\newcommand{\IU}{\mathcal{U}^\ast_{-\infty}}
\newcommand{\C}{C_{-\infty}^\ast}
\newcommand{\IN}{\mathbb{N}}
\newcommand{\IZ}{\mathbb{Z}}
\newcommand{\IQ}{\mathbb{Q}}
\newcommand{\IR}{\mathbb{R}}
\newcommand{\IC}{\mathbb{C}}
\newcommand{\IH}{\mathcal{H}}
\newcommand{\injrad}{\operatorname{inj-rad}}
\newcommand{\Rm}{\operatorname{Rm}}
\newcommand{\LLip}{L\text{-}\operatorname{Lip}}
\newcommand{\IB}{\mathfrak{B}}
\newcommand{\IK}{\mathfrak{K}}
\newcommand{\frakD}{\mathfrak{D}}
\newcommand{\frakC}{\mathfrak{C}}
\newcommand{\fraklC}{\mathfrak{lC}}
\newcommand{\frakCr}{\mathfrak{Cr}}
\newcommand{\supp}{\operatorname{supp}}
\newcommand{\diam}{\operatorname{diam}}
\newcommand{\Hom}{\operatorname{Hom}}
\newcommand{\Mat}{\operatorname{Mat}}
\newcommand{\End}{\operatorname{End}}
\newcommand{\id}{\operatorname{id}}
\newcommand{\kernel}{\operatorname{ker}}
\newcommand{\cokernel}{\operatorname{coker}}
\newcommand{\image}{\operatorname{im}}
\newcommand{\card}{\#}
\newcommand{\Vect}{\operatorname{Vect}}
\newcommand{\Idem}{\operatorname{Idem}}
\newcommand{\closure}{\operatorname{cl}}
\newcommand{\trace}{\operatorname{tr}}
\newcommand{\ch}{\operatorname{ch}}
\newcommand{\vol}{\operatorname{vol}}
\newcommand{\ind}{\operatorname{ind}}
\newcommand{\tind}{\operatorname{t-ind}}
\newcommand{\aind}{\operatorname{a-ind}}
\newcommand{\op}{\text{op}}
\newcommand{\pt}{\text{pt}}
\newcommand{\Symb}{\operatorname{Symb}}
\newcommand{\ICl}{\operatorname{\mathbb{C}\mathit{l}}}
\newcommand{\Frechet}{Fr\'{e}chet }
\newcommand{\Poincare}{Poincar\'{e} }
\newcommand{\Folner}{F{\o}lner }
\newcommand{\Spakula}{\v{S}pakula }
\newcommand{\spinc}{spin$^c$ }
\newcommand*{\largecdot}{\raisebox{-0.25ex}{\scalebox{1.2}{$\cdot$}}}
\DeclareMathOperator{\hatotimes}{\hat{\otimes}}
\DeclareMathOperator{\barotimes}{\bar{\otimes}}
\newtheorem{thm}{Theorem}[chapter]
\newtheorem*{thm*}{Theorem}
\newtheorem{chernthm}[thm]{Chern Character Isomorphism Theorem}
\newtheorem{roesthm}[thm]{Roe's Index Theorem}
\newtheorem*{roesthm*}{Roe's Index Theorem}
\newtheorem*{atiyahsingerthm*}{Atiyah--Singer Index Theorem}
\newtheorem*{mainthm*}{Main Theorem of this Thesis}
\newtheorem{indexthm}[thm]{Index Theorem}
\newtheorem{cor}[thm]{Corollary}
\newtheorem{lem}[thm]{Lemma}
\newtheorem{prop}[thm]{Proposition}
\newtheorem{conj}[thm]{Conjecture}
\theoremstyle{remark}
\newtheorem{rem}[thm]{Remark}
\newtheorem{example}[thm]{Example}
\newtheorem{examples}[thm]{Examples}
\theoremstyle{definition}
\newtheorem{defn}[thm]{Definition}
\numberwithin{equation}{chapter}
\begin{document}

\pagestyle{empty}

\begin{figure}
\vspace*{-0.4in}
\hspace*{3.0in}
\subfigure{\includegraphics[scale=0.4]{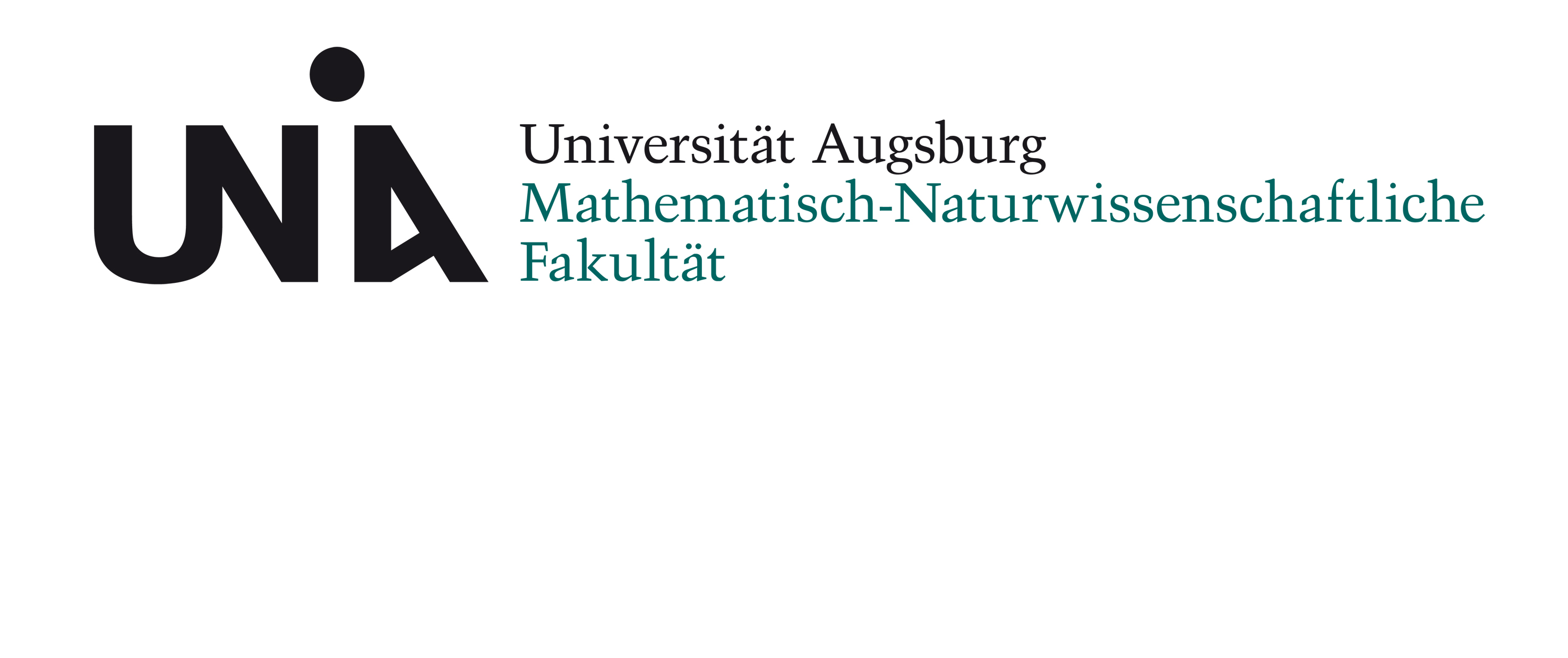}}
\end{figure}

\vspace*{11.0in}

\smash{
\rlap{$\hspace*{-4.0in}$
\includegraphics[scale=1.0]{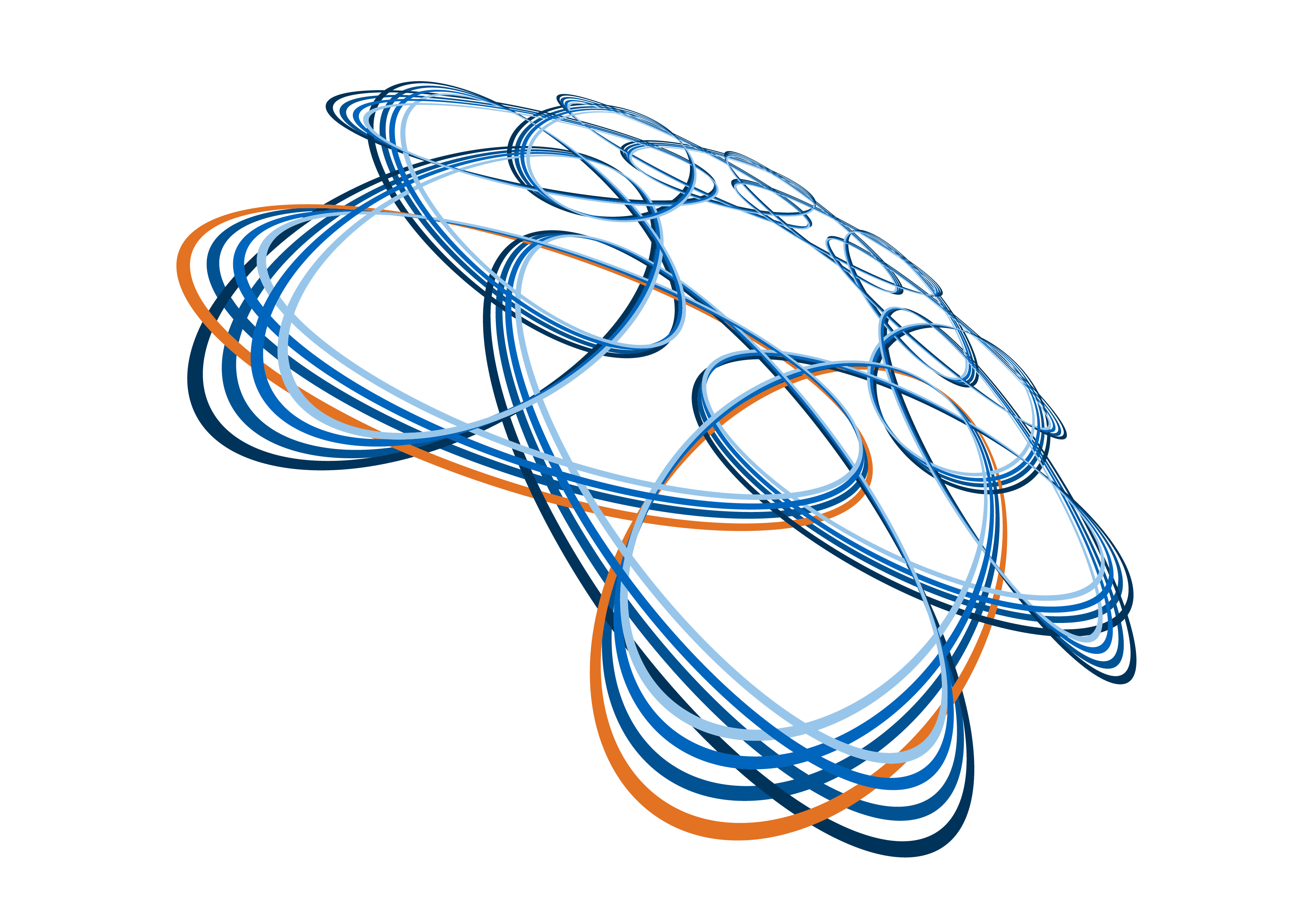}
}
}

\vspace*{-11.0in}

\begin{center}
\Huge
\textbf{Indices of pseudodifferential operators on open manifolds}
\end{center}

\begin{center}
\Large
Alexander Engel
\end{center}

\newpage \mbox{} \newpage

\vspace*{1.86in}

\begin{center}
\Huge
\textbf{Indices of pseudodifferential operators on open manifolds}
\end{center}

\vspace*{1.0in}
\begin{center}
\large
Dissertation\\
zur Erlangung des Doktorgrades (Dr.\ rer.\ nat.)\\
an der Mathematisch-Naturwissenschaftlichen Fakultät\\
der Universität Augsburg\\
\mbox{}\\
vorgelegt von\\
Alexander Engel, M.Sc.\ (hons)\\
\mbox{}\\
betreut von\\
Prof.\ Dr.\ Bernhard Hanke\\
(Universität Augsburg)
\end{center}

\newpage

\mbox{}
\vfill
\large
\begin{center}
\parbox{0cm}
{
\begin{tabbing}
Zweitgutachter: \ \= Prof.\ Dr.\ Bernhard Hanke \kill
Erstgutachter: \> Prof.\ Dr.\ Bernhard Hanke\\
\> (Universität Augsburg)\\
Zweitgutachter: \> Prof.\ John Roe, D.Phil.\\
\> (Pennsylvania State University, USA)\\
\\
Tag der mündlichen Prüfung: \ 27.\ Oktober 2014
\end{tabbing}
}
\end{center}

\normalsize
\newpage

\pagestyle{headings}
\pagenumbering{roman}
\setcounter{page}{5}

\begin{center}\large\textbf{Abstract}\end{center}\normalsize

We generalize Roe's Index Theorem for operators of Dirac type on open manifolds to elliptic pseudodifferential operators.

To this end we first introduce a novel class of pseudodifferential operators on manifolds of bounded geometry which is more general than similar classes of pseudodifferential operators defined by other authors.

We revisit \v{S}pakula's uniform $K$-homology and show that our elliptic pseudodifferential operators naturally define classes there. Furthermore, we use the uniform coarse assembly map to relate this classes to the index classes of these operators in the $K$-theory of the uniform Roe algebra and therefore establish a new and very fruitful link between uniform $K$-homology and Roe's Index Theorem.

Our investigation of uniform $K$-homology goes on with constructing the external product for it and deducing homotopy invariance of uniform $K$-homology.

The next major result is the identification of the dual theory of uniform $K$-homology: the uniform $K$-theory. We give a simple definition of uniform $K$-theory for all metric spaces and in the case of manifolds of bounded geometry we give an interpretation via vector bundles of bounded geometry over the manifold. This opens up the door for Chern--Weil theory and we define a Chern character map from uniform $K$-theory of a manifold of bounded geometry to its bounded de Rham cohomology.

We introduce a type of Mayer--Vietoris argument for these uniform (co-)homology theories which enables us to show firstly, that the Chern character induces an isomorphism modulo torsion from the uniform $K$-theory to the bounded de Rham cohomology, and secondly, that we have \Poincare duality between uniform $K$-theory and uniform $K$-homology if the manifold is spin$^c$.

\Poincare duality together with the relation of uniform $K$-homology to the index theorem of Roe mentioned above directly leads to a generalization of the index theorem to elliptic pseudodiffential operators.

Finally, using homotopy invariance of uniform $K$-homology we derive important results about the uniform coarse Baum--Connes conjecture establishing it equally important as the usual coarse Baum--Connes conjecture.

\newpage \thispagestyle{empty} \mbox{} \newpage

\begin{center}\large\textbf{Acknowledgements}\end{center}\normalsize

First and foremost I would like to thank my doctoral advisor Professor Bernhard Hanke for his support and guidance. Without his encouragement throughout the last years, this thesis would not have been possible.

Further, I want to express my gratitude to Professor John Roe from Pennsylvania State University with whom I had many inspirational conversations especially during my research stay there in summer 2013. I would also like to thank the Professors Alexandr Sergeevich Mishchenko and Evgeni Vadimovich Troitsky from Lomonosov Moscow State University, with whom I had the pleasure to work with during my stay in Moscow from September 2010 to June 2011.

It is a pleasure for me to thank all my fellow graduate students at the University of Augsburg for the delightful time I spent with them.

I gratefully acknowledge the financial and academic support from the graduate program TopMath of the Elite Network of Bavaria, the TopMath Graduate Center of TUM Graduate School at Technische Universität München and the German National Academic Foundation (Studienstiftung des deutschen Volkes).

\newpage \thispagestyle{empty} \mbox{} \newpage

\tableofcontents

\clearpage
\pagenumbering{arabic}
\setcounter{page}{1}

\chapter{Introduction}

\section*{Index theorem on open manifolds}

Let us first recall the famous Atiyah--Singer Index Theorem:

\begin{atiyahsingerthm*}[{\cite{atiyah_singer_1}}]
For any elliptic pseudodifferential operator $P$ over an oriented compact manifold $M$ without boundary we have
\[\operatorname{index}(P) = \int_M \operatorname{top-index}(P),\]
where $\operatorname{index}(P)$ is the Fredholm index of $P$ and $\operatorname{top-index}(P) \in H^m(M)$ is the cohomological index class of $P$.
\end{atiyahsingerthm*}

Roe generalized in \cite{roe_index_1} the index theorem to non-compact manifolds and we will explain this generalization now. First we will treat the cohomological side of the index theorem: if the manifold is non-compact, the top-dimensional cohomology $H^m(M)$ of it vanishes, i.e., we need to find another receptacle for the topological index class. Roe's idea was to use the bounded de Rham cohomology $H^m_{b, \mathrm{dR}}(M)$ of $M$ which is defined analogously as the usual one but using only differential forms $\alpha$ that are bounded in the norm $\sup_{x \in M} \|\alpha(x)\| + \|d \alpha(x)\|$. For this definition to make sense, the manifold $M$ must be equipped with a Riemannian metric, which is the first difference to the compact case (where the index theorem is independent of any metric on $M$). This reliance on a Riemannian metric manifests itself in, e.g., the fact that $H^m_{b, \mathrm{dR}}(\IR^m) \not= 0$, but $H^m_{b, \mathrm{dR}}(\mathbb{H}^m) = 0$.

This example also shows that if we use bounded de Rham cohomology as a receptacle for the index classes, we can not get an index theorem for, e.g., the hyperbolic space. The vanishing of the top-dimensional bounded de Rham cohomology is closely related to the amenability of the manifold. In fact, it is amenable if and only if the top-dimensional bounded de Rham cohomology does not vanish. So Roe's generalization of the Atiyah--Singer Index Theorem does only hold for amenable manifolds. That this is indeed a conceptual problem and not a problem of the techniques that Roe uses in the proof, was shown by Block and Weinberger in \cite{block_weinberger_1}.

Having found a receptacle for the index classes, we now have to find a way to evaluate them on the manifold (since plainly integrating them would often give infinity as a result). Here Roe's idea was to use an averaging procedure: we choose a compact exhaustion $(M_i)_i$ (where each $M_i$ is an embedded submanifold with boundary and of codimension $0$) of the manifold $M$ and consider the sequence $\frac{1}{\vol M_i} \int_{M_i} \alpha$, where $\alpha$ is a top-dimensional differential form. If $\alpha$ is a bounded form, then the above sequence is bounded, and choosing a functional $\tau \in (\ell^\infty)^\ast$ we may evaluate it on the above sequence to get an averaged integral of $\alpha$. We also need that this averaged integral descends to classes, i.e., we need that it vanishes on derivatives of bounded forms. So if $\alpha = d \beta$, we get $\frac{1}{\vol M_i} \int_{M_i} \alpha = \frac{1}{\vol M_i} \int_{\partial M_i} \beta \le \frac{\vol \partial M_i}{\vol M_i} \| \beta \|_\infty$, i.e., in order that the averaged integral vanishes on all derivatives of bounded forms, the exhaustion $(M_i)_i$ of $M$ must satisfy $\frac{\vol \partial M_i}{\vol M_i} \to 0$.\footnote{And furthermore, the functional $\tau \in (\ell^\infty)^\ast$ must be associated to a free ultrafilter on $\IN$, i.e., if we evaluate $\tau$ on a bounded sequence, we get the limit of some convergent subsequence. This is needed so that we may exploit the property $\frac{\vol \partial M_i}{\vol M_i} \to 0$ of the \Folner exhaustion.} Fortunately, the existence of such an exhaustion is exactly the definition of amenability of a manifold,\footnote{One generally demands $\frac{\vol B_r(\partial M_i)}{\vol M_i} \to 0$ for all $r > 0$ since the compact sets $M_i$ are usually not required to be submanifolds with a smooth boundary and of codimension $0$. Such exhaustions are called \Folner exhaustions.} and we have seen above that we have anyway to assume that our manifold is amenable (i.e., we do not get further restrictions here on the index theorem).

So the topological side of Roe's Index Theorem has the following form: given an operator $D$ of Dirac type over an amenable manifold $M$, the topological index of $D$ is defined as $\ind_\tau(I_t(D))$, where $\ind_\tau$ is the averaged integral with respect to a choice of \Folner exhaustion of $M$ and a choice of functional $\tau \in (\ell^\infty)^\ast$ associated to a free ultrafilter on $\IN$. We use another symbol $I_t(\largecdot)$ instead $\tind(\largecdot)$ in order to denote the topological index, since it is now defined via a different method than Atiyah and Singer do it: the index class $I_t(D) \in H^m_{b, \mathrm{dR}}(M)$ of the operator $D$ is defined via the asymptotic expansion of the integral kernel of the operator $e^{-tD^2}$. That it is indeed a bounded form needs that the manifold $M$ has bounded geometry, i.e., that its injectivity radius is uniformly positive and that its curvature tensor and all its covariant derivatives are bounded. This is an additional restriction on the index theorem, but since it is also used crucially at other places in the proof, we can not circumvent it. The crucial use of the asymptotic expansion of the operator $e^{-tD^2}$ is the fact why Roe can prove his index theorem only for operators of Dirac type, and is the reason why we have to use totally different methods than Roe to extend his index theorem to pseudodifferential operators.

To explain the analytic side of Roe's Index Theorem, let us first rephrase it in the compact case. Given an elliptic, graded operator $D$, its analytic index is defined as the Fredholm index $\dim \kernel D^+ - \dim \cokernel D^+$ of the positive part $D^+$ of $D$. On a compact manifold, the eigenvalues of elliptic operators are discrete. So we may find a bump function $f$ with $f(0) = 1$ and which is zero on all non-zero eigenvalues of $D^2$. Then $f(D^2)$ is the projection operator onto the kernel of $D^2$, which is the kernel of $D$ itself, and $\dim \kernel D$ may be rewritten as $\trace f(D^2)$. The analytic index of $D$ is then given as $\trace_\epsilon f(D^2)$, where $\epsilon$ is the grading operator and $\trace_\epsilon$ the graded trace. Since $f$ is a Schwartz function, the operator $f(D^2)$ is a smoothing operator and therefore represented by a smooth integral kernel $k_{f(D^2)}(x,y)$ over $M \times M$. So we finally rewrite the analytic index of $D$ as $\int_M \trace_\epsilon k_{f(D^2)}(x,x) dM$. But this expression may be now generalized to non-compact manifolds: given an elliptic operator $D$ over a manifold $M$ of bounded geometry, the operator $f(D^2)$, where $f$ is a Schwartz function with $f(0) = 1$, is a smoothing operator and therefore represented by a bounded smooth integral kernel $k_{f(D^2)}(x,y)$ over $M \times M$. Choosing a compact exhaustion $(M_i)_i$ of $M$, we define the analytic index of $D$ as the evaluation of a functional $\tau \in (\ell^\infty)^\ast$ on the bounded sequence $\frac{1}{\vol M_i} \int_{M_i} \trace_\epsilon k_{f(D^2)}(x,x) dM$. Though now the eigenvalues of the operator $D$ need not be discrete, i.e., we usually can not find a Schwartz function $f$ with $f(0) = 1$ which vanishes on every non-zero eigenvalue of $D^2$, Roe's Index Theorem leads to the fact that the analytic index is independent of the choice of such a function $f$. So the analytic index does not only depend on the zero-eigenvalue of $D$, but on the germ of the spectrum of $D$ around $0$. Roe discusses implications of this in \cite{roe_index_2}.

So we are finally able to state Roe's theorem:

\begin{roesthm*}[{\cite{roe_index_1}}]
Let $M$ be an amenable and oriented manifold of bounded geometry and let $D$ be a graded operator of Dirac type over $M$. Then
\[\ind_\tau(I_a(D)) = \ind_\tau(I_t(D)).\]
Here $I_t(D) \in H^m_{b, \mathrm{dR}}(M)$ is the topological index class of $D$, $I_a(D) \in K_0^{\mathrm{alg}}(\mathcal{U}_{-\infty}(M))$ its analytic index class, and $\ind_\tau$ is the averaged integral.
\end{roesthm*}

We have not explained the algebra $\mathcal{U}_{-\infty}(M)$ since it is not needed to do this for the understanding of the theorem: the above described procedure to compute the analytic index of $D$ is exactly the value $\ind_\tau(I_a(D))$.\footnote{But we will come back to this algebra later in the introduction in our discussion of pseudodifferential operators.} But let us mention that in order for the averaged integral (which is used to calculate the analytic index) to descend to the algebraic $K$-theory of $\mathcal{U}_{-\infty}(M)$, it is necessary to show that it is a trace, i.e., vanishes on commutators. To do this, we need again the amenability of $M$. So similarly as for the averaged integral on top-dimensional bounded de Rham forms, amenability is needed to show that the averaged integral descends to classes.

Roe proved his index theorem only for operators of Dirac type, since he has to extract the topological index class out of the asymptotic expansion of the integral kernel of $e^{-tD^2}$. But the Atiyah--Singer Index Theorem is valid much more generally for elliptic pseudodifferential operators. But for them the asymptotic expansion does in general not exist, i.e., Roe's proof breaks down for more general operators than of Dirac type.

But of course nevertheless the question arises whether there is a generalization of his index theorem to pseudodifferential operators. This is the main topic of this thesis and, in fact, we will be successful:

\begin{mainthm*}
Let $M$ be an amenable, even-dimensional spin$^c$ manifold of bounded geometry and let $P \in \Psi \mathrm{DO}_?^k(S)$ be a graded, elliptic and symmetric pseudodifferential operator of positive order $k > 0$.

Then the topological and analytic index classes of $P$ coincide:
\[\tind(P) = \aind(P) \in \bar{H}^m_{b, \mathrm{dR}}(M).\]
\end{mainthm*}

Since now we compute the analytic and topological index classes differently than Roe and also than Atiyah and Singer, we again use other symbols for them: $\tind(\largecdot)$ and $\aind(\largecdot)$ instead of $I_t(\largecdot)$ and $I_a(\largecdot)$. Note that we may even state the equality of the classes and not only the equality of the evaluations of them under averaged integrals (though this is not due to our techniques that we use in our proof and in fact, Roe could have shown this too).

The restriction to even dimensions and to graded operators is a strong one, but there is a possibility to overcome it: our restriction relies on the contruction of evaluation maps on the even, uniform $K$-homology $K_0^u(M)$ of $M$, and in order to generalize our theorem to all dimensions and to ungraded operators, we would need to construct evaluation maps on $K_1^u(M)$. This is probably possible, though we do not do it in this thesis, since it would lead us too far astray from the main techniques of this thesis. But see our more thorough discussion of this in Section \ref{sec:partitioned_manifold}.

At last, let us mention that though the main theorem of our thesis is to generalize Roe's Index Theorem to pseudodifferential operators, it will be apparent in our discussion further down of the techniques used to prove this generalization that also the development of these techniques is a major result of this thesis.

To elaborate more on this, let us mention that almost all applications of index theory to geometry are using operators of Dirac type (like the question about existence of positive scalar metrics on spin manifolds), i.e., generalizing Roe's Index Theorem does not give us new applications in geometry. But from the techniques used to prove our main theorem we will get a more thorough understanding of Roe's Index Theorem and especially its relation to the Baum--Connes conjecture.

After completing this thesis the author was made aware of Wang's Ph.D.\ thesis \cite{wang_thesis} which was published as \cite{wang}. His main theorem is an $L^2$-index theorem for properly supported, elliptic pseudodifferential operators over complete Riemannian manifolds on which a locally compact, unimodular groups acts properly and cocompactly. Though his theorem applies to a different class of operators, resp. manifolds (being properly supported is a strong restriction on the operators, since it especially not requires one to `ìnvent'' the uniformity condition as we have to do this in this thesis, and our theorem does not need an action of a group on the manifold), it uses the same idea for its proof: one has to find a Dirac operator which has the same ``higher index'' as the pseudodifferential operator and then one uses the heat kernel method for proving the needed index theorem for Dirac operators. Note that finding a Dirac operator having the same analytic index as the pseudodifferential operator is done in Wang's thesis by using the symbol class of the pseudodifferential operator and in our thesis by proving \Poincare duality between uniform $K$-homology and uniform $K$-theory. The connection between these two approaches (in the case of compact manifolds) is the Dolbeault operator on the disk bundle of $M$ giving a $K \! K$-equivalence between $K$-homology of $M$ and compactly supported $K$-theory of the tangent bundle $TM$ of $M$.

\section*{Pseudodifferential operators on open manifolds}

To the surprise of the author the first problem was to find a suitable definition of pseudodifferential operators on open manifolds. Recall that on a compact manifold a pseudodifferential operator is defined as an operator that locally (i.e., in a chart) looks like one on $\IR^m$, and this suffices: one can then show that such an operator has extensions to a continuous operator on Sobolev spaces, that one can freely compose such operators (i.e., that the composition of two pseudodifferential operators is again a pseudodifferential operator), and so on. But on open manifolds such a definition is not good. As an example, consider the operator $x^2 \cdot \tfrac{d}{dx}$ on $\IR$. It is not a pseudodifferential operator on Euclidean space since its symbol $p(x, \xi) = x^2 \cdot \xi$ is unbounded in $x$. And in fact, this operator does not have an extension to a continuous operator $H^1(\IR) \to L^2(\IR)$. But locally this operator does look like a pseudodifferential one, since every local symbol itself is bounded in $x$. To solve this problem we could of course just demand that the local bounds on the symbol should be dominated by a single global bound. But the problem is that in the definition of pseudodifferential operators there are also bounds imposed for the derivatives of the symbol of the operator. Choosing ``bad'' coordinate charts and partitions of unity we could achieve that the true pseudodifferential operator $1 \cdot \tfrac{d}{dx}$ on $\IR$ would not be recognized as one since a bad choice of charts could distort the derivatives of its symbol arbitrarily high with respect to the different charts of the ``bad'' cover of $\IR$.

The solution to this problem is to restrict to Riemannian manifolds that have bounded geometry, i.e., such that their injectivity radius is uniformly positive and their curvature tensor and its derivatives are bounded. On such manifolds we have the following very nice property of normal coordinate charts of a fixed radius less than the injectivity radius: the derivatives of the change-of-coordinates maps are uniformly bounded, i.e., we have bounds that are independent of the position of the normal coordinate charts in the manifold. For such manifolds we may define pseudodifferential operators as ones which locally in normal coordinate charts look like pseudodifferential operators on $\IR^m$ and such that the bounds imposed on the local symbols are independent of the choice of normal coordinate chart. Due to the above property of change-of-coordinates maps this is now well-defined.

This local definition of pseudodifferential operators was already given by Kordyukov in \cite{kordyukov}, by Shubin in \cite{shubin} and by Taylor in \cite{taylor_pseudodifferential_operators_lectures}. And these are, to the surprise of the author, the only three instances that the author is aware of and where such pseudodifferential operators were investigated. Up to now we have described pseudodifferential operators only locally, but we need also a certain global restriction (e.g., in order to prove that such operators compose). Kordyukov and Shubin impose that their operators must have finite propagation, i.e., that there is an $R > 0$ such that the integral kernel $k(x,y)$ of the pseudodifferential operator vanishes for all $x,y$ with $d(x,y) > R$ (note that pseudodifferential operators always have an integral kernel that is smooth outside the diagonal). And Taylor requires more generally an exponential decay of the integral kernel at infinity, and often this decay should be faster than the volume growth of the manifold.

Our definition is a priori more general than Taylor's\footnote{For some results he has to require that the exponential decay of the integral kernels is faster than the volume growth of the manifold. In these cases our class of operators becomes a priori larger. If it is indeed larger is an open question, see the discussion in Section \ref{sec:equality_smooth_uniform_roe_algebras}.} and is inspired by Roe's work: we require that our pseudodifferential operators are quasilocal\footnote{An operator $A \colon H^r(E) \to H^s(F)$ is \emph{quasilocal}, if there is some function $\mu\colon \IR_{> 0} \to \IR_{\ge 0}$ with $\mu(R) \to 0$ for $R \to \infty$ and such that for all $L \subset M$ and all $u \in H^r(E)$ with $\supp u \subset L$ we have $\|A u\|_{H^s, M - B_R(L)} \le \mu(R) \cdot \|u\|_{H^r}$.}. So at the end the definition of pseudodifferential operators on open manifold that we give is novel.

Let us summarize some facts why we prefer it over the other definitions mentioned above: firstly, it is to the knowledge of the author the most general definition of pseudodifferential operators on open manifolds.

Secondly, the pseudodifferential operators of degree $-\infty$, i.e., the smoothing ones, may be completely described: they are exactly the quasilocal smoothing operators. Furthermore, we may also define a slightly smaller class of our operators by restricting the behaviour of the integral kernels at infinity a bit more (concretely, demanding that the ``parts at infinity'' of the operators may be approximated by operators of finite propagation speed in a suitable \Frechet topology). In this case the operators of degree $-\infty$ form exactly the smooth uniform Roe algebra. It is especially nice to know how the operators of order $-\infty$ look like since elliptic operators are invertible modulo operators of order $-\infty$ and therefore have abstract index classes in the even $K$-group of them.\footnote{Recall that in the statement of Roe's Index Theorem we used the analytic index class of an operator $[D] \in K_0^{\mathrm{alg}}(\mathcal{U}_{-\infty}(M))$. This is exactly the abstract index class that we mention here. This immediately establishes that the abstract index class of an operator of Dirac type (which is of course a pseudodifferential operator in our sense) resulting from our considerations coincides with the analytic index class as Roe has defined it.} So it is necessary to understand the operators of order $-\infty$. Now the completion of the smooth uniform Roe algebra is the uniform Roe algebra $C_u^\ast(M)$ of the manifold (hence its name ``\emph{smooth} uniform Roe algebra''), i.e., it is well-investigated. Of course the completion of the operators of order $-\infty$ and having finite propagation (i.e., of the smoothing operators in the sense of Kordyukov and Shubin) is also the uniform Roe algebra. But the problem here is that these operators do not form a local $C^\ast$-algebra, i.e., their operator $K$-groups do not necessarily need to coincide with the ones of the uniform Roe algebra. But we will see that our operators of order $-\infty$ do form a local $C^\ast$-algebra. So this is the second advantage of our definition of pseudodifferential operators over the other definitions.

And thirdly, recall that in order to compute Roe's analytic index of an operator $D$ of Dirac type, we have to consider the operator $f(D^2)$, where $f$ is a Schwartz function with $f(0) = 1$. Now usually $f(D^2)$ will not have finite propagation, but it will be a quasilocal operator. This was proven by Roe and we will generalize this crucial fact to pseudodifferential operators. So this means that we stay in our class of operators when computing analytic indices, but this is not true if we would work with the definitions of the other authors. Note that the proof of the fact that $f(P)$ is quasilocal requires substantial analysis and is one of our key technical lemmas.

Let us summarize some aspects of the above discussion in the following theorem.

\begin{thm*}
We will introduce two slightly different, new class of pseudodifferential operators on manifolds of bounded geometry, which are a priori more general than the above mentioned classes of other authors and which coincide on compact manifolds with the usual one.

Our operators have the usual properties that they have on compact manifolds, e.g., they extend to bounded operators on corresponding Sobolev spaces and they compose, i.e., the product of two pseudodifferential operators is again a pseudodifferential operator.

Furthermore, the operators of order $-\infty$ may be completely described: they are in one version the quasilocal smoothing operators and in the other version they form the smooth uniform Roe algebra. In both cases they form a local $C^\ast$-algebra and the completion of the latter is the uniform Roe algebra of the manifold.

And last, the slightly bigger class of both is closed under functional calculus with Schwartz functions: if $P$ is a symmetric and elliptic operator we will show that it is essentially self-adjoint, and if $f$ is a Schwartz function, then $f(P)$ will be a pseudodifferential operator of order $-\infty$.
\end{thm*}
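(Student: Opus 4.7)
The plan is to establish the theorem piece by piece, following the structure of the statement and in each case exploiting the bounded-geometry hypothesis to reduce questions to uniform versions of classical local arguments. On a manifold $M$ of bounded geometry I would fix a uniformly locally finite cover by normal coordinate balls of fixed radius $r < \injrad(M)/2$ together with a subordinate partition of unity with uniformly bounded derivatives. Since change-of-coordinate maps between such charts have uniformly bounded derivatives of every order, the condition that a local symbol lies in a standard symbol class with bounds independent of the chart is well defined. The larger class is then defined by imposing such uniform local symbol bounds together with quasilocality, and the smaller class by additionally requiring approximability in the natural \Frechet topology by operators of finite propagation. On a compact $M$ only finitely many charts are involved, so both reduce to the classical definition. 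Boundedness $H^r \to H^{r-k}$ follows from the standard Sobolev estimate on $\IR^m$ applied chart by chart and summed using local finiteness; for a composition $P_1 P_2$ the local symbol is produced by the usual oscillatory-integral asymptotic expansion with remainders that are uniform thanks to the uniform symbol bounds, and quasilocality of the product is inherited from the quasilocality of the factors via an estimate of the form $\mu_{12}(R) \lesssim \mu_1(R/2)\|P_2\| + \|P_1\|\mu_2(R/2)$.

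For the identification of the operators of order $-\infty$: in the larger class such an operator has a bounded smooth integral kernel (by Schwartz kernel theory applied chart-wise) and is quasilocal by assumption, while conversely every quasilocal smoothing operator plainly satisfies the uniform symbol bounds of order $-\infty$ in every normal chart. In the smaller class the operators of order $-\infty$ are by design the closure, in the natural \Frechet topology, of the finite-propagation smoothing operators, that is, the smooth uniform Roe algebra, whose operator norm completion is $C_u^\ast(M)$. To show that both algebras are local $C^\ast$-algebras I would run a quasilocal Neumann argument: if $1+A$ is invertible in $C_u^\ast(M)$ with $A$ quasilocal and smoothing, the identity $(1+A)^{-1} - 1 = -A + A(1+A)^{-1}A$ exhibits $(1+A)^{-1}-1$ as again quasilocal and smoothing, with the decay function and kernel bounds inherited from those of $A$. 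Closure under holomorphic functional calculus then follows from stability under inversion in a neighbourhood of the spectrum.

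Essential self-adjointness of a symmetric elliptic $P$ of positive order follows from completeness of $M$ (automatic from bounded geometry) together with elliptic regularity identifying the maximal and minimal domains. For $f$ a Schwartz function I would represent
\[f(P) = \frac{1}{\sqrt{2\pi}} \int_\IR \hat f(t)\, e^{itP}\, dt,\]
which is bounded on $L^2$ by the spectral theorem. The smoothing property of $f(P)$ follows from the rapid decay of $\hat f$ combined with uniform elliptic regularity applied in each chart, which yields uniform symbol bounds of order $-\infty$. The main obstacle is the quasilocality of $f(P)$: for Dirac operators Roe derived this from finite propagation speed of the wave kernel, which is unavailable for a general pseudodifferential $P$ of positive order. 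I would replace it by iterated commutator estimates combined with Duhamel's formula: for a cutoff $\chi$ equal to $1$ on a large ball and supported in a slightly larger one, control of $[P,\chi]$ via derivatives of $\chi$ (uniformly, by bounded geometry) produces a polynomial-in-$|t|$ bound for the off-diagonal decay of $e^{itP}$, and integrating this against the Schwartz function $\hat f$ yields the required quasilocality function for $f(P)$. Carrying out this commutator iteration cleanly, with estimates uniform across $M$ and controlled in the order of $P$, is the central technical difficulty, and is the step from which the remaining assertions of the theorem flow.
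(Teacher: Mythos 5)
Your proposal follows essentially the same architecture as the paper's Chapter 3: the definition via uniform local symbol bounds in normal coordinate charts plus a quasilocal (resp.\ \Frechet-limit-of-finite-propagation) smoothing remainder, Sobolev boundedness and composition by chart-wise reduction to $\IR^m$, identification of the order $-\infty$ operators with $\IU(E)$ resp.\ $\C(E)$, essential self-adjointness via elliptic regularity and the deficiency-index criterion, and the crucial Duhamel/commutator estimate $[e^{itP},\eta] = -it\int_0^1 e^{ixtP}[\eta,P]e^{i(1-x)tP}\,dx$ to prove quasilocality of $e^{itP}$ and hence of $f(P) = \tfrac{1}{\sqrt{2\pi}}\int \hat f(t)e^{itP}\,dt$ for Schwartz $f$.

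The one place where you deviate is the local-$C^\ast$-algebra step. You argue inverse closedness directly from $(1+A)^{-1}-1 = -A + A(1+A)^{-1}A$, claiming the decay is "inherited from $A$." As written this is incomplete on two counts. First, the middle factor $(1+A)^{-1}$ appears on the right, so you need to know that it is itself quasilocal before the composition lemma applies; this is true, but only because any operator-norm limit of quasilocal operators is quasilocal, which is a separate lemma (and which you do not invoke), and it is not a property "inherited from $A$" in the sense of an explicit dominating function. Second, for the smaller class $\C(E)$ the Neumann identity by itself only shows $(1+A)^{-1}-1 \in \IU(E)$; you would still have to show it lies in the \Frechet-closure of the finite propagation smoothing operators, which requires approximating $(1+A)^{-1}$ in a way compatible with the \Frechet seminorms. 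The paper instead verifies holomorphic closure via Schmitt's power-series criterion: $f(A) = \sum a_i A^i$ has partial sums visibly in the algebra, converging in all of the \Frechet seminorms $\|\cdot\|_{-k,l}$, so the limit is quasilocal (resp.\ in $\C(E)$) by completeness. This handles both $\IU(E)$ and $\C(E)$ uniformly and avoids any appeal to the a priori quasilocality of $(1+A)^{-1}$. Your route can be salvaged, but it needs both missing ingredients made explicit; the power-series route buys you a cleaner argument at no extra cost.
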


\section*{Outline of our arguments}

We will now give a brief overview over the proof that we use to show our main theorem. As we have already said, we need something completely novel since Roe's proof only works for operators of Dirac type.

We start with explaining our proof in the compact setting, where all the arguments that we give now are of course already well-known. First we recall that the analytic index may be transformed into a map on $K$-homology: crushing the compact manifold to a point, we get an induced map $K_0(M) \to K_0(\pt)$, and since $K_0(\pt) \cong \IZ$ we get for every class in $K_0(M)$ an integer-value. Of course, if we have a graded, elliptic pseudodifferential operator $P$, then this integer-value associated to the class $[P] \in K_0(M)$ is the Fredholm index of $P$. Furthermore, it is possible to define a topological index map $K_0(M) \to \IZ$ and the Atiyah--Singer Index Theorem may be then rephrased by saying that both maps $K_0(M) \to \IZ$ coincide. The cohomological expression in the index theorem follows from applying the Chern character.

Now suppose that we already know the Atiyah--Singer Index Theorem for operators of Dirac type. How can we extend it to pseudodifferential operators? Well, by the above reformulation of the index theorem, we already know it for all elliptic pseudodifferential operators whose $K$-homology class may be represented by operators of Dirac type! So the only remaining part is to show that indeed \emph{every} class coming from pseudodifferential operators is represented in this way.

This remaining part is well-known for compact manifolds: if the manifold is a \spinc manifold, then the cap product $\largecdot \cap [M]\colon K^\ast(M) \to K_{m-\ast}(M)$ with the fundamental class $[M] \in K_m(M)$ ssociated to the \spinc structure is an isomorphism. This is the \Poincare Duality Theorem for $K$-homology and $K$-theory. Since $K_1(\pt) = 0$, it suffices to restrict to even-dimensional manifolds (this corresponds to the fact that on an odd-dimensional manifold the index of any differential operator is zero). Furthermore, applying the formal $2$-periodicity $K_{p}(M) \cong K_{p+2}(M)$, we may rewrite \Poincare duality as $K^0(M) \cong K_0(M)$. Now the fundamental class $[M] \in K_0(M)$ is given by the class of the Dirac operator $D$ associated to the \spinc structure of $M$, topological $K$-theory $K^0(M)$ consists of formal differences $[E] - [F]$ of isomorphism classes of vector bundles over $M$, and the cap product is in this case given by forming the twisted Dirac operator: $([E] - [F]) \cap [M] = [D_E] - [D_F]$. So we see that on compact \spinc manifolds \emph{every} $K$-homology class is represented by a difference of operators of Dirac type. So the Atiyah--Singer Index Theorem for operators of Dirac type immediately extends to all elliptic pseudodifferential operators.

So we now have to transport this line of arguments somehow into the non-compact world, i.e., we have to find a suitable $K$-homology theory, a corresponding dual $K$-theory, and we have to prove \Poincare duality for them. And in fact, this is exactly what we will do in this thesis.

\section*{Uniform $K$-homology}

The suitable $K$-homology theory that we need in order to execute the above discussed line of reasoning was provided by \Spakula who defined in his PhD thesis \cite{spakula_thesis} uniform $K$-homology $K_\ast^u(M)$. The reason why it looks more promising to try it instead of the usual $K$-homology is that \Spakula constructed a uniform coarse assembly map $\mu_u \colon K_\ast^u(M) \to K_\ast(C_u^\ast(M))$ to the $K$-theory of the uniform Roe algebra $C_u^\ast(M)$ of $M$. So we immediately have a connection to Roe's Index Theorem: recall that elliptic pseudodifferential operators have an abstract index class in the $K$-theory of the quasilocal smoothing operators. Since this algebra is a local $C^\ast$-algebra and its completion is the uniform Roe algebra, we get an abstract index class in $K_0(C_u^\ast(M))$. Of course we will show that graded elliptic pseudodifferential operators naturally define classes in $K_0^u(M)$ and then we will show that the image of these classes under the uniform coarse assembly map coincides with the abstract index classes of the elliptic operators. Since for operators of Dirac type this abstract index class is the one that Roe considers, we immediately get a connection of uniform $K$-homology to Roe's Index Theorem. This connection is furthermore enhanced by the fact that for amenable manifolds we may define index maps $\ind_\tau \colon K_0^u(M) \to \IR$ and these maps coincide under the uniform coarse assembly map with the analytic index maps of Roe that we have mentioned above and that are defined using averaging integrals.

So the first major accomplishment of the author of this thesis with respect to the uniform $K$-homology of \Spakula is to establish it as the main receptacle for classes of elliptic operators in order to conduct index theory with them in the direct tradition of Atiyah and Singer. Note that though we also have the usual $K$-homology which also accepts classes of elliptic operators, we may in general not calculate any indices with it, i.e., we do not have evaluation maps to the reals in the non-compact case for usual $K$-homology. Note that the index maps $\ind_\tau \colon K_0^u(M) \to \IR$ that we define are exactly the analytic index maps we talked about in the above section where we outlined our arguments, i.e., which are part of the rephrasing of the Atiyah--Singer Index Theorem by saying that the analytic and topological index maps on $K$-homology coincide.

If course, if our manifold is compact, uniform $K$-homology is exactly the usual $K$-homology and the analytic index map is the usual one, i.e., the Fredholm index.

Let us summarize these major results of this thesis:

\begin{thm*}
If $M$ is a manifold of bounded geometry and $P$ a graded elliptic pseudodifferential operator, then $P$ naturally defines a class in the uniform $K$-homology $K_0^u(M)$ of $M$. This class depends only on the principal symbol class of $P$.

Moreover, the uniform $K$-homology class of $P$ is mapped under the uniform coarse assembly map $\mu_u \colon K_\ast^u(M) \to K_\ast(C_u^\ast(M)) \cong K_\ast(\Psi \mathrm{DO}_u^{-\infty}(M))$ to the abstract index class of $P$ arising from the fact that $P$ is invertible modulo smoothing operators.

If $M$ is amenable, we will define analytic index maps $K_0^u(M) \to \IR$ which coincide with the analytic index maps of Roe under the uniform coarse assembly map.
\end{thm*}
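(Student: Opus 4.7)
The plan is to produce, for a graded elliptic operator $P \in \Psi \mathrm{DO}^k$ of positive order $k>0$, a Fredholm module representing a class in $K_0^u(M)$ in \Spakula's sense, then trace it through the assembly and the Folner-averaging procedure. The starting point is the preceding theorem about our calculus: $P$ is essentially self-adjoint, and for any Schwartz function $f$ the operator $f(P)$ is a pseudodifferential operator of order $-\infty$, hence lies in the smooth uniform Roe algebra. Pick a chopping function $\chi \colon \IR \to [-1,1]$, i.e.\ a smooth odd function with $\chi(t)\to \pm 1$ as $t\to \pm\infty$ and $\chi^2-1$ Schwartz, and set $F := \chi(P)$. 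Because $P$ is graded and $\chi$ is odd, $F$ is odd with respect to the grading; moreover $F^2 - I = (\chi^2-1)(P)$ is in $\Psi \mathrm{DO}^{-\infty}_u \subset C_u^\ast(M)$. To package $F$ into a uniform Fredholm module one needs to verify that for $\phi$ in a suitable dense subalgebra of bounded Lipschitz functions on $M$, the commutators $[F,\phi]$ are locally compact, uniformly small at large propagation, and quasilocal. All of this follows from the key technical fact that $f(P)$ is quasilocal (so its off-diagonal behaviour is controlled) together with the bounded-geometry estimates built into the calculus.

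Next, I would show that the class $[P] \in K_0^u(M)$ depends neither on the chopping function nor on lower-order perturbations. Two chopping functions $\chi_1,\chi_2$ differ by a Schwartz function, so $\chi_1(P)-\chi_2(P) \in \Psi \mathrm{DO}^{-\infty}_u$, and the corresponding Fredholm modules are operator homotopic via the straight-line homotopy. For principal-symbol invariance, if $P_0, P_1$ share the same principal symbol class, then $P_t := (1-t)P_0 + tP_1$ is a continuous family of graded elliptic operators; functional calculus together with the uniformity of the estimates yields a continuous path $\chi(P_t)$ in the Fredholm-module picture, producing an equivalence in $K_0^u(M)$.

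To compute $\mu_u[P]$, I would unwind \Spakula's definition: starting from a uniform Fredholm module $(H, F)$, pick a cover of $M$ by uniformly bounded sets and a partition of unity $(\phi_i)$ subordinate to it, form the ``cut-down'' operator of $F^+$, and take its index class in $K_0(C_u^\ast(M))$. For $F = \chi(P)$ the positive part $F^+$ is a parametrix for $P^+$ modulo smoothing operators in our calculus, so this procedure returns exactly the abstract index class arising from the invertibility of $P$ modulo $\Psi \mathrm{DO}^{-\infty}_u(M)$ under the isomorphism $K_\ast(C_u^\ast(M)) \cong K_\ast(\Psi \mathrm{DO}^{-\infty}_u(M))$ (the latter being a local $C^\ast$-algebra with completion $C_u^\ast(M)$, again by the preceding theorem). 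The main obstacle here is a careful bookkeeping argument: one must check that the parametrix constructed from $\chi(P)$ yields the same $K$-theoretic class as any other elliptic parametrix, which amounts to a two-out-of-three argument in the algebra of order $-\infty$ operators.

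Finally, for amenable $M$ I would define $\ind_\tau \colon K_0^u(M) \to \IR$ in two equivalent ways and show they agree. On the one hand, a \Folner exhaustion together with $\tau \in (\ell^\infty)^\ast$ associated to a free ultrafilter defines a trace $\tau_\ast$ on $C_u^\ast(M)$ via the averaged operator trace, and composing with $\mu_u$ gives one candidate $\tau_\ast \circ \mu_u$. On the other hand, one can define $\ind_\tau$ directly from the Fredholm-module data by averaging the pointwise supertraces of $\chi^2(P)-I$ (or equivalently of a Schwartz function $f$ of $P$ with $f(0)=1$) over the \Folner sets; amenability is exactly what is needed to ensure this descends to a well-defined map on $K_0^u(M)$ (vanishing on commutators). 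The compatibility of these two definitions is then a computation of Roe-type: both reduce to the asymptotic average of $\operatorname{tr}_\epsilon k_{f(P^2)}(x,x)$, so the diagram commutes. The hardest step of the whole program is the first one, namely verifying all the uniform quasilocality and local-compactness conditions for $\chi(P)$ needed by \Spakula's definition; everything afterwards is a naturality argument against the algebraic framework already in place.
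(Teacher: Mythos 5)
Your outline for the first part (building the uniform Fredholm module from $\chi(P)$, with the heavy lifting being quasilocality of Schwartz-class functions of $P$, which in turn rests on $e^{itP}$ being quasilocal) matches the paper's proof of Theorem \ref{thm:elliptic_symmetric_PDO_defines_uniform_Fredholm_module}, and your sketches of the assembly-map identification and the \Folner index maps are also on target in outline.

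The gap is in the principal-symbol invariance step. You write that the linear path $P_t = (1-t)P_0 + tP_1$ gives ``a continuous path $\chi(P_t)$ in the Fredholm-module picture, producing an equivalence in $K_0^u(M)$.'' This would work if $\chi(P_t)$ were norm-continuous in $t$, because then the operator-homotopy relation built into the definition of $K_0^u$ applies directly. But this is only the case when the order is $k=1$, so that $P_0 - P_1$ is a bounded operator on $L^2$; the quantitative estimate in Proposition \ref{prop:norm_estimate_difference_func_calc} controls $\|\chi(P)-\chi(P^\prime)\|_{lk,lk-qk}$ by $\|P-P^\prime\|_{lk,lk-qk}$, which lives between \emph{different} Sobolev spaces unless $q=0$. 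For $k>1$ one only obtains continuity in the strong-$^\ast$ operator topology. The paper handles this by introducing a genuinely new relation --- weak homotopy (Definition \ref{defn:weak_homotopy}) --- and proving that weakly homotopic uniform Fredholm modules represent the same class (Theorem \ref{thm:weak_homotopy_equivalence_K_hom}). That theorem is not a ``naturality argument against the algebraic framework already in place'': its proof requires the external product for uniform $K$-homology and a Kasparov-style $(0,2\pi)$-cylinder construction, i.e.\ a substantial further development. Without this, your argument establishes the invariance only for first-order operators.

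A smaller point: you assert $F^2 - I \in \Psi\mathrm{DO}_u^{-\infty}$. The key lemma that $e^{itP}$ is quasilocal is proved only for $P\in\Psi\mathrm{DO}^k$ (no subscript $u$), and what follows is that $(\chi^2-1)(P)$ is a quasilocal smoothing operator, hence uniformly locally compact --- not that it is a norm limit of finite-propagation operators, which is what membership in $\Psi\mathrm{DO}_u^{-\infty} = \C(E)$ would require. The paper explicitly leaves open whether those two classes coincide. This does not affect the construction of the class in $K_0^u$, but it matters for the bookkeeping in your assembly-map step, where you implicitly want the parametrix and smoothing errors to land in $\Psi\mathrm{DO}_u^{-\infty}(M)$.
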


In the above theorem we have mentioned that the uniform $K$-homology class of $P$ depends only on the principal symbol class of $P$. This expresses a certain stability of the index class (in this case independence from lower order terms) and therefore also a certain stability of the analytic indices itself. But the most perfect form of stability is achieved by homotopy invariance: recall that if two compact manifolds are homotopy equivalent, then their $K$-homology groups are isomorphic. This shows that the analytic index is resistent against homotopies---a result that is not expected since the definition of $K$-homology involves a lot of analysis. In fact, this homotopy invariance may be seen as an instance of the index theorem itself: if we rip the evaluation to integers away from the Atiyah--Singer Index Theorem, the remaining statement is exactly the homotopy invariance.

So it is of course extremly desirable to prove homotopy invariance of uniform $K$-homology, since, as we have said, this is the major part of the index theorem. To put the homotopy invariance more into the light, recall that our main theorem states the equality of the analytic and topological index classes. But this can only be formulated for amenable manifolds since only here the top-dimensional bounded de Rham cohomology does not vanish. So we have no main theorem in the non-amenable case. But as explained, we may use instead the homotopy invariance of uniform $K$-homology as our main theorem for non-amenable manifolds. Though we do not have any classes to compare in this case, we still have everything else of the index theorem. We will persue this path to index theory in the second-to-last chapter, where we will discuss uniform coarse indices. There homotopy invariance will be crucially used at several places.

The proof of homotopy invariance for usual $K$-homology utilizes the exterior product. This product may be seen as the most crucial construction for $K$-homology, since from it not only homotopy invariance may be deduced, but also Bott periodicity which is essential to the definition of the topological index map on $K$-homology. Our method for proving homotopy invariance of uniform $K$-homology will be the same, i.e., we will construct the exterior product and then deduce homotopy invariance. So the next major result of this thesis is the following theorem:

\begin{thm*}
We will construct an associative product
\[\times \colon K_{p_1}^u(X_1) \otimes K_{p_2}^u(X_2) \to K^u_{p_1 + p_2}(X_1 \times X_2)\]
with all the usual properties that the external product on usual $K$-homology has.

From this we will conclude the homotopy invariance of uniform $K$-homology.
\end{thm*}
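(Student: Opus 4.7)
The plan is to construct the external product at the level of cycles by adapting Kasparov's classical recipe to the uniform setting. Given two uniform Fredholm modules $(\mathcal{H}_i, \rho_i, F_i)$ over $X_i$ in the sense of \Spakula, I would form the graded tensor product Hilbert space $\mathcal{H}_1 \hatotimes \mathcal{H}_2$, equip it with the tensor product representation $\rho_1 \hatotimes \rho_2$ of $C_0(X_1) \otimes C_0(X_2)$ extended canonically to $C_0(X_1 \times X_2)$, and take as the Fredholm operator $F := F_1 \hatotimes 1 + 1 \hatotimes F_2$ in the even-by-even case (with the standard Connes--Skandalis modifications in the mixed-parity cases). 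This triple is the obvious candidate for the external product; the real work is to verify that it satisfies \Spakula's uniformity axioms on the product space.

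The main technical obstacle lies in the verification of the commutator axiom: one must show that $[F, (\rho_1 \hatotimes \rho_2)(g)]$ is uniformly compact for every $g \in \Lip(X_1 \times X_2)$, with a bound depending only on the Lipschitz constant of $g$. For pure tensors $g = g_1 \otimes g_2$ the commutator decomposes as $[F_1, \rho_1(g_1)] \hatotimes \rho_2(g_2) + \rho_1(g_1) \hatotimes [F_2, \rho_2(g_2)]$ and is uniformly compact by hypothesis on each factor. A general Lipschitz function on $X_1 \times X_2$ admits no such decomposition, so the strategy is to approximate $g$ by finite sums of pure tensors with uniformly controlled Lipschitz norms, using locally finite partitions of unity on the factors provided by bounded geometry, and then to exchange limit and commutator by means of operator-norm continuity of $g \mapsto \rho(g)$. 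The remaining axioms---uniform local compactness of $F^2 - 1$ and of $F - F^\ast$---follow by analogous but easier localization arguments exploiting that a ball in $X_1 \times X_2$ sits inside a product of balls in the factors.

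Once the product is known to yield a uniform Fredholm module, descent to classes, associativity, functoriality in each variable, and compatibility with the graded commutativity flip follow by the formal arguments of classical Kasparov theory, specialized to the uniform context with the caveat that every operator homotopy between candidate representatives must be realized along paths on which the uniform commutator and local compactness bounds remain uniform---this is where \Spakula's equivalence relation differs from the classical one and where some care is required.

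For the deduction of homotopy invariance, I would use the external product to relate classes on $X \times [0,1]$ to classes on $X$, namely by exhibiting a canonical class $\eta \in K_0^u([0,1])$ such that external product with $\eta$ provides a two-sided inverse to the projection-induced map $p_\ast \colon K_\ast^u(X \times [0,1]) \to K_\ast^u(X)$. Once $p_\ast$ is known to be an isomorphism, the identities $p \circ i_0 = p \circ i_1 = \id_X$ for the endpoint inclusions $i_0, i_1 \colon X \to X \times [0,1]$ give $p_\ast \circ i_{0\ast} = p_\ast \circ i_{1\ast} = \id$, and hence $i_{0\ast} = i_{1\ast}$ by cancellation. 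For any uniformly Lipschitz homotopy $H \colon X \times [0,1] \to Y$ between maps $f_0, f_1 \colon X \to Y$, the factorizations $f_t = H \circ i_t$ then yield $f_{0\ast} = H_\ast \circ i_{0\ast} = H_\ast \circ i_{1\ast} = f_{1\ast}$, which is the desired homotopy invariance.
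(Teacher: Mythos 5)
Your deduction of homotopy invariance from the external product is essentially the paper's argument and is correct. The problem is the construction of the product itself: the formula $F := F_1 \hatotimes 1 + 1 \hatotimes F_2$ does not define a Fredholm module, even before any uniformity issues arise. Since $F_1 \hatotimes 1$ and $1 \hatotimes F_2$ graded-anticommute, one computes
\[
F^2 = F_1^2 \hatotimes 1 + 1 \hatotimes F_2^2 ,
\]
so $F^2 - 1$ differs from $1 \hatotimes 1$ by the locally compact operators $(F_1^2-1)\hatotimes 1$ and $1 \hatotimes (F_2^2 - 1)$. That is, $F^2 - 1$ is approximately $1$, not locally compact, and no amount of localization can repair this. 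The commutator axiom, which you treat as the main technical obstacle, is in fact the easy part; the operator axiom is where the naive sum fails.

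The paper (following Kasparov's classical construction as in Higson--Roe, Section~9.2) instead builds the product as $T := N_1(T_1 \hatotimes 1) + N_2(1 \hatotimes T_2)$, where $N_1, N_2$ are commuting positive operators with $N_1^2 + N_2^2 = 1$. One then gets $T^2 - 1 \equiv N_1^2(T_1^2-1)\hatotimes 1 + N_2^2 \cdot 1 \hatotimes (T_2^2 - 1)$ modulo locally compact errors, which is locally compact as desired. The entire weight of the proof lies in producing such $N_1, N_2$ that additionally satisfy the \emph{uniform} versions of the compactness conditions (that $N_1$ annihilates, up to uniformly approximable error, the failure of $T_1$ to be exactly involutive, uniformly in the choice of Lipschitz test function; similarly for $N_2$; and that the $N_i$ have uniformly approximable commutators with the relevant operators). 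This is a uniform analogue of Kasparov's Technical Theorem and is the hard lemma of the section. Its proof requires the hypothesis of jointly bounded geometry on $X_1$ and $X_2$, used to choose Borel decompositions with admissible nets of uniformly bounded size, which you would also need to invoke; your proposal omits this hypothesis entirely. Without a uniform Kasparov technical lemma, the construction does not get off the ground, and this is where the bulk of the paper's work goes.
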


At last, let us get to the difference of uniform $K$-homology and usual $K$-homology and what this has to do with our definition of pseudodifferential operators. Atiyah extracted in \cite{atiyah_global_theory_elliptic_operators} the pseudolocality of operators over manifolds as the main index theoretic property. Since this property may be defined on any space, not only manifolds (and indeed, also far more general for $C^\ast$-algebras) this led to the definition of $K$-homology as we know it via Fredholm modules. In fact, under Paschke duality and since the $K$-theory of the locally compact operators vanishes we have that $K$-homology is isomorphic to the $K$-theory of the quotient of pseudolocal operators modulo locally compact operators. This corresponds to the abstraction of the following properties of pseudodifferential operators on Euclidean space: operators of order zero are pseudolocal and operators of negative order are locally compact.

Now uniform $K$-homology is the $K$-theory of the quotient of the uniformly pseudolocal operators modulo the uniformly locally compact one, i.e., the only difference is a built-in uniformity. We will show that our pseudodifferential operators of order zero are uniformly pseudolocal and that our pseudodifferential operators of negative orders are uniformly locally compact. This is due to our requirement in the definition of them that the bounds on the local symbols should be independent of the location of the normal coordinate chart in the manifold. So we see that uniform $K$-homology resembles more closely the $K$-theory of the quotient of operators of order zero modulo operators of negative order than usual $K$-homology does. Maybe this explains why we get an index theorem which is similar to the original one of Atiyah and Singer if we use uniform $K$-homology, and why an analogous theorem for usual $K$-homology of non-compact manifolds is not in sight, though of course usual $K$-homology is used in many other ways in index theory like in the Baum--Connes conjecture.

\section*{Uniform $K$-theory and duality}

So we have found a suitable homology theory for our endeavour, we now have to find its corresponding dual theory. To get an idea where to look for, we recall the compact case: here the dual theory of $K$-homology is topological $K$-theory. It consists of formal differences $[E] - [F]$ of isomorphism classes of vector bundles over $M$ and the \Poincare duality map is given in degree zero by forming the twisted operator: $[E] \cap [D] = [D_E]$. This is exactly what we need in order to extend the index theorem from operators of Dirac type to pseudodifferential operators, since twisted operators $D_E$ are of Dirac type. So to generalize to the non-compact case, we have to figure out what vector bundles are needed in order to define twisted operators. And the answer is easy: if we have an operator $D$ of Dirac type associated to a Dirac bundle $S$ of bounded geometry, a vector bundle $E$ must also have bounded geometry in order that the twisted bundle $S \otimes E$ does also possess bounded geometry. Then Roe's Index Theorem also applies to the twisted operator $D_E$.

So we need to introduce a $K$-theory on manifolds of bounded geometry that consists of vector bundles of bounded geometry. Though we could do this directly, i.e., completely analogously as for topological $K$-theory of compact spaces, we will persue another path to a definition of uniform $K$-theory: recall that we have the isomorphism $K^\ast(M) \cong K_\ast(C(M))$ between the topological $K$-theory of the compact space $M$ and the operator $K$-theory of the algebra $C(M)$ of continuous functions on $M$. This isomorphism is given in the following way: if we have a vector bundle $E$, there is a vector bundle $F$ such that $E \oplus F$ is a trivial bundle. Then we associate to $E$ the idempotent matrix with entries in $C(M)$ given by the projection matrix of the trivial bundle onto the subspace $E$. Conversely, given an idempotent matrix over $M$, its image will be a vector bundle over $M$.

Generalizing to manifolds of bounded geometry, the author had the following idea: if the entries of the idempotent matrix are functions from $C_b^\infty(M)$, i.e., all their derivatives are bounded, then the image of this matrix equipped with the induced metric and connection (where we equip the trivial bundle with a fixed metric and the flat connection) should have bounded geometry. The intuition behind this idea is that since the entries of the idempotent matrix have bounded derivatives, the subspace of the trivial bundle given by the image of that matrix shouldn't turn too fast around inside the trivial bundle, i.e., its curvature shouldn't get arbitrarily large. And in fact, this idea will be fruitful.

Since $C_b^\infty(M)$ is a local $C^\ast$-algebra and its completion is the algebra $C_u(M)$ of all bounded, uniformly continuos functions, we may therefore define uniform $K$-theory for all metric spaces as the $K$-theory of this algebra. This is especially nice since this enables us to define the cap product between uniform $K$-theory and uniform $K$-homology for all metric spaces (and not only for manifolds of bounded geometry where the idea for the definition of uniform $K$-theory comes from). Let us summarize this accomplishment of this thesis in the following theorem:

\begin{thm*}
We introduce a new version of $K$-theory, called ``uniform $K$-theory'', for all metric spaces as $K^p_u(X) := K_{-p}(C_u(X))$.

On a manifold of bounded geometry, we get the following interpretation of $K^0_u(M)$: it consists of formal differences $[E] - [F]$ of isomorphism classes (which take the metric and connection into account) of vector bundles of bounded geometry over $M$. Furthermore, \emph{every} vector bundle of bounded geometry defines a class in uniform $K$-theory.\footnote{We especially emphasized the word ``every'', since in order that $E$ defines a class in $K^0_u(M)$ we have to construct a complement bundle $F$ of bounded geometry such that $E \oplus F$ is isomorphic to the trivial bundle equipped with a fixed metric and flat connection. The crucial point here is the isomorphism and its inverse must be bounded against the occuring metrics and connections, and also all derivatives of them.}
\end{thm*}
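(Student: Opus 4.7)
The first statement is a bare definition, so all the content lies in the description of $K^0_u(M)$. The strategy is a Serre--Swan-type argument along the classical lines, but with every estimate made uniform. First I would replace $C_u(M)$ by its dense smooth subalgebra $C_b^\infty(M)$ of bounded smooth functions having all covariant derivatives bounded; on a manifold of bounded geometry this is a local $C^*$-algebra with completion $C_u(M)$, so $K^0_u(M) = K_0(C_u(M)) \cong K_0(C_b^\infty(M))$ and every class is a formal difference of idempotents $p, q \in \Mat_n(C_b^\infty(M))$ up to stable similarity.

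The passage from idempotents to bundles is straightforward: associate to $p$ the image subbundle $E_p := p(M \times \IC^n)$, equipped with the restricted Hermitian metric and the projected connection $\nabla^{E_p} = p \circ d$. Since the matrix entries of $p$ have all their derivatives uniformly bounded, the Christoffel symbols and the curvature of $\nabla^{E_p}$ can be computed as polynomials in $p$ and $dp$ and therefore inherit uniform bounds, so $E_p$ has bounded geometry. Two idempotents $p, q$ represent the same $K_0$-class iff they are stably Murray--von Neumann equivalent in $\Mat_\infty(C_b^\infty(M))$; the witnessing partial isometry has entries in $C_b^\infty$, exhibiting $E_p$ and $E_q$ as isometrically isomorphic via an isomorphism whose derivatives of all orders are uniformly bounded.

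The main work is the converse, which is also precisely the statement that every vector bundle of bounded geometry defines a class. Given $E \to M$ of bounded geometry and rank $r$, I would pick a uniformly locally finite cover of $M$ by normal coordinate balls $B_i$ of a fixed radius below the injectivity radius, together with a subordinate \emph{quadratic} partition of unity $\{\phi_i\}$ satisfying $\sum_i \phi_i^2 \equiv 1$ and all derivatives bounded uniformly in $i$ (produced from an ordinary partition of unity $\{\psi_i\}$ by dividing by $\sqrt{\sum_j \psi_j^2}$, which is smooth and bounded away from zero thanks to the uniform multiplicity of the cover). Bounded geometry of $E$ provides orthonormal local frames $\tau_i \colon E|_{B_i} \to B_i \times \IC^r$ with uniformly bounded transition maps. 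The nerve of the cover has uniformly bounded degree, so a greedy coloring partitions the index set into finitely many classes $I_1, \ldots, I_K$ such that the balls within each $I_k$ are pairwise disjoint. On each class I can assemble the maps $\phi_i \tau_i$ for $i \in I_k$ into a single smooth map $\Phi_k \colon E \to M \times \IC^r$; collecting these yields
\[\Phi := (\Phi_1, \ldots, \Phi_K) \colon E \longrightarrow M \times \IC^{Kr},\]
which is a fiberwise linear isometry by the quadratic partition of unity identity. The fiberwise orthogonal projection $p := \Phi \Phi^*$ onto its image is then an idempotent in $\Mat_{Kr}(C_b^\infty(M))$, and $E$ is isometrically isomorphic to $E_p$ via an isomorphism with uniformly bounded derivatives.

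The main obstacle is to confirm that $p$ really lies in $\Mat_{Kr}(C_b^\infty(M))$, i.e., that all its covariant derivatives are uniformly bounded on $M$. This requires threading the bounded-geometry estimates through three ingredients simultaneously: the uniform control on change-of-coordinates between overlapping normal charts (bounded geometry of $M$), the uniform control on the transitions $\tau_i \tau_j^{-1}$ (bounded geometry of $E$), and the uniform bounds on the partition $\{\phi_i\}$ expressed in those charts. Once $p$ is in hand, $F := (1-p)(M \times \IC^{Kr})$ is a bounded-geometry complement to $E$, so $[E] + [F] = [M \times \IC^{Kr}]$ in $K^0_u(M)$; this simultaneously places the class of $E$ in $K^0_u(M)$ and shows that every element of this group is represented by a formal difference $[E] - [F]$ of bounded-geometry bundles, as claimed.
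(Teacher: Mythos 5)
Your proposal is correct and follows essentially the same route as the paper: pass to the dense local $C^\ast$-algebra $C_b^\infty(M)$, build a Serre--Swan correspondence between idempotents over $C_b^\infty(M)$ and bounded-geometry subbundles of trivial bundles, and obtain the complement by coloring a uniformly locally finite cover by normal coordinate balls. Your use of a quadratic partition of unity $\sum \phi_i^2 \equiv 1$ is a small but genuine streamlining, since it makes $\Phi$ a fiberwise isometry outright and lets you verify bounded geometry of $E_p$ by directly computing the curvature and its covariant derivatives as polynomials in $p$ and its derivatives, whereas the paper instead constructs a bundle monomorphism from an ordinary partition of unity and then runs an ODE/parallel-transport argument in synchronous framings to compare the induced and original connections.
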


Note that other authors have, of course, investigated similar versions of $K$-theory: Kaad investigated in \cite{kaad} Hilbert bundles of bounded geometry over manifolds of bounded geometry (the author thanks Magnus Goffeng for pointing to that publication, but this happened in the final stage of the authors work on his thesis, i.e., he was not aware of this publication until he almost finished his thesis). Dropping the condition that the bundles must have bounded geometry, there is a general result by Morye contained in \cite{morye} having as a corollary the Serre--Swan theorem for smooth vector bundles over (possibly non-compact) smooth manifolds. If one is only interested in the last mentioned result, there is also the short note \cite{sardanashvily} by Sardanashvily.

Atiyah and Hirzebruch showed in \cite{atiyah_hirzebruch} that the Chern character induces an isomorphism $K^\ast(X) \otimes \IQ \cong H^\ast(X; \IQ)$ if $X$ is a finite CW-complex. The question whether we have something similar for uniform $K$-theory immediately arises. Looking at the definition of the Chern character via Chern--Weil theory, we see that the Chern character maps from $K^0_u(M)$ into the bounded de Rham cohomology $H^{\mathrm{ev}}_{b, \mathrm{dR}}(M)$ of $M$. So the author asked himself if the Chern character induces an isomorphism modulo torsion between the uniform $K$-theory of a manifold of bounded geometry and its bounded de Rham cohomology, and was in fact able to show it. For the proof of this we will have to introduce slightly modified versions of uniform $K$-theory and bounded de Rham cohomology for open subsets of manifolds. This modified versions will then enable us to use Mayer--Vietoris sequences and an induction over an open cover of the manifold to assemble the isomorphism. In order to do only finitely many induction steps, we will have to consider at every step infinitely many, disjoint open balls at once, which will introduce even more technical baggage into the proof. But at the end we will succeed.

The above type of Mayer--Vietoris argument turns out to be very useful since our proof of \Poincare duality will be essentially the same proof, but with bounded de Rham cohomology replaced by uniform $K$-homology.

\begin{thm*}
We introduce a type of Mayer--Vietoris argument which is suitable for proving isomorphism theorems for uniform (co-)homology theories.

We execute this Mayer--Vietoris argument to show that the Chern character induces an isomorphism $K^\ast_u(M) \barotimes \IR \cong H^\ast_{b, \mathrm{dR}}(M)$ for manifolds of bounded geometry and that the cap product is an isomorphism $K^\ast_u(M) \cong K_{m - \ast}^u(M)$ if $M$ is spin$^c$.
\end{thm*}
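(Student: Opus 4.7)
The plan is to prove both isomorphism statements by the same Mayer--Vietoris induction, differing only in the base case and in which pair of theories is being compared. First I would extend all four functors ($K^\ast_u$, $H^\ast_{b,\mathrm{dR}}$, $K^u_\ast$) to open subsets of $M$ in a way that yields long exact Mayer--Vietoris sequences for decompositions $U = V \cup W$, and so that the Chern character and the cap product with the fundamental class induce morphisms of these sequences. For $H^\ast_{b,\mathrm{dR}}$ this is done via bounded forms with appropriate support conditions; for $K^\ast_u$ via the ideal of $C_u(M)$ of functions vanishing (in a uniform sense) outside the open set; for $K^u_\ast$ by adapting \Spakula's definition to the relative setting. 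Verifying compatibility of the Chern--Weil map and of the cap product with the boundary operators of these sequences is one of the two key technical points, and requires an explicit geometric description of the boundary maps, e.g.\ via a bounded partition of unity subordinate to $\{V,W\}$, whose existence uses bounded geometry in an essential way.

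Second, bounded geometry provides a cover of $M$ by normal coordinate balls of a fixed radius $r$ smaller than $\injrad(M)$ such that every point of $M$ lies in at most $N$ of these balls, with $N$ a uniform constant depending only on $r$ and the geometry. By a greedy coloring of the intersection graph of this cover one partitions it into $N$ families $\mathcal{F}_1,\ldots,\mathcal{F}_N$ such that inside each family the balls are pairwise separated by a uniform positive distance. Setting $U_k := \bigcup_{j \le k}\bigcup \mathcal{F}_j$ one inducts on $k$, passing from $U_{k-1}$ to $U_k = U_{k-1} \cup \bigcup \mathcal{F}_k$ via the Mayer--Vietoris sequence and the five lemma. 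The crucial gain over the usual situation is that the induction terminates after $N$ steps regardless of the topology of $M$, so no transfinite constructions are required.

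Third, the base case is the verification of the isomorphism on a disjoint union of uniformly separated normal coordinate balls of fixed radius. Each such ball is uniformly biLipschitz-equivalent to the standard Euclidean ball in $\IR^m$, and the uniformity of the family lets the local computations assemble into a single isomorphism in the uniform theories. For the Chern character statement one reduces to the classical Atiyah--Hirzebruch computation on a contractible ball; for \Poincare duality one reduces to the Bott-type isomorphism on $\IR^m$ implemented by capping with the local \spinc Dirac / Dolbeault class. In both cases the essential content is that each local model problem has a known answer, and that all the local answers fit together uniformly because of bounded geometry.

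The main obstacle I anticipate is twofold. On the one hand one must ensure that the local computations on infinitely many disjoint balls really assemble into a morphism in the uniform theories rather than just a family of local morphisms; this is precisely where the uniform versions of $K$-theory and bounded de Rham cohomology (as opposed to the ordinary ones) pay off, and explains why the $\barotimes \IR$ appears in the statement instead of an ordinary tensor product. On the other hand one must verify that the Chern character and the cap product commute with the Mayer--Vietoris boundary maps: for the Chern character this is manageable once the boundary is written via partition-of-unity formulae, but for the cap product it demands a careful coarse-geometric model of the boundary map in uniform $K$-homology, analogous to what \Spakula does in the absolute case. I expect that writing down and checking this coarse boundary formula will be the single most delicate part of the proof.
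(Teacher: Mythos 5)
Your overall skeleton matches the paper's: a finite colored cover of $M$ by balls coming from bounded geometry, modified versions of the theories that admit Mayer--Vietoris sequences, a five-lemma induction over the colors, and a base case computed on a uniformly separated collection of balls. You also correctly identify the asymmetry in the base case --- for the Chern character you can collapse balls to points (uniform $K$-theory and bounded de Rham cohomology are bounded-homotopy invariant), whereas for \Poincare duality you must keep the balls because uniform $K$-homology is only invariant under bounded \emph{proper} homotopies, so the fundamental class lives in degree $m$ and the cap product with it provides the degree shift.

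There is however a genuine gap in the way you propose to set up the relative uniform $K$-theory. You suggest using the ideal $I_O \triangleleft C_u(M)$ of functions vanishing outside the open set $O$. This gives a compactly-supported-type functor: for a single geodesic ball $O$, $I_O \cong C_0(\IR^m)$ and the non-trivial $K$-group sits in degree $m$. That is incompatible with the bounded de Rham side you propose (forms that extend boundedly to a $\delta$-neighbourhood of $O$, the ``restriction'' paradigm), whose non-trivial group sits in degree $0$ on a ball. The Chern character does not shift degree, so the base case square cannot commute. The paper instead defines $K^p_{u,\mathrm{MV}}(O) := K_{-p}(C_u(O,d))$, where $O$ carries the subset metric inherited from $(M,d)$; for a ball this is $C(\overline{O})$ and the group sits in degree $0$, matching the de Rham side. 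The Mayer--Vietoris sequence is then obtained from the pullback square of \emph{restriction} maps $C_u(U\cup V,d) \to C_u(U,d)\oplus C_u(V,d) \rightrightarrows C_u(U\cap V,d)$, and the surjectivity of the restriction maps is supplied by a Tietze-type extension lemma (using the Samuel compactification), not by a short exact sequence of ideals. Relatedly, your greedy coloring only guarantees that the balls inside a single color class are uniformly separated; the Mayer--Vietoris identification of the pullback algebra with $C_u(U_K\cup U_{k+1},d)$ needs the stronger property that for each $K$ the \emph{connected components} of the partial union $U_K = U_1\cup\ldots\cup U_K$ are also uniformly separated, which the paper obtains by building the cover on the vertex set of a bounded-geometry triangulation so that two balls either intersect or are already uniformly apart.
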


Now we do have all that we need to generalize Roe's Index Theorem to pseudodifferential operators. And in fact, the actual proof of this generalization is now quite short and straightforward.

\section*{Uniform coarse indices}

Roe's Index Theorem does only hold for amenable manifolds since on non-amenable manifolds the top-dimensional bounded de Rham cohomology vanishes, i.e., we have no group anymore to accept the index classes of operators. Therefore also our generalization of Roe's Index Theorem to pseudodifferential operators does not hold for non-amenable manifolds. But we have already mentioned above that we may regard homotopy invariance as a form of an index theorem, i.e., our proof that uniform $K$-homology is homotopy invariant will be in the non-amenable case our main theorem.

We will of course elaborate in this thesis on that matter. Concretely, we will be concerned with the uniform coarse Baum--Connes conjecture which asserts that the uniform coarse assembly map $\mu_u \colon K_\ast^u(X) \to K_\ast(C_u^\ast(X))$ constructed by \Spakula is an isomorphism if $X$ is uniformly contractible and of bounded geometry. We may also formulate a version of this conjecture for countable, discrete groups equipped with a proper, left-invariant metric, and a corollary of the homotopy invariance of uniform $K$-homology will be that the conjecture for the universal cover of an aspherical manifold is equivalent to the conjecture for the fundamental group of the manifold.

Furthermore, \Spakula showed that the uniform coarse conjecture for a group is equivalent to the usual Baum--Connes conjecture for this group with certain coefficients, if the group is torsion-free. Another corollary of the homotopy invariance of uniform $K$-homology will be that we may drop the torsion-freeness in this statement. Since the Baum--Connes conjecture with coefficients is meanwhile proven for a large class of groups, we therefore get that the uniform coarse Baum--Connes conjecture does also hold for a large class of groups. Combining this with the first mentioned corollary relating the conjecture for groups to the conjecture for universal covers of aspherical manifolds, we see that for a fairly large class of manifolds the conjecture is true. So we have fulfilled our promise that homotopy invariance may be seen as a replacement for our main theorem: it establishes the uniform coarse Baum--Connes conjecture as a new index theorem (and not only in the non-amenable case, but in all cases).

Let us elaborate on the last sentence of the above paragraph. To show that we may use the uniform coarse Baum--Connes conjecture as an index theorem, we will derive an index theoretic obstruction to positive scalar curvature of spin manifolds in the $K$-theory of the uniform Roe algebra of the manifold, and we will use this to show that if the conjecture holds for a certain group, then aspherical manifolds with this fundamental group do not carry metrics of positive scalar curvature.

We have an analogous obstruction to positive scalar curvature metrics on spin manifolds in the $K$-theory of the usual Roe algebra and from this we may derive the same conclusion about non-existence of metrics of positive scalar curvature on aspherical manifolds using the usual coarse Baum--Connes conjecture. Moreover, it seems that we may also derive the analytic Novikov conjecture from the uniform coarse conjecture (though we do not prove this in this thesis explicitly). So both major implications of the coarse Baum--Connes conjecture also follow from the uniform coarse conjecture, establishing them equally important.

\section*{Short overview over each chapter}

Let us end our introduction with a short overview over each chapter explaining what we do there and what the crucial technical lemmas are.

\paragraph{Chapter \ref{chap:quasilocal_smoothing_operators}} We start with an investigation of quasilocal smoothing operators and of the smooth uniform Roe algebra, their $K$-theories and the definition of the analytic index maps. The reader should regard this chapter as an introduction to all the important notions and techniques used in this thesis: we will introduce bounded geometry and amenability, discuss the various implications and equivalent characterizations of amenability, and we will show how to define index maps on amenable manifolds using \Folner sequences. Furthermore, we will establish the important relation to the uniform Roe algebra and the index maps on it and therefore to Roe's index theorem.

\paragraph{Chapter \ref{chap:PDOs}} Here we introduce our classes of pseudodifferential operators and discuss all the usual, but crucial properties of them. Though all results in this chapter are novel since the author is the first to define this classes of pseudodifferential operators, the results are not surprising since they mimic the results already known for compact manifolds, and the proofs are also similar to the compact case (and therefore we will leave some of them out). In Section \ref{sec:uniformity_PDOs} we will then establish the important connection between our pseudodifferential operators and uniform $K$-homology: the latter relies on so-called uniformly pseudolocal and uniformly locally compact operators, and we will show that our pseudodifferential operators are such operators, if they have order $0$, resp. negative order. But the meat of this chapter is contained in its last section about functions of pseudodifferential operators: there the most of the non-trivial analysis of this thesis is done. The reason why we need to do this hard analysis is that we do not have anymore the finite propagation property of the operators $e^{itP}$ (recall that $e^{itP}$ has finite propagation if $P = D$ is an operator $D$ of Dirac type). This complicates things a lot. But instead we will prove the crucial lemma that $e^{itP}$ is a quasilocal operator. Indeed, this lemma is probably one of the most important ones in this thesis since without it any other result in the last section of this third chapter breaks down.

\paragraph{Chapter \ref{chapter:uniform_k_homology}} In this chapter we first revisit \v{S}pakula's uniform $K$-homology, which takes some time since we need many properties of it that \Spakula proved (like Paschke duality). Moreover, since the author is the first after \Spakula to work with uniform $K$-homology, the reader is probably unfamiliar with it. So we take our time to explain everything. The first important novel result in this chapter will be that elliptic pseudodifferential operators define classes in uniform $K$-homology. The proof of this requires also substantial analysis. The second half of the chapter is devoted to prove further results about uniform $K$-homology that \Spakula did not investigate: we construct analytic index maps on it, we construct the external product and from it we derive the homotopy invariance of uniform $K$-homology. It is not surprising that the construction of the external product is quite involved since this is already the case for usual $K$-homology and here we additionally have to worry about the uniformity condition all the time.

\paragraph{Chapter \ref{chap:uniform_k_theory}} This chapter introduces the completely novel uniform $K$-theory. After proving some basic properties of it, the first of two main results will be the interpretation of it on manifolds of bounded geometry, i.e., that in this case it consists of vector bundles of bounded geometry (completely analogous to the compact case, where $K$-theory consists of usual vector bundles over the space). The proof of this is quite lengthy since we always have to carry the uniformity with us, i.e., we have to do a lot of differential geometry to derive the necessary bounds on the derivatives of the curvature tensor. The last part of this chapter proves the second of the two main results in it, namely the Chern character isomorphism. This is a direct generalization of the fact that on compact spaces the Chern character induces an isomorphism modulo torsion between $K$-theory and cohomology.

\paragraph{Chapter \ref{chapter:index_theorem}} Here we assemble everything to a proof of our main theorem. We will discuss \spinc manifolds and the cap product between uniform $K$-theory and uniform $K$-homology and we will also prove \Poincare duality. Though this chapter contains the main theorem and the proof of \Poincare duality, it is quite a short and not very technical chapter. The reason is that most of the work has been already done in the previous chapters.

\paragraph{Chapter \ref{chap:uniform_coarse_indices}} In this chapter we will be concerned with the uniform coarse Baum--Conens conjecture. We will start with defining the involved assembly map from uniform $K$-homology to the $K$-theory of the uniform Roe algebra and then we will leap right into it: we will derive important properties about the uniform coarse assembly map from the homotopy invariance of uniform $K$-homology that we will have proved in this thesis. We will end this chapter with an application of the uniform theory to positive scalar curvature metrics and show by this that the uniform theory is in no way inferior to the usual theory.

\paragraph{Chapter \ref{chap:further_questions}} The last chapter is devoted to all the questions that arose out of this thesis but were left unanswered. Since both the pseudodifferential operators that we use in this thesis and the uniform $K$-theory are defined here for the first time, and since we are the first after \Spakula to work with his uniform $K$-homology, there are of course a lot questions left that one may investigate.

\chapter{Quasilocal smoothing operators}\label{chap:quasilocal_smoothing_operators}

This chapter will set the stage for most parts of the thesis. Our main goal is to define the algebra $\IU(E)$ of quasilocal smoothing operators acting on a vector bundle $E$ of bounded geometry over a manifold $M$ of bounded geometry, to define the smooth uniform Roe algebra $\C(E)$, to define analytic index maps $\ind_\tau \colon K_0(\IU(E)) \to \IR$ and on $K_0(\C(E))$, and last to identify the $K$-theory of $\C(E)$ naturally with the one of $C_u^\ast(Y)$, where $Y \subset M$ is a uniformly discrete quasi-lattice. The latter algebra is the uniform Roe algebra of $Y$.

The algebras $\IC(E) \subset \IU(E)$ consist of smoothing operators on the bundle $E$, i.e., operators $H^{-\infty}_\iota(E) \to H^\infty(E)$ between Sobolev spaces of infinite order, with an additional property controlling the behavior of the integral kernels of these operators at infinity. This last property is needed to define the index maps $\ind_\tau$ (Section \ref{sec:analytic_indices_quasiloc_smoothing}).

Our interest in this operator algebra is manifold: first of all and of the most importance is that their $K_0$-group is a receptacle for the analytic index classes of uniform elliptic operators. This was already exploited by Roe in \cite{roe_index_1}, where he proved his index theorem for operators of Dirac type over amenable manifolds, and this is also the reason why the maps $\ind_\tau \colon K_0(\IU(E)) \to \IR$ and on $K_0(\IC(E))$ that we construct in this chapter are called the analytic index maps.

Our second interest in the algebra $\IU(E)$, resp. $\IC(E)$, stems from the fact that it coincides with $\Psi \mathrm{DO}^{-\infty}(E)$, resp. $\Psi \mathrm{DO}_u^{-\infty}(E)$, the algebra of all pseudodifferential operators of order $-\infty$ (recall that we will give two slightly different definitions of pseudodifferential operators). The importance of this relies on index theory: an elliptic pseudodifferential operator has a parametrix, i.e., an inverse modulo $\Psi \mathrm{DO}_?^{-\infty}(E)$\footnote{The subscript ``?'' states that the statement applies to both versions of pseudodifferential operators.}. Therefore, it has an abstract index class in $K_0(\Psi \mathrm{DO}_?^{-\infty}(E))$, and for an operator of Dirac type this class coincides under the identification $\Psi \mathrm{DO}^{-\infty}(E) = \IU(E)$ with the analytic index class that Roe constructed.

Thirdly, our interest in $\C(E)$ relies also on the fact that its $K$-theory coincides with the one of the uniform Roe algebra $C_u^\ast(M)$ of the manifold $M$. This is a uniform analogue of the usual Roe algebra $C^\ast(M)$ which is of great importance in index theory and coarse topology since it is the receptacle for coarse indices of elliptic operators and part of the coarse Baum--Connes conjecture. So the identification of the $K$-theory of $\C(E)$ with the one of $C_u^\ast(M)$ allows us to use methods from coarse topology since $K_\ast(C_u^\ast(M))$ is, as its non-uniform counterpart, a coarse invariant of the space, i.e., $K_\ast(C_u^\ast(M)) = K_\ast(C_u^\ast(Y))$, where $Y \subset M$ is a uniformly discrete quasi-lattice. We will treat this interplay with coarse topology in the second part of this thesis, where we will also deal with the uniform analogue of the coarse Baum--Connes conjecture.

While doing all the above we will have to revisit important concepts that will be used throughout this thesis. First of all, we will have to define the notion of bounded geometry for Riemannian manifolds and for Hermitian vector bundles over them. Bounded geometry for manifolds means that the curvature tensor and all its covariant derivatives are bounded and that the injectivity radius is uniformly positive. This property is crucial for having nice equivalences between local and global definitions of the same things, e.g., the Sobolev spaces. Furthermore, the definition of pseudodifferential operators is not possible without bounded geometry, since there is just a local definition and in order to show that it is independent of the chosen coordinate charts and partitions of unity, we need bounded geometry. And last, bounded geometry ensures the existence of a uniformly discrete quasi-lattice in the manifold, i.e., it makes the application of coarse topological methods possible.

The second crucial concept that we will recall is amenability. It is the property that will give us our index maps into $\IR$ and there is no way round it as a counterexample of Block and Weinberger from \cite{block_weinberger_1} shows (i.e., they show that Roe's Index Theorem can not hold for non-amenable manifolds). This especially means that the index theorem for pseudodiffential operators that we develop in this first part of the thesis does only hold for amenable manifolds. So this is quite an important property and therefore it is not surprising that there are a lot of equivalent characterizations of it. We will state some of them, since we will need them at different places.

The last thing that we will revisit are the uniform Roe algebras that will play a major role in the second part of the thesis. But we will of course also use them in the first part, e.g., we will show in a later chapter that the closure with respect to the operator norm of the pseudodifferential operators of order $-\infty$ (at least, of one version of them) equals exactly the uniform Roe algebra. Furthermore, we will show that the analytic index maps that we may define on $K_0(\C(E))$ and on $K_0(C_u^\ast(Y))$ coincide under the canonical identification of these two $K$-groups and therefore we will draw a line from the smooth version of the index maps to the coarse versions.

So all in all this chapter will be concerned with revisiting important concepts that we will use later. Surely most, if not all, of the statements in this chapter are already known to the experts. But since the author could not find them all in the literature and for the convenience of the reader, we will give proofs for most of the results.

\section{Bounded geometry}

In this section we will revisit the important property of bounded geometry for Riemannian manifolds and for Hermitian vector bundles. Since the goal of this first part of the thesis is to generalize Roe's Index Theorem to pseudodifferential operators on manifolds without boundary,\footnote{Recall that boundary conditions are quite a delicate matter in index theory. Therefore we will save the investigation of these concepts for the future.} we will not bother ourselves with the definition of bounded geometry for manifolds with boundary.\footnote{The interested reader may consult \cite{schick_bounded_geometry_boundary} where Schick investigated the notion of bounded geometry for manifolds with boundary.}

Our definition of bounded geometry will be a global one. But the main focus of this section will be to discuss the important equivalent local characterizations of it that are crucially needed for, e.g., the definition of pseudodifferential operators.

\subsection*{Manifolds of bounded geometry}

\begin{defn}[Bounded geometry]
We will say that a Riemannian manifold $M$ has \emph{bounded geometry}, if
\begin{itemize}
\item the curvature tensor and all its derivatives are bounded, i.e., $\| \nabla^k \Rm (x) \| < C_k$ for all $x \in M$ and $k \in \IN_0$, and
\item the injectivity radius is uniformly positive, i.e., $\injrad_M(x) > \varepsilon > 0$ for all points $x \in M$ and for a fixed $\varepsilon > 0$.
\end{itemize}
\end{defn}

For a $1$-dimensional manifold the first requirement is of course vacuous, but not the second (so the manifold $(0,1)$ with the induced metric from $\IR$ is not of bounded geometry). It follows that the only connected, $1$-dimensional manifolds of bounded geometry are $\IR$ and $S^1$.

If $M$ has bounded geometry, then it must be complete (since the injectivity radius is uniformly positive). Furthermore, if $M$ is non-compact and of bounded geometry, then it must have infinite volume. And from the bound $\|\Rm(x)\| < C_0$ for all $x \in M$ we conclude that the sectional curvatures of $M$ are pinched between $-C_0$ and $C_0$.

\begin{examples}
There are plenty of examples of manifolds of bounded geometry:
\begin{itemize}
\item flat manifolds with a positive injectivity radius,
\item compact manifolds and coverings of them (equipped with the pull-back metric); more generally, coverings of manifolds of bounded geometry,
\item homogeneous manifolds with an invariant metric (this includes especially Euclidean and hyperbolic space) and
\item leafs in a foliation of a compact manifold (this is proved in \cite[lemma on page 91 and the paragraph thereafter]{greene}).
\end{itemize}
It is clear that products of manifolds of bounded geometry equipped with the product metric again have bounded geometry. Furthermore, perturbations of the metric in a compact region preserve bounded geometry (so especially connected sums of manifolds of bounded geometry are again of bounded geometry regardless of the metric that we put onto the region where we glued the manifolds).
\end{examples}

Greene also proved that there are no obstructions against admitting a metric of bounded geometry:

\begin{thm}[{\cite[Theorem 2']{greene}}]
Let $M$ be a smooth manifold without boundary. Then there exists on $M$ a metric of bounded geometry.
\end{thm}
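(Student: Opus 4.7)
The plan is to build $g$ by pulling back the standard Euclidean metric under a rigid atlas of charts of uniform shape and averaging with a subordinate partition of unity whose $C^k$-norms are controlled uniformly in the chart index.

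First I would produce a locally finite atlas $\{(U_\alpha, \varphi_\alpha)\}_{\alpha \in A}$ together with a subordinate partition of unity $\{\psi_\alpha\}$ satisfying: (i) each $\varphi_\alpha$ is a diffeomorphism onto the unit ball $B_1(0) \subset \IR^m$; (ii) the multiplicity $\sup_{x \in M} \#\{\alpha : x \in U_\alpha\}$ is finite; (iii) for every $k$ there is $C_k$ with $\|\psi_\alpha \circ \varphi_\alpha^{-1}\|_{C^k(B_1(0))} \le C_k$ uniformly in $\alpha$; and (iv) the transition maps $\varphi_\beta \circ \varphi_\alpha^{-1}$, where defined, satisfy uniform $C^k$ bounds independent of $\alpha, \beta$. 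Then set
\[ g := \sum_\alpha \psi_\alpha \cdot \varphi_\alpha^\ast g_{\mathrm{eucl}}. \]
In every chart $\varphi_\alpha$ only boundedly many summands contribute, each a smooth positive-semidefinite symmetric $2$-tensor with $C^k$-norms bounded by (iii) and (iv). Hence $g$ is a smooth Riemannian metric whose curvature tensor and all its covariant derivatives are bounded uniformly on $M$. Moreover, at any $x \in M$ at least one value $\psi_\alpha(x)$ is bounded below by a uniform positive constant (since boundedly many nonnegative numbers sum to $1$), so in the chart $\varphi_\alpha$ the metric $g$ is uniformly bi-Lipschitz to $g_{\mathrm{eucl}}$ on a definite neighborhood of $\varphi_\alpha(x)$. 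Combined with the curvature bound just obtained, standard injectivity radius comparison (e.g.\ Klingenberg) gives $\injrad_M(x) \ge \varepsilon$ uniformly, completing the verification that $(M, g)$ has bounded geometry.

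The technical heart of the argument, and the step I expect to be the main obstacle, is the construction of the rigid atlas satisfying (i)--(iv). One natural route is via a smooth triangulation of $M$ (Whitehead): take $U_\alpha$ to be a tubular thickening of the open star of the vertex $v_\alpha$ and identify it with $B_1(0)$ through a chart drawn from a fixed, finite library of simplicial models; the partition function $\psi_\alpha$ is then obtained by transplanting a single model bump function, so (iii) is automatic, (ii) is built into local finiteness of simplicial complexes, and (iv) reduces to uniform control of finitely many model transitions. An alternative is to fix a proper Whitney embedding $\iota \colon M \hookrightarrow \IR^N$ and take the $U_\alpha$ as the pullbacks under $\iota$ of a Euclidean lattice cover of a tubular neighborhood, reducing (iv) to uniform bounds on the second fundamental form of $\iota$ and its derivatives, which can be arranged by a preliminary deformation of $\iota$. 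Either way, once the atlas is in hand the remaining verification proceeds mechanically as in the previous paragraph.
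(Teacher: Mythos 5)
The paper gives no proof of this result, citing Greene, so the only question is whether your argument stands on its own. The reduction you set up is sound: granted an atlas with uniformly bounded overlap multiplicity, uniformly controlled partition functions, and uniformly controlled transition maps (and their inverses), the averaged pulled-back Euclidean metric is uniformly bi-Lipschitz to $g_{\mathrm{eucl}}$ with all coefficient derivatives bounded in each chart, and bounded geometry of $g$ follows along the lines you sketch. But the ``technical heart'' you flag is a genuine gap, and neither route you offer closes it.

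A Whitehead triangulation need not have bounded vertex degree: a star containing $N$ top-dimensional simplices must be crammed into $B_1(0)$, so chart distortion grows without bound in $N$, there is no ``fixed, finite library of simplicial models,'' and conditions (iii) and (iv) both fail. Producing a bounded-combinatorics triangulation from scratch is essentially equivalent to the theorem itself --- the paper's Theorem \ref{thm:triangulation_bounded_geometry} (Attie) produces such triangulations only once a bounded geometry metric is already in hand. The Whitney route fares no better: a generic proper embedding has second fundamental form and tubular reach degenerating at infinity, and your assertion that one can repair this ``by a preliminary deformation of $\iota$'' is unjustified and is the whole content of the theorem in disguise, since an embedding with uniformly bounded second fundamental form and all derivatives already induces a metric of bounded geometry. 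You have correctly identified the shape of a proof, but the construction that would make it one is missing.
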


We now come to the important equivalent characterizations of bounded geometry using local coordinates. The first one that we will state uses the Christoffel symbols, resp. the metric coefficients of $M$. As a reference for the first characterization via Christoffel symbols one may use, e.g., \cite[Proposition 2.4]{roe_index_1}, and for the second characterization via the metric coefficients, e.g., \cite[Theorem 2.4]{eichhorn_banach_manifold_structure}.

\begin{lem}\label{lem:bounded_geometry_christoffel_symbols}
A Riemannian manifold $(M^m, g)$ has bounded geometry if and only if there is a ball $B \subset \IR^m$ centered at $0 \in \IR^m$ such that
\begin{itemize}
\item $B$ is the domain of normal coordinates at all points $x \in M$ and
\item the Christoffel symbols $\Gamma_{ij}^k(y)$ of $M$ (considered as functions on $B$) and all their derivatives are bounded independently of $x \in M$, $y \in \exp_x(B)$ and $i,j,k$.
\end{itemize}
The second bullet point can be equivalently changed to
\begin{itemize}
\item the metric coefficients $g_{ij}(y)$ (considered as functions on $B$) and all their derivatives are bounded independently of $x \in M$ and $y \in \exp_x(B)$.
\end{itemize}
\end{lem}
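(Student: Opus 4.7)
The plan is to prove the equivalence by passing through normal coordinates and using the Jacobi equation to translate curvature information into coordinate data on a fixed Euclidean ball. The ``only if'' direction extracts uniform coordinate bounds from the global geometric hypotheses, while the ``if'' direction reconstructs the global curvature bounds from the coordinate data.

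For the forward direction, the uniform lower bound $\injrad_M(x) > \varepsilon$ allows me to take $B$ to be the ball of radius $\varepsilon$ in $\IR^m$ (or smaller), so that $\exp_x \colon B \to M$ is a diffeomorphism for every $x \in M$ and hence $B$ is the common domain of normal coordinates at all base points. To obtain uniform bounds on $g_{ij}$ in these coordinates, I would use the classical fact that along a radial geodesic $\gamma(t) = \exp_x(tv)$ the metric coefficients are determined by Jacobi fields $J$ satisfying $\nabla_t^2 J + R(J, \dot\gamma)\dot\gamma = 0$ with prescribed initial data at $t=0$. Since $\|\nabla^k \Rm\|$ is uniformly bounded on $M$, the coefficients of this ODE and their derivatives with respect to the parameter $v$ are uniformly bounded on $B$, and Gronwall-type estimates applied inductively yield uniform bounds on $J$ and all its derivatives in $t$ and $v$. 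These translate into uniform bounds on $g_{ij}$ and all its coordinate derivatives on $B$. Passage from $g_{ij}$ to $\Gamma_{ij}^k$ is by the standard formula $\Gamma_{ij}^k = \tfrac{1}{2} g^{kl}(\partial_i g_{jl} + \partial_j g_{il} - \partial_l g_{ij})$; the needed uniform lower bound on $\det(g_{ij})$, which controls the inverse $g^{kl}$, is obtained by shrinking $B$ if necessary, using that $g_{ij}(0) = \delta_{ij}$ in normal coordinates.

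For the converse, existence of normal coordinates on a fixed ball $B$ at every $x \in M$ immediately gives $\injrad_M(x) \geq \operatorname{radius}(B) > 0$. The components of the Riemann curvature tensor in a coordinate chart are polynomial expressions in the $\Gamma_{ij}^k$ and their first derivatives (equivalently in the $g_{ij}$, $g^{ij}$ and their first two derivatives), and analogously for $\nabla^k \Rm$. Uniform coordinate bounds on the local data therefore yield uniform bounds on the components of $\nabla^k \Rm$ in the normal frame, and since this frame is orthonormal at the base point $x$, the component bounds at $x$ coincide with the tensorial norms $\|\nabla^k \Rm(x)\|$. The equivalence between the two bullet-point conditions is by the same dictionary: metric bounds give Christoffel bounds via the above formula (together with the $\det$ lower bound), and Christoffel bounds recover metric bounds by integrating the Jacobi equation along radial geodesics, whose coefficient matrix can be rewritten in terms of the $\Gamma_{ij}^k$.

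The main technical obstacle will be the inductive Gronwall estimates on arbitrarily high derivatives of the Jacobi fields: boundedness of $J$ itself is routine, but simultaneously controlling all derivatives of $J$ in the parameter $v$ requires repeatedly differentiating the Jacobi equation and carefully tracking the commutators between these derivatives and $\nabla_t$, expressed in terms of covariant derivatives of the curvature tensor. Once this ODE step is set up, the remaining work is essentially polynomial manipulation and chain-rule bookkeeping.
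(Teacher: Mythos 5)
The paper does not prove this lemma; it simply cites \cite[Proposition 2.4]{roe_index_1} for the Christoffel characterization and \cite[Theorem 2.4]{eichhorn_banach_manifold_structure} for the metric-coefficient characterization. Your sketch is essentially the standard proof that appears in those references: the forward direction pulls back the metric via $\exp_x$ and controls it using Jacobi fields, with the inductive Gronwall estimates handling higher derivatives; the converse reads off $\|\nabla^k\Rm(x)\|$ from coordinate components at the origin of each normal chart, where $g_{ij}(0)=\delta_{ij}$ makes the coordinate frame orthonormal. You correctly flag the genuine technical work (differentiating the Jacobi equation in the parameter $v$ while commuting through $\nabla_t$, and shrinking $B$ to keep $\det(g_{ij})$ uniformly bounded below).

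One small remark: for the Christoffel $\to$ metric direction of the equivalence of the two bullet points, passing back through the Jacobi equation is more indirect than necessary. Metric compatibility $\partial_i g_{jk} = \Gamma^l_{ij}g_{lk} + \Gamma^l_{ik}g_{jl}$ already gives a first-order linear ODE for $g_{jk}$ along each radial line emanating from the origin, with initial data $g_{jk}(0)=\delta_{jk}$ and coefficient matrix built from $\Gamma$; Gronwall applied to this system (and its derivatives) yields the required bounds on $g_{jk}$ more cheaply than the second-order Jacobi system. Your route still works but carries redundant machinery. Apart from that, the proposal is sound and aligned with the cited literature.
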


The second equivalent characterization of bounded geometry in local coordinates is via the transition functions between overlapping normal coordinate charts. This is quite an important characterization since it will allow us to show that certain local definitions (like the one of pseudodifferential operators) are independent of the chosen coordinates.

\begin{lem}[{\cite[Appendix A1.1]{shubin}}]\label{lem:transition_functions_uniformly_bounded}
Let the injectivity radius of $M$ be positive.

Then the curvature tensor of $M$ and all its derivatives are bounded if and only if for any $0 < r < \injrad_M$ all the transition functions between overlapping normal coordinate charts of radius $r$ are uniformly bounded, as are all their derivatives (i.e., the bounds can be chosen to be the same for all transition functions).
\end{lem}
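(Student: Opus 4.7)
The plan is to argue both implications via Lemma \ref{lem:bounded_geometry_christoffel_symbols}, using the characterization of bounded geometry in terms of uniform bounds on Christoffel symbols (or metric coefficients) in normal coordinate charts as a bridge between the curvature condition and the transition function condition.

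For the direction ($\Rightarrow$), assume bounded curvature (with all derivatives) and positive injectivity radius. Writing $\phi_{xy} = \exp_y^{-1} \circ \exp_x$, I would control the derivatives of $\phi_{xy}$ by controlling $\exp_x$ and $\exp_y^{-1}$ separately via Jacobi field estimates. The differential $d\exp_x|_v$ acts on $w$ as the value at $t = 1$ of the Jacobi field along $\gamma(t) = \exp_x(tv)$ with $J(0) = 0$, $J'(0) = w$, governed by $J'' + R(\gamma', J)\gamma' = 0$. Higher derivatives of $\exp_x$ with respect to $v$ obey analogous ODE systems whose coefficients are iterated covariant derivatives of $R$. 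Bounded curvature and bounded covariant derivatives, together with Gronwall-type estimates on the finite interval $[0, r]$, yield uniform bounds on all derivatives of $\exp_x$. Since $r < \injrad_M$, no conjugate points occur and a second Jacobi field estimate gives a uniform lower bound on $\|d\exp_y\|$; a uniform version of the inverse function theorem then provides uniform bounds on all derivatives of $\exp_y^{-1}$, and composition yields the bounds on $\phi_{xy}$.

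For the direction ($\Leftarrow$), assume uniform bounds on all derivatives of transitions between overlapping normal coordinate charts of radius $r$. I would derive uniform bounds on the metric coefficients $g^{(x)}_{ij}$ in each normal chart, after which Lemma \ref{lem:bounded_geometry_christoffel_symbols} completes the argument. The key computational tool is the transformation law
\[
g^{(x)}_{ij}(v) \;=\; \sum_{k,l} g^{(y)}_{kl}\!\bigl(\phi_{xy}(v)\bigr) \, \partial_i \phi^k_{xy}(v) \, \partial_j \phi^l_{xy}(v),
\]
evaluated at the distinguished point $v_y := \exp_x^{-1}(y)$. Here $\phi_{xy}(v_y) = 0$, and the normal-coordinate identity $g^{(y)}_{kl}(0) = \delta_{kl}$ collapses the right-hand side to $\sum_k \partial_i \phi^k_{xy}(v_y) \partial_j \phi^k_{xy}(v_y)$, giving a uniform pointwise bound on $g^{(x)}_{ij}$ throughout every chart as $y$ varies. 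Differentiating the transition formula at $v_y$ and using the further normal-coordinate identity $\partial g^{(y)}(0) = 0$ kills the term that would introduce $\partial g^{(y)}(0)$, producing uniform bounds on $\partial g^{(x)}_{ij}(v_y)$ purely in terms of first and second derivatives of $\phi_{xy}$. Iteratively, each higher-order derivative of $g^{(x)}$ at $v_y$ is controlled by an inductive use of the same identity: at each order, the top-order jet of $g^{(y)}$ at $0$ appears coupled only to low-order transition derivatives and to already-bounded lower-order jets of $g^{(y)}$, which allows one to solve for it.

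The main technical obstacle is the reverse direction, specifically the inductive step for the higher-order derivatives. The higher Taylor coefficients of $g^{(y)}$ at the origin encode precisely the covariant derivatives of the curvature that we wish to bound, so the induction must carefully exploit the interplay between the Riemann curvature formula at a general point and the transition identity to extract usable bounds from this apparent self-reference. The positive injectivity radius is used crucially here, since it guarantees that every point of $M$ is simultaneously the centre of its own normal coordinate chart of radius $r$, which is what makes the resulting estimates uniform in the basepoint.
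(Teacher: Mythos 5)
The paper does not supply its own proof of this lemma: it is cited verbatim from Shubin's Appendix~A1.1, with a further pointer to Ivanov's MathOverflow answer for a direct argument in the reverse direction (transition bounds $\Rightarrow$ metric bounds). So your proof is being judged against the standard arguments rather than against any in-paper proof.

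Your sketch of the forward direction is in line with the standard approach: Jacobi field / Gronwall estimates bound $\exp_x$, absence of conjugate points below the injectivity radius together with a quantitative inverse function theorem bounds $\exp_y^{-1}$, and composition gives the transition bounds. No objection there.

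The reverse direction, however, has a genuine gap, precisely at the place you flag as the ``main technical obstacle'' and then leave unresolved. The transformation law evaluated at $v = v_y$ gives, at order $k \geq 2$,
\[
\partial^k g^{(x)}_{ij}(v_y)
\;=\;
L_{xy}\!\bigl[\partial^k g^{(y)}(0)\bigr] + B_{xy},
\]
where $L_{xy}$ is built from the (bounded, invertible) Jacobian of $\phi_{xy}$ at $v_y$ and $B_{xy}$ is bounded by induction and the transition bounds. But this is a single relation between the unknown $\partial^k g^{(y)}(0)$ and the unknown $\partial^k g^{(x)}(v_y)$. Setting $x = y$ makes $\phi_{xy} = \mathrm{id}$ and $v_y = 0$, and the relation collapses to a tautology, so it places no constraint at all on $\partial^k g^{(y)}(0)$. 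Unlike the $k = 0$ and $k = 1$ cases, there is no normal-coordinate cancellation at order $\geq 2$ ($\partial^2 g^{(y)}(0)$ encodes curvature and is generically nonzero), so the top-order jet of $g^{(y)}$ at the origin does not drop out, and one cannot ``solve for it'' from this identity alone. Your statement that ``the induction must carefully exploit the interplay $\ldots$ to extract usable bounds from this apparent self-reference'' acknowledges the problem but does not explain how to break it; as written, the inductive step simply does not close. One needs a further idea — for example exploiting the dependence of $\phi_{xy}$ on the basepoints (not only on $v$), or the Gauss-lemma / radial-gauge constraints on the Taylor coefficients of $g^{(y)}$ at $0$, as in Ivanov's argument — and that idea is the real content of this direction.
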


A direct proof that the uniform boundedness of the transition functions and all their derivatives gives a uniform bound for the metric coefficients and all their derivatives was given in \cite{MO_characterization_bounded_geometry} by Ivanov.

The last fact which we will need about manifolds of bounded geometry is the existence of ``nice'' coordinate covers and corresponding partitions of unity. A proof may be found in, e.g., \cite[Appendix A1.1]{shubin} (Shubin addresses the first statement about the existence of ``nice'' covers actually to the paper \cite{gromov_curvature_diameter_betti_numbers} of Gromov).

\begin{lem}\label{lem:nice_coverings_partitions_of_unity}
Let $M$ be a manifold of bounded geometry.

For every $0 < \varepsilon < \tfrac{\injrad_M}{3}$ there exists a countable covering of $M$ by balls $B_\varepsilon(x_i)$, i.e., $M = \bigcup B_\varepsilon(x_i)$, with the properties
\begin{itemize}
\item the midpoints $x_i \in M$ of the balls form a \emph{uniformly discrete set} in $M$ (concretely, $d(x_i, x_j) \ge \varepsilon$ for all $i \not= j$) and
\item the balls $B_{2\varepsilon}(x_i)$ with the double radius and the same centers form a \emph{uniformly locally finite cover} of $M$ (i.e., there is a bound $C_\varepsilon$ such that every $x \in M$ lies in at most $C_\varepsilon$ of the balls $B_{2\varepsilon}(x_i)$).
\end{itemize}

Furthermore, there exists a subordinate partition of unity $1 = \sum_i \varphi_i$, i.e., we have $\supp \varphi_i \subset B_{2\varepsilon}(x_i)$, such that in normal coordinates the functions $\varphi_i$ and all their derivatives are uniformly bounded (i.e., the bounds do not depend on $i$).
\end{lem}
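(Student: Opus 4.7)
The plan is a standard packing-and-bump construction. First, I would produce a maximal $\varepsilon$-separated subset $\{x_i\}_{i\in I}\subseteq M$ (apply Zorn's lemma, or argue greedily using a countable basis), so that $d(x_i, x_j)\ge \varepsilon$ whenever $i\ne j$. Maximality forces every $p\in M$ to lie within distance $\varepsilon$ of some $x_i$, which yields the cover $M=\bigcup_i B_\varepsilon(x_i)$; countability of $I$ follows from second countability of $M$ together with the $\varepsilon$-separation.

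Next, I would establish the uniform local finiteness of $\{B_{2\varepsilon}(x_i)\}$ by a volume-packing argument. The balls $B_{\varepsilon/2}(x_i)$ are pairwise disjoint by separation, and bounded geometry (together with $\varepsilon < \injrad_M/3$) yields two-sided volume comparison: there exist constants $0 < v_- \le v_+$, depending only on $\varepsilon$ and the curvature bounds, with $v_-\le \vol B_{\varepsilon/2}(y)$ and $\vol B_{5\varepsilon/2}(y)\le v_+$ for every $y\in M$. If $x_{i_1}, \ldots, x_{i_N}$ all lie in $B_{2\varepsilon}(p)$, then the disjoint balls $B_{\varepsilon/2}(x_{i_k})$ are contained in $B_{5\varepsilon/2}(p)$, which forces $Nv_-\le v_+$, i.e., $N\le N_{\max}:=v_+/v_-$.

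For the partition of unity I would fix, once and for all, a single model cutoff $\psi\in C^\infty([0,\infty),[0,1])$ with $\psi\equiv 1$ on $[0,\varepsilon]$ and $\supp\psi\subseteq [0, 2\varepsilon)$, and transport it to each center by setting
\[
\tilde\varphi_i(y) := \psi\bigl(|\exp_{x_i}^{-1}(y)|\bigr)
\]
for $y\in B_{3\varepsilon}(x_i)$ and extending by zero. The covering property gives $S := \sum_i \tilde\varphi_i \ge 1$ pointwise (any $y$ lies in some $B_\varepsilon(x_j)$ on which $\tilde\varphi_j\equiv 1$), while uniform local finiteness gives $S\le N_{\max}$ with at most $N_{\max}$ summands nonzero at each point. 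Setting $\varphi_i := \tilde\varphi_i/S$ then produces a partition of unity with $\supp\varphi_i\subseteq B_{2\varepsilon}(x_i)$.

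The hard part will be the uniform bounds on derivatives of the $\varphi_i$ in normal coordinates. Each $\tilde\varphi_i$ is trivially smooth in its own normal chart at $x_i$, being $\psi(|\cdot|)$ with $C^k$ norms independent of $i$; but to get bounds in the normal chart at another center $x_j$ meeting $\supp\tilde\varphi_i$ I must invoke Lemma \ref{lem:transition_functions_uniformly_bounded}, which says precisely that the change-of-coordinates between overlapping normal charts of radius less than $\injrad_M$ is uniformly $C^k$-bounded in all orders. Composition then transfers the uniform bounds on $\psi(|\cdot|)$ to $\tilde\varphi_i$ in every overlapping normal chart; summing at most $N_{\max}$ such bumps gives uniform bounds on $S$ and its derivatives; and the pointwise lower bound $S\ge 1$ transfers these to $\varphi_i = \tilde\varphi_i/S$. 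Without bounded geometry the derivatives of the coordinate changes could be arbitrarily large and the derivatives of $\varphi_i$ would blow up, so Lemma \ref{lem:transition_functions_uniformly_bounded} is exactly the technical ingredient that rescues uniformity.
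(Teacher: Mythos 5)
Your proof is correct. The paper itself gives no proof of this lemma — it only cites Shubin's Appendix A1.1 and Gromov — and your packing-and-bump construction (maximal $\varepsilon$-separated set, volume-comparison bound on multiplicity, model cutoff pushed to each center by $\exp_{x_i}^{-1}$, normalization, and transfer of uniform $C^k$ bounds between overlapping normal charts via Lemma \ref{lem:transition_functions_uniformly_bounded}) is essentially the standard argument found in those references.
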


\subsection*{Vector bundles of bounded geometry}

Now we come to the notion of bounded geometry for Hermitian vector bundles and its equivalent local characterizations. We restrict ourselves to complex vector bundles since later we will be working with complex $K$-theory, i.e., in order to translate uniform $K$-theory that we will investigate in Chapter \ref{chap:uniform_k_theory} to suitable isomorphism classes of vector bundles, we will have to use complex vector bundles. This is completely analogous to the case of compact spaces and usual topological $K$-theory. But note that everything in this section also applies to real vector bundles of bounded geometry.\footnote{In fact, it should be possible to treat everything in this thesis in the ``real'' (or ``Real'') setting.}

\begin{defn}[Vector bundles of bounded geometry]
Let $M$ be a Riemannian manifold and let $E \to M$ be a complex vector bundle equipped with a Hermitian metric and compatible connection. Then we say that \emph{$E$ has bounded geometry}, if the curvature tensor of $E$ and all its derivatives are bounded.
\end{defn}

\begin{examples}
As for manifolds, there are of course plenty of important examples of vector bundles of bounded geometry:
\begin{itemize}
\item flat bundles (so especially trivial bundles),
\item every bundle over a compact manifold,
\item if $\widetilde{M}$ is a covering of $M$, then the pull-back of a vector bundle of bounded geometry from $M$ to $\widetilde{M}$ has bounded geometry (where $\widetilde{M}$ is equipped with the pull-back metric), and
\item if $M$ has bounded geometry, then the tangent bundle $T M$ of $M$ is a real vector bundle of bounded geometry.
\end{itemize}
Furthermore, if $E$ and $F$ are two vector bundles of bounded geometry, then the dual bundle $E^\ast$, the direct sum $E \oplus F$, the tensor product $E \otimes F$ (and so especially also the homomorphism bundle $\Hom(E, F) = F \otimes E^\ast$) and all exterior powers $\Lambda^l E$ are also of bounded geometry. If $E$ is defined over $M$ and $F$ over $N$, then their external tensor product\footnote{The fiber of $E \boxtimes F$ over the point $(x,y) \in M \times N$ is given by $E_x \otimes F_y$.} $E \boxtimes F$ over $M \times N$ is also of bounded geometry.
\end{examples}

Analogously as for manifolds, there are no obstructions against equipping a complex vector bundle with a Hermitian metric and a compatible connection such that it becomes of bounded geometry. The proof (i.e., the construction of the metric and the connection) is done in ``nice'' local coordinates and partition of unity of $M$ and we have to use the equivalent local characterization of bounded geometry for vector bundles from Lemma \ref{lem:equiv_characterizations_bounded_geom_bundles}.

\begin{lem}
Let $M$ be a manifold of bounded geometry and let $E \to M$ be a complex vector bundle over $M$. Then we may equip $E$ with a Hermitian metric and a compatible connection of bounded geometry.
\end{lem}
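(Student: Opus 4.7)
The plan is to build $h$ and $\nabla$ locally from trivial data, glue via a partition of unity, and extract the uniform estimates from the bounded geometry of $M$.

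First, invoke Lemma \ref{lem:nice_coverings_partitions_of_unity} to obtain a uniformly locally finite cover $\{B_{2\varepsilon}(x_i)\}_i$ of $M$ by normal coordinate balls together with a subordinate partition of unity $\{\varphi_i\}$ whose derivatives in normal coordinates are uniformly bounded in $i$. Each such ball is contractible, so choose smooth trivializations $\tau_i \colon E|_{B_{2\varepsilon}(x_i)} \to B_{2\varepsilon}(x_i) \times \IC^N$ and set $h_i := \tau_i^\ast h_0$, where $h_0$ is the standard Hermitian metric on $\IC^N$. Define the global Hermitian metric by
\[ h \;:=\; \sum_i \varphi_i\, h_i, \]
which is well-defined and positive-definite by local finiteness combined with $\sum_i \varphi_i \equiv 1$. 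For the connection, let $\nabla^{(i)}$ be the pull-back under $\tau_i$ of the flat connection on $B_{2\varepsilon}(x_i) \times \IC^N$; add an explicit $\mathfrak{u}(E,h)$-valued $1$-form on $B_{2\varepsilon}(x_i)$ to obtain $\widetilde{\nabla}^{(i)}$ that is $h$-compatible on $B_{2\varepsilon}(x_i)$, and set $\nabla := \sum_i \varphi_i \widetilde{\nabla}^{(i)}$; since $\sum_i \varphi_i \equiv 1$ this is a globally defined connection on $E$, and by construction $h$-compatible.

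The only real task is to prove that the pair $(h,\nabla)$ has bounded geometry. By the equivalent local characterization of bounded geometry for vector bundles (Lemma \ref{lem:equiv_characterizations_bounded_geom_bundles} below), it suffices to bound, together with all their derivatives, the matrix entries of $h$ and the Christoffel symbols of $\nabla$ in normal coordinates on each $B_{2\varepsilon}(x_k)$ expressed in the frame provided by $\tau_k$. There these objects are finite sums of the form
\[ H(z) = \sum_i \varphi_i(z)\, g_{ik}(z)^\ast g_{ik}(z), \qquad g_{ik} := \tau_i \circ \tau_k^{-1}, \]
together with analogous formulas for the connection coefficients involving derivatives of $g_{ik}$ and $\varphi_i$. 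The derivatives of $\varphi_i$ are uniformly controlled by Lemma \ref{lem:nice_coverings_partitions_of_unity}, and at any point only a uniformly bounded number of indices contribute.

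The main obstacle is therefore to ensure that the transition cocycle $(g_{ik})$ has all derivatives uniformly bounded, independently of the pair $(i,k)$. This is where the bounded geometry of $M$ enters essentially: Lemmas \ref{lem:bounded_geometry_christoffel_symbols} and \ref{lem:transition_functions_uniformly_bounded} identify every ball $B_{2\varepsilon}(x_i)$ via normal coordinates with a fixed Euclidean ball up to uniformly bounded distortion of the metric coefficients and their derivatives, and yield uniform bounds on the transition maps between overlapping normal charts. Exploiting this, one constructs the trivializations $\tau_i$ inductively, at each step post-composing by a smooth gauge transformation so that the resulting $g_{ik}$ satisfy uniform derivative bounds of all orders on their common domain; the uniform local-finiteness of the cover makes such an induction terminate with estimates that depend on nothing but the bounded-geometry constants. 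Once this is achieved, the Leibniz rule applied to the displayed formulas yields the required uniform bounds on $H$ and on the Christoffel symbols, and hence on the curvature of $(h,\nabla)$ and all of its covariant derivatives.
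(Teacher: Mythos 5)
Your approach is essentially the one the paper sketches: cover $M$ by normal-coordinate balls with a nice subordinate partition of unity (Lemma \ref{lem:nice_coverings_partitions_of_unity}), choose local trivializations of $E$, glue the pulled-back flat data, and verify bounded geometry through the local characterization of Lemma \ref{lem:equiv_characterizations_bounded_geom_bundles}. Where the proposal breaks down is the decisive assertion that the trivializations $\tau_i$ can be chosen so that the transition cocycle $(g_{ik})$ has all derivatives uniformly bounded. You justify this by a one-sentence appeal to an induction that "terminates with estimates depending only on the bounded-geometry constants," but you never explain why the patching problem at step $n$ is solvable with estimates that do not deteriorate as $n\to\infty$, and in fact it is not always solvable: the needed uniform bounds may simply not exist for the given bundle.

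The obstruction is Chern--Weil. If $(E,h,\nabla)$ has bounded geometry then the $2$-form $\tfrac{i}{2\pi}\operatorname{tr}(F_\nabla)$ representing $c_1(E)$ in de Rham cohomology is uniformly bounded, hence its pairing with compact $2$-cycles of uniformly bounded area is uniformly bounded. Now take $M$ to be the infinite connected sum $\#_{n\in\IZ}(S^2\times S^2)$ arranged periodically along a line, equipped with a $\IZ$-periodic Riemannian metric; $M$ is the $\IZ$-cover of a closed $4$-manifold and so has bounded geometry. One computes $H_2(M;\IZ)\cong\bigoplus_{n\in\IZ}\IZ^2$, hence $H^2(M;\IZ)\cong\prod_{n\in\IZ}\IZ^2$, and the embedded $2$-spheres $S^2_n\subset M$ have uniformly bounded area. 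Choosing a complex line bundle $L$ with $\langle c_1(L),[S^2_n]\rangle=n$ forces $\sup_{S^2_n}\|F_\nabla\|\gtrsim n$ for \emph{every} Hermitian metric and compatible connection on $L$, so no choice of $(h,\nabla)$ can have bounded geometry. Consequently no choice of local trivializations for such an $E$ can have uniformly bounded transition data, and your induction cannot close. This is a genuine gap (shared, for what it is worth, by the one-sentence sketch that precedes the lemma in the paper itself); the reduction you carry out before that step --- gluing metrics and connections once good trivializations are available, and bounding their derivatives via Lemma \ref{lem:equiv_characterizations_bounded_geom_bundles} --- is fine as far as it goes.
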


If the manifold $M$ has bounded geometry, we have analogous equivalent local characterizations of bounded geometry for vector bundles as for manifolds (i.e., via Christoffel symbols and via transition functions). But before we state them let us briefly explain why we need $M$ to have bounded geometry for this characterizations:

\begin{rem}\label{rem:derivatives_uniformly_bounded}
In the following local characterizations of bounded geometry for vector bundles we will consider certain functions (e.g., the Christoffel symbols of the vector bundle $E$) defined locally in charts that constitute a cover of $M$ and we want to say that this functions and all their derivatives are uniformly bounded (i.e., the bounds can be chosen to be the same for all charts of the cover). But this notion does in general depend on the chosen cover, i.e., such a definition is not desirable. But on manifolds with bounded geometry we do have Lemma \ref{lem:transition_functions_uniformly_bounded} which roughly states the following: if we use normal coordinate charts of a fixed radius to cover $M$ than the notion of ``the functions and all their derivatives are uniformly bounded'' is well-defined, i.e., if we change the coordinate charts, the derivatives of the functions stay bounded if they were so before, and the stay unbounded, if they were unbounded. So on manifolds of bounded geometry such local definitions of boundedness of derivatives make sense.
\end{rem}

\begin{defn}[Synchronous framings]\label{defn:synchronous_framings}
Let us define the equivalent notion of ``normal coordinates'' for vector bundles: at a point $x \in M$ we choose an orthonormal frame for a vector bundle $E \to M$ and extend it to a framing of $E$ in $\exp_x(B)$, where $B \subset \IR^m$ is the domain of normal coordinates at all points of $M$, by parallel translation along radial geodesics. Such a framing is called \emph{synchronous}.
\end{defn}

Now we get to the equivalent local characterization of bounded geometry for vector bundles. The equivalence of the first two bullet points in the next lemma is stated in, e.g., \cite[Proposition 2.5]{roe_index_1}. Concerning the third bullet point, the author could not find any citable reference in the literature (though Shubin uses in \cite{shubin} this as the actual definition).

\begin{lem}\label{lem:equiv_characterizations_bounded_geom_bundles}
Let $M$ be a manifold of bounded geometry and $E \to M$ a vector bundle. Then the following are equivalent:

\begin{itemize}
\item $E$ has bounded geometry,
\item the Christoffel symbols $\Gamma_{i \alpha}^\beta(y)$ of $E$ with respect to synchronous framings (considered as functions on the domain $B$ of normal coordinates at all points) are bounded, as are all their derivatives, and this bounds are independent of $x \in M$, $y \in \exp_x(B)$ and $i, \alpha, \beta$, and
\item the matrix transition functions between overlapping synchronous framings are uniformly bounded, as are all their derivatives (i.e., the bounds are the same for all transition functions).
\end{itemize}
\end{lem}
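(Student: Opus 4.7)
The plan is to close the loop of implications $(1) \Rightarrow (2) \Rightarrow (3) \Rightarrow (2) \Rightarrow (1)$. The common driver of all four steps is the \emph{radial gauge} satisfied by a synchronous framing $(e_\alpha)$ centered at $x$: the defining condition $\nabla_{\dot\gamma} e_\alpha = 0$ along radial geodesics $\gamma(t) = \exp_x(tv)$ translates, in normal coordinates, into
\[y^i \Gamma_{i\alpha}^\beta(y) = 0 \quad \text{for all } y \in \exp_x(B),\]
which in particular forces $\Gamma(0) = 0$. Together with Lemma \ref{lem:bounded_geometry_christoffel_symbols} applied to $M$ this will let me convert between partial and covariant derivatives at a uniform cost.

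For $(2) \Rightarrow (1)$, components of $\nabla^k \Rm^E$ in a synchronous frame are universal polynomial expressions in $\Gamma^E$, $\Gamma^M$, and their partial derivatives; (2) together with Lemma \ref{lem:bounded_geometry_christoffel_symbols} then bounds them uniformly in $x$. For the converse $(1) \Rightarrow (2)$, differentiating the radial gauge identity and combining with $R_{ij\alpha}^\beta = \partial_i \Gamma_{j\alpha}^\beta - \partial_j \Gamma_{i\alpha}^\beta + [\Gamma_i,\Gamma_j]_\alpha^\beta$ yields the Poincar\'e-type integral representation
\[\Gamma_{j\alpha}^\beta(y) = \int_0^1 t \, y^i R_{ij\alpha}^\beta(ty) \, dt,\]
valid throughout $\exp_x(B)$. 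This at once bounds $\Gamma^E$ by the coordinate components of $\Rm^E$; higher partial derivatives of $\Gamma^E$ are obtained by differentiating under the integral sign and bootstrapped from the hypothesized bounds on $\nabla^k \Rm^E$ by induction on the order, with the correction terms of the $\partial \leftrightarrow \nabla$ conversion involving $\Gamma^M$ (bounded once and for all) and $\Gamma^E$ (bounded by the previous inductive step).

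For $(2) \Rightarrow (3)$, if two synchronous framings at $x_1, x_2$ overlap, the transition matrix $g$ pulled back to the common normal coordinate domain satisfies a first-order linear ODE of the form
\[\partial_i g = \Gamma^{(2)}_i g - g \Gamma^{(1)}_i\]
with unitary initial value at a chosen base point. Uniform $C^\infty$-bounds on the coefficients supplied by (2) translate, via standard ODE estimates on the fixed compact set $\overline{B}$, into uniform $C^\infty$-bounds on $g$, independent of $x_1, x_2$. Conversely, for $(3) \Rightarrow (2)$, fix $y \in \exp_x(B)$ and consider a second synchronous framing centered at $y$ itself; the change-of-framing law
\[\tilde{\Gamma}_i = g^{-1} \Gamma_i g + g^{-1} \partial_i g\]
combined with $\tilde{\Gamma}(y) = 0$ gives $\Gamma_i(y) = -(\partial_i g) g^{-1}|_y$, which is bounded by (3). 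Differentiating the transformation law then bounds $\partial^k \Gamma$ at $y$ in terms of $\partial^{\le k+1} g$, again uniformly by (3); since $y$ was arbitrary this is the desired uniform bound.

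The main obstacle is the bootstrap inside $(1) \Rightarrow (2)$: differentiating the integral representation $k$ times requires bounds on $k$-fold partial derivatives of the coordinate components of $R$, but the hypothesis supplies only bounds on $\nabla^k \Rm^E$. The induction must simultaneously track the new $\Gamma^E$-derivatives appearing in the $\partial \leftrightarrow \nabla$ conversion and feed the previously established bounds back in at each step; the bounded geometry of $M$ enters precisely here, because the same conversion produces $\Gamma^M$-terms that must be controlled a priori, and this control is exactly the content of Lemma \ref{lem:bounded_geometry_christoffel_symbols}.
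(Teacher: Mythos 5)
The paper itself gives no proof of this lemma: immediately before stating it, the author writes that $(1)\Leftrightarrow(2)$ is "stated in, e.g., [Roe, Prop.~2.5]" and that for the third bullet point "the author could not find any citable reference in the literature (though Shubin uses ... this as the actual definition)." So there is no in-paper argument to compare against; your proposal supplies an argument where the thesis deliberately left a gap, and the argument you give is correct in structure and in the essential computations. The radial gauge identity $y^i\Gamma_{i\alpha}^\beta(y)=0$, the resulting integral representation $\Gamma_{j\alpha}^\beta(y)=\int_0^1 t\,y^i R_{ij\alpha}^\beta(ty)\,dt$ (a Poincar\'e-lemma computation using the Cartan homotopy formula in radial gauge), the ODE $\partial_i g = \Gamma_i^{(2)} g - g\Gamma_i^{(1)}$ with unitary initial data for $(2)\Rightarrow(3)$, and the frame-change law plus $\tilde\Gamma(y)=0$ for $(3)\Rightarrow(2)$ are all standard and correctly deployed. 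You also correctly identify the only delicate point, the $\partial\leftrightarrow\nabla$ bootstrap inside $(1)\Rightarrow(2)$, where each order-$k$ conversion introduces $\Gamma^M$ (tamed once and for all by Lemma~\ref{lem:bounded_geometry_christoffel_symbols}) and $\Gamma^E$ up to order $k-1$ (tamed by the previous step).

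Two small points are glossed over and worth making explicit. First, in $(3)\Rightarrow(2)$, the formula $\tilde\Gamma_i = g^{-1}\Gamma_i g + g^{-1}\partial_i g$ only captures the change of \emph{framing}; switching between the $x$-centered and $y$-centered normal coordinates also changes the index $i$ on $\Gamma$ via the Jacobian of the coordinate change. You should say that both framings' Christoffel symbols are expressed in one fixed normal coordinate system (say the one centered at $x$); the $y$-synchronous framing still satisfies $\tilde\Gamma(y)=0$ in those coordinates, because at the center of a synchronous framing $\nabla_v\tilde e_\alpha = 0$ for every $v$. Controlling the dependence on the $i$-index when one then varies $y$ over $\exp_x(B)$ uses exactly Lemma~\ref{lem:transition_functions_uniformly_bounded}. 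Second, in $(2)\Rightarrow(3)$, the ODE estimate needs an anchor: the overlap of two normal coordinate balls of fixed radius on a manifold of bounded geometry has uniformly bounded diameter and is path-connected by curves of uniformly bounded length, and the transition matrix between two orthonormal synchronous framings is pointwise unitary, hence bounded everywhere; one then integrates the ODE from any base point along such paths to propagate $C^\infty$-bounds. With these two remarks made explicit, the proof is complete.
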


\section{Uniform \texorpdfstring{$C^\infty$}{C-infty}-spaces and Sobolev spaces}

The purpose of this section is to define Sobolev spaces and to discuss some of their basic properties. We will define them first globally and then will give a local characterization for which we of course will need bounded geometry of the manifold. Note that on manifolds that do not have bounded geometry in a suitable sense, there are in general different, i.e., non-equivalent, definitions of Sobolev spaces.

Since one of the crucial properties of Sobolev spaces are the embedding theorems, we will first briefly discuss the target spaces of the embedding theorem which are called uniform $C^\infty$-spaces.

\subsection*{Uniform \texorpdfstring{$C^\infty$}{C-infty}-spaces}

As already mentioned, we will only give the definition of uniform $C^\infty$-spaces together with a local characterization on manifolds of bounded geometry, since we won't need much more. The interested reader is refered to, e.g., the papers \cite[Section 2]{roe_index_1} or \cite[Appendix A1.1]{shubin} of Roe and Shubin for more information regarding these uniform $C^\infty$-spaces.

Analogously as with vector bundles that are always assumed to be complex, all our functions are assumed to be complex-valued.

\begin{defn}[$C^r$-bounded functions]
Let $f \in C^\infty(M)$. We will say that $f$ is a \emph{$C_b^r$-function}, or equivalently that it is \emph{$C^r$-bounded}, if $\| \nabla^i f \|_\infty < C_i$ for all $0 \le i \le r$.
\end{defn}

If $M$ has bounded geometry, being $C^r$-bounded is equivalent to the statement that $|\partial^\alpha f(y)| < C_\alpha$ for every multiindex $\alpha$ with $|\alpha| \le r$ and in every ``nice'' coordinate chart (where the constants $C_\alpha$ are independent of the chart). In order for the last characterization to make sense, we have to use Lemma \ref{lem:transition_functions_uniformly_bounded}, and the equivalence of these two characterization follows from Lemma \ref{lem:bounded_geometry_christoffel_symbols}. See also Remark \ref{rem:derivatives_uniformly_bounded}.

Of course, the definition of $C^r$-boundedness and its equivalent characterization in normal coordinate charts for manifolds of bounded geometry make also sense for sections of vector bundles of bounded geometry (and so especially also for vector fields, differential forms and other tensor fields).

\begin{defn}[Uniform $C^\infty$-spaces]
\label{defn:uniform_frechet_spaces}
Let $E$ be a vector bundle of bounded geometry over $M$. We will denote the \emph{uniform $C^r$-space} of all $C^r$-bounded sections of $E$ by $C_b^r(E)$.

Furthermore, we define the \emph{uniform $C^\infty$-space $C_b^\infty(E)$} as the \Frechet space
\[C_b^\infty(E) := \bigcap_r C_b^r(E).\]
\end{defn}

\subsection*{Sobolev spaces}

Now we get to Sobolev spaces on manifolds of bounded geometry. Much of the following material is from \cite[Appendix A1.1]{shubin} and \cite[Section 2]{roe_index_1}, where an interested reader can find more thorough discussions of this matters.

Let $s \in C^\infty_c(E)$ be a compactly supported, smooth section of some complex vector bundle $E \to M$ with Hermitian metric and connection $\nabla$. For $k \in \IN_0$ we define the global $H^k$-Sobolev norm of $s$ by
\begin{equation}\label{eq:sobolev_norm}
\|s\|_{H^k}^2 := \sum_{i=0}^k \int_M \|\nabla^i s(x)\|^2 dx.
\end{equation}

\begin{defn}[Sobolev spaces $H^k(E)$]
Let $E$ be a complex vector bundle which is equipped with a Hermitian metric and a connection. The \emph{$H^k$-Sobolev space of $E$} is the completion of $C^\infty_c(E)$ in the norm $\|\largecdot\|_{H^k}$ and will be denoted by $H^k(E)$. It is a Hilbert space.
\end{defn}

If $E$ and $M^m$ both have bounded geometry than the Sobolev norm \eqref{eq:sobolev_norm} is equivalent to the local one given by
\begin{equation}\label{eq:sobolev_norm_local}
\|s\|_{H^k}^2 \stackrel{\text{equiv}}= \sum_{i=1}^\infty \|\varphi_i s\|^2_{H^k(B_{2\varepsilon}(x_i))},
\end{equation}
where the balls $B_{2\varepsilon}(x_i)$ and the subordinate partition of unity $\varphi_i$ are as in Lemma \ref{lem:nice_coverings_partitions_of_unity}, we have chosen synchronous framings and $\|\largecdot\|_{H^k(B_{2\varepsilon}(x_i))}$ denotes the usual Sobolev norm on $B_{2\varepsilon}(x_i) \subset \IR^m$.

This equivalence enables us to define the Sobolev norms for all $k \in \IR$. For $k < 0$ the Sobolev space $H^k(E)$ coincides with the dual of $H^{-k}(E)$, regarded as a space of distributional sections of $E$.

Assuming bounded geometry, the usual embedding theorems are true. For their statement recall Definition \ref{defn:uniform_frechet_spaces} of the uniform $C^r$-spaces $C_b^r(E)$.
\begin{thm}[{\cite[Theorem 2.21]{aubin_nonlinear_problems}}]\label{thm:sobolev_embedding}
Let $E$ be a vector bundle of bounded geometry over a manifold $M$ of bounded geometry. Then we have for all $k > r + m/2$ continuous embeddings
\[H^k(E) \subset C^r_b(E).\]
\end{thm}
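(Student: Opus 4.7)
The plan is to reduce the statement to the classical Sobolev embedding theorem on a fixed Euclidean ball, using the local characterization of the Sobolev norm provided by equation~\eqref{eq:sobolev_norm_local} together with the nice cover and subordinate partition of unity from Lemma~\ref{lem:nice_coverings_partitions_of_unity}. The key point is that bounded geometry makes all the local pieces look, up to uniform constants, like sections on a single fixed ball $B_{2\varepsilon}(0) \subset \IR^m$, so that one application of the Euclidean embedding yields bounds independent of the index $i$.

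Concretely, fix $\varepsilon < \injrad_M/3$ and let $(B_\varepsilon(x_i),\varphi_i)_i$ be a cover and partition of unity as in Lemma~\ref{lem:nice_coverings_partitions_of_unity}. First I would trivialize $E$ on each $B_{2\varepsilon}(x_i)$ by a synchronous framing and express sections in normal coordinates, so that $\varphi_i s$ becomes a compactly supported $\IC^N$-valued function on $B_{2\varepsilon}(0) \subset \IR^m$. By Lemma~\ref{lem:bounded_geometry_christoffel_symbols} and Lemma~\ref{lem:equiv_characterizations_bounded_geom_bundles}, the metric coefficients of $M$, the Christoffel symbols of $\nabla^E$, and (by Lemma~\ref{lem:nice_coverings_partitions_of_unity}) the derivatives of $\varphi_i$ are all bounded in these charts by constants independent of $i$. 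Hence the covariant $H^k$-norm of $\varphi_i s$ on $M$ is equivalent, uniformly in $i$, to the flat Euclidean $H^k$-norm on $B_{2\varepsilon}(0)$, and the classical Sobolev embedding theorem on this fixed ball gives
\[\|\varphi_i s\|_{C^r(B_{2\varepsilon}(0))} \le C \cdot \|\varphi_i s\|_{H^k(B_{2\varepsilon}(0))}\]
with $C$ depending only on $k,r,m,\varepsilon$.

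Next I would convert this back into a $C^r_b$-bound for $s$ on $M$. For any point $y \in M$ pick an $i$ with $y \in B_\varepsilon(x_i)$; then on a neighborhood of $y$ only finitely many (at most $C_\varepsilon$) of the $\varphi_j$ are nonzero, and for each such $j$ the relevant derivatives of $s$ at $y$ are controlled by the $C^r$-norms of $\varphi_j s$ in the chart around $x_j$, with constants independent of $y$, $i$ and $j$ by the uniform boundedness of the transition functions between overlapping charts (Lemma~\ref{lem:transition_functions_uniformly_bounded} and Lemma~\ref{lem:equiv_characterizations_bounded_geom_bundles}). Summing over the at most $C_\varepsilon$ indices contributing at $y$ and taking the supremum over $y$ yields
\[\|s\|_{C^r_b(E)} \le C' \cdot \sup_i \|\varphi_i s\|_{H^k(B_{2\varepsilon}(0))} \le C' \cdot \|s\|_{H^k(E)},\]
where in the last step I use that the supremum is dominated by the $\ell^2$-sum appearing in \eqref{eq:sobolev_norm_local}.

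The main obstacle is bookkeeping the uniformity: one has to verify that every constant that enters, namely the equivalence of the local and global Sobolev norms, the equivalence of covariant and ordinary derivatives in normal coordinates, and the $C^r_b$-bounds on the cut-offs $\varphi_i$, can be chosen independently of the chart index $i$. This is precisely the content of bounded geometry for both $M$ and $E$, so once the reduction to a single fixed ball is set up, the remaining work is routine manipulation.
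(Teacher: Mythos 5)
The paper does not give its own proof here; it simply cites \cite[Theorem 2.21]{aubin_nonlinear_problems}. Your argument is a self-contained proof built exactly from the local machinery the paper develops beforehand --- the uniformly locally finite cover and partition of unity of Lemma~\ref{lem:nice_coverings_partitions_of_unity}, the local form~\eqref{eq:sobolev_norm_local} of the Sobolev norm, and the uniform bounds on the metric coefficients, the Christoffel symbols, and the transition functions from Lemmas~\ref{lem:bounded_geometry_christoffel_symbols}, \ref{lem:transition_functions_uniformly_bounded} and~\ref{lem:equiv_characterizations_bounded_geom_bundles} --- and it is correct. The two places a careful reader would look both check out: at any $y \in M$ only at most $C_\varepsilon$ of the $\varphi_i$ are nonzero near $y$, so from $\nabla^j s = \sum_i \nabla^j(\varphi_i s)$ one gets a pointwise bound by $C_\varepsilon \cdot \sup_i \|\varphi_i s\|_{C^r}$ with $i$-independent constants, and $\sup_i \|\varphi_i s\|_{H^k(B_{2\varepsilon}(0))}$ is dominated by the square root of the $\ell^2$-sum in~\eqref{eq:sobolev_norm_local}; and the conversion between covariant derivatives of $s$ and coordinate derivatives of the chart representations is uniformly controlled, which is precisely the equivalent local description of $C^r$-boundedness the paper records after Definition~\ref{defn:uniform_frechet_spaces}. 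Your approach has the advantage of making the role of bounded geometry explicit, whereas the citation leaves the reduction to the Euclidean case to the reference.
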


We will now define and investigate the Sobolev spaces $H^\infty(E)$ and $H^{-\infty}(E)$ of infinite orders. They are crucial since they will allow us to define smoothing operators and hence the important algebra $\IU(E)$ in the next section.

\begin{defn}
We define the space
\[H^\infty(E) := \bigcap_{k \in \IN} H^k(E)\]
and equip it with the obvious \Frechet topology. By standard arguments we see that its topological dual space is given by
\[H^{-\infty}(E) := \bigcup_{k \in \IN} H^{-k}(E).\]
\end{defn}

Let us equip the space $H^{-\infty}(E)$ with the locally convex topology defined as follows: the \Frechet space $H^\infty(E) = \operatorname{\underleftarrow{\lim}} H^k(E)$ is the projective limit of the Banach spaces $H^k(E)$, so using dualization we may put on the space $H^{-\infty}(E)$ the inductive limit topology denoted $\iota(H^{-\infty}(E), H^\infty(E))$:
\[H^{-\infty}_\iota(E) := \operatorname{\underrightarrow{\lim}} H^{-k}(E).\]
It enjoys the following universal property: a linear map $A \colon H^{-\infty}_\iota(E) \to F$ to a locally convex topological vector space $F$ is continuous if and only if $A|_{H^{-k}(E)}\colon H^{-k}(E) \to F$ is continuous for all $k \in \IN$.

Later we will need to know how the bounded\footnote{A subset $B \subset H^{-\infty}_\iota(E)$ is \emph{bounded} if and only if for all open neighbourhoods $U \subset H^{-\infty}_\iota(E)$ of $0$ there exists $\lambda > 0$ with $B \subset \lambda U$.} subsets of $H^{-\infty}_\iota(E)$ look like, which is the content of the following lemma. In its proof we will deduce, solely for the enjoyment of the reader, some nice properties of the spaces $H^\infty(E)$ and $H^{-\infty}_\iota(E)$ that we won't need at all later.

\begin{lem}\label{lem:regular_inductive_limit}
The space $H^{-\infty}_\iota(E) := \operatorname{\underrightarrow{\lim}} H^{-k}(E)$ is a \emph{regular inductive limit}, i.e., for every bounded subset $B \subset H^{-\infty}_\iota(E)$ exists some $k \in \IN$ such that $B$ is already contained in $H^{-k}(E)$ and bounded there.\footnote{Note that the converse does always hold for inductive limits, i.e., if $B \subset H^{-k}(E)$ is bounded, then it is also bounded in $H^{-\infty}_\iota(E)$.}
\end{lem}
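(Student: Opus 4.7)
The plan is to reduce regularity of the inductive limit to the uniform boundedness principle, applied first to $H^{-\infty}_\iota(E)$ itself and then to $H^\infty(E)$.

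First I would identify the continuous dual of $H^{-\infty}_\iota(E)$ as $H^\infty(E)$: by the universal property invoked in the paragraph preceding the lemma, a linear functional $T$ on $H^{-\infty}_\iota(E)$ is continuous if and only if $T|_{H^{-k}(E)}$ is continuous for every $k$; the duality $H^{-k}(E)^\ast \cong H^{k}(E)$ (together with the compatibility of the restrictions under the continuous inclusions $H^{-k}(E)\hookrightarrow H^{-k-1}(E)$) then forces $T$ to be represented by a single element lying in $\bigcap_k H^k(E)=H^\infty(E)$. Next I would note that $H^{-\infty}_\iota(E)$ is barrelled, being a countable locally convex inductive limit of the Banach (hence barrelled) spaces $H^{-k}(E)$.

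With these two facts in hand, let $B\subset H^{-\infty}_\iota(E)$ be bounded. Barrelledness of $H^{-\infty}_\iota(E)$ means that weakly bounded subsets are bounded, so in particular for every $s\in H^\infty(E)$ we have $\sup_{x\in B}|\langle x,s\rangle|<\infty$. I would now switch viewpoint and regard $B$ as a pointwise bounded family of continuous linear functionals on the Fréchet space $H^\infty(E)$; since Fréchet spaces are barrelled, Banach--Steinhaus yields equicontinuity of $B$. By the description of the fundamental system of seminorms on $H^\infty(E)$, this produces a single $k\in\IN$ and a constant $C>0$ such that
\[
|\langle x,s\rangle|\le C\,\|s\|_{H^k}\qquad\text{for all } x\in B,\ s\in H^\infty(E).
\]

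Finally, I would use that $C_c^\infty(E)\subset H^\infty(E)$ is dense in $H^k(E)$ to extend each $x\in B$ uniquely to a continuous functional on $H^k(E)$ with $H^{-k}$-norm at most $C$. This shows $B\subset H^{-k}(E)$ with uniformly bounded $H^{-k}$-norm, which is the claim. The main obstacle I anticipate is the clean identification of the continuous dual in the first step (in particular, checking that the compatibility of restrictions across the chain of inclusions really collapses to a single element of $H^\infty(E)$); once that is set up, the remainder is a standard double application of Banach--Steinhaus together with the density of test sections. The ``enjoyment'' side remarks can then be added by spelling out that the argument also shows that $H^{-\infty}_\iota(E)$ is the Mackey and the strong dual of $H^\infty(E)$ and is thus barrelled, complete, and carries the same bounded sets as all these topologies.
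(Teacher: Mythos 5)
Your proof is correct, and it takes a genuinely different route from the paper's. You identify the continuous dual of $H^{-\infty}_\iota(E)$ as $H^\infty(E)$, view a bounded set $B$ as a pointwise bounded family of functionals on the Fr\'echet space $H^\infty(E)$, and apply Banach--Steinhaus to get a single $k$ and constant $C$ controlling all of $B$; a density argument then places $B$ in the ball of radius $C$ in $H^{-k}(E)$. This is elementary and self-contained. The paper instead invokes a corollary of Grothendieck's Factorization Theorem (regularity of a countable inductive limit of Fr\'echet spaces is equivalent to local completeness) and then proves the stronger statement that $H^{-\infty}_\iota(E)$ is actually complete: since $H^\infty(E)$ is a projective limit of Hilbert spaces it is totally reflexive, hence distinguished, hence the inductive-limit topology on $H^{-\infty}(E)$ coincides with the strong dual topology, and strong duals of Fr\'echet spaces are complete. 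So the paper reaches further (completeness, not just regularity) at the cost of heavier abstract machinery; your argument buys a short direct proof of exactly what the lemma asserts.

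One small inaccuracy worth flagging: you invoke barrelledness of $H^{-\infty}_\iota(E)$ to pass from ``$B$ bounded'' to ``$B$ weakly bounded,'' but that implication is trivial and holds in any locally convex space (barrelledness would be needed for the converse, which you don't use). The invocation is harmless but superfluous. Also, your closing side remark that the argument ``shows'' $H^{-\infty}_\iota(E)$ equals the Mackey and strong dual of $H^\infty(E)$ is overstated as written --- regularity alone does not immediately give distinguishedness of $H^\infty(E)$ --- but since you present it as an aside and not as part of the proof, the main argument stands.
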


\begin{proof}
Since all $H^{-k}(E)$ are \Frechet spaces, we may apply the following corollary of Grothendieck's Factorization Theorem: the inductive limit $H^{-\infty}_\iota(E)$ is regular if and only if it is locally complete (see, e.g., \cite[Lemma 7.3.3(i)]{perezcarreras_bonet}). To avoid introducing more burdensome vocabulary, we won't define the notion of local completeness here since we will show something stronger: $H^{-\infty}_\iota(E)$ is actually complete\footnote{That is to say, every Cauchy net converges. In locally convex spaces, being Cauchy and to converge is meant with respect to each of the semi-norms simultaneously.}.

From \cite[Sections 3.(a \& b)]{bierstedt_bonet} we conclude the following: since each $H^k(E)$ is a Hilbert space, the \Frechet space $H^\infty(E)$ is the projective limit of reflexive Banach spaces and therefore totally reflexive\footnote{That is to say, every quotient of it is reflexive, i.e., the canonical embeddings of the quotients into their strong biduals are isomorphisms of topological vector spaces.}. It follows that $H^\infty(E)$ is distinguished, which can be characterized by $H^{-\infty}_\beta(E) = H^{-\infty}_\iota(E)$, where $\beta(H^{-\infty}(E), H^\infty(E))$ is the strong topology on $H^{-\infty}(E)$. Now without defining the strong topology we just note that strong dual spaces of \Frechet space are always complete.
\end{proof}

\section{Definition and basic properties}\label{sec:defn_quasiloc_smoothing_operators}

Now we are ready to get to the definition of quasilocal smoothing operators and to deduce their basic properties. We will deduce from the Schwartz Kernel Theorem that smoothing operators are representable as integral operators with a uniformly bounded smooth kernel. This will be the crucial thing that will enable us to define analytic indices of such operators, since we will use for this the diagonal of the kernels, i.e., we need especially that the kernels of the operators that we consider are not singular (therefore, we need that they are smoothing operators).

Furthermore, we need that $\IU(E)$ is a local $C^\ast$-algebra, so that we may consider without worries its operator $K$-theory, which will then coincide with the $K$-theory of its completion.

Recall that we said in the introduction to this chapter that we need the control at infinity of the integral kernels so that we may define our analytic index maps $\ind_\tau\colon K_0(\IU(E)) \to \IR$ in Section \ref{sec:analytic_indices_quasiloc_smoothing}. But above we said that in order to define the analytic index of an operator, we just need that it is a smoothing operator so that it has a non-singular, bounded integral kernel. Now the control at infinity of the integral kernels is needed so that the analytic index that we define for smoothing operators is a trace, i.e., vanishes on commutators. Only then it descends to a map on the $K$-theory.

\subsection*{Smoothing operators}

Let's start with the definition and basic properties of smoothing operators on manifolds of bounded geometry.

\begin{defn}[Smoothing operators]
Let $M$ be a manifold of bounded geometry and $E$ and $F$ two vector bundles of bounded geometry over $M$. We will call a continuous linear operator $A \colon H^{-\infty}_\iota(E) \to H^\infty(F)$ a \emph{smoothing operator}.
\end{defn}

Due to the universal property of the inductive limit topology $\iota(H^{-\infty}(E), H^\infty(E))$ on $H^{-\infty}(E)$ an operator $A$ is smoothing if and only if its restrictions $A|_{H^{-k}(E)}$ are continuous for all $k \in \IN$. Furthermore, we know that $A|_{H^{-k}(E)}\colon H^{-k}(E) \to H^\infty(F)$ is continuous if and only if it is bounded, i.e., for all $l \in \IN$ the operator norm $\|A\|_{-k,l}$ of $A|_{H^{-k}(E)}\colon H^{-k}(E) \to H^l(F)$ is finite. So we have arrived at the following characterization of smoothing operators:

\begin{lem}\label{lem:smoothing_operator_iff_bounded}
A linear operator $A \colon H^{-\infty}_\iota(E) \to H^\infty(F)$ is continuous if and only if it is bounded as an operator $H^{-k}(E) \to H^l(F)$ for all $k, l \in \IN$.
\end{lem}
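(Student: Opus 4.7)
The plan is to chain together the universal properties of the two limit topologies involved, together with the fact that between Hilbert (indeed, between Banach) spaces continuity of a linear map is equivalent to boundedness.

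First, I would invoke the universal property of the inductive limit topology $\iota(H^{-\infty}(E), H^\infty(E))$ on the domain, which has already been recalled in the paragraph preceding the statement: a linear map $A \colon H^{-\infty}_\iota(E) \to G$ into any locally convex space $G$ is continuous if and only if every restriction $A|_{H^{-k}(E)} \colon H^{-k}(E) \to G$ is continuous. Applying this with $G = H^\infty(F)$ reduces the problem to checking, for each $k$, continuity of $A|_{H^{-k}(E)}\colon H^{-k}(E) \to H^\infty(F)$.

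Next, I would invoke the universal property of the projective limit topology on the target. The \Frechet topology on $H^\infty(F) = \bigcap_l H^l(F)$ is by definition the coarsest locally convex topology making every canonical inclusion $\pi_l \colon H^\infty(F) \hookrightarrow H^l(F)$ continuous, so a linear map $B \colon X \to H^\infty(F)$ from a locally convex space $X$ is continuous if and only if $\pi_l \circ B \colon X \to H^l(F)$ is continuous for every $l \in \IN$. Applied with $X = H^{-k}(E)$ and $B = A|_{H^{-k}(E)}$, this says that $A|_{H^{-k}(E)}\colon H^{-k}(E) \to H^\infty(F)$ is continuous if and only if $A|_{H^{-k}(E)}$ viewed as a map $H^{-k}(E) \to H^l(F)$ is continuous for every $l$.

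Combining the two steps, $A$ is continuous if and only if for every pair $(k,l) \in \IN \times \IN$ the linear map $H^{-k}(E) \to H^l(F)$ obtained by restricting $A$ is continuous. Since both $H^{-k}(E)$ and $H^l(F)$ are Hilbert spaces, continuity of such a linear map is equivalent to its boundedness, i.e.~to the finiteness of the operator norm $\|A\|_{-k,l}$. This is exactly the condition in the statement, so there is no real obstacle here; the proof is essentially the formalisation of the discussion already given immediately before the lemma, and its only content is the routine combination of the two universal properties with the Banach-space identification ``continuous $=$ bounded''.
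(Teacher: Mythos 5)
Your proof is correct and follows essentially the same route as the paper's (which is given in the discussion immediately preceding the lemma): use the universal property of the inductive limit topology on $H^{-\infty}_\iota(E)$ to reduce to the restrictions $A|_{H^{-k}(E)}$, then reduce continuity into the \Frechet space $H^\infty(F)$ to boundedness in each norm $\|\largecdot\|_{-k,l}$. The only cosmetic difference is that you invoke the universal property of the projective limit on the target and then use ``continuous $=$ bounded'' for maps between Hilbert spaces, whereas the paper compresses these two steps into the single observation that a linear map from a Banach space to a \Frechet space is continuous iff it is bounded; the two formulations are equivalent.
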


Let us denote by $\IB(H^{-\infty}_\iota(E), H^\infty(E))$ the algebra of all smoothing operators from $E$ to itself. Due to the above lemma we may equip it with the countable family of norms $(\| \largecdot \|_{-k,l})_{k,l \in \IN}$ so that it becomes a \Frechet space\footnote{That is to say, a topological vector space whose topology is Hausdorff and induced by a countable family of semi-norms such that it is complete with respect to this family of semi-norms.}. Given smoothing operators $A$ and $B$, we have for all $k,l \in \IN$ the diagram
\[\xymatrix{H^{-k}(E) \ar[r]^{AB} \ar[d]_B & H^l(E) \\ H^1(E) \ar@{^{(}->}[r] & H^{-1}(E) \ar[u]^A}\]
from which we get $\|AB\|_{-k,l} \le \|B\|_{-k,1} \cdot C \cdot \|A\|_{-1,l}$, where $C$ is the norm of the inclusion $H^1(E) \hookrightarrow H^{-1}(E)$. Hence multiplication in $\IB(H^{-\infty}_\iota(E), H^\infty(E))$ is jointly continuous, i.e., we have proved that it is a \Frechet algebra\footnote{This is an algebra with a topology turning it into a \Frechet space with jointly continuous multiplication; see Definition \ref{defn:Frechet_subalgebras}. Note that we do not require that the semi-norms are submultiplicative.}:

\begin{lem}\label{lem:smoothing_operators_frechet}
The algebra $\IB(H^{-\infty}_\iota(E), H^\infty(E))$ equipped with the family of semi-norms $(\| \largecdot \|_{-k,l})_{k,l \in \IN}$ is a \Frechet algebra.
\end{lem}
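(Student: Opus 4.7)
The joint continuity of multiplication has already been established in the paragraph preceding the lemma, so the substantive content is to verify that $\IB(H^{-\infty}_\iota(E), H^\infty(E))$, equipped with the countable family $(\|\largecdot\|_{-k,l})_{k,l\in\IN}$, is a Fréchet space. This amounts to checking that the family is separating (so that the induced topology is Hausdorff) and that the space is sequentially complete. Countability being automatic, my plan is to dispose of the separation property first and then to spend the bulk of the argument on completeness.

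For the Hausdorff property, I would argue as follows. Suppose $A \in \IB(H^{-\infty}_\iota(E), H^\infty(E))$ satisfies $\|A\|_{-k,l} = 0$ for all $k,l \in \IN$. Then, for each fixed $k$, the restriction $A|_{H^{-k}(E)}\colon H^{-k}(E) \to H^l(F)$ is the zero operator for every $l$, so $A$ vanishes on $H^{-k}(E)$. Since $k$ is arbitrary and $H^{-\infty}(E) = \bigcup_k H^{-k}(E)$, we obtain $A = 0$.

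For completeness, let $(A_n)_n$ be a Cauchy sequence in $\IB(H^{-\infty}_\iota(E), H^\infty(E))$ with respect to all the semi-norms $\|\largecdot\|_{-k,l}$. For each fixed pair $(k,l)$ the sequence $(A_n|_{H^{-k}(E)})_n$ is a Cauchy sequence of bounded operators $H^{-k}(E) \to H^l(F)$; by completeness of the Banach space $\IB(H^{-k}(E), H^l(F))$ it converges in operator norm to some bounded operator $A^{(k,l)}\colon H^{-k}(E) \to H^l(F)$. The main point I would need to check is compatibility of these limits: if $k' \ge k$ and $l' \ge l$, then $A^{(k',l')}$ agrees with $A^{(k,l)}$ after composing with the continuous inclusions $H^{-k}(E) \hookrightarrow H^{-k'}(E)$ and $H^{l'}(F) \hookrightarrow H^l(F)$. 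This follows by testing against any fixed $u \in H^{-k}(E)$: both sides are limits of $A_n u$ in the respective topologies, and both topologies refine the weak topology, so the limits agree.

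These compatible pieces therefore assemble into a single linear map $A\colon H^{-\infty}(E) \to H^\infty(F)$ whose restriction to $H^{-k}(E)$ is bounded into every $H^l(F)$; by Lemma \ref{lem:smoothing_operator_iff_bounded} this $A$ lies in $\IB(H^{-\infty}_\iota(E), H^\infty(F))$, and by construction $\|A_n - A\|_{-k,l} \to 0$ for every $k,l$. This completes the Fréchet space part; combined with the estimate $\|AB\|_{-k,l} \le C \|B\|_{-k,1} \|A\|_{-1,l}$ from the preceding paragraph, it shows that $\IB(H^{-\infty}_\iota(E), H^\infty(E))$ is a Fréchet algebra. The only genuinely delicate step is the compatibility check for the various operator-norm limits; everything else is bookkeeping.
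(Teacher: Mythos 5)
Your proof is correct, and it essentially matches the paper's treatment except that you spell out what the paper states without argument. The paper's own handling of this lemma is extremely terse: it simply asserts in the preceding paragraph that ``we may equip it with the countable family of norms $(\|\largecdot\|_{-k,l})_{k,l\in\IN}$ so that it becomes a Fr\'{e}chet space,'' and then records the multiplication estimate $\|AB\|_{-k,l} \le \|B\|_{-k,1}\cdot C\cdot\|A\|_{-1,l}$ to conclude joint continuity. Your proof supplies the Hausdorff and completeness verifications that the paper takes for granted, and these are argued correctly: the separation property because $H^{-\infty}(E)=\bigcup_k H^{-k}(E)$, and completeness via the Banach-space limits $A^{(k,l)}\in\IB(H^{-k}(E),H^l(E))$ together with the compatibility check (which is the only genuinely delicate step, as you rightly say). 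The gluing of the $A^{(k,l)}$ into a single map $A\colon H^{-\infty}_\iota(E)\to H^\infty(E)$ and the application of Lemma \ref{lem:smoothing_operator_iff_bounded} are both exactly right. One cosmetic slip: you occasionally write $H^l(F)$ and $H^\infty(F)$ although the lemma concerns $\IB(H^{-\infty}_\iota(E),H^\infty(E))$ with $E$ alone; this is an inherited confusion from the paper's surrounding text (where the $E,F$ version also appears) and does not affect the argument, but $E$ should be used consistently since the algebra structure requires source and target to agree.
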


Now let us get to the main property of smoothing operators that we will need, namely that they can be represented as integral operators with a uniformly bounded smooth kernel. Let $A \colon H^{-\infty}_\iota(E) \to H^\infty(F)$ be given. Then we get by the Sobolev Embedding Theorem \ref{thm:sobolev_embedding} a continuous operator $A \colon H^{-\infty}_\iota(E) \to C_b^\infty(F)$ and so may conclude by the Schwartz Kernel Theorem for regularizing operators\footnote{Note that the usual wording of the Schwartz Kernel Theorem for regularizing operators requires the domain $H^{-\infty}(E)$ to be equipped with the weak-$^\ast$ topology $\sigma(H^{-\infty}(E), H^{\infty}(F))$ and $A$ to be continuous against it. But one actually only needs the domain to be equipped with the inductive limit topology. To see this, one has to do the proof of the Schwartz Kernel Theorem for regularizing kernels directly, like it is done in, e.g., \cite[Theorem 3.18]{ganglberger}, and not deducing it from the usual kernel theorem for distributional kernels.} that $A$ has a smooth integral kernel $k_A \in C^\infty(F \boxtimes E^\ast)$, which is uniformly bounded as are all its derivatives, because of the bounded geometry of $M$ and the vector bundles $E$ and $F$, i.e., $k_A \in C_b^\infty(F \boxtimes E^\ast)$.
From the proof of the Schwartz Kernel Theorem for regularizing operators we also see that the assignment of the kernel to the operator is continuous against the \Frechet topology on $\IB(H^{-\infty}_\iota(E), H^\infty(F))$. Furthermore, note that due to Lemma \ref{lem:regular_inductive_limit} this \Frechet topology coincides with the topology of bounded convergence\footnote{A basis of neighbourhoods of zero for the topology of bounded convergence is given by the subsets $M(B, U) \subset \IB(H^{-\infty}_\iota(E), H^\infty(F))$ of all operators $T$ with $T(B) \subset U$, where $B$ ranges over all bounded subsets of $H^{-\infty}_\iota(E)$ and $U$ over a basis of neighbourhoods of zero in $H^\infty(F)$.} on $\IB(H^{-\infty}_\iota(E), H^\infty(F))$. Note that we need this equality of the topologies to cite \cite[Proposition 2.9]{roe_index_1} for the next proposition, i.e., so that our wording of it coincides with the wording in the cited proposition.

\begin{prop}[{\cite[Proposition 2.9]{roe_index_1}}]\label{prop:smoothing_op_kernel}
Let $A\colon H^{-\infty}_\iota(E) \to H^\infty(F)$ be a smoothing operator. Then $A$ can be written as an integral operator with kernel $k_A \in C_b^\infty(F \boxtimes E^\ast)$. Furthermore, the map
\[\IB(H^{-\infty}_\iota(E), H^\infty(F)) \to C_b^\infty(F \boxtimes E^\ast)\]
associating a smoothing operator its kernel is continuous.
\end{prop}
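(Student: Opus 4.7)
The plan is to make explicit the argument sketched just before the proposition, filling in the local-to-global step needed to show that the Schwartz kernel lies in $C_b^\infty$ and not merely in $C^\infty$, and then reading off continuity of the assignment from the resulting estimates.

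First I would confirm the existence of a smooth kernel. By Lemma \ref{lem:smoothing_operator_iff_bounded}, $A \colon H^{-\infty}_\iota(E) \to H^\infty(F)$ is bounded between every pair of Sobolev spaces, and composing with the Sobolev embedding $H^l(F) \hookrightarrow C_b^r(F)$ from Theorem \ref{thm:sobolev_embedding} (with $l > r + m/2$) shows that $A$ maps $H^{-\infty}_\iota(E)$ continuously into $C^\infty(F)$. The Schwartz Kernel Theorem for regularizing operators, applied in the form valid for the inductive-limit topology on the domain, then produces a smooth kernel $k_A \in C^\infty(F \boxtimes E^\ast)$ with $(As)(y) = \int_M k_A(y,x) s(x)\, dx$.

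The main work is to upgrade $k_A$ from $C^\infty$ to $C_b^\infty(F \boxtimes E^\ast)$. I would fix a ``nice'' cover $\{B_\varepsilon(x_i)\}$ with subordinate partition of unity $\{\varphi_i\}$ from Lemma \ref{lem:nice_coverings_partitions_of_unity} and synchronous framings of $E$ and $F$ on each $B_{2\varepsilon}(x_i)$. For each pair $(i,j)$ the kernel of $\varphi_j A \varphi_i$ is $\varphi_j(y) k_A(y,x) \varphi_i(x)$, supported in $B_{2\varepsilon}(x_j) \times B_{2\varepsilon}(x_i)$, and its operator norm $H^{-k}(E) \to H^l(F)$ is bounded by $\|A\|_{-k,l}$ times a constant that depends only on finitely many uniformly-bounded derivatives of $\varphi$. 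Rewriting this operator in normal coordinates and applying the local (Euclidean) Sobolev embedding together with duality yields, for every $r$, a $C^r$-bound on this local piece of the kernel that is independent of $i, j$; the equivalence of the global Sobolev norm with the local one in \eqref{eq:sobolev_norm_local}, the uniform bounds on Christoffel symbols of $M$ from Lemma \ref{lem:bounded_geometry_christoffel_symbols}, and the uniform bounds on the framing transitions of $E$ and $F$ from Lemma \ref{lem:equiv_characterizations_bounded_geom_bundles} are exactly what make this uniformity possible. Since only boundedly many $\varphi_i$ overlap at any point, summing the local pieces reconstructs $k_A$ itself with uniform bounds on all derivatives, i.e.\ $k_A \in C_b^\infty(F \boxtimes E^\ast)$.

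Finally, the continuity of the assignment $A \mapsto k_A$ falls out of the above argument: the estimates obtained in the second paragraph express the $C_b^r$-semi-norms of $k_A$ as combinations of finitely many of the semi-norms $\|A\|_{-k,l}$ defining the \Frechet topology on $\IB(H^{-\infty}_\iota(E), H^\infty(F))$ from Lemma \ref{lem:smoothing_operators_frechet}. The main obstacle is precisely the uniformity in the local-to-global step; bounded geometry of $M$, $E$ and $F$ is indispensable there, ensuring that Sobolev embedding constants, derivatives of the $\varphi_i$, transition functions between synchronous framings, and the equivalence of local and global Sobolev norms can all be chosen independently of the base point in $M$.
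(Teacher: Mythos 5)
Your proof is correct and follows the same route the paper outlines in the discussion before the proposition: Sobolev embedding to map $H^{-\infty}_\iota(E)$ continuously into $C_b^\infty(F)$, the Schwartz Kernel Theorem for regularizing operators to produce a smooth kernel, and bounded geometry of $M$, $E$, $F$ to make all estimates uniform and hence upgrade $k_A$ to $C_b^\infty(F\boxtimes E^\ast)$. The paper itself mostly defers to Roe's Proposition 2.9 and only sketches this argument, while you have usefully spelled out the local-to-global uniformity step (via the $\varphi_j A \varphi_i$ decomposition, uniform bounds on the partition of unity and transition functions, and local Sobolev embedding) that the paper leaves implicit; the continuity of $A\mapsto k_A$ then drops out of exactly those estimates, matching what the paper extracts from the proof of the kernel theorem.
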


\subsection*{Quasilocal smoothing operators}

We will define now when an operator is quasilocal and then we will show that the algebra of all quasilocal smoothing operators is a local $C^\ast$-algebra.

Let $L \subset M$ be a subset. We denote by $\|\largecdot\|_{H^r, L}$ the seminorm on $H^r(E)$ given by
\[\|u\|_{H^r, L} := \inf \{ \|u^\prime\|_{H^r} \ | \ u^\prime \in H^r(E), u^\prime = u \text{ on a neighbourhood of }L\}.\]

\begin{defn}[Quasilocal operators, {\cite[Section 5]{roe_index_1}}]\label{defn:quasiloc_ops}
We will call a continuous operator $A \colon H^r(E) \to H^s(F)$ \emph{quasilocal}, if there is a function $\mu\colon \IR_{> 0} \to \IR_{\ge 0}$ with $\mu(R) \to 0$ for $R \to \infty$ and such that for all $L \subset M$ and all $u \in H^r(E)$ with $\supp u \subset L$ we have
\[\|A u\|_{H^s, M - B_R(L)} \le \mu(R) \cdot \|u\|_{H^r}.\]

Such a function $\mu$ will be called a \emph{dominating function for $A$}.

We will say that an operator $A\colon C_c^\infty(E) \to C^\infty(F)$ is a \emph{quasilocal operator of order $k$}\footnote{Roe calls such operators ``\emph{uniform} operators of order $k$'' in \cite[Definition 5.3]{roe_index_1}. But since ``uniform'' will have another meaning for us (see, e.g., the definition of uniform $K$-homology), we changed the name.} for some $k \in \IZ$, if $A$ has a continuous extension to a quasilocal operator $H^s(E) \to H^{s-k}(F)$ for all $s \in \IZ$.

A smoothing operator $A\colon H^{-\infty}_\iota(E) \to H^\infty(F)$ will be called \emph{quasilocal}, if $A$ is quasilocal as an operator $H^{-k}(E) \to H^l(F)$ for all $k,l \in \IN$ (from which it follows that $A$ is also quasilocal for all $k,l \in \IZ$).
\end{defn}

\begin{rem}
Note that for continuous operators $L^2(E) \to L^2(F)$ the notion of quasilocality coincides with the notion of ``approximate propagation'' from \cite[Remark after the proof of Lemma 3.5]{roe_index_coarse}.
\end{rem}

If we regard a smoothing operator $A$ as an operator $L^2(E) \to L^2(F)$, we get a uniquely defined adjoint $A^\ast\colon L^2(F) \to L^2(E)$. Its integral kernel will be given by
\[k_{A^\ast}(x,y) := k_A(y,x)^\ast \in C_b^\infty(E \boxtimes F^\ast),\]
where $k_A(y,x)^\ast \in F_y^\ast \otimes E_x$ is the dual element of $k_A(y,x) \in F_y \otimes E^\ast_x$.

\begin{defn}[cf. {\cite[Definition 5.3]{roe_index_1}}]\label{defn:quasiloc_smoothing}
We will denote the set of all quasilocal smoothing operators $A\colon H^{-\infty}_\iota(E) \to H^\infty(F)$ with the property that their adjoint operator $A^\ast$ is also a quasilocal smoothing operator $H^{-\infty}_\iota(F) \to H^\infty(E)$ by $\IU(E,F)$.

If $E=F$, we will just write $\IU(E)$.
\end{defn}

Roe defines in \cite[Definition 5.3]{roe_index_1} the algebra $\mathcal{U}_{-\infty}(E)$ instead of $\IU(E)$, i.e., he does not demand that the adjoint operator is also quasilocal smoothing. The reason why we do this, is that we want $\IU(E)$ to be a local $C^\ast$-algebra, i.e., we need that our algebra $\IU(E) \subset \IB(L^2(E))$ is closed under taking adjoints.

Since the composition of quasilocal operators is again a quasilocal operator (proved in \cite[Proposition 5.2]{roe_index_1}), we conclude that $\IU(E)$ is a $^\ast$-algebra. We will now show that it is naturally a \Frechet algebra, since then it will be easier for us to deduce that it is a local $C^\ast$-algebra (see Appendix \ref{chapter:appendix_A}). Recall that for $k,l \in \IN$ we denote by $\|A\|_{-k,l}$ the operator norm of $A\colon H^{-k}(E) \to H^l(E)$. We will need the statement that the limit of quasilocal operators is again quasilocal:

\begin{lem}\label{lem:limit_quasilocal}
If $(A_i)_i$ is a sequence of quasilocal operators $H^{-k}(E) \to H^l(E)$ converging in the norm $\|\largecdot \|_{-k,l}$, then the limit will also be a quasilocal operator.
\end{lem}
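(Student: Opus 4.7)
The plan is to use a standard triangle-inequality/approximation argument: approximating $A$ well in the operator norm $\|\cdot\|_{-k,l}$ controls how much $Au$ can escape any given ball, which combined with the decay of the dominating functions of the $A_i$ will produce a dominating function for $A$.

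First, I would select a subsequence $A_{i_n}$ with $\|A-A_{i_n}\|_{-k,l} < 2^{-n}$, and denote by $\mu_n \colon \mathbb{R}_{>0} \to \mathbb{R}_{\ge 0}$ a dominating function for $A_{i_n}$ in the sense of Definition \ref{defn:quasiloc_ops}. Then, for any $u \in H^{-k}(E)$ with $\supp u \subset L$ and any $R > 0$, I would estimate
\[
\|Au\|_{H^l, M - B_R(L)} \le \|A_{i_n}u\|_{H^l, M - B_R(L)} + \|(A - A_{i_n})u\|_{H^l, M - B_R(L)},
\]
where the key point is that the seminorm $\|\cdot\|_{H^l, M - B_R(L)}$ is dominated by the full norm $\|\cdot\|_{H^l}$, so the second term is bounded by $2^{-n}\|u\|_{H^{-k}}$, and the first by $\mu_n(R)\|u\|_{H^{-k}}$.

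Taking the infimum over $n$ motivates defining
\[
\mu(R) := \inf_{n \in \mathbb{N}}\bigl(\mu_n(R) + 2^{-n}\bigr),
\]
which by the above estimate is a dominating function candidate for $A$. It then remains to check that $\mu(R) \to 0$ as $R \to \infty$: given $\varepsilon > 0$, first choose $n$ so large that $2^{-n} < \varepsilon/2$, and then choose $R_0$ so that $\mu_n(R) < \varepsilon/2$ for all $R > R_0$, which is possible since $\mu_n$ is a dominating function; this yields $\mu(R) < \varepsilon$ for $R > R_0$.

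I do not anticipate a serious obstacle here. The only minor subtlety is remembering that the support seminorm is controlled by the ambient Sobolev norm so that the error term $\|(A - A_{i_n})u\|_{H^l, M - B_R(L)}$ can be estimated independently of $R$ and $L$; everything else is a clean application of the triangle inequality and the definition of a dominating function.
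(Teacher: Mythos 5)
Your proof is correct and uses essentially the same strategy as the paper's: approximate $A$ by a well-chosen $A_{i_n}$, split via the triangle inequality, control the error term by the full $H^l$-norm, and combine the decay of $\mu_n$ with the shrinking error to produce a dominating function for $A$. The only cosmetic difference is that the paper stitches its dominating function together piecewise on intervals $[R_{i_n}, R_{i_{n+1}}]$, whereas your $\mu(R) := \inf_n(\mu_n(R) + 2^{-n})$ packages the same bound more cleanly in a single formula.
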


\begin{proof}
Let $(A_i)_i$ be a sequence of quasilocal operators $H^{-k}(E) \to H^l(E)$ converging in the norm $\|\largecdot \|_{-k,l}$ to an operator $A$. We have to show that $A$ is also quasilocal.

Let $L \subset M$ and $u \in H^{-k}(E)$ with $\supp u \subset L$. Let $\varepsilon > 0$ be given and we choose an $A_{i_1}$ with $\|A - A_{i_1}\|_{-k,l} < \varepsilon$. Then
\begin{align*}
\|Au\|_{H^l, M-B_R(L)} & \le \underbrace{\| (A-A_{i_1})u\|_{H^l, M-B_R(L)}}_{\le \|(A - A_{i_1})u\|_{H^l}} + \|A_{i_1} u\|_{H^l, M-B_R(L)}\\
& \le \big( \varepsilon + \mu_{i_1}(R) \big) \cdot \|u\|_{H^{-k}}.
\end{align*}
We choose an $A_{i_2}$ with $\|A - A_{i_2}\|_{-k,l} < \varepsilon / 2$ and we have an $R_{i_2}$ such that $\mu_{i_2} < \varepsilon / 2$ for all $R \ge R_{i_2}$. Now we set $\mu_A(R) := \varepsilon + \mu_{i_1}(R)$ for all $R \le R_{i_2}$.

Let $A_{i_3}$ correspond to $\varepsilon / 4$ with corresponding $R_{i_3}$ such that $\mu_{i_3}(R) < \varepsilon / 4$ for all $R \ge R_{i_3}$. Then we go on with the definition of $\mu_R$ and set $\mu_A(R) := \varepsilon / 2 + \mu_{i_2}(R)$ for all $R$ with $R_{i_2} \le R \le R_{i_3}$.

Continuing this way, we get a dominating function $\mu_A$ for $A$ with $\mu(R) \to 0$ as $R \to \infty$, i.e., we have shown that $A$ is quasilocal.
\end{proof}

Equipping $\IU(E)$ with the family of norms $(\|\largecdot\|_{-k,l}, \|\largecdot^\ast\|_{-k,l})_{k,l \in\IN}$, the $^\ast$-algebra $\IU(E)$ will be complete and the adjoint map $A \mapsto A^\ast$ will be continuous on it. Since we also know from Lemma \ref{lem:smoothing_operators_frechet} that multiplication is jointly continuous, we have proved that $\IU(E)$ is a \Frechet algebra:

\begin{lem}\label{lem:mathcalU_Frechet}
$\IU(E)$ is naturally a \Frechet algebra and the adjoint map $A \mapsto A^\ast$ on it is continuous.
\end{lem}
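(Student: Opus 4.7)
The plan is to verify in turn the four requirements for a Fréchet algebra (Hausdorff topology, countable family of seminorms, completeness, jointly continuous multiplication) and then the continuity of the involution. Hausdorffness and countability are essentially built into the definition of the seminorm family $(\|\cdot\|_{-k,l}, \|\cdot^\ast\|_{-k,l})_{k,l\in\IN}$: the family is indexed by $\IN\times\IN$, and already the single seminorm $\|\cdot\|_{0,0}$ (the $L^2$-operator norm) separates points.

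The one nontrivial ingredient is \emph{completeness}. I would start with a Cauchy net $(A_i)_i$ in $\IU(E)$ with respect to all the seminorms above. In particular $(A_i)_i$ is Cauchy in $\IB(H^{-\infty}_\iota(E),H^\infty(E))$, which is Fréchet by Lemma \ref{lem:smoothing_operators_frechet}, so there exists a smoothing limit $A$. Similarly $(A_i^\ast)_i$ is Cauchy in $\IB(H^{-\infty}_\iota(E),H^\infty(E))$ and converges to some smoothing operator $B$. Restricting to $L^2(E)$ (the inclusion $H^\infty(E)\hookrightarrow L^2(E)\hookrightarrow H^{-\infty}_\iota(E)$ is continuous), continuity of the $L^2$-adjoint gives $B=A^\ast$. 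Then I would invoke Lemma \ref{lem:limit_quasilocal} twice: for each $k,l\in\IN$, $A_i\to A$ in $\|\cdot\|_{-k,l}$, and each $A_i$ is quasilocal as an operator $H^{-k}(E)\to H^l(E)$, so $A$ is quasilocal as such; the same argument applied to $(A_i^\ast)$ shows that $A^\ast$ is quasilocal for every pair $(k,l)$. Hence $A\in\IU(E)$ and $A_i\to A$ in the full Fréchet topology.

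For \emph{joint continuity of multiplication} I would reuse the estimate already derived on page \pageref{lem:smoothing_operators_frechet}, namely
\[
\|AB\|_{-k,l}\;\le\;C\,\|A\|_{-1,l}\,\|B\|_{-k,1},
\]
where $C$ is the norm of the inclusion $H^1(E)\hookrightarrow H^{-1}(E)$. For the adjoint seminorms, the identity $(AB)^\ast=B^\ast A^\ast$ yields the analogous bound
\[
\|(AB)^\ast\|_{-k,l}\;\le\;C\,\|B^\ast\|_{-1,l}\,\|A^\ast\|_{-k,1}.
\]
Both right-hand sides involve only seminorms in our fixed family, so multiplication is jointly continuous. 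Continuity of the involution is then immediate and costless: by construction the seminorm family is stable under $A\mapsto A^\ast$ (one simply permutes the two factors $\|\cdot\|_{-k,l}$ and $\|\cdot^\ast\|_{-k,l}$), so $A\mapsto A^\ast$ is in fact an isometry for each individual seminorm.

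The only real obstacle is the completeness step, and specifically the assertion that quasilocality passes to the limit in each Sobolev-operator norm; but this is exactly the content of Lemma \ref{lem:limit_quasilocal}, so once that lemma is in hand the proof reduces to bookkeeping. Everything else is formal: the Fréchet property of the ambient algebra of smoothing operators, the standard Sobolev composition estimate, and the symmetry of the seminorm family under the involution.
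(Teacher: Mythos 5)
Your proof is correct and follows essentially the same route as the paper: completeness of $\IU(E)$ is obtained from the \Frechet property of the ambient smoothing algebra (Lemma \ref{lem:smoothing_operators_frechet}) together with Lemma \ref{lem:limit_quasilocal} applied to $A$ and $A^\ast$, joint continuity of multiplication is the Sobolev composition estimate plus $(AB)^\ast=B^\ast A^\ast$, and continuity of the involution is built into the symmetry of the seminorm family. You simply spell out the bookkeeping (in particular the identification $B=A^\ast$ via restriction to $L^2(E)$) that the paper leaves implicit.
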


Recall that our goal is to show that $\IU(E)$ is a local $C^\ast$-algebra (Definition \ref{defn:local_Cstar_algebra}). Since we already know that it is a \Frechet algebra with a topology which is finer than the norm topology (since the family of norms used to define the \Frechet topology does include the usual operator norm), we conclude from Lemma \ref{lem:matrix_algebras_holomorphically_closed} that it suffices to show that $\IU(E)$ is closed under holomorphic functional calculus.

To do this we will use Lemma \ref{lem:stable_calculus_power_series}: let $A \in \IU(E)$ and let $f(x) = \sum_{i \ge 1} a_i x^i$ be a power series around $0 \in \IC$ with radius of convergence bigger than $\|A\|_{op} = \|A\|_{0,0}$. We have to show that $f(A) \in \IU(E)$. For $k,l \in \IN$ we have the estimate
\[\|A^{n+2}\|_{-k,l} \le \|A\|_{0,l} \cdot \|A\|_{0,0}^n \cdot \|A\|_{-k,0}\]
and so
\begin{align*}
\|f(A)\|_{-k,l} & \le \sum_{i \ge 1} |a_i| \cdot \|A^i\|_{-k,l}\\
& \le |a_1| \|A\|_{-k,l} + \sum_{i \ge 2} |a_i| \cdot \|A\|_{0,l} \cdot \|A\|_{0,0}^{i-2} \cdot \|A\|_{-k,0}\\
& < \infty.
\end{align*}
Since this holds for every $k,l \in \IN$, we conclude that $f(A)$ is a smoothing operator.

The above estimate also shows that the power series $f(A)$ converges in all norms $\|\largecdot\|_{-k,l}$ (though it is a priori only assumed to converge with respect to the operator norm $\|\largecdot\|_{0,0}$). Since every partial sum is a quasilocal operator, we get with the above Lemma \ref{lem:limit_quasilocal} that the limit $f(A)$ is also quasilocal.

Applying the same arguments also to the adjoint $A^\ast$ of $A$, we finally conclude that $f(A) \in \IU(E)$. This completes the proof of the lemma that $\IU(E)$ is a local $C^\ast$-algebra:

\begin{lem}\label{U*_-infty(E)_local_C*_algebra}
$\IU(E)$ is a local $C^\ast$-algebra.
\end{lem}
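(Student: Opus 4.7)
The plan is to verify the two clauses of Definition \ref{defn:local_Cstar_algebra}. The first clause---that $\IU(E)$ is a $^\ast$-subalgebra of the ambient $C^\ast$-algebra $\IB(L^2(E))$---is essentially built into Definition \ref{defn:quasiloc_smoothing}: closure under $^\ast$ is by construction, and closure under composition follows from \cite[Proposition 5.2]{roe_index_1}.

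For the second clause, I would lean on the \Frechet structure furnished by Lemma \ref{lem:mathcalU_Frechet}. Since that family of semi-norms is strictly finer than the operator norm, Lemma \ref{lem:matrix_algebras_holomorphically_closed} reduces holomorphic closure to checking a single scalar closure property, and Lemma \ref{lem:stable_calculus_power_series} in turn reduces the latter to the following: for every $A \in \IU(E)$ and every power series $f(z) = \sum_{i \ge 1} a_i z^i$ with radius of convergence strictly larger than $\|A\|_{0,0}$, the element $f(A)$ must lie in $\IU(E)$.

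The main step is then a telescoping estimate. Factoring $A^{n+2}$ as $H^{-k}(E) \xrightarrow{A} L^2(E) \xrightarrow{A^n} L^2(E) \xrightarrow{A} H^l(E)$ yields
\[\|A^{n+2}\|_{-k,l} \le \|A\|_{-k,0} \cdot \|A\|_{0,0}^n \cdot \|A\|_{0,l},\]
so $\sum |a_i| \|A^i\|_{-k,l}$ converges for every $k, l \in \IN$. Hence $f(A)$ is smoothing by Lemma \ref{lem:smoothing_operator_iff_bounded}, and the partial sums converge to $f(A)$ in each individual semi-norm $\|\largecdot\|_{-k,l}$, not merely in operator norm. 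Since each partial sum is quasilocal (products of quasilocal operators being quasilocal), Lemma \ref{lem:limit_quasilocal} promotes the limit to a quasilocal operator. Running the same argument on $A^\ast \in \IU(E)$ disposes of the adjoint, and so $f(A) \in \IU(E)$.

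The main obstacle is the telescoping estimate itself: the trick is to exploit the smoothing nature of $A$ to trade uniform control across all Sobolev scales for just two factors of $A$, absorbing the rest into a simple power of the operator norm; a naive iteration would fail because $A$ carries no uniform norm across Sobolev scales. A secondary subtlety is that quasilocality must be verified in each norm $\|\largecdot\|_{-k,l}$ separately under the limit, which is precisely the content of Lemma \ref{lem:limit_quasilocal}, obtained there by a piecewise concatenation of dominating functions.
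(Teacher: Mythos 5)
Your proposal is correct and follows essentially the same route as the paper: reduce to holomorphic closure via Lemma \ref{lem:matrix_algebras_holomorphically_closed}, then to power series via Lemma \ref{lem:stable_calculus_power_series}, establish the key estimate $\|A^{n+2}\|_{-k,l} \le \|A\|_{0,l}\,\|A\|_{0,0}^n\,\|A\|_{-k,0}$, and conclude smoothness and quasilocality of $f(A)$ in every semi-norm using Lemmas \ref{lem:smoothing_operator_iff_bounded} and \ref{lem:limit_quasilocal}, with the adjoint handled symmetrically. The only cosmetic difference is the ordering of factors in the telescoping bound, which does not change the argument.
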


Another possible control of the integral kernel of a smoothing operator at infinity would be to demand that the kernel vanishes outside of a uniform neighbourhood of the diagonal. This leads to smoothing operators with finite propagation, as defined in a moment. Note that operators of finite propagation will play a crucial role in our discussion of uniform $K$-homology.

\begin{defn}[Finite propagation]\label{defn:finite_prop_speed}
Let $A\colon L^2(E) \to L^2(F)$ be an operator. We will say that $A$ has \emph{finite propagation}, if there exists an $R \in \IR_{\ge 0}$ such that $\supp As \subset \overline{B_R(\supp s)}$ for all sections $s \in L^2(E)$.

In this case we will call the smallest possible value of $R$ the \emph{propagation of the operator $A$}.
\end{defn}

Note that an operator with propagation $R$ is of course quasilocal, since we can find a dominating function $\mu$ with $\mu(r) = 0$ for all $r > R$.

If $A$ is a smoothing operator, then it has by Proposition \ref{prop:smoothing_op_kernel} a smooth integral kernel $k_A$. It is clear that $A$ has propagation at most $R$ if and only if the kernel satisfies $k_A(x,y) = 0$ for all $x,y \in M$ with $d(x,y) > R$. On the other hand, if we have some section $k \in C_b^\infty(F \boxtimes E^\ast)$ with $k(x,y) = 0$ for all $x,y \in M$ with $d(x,y) > R$ for some $R > 0$, then the integral operator $A_k$ defined by it is a smoothing operator with propagation at most $R$. Since in this case the same also holds for the adjoint operator $A_k^\ast = A_{k^\ast}$, we get $A_k \in \IU(E, F)$.

Given an operator $A \in \IU(E, F)$, it is tempting to truncate its kernel $k_A(x,y)$ outside an $R$-neighbourhood of the diagonal in order to approximate $A$ via operators with finite propagation. But if one goes on to prove this, one quickly comes to the point where one has to show that the kernel $k(x,y)$ is integrable (i.e., is in $L^1(M)$) with respect to either one of the variables. But this seems not necessarily to hold for quailocal smoothing operators. In fact, it is an open question to determine the closure of the smoothing operators with finite propagation, and it is conjectured that this closure coincides with the quasilocal smoothing operators if we impose certain restrictions on the manifold like having Property A or having finite asymptotic dimension.

\subsection*{Smooth uniform Roe algebra}

We have discussed above that the closure of the smoothing operators with finite propagation are not necessarily all quasilocal smoothing operators. But since exactly this closure has connections to the uniform Roe algebra, we will give it a separate name now:

\begin{defn}[Smooth uniform Roe algebra]\label{defn:smooth_uniform_roe_algebra}
Let $M$ be a manifold of bounded geometry and let $E$, $F$ be two vector bundles of bounded geometry over $M$. The closure of all smoothing operators $A \colon H_\iota^{-\infty}(E) \to H^\infty(F)$ with finite propagation\footnote{Note that if $A$ is smoothing and has finite propagation, then $A^\ast$ is also smoothing and also with finite propagation. This follows from representing $A$ as an integral operator with smooth, uniformly bounded kernel.} under the family of norms $(\|\largecdot\|_{-k,l}, \|\largecdot^\ast\|_{-k,l})_{k,l \in\IN}$ will be denoted $\C(E,F)$.

If $E = F$, we will write $\C(E)$ for $\C(E,E)$ and will call it the \emph{smooth uniform Roe algebra}.
\end{defn}

We certainly have $\C(E) \subset \IU(E)$ and $\C(E)$ is by definition a \Frechet algebra on which the adjoint map $A \mapsto A^\ast$ is continuous. Basically the same arguments that we used to prove that $\IU(E)$ is a local $C^\ast$-algebra can also be used to prove that $\C(E)$ is a local $C^\ast$-algebra.

So the $K$-theory of $\C(E)$ coincides with its norm closure, and we will see later that this norm closure is exactly the uniform Roe algebra $C_u^\ast(E)$. Therefore $\C(E)$ may be regarded as a ``smooth version'' of it, hence its name.

\section{Coarsely bounded geometry}\label{sec:coarsely_bounded_geometry}

In this section we introduce the highly important notion of coarsely bounded geometry for metric spaces. First note that ``coarsely bounded geometry''  is the same notion as ``bounded geometry'' that other authors use. Our reason for renaming it is that there will be another notion of bounded geometry, namely ``locally bounded geometry'' (Definition \ref{defn:locally_bounded_geometry}, originally introduced in \cite[Definition 3.1]{spakula_universal_rep} by \v{S}pakula) for metric spaces, that will be equally important for us later. As the names suggest, this notions of bounded geometry will control certain aspects of the metric space, either at the coarse level, i.e., at infinity, or at the local level. Note that manifolds of bounded geometry, if regarded as metric spaces, will be both of coarsely and locally bounded geometry, i.e., no confusion can arise by not distinguishing between ``coarsely'' and ``locally'' for manifolds. In fact, one could take the view that coarsely bounded geometry is the abstraction to metric spaces of the implications of bounded geometry for manifolds on the large scale, and that locally bounded geometry is the abstraction to metric spaces of the local implications of it.

The importance of coarsely bounded geometry stems from the fact that this seems to be the right class of spaces where we want to do coarse index theory. As an first example, the property of amenability may only be possessed by metric spaces having coarsely bounded geometry, and amenability is a crucial property for the theory that we develop in this first part of the thesis, i.e., only it allows us to define our index maps into $\IR$.

Furthermore, the coarse Baum--Connes conjecture is stated only for metric spaces having coarsely bounded geometry, since there are examples of spaces not having bounded geometry and not satisfying the coarse Baum--Connes conjecture (\cite{dranishnikov_ferry_weinberger} and \cite[Section 8]{yu_finite_asymptotic_dimension}). And regarding some positive results about the coarse Baum--Connes conjecture: Skandalis, Tu and Yu showed in \cite[Theorem 5.3]{skandalis_tu_yu} that if a metric space $X$ has bounded geometry, then $X$ having property A as defined by Yu in \cite[Definition 2.1]{yu_embedding_Hilbert_space} is equivalent to the uniform Roe algebra $C_u^\ast(X)$ being a nuclear $C^\ast$-algebra. Furthermore, Sako showed in his preprint \cite{sako} that we additionally have the equivalent characterizations of $X$ having property A via the exactness and also via the local reflexivity of $C_u^\ast(X)$. Now the importance of property A lies in the fact that if a metric space $X$ does have it, then $X$ admits a uniform embedding into a Hilbert space (\cite[Theorem 2.7]{yu_embedding_Hilbert_space}). And if a metric space $X$ with bounded geometry admits a uniform embedding into a Hilbert space, than Yu showed in \cite[Theorem 1.1]{yu_embedding_Hilbert_space} that $X$ satisfies the coarse Baum--Connes conjecture in the form of \cite[Conjecture 3.4]{yu_embedding_Hilbert_space}. So we see that the notion of coarsely bounded geometry is immanent to the coarse Baum--Connes conjecture and therefore to coarse index theory.

\begin{defn}[Coarsely bounded geometry]
\label{defn:coarsely_bounded_geometry}
Let $X$ be a metric space. We call a subset $\Gamma \subset X$ a \emph{quasi-lattice} if
\begin{itemize}
\item there is a $c > 0$ such that $B_c(\Gamma) = X$ (i.e., $\Gamma$ is \emph{coarsely dense}) and
\item for all $r > 0$ there is a $K_r > 0$ such that $\card(\Gamma \cap B_r(y)) \le K_r$ for all $y \in X$.
\end{itemize}
A metric space is said to have \emph{coarsely bounded geometry} if it admits a quasi-lattice.
\end{defn}

Note that if we have a quasi-lattice $\Gamma \subset X$, then there also exists a uniformly discrete quasi-lattice $\Gamma^\prime \subset X$. The proof of this is an easy application of the Lemma of Zorn: given an arbitrary $\delta > 0$ we look at the family $\mathcal{A}$ of all subsets $A \subset \Gamma$ with $d(x,y) > \delta$ for all $x,y \in A$. These subsets are partially ordered under inclusion of sets and every totally ordered chain $A_1 \subset A_2 \subset \ldots \subset \Gamma$ has an upper bound given by the union $\bigcup_i A_i \in \mathcal{A}$. So the Lemma of Zorn provides us with a maximal element $\Gamma^\prime \in \mathcal{A}$. That $\Gamma^\prime$ is a quasi-lattice follows from its maximality.

\begin{examples}\label{ex:coarsely_bounded_geometry}
Every manifold $M$ of bounded geometry is a metric space of coarsely bounded geometry: any maximal set $\Gamma \subset M$ of points which are at least a fixed distance apart (i.e., there is an $\varepsilon > 0$ such that $d(x, y) \ge \varepsilon$ for all $x \not= y \in \Gamma$) will do the job. We can get such a maximal set by invoking Zorn's lemma (this is actually the same proof as the one of Lemma \ref{lem:nice_coverings_partitions_of_unity}). Note that a manifold of bounded geometry will also have locally bounded geometry, so no confusion can arise by not distinguishing between ``coarsely'' and ``locally'' bounded geometry in the terminology for manifolds.

If $(X,d)$ is an arbitrary metric space that is bounded, i.e., $d(x,x^\prime) < D$ for all $x, x^\prime \in X$ and some $D$, then \emph{any} finite subset of $X$ will constitute a quasi-lattice.

Let $K$ be a simplicial complex of bounded geometry\footnote{That is, the number of simplices in the link of each vertex is uniformly bounded.}. If we equip $K$ with the metric derived from barycentric coordinates, then the set of all vertices becomes a quasi-lattice in $K$.
\end{examples}

Let us now briefly discuss the notion of ``coarse equivalence'' for metric spaces, which we already mentioned sometimes.

\begin{defn}
Let $f\colon X \to Y$ be a (not necessarily continuous) map. We call $f$ a \emph{coarse} map, if
\begin{itemize}
\item for all $R > 0$ there is an $S > 0$ such that we have
\[d(x_1, x_2) < R \Rightarrow d(f(x_1), f(x_2)) < S\]
for all $x_1, x_2 \in X$, and
\item the preimage of every bounded subset is bounded.
\end{itemize}

If we replace the second bullet point by the following one:
\begin{itemize}
\item for all $R > 0$ there is an $S > 0$ such that we have
\[d(f(x_1), f(x_2)) < R \Rightarrow d(x_1, x_2) < S\]
for all $x_1, x_2 \in X$,
\end{itemize}
then we will call $f$ a \emph{uniformly coarse} map\footnote{Roe calls such maps ``rough''.}.

Two (not necessarily continuous) maps $f, g \colon X \to Y$ are called \emph{close}, if there is an $R > 0$ such that $d(f(x), g(x)) < R$ for all $x \in X$.

Finally, two metric spaces $X$ and $Y$ are called \emph{coarsely equivalent}, if there are coarse maps $f \colon X \to Y$ and $g \colon Y \to X$ such that their composites are close to the corresponding identity maps. If $f$ and $g$ are uniformly coarse, then $X$ and $Y$ are called \emph{uniformly coarsely equivalent}.
\end{defn}

It is clear that if $\Gamma \subset X$ is a quasi-lattice, then $\Gamma$ and $X$ are uniformly coarsely equivalent.

From \cite[Exercise 1.12]{roe_lectures_coarse_geometry} we know the following: if $f \colon X \to Y$ is a uniformly coarse map and if $f(X) \subset Y$ is coarsely dense (i.e., $B_c(f(X)) = Y$ for some $c > 0$), then there is a uniformly coarse map $g \colon Y \to X$ such that $f \circ g$ and $g \circ f$ are close to the corresponding identities, i.e., $X$ and $Y$ are uniformly coarsely equivalent. Moreover, every coarse equivalence arises in this way, i.e., coarsely equivalent spaces are also uniformly coarsely equivalent.

The importance of coarse equivalences stems from the following fact: recall that we said that the $K$-groups of the algebra $\IU(E)$ are a natural receptacle for the analytic index classes of uniform elliptic operators. We know that $K_\ast(\IU(E))$ is isomorphic to the $K$-theory $K_\ast(C_u^\ast(M))$ of the uniform Roe algebra of the manifold $M$, and this is again isomorphic to $K_\ast(C_u^\ast(Y))$ for a uniformly discrete quasi-lattice $Y \subset M$ (Proposition \ref{prop:IU(E)_dense_Cu*(E)} together with Lemma \ref{lem:iso_discrete_versions_uniform_roe}). But $K_\ast(C_u^\ast(Y))$ depends only on the coarse equivalence class of $Y$: in \cite[Theorem 4]{brodzki_niblo_wright} Brodzki, Niblo and Wright showed that if two uniformly discrete metric spaces of coarsely bounded geometry are coarsely equivalent, then their uniform Roe algebras are stably isomorphic\footnote{They stated their theorem by saying that the uniform Roe algebras will be Morita equivalent, but the proof actually shows stable isomorphy of the two algebras. Note that the uniform Roe algebra is usually not separable, and for such algebras being Morita equivalent is in general not equivalent to being stably isomorphic (see \cite{brown_green_rieffel}).}, i.e., their $K$-groups will coincide. So we see that the analytic index classes of uniform elliptic operators ``live'' in $K$-groups that depend only on the coarse equivalence class of the manifold.

\section{Amenability}\label{sec:amenability}

Amenability is the property that restricts the generality of Roe's Index Theorem at most, and therefore also of our index theorem for pseudodifferential operators. The class of amenable manifolds, e.g., does not contain hyperbolic space, and so the index theorem that we develop here does not hold for it. This is not a problem with the techniques used to prove the theorem, but it is an inherent problem: Block and Weinberger showed in \cite{block_weinberger_1} that Roe's Index Theorem can not hold for non-amenable manifolds. This is mostly due to the observation that if $M$ is not amenable, then its top-dimensional bounded de Rham cohomology $H^n_{b, \mathrm{dR}}(M)$ vanishes (we will discuss this in the paragraph after Proposition \ref{prop:equiv_amenability_metric_spaces}).
But if there are no cohomological index classes, there is no index theorem in the classical sense\footnote{This means that there is no index theorem if one wants such a theorem to include expressions in cohomology classes. Of course there are also other index theorem that have nothing to do with cohomology, and these theorems are also applicable to non-amenable manifolds.}.

Let us explain what amenability means and why it is needed to define indices in the sense how we will do it. If we have a Riemannian manifold $M$ of infinite volume and a bounded function $f$ on it, we may do the following averaging procedure: we choose an exhaustion $(M_i)_i$ of $M$ via compact subsets and then we consider the sequence $\frac{1}{\vol M_i} \int_{M_i} f dM$. Since $f$ is bounded, this sequence is a bounded sequence, and we may evaluate a functional $\tau \in (\ell^\infty)^\ast$ on it. Up to this point no amenability is needed. Now we consider bounded top-dimensional forms $\alpha \in \Omega^n(M)$ (bounded means that $\| \alpha \| := \sup_{x \in M} \{\|\alpha(x)\| + \|d \alpha(x)\|\}$ is bounded). Again, we may average these forms over an exhaustion of $M$ by compact subsets (now they should be embedded submanifolds with boundary and of codimension $0$), and no amenability is needed for that. But if we want that the functional $\theta$ that we defined in this way on the bounded forms $\Omega^n_b(M)$ descends to cohomology classes, we need that it vanishes on bounded exact forms, i.e., that $\theta(d \beta) = 0$. And here amenability enters the game. The line of argumentation is roughly the following (see \cite[Proposition 6.5]{roe_index_1} for the concrete proof): using Stokes Theorem we get $\frac{1}{\vol M_i} \int_{M_i} d \beta = \frac{1}{\vol M_i} \int_{\partial M_i} \beta$, and that must become $0$. Now $\beta$ is bounded and therefore $\int_{\partial M_i} \beta \le \vol(\partial M_i) \cdot \|\beta\|$. Hence we need that the ratio $\vol(\partial M_i) / \vol(M_i)$ goes to $0$ in order to conclude $\theta(d \beta) = 0$ for all bounded forms $\beta$. So we have to choose a compact exhaustion $(M_i)_i$ of $M$ with concretely this property in order that $\theta$ descends to $H^n_{b, \mathrm{dR}}(M)$. Raising exactly this to a definition, gives us ``amenability''.

But let us first start with the notion of amenability for metric spaces and after that we will relate it to the above discussion for manifolds.

\begin{defn}[Amenable metric spaces]\label{defn:amenability_metric_spaces}
Let $(X, d)$ be a metric space of coarsely bounded geometry. For a subset $U \subset X$ we define its \emph{$r$-boundary $\partial_r U$} as
\[\partial_r U := \{x \in X \ | \ d(x, U) < r \text{ and }d(x, X - U) < r\}.\]

We will call a quasi-lattice $\Gamma \subset X$ \emph{amenable} if for any $r, \delta > 0$ there is a finite subset $U \subset \Gamma$ with
\[\frac{\card \partial_r U}{\card U} < \delta,\]
where $\partial_r U$ is computed in $\Gamma$. Note that $\card \partial_r U$ is finite, because $\Gamma$ is a quasi-lattice.
We will call the space $X$ \emph{amenable} if it admits an amenable quasi-lattice.
\end{defn}

Let us give elementary examples. Using the characterization of amenability via bounded de Rham cohomology, that we will discuss later, we may easily find further examples and non-examples of amenable spaces on our own. Furthermore, we will also discuss the relation between amenability of the universal cover of a space and the amenability of the fundamental group of the covered space, which will also give us a lot more examples.

\begin{examples}
If the quasi-lattice $\Gamma$ is finite, then the space is clearly amenable (in this case we may take $U := \Gamma$ for all $r, \delta > 0$ since then $\partial_r U = \emptyset$). So especially every bounded metric space is amenable.

Euclidean space $\IR^n$ is amenable, whereas hyperbolic space $\mathbb{H}^n$ for $n \ge 2$ is not amenable.
\end{examples}

It is clear that given some amenable quasi-lattice $\Gamma \subset X$ and a sequence $(r_i)_i$ of positive numbers, we can construct a sequence $(U_i)_i$ of finite subsets of $\Gamma$ with $\frac{\card \partial_{r_i} U_i}{\card U_i} \to 0$. Since $(r_i)_i$ is allowed to go to infinity, we can therefore get a sequence $(U_i)_i$ with the property that for all $r > 0$ we have $\frac{\card \partial_{r} U_i}{\card U_i} \to 0$. Such a sequence will be called a \emph{F{\o}lner sequence}. If $(U_i)_i$ is also an exhaustion, i.e., it satisfies $U_1 \subset U_2 \subset \ldots$ and $\bigcup U_i = \Gamma$, then it is called a \emph{F{\o}lner exhaustion} of $\Gamma$. Now will now show that we can always get such an exhaustion, if we have at least one sequence (at least if $\Gamma$ is countable):

\begin{lem}\label{lem:amenable_admits_exhaustion}
Let $\Gamma \subset X$ be an amenable, countable quasi-lattice. Then $\Gamma$ admits a F{\o}lner exhaustion.
\end{lem}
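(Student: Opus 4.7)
The plan is to combine a F{\o}lner sequence (whose existence was noted in the paragraph preceding the lemma) with an enumeration of $\Gamma$, adding one point of the enumeration at each step while also absorbing a sufficiently ``fat'' F{\o}lner set that swamps the small perturbations caused by this enlargement. The key quantitative input is that $\Gamma$ is a quasi-lattice, so balls of radius $r$ in $X$ contain at most $K_r$ points of $\Gamma$; this lets us control how much $\partial_r$ grows when we adjoin finitely many points.

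First I would fix a F{\o}lner sequence $(V_n)_n$ in $\Gamma$ with $\frac{\card \partial_{r_n} V_n}{\card V_n} < \varepsilon_n$, where $r_n \to \infty$ and $\varepsilon_n \to 0$; its existence is the observation made right after Definition~\ref{defn:amenability_metric_spaces}. Next, fix an enumeration $\Gamma = \{x_1, x_2, \ldots\}$. The construction of $U_i$ proceeds by induction: having built $U_{i-1}$, choose $n_i$ large enough that $r_{n_i} \ge i$, that $\varepsilon_{n_i} < 1/i$, and, crucially, that
\[
\card V_{n_i} \ge i \cdot K_i \cdot (\card U_{i-1} + 1),
\]
and set $U_i := U_{i-1} \cup V_{n_i} \cup \{x_i\}$. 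By construction $U_{i-1} \subset U_i$, $x_i \in U_i$, and the $U_i$ are finite, so they are an exhaustion of $\Gamma$.

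The main estimate is the following stability of the F{\o}lner condition under small enlargements. Whenever $U = V \sqcup W$ with $V,W$ finite, a point $x \in \partial_r U$ satisfies $d(x,\Gamma \setminus U) < r$, hence $d(x,\Gamma \setminus V) < r$; and $d(x,U) < r$ gives either $d(x,V) < r$ (so $x \in \partial_r V$) or $d(x,W) < r$ (so $x \in B_r(W)$). Therefore $\partial_r U \subset \partial_r V \cup (B_r(W) \cap \Gamma)$, and by the quasi-lattice property $\card(B_r(W) \cap \Gamma) \le K_r \cdot \card W$. Applying this with $V = V_{n_i}$ and $W = (U_{i-1} \cup \{x_i\}) \setminus V_{n_i}$ yields
\[
\frac{\card \partial_r U_i}{\card U_i} \le \frac{\card \partial_r V_{n_i}}{\card V_{n_i}} + \frac{K_r \cdot (\card U_{i-1}+1)}{\card V_{n_i}}.
\]
Fix $r > 0$; for all $i \ge r$ one has $r \le r_{n_i}$ (so $\partial_r V_{n_i} \subset \partial_{r_{n_i}} V_{n_i}$) and $K_r \le K_i$, so the right-hand side is bounded by $\varepsilon_{n_i} + 1/i \le 2/i \to 0$. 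Thus $(U_i)_i$ is a F{\o}lner exhaustion.

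The only mild obstacle is getting the bookkeeping right so that adding the single point $x_i$ and the ``already built'' set $U_{i-1}$ does not spoil the ratio of the chosen F{\o}lner set $V_{n_i}$; this is handled by the size condition $\card V_{n_i} \ge i \cdot K_i \cdot (\card U_{i-1}+1)$, which is achievable because the F{\o}lner sequence can be thinned to sets of arbitrarily large cardinality (a F{\o}lner set with small boundary ratio in an infinite quasi-lattice must itself be large, since otherwise its $r$-boundary would exhaust it).
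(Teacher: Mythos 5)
Your construction follows the same blueprint as the paper's: enumerate $\Gamma$, inductively adjoin the $i$th enumeration point to a large F\o lner set, and use the quasi-lattice bound $K_r$ to control the growth of the $r$-boundary under that perturbation. The inclusion $\partial_r(V\sqcup W)\subset\partial_r V\cup(B_r(W)\cap\Gamma)$ that you isolate is exactly the estimate behind the paper's inequality $\card\partial_r U_{k+1}\le\card\partial_r U+K_r(\card U_k+1)$, and your arithmetic is correct \emph{provided} the cardinality condition $\card V_{n_i}\ge iK_i(\card U_{i-1}+1)$ can actually be met.

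That cardinality condition is where there is a genuine gap. You justify it by claiming that a F\o lner set with small boundary ratio ``must itself be large, since otherwise its $r$-boundary would exhaust it,'' but this is false for an arbitrary amenable quasi-lattice. Take $X=\Gamma=\{0\}\cup\{2^n : n\ge 1\}\subset\IR$ with the Euclidean metric; this is a countable, uniformly discrete quasi-lattice of coarsely bounded geometry. The singletons $V_n=\{2^n\}$ with radii $r_n=2^{n-2}$ give $\partial_{r_n}V_n=\emptyset$, since the nearest point of $\Gamma\setminus V_n$ lies at distance $2^{n-1}>r_n$. So $(V_n)$ is a perfectly valid F\o lner sequence with $\card V_n\equiv 1$, and no thinning can produce large sets from it; since $\card U_{i-1}\to\infty$ while $\card V_{n_i}$ stays bounded, the term $K_r(\card U_{i-1}+1)/\card U_i$ does not tend to zero, and the construction collapses. (The lemma itself is still true for this $\Gamma$ --- the prefixes $\{0,2,\dots,2^i\}$ form a F\o lner exhaustion --- so the defect lies in insisting on a \emph{pre-fixed} F\o lner sequence.) The paper avoids this by choosing a fresh set directly from the amenability condition at each step and justifying that it can be taken large via the observation that $\card\partial_r U\ge 1$ when $r>2c$, which holds when $X$ is path-connected (an intermediate-value argument along a path from $U$ to $\Gamma\setminus U$ together with coarse density produces a point of $\Gamma$ in $\partial_r U$) and covers the manifold and graph applications of the thesis. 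To repair your proof you should either choose $V_{n_i}$ fresh at each step, or supply a coarse-connectedness hypothesis and argument forcing $\card V_n\ge 1/\varepsilon_n\to\infty$ in place of the ``would exhaust it'' heuristic.
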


\begin{proof}
We will first show that for any $r > 0$ there is an exhaustion of $\Gamma$ by finite subsets $U_i$, such that
\[\frac{\card \partial_r U_i}{\card U_i} \stackrel{i \to \infty}\longrightarrow 0.\]

Let $c > 0$ be such that $B_c(\Gamma) = X$ and let $r > 2c$ (note that this is no restriction since it suffices to prove the statement for arbitrarily large $r$). Let us furthermore enumerate the set $\Gamma$, i.e., $\Gamma = \{\gamma_1, \gamma_2, \ldots\}$. Now we choose a subset $U \subset \Gamma$ with $\card U > 2 K_r$, where $K_r$ is such that $\card(\Gamma \cap B_{r}(y)) \le K_r$ for all $y \in M$ (note that this is possible since from $k > 2c$ we can conclude $\card \partial_r U \ge 1$), and $\tfrac{\card \partial_r U}{\card U} < 1/2$ and set $U_1 := U \cup \{\gamma_1\}$. Then we have $\card \partial_r U_1 \le \card \partial_r U + K_r$ and therefore
\[\frac{\card \partial_r U_1}{\card U_1} \le \frac{\card \partial_r U + K_r}{\card U} < \frac{1}{2} + \frac{1}{2}.\]

Now suppose that we have already constructed $U_1 \subset \ldots \subset U_k$ with $\tfrac{\card \partial_r U_i}{\card U_i} < \tfrac{1}{i}$ for all $1 \le i \le k$. We choose a new subset $U \subset \Gamma$ satisfying $\card U > 2 K_r (\card U_k + 1)$ and $\tfrac{\card \partial_r U}{\card U} < \tfrac{1}{2(k+1)}$ and define $U_{k+1} := U_k \cup U \cup \{\gamma_{k+1}\}$. Then we get the upper bound $\card \partial_r U_{k+1} \le \card \partial_r U + K_r (\card U_k + 1)$ and so
\[\frac{\card \partial_r U_{k+1}}{\card U_{k+1}} \le \frac{\card \partial_r U + K_r (\card U_k + 1)}{\card U} < \frac{1}{2(k+1)} + \frac{1}{2(k+1)}.\]

Since $\gamma_i \in U_i$, the subsets $U_1 \subset U_2 \subset \ldots$ are an exhaustion of $\Gamma$.

Note that it is also possible to increase $r$ on every step of the construction (and we can of course also decrease it), i.e., we can get an exhaustion satisfying
\[\frac{\card \partial_{r_k} U_{k}}{\card U_{k}} < \frac{1}{k},\]
where $r_k$ is any arbitrary sequence (especially it is allowed to go to infinity).
\end{proof}

As promised, we now come to the notion of amenability for Riemannian manifolds.

\begin{defn}[{\cite[Definition 6.1]{roe_index_1}}]
Let $M$ be a manifold of bounded geometry. A sequence of compact subsets $(M_i)_i$ of $M$ will be called a \emph{F{\o}lner sequence}\footnote{In \cite[Definition 6.1]{roe_index_1} such sequences were called \emph{regular}.} if for each $r > 0$ we have
\[\frac{\vol B_r(\partial M_i)}{\vol M_i} \stackrel{i \to \infty}\longrightarrow 0.\]

A F{\o}lner sequence $(M_i)_i$ will be called a \emph{F{\o}lner exhaustion}, if $(M_i)_i$ is an exhaustion, i.e., $M_1 \subset M_2 \subset \ldots$ and $\bigcup_i M_i = M$.
\end{defn}

\begin{lem}\label{lem:manifold_equiv_amenable_reg_exhaustion}
Let $M$ be a manifold of bounded geometry.

Then $M$ is amenable as a metric space if and only if it admits a F{\o}lner sequence. If this is the case, then $M$ also admits a F{\o}lner exhaustion.
\end{lem}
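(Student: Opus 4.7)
The strategy is to translate between the discrete amenability of a quasi-lattice and the continuous \Folner condition on $M$ by exploiting the uniform two-sided control on volumes of balls that comes with bounded geometry: there exist functions $v_-, v_+ \colon \IR_{>0} \to \IR_{>0}$ with $0 < v_-(s) \le \vol B_s(x) \le v_+(s)$ for all $x \in M$ and $s > 0$. I fix once and for all a uniformly discrete quasi-lattice $\Gamma \subset M$ with separation constant $\delta_0 > 0$ and coarse density constant $c > 0$, whose existence was noted in the paragraph after Definition \ref{defn:coarsely_bounded_geometry}.

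For the direction ``amenable $\Rightarrow$ \Folner sequence'', starting from an amenable structure on $\Gamma$ I produce finite sets $U_i \subset \Gamma$ with $\card(\partial_s U_i)/\card(U_i) \to 0$ simultaneously for all $s > 0$, as explained in the discussion preceding Lemma \ref{lem:amenable_admits_exhaustion}. Then I set $M_i := \overline{B_c(U_i)}$. The disjoint balls $B_{\delta_0/2}(\gamma)$ for $\gamma \in U_i$ yield $\vol M_i \ge v_-(\delta_0/2) \cdot \card U_i$. The key geometric step is the inclusion
\[ B_r(\partial M_i) \ \subset\ B_{r+c}\bigl(\partial_{3c} U_i\bigr), \]
derived as follows: a point $y \in \partial M_i$ is a limit of points outside $M_i$, each of which by coarse density has a nearest $\Gamma$-point in $\Gamma \setminus U_i$ and at distance $< c$; combined with $y \in \overline{B_c(U_i)}$, this yields points $\gamma \in U_i$ and $\gamma' \in \Gamma \setminus U_i$ both within distance $c$ of $y$, so that $d(\gamma, \gamma') \le 2c$ and hence $\gamma \in \partial_{3c} U_i$. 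Consequently $\vol B_r(\partial M_i) \le v_+(r+c) \cdot \card(\partial_{3c} U_i)$, and the \Folner quotient for $(M_i)$ tends to zero.

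For the converse direction, given a \Folner sequence $(M_i)_i$ I set $U_i := \Gamma \cap M_i$, so that $\Gamma \setminus U_i = \Gamma \setminus M_i$. A point $y \in M_i$ with $d(y, \partial M_i) \ge c$ must have its nearest $\Gamma$-point inside $M_i$, since otherwise the geodesic joining them (of length $< c$) would cross $\partial M_i$; this proves $M_i \setminus B_c(\partial M_i) \subset B_c(U_i)$ and hence the lower bound $\vol M_i - \vol B_c(\partial M_i) \le v_+(c) \cdot \card U_i$. On the boundary side, the same geodesic-crossing argument shows $\partial_s U_i \subset B_s(\partial M_i)$, and combining with the disjoint $\delta_0/2$-balls centred at the points of $\partial_s U_i$ produces $v_-(\delta_0/2) \cdot \card(\partial_s U_i) \le \vol B_{s + \delta_0/2}(\partial M_i)$. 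Dividing these two inequalities, the \Folner hypothesis on $(M_i)$ drives $\card(\partial_s U_i)/\card U_i$ to zero for every $s > 0$, proving amenability.

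For the existence of a \Folner exhaustion, once amenability of $M$ is established I invoke Lemma \ref{lem:amenable_admits_exhaustion} on $\Gamma$ to obtain a \Folner exhaustion $(U_i)_i$ of $\Gamma$, and then take $M_i := \overline{B_c(U_i)}$: monotonicity is inherited from $U_i \subset U_{i+1}$, while $\bigcup_i M_i \supset B_c\bigl(\bigcup_i U_i\bigr) = B_c(\Gamma) = M$, and the \Folner estimate for $(M_i)$ is the one already established in the forward direction. The main technical obstacle I foresee is precisely the boundary comparison $B_r(\partial M_i) \subset B_{r+c}(\partial_{3c} U_i)$ together with its converse counterpart $\partial_s U_i \subset B_s(\partial M_i)$: one must argue carefully that coarse density really forces a $\Gamma \setminus U_i$-point nearby each boundary point, which is where the interplay between the separation constant $\delta_0$ and the coarse-density constant $c$ becomes delicate and the bounded geometry of $M$ is crucially used through the volume bounds $v_\pm$.
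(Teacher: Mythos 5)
Your proof is correct, and it follows the same overall strategy as the paper's: translate between amenability of a quasi-lattice $\Gamma$ and the continuous \Folner condition on $M$ using the uniform two-sided volume bounds that bounded geometry provides. The concrete constructions differ slightly, though, and yours is arguably a bit cleaner. In the forward direction the paper sets $M_\delta := \bigcup_{x \in U} B_{2r}(x)$ for a parameter $r > 2c$ and then must let $r = r_i \to \infty$ together with the $U_i$, invoking the refined form of Lemma~\ref{lem:amenable_admits_exhaustion} to get the \Folner estimate simultaneously for all radii; your choice $M_i := \overline{B_c(U_i)}$ uses the fixed coarse-density constant $c$, so the boundary comparison lands in $\partial_{3c} U_i$ with $3c$ independent of the target radius $r$, and the full \Folner condition for all $r$ drops out of a single estimate without any diagonalization. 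In the backward direction the paper takes $U := \Gamma \cap B_c(M_i)$ and bounds $\vol M_i$ from above by covering $M_i$ with $2c$-balls around $U$, whereas you take $U_i := \Gamma \cap M_i$ and get the lower bound on $\card U_i$ through the geodesic-crossing inclusion $M_i \setminus B_c(\partial M_i) \subset B_c(U_i)$; both routes work, and your boundary inclusion $\partial_s U_i \subset B_s(\partial M_i)$ is also a slightly tighter version of the paper's $\partial_r U \subset B_{r+c}(\partial M_i)$. One small point worth spelling out for completeness: the claim that $\gamma' \in \Gamma \setminus U_i$ lies within distance $c$ (not just $c + \varepsilon$) of $y \in \partial M_i$ requires passing to the limit along a sequence $z_n \to y$ outside $M_i$, using that only finitely many lattice points lie near $y$; you gesture at this but it is the one place where the estimate needs a line of extra care, and harmlessly replacing $3c$ by $3c + \varepsilon$ would also fix it.
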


\begin{proof}
\textbf{($\boldsymbol{\Longrightarrow}$)} Let $M$ be amenable with amenable quasi-lattice $\Gamma \subset M$, let $c > 0$ be such that $B_c(\Gamma) = M$ and let $r > 2c$ be given.

Since $M$ has bounded geometry, there is a $C_1 > 0$ such that $\vol B_{2r}(x) \ge C_1$ and a $C_2 > 0$ such that $\vol B_{r}(\partial B_{2r}(x)) \le C_2$ for all $x \in M$.

Let $\delta > 0$ be given and let $U \subset \Gamma$ be the finite subset of $\Gamma$ satisfying $\frac{\card \partial_{5r} U}{\card U} < \delta$. We set $M_\delta := \bigcup_{x \in U} B_{2r}(x)$ which is a compact subset due to the finiteness of $U$. Then we get
\[\vol M_\delta \ge \frac{C_1}{K_{4r}} \cdot \card U,\]
where $K_{4r} > 0$ is such that $\card(\Gamma \cap B_{4r}(y)) \le K_{4r}$ for all $y \in M$. Furthermore
\[\vol B_{r}(\partial M_\delta) \le C_2 \cdot \card \partial_{5r} U,\]
since if $y \notin \partial_{5r} U$ then $B_{2r}(y) \cap B_r(\partial M_\delta) = \emptyset$, i.e., only points $y \in \partial_{5r} U$ can possibly contribute to $B_{r}(\partial M_\delta)$; see Figure \ref{fig:amenability_equiv}:

\begin{figure}[htbp]
\centering
\includegraphics[scale=0.45]{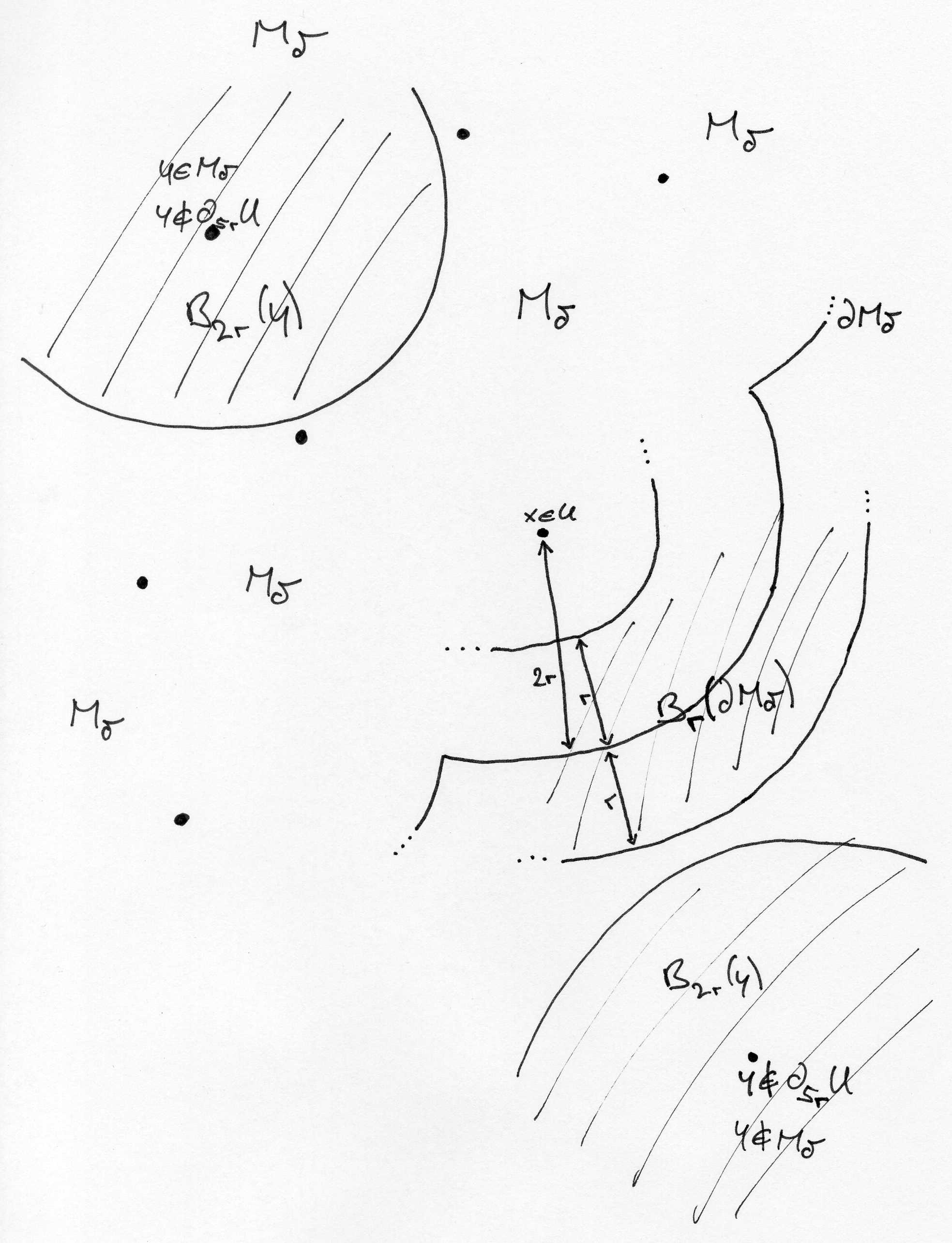}
\caption{Only points $y \in \partial_{5r} U$ can contribute to $B_r(\partial M_\delta)$.}
\label{fig:amenability_equiv}
\end{figure}

Putting this two estimates together we conclude
\[\frac{\vol B_r(\partial M_\delta)}{\vol M_\delta} \le \frac{C_2 \cdot \card \partial_{5r} U}{\frac{C_1}{K_{4r}} \cdot \card U} < \frac{C_2 K_{4r}}{C_1} \cdot \delta.\]

Now we choose via Lemma \ref{lem:amenable_admits_exhaustion} an exhaustion $U_i$ of $\Gamma$ satisfying $\tfrac{\card \partial_{3r_i} U_i}{\card U_i} < \tfrac{C_1}{C_2 K_{4r_i}}\cdot \tfrac{1}{i}$ for a sequence $(r_i)_i$ converging to infinity (note that $C_2$ does depend on $r_i$), and define $M_i := \bigcup_{x \in U_i} B_{2r_i}(x)$; then $\tfrac{\vol B_{r_i}(\partial M_i)}{\vol M_i} < \tfrac{1}{i}$. We have shown that if $M$ is amenable as a metric space, then it admits a \Folner exhaustion.

\textbf{($\boldsymbol{\Longleftarrow}$)} Let $M$ admit a F{\o}lner sequence, and we have to show that $M$ is amenable as a metric space. So let $\Gamma$ be a coarsely dense quasi-lattice in $M$ with $B_c(\Gamma) = M$, let $r, \delta > 0$ be given and let $\varepsilon > 0$ be arbitrary. As assumed, there is a compact $M_i \subset M$ with
\[\frac{\vol B_{r+c+\varepsilon}(\partial M_i)}{\vol M_i} < \delta \cdot \frac{C_1}{K_\varepsilon}\cdot \frac{1}{C_2},\]
where $C_1$ is a lower bound on $\vol B_\varepsilon(x)$ and $C_2$ an upper bound on $\vol B_{2c}(x)$ for all points $x \in M$.

We define $U := \Gamma \cap B_c(M_i)$ which is finite since $M_i$ is compact and $\Gamma$ a quasi-lattice. Since $\partial_r U \subset B_{r+c}(\partial M_i)$, we get the estimate
\[\card \partial_r U \cdot C_1 \le K_\varepsilon \cdot \vol B_{r+c+\varepsilon}(\partial M_i),\]
and since every $x \in M_i$ is contained in a ball $B_{2c}(\gamma)$ for a $\gamma \in U$, we get the estimate $\vol M_i \le \card U \cdot C_2$. So putting it together we have
\[\frac{\card \partial_r U}{\card U} \le \frac{K_\varepsilon \cdot \vol B_{r+c+\varepsilon}(\partial M_i)}{C_1} \cdot \frac{C_2}{\vol M_i} < \delta,\]
which completes the proof.
\end{proof}

At last, let us state that amenability is preserved under coarse equivalences of spaces. A fact that is straightforward to prove.

\begin{lem}\label{lem:amenability_coarse_equiv}
Let $X$ and $Y$ be metric spaces of coarsely bounded geometry and let them be coarsely equivalent.

Then $X$ is amenable if and only if $Y$ is.
\end{lem}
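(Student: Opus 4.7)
The plan is to transfer an amenable quasi-lattice in $X$ directly to one in $Y$ via a uniformly coarse equivalence, controlling both the cardinalities and the boundaries by means of a counting argument based on local finiteness of the quasi-lattices and the reverse uniform coarseness estimate that a uniformly coarse equivalence provides.

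First, I would use the excerpt's remark that coarsely equivalent spaces of coarsely bounded geometry are automatically uniformly coarsely equivalent, so I may fix uniformly coarse maps $f \colon X \to Y$ and $g \colon Y \to X$ with $g \circ f$ and $f \circ g$ close to the respective identities; let $D_0$ be a common bound for these closeness constants. Next, assuming $X$ amenable with a (uniformly discrete) amenable quasi-lattice $\Gamma_X$, I would propose the candidate $\Gamma_Y := f(\Gamma_X) \subset Y$. The verification that $\Gamma_Y$ is a quasi-lattice is routine: coarse density of $\Gamma_Y$ follows by writing $y \approx f(g(y))$ and approximating $g(y)$ by a point of $\Gamma_X$, then pushing forward with the forward control of $f$; local finiteness of $\Gamma_Y$ follows because any $y' \in \Gamma_Y \cap B_r(y)$ is the image of some $x' \in \Gamma_X$ with $d(x',g(y)) \leq D_0 + \varphi(r)$ for a control function $\varphi$ of $g$, and $\Gamma_X$ is itself a quasi-lattice.

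The heart of the proof is showing $\Gamma_Y$ is amenable. Given $r,\delta > 0$, I would pick a finite $U \subset \Gamma_X$ with $\tfrac{\card \partial_{R+1} U}{\card U}$ small (with $R$ chosen below) and set $V := f(U) \subset \Gamma_Y$. Two key estimates then govern the F{\o}lner ratio. The lower bound $\card V \geq \card U / K_{2D_0}^X$ comes from the observation that $g \circ f \sim \id$ forces the fibers of $f|_{\Gamma_X}$ to have diameter $\leq 2D_0$, hence at most $K_{2D_0}^X$ elements by local finiteness of $\Gamma_X$. The boundary bound $\card \partial_r V \leq K_r^Y \cdot \card \partial_{R+1} U$ is obtained by defining $\phi \colon \partial_r V \to \partial_{R+1} U$ as follows: for $v \in \partial_r V$ choose $v_1 \in V$ and $v_2 \in \Gamma_Y - V$ both within distance $r$ of $v$, write $v_1 = f(x_1)$ with $x_1 \in U$ and $v_2 = f(x_2)$ (necessarily with $x_2 \notin U$, since otherwise $v_2 \in V$), and set $\phi(v) := x_1$. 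From $d(f(x_1), f(x_2)) < 2r$ and the reverse uniform coarseness (obtained by applying $g$ and using $g \circ f \sim \id$), one gets $d(x_1,x_2) \leq 2D_0 + \varphi(2r) =: R$, which places $x_1 \in \partial_{R+1} U$. The fibers of $\phi$ lie in $B_r(f(x_1)) \cap \Gamma_Y$ and thus have size at most $K_r^Y$.

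Combining these estimates gives
\[
\frac{\card \partial_r V}{\card V} \;\leq\; K_r^Y \cdot K_{2D_0}^X \cdot \frac{\card \partial_{R+1} U}{\card U},
\]
which can be made $< \delta$ by choosing $U$ appropriately. Hence $\Gamma_Y$ is amenable, so $Y$ is. The reverse implication follows by interchanging the roles of $f$ and $g$, $X$ and $Y$. The main obstacle in this plan is the boundary estimate: it requires simultaneously exploiting the forward control of $f$ (to witness $v_1, v_2$ as images of lattice points), the reverse uniform coarseness encoded in $g$ (to bound $d(x_1,x_2)$), and the uniform local finiteness of both quasi-lattices to control the fibers; all three features genuinely rely on the coarsely bounded geometry hypothesis.
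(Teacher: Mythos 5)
Your proof is correct, and since the paper omits the proof entirely (it simply asserts the statement is ``straightforward to prove''), you have supplied exactly the kind of F{\o}lner-pushing argument that is expected: pass to a uniformly coarse equivalence, push the amenable quasi-lattice forward, and control the boundary by the two counting bounds. All three estimates check out: the forward map has fibers of diameter at most $2D_0$ (hence size at most $K^X_{2D_0}$), the map $\phi$ into $\partial_{R+1}U$ is well defined with $R = 2D_0 + \varphi(2r)$ depending only on $r$ and $D_0$, and the fibers of $\phi$ are contained in balls of radius $r$ intersected with $\Gamma_Y$, hence have size at most $K^Y_r$.

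One small cosmetic remark: the parenthetical assumption that $\Gamma_X$ is uniformly discrete is never actually used anywhere in your argument (every counting step invokes only the quasi-lattice bound $\card(\Gamma_X \cap B_\rho(x)) \le K^X_\rho$, which holds for any quasi-lattice), and in fact relying on it would be slightly circular, since passing from an amenable quasi-lattice to a uniformly discrete amenable sub-quasi-lattice is itself an instance of the lemma being proved. You should simply delete the parenthetical; the argument is then self-contained.
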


\subsection*{Characterizations of amenability}

We have said that we need amenability so that we have \Folner sequences and therefore suitable functionals on $H^n_{b, \mathrm{dR}}(M)$ to detect cohomological index classes. Now we may of course ask ourselves if the restriction to \Folner sequences is too much, i.e., it might a priori be the case that we have a non-trivial index class in $H^n_{b, \mathrm{dR}}(M)$ and $M$ is not amenable, i.e., we would not be able to detect that non-trivial class via a \Folner sequence (since that would make $M$ amenable). But we could maybe detect by other means, i.e., by another index theorem. But we already said that there is no cohomological index theorem for non-amenable manifolds, i.e., the case that we have a non-trivial index class in $H^n_{b, \mathrm{dR}}(M)$ for $M$ non-amenable should not occur. And in fact, we will show now that if $M$ is not amenable, then $H^n_{b, \mathrm{dR}}(M)$ vanishes, and that this is even an equivalent characterization. Furthermore, we will discuss some other equivalent characterizations of amenability further showing the importance of this notion.\footnote{True to the motto that if a property has a lot of different equivalent characterizations, then this property must be important.} Note that we won't explain all the occuring notions in the following discussion since that would lead us too far astray.

We will start with characterizations via different homology and cohomology theories. An important one was given by Block and Weinberger in \cite[Theorem 3.1]{block_weinberger_1}, where they showed that the amenability of $X$ is equivalent to $H^{\mathrm{uf}}_0(X) \not= 0$, where $H^{\mathrm{uf}}_\ast(X)$ denotes the uniformly finite homology of $X$ that they introduced there, and it is also equivalent to $\bar{H}^{\mathrm{uf}}_0(X) \not= 0$, where $\bar{H}^{\mathrm{uf}}_\ast(X)$ is \emph{reduced} uniformly finite homology (i.e., we divide by the \emph{closure} of the boundary operator in the definition of the homology groups). The equivalence of the latter to the other ones is mentioned in, e.g., \cite{block_weinberger_large_scale}.

Another characterization was given by \v{S}pakula using uniform $K$-homology: he showed in \cite[Theorem 11.2]{spakula_uniform_k_homology} that if we have a connected graph $X$ with vertex set $Y$, then $X$ is amenable if and only if the fundamental class $[Y] \in K_0^u(X)$ does not vanish. The technical restriction of $X$ being a connected graph is not crucial: every metric space of coarsely bounded geometry is coarsely equivalent to such a graph with vertex set $Y$ being a quasi-lattice of $X$ and we know that amenability is a property of the coarse equivalence class of a metric space (Lemma \ref{lem:amenability_coarse_equiv}).

There is also a characterization using the $K$-theory of uniform Roe algebras. In \cite{elek} Elek proved that for a finitely generated group $G$ amenability\footnote{We will discuss amenability of groups further down.} is equivalent to the non-vanishing of its fundamental class in the algebraic $K$-theory group $K_0^{\mathrm{alg}}(T(G))$ of its translation algebra $T(G)$. Note that the translation algebra of $G$ is the same as the uniform Roe algebra $C_u^\ast(G)$. Moreover, Elek's proof generalizes to vertex sets of connected graphs in the following sense: if $Y$ is a vertex set of a connected graph $X$, then $X$ is amenable if and only if $[1] \not= [0] \in K_0(C_u^\ast (Y))$, where $[1] \in K_0(C_u^\ast (Y))$ is the corresponding fundamental class. This characterization is connected to the one via uniform $K$-homology by the uniform coarse assembly map $\mu_u\colon K_\ast^u(X) \to K_\ast(C_u^\ast (Y))$ which was constructed in \cite[Section 9]{spakula_uniform_k_homology} by \v{S}pakula: indeed, we have $\mu_u([Y]) = [1]$.

Let us summarize the above discussion:

\begin{prop}\label{prop:equiv_amenability_metric_spaces}
Let $X$ be a connected graph and $Y$ its set of vertices such that it is a quasi-lattice for $X$.

Then the following are equivalent:
\begin{itemize}
\item $X$ is amenable,
\item $H_0^{\mathrm{uf}}(X) \not= 0$,
\item $\bar{H}_0^{\mathrm{uf}}(X) \not= 0$,
\item $[Y] \not= [0] \in K_0^u(X)$ and
\item $[1] \not= [0] \in K_0(C_u^\ast (Y))$.
\end{itemize}
\end{prop}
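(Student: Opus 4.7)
The plan is to stitch together the characterizations cited in the paragraphs preceding the proposition, rather than to prove each equivalence from scratch. All four non-trivial equivalences appear in the literature, so the main job is to arrange them in a coherent loop and to deal with the technical issue that some references require $X$ to be the vertex set of a graph while others apply to general metric spaces of coarsely bounded geometry.

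First I would observe that by Lemma \ref{lem:amenability_coarse_equiv} amenability is a coarse invariant, so amenability of the connected graph $X$ is equivalent to amenability of its vertex quasi-lattice $Y$. This reduction is crucial because the uniformly finite homology $H_0^{\mathrm{uf}}$, the uniform $K$-homology $K_0^u$, and the uniform Roe algebra $C_u^\ast$ are all coarse invariants (at least up to canonical isomorphism on coarsely equivalent spaces of coarsely bounded geometry), so every condition on the list can be freely transferred between $X$ and $Y$.

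Next I would assemble the equivalences. For (1)$\Leftrightarrow$(2), I would cite \cite[Theorem 3.1]{block_weinberger_1} directly. For (2)$\Leftrightarrow$(3), the reduced version obviously vanishes whenever the unreduced one does, and conversely the Block--Weinberger proof actually shows that the fundamental class survives even after taking closure, as noted in \cite{block_weinberger_large_scale}; so this step is just a citation. For (1)$\Leftrightarrow$(4), I would quote \v{S}pakula's \cite[Theorem 11.2]{spakula_uniform_k_homology}, which applies in precisely the graph-with-vertex-set setting assumed in the proposition; no extra work is needed. Finally, for (4)$\Leftrightarrow$(5), I would invoke the uniform coarse assembly map $\mu_u \colon K_0^u(X) \to K_0(C_u^\ast(Y))$ constructed by \v{S}pakula together with the computation $\mu_u([Y]) = [1]$ mentioned in the text; this immediately gives (4)$\Rightarrow$(5). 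The reverse implication (5)$\Rightarrow$(4) can then be deduced from Elek's result in \cite{elek}, noting that Elek's proof goes through for the vertex set of a connected graph, thereby furnishing an independent proof of (1)$\Leftrightarrow$(5) that closes the loop.

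The only real obstacle is bookkeeping: checking that the coarse-invariance statements used to move between $X$ and $Y$ apply verbatim to each of the four invariants in question, and verifying that Elek's argument, which is written for finitely generated groups, truly does generalize to vertex sets of connected graphs of bounded geometry as claimed in the preceding paragraph. Both points are essentially mechanical; the mathematical substance lies entirely in the cited theorems of Block--Weinberger, \v{S}pakula, and Elek, so beyond a careful citation and the identification $\mu_u([Y])=[1]$ there is little further work to do.
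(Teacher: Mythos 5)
Your proposal cites exactly the same results the paper uses --- Block--Weinberger \cite[Theorem 3.1]{block_weinberger_1} for (1)$\Leftrightarrow$(2), the remark in \cite{block_weinberger_large_scale} for (2)$\Leftrightarrow$(3), \v{S}pakula's \cite[Theorem 11.2]{spakula_uniform_k_homology} for (1)$\Leftrightarrow$(4), and Elek's result (generalized from groups to vertex sets of connected graphs) for (1)$\Leftrightarrow$(5) --- and the paper's ``proof'' is indeed nothing more than assembling these citations. In that sense your approach is the paper's approach.

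However, one step in your loop is stated with the implication backwards. From $\mu_u([Y])=[1]$ and the fact that $\mu_u$ is merely a group homomorphism (\emph{not} known to be injective in general --- that is the content of the uniform coarse Baum--Connes conjecture), you \emph{cannot} conclude (4)$\Rightarrow$(5): a nonzero class $[Y]$ could a priori be mapped to $0 = [1]$ in $K_0(C_u^\ast(Y))$. What $\mu_u([Y])=[1]$ does give for free is the contrapositive direction: if $[Y]=[0]$ then $[1]=\mu_u([0])=[0]$, i.e.\ (5)$\Rightarrow$(4). Your overall argument nonetheless closes correctly, because you also invoke Elek's independent proof of (1)$\Leftrightarrow$(5), which together with \v{S}pakula's (1)$\Leftrightarrow$(4) supplies both directions between (4) and (5); the assembly-map remark is then just a redundant (and, as you phrased it, misdirected) observation rather than a load-bearing step.
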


Let us now discuss the for us most important characterization of amenability via the bounded de Rham cohomology, which we are going to define first:

\begin{defn}[Bounded de Rham cohomology]\label{defn:bounded_de_rham}
Let $\Omega_b^p(M)$ denote the vector space of $p$-forms on $M$, which are bounded in the norm
\[\| \alpha \| := \sup_{x \in M} \{\|\alpha(x)\| + \|d \alpha(x)\|\}.\]
We define the \emph{bounded de Rham cohomology of $M$} as
\[ H_{b, \mathrm{dR}}^p(M) := \kernel d_p / \image d_{p-1}.\]
\end{defn}

\begin{rem}
Since in general the subspace $\image d_{p-1} \subset \kernel d_p$ is not closed, the induced norm on the bounded de Rham cohomology vector space is in general just a seminorm, i.e., in general there are elements with induced norm $0$ in $H_{b, \mathrm{dR}}^\ast(M)$. The bounded de Rham cohomology as we have defined it is sometimes called \emph{unreduced}. The \emph{reduced bounded de Rham cohomology} is then defined as
\[\bar{H}_{b, \mathrm{dR}}^p(M) := \kernel d_p / \closure(\image d_{p-1}).\]
\end{rem}

Now let $M$ be a connected and oriented $m$-dimensional manifold of bounded geometry. Then there is a duality isomorphism $H_{b, \mathrm{dR}}^m(M) \cong H_0^{\mathrm{uf}}(M; \IR)$, where the latter denotes the uniformly finite homology of Block and Weinberger. This particular isomorphism is mentioned in the remark at the end of Section 3 in \cite{block_weinberger_1} and proved in \cite[Lemma 2.2]{whyte}.\footnote{Alternatively, we could use the \Poincare duality isomorphism $H_{b, \mathrm{dR}}^i(M) \cong H_{m-i}^\infty(M; \IR)$ which is proved in \cite[Theorem 4]{attie_block_1}, where $H_{m-i}^\infty(M; \IR)$ denotes simplicial $L^\infty$-homology and $M$ is triangulated according to Theorem \ref{thm:triangulation_bounded_geometry}, and then use the fact that $H_0^\infty(M; \IR) \cong H_0^{\mathrm{uf}}(M; \IR)$ under this triangulation (for this we need the assumption that $M$ is connected).} Since we have a characterization of amenability using uniformly finite homology (see the above Proposition \ref{prop:equiv_amenability_metric_spaces}), we therefore also have a characterization of it via bounded de Rham cohomology. We are going to discuss this now more intimately.

First we introduce the following notions:

\begin{defn}[Closed at infinity, {\cite[Definition II.5]{sullivan}}]
A Riemannian manifold $M$ is called \emph{closed at infinity} if for every function $f$ on $M$ with $0 < C^{-1} < f < C$ for some $C > 0$, we have $[f \cdot dM] \not= 0 \in H_{b, \mathrm{dR}}^m(M)$ (where $dM$ denotes the volume form of $M$ and $m = \dim M$).
\end{defn}

\begin{defn}[Fundamental classes, {\cite[Definition 3.3]{roe_index_1}}]\label{defn:fundamental_class}
A \emph{fundamental class} for the manifold $M$ is a positive linear functional $\theta\colon \Omega^m_b(M) \to \IR$ such that $\theta(dM) \not = 0$ and $\theta \circ d = 0$.
\end{defn}

If the volume form $dM$ of $M$ does not lie in the closure of the boundary operator $d$, i.e., $dM \notin \closure (\image d_{m-1})$, then we can get a positive linear functional $\theta \colon \Omega_b^m(M) \to \IR$ with $\theta(dM) \not= 0$ and $\theta \circ d = 0$, i.e., a fundamental class for $M$, by using Hahn--Banach. On the contrary, if the manifold $M$ does admit such a fundamental class $\theta$, we may conclude that $M$ is closed at infinity (for the proof we use the positivity of $\theta$: $\theta(f \cdot dM) \ge \theta(C^{-1} \cdot dM) = C^{-1} \cdot \theta(dM) \not= 0$). In a moment (Corollary \ref{cor:volume_form_not_boundary_not_in_closure_of_such}) we will even see that having a fundamental class and being closed at infinity are equivalent (i.e., it can not happen that the volume form is not the boundary of some bounded form, but lies in the closure of the boundaries).

Now we relate the above defined two notions to amenability: if $M$ is amenable, then we know already from Lemma \ref{lem:manifold_equiv_amenable_reg_exhaustion} that it admits a F{\o}lner sequence. Given some F{\o}lner sequence for $M$, we can construct a fundamental class for $M$ out of it; this is proved in \cite[Propositions 6.4 \& 6.5]{roe_index_1}. But we have already convinced ourselves that the existence of such a fundamental class for $M$ implies that $M$ is closed at infinity. This means especially $H_{b, \mathrm{dR}}^m(M) \not= 0$. But since this is isomorphic to $H_0^{\mathrm{uf}}(M; \IR)$, we conclude that the latter does also not vanish. And since we know from the above Proposition \ref{prop:equiv_amenability_metric_spaces} that the non-vanishing of $H_0^{\mathrm{uf}}(M; \IR)$ is equivalent to the amenability of $M$, we have arrived at the start of our chain. This shows that all these properties are equivalent. Let us summarize this:

\begin{prop}
Let $M$ be a connected, orientable manifold of bounded geometry.

Then the following are equivalent:
\begin{itemize}
\item $M$ is amenable,
\item $M$ admits a F{\o}lner sequence,
\item $M$ admits a fundamental class and
\item $M$ is closed at infinity.
\end{itemize}
\end{prop}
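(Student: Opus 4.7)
The equivalence (1) $\Leftrightarrow$ (2) is already Lemma \ref{lem:manifold_equiv_amenable_reg_exhaustion}, so my plan is to close the circle (2) $\Rightarrow$ (3) $\Rightarrow$ (4) $\Rightarrow$ (1).

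For (2) $\Rightarrow$ (3), given a \Folner sequence $(M_i)_i$ and a free ultrafilter $\tau \in (\ell^\infty)^\ast$ on $\IN$, I would define
\[\theta(\alpha) := \tau\Bigl( i \mapsto \tfrac{1}{\vol M_i} \int_{M_i} \alpha \Bigr).\]
Positivity and $\theta(dM) = 1$ are immediate from boundedness of $\alpha$, so the content is $\theta \circ d = 0$. Since the $M_i$ need not be submanifolds with smooth boundary I cannot apply Stokes directly; instead, for each $i$ I would choose a smooth cutoff $\chi_i$ which is $1$ on $M_i$, supported in $B_r(M_i)$, and with uniformly bounded gradient (such cutoffs exist by bounded geometry). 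Then for $\beta \in \Omega^{m-1}_b(M)$,
\[\int_{M_i} d\beta = \int_M \chi_i \, d\beta + O\bigl(\|d\beta\|_\infty \vol B_r(\partial M_i)\bigr) = -\int_M d\chi_i \wedge \beta + O\bigl(\|d\beta\|_\infty \vol B_r(\partial M_i)\bigr),\]
and the remaining term is bounded by $C \|\beta\|_\infty \vol B_r(\partial M_i)$. Dividing by $\vol M_i$ and using the \Folner condition $\vol B_r(\partial M_i)/\vol M_i \to 0$, the ultrafilter limit vanishes. This is essentially Roe's argument in \cite[Propositions 6.4 \& 6.5]{roe_index_1}.

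The step (3) $\Rightarrow$ (4) is already spelled out in the paragraph preceding the statement: if $0 < C^{-1} < f < C$, positivity of the fundamental class $\theta$ yields $\theta(f \cdot dM) \ge C^{-1}\theta(dM) > 0$, hence $[f\cdot dM] \neq 0 \in H^m_{b,\mathrm{dR}}(M)$.

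For the closing implication (4) $\Rightarrow$ (1), I would invoke the duality isomorphism $H^m_{b,\mathrm{dR}}(M) \cong H_0^{\mathrm{uf}}(M;\IR)$ of Whyte (mentioned in the text just before the definition of closed at infinity). Being closed at infinity says $[dM] \neq 0$ in $H^m_{b,\mathrm{dR}}(M)$, so $H_0^{\mathrm{uf}}(M;\IR) \neq 0$. A uniformly discrete quasi-lattice $Y \subset M$ (which exists by coarsely bounded geometry and can be taken as the vertex set of a connected graph coarsely equivalent to $M$) then has $H_0^{\mathrm{uf}}(Y) \neq 0$ by coarse invariance of uniformly finite homology; Proposition \ref{prop:equiv_amenability_metric_spaces} gives amenability of the associated graph, and Lemma \ref{lem:amenability_coarse_equiv} transfers amenability back to $M$. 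The main obstacle is the Stokes--with--rough--boundary step in (2) $\Rightarrow$ (3); everything else is either bookkeeping or an appeal to results already cited in the excerpt.
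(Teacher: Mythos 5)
Your proposal follows essentially the same chain as the paper: (1) $\Leftrightarrow$ (2) via Lemma~\ref{lem:manifold_equiv_amenable_reg_exhaustion}, then (2) $\Rightarrow$ (3) by Roe's construction, (3) $\Rightarrow$ (4) by positivity, and (4) $\Rightarrow$ (1) via the duality $H^m_{b,\mathrm{dR}}(M) \cong H_0^{\mathrm{uf}}(M;\IR)$ together with Proposition~\ref{prop:equiv_amenability_metric_spaces} and Lemma~\ref{lem:amenability_coarse_equiv}. The only addition is that you spell out the cutoff argument inside the (2) $\Rightarrow$ (3) step, which the paper delegates entirely to the citation of Roe; that fill-in is correct.
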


From duality of $H_{b, \mathrm{dR}}^m(M)$ with $H_0^{\mathrm{uf}}(M; \IR)$ we get the following corollary: if the volume form is not the boundary of some bounded form, then it is also not in the closure of the boundaries.

\begin{cor}\label{cor:volume_form_not_boundary_not_in_closure_of_such}
If $[dM] \not= 0 \in H_{b, \mathrm{dR}}^m(M)$, then $[dM] \not= 0 \in \bar{H}_{b, \mathrm{dR}}^m(M)$, where the latter group denotes the reduced bounded de Rham cohomology of $M$.
\end{cor}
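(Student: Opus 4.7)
The plan is to leverage the equivalences of amenability established just above, together with continuity of a carefully chosen fundamental class. By the contrapositive formulation I need to show that if $dM \in \closure(\image d_{m-1})$, then in fact $dM \in \image d_{m-1}$; equivalently, I want to produce, under the hypothesis $[dM] \neq 0 \in H^m_{b,\mathrm{dR}}(M)$, a \emph{continuous} linear functional on $\Omega^m_b(M)$ which vanishes on $\image d_{m-1}$ and is nonzero on $dM$.

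First I would observe that the hypothesis $[dM] \neq 0 \in H^m_{b,\mathrm{dR}}(M)$ immediately gives $H^m_{b,\mathrm{dR}}(M) \neq 0$, so by the duality $H^m_{b,\mathrm{dR}}(M) \cong H^{\mathrm{uf}}_0(M;\IR)$ recalled above (and proved in \cite[Lemma 2.2]{whyte}), we have $H_0^{\mathrm{uf}}(M;\IR) \neq 0$. By Proposition \ref{prop:equiv_amenability_metric_spaces}, this forces $M$ to be amenable, and then by Lemma \ref{lem:manifold_equiv_amenable_reg_exhaustion} the manifold $M$ admits a F{\o}lner exhaustion $(M_i)_i$.

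Next I would use this F{\o}lner exhaustion together with a functional $\tau \in (\ell^\infty)^\ast$ coming from a free ultrafilter on $\IN$ to build the averaged-integral functional
\[\theta(\alpha) := \tau\!\left(\tfrac{1}{\vol M_i}\int_{M_i} \alpha\right).\]
By the standard argument of \cite[Propositions 6.4 \& 6.5]{roe_index_1}, this $\theta$ is a fundamental class: it is positive, satisfies $\theta(dM) = 1 \neq 0$, and vanishes on $\image d_{m-1}$ (the latter because the F{\o}lner condition $\vol B_r(\partial M_i)/\vol M_i \to 0$ together with Stokes' theorem kills the averaged integral of any bounded exact form). The crucial additional feature is that $\theta$ is \textbf{continuous} with respect to the norm on $\Omega^m_b(M)$: indeed $|\tfrac{1}{\vol M_i}\int_{M_i}\alpha| \le \|\alpha\|_\infty \le \|\alpha\|$, so $|\theta(\alpha)| \le \|\alpha\|$.

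From continuity, $\theta$ annihilates not only $\image d_{m-1}$ but its entire closure $\closure(\image d_{m-1})$. If $dM$ were to lie in $\closure(\image d_{m-1})$, this would force $\theta(dM) = 0$, contradicting $\theta(dM) \neq 0$; hence $dM \notin \closure(\image d_{m-1})$, which is exactly $[dM] \neq 0 \in \bar{H}^m_{b,\mathrm{dR}}(M)$. The only real subtlety in the argument is checking that the fundamental class coming out of the F{\o}lner construction is norm-continuous rather than just positive and linear, but as sketched this is immediate from the sup-bound on averages; no extra analysis is needed.
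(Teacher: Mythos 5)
Your proof is correct and essentially the argument the paper intends: from $[dM]\ne 0$ one gets $H^m_{b,\mathrm{dR}}(M)\ne 0$, hence via Whyte's duality and Proposition \ref{prop:equiv_amenability_metric_spaces} that $M$ is amenable, hence a fundamental class $\theta$ with $\theta(dM)\ne 0$ and $\theta\circ d=0$ which, being norm-continuous, annihilates $\closure(\image d_{m-1})$. A small simplification worth noting: the continuity you verify from the F{\o}lner averaging bound is in fact automatic for \emph{any} fundamental class, since positivity applied to $\|\alpha\|_\infty\,dM \pm \alpha \ge 0$ already yields $|\theta(\alpha)| \le \theta(dM)\cdot\|\alpha\|_\infty$, so no particular construction of $\theta$ is needed once amenability is established.
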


At last, we are going to relate amenability to the linear isoparametric inequality and to the volume growth of the manifold:

\begin{prop}[{\cite[Subsection 4.1]{gromov_hyperbolic_manifolds_groups_actions}}]
Let $M$ be a connected and orientable manifold of bounded geometry.

Then $M$ is not amenable if and only if $\vol(R) \le C \cdot \vol(\partial R)$ for all $R \subset M$ and a fixed constant $C > 0$.
\end{prop}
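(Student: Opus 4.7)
My plan is to reduce both directions to the \Folner characterization from Lemma \ref{lem:manifold_equiv_amenable_reg_exhaustion}, then bridge the gap between the two distinct notions of ``boundary size'': the $(m-1)$-dimensional Hausdorff measure $\vol(\partial R)$ appearing in the isoperimetric inequality, and the $m$-dimensional volume $\vol(B_r(\partial M_i))$ appearing in the definition of a \Folner sequence. The crucial preparatory step is to use bounded geometry to compare these two quantities uniformly.

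First I would establish the following uniform comparison lemma: for every $r>0$ there are constants $c_r, C_r>0$, depending only on $r$ and the bounded geometry constants of $M$, such that
\[
c_r \cdot \vol_{m-1}(\partial R) \;\le\; \vol\bigl(B_r(\partial R)\bigr) \;\le\; C_r \cdot \vol_{m-1}(\partial R)
\]
for every compact codimension-$0$ submanifold $R\subset M$ with piecewise smooth boundary. The upper bound follows from the uniform tubular neighborhood theorem: bounded geometry guarantees a positive lower bound on the normal injectivity radius of any such hypersurface of controlled second fundamental form, and the Jacobian of the normal exponential map is uniformly bounded in terms of the curvature bounds on $M$. The lower bound comes from covering $\partial R$ by a uniformly locally finite family of normal coordinate charts (cf.\ Lemma \ref{lem:nice_coverings_partitions_of_unity}) and observing that each chart contributes a controlled half-tube of $m$-volume proportional to its area. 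In this step it is convenient to assume $\partial R$ is smooth; one can always arrange this by a small perturbation that changes volumes negligibly.

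For the direction ``$\Leftarrow$'', assume the linear isoperimetric inequality $\vol(R)\le C\cdot\vol(\partial R)$ holds and, toward a contradiction, assume $M$ is amenable. By Lemma \ref{lem:manifold_equiv_amenable_reg_exhaustion}, $M$ admits a \Folner sequence $(M_i)_i$ which, after a small smoothing of the boundaries, we may take to consist of compact codimension-$0$ submanifolds with smooth boundary. Applying the comparison lemma with any fixed $r>0$, we obtain
\[
\frac{\vol_{m-1}(\partial M_i)}{\vol(M_i)} \;\le\; \frac{1}{c_r}\cdot\frac{\vol(B_r(\partial M_i))}{\vol(M_i)} \;\xrightarrow{i\to\infty}\; 0,
\]
which contradicts the hypothesis $\vol(M_i)\le C\cdot\vol_{m-1}(\partial M_i)$.

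For the direction ``$\Rightarrow$'', assume the isoperimetric inequality fails for every $C$, so that for each $n\in\IN$ we can find a compact $R_n\subset M$ with $\vol_{m-1}(\partial R_n)/\vol(R_n)<1/n$. The comparison lemma yields, for each fixed $r>0$,
\[
\frac{\vol(B_r(\partial R_n))}{\vol(R_n)} \;\le\; C_r\cdot\frac{\vol_{m-1}(\partial R_n)}{\vol(R_n)} \;<\; \frac{C_r}{n}.
\]
A standard diagonal argument — picking $R_{n_k}$ such that the ratio for $r=k$ is less than $1/k$ — produces a \Folner sequence in the sense of Roe, and Lemma \ref{lem:manifold_equiv_amenable_reg_exhaustion} then gives amenability of $M$.

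The main obstacle is the two-sided comparison in the preparatory step; once that is in hand both implications are short. The technical work concentrates on showing that bounded geometry provides uniform constants $c_r, C_r$ independent of where $\partial R$ sits in $M$ or how its extrinsic geometry looks locally — this is where the injectivity radius bound and the bounds on $\Rm$ and its derivatives are genuinely needed, and where one must be careful if $\partial R$ is only piecewise smooth (handled by the smoothing step).
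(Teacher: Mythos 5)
Your central ``comparison lemma'' $c_r \vol_{m-1}(\partial R) \le \vol\bigl(B_r(\partial R)\bigr) \le C_r \vol_{m-1}(\partial R)$, with $c_r, C_r$ depending only on $r$ and the bounded geometry of $M$, is false in both directions --- and both implications of your proof lean on it. For the lower bound, take $\partial R$ crumpled inside a ball of fixed radius: bounded geometry bounds $\vol\bigl(B_r(\partial R)\bigr)$ from above by the volume of a slightly larger metric ball, while $\vol_{m-1}(\partial R)$ can be made arbitrarily large. Your chart-by-chart argument overcounts: a wiggly $\partial R$ may pass through the same normal coordinate chart many times, and the half-tubes of its different sheets overlap, so summing their $m$-volumes is an \emph{over}-estimate of $\vol\bigl(B_r(\partial R)\bigr)$, not the under-estimate you need. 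Uniform local finiteness of the cover bounds how many charts meet a given point of $M$, not how many sheets of $\partial R$ meet a given chart. For the upper bound, take $\partial R$ to be a geodesic sphere of radius $\varepsilon \ll r$: then $\vol_{m-1}(\partial R) \to 0$ as $\varepsilon \to 0$ while $\vol\bigl(B_r(\partial R)\bigr)$ stays comparable to $r^m$, so no $C_r$ independent of $\varepsilon$ exists. The obstruction is structural: the Jacobian of the normal exponential map off $\partial R$ involves the shape operator of $\partial R$, which bounded geometry of $M$ does \emph{not} control, so a ``uniform tubular neighbourhood theorem'' with constants independent of the extrinsic geometry of the hypersurface simply does not hold. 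Your remark that ``a small perturbation changes volumes negligibly'' does not repair this: smoothing away wrinkles can change $\vol_{m-1}(\partial R)$ by a large factor, destroying the isoperimetric ratio you are tracking, and the smoothed surface still has no a priori bound on its second fundamental form.

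The correct way to bridge $\vol_{m-1}(\partial R)$ and $\vol\bigl(B_r(\partial R)\bigr)$ is the coarea formula, not a uniform pointwise comparison. For the ``$\Leftarrow$'' direction: given a \Folner sequence $(M_i)_i$, set $f_i := d(\,\cdot\,, M_i)$; then
\[
\int_0^r \vol_{m-1}\bigl(\{f_i = t\}\bigr)\,dt \;=\; \vol\bigl(\{0<f_i<r\}\bigr) \;\le\; \vol\bigl(B_r(\partial M_i)\bigr),
\]
so some $t_i\in(0,r)$ gives a rectifiable level set $\Sigma_i := \{f_i = t_i\}$ bounding the region $M_i' := \{f_i\le t_i\}\supset M_i$ with $\vol_{m-1}(\Sigma_i)\le\vol\bigl(B_r(\partial M_i)\bigr)/r$; then $\vol_{m-1}(\partial M_i')/\vol(M_i')\to 0$, contradicting the linear isoperimetric inequality --- no control of the second fundamental form of $\partial M_i$ is ever needed. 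The ``$\Rightarrow$'' direction is genuinely harder and likewise cannot be run through your upper bound; one can pass to the functional ($L^1$-Poincar\'{e}) form of the inequality obtained via coarea and Cavalieri's principle, or discretize to a quasi-lattice as in the proof of Lemma~\ref{lem:manifold_equiv_amenable_reg_exhaustion}, where the isoperimetric inequality becomes combinatorial and there is no extrinsic geometry to worry about. The paper gives no proof of this proposition (it is cited to Gromov), so your sketch has to stand on its own, and as written the comparison lemma is a real gap.
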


\begin{prop}[{\cite[Proposition 6.2]{roe_index_1}}]
If $M$ has subexponential growth\footnote{This means that there is a point $x_0 \in M$ such that for all $p > 0$ we have $e^{-pr} \vol(B_r(x_0)) \to 0$ as $r \to \infty$.}, then it admits a \Folner exhaustion.
\end{prop}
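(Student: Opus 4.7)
The plan is to take the \Folner exhaustion to consist of large concentric balls $M_i := \overline{B_{R_i}(x_0)}$ for a suitably chosen sequence $R_i \to \infty$. These are compact since bounded geometry implies completeness and hence Hopf--Rinow applies. Writing $V(R) := \vol B_R(x_0)$, the triangle inequality gives $B_r(\partial B_R(x_0)) \subset B_{R+r}(x_0) \setminus B_{R-r}(x_0)$ for $r < R$, so it suffices to select the $R_i$ so that for every fixed $r > 0$,
\[
\frac{V(R_i + r) - V(R_i - r)}{V(R_i)} \xrightarrow{i \to \infty} 0.
\]

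The heart of the argument is the following liminf estimate: for each fixed $r > 0$,
\[
\liminf_{R \to \infty} \frac{V(R+r) - V(R-r)}{V(R)} = 0.
\]
Indeed, were this liminf some $\delta > 0$, then for all sufficiently large $R$ one would have $V(R+r) \ge V(R-r) + \tfrac{\delta}{2} V(R) \ge (1 + \tfrac{\delta}{2}) V(R-r)$, since $V$ is non-decreasing. Iterating along the arithmetic progression $R_n := R_0 + 2rn$ yields $V(R_n) \ge (1 + \tfrac{\delta}{2})^n V(R_0)$, which grows exponentially in $R_n$ and so contradicts the hypothesis $e^{-pr}V(r) \to 0$ for every $p > 0$.

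With the liminf estimate in hand, I would diagonalize: set $r_k := k$ and $\epsilon_k := 1/k$, and for each $k$ use the estimate to choose $R_k$ with $R_k > R_{k-1} + 1$, $R_k > r_k$, and
\[
\frac{V(R_k + r_k) - V(R_k - r_k)}{V(R_k)} < \epsilon_k.
\]
Since $R_k \nearrow \infty$, the sets $M_k = \overline{B_{R_k}(x_0)}$ form an exhaustion of $M$ (every point of $M$ lies within finite distance of $x_0$). For any fixed $r > 0$ and all $k \ge r$,
\[
\frac{\vol B_r(\partial M_k)}{\vol M_k} \le \frac{V(R_k + r) - V(R_k - r)}{V(R_k)} \le \frac{V(R_k + r_k) - V(R_k - r_k)}{V(R_k)} < \frac{1}{k} \xrightarrow{k \to \infty} 0.
\]
Thus $(M_k)_k$ is a \Folner exhaustion of $M$.

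The main obstacle is the liminf estimate, which is the only place where the subexponential growth hypothesis is actually used; everything else is a routine diagonal construction combined with the volume-of-annulus bound from the triangle inequality.
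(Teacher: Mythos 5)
The paper does not prove this proposition itself; it simply cites Roe's original paper for it, so there is no argument in the thesis to compare against. Your proof is correct and is the standard one: concentric closed balls together with the $\liminf$-is-zero estimate for the annulus ratio (proved by contradiction via iteration) is exactly the argument one expects Roe to give. One small point worth spelling out if you write this up: the step from $\liminf_R (V(R+r)-V(R-r))/V(R) = \delta > 0$ to $V(R+r) \ge (1+\delta/2)V(R-r)$ for all large $R$ uses the monotonicity $V(R) \ge V(R-r)$, which you do mention but which is where the seemingly odd $V(R)$ in the denominator disappears; and to ensure $\overline{B_{R_k}(x_0)}$ is compact you invoke completeness of $M$, which here follows from bounded geometry. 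With those two remarks the argument is airtight, and the diagonalization over $r_k = k$, $\epsilon_k = 1/k$ correctly converts the family of one-parameter $\liminf$ statements into a single F\o{}lner exhaustion.
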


Note that the converse to the last proposition does not hold.

\subsection*{Amenability of groups}

We will now briefly discuss amenability of groups. Our main interest for this is the fact that the universal covering $\widetilde{M}$ of a compact Riemannian manifold $M$ equipped with the pull-back metric is amenable if and only if the fundamental group $\pi_1(M)$ of $M$ is amenable. And if this is the case, we may construct fundamental classes for $\widetilde{M}$ that respect this structure of $\widetilde{M}$ of a covering manifold (see Proposition \ref{prop:fundamental_group_amenable_nice_fundamental_classes} for the concrete statement).

We will state the \Folner condition as a definition for amenability of a countable, discrete group. Note that there are a lot of other equivalent characterizations of amenability of countable, discrete groups, but since we do not need them at all, we won't state them either.

\begin{defn}[Amenable groups]\label{defn:amenable_group}
Let $\Gamma$ be a countable, discrete group. We will call $\Gamma$ \emph{amenable}, if for each $\varepsilon > 0$ and each finite subset $\Sigma \subset \Gamma$ there is a finite subset $E \subset \Gamma$ such that
\[\card(E \cap \gamma E) \ge (1-\varepsilon) \card(E) \ \forall \gamma \in \Sigma.\]
\end{defn}

This group theoretic definition of amenability is related to amenability of metric spaces via the following observation: a finitely generated group $\Gamma$ is amenable as a group if and only if $\Gamma$ equipped with the word metric is amenable as a metric space.

The class of amenable groups is quite large. Let us state without proof certain subclasses that it includes and certain permanency properties:

\begin{examples}
All finite groups are amenable and also all abelian groups.

Furthermore, the class of amenable groups is closed under taking subgroups, quotients, group extensions by another amenable group (therefore, the class is also closed under taking finite direct products) and under direct limits. It follows that all solvable groups are amenable.

And last, similarly to the fact that manifolds of subexponential volume growth are amenable, groups with subexponential volume growth are also amenable.
\end{examples}

To give at least one non-example, if a countable group contains a free subgroup on two generators, then it is not amenable.

Let us now state the prior mentioned, well-known equivalence of amenability of the fundamental group of a manifold and of the amenability of the universal cover of this manifold:

\begin{prop}[{\cite{brooks}}]
Let $M$ be a compact Riemannian manifold and denote by $\widetilde{M}$ its universal cover equipped with the pull-back metric.

Then $\widetilde{M}$ is amenable if and only if $\pi_1(M)$ is an amenable group.
\end{prop}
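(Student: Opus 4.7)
The plan is to reduce this to the already-established fact that amenability is a coarse invariant (Lemma~\ref{lem:amenability_coarse_equiv}) together with the remark from the paper that a finitely generated group is amenable in the sense of Definition~\ref{defn:amenable_group} if and only if it is amenable as a metric space when endowed with any word metric. So the whole proof becomes: exhibit a coarse equivalence between $\widetilde{M}$ and $\pi_1(M)$ equipped with a word metric. Since $M$ is compact, $\pi_1(M)$ is finitely generated and hence such a word metric exists; note also that $\widetilde{M}$, as the cover of a compact manifold, automatically has bounded geometry, so that talking about amenability as a metric space makes sense.

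First, I would fix a basepoint $\tilde{x}_0 \in \widetilde{M}$ and consider the orbit $\Gamma \cdot \tilde{x}_0$ under the deck transformation action of $\Gamma := \pi_1(M)$. Since $\Gamma$ acts freely, properly discontinuously, by isometries, and cocompactly on $\widetilde{M}$, the orbit is a uniformly discrete subset: pick a fundamental domain of diameter $D$; then any two distinct points of the orbit are separated by at least the injectivity radius of the quotient, and every point of $\widetilde{M}$ lies within distance $D$ of the orbit, so $B_{D}(\Gamma \cdot \tilde{x}_0) = \widetilde{M}$. Thus the orbit is a uniformly discrete quasi-lattice in $\widetilde{M}$, and in particular the inclusion map $\Gamma \cdot \tilde{x}_0 \hookrightarrow \widetilde{M}$ is a coarse equivalence.

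Next, I would invoke the \v{S}varc--Milnor lemma: the map $\Gamma \to \widetilde{M}$, $\gamma \mapsto \gamma \cdot \tilde{x}_0$, is a quasi-isometry between $\Gamma$ with any word metric coming from a finite generating set and the orbit $\Gamma \cdot \tilde{x}_0 \subset \widetilde{M}$ with the restricted Riemannian metric. The proof of this is standard and uses only the properness and cocompactness of the action together with compactness of a fundamental domain to bound generator lengths by distances and vice versa. Composing with the previous step yields a coarse equivalence between $\Gamma$ (as a metric space with its word metric) and $\widetilde{M}$.

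Combining these with Lemma~\ref{lem:amenability_coarse_equiv}, $\widetilde{M}$ is amenable as a metric space if and only if $\Gamma$ is amenable as a metric space in its word metric, which by the remark just after Definition~\ref{defn:amenable_group} is equivalent to $\Gamma$ being amenable as a group in the sense of Definition~\ref{defn:amenable_group}. The only mildly subtle step is the \v{S}varc--Milnor construction of the quasi-isometry, but for the covering of a \emph{compact} manifold this is a textbook application and no new ideas are needed.
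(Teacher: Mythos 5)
Your proof is correct and complete. The paper itself gives no proof of this proposition at all---it is stated as a citation to Brooks's theorem \cite{brooks}---so there is no in-paper argument to compare against. Your argument is the standard modern coarse-geometric route and fits the surrounding framework well: it runs through \v{S}varc--Milnor plus Lemma \ref{lem:amenability_coarse_equiv} (coarse invariance of amenability) plus the observation after Definition \ref{defn:amenable_group} that a finitely generated group is amenable as a group if and only if it is amenable as a metric space in its word metric. This is actually a genuinely different (and more elementary) route than Brooks's original argument, which characterizes amenability of $\pi_1(M)$ in terms of the bottom $\lambda_0$ of the $L^2$-spectrum of the Laplacian on $\widetilde{M}$ and is spectral-analytic in nature; what Brooks's approach buys is the stronger spectral statement $\lambda_0(\widetilde{M})=0 \Leftrightarrow \pi_1(M)$ amenable, while yours gives the geometric F{\o}lner-type equivalence directly using only tools already developed in the text. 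One very small imprecision: the separation between distinct points of the orbit $\Gamma\cdot\tilde{x}_0$ is bounded below by twice the injectivity radius of $M$ (a non-constant geodesic loop of length $<2\injrad(M)$ based at a point would be contained in a normal ball and hence be contractible), not by the injectivity radius itself---but this only affects the constant, not the conclusion that the orbit is uniformly discrete.
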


At last, we state the following useful proposition: if $\pi_1(M)$ is amenable, we may construct fundamental classes $\theta$ for $\widetilde{M}$ that are compatible with the structure of $\widetilde{M}$ as a covering space in the following way:

\begin{prop}[{\cite[Proposition 6.6]{roe_index_1}}]\label{prop:fundamental_group_amenable_nice_fundamental_classes}
Let $M$ be a compact Riemannian manifold, denote by $\widetilde{M}$ its universal cover equipped with the pull-back metric, and let $\pi_1(M)$ be amenable.

Then $\widetilde{M}$ admits a fundamental class $\theta$ with the property
\[\theta(\pi^\ast \alpha) = \int_M \alpha\]
for every top-dimensional form $\alpha$ on $M$ and where $\pi \colon \widetilde{M} \to M$ is the covering projection.
\end{prop}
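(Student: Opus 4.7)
The plan is to transplant a \Folner sequence from $\pi_1(M)$ to $\widetilde{M}$ via the deck-transformation action and then average. Using amenability of $\pi_1(M)$, pick a sequence of finite subsets $E_i \subset \pi_1(M)$ with $\card(\partial_r E_i)/\card(E_i) \to 0$ for every fixed $r > 0$ (measuring $\partial_r$ in a word metric with respect to a finite generating set in the countable, discrete case, or a symmetric compact set in general). Fix a relatively compact fundamental domain $F \subset \widetilde{M}$ for the $\pi_1(M)$-action, e.g.\ a Dirichlet domain based at some point, so that $\widetilde{M}$ is partitioned by the translates $\gamma \cdot F$ up to a set of measure zero, and $\pi$ restricts to a volume-preserving diffeomorphism from $\operatorname{int}(\gamma F)$ onto a full-measure open subset of $M$. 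Put $M_i := \bigcup_{\gamma \in E_i} \gamma F$; an elementary comparison (every point of $B_r(\partial M_i)$ lies in a translate $\gamma F$ with $\gamma \in \partial_{r+\diam F}\, E_i$) shows that $(M_i)_i$ is a \Folner sequence for $\widetilde{M}$ in the sense of Lemma \ref{lem:manifold_equiv_amenable_reg_exhaustion}, and $\vol M_i = \card(E_i) \cdot \vol F$.

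Define
\[
\theta(\alpha) := \tau\!\left(\frac{1}{\card E_i} \int_{M_i} \alpha\right),
\]
where $\tau \in (\ell^\infty)^\ast$ is a positive, unital functional extending the limit along a free ultrafilter on $\IN$. Positivity and linearity pass from integration and $\tau$ to $\theta$, so $\theta$ is a positive linear functional on $\Omega^m_b(\widetilde{M})$. The normalization property is immediate: for any $\beta \in \Omega^m(M)$ each summand satisfies $\int_{\gamma F} \pi^\ast \beta = \int_M \beta$ by the change-of-variables formula, so the sequence inside $\tau$ is constantly equal to $\int_M \beta$ and hence $\theta(\pi^\ast \beta) = \int_M \beta$. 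In particular, since $\pi$ is a local isometry we have $d\widetilde{M} = \pi^\ast(dM)$, giving $\theta(d\widetilde{M}) = \vol(M) > 0$, so $\theta$ is nontrivial.

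It remains to verify $\theta \circ d = 0$. For $\eta \in \Omega^{m-1}_b(\widetilde{M})$, Stokes yields $\int_{M_i} d\eta = \int_{\partial M_i} \eta$, and the $(m-1)$-volume of $\partial M_i$ is bounded by $C \cdot \card(\partial_r E_i)$ for $r$ of the order of $\diam F$, since each boundary face lies between a translate $\gamma F$ with $\gamma \in E_i$ and one with $\gamma' \notin E_i$. Therefore
\[
\left| \frac{1}{\card E_i} \int_{M_i} d\eta \right| \;\le\; C\,\|\eta\|_\infty \cdot \frac{\card(\partial_r E_i)}{\card E_i} \;\longrightarrow\; 0,
\]
and positivity of $\tau$ on the absolute value forces $\theta(d\eta) = 0$. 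Together with the previous paragraph, this shows that $\theta$ is a fundamental class on $\widetilde{M}$ satisfying $\theta(\pi^\ast \alpha) = \int_M \alpha$.

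The main technical obstacle is that a Dirichlet fundamental domain need not have smooth boundary, so applying Stokes directly to $M_i$ requires some justification. The cleanest workaround is to replace the sharp cut-off $\mathbf{1}_{M_i}$ by a smooth weight $\Psi_i := \sum_{\gamma \in E_i} \gamma^\ast \rho$, where $\rho \in C^\infty_c(\widetilde{M})$ is non-negative with $\sum_{\gamma \in \pi_1(M)} \gamma^\ast \rho \equiv 1$ (such a $\rho$ exists by a partition-of-unity argument over $M$ pulled back); then $\int_{\widetilde{M}} \Psi_i\, \pi^\ast\beta = \card(E_i) \int_M \beta$ is exact, integration by parts gives $\int \Psi_i\, d\eta = -\int d\Psi_i \wedge \eta$, and $d\Psi_i$ is supported in a uniform neighbourhood of translates $\gamma \cdot \supp d\rho$ with $\gamma \in \partial_r E_i$, yielding the same \Folner-driven estimate without any appeal to singular Stokes.
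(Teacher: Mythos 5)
Correct, and the approach is the natural one. The thesis does not present its own proof of this proposition --- it cites \cite[Proposition 6.6]{roe_index_1} directly --- so I will evaluate your argument on its own merits. Your construction (transplant a \Folner sequence $(E_i)$ from the amenable group to $\widetilde{M}$ via a fundamental domain, average against an ultrafilter limit, and use the \Folner estimate to kill exact forms) is exactly what one expects Roe's cited proof to do, and each step checks out: the change-of-variables identity gives $\theta(\pi^\ast\beta) = \int_M \beta$ on the nose, so the normalization is exact rather than merely asymptotic; boundedness of the averaging sequence follows from $\vol M_i = \card(E_i)\,\vol F$; positivity and linearity are inherited from integration and $\tau$; and the integration-by-parts estimate shows $\theta \circ d = 0$. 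Your replacement of $\mathbf 1_{M_i}$ by the equivariant smooth weight $\Psi_i = \sum_{\gamma\in E_i}\gamma^\ast\rho$ is a clean way to sidestep the non-smooth boundary of a Dirichlet domain, and it leaves the exact normalization untouched.

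One small imprecision worth tightening: when you claim every point of $B_r(\partial M_i)$ lies in a translate $\gamma F$ with $\gamma\in\partial_{r+\diam F}\,E_i$, you are silently identifying the intrinsic metric on $\widetilde{M}$ with the word metric on $\pi_1(M)$. By \v{S}varc--Milnor these are only quasi-isometric, so the correct statement involves $\partial_{Cr+C'}\,E_i$ for quasi-isometry constants $C,C'$ depending on $M$ and the choice of generating set. Since the \Folner condition holds for every radius, the conclusion survives unchanged, so this is a wording issue rather than a gap. The same remark applies to the radius controlling $\supp d\Psi_i$ in your final paragraph.
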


\section{Analytic indices}\label{sec:analytic_indices_quasiloc_smoothing}

Our goal is to describe an index map on $K_0(\IU(E))$ and on $K_0(\C(E))$ if the manifold $M$ is amenable. Let us explain the basic idea: if we have an operator $A$, then its Fredholm index is defined as $\dim (\kernel A) - \dim (\kernel A^\ast)$ if both quantities are finite. We may rewrite $\dim (\kernel A)$ as $\trace P_A$, where $P$ is the projection operator onto the kernel of $A$. Now we suppose that $P_A$ can be represented as an integral operator with continuous integral kernel $k_A(x,y)$ and then it is well-known that we have $\trace P_A = \int k_A(x,x)dx$. Though this reasoning only works if $A$ is Fredholm and if the domain of integration is compact, we may use it to define ``averaged analytic indices'': if $M$ is non-compact but the integral kernel of the projection operator $P_A$ onto the kernel of $A$ is bounded, we may form the bounded sequence $\frac{1}{\vol M_i} \int_{M_i} k_A(x,x) dM$ and then evaluate a functional $\tau \in (\ell^\infty)^\ast$ on it.

In this way we get an index map on $\IU(E)$, resp. on $\C(E)$, since every smoothing operator has a smooth and bounded integral kernel. Furthermore, we get this index map for every manifold $M$ regardless of its amenability. But now the amenability together with the control at infinity of the integral kernels of operators from $\IU(E)$, resp. from $\C(E)$, ensures us that this index map is a trace, i.e., vanishes on commutators. So it descends in this case to a map on $K_0(\IU(E))$, resp. on $K_0(\C(E))$, which is exactly what we need.

Let us formalize this discussion (everything that we say here for $\IU(E)$ also holds for $\C(E)$): recall from Lemma \ref{lem:manifold_equiv_amenable_reg_exhaustion} that amenability of $M$ is equivalent to $M$ admitting a F{\o}lner sequence $(M_i)_i$. Given $A \in \IU(E)$, we define a sequence $(m_i)_i$ via
\begin{equation}\label{eq:index_map_cont_version}
m_i := \frac{1}{\vol M_i} \int_{M_i} \trace k_A(x,x) dM,
\end{equation}
where $k_A \in C_b^\infty(E \boxtimes E^\ast)$ is the uniformly bounded integral kernel of $A$ which is provided by Proposition \ref{prop:smoothing_op_kernel}. So $(m_i)_i$ is a bounded sequence. Now if $\tau \in (\ell^\infty)^\ast$ is a functional associated to a free ultrafilter on $\IN$,\footnote{That is, if we evaluate $\tau$ on a bounded sequence, we get the limit of some convergent subsequence.} we can evaluate $\tau$ on $(m_i)_i$. Note that if $A$ is a self-adjoint operator, then $\tau(m_i) \in \IR$. Using that $(M_i)_i$ is a \Folner sequence, it was shown in \cite[Theorem 6.7]{roe_index_1} that this construction defines a trace\footnote{That is to say, it vanishes on commutators.} on $\IU(E)$ and therefore we get an induced map $\ind_\tau \colon K_0(\IU(E)) \to \IR$.

In the case of the trivial, one-dimensional bundle $\IC \to M$ we will write $\IU(M)$ for the quasilocal smoothing operators on it, and we will write $\C(M)$ for the smooth uniform Roe algebra of it. The main result of this section is the following proposition (where the first part of it about the existence of index maps was already proved in the above discussion).

\begin{prop}\label{prop:analytic_index_map_quasiloc_smoothing}
Let $M$ be an amenable manifold of bounded geometry and $E$ a vector bundle of bounded geometry over $M$.

Then for every F{\o}lner sequence $(M_i)_i$ of $M$ and every functional $\tau \in (\ell^\infty)^\ast$ which is associated to a free ultrafilter on $\IN$, we get an analytic index map
\[\ind_\tau\colon K_0(\IU(E)) \to \IR.\]

Furthermore, we have a natural map $K_\ast(\IU(E)) \to K_\ast(\IU(M))$ and the index maps are compatible with it, i.e., we have the following commutative diagram:

\begin{equation}\label{eq:natural_receptacle_index_maps}
\xymatrix{K_0(\IU(E)) \ar[rr] \ar[dr]_{\ind_\tau} & & K_0(\IU(M)) \ar[dl]^{\ind_\tau} \\ & \IR}
\end{equation}

We may analogously construct a natural map $K_\ast(\C(E)) \to K_\ast(\C(M))$. Since $\C(E) \subset \IU(E)$, we also have analytic index maps $\ind_\tau\colon K_0(\C(E)) \to \IR$, and they are compatible in the above sense (i.e., we have an analogous commutative diagram) with the natural map $K_\ast(\C(E)) \to K_\ast(\C(M))$.
\end{prop}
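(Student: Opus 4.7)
The plan is to verify in three steps that the formula \eqref{eq:index_map_cont_version} descends to a well-defined index map on $K_0(\IU(E))$, to construct the natural map $K_\ast(\IU(E)) \to K_\ast(\IU(M))$ via the stable triviality of bounded-geometry bundles, and to deduce compatibility as an almost immediate consequence of how these constructions interact.

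For the first step, I would adapt Roe's argument from \cite[Theorem 6.7]{roe_index_1} from the trivial line bundle case to coefficients in $E$. Given $A, B \in \IU(E)$ with uniformly bounded smooth kernels $k_A, k_B \in C_b^\infty(E \boxtimes E^\ast)$, the sequence associated to the commutator $[A,B]$ has $i$-th term
\[
\frac{1}{\vol M_i} \int_{M_i} \trace\!\left( \int_M k_A(x,z) k_B(z,x) - k_B(x,z) k_A(z,x) \, dz \right) dM(x).
\]
Splitting the inner convolution integral at radius $R$ around $M_i$, the contribution from $B_R(M_i)$ is a genuine bundle-valued commutator whose fiberwise trace integrates to zero by Fubini and cyclicity \emph{up to} a boundary error supported in a collar of $\partial M_i$, while the contribution from $M - B_R(M_i)$ is controlled by the dominating functions of $A, B, A^\ast, B^\ast$ (this is precisely where the adjoint clause of Definition \ref{defn:quasiloc_smoothing} enters). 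Choosing $R = R_i \to \infty$ so that simultaneously $\mu(R_i) \to 0$ and $\vol B_{R_i}(\partial M_i)/\vol M_i \to 0$, which is possible by the \Folner condition, and then applying $\tau$, yields $\tau$-vanishing on commutators. Hence \eqref{eq:index_map_cont_version} defines a trace on $\IU(E)$ and descends to $K_0(\IU(E)) \to \IR$ by the standard construction.

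For the natural map, I would use the fact, established in Chapter \ref{chap:uniform_k_theory}, that every vector bundle $E$ of bounded geometry admits a bounded-geometry complement $F$ together with an isomorphism $E \oplus F \cong M \times \IC^N$. The corresponding orthogonal projection $p$ onto $E$ lies in $M_N(C_b^\infty(M))$, and I claim $\IU(E) \cong p M_N(\IU(M)) p$: multiplication by a $C_b^\infty$-matrix has propagation zero and therefore preserves quasilocality, smoothing, and boundedness of integral kernels (and the same for adjoints). The corner inclusion $p M_N(\IU(M)) p \hookrightarrow M_N(\IU(M))$ then induces the required map $K_\ast(\IU(E)) \to K_\ast(M_N(\IU(M))) = K_\ast(\IU(M))$, with independence of the choice of $(F, N)$ handled by the usual stability argument for operator $K$-theory of local $C^\ast$-algebras. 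The same recipe, applied with $p$ of propagation zero, yields $\C(E) \cong p M_N(\C(M)) p$ and hence the natural map $K_\ast(\C(E)) \to K_\ast(\C(M))$.

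Compatibility of the diagram \eqref{eq:natural_receptacle_index_maps} is then essentially automatic: under $\iota \colon \IU(E) \hookrightarrow M_N(\IU(M))$ the kernel of $\iota(A)$ restricted to the diagonal satisfies $\trace_{\IC^N} k_{\iota(A)}(x,x) = \trace_{E_x} k_A(x,x)$, because the embedding is by $A \mapsto A \oplus 0$ and the orthogonal complement contributes zero. Consequently the defining bounded sequence $(m_i)_i$ is identical before and after applying $\iota$, so $\ind_\tau$ is preserved. The main obstacle in this proof is genuinely the trace property of the first step: it is where the quasilocality built into $\IU(E)$ (plus the adjoint clause) and the \Folner property must be combined delicately, and it is the precise point at which amenability is indispensable---a non-\Folner exhaustion simply fails to kill the collar error no matter how fast $\mu(R)$ decays.
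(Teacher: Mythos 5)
Your proposal is correct and follows essentially the same route as the paper: the paper obtains the trace property of \eqref{eq:index_map_cont_version} by citing \cite[Theorem~6.7]{roe_index_1} (which you unpack into the boundary-collar/quasilocality splitting, correctly pointing out where the adjoint clause is needed), constructs $K_\ast(\IU(E)) \to K_\ast(\IU(M))$ via a $C_b^\infty$-complement and the map $A \mapsto A \circ e$ into $\IU(\IC^N) \cong \Mat_{N\times N}(\IU(M))$ with well-definedness via inner automorphisms (your corner-projection and stability language is an equivalent packaging), and deduces compatibility by exactly the diagonal-trace observation $\trace_{\IC^N} k_{\iota(A)}(x,x) = \trace_{E_x} k_A(x,x)$.
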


This result is analogous to the one in \cite[Section 7]{roe_index_1}, which establishes the group $K_0(\IU(M))$ as a universal receptacle for indices of elliptic operators. In our case we may also use the algebra $\C(M)$ instead.

\begin{proof}
The first part about the existence of the index maps was proved in the discussion above the proposition. So it remains for us to construct the natural map $K_0(\IU(E)) \to K_0(\IU(M))$ and show that it is compatible with these index maps.

We will use the terminology and results from Section \ref{sec:interpretation_uniform_k_theory}. Since $E$ has bounded geometry, it is $C_b^\infty$-complemented, i.e., there exists a bundle $E^\perp$ such that $E \oplus E^\perp$ is $C_b^\infty$-isomorphic to a trivial bundle $\IC^N$. Let us denote by $i \colon E \hookrightarrow \IC^N$ the inclusion and by $e \in \Idem_{N \times N}(C_b^\infty(M))$ the orthogonal projection matrix of $\IC^N$ onto its subspace $E$. Then we get an induced map $\IU(E) \to \IU(\IC^N)$ via $A \mapsto A \circ e$, and since this is a $^\ast$-homomorphism, an induced map $i_0 \colon K_0(\IU(E)) \to K_0(\IU(\IC^N))$.

Now let $F$ and $G$ be two $C_b^\infty$-complements of $E$ and of the same dimension\footnote{We can assume this w.l.o.g. since we may add a suitable trivial bundle to either $F$ or $G$.}. We will show that the induced maps $i_0^F, i_0^G \colon K_0(\IU(E)) \to K_0(\IU(\IC^N))$ are the same. Since $F$ and $G$ are as complements of $E$ unitarily $C_b^\infty$-isomorphic, we get a unitary $C_b^\infty$-isomorphism $U \colon \IC^N \to \IC^N$ intertwining the inclusions $i^F, i^G \colon E \hookrightarrow \IC^N$, i.e., $U \circ i^F = i^G$, via $\IC^N \cong E \oplus F \cong E \oplus G \cong \IC^N$. This gives an inner automorphism $\operatorname{Ad}_U \colon \IU(\IC^N) \to \IU(\IC^N)$ via $A \mapsto U A U^\ast$. Since inner automorphisms induce the identity map on the $K$-groups,\footnote{see, e.g., \cite[Lemma 4.6.1]{higson_roe}} we get from the commutative diagram
\[\xymatrix{& K_0(\IU(E))\ar[dl]_{i_0^F} \ar[dr]^{i_0^G} & \\ K_0(\IU(\IC^N)) \ar[rr]_{(\operatorname{Ad}_U)_0 = \id} & & K_0(\IU(\IC^N)) }\]
the claim $i_0^F = i_0^G$.

Now since $\IU(\IC^N) \cong \Mat_{N \times N}(\IU(M))$ we get on $K$-theory a natural isomorphism $K_0(\IU(\IC^N)) \cong K_0(\IU(M))$. So together with the above discussion we get for any vector bundle $E$ a natural map $K_0(\IU(E)) \to K_0(\IU(M))$. It is easily deduced from the construction that this map is compatible with the index maps on $K_0(\IU(E))$ and $K_0(\IU(M))$, i.e., the Diagram \eqref{eq:natural_receptacle_index_maps} is commutative.

Clearly all the above also works for the algebra $\C(E)$ instead of $\IU(E)$.
\end{proof}

\section{Uniform Roe algebras}\label{sec:uniform_roe_algebra}

Let us now introduce the uniform Roe algebras $C_u^\ast(Y)$ of a discete metric space $Y$ (the case of a non-discrete metric space will be discussed in Section \ref{sec:uniformity_PDOs}). Note that the first instance of the uniform Roe algebra was actually $\mathcal{U}_{-\infty}(M)$ defined by Roe in \cite{roe_index_1}. Later it was generalized to the algebra $C_u^\ast(Y)$, since $\mathcal{U}_{-\infty}(M)$ is only defined for manifolds. Also note that we will show that not $\IU(E)$, but $\C(E)$\footnote{see Definition \ref{defn:smooth_uniform_roe_algebra}} actually is a smooth subalgebra of $C_u^\ast(E)$.

We have already mentioned the importance of the uniform Roe algeba to coarse spaces in Section \ref{sec:coarsely_bounded_geometry}. Let us mention now another important property of it which states that the uniform Roe algebra contains all of the coarse information of a space (a rigidity result): \v{S}pakula and Willett showed in \cite[Theorem 1.4]{spakula_willett} that if $X$ and $Y$ are discrete metric spaces of coarsely bounded geometry and both have property $A$, then they are coarsely equivalent if $C_u^\ast(X)$ and $C_u^\ast(Y)$ are Morita equivalent.

\begin{defn}[Uniform Roe algebras]\label{defn:C_u^ast(Y)}
Let $Y$ be a uniformly discrete metric space with coarsely bounded geometry.

The \emph{uniform Roe algebra} $C_u^\ast(Y)$ of $Y$ is the norm closure of all finite propagation operators with uniformly bounded coefficients in $\IB(\ell^2(Y))$.

Representing an operator $T$ on $\ell^2(Y)$ as a matrix indexed by $Y$, the notion of ``uniformly bounded coefficients'' means that there exists a $C > 0$ such that we have $\|T(x,y)\| < C$ for all $x, y \in Y$. And ``finite propagation'' means that there is an $R > 0$ such that $T(x,y) = 0$ whenever $d(x,y) > R$.
\end{defn}

Let us also introduce another version of the uniform Roe algebra which is in some cases easier to work with.

\begin{defn}[{\cite[Definition 6.6 \& Remark 6.7]{spakula_uniform_k_homology}}]
\label{defn:C_k^ast(Y)}
Let $C_k^\ast(Y)$ be the norm closure of all locally compact, finite propagation operators $T \in \IB(\ell^2 (Y \times \IN))$ with uniformly bounded coefficients and the following additional property (the uniformity condition): for every $\varepsilon > 0$ there exists an $M > 0$ such that each entry $T(x,y)$ of $T$ is at most $\varepsilon$ away from a rank-$M$ operator.

Here ``locally compact'' means that each entry $T(x,y)$ is a compact operator on $\ell^2(\IN)$. But note that this is implied by the uniformity condition.
\end{defn}

Now we can state the following lemma which ensures that the $K$-theory of $C_u^\ast(Y)$ and the $K$-theory of $C_k^\ast(Y)$ are equal (i.e., it justifies calling both algebras as uniform Roe algebras).

\begin{lem}[{\cite[Lemma 6.10]{spakula_uniform_k_homology}}]
\label{lem:iso_discrete_versions_uniform_roe}
Let $Y$ be a uniformly discrete metric space with coarsely bounded geometry.

Then the obvious inclusion $C_u^\ast(Y) \otimes \IK(\ell^2(\IN)) \hookrightarrow C_k^\ast(Y)$ is an isomorphism.
\end{lem}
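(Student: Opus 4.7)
The map $\pi \colon C_u^\ast(Y) \otimes \IK(\ell^2(\IN)) \to C_k^\ast(Y)$ is well-defined on elementary tensors: if $A \in C_u^\ast(Y)$ has finite propagation $R$ with uniformly bounded entries and $K \in \IK(\ell^2(\IN))$ is approximable within $\varepsilon$ by a rank-$M$ operator $K'$, then $A \otimes K$, viewed as a matrix over $Y$ with entries in $\IB(\ell^2(\IN))$, has propagation $\leq R$, entries of norm $\leq \|A\|\cdot\|K\|$ that are compact, and satisfies the uniformity condition because each $A(x,y)K$ is within $\|A\|\varepsilon$ of the rank-$\leq M$ operator $A(x,y)K'$. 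Since $\IK(\ell^2(\IN))$ is nuclear, the $C^\ast$-tensor norm on $C_u^\ast(Y) \otimes \IK(\ell^2(\IN))$ is unique, so $\pi$ extends by continuity to a $\ast$-homomorphism. Injectivity is immediate because the spatial embedding $C_u^\ast(Y) \otimes \IK(\ell^2(\IN)) \hookrightarrow \IB(\ell^2(Y)) \otimes \IK(\ell^2(\IN)) \subset \IB(\ell^2(Y \times \IN))$ is isometric and $\pi$ factors through it followed by the inclusion $C_k^\ast(Y) \subset \IB(\ell^2(Y \times \IN))$.

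For surjectivity, let $p_n \in \IK(\ell^2(\IN))$ denote the projection onto the first $n$ standard basis vectors and set $P_n := 1 \otimes p_n$. Given $T \in C_k^\ast(Y)$, by density one may assume $T$ has finite propagation $R$, uniformly bounded entries of norm $\leq C$, and satisfies the uniformity condition. The compression $T_n := P_n T P_n$ has entries $p_n T(x,y) p_n$ and expands as $T_n = \sum_{i,j \leq n} S_{ij} \otimes e_{ij}$, where $S_{ij}(x,y) := \langle T(x,y) e_j, e_i \rangle$ defines a scalar matrix of propagation $\leq R$ with sup-norm $\leq C$; hence $S_{ij} \in C_u^\ast(Y)$ and $T_n \in C_u^\ast(Y) \otimes \IK(\ell^2(\IN))$. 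It then remains to show $T_n \to T$ in norm. Coarsely bounded geometry of $Y$ provides a uniform ball-size bound $N_R$, and a standard Cauchy--Schwarz argument gives the elementary estimate $\|S\|_{\mathrm{op}} \leq N_R \sup_{x,y} \|S(x,y)\|$ for every finite-propagation block matrix $S$ of propagation $\leq R$. Applying this to $T-T_n$ reduces the convergence to the uniform tail estimates $\sup_{x,y}\|(1-p_n) T(x,y)\| \to 0$ and $\sup_{x,y}\|T(x,y)(1-p_n)\| \to 0$.

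The decisive point, and main obstacle, is extracting these uniform tail estimates from the pointwise uniformity condition, which a priori only provides, for each $\varepsilon > 0$, an $M$ with every $T(x,y)$ within $\varepsilon$ of \emph{some} rank-$M$ operator whose range may depend on $(x,y)$. I would apply the uniformity condition simultaneously to $T$ and to $T^\ast$ (both lie in $C_k^\ast(Y)$), which controls both the ``row'' and ``column'' essential ranges of every $T(x,y)$, and then combine this with a diagonal argument inside the fixed separable Hilbert space $\ell^2(\IN)$ to conclude that the family $\{T(x,y) : x,y \in Y\}$ is precompact in $\IK(\ell^2(\IN))$. For any precompact subset of $\IK(\ell^2(\IN))$ the maps $A \mapsto p_n A$ and $A \mapsto A p_n$ converge uniformly to the identity (total boundedness of the family plus compactness of each member), which supplies the required tail estimate and, combined with the previous paragraph, completes surjectivity.
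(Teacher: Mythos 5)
There is a genuine gap, and you have put your finger on exactly where it is. The early parts are fine: extension of $\pi$ by nuclearity of $\IK(\ell^2(\IN))$, injectivity via the spatial embedding, reduction by density to a finite-propagation representative $T$ with uniformly bounded entries, identification of $P_n T P_n$ as an element of $C_u^\ast(Y) \otimes \IK(\ell^2(\IN))$, and the operator-norm estimate $\|S\| \leq N_R \sup_{x,y}\|S(x,y)\|$ for propagation-$\leq R$ block matrices over a space of coarsely bounded geometry. Everything then hinges on the uniform tail estimate $\sup_{x,y}\|T(x,y) - p_n T(x,y) p_n\| \to 0$, as you correctly observe.

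Your last paragraph does not supply this. Applying the uniformity condition to $T^\ast$ gives nothing new: since $T^\ast(x,y) = T(y,x)^\ast$ and a rank-$M$ operator has rank-$M$ adjoint, the condition for $T^\ast$ is literally the condition for $T$ with the index pair permuted. And a diagonal argument can at best produce convergence along a subsequence in some weak topology; it cannot yield norm-precompactness of an uncountable family of compacts. In fact the precompactness of $\{T(x,y) : x,y \in Y\}$ is \emph{false} under the uniformity condition as stated in this paper. Take $Y = \IN$ and $T = \sum_i e_{ii} \otimes P_i$ acting on $\ell^2(\IN \times \IN)$, where $P_i$ is the rank-one projection onto the $i$th basis vector of $\ell^2(\IN)$. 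This is a projection of norm one, propagation zero, each entry of rank at most one, so the uniformity condition holds trivially with $M = 1$; yet $\|P_i - P_j\| = 1$ for $i \neq j$, the family is not totally bounded, and $p_n T(i,i) p_n = 0$ for all $i > n$, so your tail estimate fails. Pushing further, the entries of \emph{every} element of $C_u^\ast(Y) \otimes \IK(\ell^2(\IN))$ do form a totally bounded subset of $\IK(\ell^2(\IN))$ --- they are uniform-in-$(x,y)$ limits of families confined to bounded subsets of fixed finite-dimensional spaces $\operatorname{span}\{K_1, \ldots, K_m\}$ --- so this $T$ is not in the image of $\pi$ at all. The conclusion is that either the lemma fails under the definition of $C_k^\ast(Y)$ as written in this thesis, or, much more likely, the intended uniformity condition is stronger: the rank-$M$ approximant should lie in the fixed corner $p_M \IK(\ell^2(\IN)) p_M$, equivalently $\sup_{x,y}\|T(x,y) - p_M T(x,y) p_M\| < \varepsilon$. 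Under that reading the tail estimate is the hypothesis itself and the remainder of your argument closes the proof; you should check Definition 6.6 and Lemma 6.10 of \v{S}pakula's paper directly rather than trying to extract precompactness from the condition as paraphrased here.
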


Let $M$ be a manifold of bounded geometry, $E$ be a vector bundle of bounded geometry over $M$ and $\C(E)$ the smooth uniform Roe algebra\footnote{see Definition \ref{defn:smooth_uniform_roe_algebra}} on $E$. The following lemma establishes $\C(E)$ as a ``smooth'' version of $C_k^\ast(Y)$, where $Y \subset M$ is a uniformly discrete quasi-lattice (recall Definition \ref{defn:coarsely_bounded_geometry}).

\begin{lem}\label{lem:quasi_local_smoothing_dense_uniform_roe}
We have a continuous\footnote{with respect to the operator norm on $\C(E) \subset \IB(L^2(E))$} inclusion $\C(E) \hookrightarrow C_k^\ast(Y)$ with dense image, where $Y \subset M$ is a uniformly discrete quasi-lattice.

But this inclusion is \emph{not} canonical.
\end{lem}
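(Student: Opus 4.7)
The plan is to exhibit the inclusion via an explicit unitary identification $L^2(E) \cong \ell^2(Y \times \IN)$, verify that smoothing, finite propagation operators have matrix coefficients meeting the defining conditions of $C_k^\ast(Y)$, and then argue density by approximating the matrix entries of a general $C_k^\ast(Y)$-operator by smooth kernels.

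First, I would fix a Borel partition $M = \bigsqcup_{y \in Y} U_y$ with $y \in U_y \subset B_R(y)$ for some uniform $R > 0$; this exists because $Y$ is a uniformly discrete quasi-lattice, e.g.\ by taking Voronoi cells. Decomposing $L^2(E) = \bigoplus_{y \in Y} L^2(E|_{U_y})$ and choosing, for every $y \in Y$, a unitary $u_y \colon L^2(E|_{U_y}) \xrightarrow{\cong} \ell^2(\IN)$ (possible since each summand is a separable Hilbert space) yields a unitary $U \colon L^2(E) \xrightarrow{\cong} \ell^2(Y; \ell^2(\IN)) = \ell^2(Y \times \IN)$. Every operator $A$ on $L^2(E)$ becomes a matrix $(A(y,y'))_{y,y' \in Y}$ with $A(y,y') \in \IB(\ell^2(\IN))$, and the induced map $\IB(L^2(E)) \to \IB(\ell^2(Y \times \IN))$ is an isometric $^\ast$-isomorphism, giving continuity for free.

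Next I would check that a smoothing operator $A$ with propagation $S$ lies in $C_k^\ast(Y)$. Finite propagation: if $d(y,y') > S + 2R$, then $d(U_y, U_{y'}) > S$, so $A(y,y') = 0$. Uniformly bounded coefficients: by Proposition \ref{prop:smoothing_op_kernel} the kernel $k_A$ lies in $C_b^\infty(E \boxtimes E^\ast)$, and a standard Schur-type estimate together with $\vol(U_y) \le \vol B_R(y) \le C_R$ gives a uniform bound on $\|A(y,y')\|$. The uniformity condition is the key point: restricted to the compact set $\overline{U_y} \times \overline{U_{y'}}$ the kernel and all its derivatives are uniformly bounded \emph{independently} of $y, y'$ (bounded geometry of $M$ and $E$), hence pulling back along $\exp$ from uniformly sized normal charts and expanding in a fixed Fourier basis yields a finite-rank approximation of $A(y,y')$ whose error in operator norm decays in the rank uniformly in $y, y'$; in particular $A(y,y')$ is compact. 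Passing to norm limits, the whole of $\C(E)$ maps into $C_k^\ast(Y)$.

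For density, given a generator $T \in C_k^\ast(Y)$ of the form: locally compact, finite propagation $S$, uniformly bounded entries, and satisfying the uniformity condition, I would approximate each entry $T(y,y')$ within $\varepsilon$ by a rank-$M$ operator $T_M(y,y')$ (uniformly in $y,y'$), and then approximate each such finite-rank operator by an integral operator on $U_{y'} \to U_y$ with a smooth, compactly supported kernel, with all estimates uniform in $y,y'$; assembling these kernels on $M \times M$ produces a smoothing operator of finite propagation $\le S + 2R$ whose matrix is within $\varepsilon$ of $T$ in operator norm by the Schur test (only boundedly many entries in each row and column are nonzero). Non-canonicity is then immediate: both the partition $\{U_y\}$ and the unitaries $u_y$ are made by arbitrary choices, and different choices yield inclusions that differ by an inner automorphism induced by a non-scalar unitary in $C_u^\ast(Y) \otimes \IK$.

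The main obstacle is the uniform smooth approximation of arbitrary rank-$M$ operators on $L^2(E|_{U_{y'}}) \to L^2(E|_{U_y})$ with bounds that do not degenerate as $(y,y')$ varies over $Y \times Y$; this is where one genuinely needs bounded geometry of $M$ and $E$ to transplant a fixed smoothing procedure in a normal chart to every pair of cells in a uniform way.
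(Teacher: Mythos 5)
Your proposal is correct and follows essentially the same strategy as the paper: decompose $L^2(E)$ along a quasi-latticing partition with chosen orthonormal bases, check the matrix conditions for finite-propagation smoothing operators, and prove density by approximating the entries of a generator uniformly by smooth kernels. The one real point of divergence is in verifying the uniformity condition for the inclusion: where you propose an explicit finite-rank approximation by pulling back to normal charts and expanding in a fixed Fourier basis, the paper simply invokes a result of Stinespring showing that the blocks $\chi_{V_y} A \chi_{V_x}$ are trace class with uniformly bounded traces, which yields the uniform approximability at once — a more streamlined route to the same conclusion.
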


For the proof we first have to define the following notion:

\begin{defn}[Quasi-latticing partitions, {\cite[Definition 8.1]{spakula_uniform_k_homology}}]\label{defn:quasi-latticing_partitions}
Let $Y \subset M$ be a uniformly discrete quasi-lattice.

A collection $(V_y)_{y \in Y}$ of open, disjoint subsets of $M$ is called a \emph{quasi-latticing partition with diameters $\le$ d}, if $M = \bigcup_{y \in Y} \overline{V_y}$, $\sup_{y \in Y} \diam V_y \le d$ and for every $\varepsilon > 0$ we have $\sup_{y \in Y} \card \{z \in Y \ | \ V_z \cap B_\varepsilon(V_y) \not= \emptyset\} < \infty$.
\end{defn}

Note that on manifolds of bounded geometry such quasi-latticing partitions always exist (by letting $V_y$ consist of all points $x$ of $M$ such that $y$ is the closest point to $x$ of all the points in $Y$).

\begin{proof}
Let $(V_y)_{y \in Y}$ be a quasi-latticing partition and we denote by $H$ the Hilbert space $H := L^2(E)$. Setting $H_y := \chi_{V_y} \cdot H$, we get a bijective (since each $H_y$ is infinite dimensional) isometry $H = \bigoplus_{y \in Y} H_y \cong \ell^2(Y \times \IN)$ after fixing an orthonormal basis $(e_i^y)_{i \in \IN}$ of each $H_y$.

Since $C_k^\ast(Y)$ is defined as the norm closure of the locally compact, finite propagation operators on $\ell^2(Y \times \IN)$ with uniformly bounded coefficients and satisfying a uniformity condition, it corresponds under the above isometry $H = \bigoplus_{y \in Y} H_y \cong \ell^2(Y \times \IN)$ to the norm closure of the bounded operators $A \in \IB(H)$ with finite propagation and the property that for all points $x,y \in Y$ the operators $\chi_{V_y} \cdot A \cdot \chi_{V_x} \colon H_x \to H_y$ are compact and the collection of them satisfies the uniformity condition.

Let $A \in \C(E)$ have finite propagation. Since $A$ is by definition a continuous operator on $H = L^2(E)$, it remains to show that it is locally compact and satisfies the uniformity condition. We will do this at once: by \cite[Theorem 4.(3)]{stinespring} the operators $\chi_{V_y} \cdot A \chi_{V_x} \colon H_x \to H_y$ are trace class with traces uniformly bounded in $x,y \in Y$. From this the claim that $A$ defines a finite propagation operator in $C_k^\ast(Y)$ follows.

Since the finite propagation operators in $\C(E)$ are dense (by definition) and the identification $H = \bigoplus_{y \in Y} H_y \cong \ell^2(Y \times \IN)$ is an isometry, we get the claimed continuous inclusion $\C(E) \hookrightarrow C_k^\ast(Y)$. It is clear that it is not canonical since it depends on the chosen quasi-latticing partition $(V_y)_{y \in Y}$ and the chosen orthonormal bases $(e_i^y)_{i \in \IN}$ of the spaces $H_y$.

It remains to show that $\C(E) \subset C_k^\ast(Y)$ is dense. Let $T \in C_k^\ast(Y)$ be a locally compact, finite propagation operator with uniformly bounded coefficients and satisfying the uniformity condition, and let $\varepsilon > 0$ be given. Since $T(x,y) := \chi_{V_y} \cdot T \chi_{V_x} \colon H_x \to H_y$ is compact, there is a finite rank operator $k(x,y)\colon H_x \to H_y$ which is $\varepsilon$ away from $T(x,y)$. We can write $k(x,y) = \sum_{j=1}^N \lambda_j \langle f_j, \largecdot \rangle g_j$, where $\{f_j\} \subset H_x$ and $\{g_j\} \subset H_y$ are orthonormal families (but not necessarily complete ones) and the $\lambda_j$ are complex numbers bounded in absolute value from above by the common upper bound $C$ for the coefficients of $T$. We fix a $\delta > 0$ and choose functions $f_j^\prime \in H^\infty(E)$ with $\supp f_j^\prime \subset B_\delta(V_x)$ and $\|f_j - f_j^\prime\|_{L^2(E)} < \varepsilon / 2N$ and analogously we choose $g_j^\prime$. Then the operator $k^\prime(x,y) := \sum_{j=1}^N \lambda_j \langle f^\prime_j, \largecdot \rangle g^\prime_j$ is a smoothing operator which is $C\varepsilon$ away from $k(x,y)$, and therefore $2C\varepsilon$ from $T(x,y)$. Now we do this for all $x,y \in Y$. Since $T$ satisfies the uniformity condition, there is a common upper bound for the $N$s. This allows us to choose approximating functions from $H^\infty(E)$ such that they all have common upper bounds on all their derivatives (we use for all $x,y$ the same $\delta$), which leads to the fact that if we put all the smoothing operators $k^\prime(x,y)$ together into one operator (call it $T^\prime$), it will still be a bounded operator in all seminorms, i.e., a smoothing operator (here we also need the property of quasi-latticing partitions which is stated in the definition as the last one and denote its value for $\delta$ by $\delta_{(V_y)}$). Furthermore, $T^\prime$ is at most $2C\delta_{(V_y)}\varepsilon$ away from $T$ and $T^\prime$ has finite propagation. From this the claim follows.
\end{proof}

These inclusions $\C(E) \hookrightarrow C_k^\ast(Y)$ are $^\ast$-homomorphisms and therefore we get induced maps $K_\ast(\C(E)) \to K_\ast(C_k^\ast(Y))$. Since $\C(E)$ is a local $C^\ast$-algebra and densely included in $C_k^\ast(Y)$, we conclude with Lemma \ref{lem:loc_algebra_same_k_theory} that the induced maps $K_\ast(\C(E)) \to K_\ast(C_k^\ast(Y))$ are isomorphisms. Now we will show that though these inclusions $\C(E) \hookrightarrow C_k^\ast(Y)$ are not canonical, the induced isomorphisms $K_\ast(\C(E)) \to K_\ast(C_k^\ast(Y))$ are, because they are all equal.

\begin{prop}\label{prop:smooth_version_of_uniform_roe}
Let $M$ be a manifold of bounded geometry and $Y \subset M$ a uniformly discrete quasi-lattice. Furthermore, let $E \to M$ be a vector bundle of bounded geometry.

Then the groups $K_\ast(\C(E))$ and $K_\ast(C_k^\ast(Y))$ are naturally isomorphic.
\end{prop}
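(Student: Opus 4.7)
The strategy is to show that although the inclusion $\iota \colon \C(E) \hookrightarrow C_k^\ast(Y)$ constructed in Lemma \ref{lem:quasi_local_smoothing_dense_uniform_roe} depends on the choice of a quasi-latticing partition $(V_y)_{y \in Y}$ and of orthonormal bases $(e_i^y)_i$ in each $H_y := \chi_{V_y} L^2(E)$, the induced map $\iota_\ast$ on $K$-theory does not. Combined with Lemma \ref{lem:loc_algebra_same_k_theory} applied to the dense inclusion of the local $C^\ast$-algebra $\C(E)$ into $C_k^\ast(Y)$, this yields the required canonical isomorphism, justifying the word ``natural''.

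First I would fix two sets of data, giving two identifications $\phi, \phi' \colon L^2(E) \xrightarrow{\sim} \ell^2(Y \times \IN)$, and write $\iota, \iota'$ for the resulting inclusions. The unitary $U := \phi' \phi^{-1}$ on $\ell^2(Y \times \IN)$ tautologically intertwines them: $\iota'(A) = U \iota(A) U^\ast$ for every $A \in \C(E)$. The crucial step is to check that $U$ lies in the multiplier algebra $M(C_k^\ast(Y))$. Its matrix entries $U_{(y,i),(z,j)} = \langle e_i^y, (e_j^z)' \rangle_{L^2(E)}$ have modulus bounded by $1$ since $U$ is a unitary, and they vanish unless $V_y \cap V_z' \neq \emptyset$. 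Because the diameters of both partitions are uniformly bounded by some constants $d, d'$ (Definition \ref{defn:quasi-latticing_partitions}), this forces $d(y,z) \le d + d'$, so $U$ is a uniformly bounded, finite propagation operator. A direct computation then shows that $UA$ and $AU$ lie in $C_k^\ast(Y)$ for every $A \in C_k^\ast(Y)$---the local compactness and the uniformity condition pass through because composition with a finite propagation, uniformly bounded unitary block-matrix only redistributes finitely many entries---so $U \in M(C_k^\ast(Y))$.

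Next I would invoke the standard fact that conjugation by a unitary in the multiplier algebra induces the identity on $K$-theory. On $K_0$, the cleanest argument is via Murray--von Neumann equivalence: for any projection $p$ in a matrix algebra over $C_k^\ast(Y)$, the partial isometry $v := Up$ satisfies $v^\ast v = p$ and $vv^\ast = UpU^\ast$, so $[UpU^\ast] = [p]$; the argument for $K_1$ proceeds analogously (or by suspension). Hence $\iota_\ast = \iota'_\ast$, and the isomorphism $K_\ast(\C(E)) \to K_\ast(C_k^\ast(Y))$ is independent of all choices.

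The only technical point requiring genuine care is the verification that the intertwining unitary $U$ really is a multiplier of $C_k^\ast(Y)$; this is where one pays the price of working with a non-unital algebra, and it rests entirely on the finite propagation of $U$, which in turn traces back to the uniform bound on the diameters built into the very definition of a quasi-latticing partition. Everything else is standard $K$-theoretic bookkeeping.
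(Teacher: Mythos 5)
Your proof is correct and takes essentially the same approach as the paper: exhibit a unitary intertwining the two inclusions $\C(E) \hookrightarrow C_k^\ast(Y)$ and invoke the fact that conjugation by such a unitary induces the identity on $K$-theory. The only organizational difference is that the paper first fixes a canonical quasi-latticing partition (the nearest-point one), so it only needs to compare two choices of orthonormal bases and the intertwining unitary automatically has propagation zero; you treat both dependencies at once, proving finite propagation of the intertwiner from the uniform diameter bounds, and you spell out the multiplier-algebra point ($U \in M(C_k^\ast(Y))$) that the paper leaves implicit behind the phrase ``inner automorphisms act trivially on $K$-theory''. Both routes are fine; yours is slightly more self-contained, the paper's slightly leaner.
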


\begin{proof}
From the proof of the above Lemma \ref{lem:quasi_local_smoothing_dense_uniform_roe} we see that the constructed maps $\C(E) \hookrightarrow C_k^\ast(Y)$ depend on two things: on the chosen quasi-latticing partition $(V_y)_{y \in Y}$ and the chosen orthonormal bases $(e_i^y)_{i \in \IN}$ of the spaces $H_y$. The dependence on the first can be dropped by considering a canonical quasi-latticing partition, namely the one that picks the nearest points, i.e., $p \in V_y$ if and only if of all points from $Y$ the point $y$ is the nearest to $p$. The dependence of the inclusions on the second point can not be dropped, but given two different collections of bases for the spaces $H_y$, we may map the one to the other and get a unitary $H \to H$ intertwining the constructed inclusions of $\C(E)$ into $C_k^\ast(Y)$. Since inner automorphisms act trivially on $K$-theory, we get that the maps $K_\ast(\C(E)) \to K_\ast(C_k^\ast(Y))$ induced by the two different inclusions are equal.
\end{proof}

Let $E$ and $F$ be two vector bundles of bounded geometry over $M$. From the above result we conclude that $K_\ast(\C(E))$ and $K_\ast(\C(F))$ are isomorphic. Recall that for the trivial bundle $F = \IC$ we just write $K_\ast(\C(M))$ and that we constructed a natural map $K_\ast(\C(E)) \to K_\ast(\C(M))$ in Proposition \ref{prop:analytic_index_map_quasiloc_smoothing}. We will see now that this natural map is an isomorphism by comparing it with the isomorphisms with $K_\ast(C_k^\ast(Y))$.

\begin{cor}\label{cor:natural_receptacle_iso}
The map $K_\ast(\C(E)) \to K_\ast(\C(M))$ from Proposition \ref{prop:analytic_index_map_quasiloc_smoothing} is an isomorphism and we have a commutative diagram of isomorphisms
\[\xymatrix{ K_\ast(\C(E)) \ar[rr] \ar[dr] & & K_\ast(\C(M)) \ar[dl] \\ & K_\ast(C_k^\ast(Y)) &}\]
where the diagonal maps are the ones from the above proposition.
\end{cor}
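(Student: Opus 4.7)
The plan is to reduce everything to Proposition~\ref{prop:smooth_version_of_uniform_roe}. That proposition already provides natural isomorphisms $K_\ast(\C(E)) \to K_\ast(C_k^\ast(Y))$ and $K_\ast(\C(M)) \to K_\ast(C_k^\ast(Y))$, so if one knows that the diagram in question commutes, then the top horizontal arrow $K_\ast(\C(E)) \to K_\ast(\C(M))$ is forced to be an isomorphism as the composition of one isomorphism with the inverse of another. So the entire content of the corollary sits in the commutativity of the triangle.

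To establish commutativity, I would unpack the two routes from $K_\ast(\C(E))$ to $K_\ast(C_k^\ast(Y))$ and identify them on representative elements. The direct route uses a quasi-latticing partition $(V_y)_{y\in Y}$ together with a choice of orthonormal basis of each $H_y = \chi_{V_y}L^2(E)$, producing an isometry $L^2(E) \cong \ell^2(Y\times\IN)$ and hence an inclusion $\iota_E\colon \C(E) \hookrightarrow C_k^\ast(Y)$. The indirect route factors through $K_\ast(\C(\IC^N)) \cong K_\ast(\Mat_{N\times N}(\C(M)))$ via $A \mapsto A\circ e$ (for $e$ the projection of $\IC^N$ onto $E$ afforded by a $C_b^\infty$-complement $E^\perp$), followed by the Morita identification and then the analogous inclusion $\iota_M\colon \C(M) \hookrightarrow C_k^\ast(Y)$. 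To compare, pick the same partition $(V_y)$ and extend the chosen basis of each $H_y^E := \chi_{V_y}L^2(E)$ to a basis of $H_y^{\IC^N} := \chi_{V_y}L^2(\IC^N) = H_y^E \oplus H_y^{E^\perp}$; this realizes the inclusion $L^2(E)\hookrightarrow L^2(\IC^N)$ as a block-diagonal isometry $\ell^2(Y\times\IN) \hookrightarrow \ell^2(Y\times\IN)$ compatible with $A \mapsto A\circ e$. Under the isomorphism $C_k^\ast(Y) \cong C_u^\ast(Y)\otimes\IK$ of Lemma~\ref{lem:iso_discrete_versions_uniform_roe}, this corresponds to the standard stabilization embedding, which is well known to induce the identity on $K$-theory.

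The final step is to get rid of the two arbitrary choices (the quasi-latticing partition and the complement bundle $E^\perp$ together with the basis of $H_y^{E^\perp}$). For the partition one can, as in the proof of Proposition~\ref{prop:smooth_version_of_uniform_roe}, take the canonical ``nearest-point'' partition. For the remaining basis choices and for the choice of $E^\perp$, two different choices differ by a unitary of $\ell^2(Y\times\IN)$ that lies in (the multiplier algebra of) $C_k^\ast(Y)$; conjugation by such a unitary is an inner automorphism and therefore acts as the identity on $K$-theory, exactly as was used both in Proposition~\ref{prop:analytic_index_map_quasiloc_smoothing} (to see that the map $K_\ast(\C(E)) \to K_\ast(\C(M))$ is well defined independently of $E^\perp$) and in Proposition~\ref{prop:smooth_version_of_uniform_roe} (to see that $\iota_E$ is well defined on $K$-theory). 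Combining these two observations delivers commutativity.

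The main obstacle is purely bookkeeping: matching up the non-canonical isometries $L^2(E)\cong\ell^2(Y\times\IN)$ and $L^2(\IC^N)\cong\ell^2(Y\times\IN)$ with the algebraic Morita identification $\C(\IC^N)\cong\Mat_{N\times N}(\C(M))$ so that everything aligns with the stabilization $C_k^\ast(Y)\cong C_u^\ast(Y)\otimes\IK$. Once this alignment is made explicit, no new analytic input is needed beyond the two preceding results; the corollary follows formally.
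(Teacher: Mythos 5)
Your proposal is correct and takes essentially the same route as the paper: you construct exactly the block-diagonal isometry $V$ (extending a basis of each $H_y^E$ to $H_y^{\IC^N}$) that the paper uses, and then invoke the fact that conjugation by an isometry acts trivially on $K$-theory. The paper packages that last step as an appeal to the Higson--Roe lemma on inner endomorphisms, while you phrase it as a stabilization observation under $C_k^\ast(Y)\cong C_u^\ast(Y)\otimes\IK$, but this is the same underlying fact.
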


\begin{proof}
That the map $K_\ast(\C(E)) \to K_\ast(\C(M))$ is an isomorphism will immediately follow if we can establish the commutativity of the diagram (since we already know that the diagonal maps are isomorphisms). Let $i\colon E \to \IC^N$ be an inclusion inducing the natural map $i_\ast\colon K_\ast(\C(E)) \to K_\ast(\C(\IC^N)) \cong K_\ast(\C(M))$. Then we have the following commutative diagram
\[\xymatrix{ K_\ast(\C(E)) \ar[r]^{i_\ast} \ar[d]^{\cong} & K_\ast(\C(\IC^N)) \ar[d]^{\cong} \\ K_\ast(C_k^\ast(Y)) \ar@{-->}[r]^{(\operatorname{Ad}_V)_\ast} & K_\ast(C_k^\ast(Y))}\]
where the map $(\operatorname{Ad}_V)_\ast$ is induced from the inner endomorphism $\operatorname{Ad}_V(T) := VTV^\ast$ on $C_k^\ast(Y)$, where the isometry $V \colon C_k^\ast(Y) \to C_k^\ast(Y)$ will be constructed in a moment, and the vertical maps in the diagram are the ones from the above Proposition \ref{prop:smooth_version_of_uniform_roe}.

Let us construct the isometry $V$: to construct the map $K_\ast(\C(E)) \to K_\ast(C_k^\ast(Y))$ we have chosen a collection of orthonormal bases of $L^2(E) = \bigoplus_{y \in Y} H_y$, and for the map $K_\ast(\C(\IC^N)) \to K_\ast(C_k^\ast(Y))$ we have chosen a collection of orthonormal bases of $L^2(\IC^N) = \bigoplus_{y \in Y} H_y^\prime$. Since $E \subset \IC^N$ as a subbundle, we have $H_y \subset H_y^\prime$ as subspaces. So we may choose the basis for each space $H_y^\prime$ such that it extends the chosen basis of $H_y$ to get an isometric inclusion $\bigoplus_{y \in Y} H_y \hookrightarrow \bigoplus_{y \in Y} H_y^\prime$. From this we get our isometry $V$. It is clear that with this construction the above diagram commutes.

Since inner endomorphisms induced by isometries act trivially on $K$-theory,\footnote{see, e.g., \cite[Lemma 4.6.2]{higson_roe}} i.e., $(\operatorname{Ad}_V)_\ast = \id$, we conclude from the diagram that $i_\ast\colon K_\ast(\C(E)) \to K_\ast(\C(\IC^N))$ is an isomorphism.
\end{proof}

Let us mention at least one example of the computation of the $K$-groups of a uniform Roe algebra:

\begin{example}
Mavra showed in \cite{mavra} that $K_1(C_u^\ast(\IR)) \cong \IZ$ and that $K_0(C_u^\ast(\IR))$ is an ininitely generated, torsion free abelian group. For the computation he used the Pimsner--Voiculescu exact sequence and the following fact proven by combining results of Yu (\cite[Proposition 3.2.4]{mavra}): if $M$ is a compact manifold with fundamental group $\Gamma$ and if $X$ denotes the universal cover of $M$, then $C_u^\ast(X)$ is Morita equivalent to the reduced crossed product algebra $\ell^\infty(\Gamma; \IC) \rtimes \Gamma$.
\end{example}

\section{Index maps for uniform Roe algebras}\label{sec:index_maps_uniform_roe}

Since $\C(E)$ is a ``smooth version'' of the uniform Roe algebra $C_u^\ast(Y)$ and since we do have analytic index maps on $K_0(\C(E))$, it is not surprising that we may define index maps on $K_0(C_u^\ast(Y))$ that are equal to the ones on $K_0(\C(E))$ under the isomorphism $K_0(\C(E)) \cong K_0(C_u^\ast(Y))$. To show this is the task of the first half of this section.

The second half is denoted to the index maps on $K_0(C_k^\ast(Y))$ and will be a bit more technically involved as the first half. The reason for this is that here we will have to approximate the compact operators $T(x,y) \in \IK(\ell^2(\IN))$ by operators of trace class, but we will have to do it uniformly (i.e., that the trace norms for different points $x, y \in Y$ stay uniformly bounded in the process of approximation). We will have the same problem when we define index maps on uniform $K$-homology $K_0^u(X)$ of a space $X$, but there it is even more involved (in fact, almost all of the Section \ref{sec:normalization} will be devoted to set up this approximation via ``uniformly traceable operators'' for $K_0^u(X)$). So the in comparison short solution here may be seen as a warm-up for Section \ref{sec:normalization}.

We may ask ourselves why we do need these much more complicated index maps on $K_0(C_k^\ast(Y))$. One explanation would be that the uniform coarse assembly map $\mu_u \colon K_\ast^u(X) \to K_\ast(C_k^\ast(Y))$ maps naturally into the $K$-theory of $C_k^\ast(Y)$ and not into the one of $C_u^\ast(Y)$ (though this $K$-groups are isomorphic). So we need the index maps on $K_0(C_k^\ast(Y))$ in order to relate these indices to the analytic indices that we define on $K_0^u(X)$.

As with the index maps on $K_0(\C(E))$, we will need here amenability so that the index maps that we define on $C_u^\ast(Y)$, resp. $C_k^\ast(Y)$, descend to $K$-theory. That we can not have similar index maps without amenability was shown by Elek in \cite{elek}.

But let us start with the index maps on $K_0(C_u^\ast(Y))$:

\begin{prop}[Index maps on $K_0(C_u^\ast(Y))$, {\cite[Section 2]{elek}}]\label{prop:index_maps_uniform_Roe_algebras}
Let $Y$ be a uniformly discrete metric space of coarsely bounded geometry and let it be amenable.

Then we get for any functional $\tau \in (\ell^\infty)^\ast$ associated to a free ultrafilter on $\IN$\footnote{That is, if we evaluate $\tau$ on a bounded sequence, we get the limit of some convergent subsequence.} and for any F{\o}lner sequence $(U_i)_i$ of $Y$ an index map
\[\ind_\tau \colon K_0(C_u^\ast(Y)) \to \IR.\]
\end{prop}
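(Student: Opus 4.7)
The plan is to mimic the construction of $\ind_\tau$ on $\IU(E)$ from Section \ref{sec:analytic_indices_quasiloc_smoothing}, with the diagonal integral replaced by a diagonal sum over the F{\o}lner sets. Concretely, for an operator $T \in C_u^\ast(Y)$, represented as a matrix $(T(x,y))_{x,y \in Y}$, I would define
\[
m_i(T) := \frac{1}{\card U_i} \sum_{x \in U_i} T(x,x),
\]
observe that $|T(x,x)| = |\langle \delta_x, T \delta_x \rangle| \le \|T\|_{\op}$, so the sequence $(m_i(T))_i$ is bounded, and then set $\ind_\tau(T) := \tau((m_i(T))_i)$. The estimate $|m_i(T)| \le \|T\|_{\op}$ shows that this is a bounded linear functional on $C_u^\ast(Y)$ and in particular is norm-continuous, so it is enough to verify properties on the dense subalgebra of finite propagation operators.

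The main point is to check that $\ind_\tau$ is a trace, because only then will it descend to $K_0$. First I would do this on finite propagation operators $A, B$ with propagations $R_A, R_B \le R$. A direct re-indexing of the summation gives
\[
\sum_{x \in U_i} (AB)(x,x) - \sum_{x \in U_i} (BA)(x,x) \;=\; \sum_{\substack{x \in U_i, y \notin U_i \\ d(x,y) \le R}} A(x,y) B(y,x) \;-\; \sum_{\substack{y \in U_i, x \notin U_i \\ d(x,y) \le R}} A(x,y) B(y,x),
\]
where the sums are finite because of finite propagation. The supports of both sums lie in $\partial_R U_i \times Y$ (up to propagation $R$), so by the coarsely bounded geometry each sum has at most $K_R \cdot \card \partial_R U_i$ nonzero terms, each bounded in modulus by $\|A\|_\infty \|B\|_\infty$. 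Dividing by $\card U_i$ and invoking the F{\o}lner property $\card \partial_R U_i / \card U_i \to 0$ shows that $m_i(AB) - m_i(BA) \to 0$, hence $\ind_\tau([A,B]) = 0$. By norm-continuity of $\ind_\tau$ (and joint continuity of the commutator in operator norm, using boundedness on one of the factors), the trace property extends to all of $C_u^\ast(Y)$.

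To obtain the map on $K_0$, I would stabilize in the standard way: on $M_n(C_u^\ast(Y)) \cong C_u^\ast(Y) \otimes M_n(\IC)$ use $\ind_\tau \otimes \trace$, which remains a norm-continuous trace on each matrix level and is compatible with the upper-left corner inclusions. A trace on a $C^\ast$-algebra descends to a homomorphism on $K_0$ via $[p] - [q] \mapsto \ind_\tau(p) - \ind_\tau(q)$ for projections $p, q$ (in matrices over a unitization), and the usual check that homotopic projections have equal trace together with $\ind_\tau(p \oplus q) = \ind_\tau(p) + \ind_\tau(q)$ gives the required group homomorphism $\ind_\tau \colon K_0(C_u^\ast(Y)) \to \IR$.

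The only real obstacle is the trace property on finite-propagation operators; the bookkeeping for the boundary estimate above is the one place where amenability enters, exactly as it did in \cite[Theorem 6.7]{roe_index_1} for $\IU(E)$. Everything else (continuity, the matrix extension, the descent to $K_0$) is formal. Independence from the chosen F{\o}lner sequence up to a class of averaging procedures is not claimed by the proposition, so need not be addressed; only existence of the index map for each choice of $\tau$ and $(U_i)_i$ is required.
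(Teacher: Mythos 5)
Your proof is correct and follows essentially the same route as the paper: define the index by averaging the diagonal over a F{\o}lner sequence, observe that the functional is bounded in operator norm (your bound $|T(x,x)|=|\langle\delta_x,T\delta_x\rangle|\le\|T\|_{\op}$ is in fact a bit cleaner than the paper's chain through the total sup-norm and $\ell^2$), establish the trace property on the dense subalgebra of finite propagation operators, extend by continuity, and descend to $K_0$. The only real divergence is that you prove the trace property by hand with the explicit re-indexing and boundary estimate, whereas the paper simply cites Elek \cite[Section 2]{elek} for this step; both are fine, and your direct computation is the one that Elek's proof amounts to anyway.
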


\begin{proof}
Let $T \in C_u^\ast(Y)$ and let $(U_i)_i$ be a sequence of finite subsets of $Y$ (at this stage of the proof not necessarily a F{\o}lner sequence). We define a sequence $(t_i)_i \in \ell^\infty$ by
\begin{equation*}
t_i := \frac{1}{\card U_i} \sum_{y \in U_i} T(y,y).
\end{equation*}
Since $T$ has uniformly bounded coefficients, this sequence is indeed a bounded sequence. Let $\tau \in (\ell^\infty)^\ast$ be a functional associated to a free ultrafilter of $\IN$. Then we can define $\ind_\tau (T) := \tau (t)$, where $t = (t_i)_i$ is the bounded sequence defined in the above display. Note that if $T$ is self-adjoint, then its index is real-valued.

Elek showed in \cite[Section 2]{elek} that if $T, S \in C_u^\ast(Y)$ both have finite propagation, then $\ind_\tau (TS) = \ind_\tau (ST)$ under the assumption that $(U_i)_i$ is a F{\o}lner sequence. So $\ind_\tau \colon C_u^\ast (Y) \to \IR$ is in the amenable case a trace on the dense subalgebra of all finite propagation operators with uniformly bounded coefficients in $\IB(\ell^2(Y))$. Now it remains to show that it is continuous against the norm on $\IB(\ell^2(Y))$ so that it extends to a trace on the whole algebra $C_u^\ast(Y)$.

It is clear that the index map is continuous against the total sup-norm of $T$ given by $\sup_{x,y \in Y} |T(x, y)|$. We can bound this norm from above against the operator norm on $\IB(\ell^2(Y))$ of $T$ in the following way: the supremum over the absolute values of all entries of $T$ is the same as the supremum over the sup-norms of the columns of $T$, which are regarded as elements of $\ell^\infty(Y)$. Now the $\ell^\infty$-norm is bounded from above by the $\ell^2$-norm, and the $\ell^2$-norm of the $y$th column of $T$ is $\|T e_y\|_2$, where $e_y$ is the sequence which is $1$ on the $y$th position and all other entries are $0$. Since $\|T e_y\|_2 \le \|T\|_{op} \cdot \|e_y\|_2 = \|T\|_{op}$, we get the estimate
\begin{equation*}
\sup_{x,y \in Y} |T(x, y)| \le \sup_{y \in Y} \|T e_y\|_\infty \le \sup_{y \in Y} \| T e_y \|_2 \le \|T\|_{op}.
\end{equation*}
From this we can conclude that the index map is continuous against the operator norm on $\IB(\ell^2(Y))$ since it is clearly continuous against the total sup-norm.

So it defines a trace on the whole algebra $C_u^\ast(Y)$ and descends therefore to a map on $K_0(C_u^\ast(Y))$.
\end{proof}

Now we get to the index maps on $K_0(C_k(Y))$:

\begin{prop}
Let $Y$ be an amenable, uniformly discrete metric space having coarsely bounded geometry.

Furthermore, let a functional $\tau \in (\ell^\infty)^\ast$ associated to a free ultrafilter on $\IN$ and a F{\o}lner sequence $(U_i)_i$ of $Y$ be given.

Then every $[T] \in K_0(C_k^\ast(Y))$ has a uniformly traceable representative $T$, i.e., one where each entry $T(x,y)$ is a trace class operator on $\ell^2(\IN)$ and the trace norms of these entries are bounded from above. Furthermore, the index defined via evaluating $\tau$ on the bounded sequence $t = (t_i)_i \in \ell^\infty$ given by
\begin{equation}\label{eq:defn_sequence_trace_entries}
t_i := \frac{1}{\card U_i} \sum_{y \in U_i} \trace T(y,y)
\end{equation}
is independent of the choice of such a representative.

Therefore we get index maps
\[\ind_\tau \colon K_0(C_k^\ast(Y)) \to \IR\]
which are compatible with the index maps on $K_0(C_u^\ast(Y))$, i.e., the diagram
\begin{equation*}
\xymatrix{K_0(C_u^\ast(Y)) \ar[rr]^\cong \ar[dr]_{\ind_\tau} & & K_0(C_k^\ast(Y)) \ar[dl]^{\ind_\tau} \\ & \IR}
\end{equation*}
commutes, where the horizontal map is induced by the isomorphism in Lemma \ref{lem:iso_discrete_versions_uniform_roe}.
\end{prop}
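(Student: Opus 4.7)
The strategy is to reduce everything to the already-constructed index map on $K_0(C_u^\ast(Y))$ from Proposition \ref{prop:index_maps_uniform_Roe_algebras} via the stability isomorphism $C_u^\ast(Y) \otimes \IK(\ell^2(\IN)) \cong C_k^\ast(Y)$ of Lemma \ref{lem:iso_discrete_versions_uniform_roe}, but en route one has to produce concrete representatives whose entry-wise traces are actually defined. Starting from projections $p, q$ in $\Mat_N((C_k^\ast(Y))^+)$ representing a given $K_0$-class, I would transport them through the stability isomorphism into $\Mat_N((C_u^\ast(Y) \otimes \IK)^+)$ and then use the norm-density of $C_u^\ast(Y) \otimes \Mat_n$ in $C_u^\ast(Y) \otimes \IK$ together with the standard functional-calculus fact that two projections at distance less than one in a $C^\ast$-algebra are unitarily equivalent, in order to replace $p$ and $q$ up to Murray--von Neumann equivalence by projections living in $\Mat_N((C_u^\ast(Y) \otimes \Mat_n)^+)$ for some sufficiently large $n$. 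Each entry of such a replacement, regarded as an operator on $\ell^2(\IN)$ via the inclusion $\Mat_n \hookrightarrow \IK$, has rank at most $nN$ and operator norm bounded by that of the ambient projection, so its trace norm is uniformly bounded by $nN$; this delivers the required uniformly traceable representative.

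With the representative in hand, boundedness of the sequence $(t_i)_i$ in \eqref{eq:defn_sequence_trace_entries} is immediate, so $\ind_\tau$ is well-defined on uniformly traceable elements. To ensure it descends to $K_0$ I would verify the trace property $\ind_\tau(TS - ST) = 0$ for uniformly traceable operators of finite propagation $R$: writing
\[ \trace (TS)(y,y) - \trace (ST)(y,y) = \sum_{z \in Y} \bigl( \trace T(y,z) S(z,y) - \trace S(y,z) T(z,y) \bigr), \]
the $y$-sum over a finite set $U_i$ of the two terms, after rearranging to the symmetric sums $\sum_{y,z}$, differ only by contributions from pairs $(y,z)$ with exactly one endpoint in $U_i$; finite propagation confines such pairs to $\partial_R U_i$, and dividing by $\card U_i$ followed by applying the Følner property finishes the argument along the lines of \cite[Theorem 6.7]{roe_index_1} and of Proposition \ref{prop:index_maps_uniform_Roe_algebras}. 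Continuity in the operator norm on uniformly traceable representatives of fixed $K_0$-class then extends this, and the independence of the chosen uniformly traceable representative drops out of the trace property combined with the standard Murray--von Neumann picture of $K_0$.

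Finally, the compatibility with $\ind_\tau$ on $K_0(C_u^\ast(Y))$ is a direct calculation once the explicit form of the stability isomorphism is made concrete: a class $[p] \in K_0(C_u^\ast(Y))$ corresponds to $[p \otimes e_{11}] \in K_0(C_k^\ast(Y))$, and the $(y,y)$-entry of $p \otimes e_{11}$ is the rank-one operator $p(y,y) \cdot e_{11}$ on $\ell^2(\IN)$ with $\trace(p(y,y) \cdot e_{11}) = p(y,y)$. Hence the bounded sequences fed to $\tau$ on the two sides of the diagram coincide literally, and commutativity is automatic. The main technical hurdle throughout is the very first step: ensuring that the trace-norm bound on entries is uniform across $Y$, since an arbitrary projection in $C_k^\ast(Y)$ only has compact entries satisfying the weak uniformity condition of Definition \ref{defn:C_k^ast(Y)}; the tensor decomposition together with the density of $\Mat_n$ in $\IK$ is exactly what allows me to upgrade ``compact uniformly'' to ``finite-rank uniformly'' at the cost of only a $K_0$-equivalent deformation.
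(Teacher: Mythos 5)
The student takes a genuinely different route from the paper. The paper constructs a Banach $^\ast$-algebra $C_{\mathrm{tr}}^\ast(Y)$, namely the completion of $C_{\mathrm{fin}}^\ast(Y) \otimes_{\mathrm{alg}} N(\ell^2(\IN))$ with respect to the sum of the operator norm and two trace-based norms $\|\largecdot\|_{1,1}$, $\|\largecdot\|_{\infty,\infty}$ (trace norms of entries followed by column, resp.\ row, sum norms). It then shows $C_{\mathrm{tr}}^\ast(Y)$ is a dense two-sided $^\ast$-ideal in $C_k^\ast(Y)$ on which $\ind_\tau$ is a \emph{continuous} trace for the Banach-algebra topology; continuous traces on Banach algebras descend to $K_0$, and the surjectivity of $K_0(C_{\mathrm{tr}}^\ast(Y)) \to K_0(C_k^\ast(Y))$ is read off the commutative triangle with $K_0(C_u^\ast(Y))$ via stability. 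You instead attempt to work entirely at the level of representatives: deform a given projection to one with $\Mat_n$-valued entries, compute directly, and appeal to a trace property plus the Murray--von Neumann picture.

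Your first step is sound and somewhat more concrete than the paper's: the approximation argument really does deliver a projection whose entries have rank $\le n$, hence uniformly bounded trace norm, and your compatibility computation with $p \otimes e_{11}$ is correct. But there is a genuine gap in the middle. The claim that ``continuity in the operator norm on uniformly traceable representatives of fixed $K_0$-class then extends this'' is not right: the trace of a compact operator is not controlled by its operator norm, so two projections that are $\varepsilon$-close in operator norm can have diagonal entries with vastly different traces, and $\ind_\tau$ is \emph{not} operator-norm continuous. Relatedly, ``the trace property combined with the standard Murray--von Neumann picture of $K_0$'' does not immediately give independence of the representative, because the partial isometry or unitary implementing the equivalence between two uniformly traceable finite-propagation representatives does not, a priori, itself have finite propagation or entries with uniformly bounded Hilbert--Schmidt/trace norm; yet both are needed for the Følner boundary argument $\sum_{y \in U_i}\sum_z \|v(z,y)\|_{HS}^2 \approx \sum_{y \in U_i}\sum_z \|v(y,z)\|_{HS}^2$ to apply. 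This is exactly the issue the paper's Banach algebra $C_{\mathrm{tr}}^\ast(Y)$ with its finer topology resolves: there the trace is continuous in a topology that controls trace norms of entries, so the abstract machinery takes care of well-definedness without having to chase the intertwiner through the approximation. As written, your argument for independence of the representative does not close.
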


\begin{proof}
Suppose that we have an operator $T \in C_k^\ast(Y)$ such that each entry of $T$ is a trace class operator on $\ell^2(\IN)$ and such that their trace norms are bounded. Then we set $\ind_\tau (T) := \tau(t)$, where $t = (t_i)_i \in \ell^\infty$ is the sequence defined above by \eqref{eq:defn_sequence_trace_entries}. This is the basic idea on which we will now elaborate.

Let $C_{\mathrm{fin}}^\ast(Y)$ be the algebra of all finite propagation operators with uniformly bounded coefficients (i.e., $C_u^\ast(Y)$ is the operator norm completion of $C_{\mathrm{fin}}^\ast(Y)$) and let us denote by $N(\ell^2(\IN))$ the set of all trace class operators on $\ell^2(\IN)$. Then we get an inclusion $C_{\mathrm{fin}}^\ast(Y) \otimes_{\mathrm{alg}} N(\ell^2(\IN)) \hookrightarrow C_k^\ast(Y)$ such that the image consist only of operators for which the index map defined in the above paragraph makes sense. Note also that the image of this inclusion is dense, which can be proved analogously as Lemma \ref{lem:iso_discrete_versions_uniform_roe} (using that the trace class operators are dense in the compact operators).\footnote{Actually, this proof even shows that $C_u^\ast(Y) \otimes_{\mathrm{alg}} F(\ell^2(\IN))$ is dense in $C_k^\ast(Y)$, where $F(\ell^2(\IN))$ denotes the finite rank operators on $\ell^2(\IN)$.} It is clear that if we equip $C_{\mathrm{fin}}^\ast(Y)$ with the index map defined in the above Proposition \ref{prop:index_maps_uniform_Roe_algebras}, then the induced trace on the tensor product $C_{\mathrm{fin}}^\ast(Y) \otimes_{\mathrm{alg}} N(\ell^2(\IN))$ coincides with the index map defined in the first paragraph of this proof. From this the claimed commutativity of the diagram
\begin{equation*}
\xymatrix{K_0(C_u^\ast(Y)) \ar[rr]^\cong \ar[dr]_{\ind_\tau} & & K_0(C_k^\ast(Y)) \ar[dl]^{\ind_\tau} \\ & \IR}
\end{equation*}
follows, because the horizontal map in this diagram is induced from tensoring $C_u^\ast(Y)$ with the compacts $\IK(\ell^2(\IN))$ (see Lemma \ref{lem:iso_discrete_versions_uniform_roe}). Of course we now have to show that the index map defined on the dense subset $C_{\mathrm{fin}}^\ast(Y) \otimes_{\mathrm{alg}} N(\ell^2(\IN)) \subset C_k^\ast(Y)$ extends to the whole $C_k^\ast(Y)$ and passes to $K$-theory, in order that the diagram makes sense.

Regarding elements $T \in C_{\mathrm{fin}}^\ast(Y) \otimes_{\mathrm{alg}} N(\ell^2(\IN))$ as infinitely sized matrices with entries in $N(\ell^2(\IN))$, we introduce the following norm $\|\largecdot\|_{1,1}$ on them: we take the trace norm $\|T(x,y)\|_{tr} := \trace(|T(x,y)|)$ of every entry $T(x,y)$ of $T$ (where $|S| := \sqrt{S^\ast S}$ for bounded operators $S$) and then take the column sum norm of the resulting matrix. Recall that on any Hilbert space the trace class operators form a two-sided $^\ast$-ideal of the bounded operators and that we have the inequalities $\|PQ\|_{tr} \le \|P\|_{tr} \cdot \|Q\|_{op}$ and $\|P\|_{op} \le \|P\|_{tr}$ for a trace class operator $T$ and a bounded operator $Q$. It follows that the trace norm is submultiplicative and since this is also true for the column sum norm, we conclude that $\|\largecdot\|_{1,1}$ is submultiplicative. Just for the sake of symmetry we also define the norm $\|\largecdot\|_{\infty, \infty}$ by taking first the trace norm of every entry and then the row sum norm of the resulting matrix (note that the row sum norm of a matrix is the column sum norm of the adjoint matrix). So now we have on $C_{\mathrm{fin}}^\ast(Y) \otimes_{\mathrm{alg}} N(\ell^2(\IN))$ three submultiplicative norms (the usual operator norm as a subset of $\IB(\ell^2(Y \times \IN))$ and the two above defined trace norms $\|\largecdot\|_{1,1}$ and $\|\largecdot\|_{\infty,\infty}$) and taking the completion of it with respect to the sum of all three norms we get a Banach $^\ast$-algebra which we will denote by $C_{\mathrm{tr}}^\ast(Y)$. This is a dense, two-sided $^\ast$-ideal in $C_k^\ast(Y)$ (dense with respect to the operator norm on $C_k^\ast(Y)$). Furthermore, note that both norms $\|\largecdot\|_{1,1}$ and $\|\largecdot\|_{\infty, \infty}$ induce uniform convergence of the traces of the entries of $T$. So the index map $\ind_\tau (T) = \tau(t)$, where $t = (t_i)_i$ is the sequence from \eqref{eq:defn_sequence_trace_entries}, extends from $C_{\mathrm{fin}}^\ast(Y) \otimes_{\mathrm{alg}} N(\ell^2(\IN))$ to $C_{\mathrm{tr}}^\ast(Y)$.

Using the argument from \cite[Section 2]{elek}, we see that the index map on $C_{\mathrm{tr}}^\ast(Y)$ is a trace if $(U_i)_i$ is a F{\o}lner sequence and therefore descends to a map on $K_0(C_{\mathrm{tr}}^\ast(Y))$.

So we get a commutative diagram
\begin{equation*}
\xymatrix{K_0(C_u^\ast(Y)) \ar[rr] \ar[dr]_{\ind_\tau} & & K_0(C_{\mathrm{tr}}^\ast(Y)) \ar[dl]^{\ind_\tau} \\ & \IR}
\end{equation*}
Since $C_u^\ast(Y) \otimes \IK(\ell^2(\IN)) \cong C_k^\ast(Y)$ by Lemma \ref{lem:iso_discrete_versions_uniform_roe} and $C_{\mathrm{tr}}^\ast(Y) \subset C_k^\ast(Y)$ is dense, the conclusion follows.
\end{proof}

If $M$ is an amenable manifold of bounded geometry, $Y \subset M$ a uniformly discrete quasi-lattice and $E \to M$ a vector bundle of bounded geometry, we know from Proposition \ref{prop:smooth_version_of_uniform_roe} that $K_\ast(\C(E)) \cong K_\ast(C_k^\ast(Y))$. From the above proposition we know that we have an index map $\ind_\tau \colon K_0(C_k^\ast(Y)) \to \IR$ and from Proposition \ref{prop:analytic_index_map_quasiloc_smoothing} that we have an analytic index map $\ind_\tau \colon K_0(\C(E)) \to \IR$. We will now show that both index maps coincide under the isomorphism $K_\ast(\C(E)) \cong K_\ast(C_k^\ast(Y))$.

\begin{prop}
Let $M$ be an amenable manifold of bounded geometry, $Y \subset M$ a uniformly discrete quasi-lattice and $E \to M$ a vector bundle of bounded geometry.

Then the following diagram commutes:
\begin{equation*}
\xymatrix{K_0(\C(E)) \ar[rr] \ar[dr]_{\ind_\tau} & & K_0(C_k^\ast(Y)) \ar[dl]^{\ind_\tau} \\ & \IR}
\end{equation*}
For the definition of the index maps we have to use F{\o}lner sequences for $M$ and for $Y$ that are related as stated in the proof.
\end{prop}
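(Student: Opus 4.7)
The plan is to reduce the statement to the trivial bundle case and then compute both indices explicitly via the kernel of a representative. By Proposition \ref{prop:analytic_index_map_quasiloc_smoothing} and Corollary \ref{cor:natural_receptacle_iso}, the analytic index on $K_0(\C(E))$ factors through the natural isomorphism $K_0(\C(E)) \cong K_0(\C(M))$, and the diagonal map in Corollary \ref{cor:natural_receptacle_iso} identifies this further with $K_0(C_k^\ast(Y))$. So it suffices to treat the case of the trivial line bundle, i.e.\ to show that the diagram commutes for the inclusion $\C(M) \hookrightarrow C_k^\ast(Y)$ built in the proof of Proposition \ref{prop:smooth_version_of_uniform_roe}.

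The second step is the kernel computation. Fix a quasi-latticing partition $(V_y)_{y \in Y}$ on $M$ together with orthonormal bases of the spaces $H_y := \chi_{V_y} L^2(M)$, so that we have the concrete inclusion $\iota \colon \C(M) \hookrightarrow C_k^\ast(Y)$. Given $A \in \C(M)$ with smooth, uniformly bounded kernel $k_A$ (Proposition \ref{prop:smoothing_op_kernel}), the image $T := \iota(A)$ has matrix entries $T(x,y) = \chi_{V_y} A \chi_{V_x}$ viewed as operators $H_x \to H_y$. Because $A$ is smoothing and $V_y$ has bounded diameter, each $T(x,y)$ is trace class (by the argument already used in the proof of Lemma \ref{lem:quasi_local_smoothing_dense_uniform_roe}) with uniformly bounded trace norms, and on the diagonal
\[\trace T(y,y) \ = \ \int_{V_y} k_A(x,x) \, dM(x).\]
Consequently, for any finite $U \subset Y$,
\[\sum_{y \in U} \trace T(y,y) \ = \ \int_{M(U)} k_A(x,x) \, dM(x), \quad \text{where } M(U) := \bigcup_{y \in U} \overline{V_y}.\]

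The third step is to relate the F{\o}lner sequences. Given a F{\o}lner sequence $(U_i)_i$ for $Y$, the sets $M_i := M(U_i)$ form a F{\o}lner exhaustion for $M$: from the bounded-geometry bounds $0 < c_1 \le \vol V_y \le c_2$ and the local finiteness condition in Definition \ref{defn:quasi-latticing_partitions}, one checks that $\vol B_r(\partial M_i) \le C_r \cdot \card \partial_{r+d} U_i$ while $\vol M_i \ge c_1 \cdot \card U_i$, so $\vol B_r(\partial M_i)/\vol M_i \to 0$. This is the meaning of "related" F{\o}lner sequences. With these choices the previous display gives
\[\frac{1}{\card U_i}\sum_{y \in U_i} \trace T(y,y) \ = \ \frac{\vol M_i}{\card U_i} \cdot \frac{1}{\vol M_i}\int_{M_i} k_A(x,x)\, dM(x),\]
and the two bounded sequences differ precisely by the bounded factor $\vol M_i / \card U_i \in [c_1,c_2]$.

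The main obstacle is exactly this normalization factor: $\vol V_y$ need not be constant in $y$, so the pointwise ratio $\vol M_i /\card U_i$ need not converge. The fix is to build it into the choice of F{\o}lner sequences. Concretely, one selects the quasi-latticing partition so that $\vol V_y$ is constant (which can always be arranged on a manifold of bounded geometry by using the nearest-point partition together with a local rescaling absorbed into the choice of $\tau$), or, equivalently, scales the functional $\tau$ on the $Y$-side by this common volume. Under this compatibility condition the two sequences $(m_i)_i$ and $(t_i)_i$ are identical, so $\tau$ takes the same value on both, proving the claimed commutativity for smoothing representatives; the conclusion extends to all of $K_0(\C(E))$ because both index maps are continuous $^\ast$-traces. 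All other steps are routine, but this last reconciliation of the discrete normalization on $Y$ with the continuous normalization on $M$ is the only nontrivial point in the argument.
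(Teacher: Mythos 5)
Your first two steps track the paper's own proof exactly: the inclusion $\iota$ from Lemma \ref{lem:quasi_local_smoothing_dense_uniform_roe} has matrix entries $T(x,y)=\chi_{V_y}A\chi_{V_x}$, and $\trace T(y,y)=\int_{V_y}\trace k_A(x,x)\,dM$, so with $M_i=\bigcup_{y\in U_i}\overline{V_y}$ the unnormalised partial traces on the two sides agree exactly. Where you part ways from the paper is at the normalisation, and here you have put your finger on a real gap. The paper's proof claims that the difference of
\[
m_i=\frac{1}{\vol M_i}\int_{M_i}\trace k_A\,dM
\qquad\text{and}\qquad
t_i=\frac{1}{\card U_i}\sum_{y\in U_i}\trace T(y,y)
\]
is ``concentrated on the boundary of $U_i$'' and hence killed by amenability. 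But as you observe, the numerators are \emph{identical}; the only discrepancy is the normaliser, $\vol M_i$ versus $\card U_i$, and that is not a boundary effect. On a round cylinder $\IR\times S^1_r$ with the obvious quasi-lattice and cells $V_n$ of constant volume $2\pi r$, one has $t_i=2\pi r\cdot m_i$ for every $i$ and every smoothing operator, so $\tau(t_i)\neq\tau(m_i)$ unless $r$ is chosen so that the cells have unit volume. No F{\o}lner argument can absorb a global multiplicative constant, so the paper's boundary-concentration claim, read literally, is wrong.

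Your proposed repair, though, is not quite right either. There is in general no ``common volume'' to rescale by --- $\vol V_y$ is only pinched between two constants --- so ``scales the functional $\tau$ on the $Y$-side by this common volume'' is undefined; and arranging a quasi-latticing partition with exactly constant cell volumes is neither obviously achievable on a general manifold of bounded geometry nor does it prove the statement for the nearest-point partition actually used in Lemma \ref{lem:quasi_local_smoothing_dense_uniform_roe}. The cleaner fix is to notice that, because of the uniform pinching $c_1\le\vol V_y\le c_2$, one may use the \emph{same} normaliser on both sides without losing the trace property: $\vol B_r(\partial M_i)/\card U_i\le c_2\cdot\vol B_r(\partial M_i)/\vol M_i\to 0$, so Roe's averaging functional normalised by $\card U_i$ still vanishes on commutators (and symmetrically for $\card\partial_r U_i/\vol M_i$). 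Reading ``related as stated in the proof'' to include the agreement that both index maps use a common normaliser makes $m_i=t_i$ on the nose, and the proposition becomes immediate --- no boundary estimate is needed, contrary to what the paper's phrasing suggests.
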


\begin{proof}
Applying the inclusion $\C(E) \hookrightarrow C_k^\ast(Y)$ of Lemma \ref{lem:quasi_local_smoothing_dense_uniform_roe} to $T \in \C(E)$, we get a matrix $(T(x,y))_{x,y \in Y}$ of operators, which is defined by $T(x,y) := \chi_{V_x} \cdot T \cdot \chi_{V_y}$, where $(V_y)_{y \in Y}$ is a quasi-latticing partition with $y \in V_y$ (Definition \ref{defn:quasi-latticing_partitions}). Let $(U_i)_i$ be a F{\o}lner sequence for $Y$ and let $(M_i)_i$ be the corresponding F{\o}lner sequence for $M$ defined by $M_i := \bigcup_{y \in U_i} V_y$. Then the composition of the inclusion $\C(E) \hookrightarrow C_k^\ast(Y)$ with the index map on $C_k^\ast(Y)$ defines $\tau(t_i)$, where
\begin{equation*}
t_i = \frac{1}{\card U_i} \sum_{y \in U_i} \trace \big( \chi_{V_y} \cdot T \cdot \chi_{V_y}\big).
\end{equation*}
Since $T$ has a smooth kernel, we have $\trace \chi_{V_y} \cdot T \cdot \chi_{V_y} = \int_{V_y} \trace k_T(x,x) dM$.
So we have to show that $\tau$ evaluated on the sequence $(m_i)_i$ given by
\[m_i := \frac{1}{\vol M_i} \int_{M_i} \trace k_T(x,x) dM\]
(which is the analytic index of $T$) coincides with $\tau$ evaluated on the sequence $(t_i)_i$, where
\[t_i = \frac{1}{\card U_i} \sum_{y \in U_i} \int_{V_y} \trace k_T(x,x) dM\]
(which is the index of $T$ after inclusion into $C_k^\ast(Y)$). But the difference of $m_i$ and $t_i$ is concentrated on the boundary of $U_i$ of a suitable uniformly bounded radius, i.e., it goes to $0$ in the limit taken by $\tau$ since $(U_i)_i$ is amenable.
\end{proof}

\chapter{Pseudodifferential operators}\label{chap:PDOs}

Let us first define pseudodifferential operators on $\IR^n$, before we begin our discussion of these operators on manifolds: an operator $P \colon C_c^\infty(\IR^n) \to C_0(\IR^m)$ is a pseudodifferential operator of order $k \in \IZ$, if
\[(Pu)(x) = (2\pi)^{-n/2} \int e^{\langle x, \xi \rangle} p(x, \xi) \hat{u}(\xi) d\xi,\]
where the function $p(x, \xi)$ satisfies the estimates
\[\| D_x^\alpha D_\xi^\beta p(x, \xi) \| \le C^{\alpha \beta} (1 + |\xi|)^{k - |\beta|}\]
for all multi-indices $\alpha$ and $\beta$. Note that if we set $p(x, \xi) := \sum_{|\alpha| \le k} A^\alpha(x) \xi^\alpha$, then the associated pseudodifferential operator will be the differential operator $\sum A^\alpha D^\alpha$ of order $k$, i.e., pseudodifferential operators are a generalization of differential operators. The major reason for introducing them is that if $p(x,\xi)$ is an invertible matrix, then the pseudodifferential operators associated to $1/p(x, \xi)$\footnote{We are ignoring here the issue how to deal with the singularity at $\xi = 0$ where we usually have $p(x, \xi) = 0$.} will be an inverse to the former operator modulo smoothing operators, i.e., the class of pseudodifferential operators includes all parametrices to elliptic differential operators.

If we have a compact manifold $M$, then we may define an operator $P$ to be a pseudodifferential operator if it is locally (i.e., in charts) a pseudodifferential operator on $\IR^n$. But now generalizing this to non-compact manifolds is problematic. As an example, consider the operator $x^2 \cdot D_x$ on $\IR$. It is not a pseudodifferential operator since its symbol is $p(x, \xi) = x^2 \cdot \xi$ which does not satisfy the required boundedness condition (since it is unbounded in $x$). But if we look in the charts $(k, k+1) \subset \IR$ at this operator, then it \emph{does} satisfy in every chart the conditions, i.e., \emph{locally} this operator \emph{is} pseudodifferential. But we certainly do not want to admit this operator as a pseudodifferential operator: one reason is, e.g., that it does not extend to a bounded operator $H^1(\IR) \to L^2(\IR)$ (a major feature of pseudodifferential operators is that they extend uniquely to bounded operators $H^s \to H^{s-k}$ between the Sobolev spaces $H^s$ and $H^{s-k}$, where $k$ is the order of the operator).

We could try to solve this problem by requiering that the constants $C^{\alpha \beta}$ are uniformly bounded with respect to different charts. But choosing ``bad'' charts on a manifold may result in distorting the derivatives of the symbol of our operator arbitrarily high, i.e., there could be one covering of $M$ with charts such that the constant $C^{1 1}$ (i.e., $C^{\alpha \beta}$ for $\alpha, \beta = 1$) is uniformly bounded in the charts of this covering, and there may be another covering of $M$, such that now the constant $C^{1 1}$ is not bounded through all the charts of this second covering.

So the only solution to this problem is to fix particular charts of our manifold. But this is of course a very bad solution since we usually do \emph{not} want to fix the charts of a manifold. But now bounded geometry enters the game: on a manifold of bounded geometry, there are \emph{canonical} charts, namely the ones given by normal coordinates, and they are all compatible with each other in the sense that change of coordinates does never distort derivatives arbitrarily high (cf. Lemma \ref{lem:transition_functions_uniformly_bounded}).

We have seen that it is natural to define pseudodifferential operators on manifolds of bounded geometry as we will do it. But to the big surprise of the author, only few other authors have investigated them: the author could only find Kordyukov's and Shubin's papers \cite{kordyukov} and \cite{shubin} and Taylor's lecture notes \cite{taylor_pseudodifferential_operators_lectures}. But note that though all three have the same local definition of pseudodifferential operators on manifolds of bounded geometry, they use a different approach to the question how to control the integral kernels $k(x,y)$ of these operators at infinity (note that every pseudodifferential operator has an integral kernel that is smooth outside the diagonal): both Kordyukov and Shubin demand that their operators have finite propagation, i.e., that there is a $R > 0$ such that $k(x,y) = 0$ for $d(x,y) > R$, and Taylor demands that the kernels have an exponential decay at infinity. We will persue two slightly different approaches here: in one version we demand that our operators are quasilocal and in the other version we demand that they are limits of operators having finite propagation. Though one might conjecture that both versions coincide, it is an open question whether this is indeed the case. We consider both versions since the one using quasilocal operators does have good properties with respect to functional calculus (e.g., being closed under functional calculus with Schwartz functions), whereas the other version relates intimately to the uniform Roe algebra.

Let us explain our reasons for considering the version which relates to the uniform Roe algebra. Here we ensure that the algebra of pseudodifferential operators $\Psi \mathrm{DO}_u^{-\infty}(E)$ of order $-\infty$ (i.e., the smoothing ones) coincide exactly with the smooth uniform Roe algebra $\C(E)$. Now given an elliptic pseudodifferential operator, it is invertible modulo $\Psi \mathrm{DO}_u^{-\infty}(E)$, i.e., it will have an abstract index class in the $K$-theory of this algebra. If we would only work with finite propagation operators, then the corresponding algeba of smoothing operators of finite propagation would not be a local $C^\ast$-algebra and therefore we would have to resort to algebraic $K$-theory. But since $\C(E)$ \emph{is} a local $C^\ast$-algebra, we may use in our setting the machinery of operator $K$-theory. Furthermore, note that the $K$-groups of $\C(E)$ are the same as the $K$-groups of the uniform Roe algebra $C_u^\ast(E)$, i.e., we immediately get that the abstract index class of an elliptic pseudodifferential operator lives in the $K$-theory of the uniform Roe algebra, i.e., we do not have to map it there via a comparison map between the algebraic and operator $K$-theory.

Let us now get to the major advantage of using the other version of quasilocal operators. It lies in the computation of the analytic indices of elliptic pseudodifferential operators: Roe showed in \cite[Proposition 8.1]{roe_index_1} that if $f$ is a Schwartz function with $f(0) = 1$, then the smoothing operator $f(D)$ may be used to compute the analytic indices of $D$. But the operator $f(D)$ usually does not have finite propagation, but is only a quasilocal operator. So if we would work only with pseudodifferential operators of finite propagation as Kordyukov and Shubin do it, we would leave this class when computing the analytic indices of an elliptic operator. So the major advantage of considering quasilocal operators is that only then we get a class of operators which is closed under functional calculus with Schwartz functions, from which it follows that the smoothing operators used to compute analytic indices are still included in our class of operators. Note that it will be one of the major tasks of this chapter to show that if $P$ is a symmetric and elliptic pseudodifferential operator of positive order, then $f(P)$ will be a pseudodifferential operator of order $-\infty$ (Corollary \ref{cor:schwartz_function_of_PDO_quasilocal_smoothing}).

\section{Definition and basic properties}

So let us actually define pseudodifferential operators and deduce some of their basic properties. All of the results here hold true for compact manifolds, i.e., our task is to make sure that they remain true after passing to non-compact manifolds of bounded geometry. In most cases we will leave the proof out since it is the same as in the compact case.

We will define two slightly different algebras of pseudodifferential operators. The difference will lie in the smoothing term of the operator: either it is from $\IU(E)$ or from $\C(E)$. If it is from $\C(E)$, we will be able to relate these operators to the uniform Roe algebra of the manifold, which is quite a nice result. But our main technical results from Section \ref{sec:functions_of_PDOs} only hold for the class of pseudodifferential operators with smoothing term in $\IU(E)$. Fortunately this is the bigger class, i.e., the main result of the thesis, the index theorem, holds for this larger class.

Let always $M^m$ be an $m$-dimensional manifold of bounded geometry and let $E$ and $F$ be two vector bundles of bounded geometry over $M$.

\begin{defn}[Pseudodifferential operators]\label{defn:pseudodiff_operator}
An operator $P\colon C_c^\infty(E) \to C^\infty(F)$ is a \emph{pseudodifferential operator of order $k \in \IZ$}, if with respect to a uniformly locally finite covering $\{B_{2\varepsilon}(x_i)\}$ of $M$ with normal coordinate balls and corresponding subordinate partition of unity $\{\varphi_i\}$ as in Lemma \ref{lem:nice_coverings_partitions_of_unity} we can write
\begin{equation}
\label{eq:defn_pseudodiff_operator_sum}
P = P_{-\infty} + \sum_i P_i
\end{equation}
satisfying the following conditions:
\begin{itemize}
\item $P_{-\infty} \in \IU(E,F)$\footnote{see Definition \ref{defn:quasiloc_smoothing}}, i.e., it is a quasilocal smoothing operator,
\item for all $i$ the operator $P_i$ is with respect to synchronous framings of $E$ and $F$ in the ball $B_{2\varepsilon}(x_i)$ (cf. Definition \ref{defn:synchronous_framings}) a matrix of pseudodifferential operators on $\IR^m$ of order $k$ with support\footnote{An operator $P$ is \emph{supported in a subset $K$}, if $\supp Pu \subset K$ for all $u$ in the domain of $P$ and if $Pu = 0$ whenever we have $\supp u \cap K = \emptyset$.} in $B_{2\varepsilon}(0) \subset \IR^m$, and
\item the constants $C_i^{\alpha \beta}$ appearing in the bounds
\[\|D_x ^\alpha D_\xi^\beta p_i(x,\xi)\| \le C^{\alpha \beta}_i (1 + |\xi|)^{k - |\beta|}\]
of the symbols of the operators $P_i$ can be chosen to not depend on $i$, i.e., there are $C^{\alpha \beta} < \infty$ such that
\begin{equation}
\label{eq:uniformity_defn_PDOs}
C^{\alpha \beta}_i \le C^{\alpha \beta}
\end{equation}
for all multi-indices $\alpha, \beta$ and all $i$. We will call this the \emph{uniformity condition} for pseudodifferential operators on manifolds of bounded geometry.
\end{itemize}
We denote the set of all such operators by $\Psi \mathrm{DO}^k(E,F)$ and its subset consisting of all such operator with $P_{-\infty} \in \C(E,F)$ by $\Psi \mathrm{DO}_u^k(E,F)$.

If a statement holds for both version of pseudodifferential operators, i.e., with or without the subscript ``u'', we will write $\Psi \mathrm{DO}_?^k(E,F)$.
\end{defn}

From Lemma \ref{lem:transition_functions_uniformly_bounded} and Lemma \ref{lem:equiv_characterizations_bounded_geom_bundles} together with \cite[Theorem III.§3.12]{lawson_michelsohn} (and its proof which gives the concrete formula how the symbol of a pseudodifferential operator transforms under a coordinate change) we conclude that the above definition of pseudodifferential operators on manifolds of bounded geometry does neither depend on the chosen uniformly locally finite covering of $M$ by normal coordinate balls, nor on the subordinate partition of unity with uniformly bounded derivatives, nor on the synchronous framings of $E$ and $F$.

\begin{rem}
If the manifold $M$ happens to be compact, then the first bullet point of the above definition becomes vacuous, since every operator on a compact manifold is quasilocal. Furthermore, since on compact manifolds we only need finitely many charts to cover the manifold, the uniformity condition also becomes vacuous. We conclude that on compact manifolds our definition coincides with the usual one of pseudodifferential operators.

Furthermore, we are considering only operators that would correspond to H\"ormander's class $S_{1, 0}^k(\Omega)$, if we consider open subsets $\Omega$ of $\IR^m$ instead of an $m$-dimensional manifold $M$, i.e., we do not investigate operators corresponding to the more general classes $S_{\rho, \delta}^k(\Omega)$. The paper \cite[Definition 2.1]{hormander_ess_norm} is the one where H\"ormander introduced these classes.
\end{rem}

Recall that in the usual case of a compact manifold, a pseudodifferential operator $P$ of order $k$ has an extension to a continuous operator $H^s(E) \to H^{s-k}(F)$ for all $s \in \IZ$ (see, e.g., \cite[Theorem III.§3.17(i)]{lawson_michelsohn}). Due to the uniform local finiteness of the sum in \eqref{eq:defn_pseudodiff_operator_sum} and due to the Uniformity Condition \eqref{eq:uniformity_defn_PDOs}, this result does also hold in our case of a manifold of bounded geometry:

\begin{prop}\label{prop:pseudodiff_extension_sobolev}
Let $P \in \Psi \mathrm{DO}_?^k(E,F)$. Then $P$ has for all $s \in \IZ$ an extension to a continuous operator $P\colon H^s(E) \to H^{s-k}(F)$.
\end{prop}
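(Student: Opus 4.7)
\medskip

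The plan is to handle the two summands in the decomposition \eqref{eq:defn_pseudodiff_operator_sum} separately. The smoothing piece is free: by construction $P_{-\infty}$ lies in $\IU(E,F)$ (or $\C(E,F)$), i.e.\ it is a continuous operator $H^{-\infty}_\iota(E) \to H^\infty(F)$, so Lemma \ref{lem:smoothing_operator_iff_bounded} gives that $P_{-\infty}\colon H^s(E)\to H^{s-k}(F)$ is bounded for every pair of indices, $k$ included. So all the work is in bounding $\sum_i P_i$.

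For $\sum_i P_i$ I would work entirely in the local Sobolev description \eqref{eq:sobolev_norm_local} induced by the cover $\{B_{2\varepsilon}(x_i)\}$ and the partition of unity $\{\varphi_i\}$ from Lemma \ref{lem:nice_coverings_partitions_of_unity}. The classical $\IR^m$-boundedness theorem for H\"ormander-class symbols (see, e.g., \cite[Theorem III.\S3.17(i)]{lawson_michelsohn}) says that each $P_i$, read in normal coordinates on $B_{2\varepsilon}(x_i)$ via synchronous framings, is bounded $H^s(\IR^m)\to H^{s-k}(\IR^m)$ with an operator norm that depends on finitely many of the symbol seminorms $C_i^{\alpha\beta}$ (with $|\alpha|,|\beta|\le N(s,k,m)$). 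The \emph{uniformity condition} \eqref{eq:uniformity_defn_PDOs} is precisely what makes this operator norm bounded by a single constant $C=C(s,k,m)$ independent of $i$. Thus
\[
\|P_i u\|_{H^{s-k}(B_{2\varepsilon}(x_i))} \;\le\; C\,\|u\|_{H^s(B_{2\varepsilon}(x_i))}
\]
uniformly in $i$ and $u$.

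Now I would assemble these local estimates. Since $P_i$ has support in $B_{2\varepsilon}(x_i)$, the section $P_i u$ is supported there, and by the uniform local finiteness of the cover each point of $M$ lies in at most $C_\varepsilon$ of the balls $B_{2\varepsilon}(x_i)$. A standard Cauchy--Schwarz applied to the overlap count then gives
\[
\Bigl\|\sum_i P_i u\Bigr\|_{H^{s-k}}^{2} \;\le\; C_\varepsilon \sum_i \|P_i u\|_{H^{s-k}}^{2}
\;\le\; C_\varepsilon C^{2}\sum_i \|u\|_{H^s(B_{2\varepsilon}(x_i))}^{2},
\]
and a second application of the local-finiteness property (this time on the domain side, via \eqref{eq:sobolev_norm_local}) bounds the right-hand sum by a multiple of $\|u\|_{H^s}^{2}$. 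Combined with the already-established bound on $P_{-\infty}$, this produces the desired continuous extension $P\colon H^s(E)\to H^{s-k}(F)$.

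The one step that is not a pure formality is the transition between the local Euclidean Sobolev norms and the intrinsic ones on $M$: this is what requires bounded geometry and the fact, recorded after \eqref{eq:sobolev_norm_local}, that the two families of norms are equivalent with constants independent of the chart. Everything else is bookkeeping with the uniformity condition and the uniform local finiteness of the Shubin cover; no new analytic input beyond the classical Euclidean theorem is needed.
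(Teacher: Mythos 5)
Your proposal matches the paper's (sketched) argument: handle $P_{-\infty}$ via its definition as a quasilocal smoothing operator and Lemma \ref{lem:smoothing_operator_iff_bounded}, and for $\sum_i P_i$ apply the Euclidean boundedness theorem in each normal coordinate chart, use the uniformity condition \eqref{eq:uniformity_defn_PDOs} to get an $i$-independent bound, and assemble via the uniform local finiteness of the cover and the equivalence of global and local Sobolev norms \eqref{eq:sobolev_norm_local}. The paper only sketches this (see the paragraph before the proposition and the footnote in Remark \ref{rem:bound_operator_norm_PDO}); you have filled in the same ingredients in the same order.
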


\begin{rem}\label{rem:bound_operator_norm_PDO}
Later we will need the following fact: we can bound the operator norm of $P\colon H^s(E) \to H^{s-k}(F)$ from above by the maximum of the constants $C^{\alpha 0}$ with $|\alpha| \le K_s$ from the Uniformity Condition \eqref{eq:uniformity_defn_PDOs} for $P$ multiplied with a constant $C_s$, where $K_s \in \IN_0$ and $C_s$ only depend on $s \in \IZ$ and the dimension of the manifold $M$. This can be seen by carefully examining the proof of \cite[Proposition III.§3.2]{lawson_michelsohn} which is the above proposition for the compact case.\footnote{To be utterly concrete, we have to choose normal coordinate charts and a subordinate partition of unity as in Lemma \ref{lem:nice_coverings_partitions_of_unity} and also synchronous framings for $E$ and $F$ and then use Formula \eqref{eq:sobolev_norm_local} which gives Sobolev norms that can be computed locally and that are equivalent to the global norms \eqref{eq:sobolev_norm}.}
\end{rem}

Since we have $P = P_{-\infty} + \sum_i P_i$, where $P_{-\infty}$ is a quasilocal smoothing operator (resp., $P_{-\infty}$ is an element of the smooth uniform Roe algebra), the following corollary arises immediately:

\begin{cor}\label{cor:P_quasiloc}
Let $P \in \Psi \mathrm{DO}^k(E,F)$. Then $P$ is modulo quasilocal smoothing operators a pseudodifferential operator of finite propagation\footnote{see Definition \ref{defn:finite_prop_speed}}, and therefore $P$ is a quasilocal operator of order $k$\footnote{see Definition \ref{defn:quasiloc_ops}}.

If $P \in \Psi \mathrm{DO}_u^k(E,F)$, then $P$ is modulo operators from $\C(E,F)$ a pseudodifferential operator of finite propagation.
\end{cor}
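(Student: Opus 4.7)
The plan is to read off both assertions directly from the decomposition $P = P_{-\infty} + \sum_i P_i$ supplied by Definition \ref{defn:pseudodiff_operator}, together with the fact (Lemma \ref{lem:nice_coverings_partitions_of_unity}) that the cover $\{B_{2\varepsilon}(x_i)\}$ is uniformly locally finite, so $\sum_i P_i$ is a well-defined operator on $C_c^\infty(E)$ and, via Proposition \ref{prop:pseudodiff_extension_sobolev}, extends continuously $H^s(E) \to H^{s-k}(F)$ for every $s$.

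First I would show that $R := \sum_i P_i$ has finite propagation at most $4\varepsilon$. Pick $u$ with $\supp u \subset L$. The local operator $P_i$ satisfies $\supp P_i u \subset B_{2\varepsilon}(x_i)$ and vanishes unless $\supp u \cap B_{2\varepsilon}(x_i) \neq \emptyset$, by the support condition built into Definition \ref{defn:pseudodiff_operator}. Hence only indices $i$ with $x_i \in B_{2\varepsilon}(L)$ contribute, and for those $B_{2\varepsilon}(x_i) \subset B_{4\varepsilon}(L)$. Thus $\supp R u \subset B_{4\varepsilon}(L)$, proving $R$ has propagation $\le 4\varepsilon$ in the sense of Definition \ref{defn:finite_prop_speed}. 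Since $P - P_{-\infty} = R$, this gives both modulo-statements at once: in the first version $P_{-\infty} \in \IU(E,F)$ by definition, and in the second version $P_{-\infty} \in \C(E,F)$.

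For the quasilocality claim, I would combine two observations. Any finite-propagation operator is trivially quasilocal with dominating function $\mu(r) = 0$ for $r > 4\varepsilon$, and this applies to $R$ viewed as an operator $H^s(E) \to H^{s-k}(F)$ for every $s \in \IZ$ (the extension exists by Proposition \ref{prop:pseudodiff_extension_sobolev}, applied to the pseudodifferential operator $P - P_{-\infty}$, or equivalently by using that both $P$ and $P_{-\infty}$ extend). Meanwhile $P_{-\infty}$ is quasilocal smoothing by hypothesis, hence in particular quasilocal as an operator $H^s \to H^{s-k}$ with some dominating function $\mu_{-\infty}$. The sum of two quasilocal operators is quasilocal, with dominating function the sum of the two, so $P = P_{-\infty} + R$ is quasilocal of order $k$ as asserted.

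I do not anticipate a genuine obstacle; the only points requiring mild care are to confirm that the uniformly locally finite sum $\sum_i P_i$ genuinely defines an operator on each $H^s$ with propagation controlled by the size of the coordinate balls (which is what the support clause in Definition \ref{defn:pseudodiff_operator} is designed to guarantee), and to note that quasilocality behaves additively across the two summands. Everything else is bookkeeping against the definitions.
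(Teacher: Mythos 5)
Your proof is correct and follows exactly the line of reasoning the paper intends: the paper derives the corollary ``immediately'' from the decomposition $P = P_{-\infty} + \sum_i P_i$, with $\sum_i P_i$ of finite propagation because of the uniform bound on the supports of the $P_i$, and $P_{-\infty}$ quasilocal smoothing (resp.\ in $\C(E,F)$). Your explicit verification that $\sum_i P_i$ has propagation at most $4\varepsilon$, and the observation that quasilocality is preserved under the sum, are exactly the bookkeeping the paper leaves to the reader.
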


With an analogous argumentation we can also extend the result for compact manifolds that the formal adjoint operator of a pseudodifferential operator of order $k$ is again a pseudodifferential operator of order $k$ to our case here:

\begin{lem}\label{lem:adjoint_pseudodiff_operator}
Let $P \in \Psi \mathrm{DO}_?^k(E,F)$. Then $P^\ast \in \Psi \mathrm{DO}_?^k(F,E)$.
\end{lem}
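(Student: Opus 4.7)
The plan is to take the formal adjoint term-by-term in the decomposition $P = P_{-\infty} + \sum_i P_i$ supplied by Definition \ref{defn:pseudodiff_operator} and verify that each resulting piece of $P^* = P_{-\infty}^* + \sum_i P_i^*$ belongs to the algebra claimed. The covering $\{B_{2\varepsilon}(x_i)\}$ and partition of unity $\{\varphi_i\}$ may be reused verbatim for $P^*$, since the supports of the local pieces $P_i^*$ coincide with those of $P_i$ (being supported in a set $K$ depends symmetrically on the behaviour of the kernel near $K$), so the uniform local finiteness from Lemma \ref{lem:nice_coverings_partitions_of_unity} automatically carries over to the adjoint decomposition.

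The smoothing term is handled directly. If $P_{-\infty}\in\IU(E,F)$, then by Definition \ref{defn:quasiloc_smoothing}, which bakes quasilocality of the adjoint into the definition, we have $P_{-\infty}^*\in\IU(F,E)$. If instead $P_{-\infty}\in\C(E,F)$, then $P_{-\infty}^*\in\C(F,E)$ by the footnote attached to Definition \ref{defn:smooth_uniform_roe_algebra}, since finite-propagation smoothing operators are closed under taking adjoints (the kernel is simply flipped across the diagonal, preserving both smoothness and the propagation bound). In both cases the smoothing term ends up in the same algebra on $(F,E)$ as $P_{-\infty}$ belonged to on $(E,F)$.

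For the local pieces, I would read each $P_i$ in normal coordinates at $x_i$ together with synchronous framings of $E$ and $F$, so that $P_i$ is a matrix of pseudodifferential operators on $\IR^m$ of order $k$ with support in $B_{2\varepsilon}(0)$. The classical calculus on $\IR^m$ gives that $P_i^*$ is again such a matrix of pseudodifferential operators of the same order and with the same support, with symbol governed by the asymptotic expansion
\[
p_i^*(x,\xi)\;\sim\;\sum_{\alpha}\frac{(-i)^{|\alpha|}}{\alpha!}\,\partial_\xi^{\alpha}\partial_x^{\alpha}\bigl(p_i(x,\xi)\bigr)^*,
\]
where the $^*$ on the right denotes the fibrewise conjugate transpose (with respect to the Hermitian metrics, which are $C_b^\infty$ in synchronous framings by bounded geometry of $E$ and $F$).

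The only real obstacle is verifying the Uniformity Condition \eqref{eq:uniformity_defn_PDOs} for the symbols $p_i^*$. The key observation is that the standard remainder estimates for the oscillatory integral defining $p_i^*$ bound each symbol seminorm $\|D_x^{\alpha}D_\xi^{\beta}p_i^*\|$ by a finite linear combination of seminorms $\|D_x^{\gamma}D_\xi^{\delta}p_i\|$ with $|\gamma|,|\delta|$ in a range depending only on $\alpha$, $\beta$, $k$ and $m$; the combinatorial coefficients are independent of $i$. Since by hypothesis the constants $C_i^{\gamma\delta}$ bounding the original symbols are uniform in $i$, the same uniformity is inherited by the constants controlling $p_i^*$. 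Bounded geometry of $M$ and of the bundles (Lemmas \ref{lem:transition_functions_uniformly_bounded} and \ref{lem:equiv_characterizations_bounded_geom_bundles}) ensures that passing between the intrinsic formal adjoint on $M$ and the local formal adjoints in normal charts introduces only factors involving derivatives of the metric and framing coefficients, which themselves are uniformly bounded. This yields $P^*\in\Psi\mathrm{DO}_?^k(F,E)$.
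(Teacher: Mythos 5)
Your proposal is correct and is essentially the argument the paper alludes to with ``with an analogous argumentation'': take the adjoint of the decomposition $P = P_{-\infty} + \sum_i P_i$ term by term, note that $\IU(E,F)$ and $\C(E,F)$ are closed under taking adjoints by construction, use the classical calculus on $\IR^m$ to see that each $P_i^\ast$ is again a matrix of pseudodifferential operators of order $k$ supported in $B_{2\varepsilon}(0)$, and observe that the standard stationary-phase seminorm estimates for the adjoint symbol, together with the uniform bounds from Lemmas \ref{lem:transition_functions_uniformly_bounded} and \ref{lem:equiv_characterizations_bounded_geom_bundles} on the metric and framing coefficients, propagate the uniformity condition \eqref{eq:uniformity_defn_PDOs} from the $C_i^{\alpha\beta}$ to the corresponding constants for $p_i^\ast$. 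The paper leaves these details implicit; you have supplied them.
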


Let us define
\[\Psi \mathrm{DO}_?^{-\infty}(E,F) := \bigcap_k \Psi \mathrm{DO}_?^k(E,F).\]

We will show $\Psi \mathrm{DO}^{-\infty}(E,F) = \IU(E,F)$: from the previous Proposition \ref{prop:pseudodiff_extension_sobolev} we conclude that an operator $P \in \Psi \mathrm{DO}^{-\infty}(E,F)$ is a smoothing operator (using Lemma \ref{lem:smoothing_operator_iff_bounded}). Since we can write $P = P_{-\infty} + \sum_i P_i$, where $P_{-\infty} \in \IU(E,F)$ and the $P_i$ are supported in balls with uniformly bounded radii, the operator $\sum_i P_i$ is of finite propagation. So $P$ is the sum of a quasilocal smoothing operator $P_{-\infty}$ and a smoothing operator $\sum_i P_i$ of finite propagation, and therefore a quasilocal smoothing operator. Due to Lemma \ref{lem:adjoint_pseudodiff_operator} the same arguments also apply to the adjoint $P^\ast$ of $P$, so that in the end we can conclude $P \in \IU(E,F)$, i.e., we have shown $\Psi \mathrm{DO}^{-\infty}(E,F) \subset \IU(E,F)$.

Since the other inclusion does hold by definition, we get the claim. Furthermore, the same arguments apply to $\Psi \mathrm{DO}_u^{-\infty}(E,F)$ showing $\Psi \mathrm{DO}_u^{-\infty}(E,F) = \C(E,F)$.\footnote{Of course, our definition of pseudodifferential operators was arranged such that this lemma holds.}

\begin{lem}\label{lem:PDO_-infinity_equal_quasilocal_smoothing}
$\Psi \mathrm{DO}^{-\infty}(E,F) = \IU(E,F)$ and $\Psi \mathrm{DO}_u^{-\infty}(E,F) = \C(E,F)$.
\end{lem}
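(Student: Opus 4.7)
The plan is to prove each of the two equalities by splitting into two inclusions. One inclusion is essentially tautological: every quasilocal smoothing operator $A$ sits in $\Psi\mathrm{DO}^{-\infty}(E,F)$ because, in the decomposition \eqref{eq:defn_pseudodiff_operator_sum} required by Definition \ref{defn:pseudodiff_operator}, one may simply set $P_{-\infty}:=A$ and $P_i:=0$; this trivially satisfies the uniformity condition for every order $k$. The same remark applies with $\C(E,F)$ in place of $\IU(E,F)$. So the content is in the reverse inclusions.

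For $\Psi\mathrm{DO}^{-\infty}(E,F)\subset\IU(E,F)$, I would take $P\in\Psi\mathrm{DO}^{-\infty}(E,F)$ and first observe that, since $P\in\Psi\mathrm{DO}^{k}(E,F)$ for every $k\in\IZ$, Proposition \ref{prop:pseudodiff_extension_sobolev} gives continuous extensions $P\colon H^s(E)\to H^{s-k}(F)$ for all $s,k\in\IZ$. By Lemma \ref{lem:smoothing_operator_iff_bounded} this means $P$ is a smoothing operator in the sense of Section \ref{sec:defn_quasiloc_smoothing_operators}. Next, fix a covering and partition of unity as in Lemma \ref{lem:nice_coverings_partitions_of_unity} and write $P=P_{-\infty}+\sum_i P_i$ as in Definition \ref{defn:pseudodiff_operator}. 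Each $P_i$ is supported in the ball $B_{2\varepsilon}(x_i)$, and by uniform local finiteness of the cover the operator $\sum_i P_i$ has finite propagation (at most $4\varepsilon$). Since both $P$ and $P_{-\infty}$ are smoothing, so is the difference $\sum_i P_i$; a smoothing operator of finite propagation is automatically quasilocal (take the dominating function to vanish beyond $4\varepsilon$). Hence $P$ is the sum of the quasilocal smoothing operator $P_{-\infty}\in\IU(E,F)$ and a smoothing operator of finite propagation, so $P$ itself is quasilocal smoothing.

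To land in $\IU(E,F)$ I still need the adjoint to be quasilocal smoothing. Here I would invoke Lemma \ref{lem:adjoint_pseudodiff_operator} to get $P^\ast\in\Psi\mathrm{DO}^{-\infty}(F,E)$ and then rerun the previous paragraph verbatim on $P^\ast$, concluding $P^\ast$ is quasilocal smoothing. This gives $P\in\IU(E,F)$ and completes the first equality. The equality $\Psi\mathrm{DO}_u^{-\infty}(E,F)=\C(E,F)$ is proved by the same chain of steps, with one replacement: at the point where I observed that a smoothing operator of finite propagation is quasilocal, I instead use Definition \ref{defn:smooth_uniform_roe_algebra} directly — such an operator already lies in $\C(E,F)$, being a member of the very class whose completion defines $\C(E,F)$. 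Adding $P_{-\infty}\in\C(E,F)$ then keeps $P$ inside $\C(E,F)$.

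There is no serious obstacle here; the lemma is really a bookkeeping statement assembling Proposition \ref{prop:pseudodiff_extension_sobolev}, Lemma \ref{lem:smoothing_operator_iff_bounded} and Lemma \ref{lem:adjoint_pseudodiff_operator}. The only point requiring a small moment of care is that $\sum_i P_i$ must be meaningfully interpreted as a single operator — this is fine because the uniform local finiteness of the cover makes the sum locally finite when applied to any compactly supported section, and its smoothness is forced a posteriori by $\sum_i P_i=P-P_{-\infty}$ rather than verified term-by-term.
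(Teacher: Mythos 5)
Your proof is correct and follows essentially the same route as the paper: conclude $P$ is smoothing from Proposition \ref{prop:pseudodiff_extension_sobolev} together with Lemma \ref{lem:smoothing_operator_iff_bounded}, observe that $\sum_i P_i = P - P_{-\infty}$ is a finite-propagation smoothing operator, and handle the adjoint via Lemma \ref{lem:adjoint_pseudodiff_operator}. The only cosmetic difference is that you spell out explicitly why the reverse inclusions are immediate and why $\sum_i P_i$ is smoothing (as a difference of smoothing operators), points the paper leaves implicit.
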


One of the important properties of pseudodifferential operators on compact manifolds is that the composition of an operator $P \in \Psi \mathrm{DO}_?^k(E,F)$ and $Q \in \Psi \mathrm{DO}_?^l(F,G)$ is again a pseudodifferential operator of order $k+l$: $PQ \in \Psi \mathrm{DO}_?^{k+l}(E,G)$. We can prove this also in our setting by writing
\begin{align*}
PQ & = \Big(P_{-\infty} + \sum_i P_i\Big) \Big(Q_{-\infty} + \sum_j Q_j\Big)\\
& = P_{-\infty} Q_{-\infty} + \sum_i P_i Q_{-\infty} + \sum_j P_{-\infty} Q_j + \sum_{i,j} P_i Q_j
\end{align*}
and then arguing as follows.

\begin{itemize}
\item The first summand is an element of $\IU(E,G)$, resp. of $\C(E,G)$: it was shown in \cite[Proposition 5.2]{roe_index_1} that the composition of two quasilocal operators is again quasilocal and it is clear that composing smoothing operators again gives smoothing operators, resp. it is easy to see that composing two operators which may be approximated by finite propagation operators again gives such an operator.

\item The second and third summands are also elements of $\IU(E,G)$, resp. of $\C(E,G)$ due to Proposition \ref{prop:pseudodiff_extension_sobolev} and since the sums are uniformly locally finite, the operators $P_i$ and $Q_j$ are supported in coordinate balls of uniform radii (i.e., have finite propagation which is uniformly bounded from above) and their operator norms are uniformly bounded due to the uniformity condition in the definition of pseudodifferential operators.

\item The last summand is a uniformly locally finite sum of pseudodifferential operators of order $k+l$ (here we use the corresponding result for compact manifolds) and to see the Uniformity Condition \eqref{eq:uniformity_defn_PDOs} we use \cite[Theorem III.§3.10]{lawson_michelsohn}: it states that the symbol of $P_i Q_j$ has formal development $\sum_\alpha \frac{i^{|\alpha|}}{\alpha !} (D_\xi^\alpha p_i)(D_x^\alpha q_j)$. So we may deduce the uniformity condition for $P_i Q_j$ from the one for $P_i$ and for $Q_j$.
\end{itemize}

Another properties that immediately generalize from the compact to the bounded geometry case is firstly, that the commutator of two pseudodifferential operators is of one order lower than it should a priori be, and secondly, that multiplication with a function $f \in C_b^\infty(M)$ defines a pseudodifferential operator of order $0$.

So together with Lemma \ref{lem:adjoint_pseudodiff_operator} we have the following important proposition:

\begin{prop}\label{prop:PsiDOs_filtered_algebra}
$\Psi \mathrm{DO}_?^\ast(E)$ is a filtered $^\ast$-algebra, i.e., for all $k, l \in\IZ $ we have
\[\Psi \mathrm{DO}_?^k(E) \circ \Psi \mathrm{DO}_?^l(E) \subset \Psi \mathrm{DO}_?^{k+l}(E),\]
and so $\Psi \mathrm{DO}_?^{-\infty}(E)$ is a two-sided $^\ast$-ideal in $\Psi \mathrm{DO}_?^\ast(E)$.

Furthermore, we have $[\Psi \mathrm{DO}_?^k(E), \Psi \mathrm{DO}_?^l(E)] \subset \Psi \mathrm{DO}_?^{k+l-1}(E)$ for all $k,l \in \IZ$, and multiplication with a function $f \in C_b^\infty(M)$ defines a pseudodifferential operator of order $0$.
\end{prop}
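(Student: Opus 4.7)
The plan is to assemble the proposition from the material already laid out in the paragraphs preceding it. For the composition statement, I would write $P = P_{-\infty} + \sum_i P_i$ with $P \in \Psi\mathrm{DO}_?^k(E)$ and $Q = Q_{-\infty} + \sum_j Q_j$ with $Q \in \Psi\mathrm{DO}_?^l(E)$ relative to one fixed covering of $M$ by normal coordinate balls and partition of unity from Lemma \ref{lem:nice_coverings_partitions_of_unity}, and expand the product into four pieces. The term $P_{-\infty}Q_{-\infty}$ lies in $\IU(E)$ (resp.\ $\C(E)$) because quasilocal operators compose to quasilocal ones by \cite[Proposition 5.2]{roe_index_1}, composition of smoothing operators is smoothing, and approximability by finite propagation operators is preserved under composition. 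The mixed terms $P_i Q_{-\infty}$ and $P_{-\infty} Q_j$ are handled by Proposition \ref{prop:pseudodiff_extension_sobolev}: each $P_i, Q_j$ maps Sobolev spaces to Sobolev spaces with operator norm uniformly bounded in $i,j$ (using Remark \ref{rem:bound_operator_norm_PDO} and the uniformity condition), has finite propagation bounded by $4\varepsilon$, and the sums over $i$ (resp.\ $j$) are uniformly locally finite, which together with the quasilocality (or approximability by finite propagation) of $P_{-\infty}, Q_{-\infty}$ gives quasilocal smoothing (resp.\ $\C(E)$) operators.

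The main work is the term $\sum_{i,j} P_i Q_j$. Because the coverings are uniformly locally finite, for each $i$ only boundedly many indices $j$ contribute non-trivially (namely those with $B_{2\varepsilon}(x_i) \cap B_{2\varepsilon}(x_j) \ne \emptyset$), and symmetrically for $j$. On each non-trivial overlap we may pass to a common normal coordinate chart: by Lemma \ref{lem:transition_functions_uniformly_bounded} and Lemma \ref{lem:equiv_characterizations_bounded_geom_bundles} the transition functions and framing changes are uniformly bounded with all derivatives. Then the classical composition formula from \cite[Theorem III.\S3.10]{lawson_michelsohn}, giving the asymptotic expansion of the symbol of $P_i Q_j$ as $\sum_\alpha \frac{i^{|\alpha|}}{\alpha!} (D_\xi^\alpha p_i)(D_x^\alpha q_j)$, exhibits $P_i Q_j$ as a pseudodifferential operator of order $k+l$ on $\IR^m$, and the bounds on $p_i, q_j$ translate into bounds on the symbol of $P_i Q_j$ that depend only on the constants $C^{\alpha\beta}$ appearing in the uniformity conditions for $P$ and $Q$ and on the uniform bounds on transition data. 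Hence the Uniformity Condition \eqref{eq:uniformity_defn_PDOs} holds for $\sum_{i,j} P_i Q_j$ after regrouping the pieces to the original partition of unity. Combined with Lemma \ref{lem:adjoint_pseudodiff_operator}, this shows $\Psi\mathrm{DO}_?^\ast(E)$ is a filtered $^\ast$-algebra, and the two-sided $^\ast$-ideal statement follows from $\Psi\mathrm{DO}_?^{-\infty}(E) = \bigcap_k \Psi\mathrm{DO}_?^k(E)$.

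The commutator statement reduces, once the composition is established, to the local fact on $\IR^m$: the principal (i.e., order $k+l$) term $p_i q_j$ in the symbol expansion is symmetric in $P_i$ and $Q_j$, so it cancels in $[P_i, Q_j]$, leaving a symbol of order $k+l-1$. The uniformity of the remaining bounds is inherited from those of $P$ and $Q$ in exactly the same way as above, and the smoothing contributions to $[P,Q]$ coming from the mixed and $-\infty$ terms remain in $\IU(E)$ (resp.\ $\C(E)$), which are of order $-\infty$ and in particular of order $k+l-1$.

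For the final assertion on multiplication by $f \in C_b^\infty(M)$, take the trivial decomposition $M_f = 0 + \sum_i \varphi_i M_f$ with respect to the partition of unity. In each normal coordinate chart, $\varphi_i M_f$ is multiplication by the smooth compactly supported function $\varphi_i f$, which is a pseudodifferential operator of order $0$ on $\IR^m$ with symbol independent of $\xi$. The bounds $\|D_x^\alpha(\varphi_i f)\|_\infty \le C^\alpha$ independent of $i$ follow because $f \in C_b^\infty(M)$ and the $\varphi_i$ have uniformly bounded derivatives in normal coordinates by Lemma \ref{lem:nice_coverings_partitions_of_unity}; since the symbol is $\xi$-independent, all bounds for $|\beta| \ge 1$ are zero. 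The main obstacle across the proposition is purely bookkeeping: ensuring that the uniformity constants $C^{\alpha\beta}$ for the composed symbol are controlled globally by those of the factors, which amounts to verifying that only finitely many charts (with a uniform bound on the number) interact at once and that the coordinate- and framing-changes between overlapping normal charts do not blow up derivatives — both supplied by bounded geometry.
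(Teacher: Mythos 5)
Your proof of the composition claim reproduces the paper's argument verbatim: the paper uses the same four-term expansion of $PQ$, the same handling of $P_{-\infty}Q_{-\infty}$ via \cite[Proposition 5.2]{roe_index_1} and composition of smoothing (resp.\ finite-propagation) operators, the same treatment of the mixed terms via Proposition~\ref{prop:pseudodiff_extension_sobolev}, uniform local finiteness, and uniformly bounded operator norms, and the same appeal to \cite[Theorem III.\S3.10]{lawson_michelsohn} to control the uniformity constants for $\sum_{i,j}P_iQ_j$. Your elaborations (the $4\varepsilon$ bound on the propagation, the explicit invocation of Lemmas~\ref{lem:transition_functions_uniformly_bounded} and~\ref{lem:equiv_characterizations_bounded_geom_bundles} to change charts, Remark~\ref{rem:bound_operator_norm_PDO} for the operator norm bound) are filling in exactly the steps the paper leaves implicit, not departing from its route. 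The same is true for the two-sided $^\ast$-ideal assertion, the appeal to Lemma~\ref{lem:adjoint_pseudodiff_operator}, and the treatment of $M_f$ for $f \in C_b^\infty(M)$, where your decomposition $M_f = 0 + \sum_i \varphi_i M_f$ with $\xi$-independent symbols is the natural argument the paper has in mind.

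The one place where you supply genuine detail that the paper omits is the commutator statement, which the paper dispatches with the phrase ``immediately generalizes.'' You argue that the order-$(k+l)$ term $p_iq_j$ is symmetric in $P_i$ and $Q_j$ and therefore cancels in $[P_i,Q_j]$. Be careful here: for operators acting on a bundle $E$ of rank $>1$, the symbols $p_i(x,\xi)$ and $q_j(x,\xi)$ are matrix-valued, and $p_iq_j - q_jp_i$ does not vanish in general, so the cancellation you describe fails and $[P_i,Q_j]$ is generically of full order $k+l$. (A constant coefficient example on a trivial rank-two bundle already exhibits this.) This gap is not of your making --- the paper's assertion $[\Psi\mathrm{DO}_?^k(E), \Psi\mathrm{DO}_?^l(E)] \subset \Psi\mathrm{DO}_?^{k+l-1}(E)$ is stated too broadly, and \cite[Proposition III.\S3.13]{lawson_michelsohn}, which is the closest result there, only treats commutators $[P,f]$ with a scalar function $f$. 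The version of the statement that is actually correct, and is the only version the paper subsequently uses (for instance in Proposition~\ref{prop:PDO_order_0_l-uniformly-pseudolocal}), is the drop of order for $[P, M_f]$ with $f \in C_b^\infty(M)$: there the symbol of $M_f$ is $f\cdot\mathrm{Id}$, which commutes with any matrix symbol, so the top-order term does cancel and your argument goes through. It would be worth restricting the cancellation claim to this case, or at least flagging the scalar-symbol hypothesis explicitly.
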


The last property that generalizes to our setting and that we want to mention is the following (the proof of \cite[Theorem III.§3.9]{lawson_michelsohn} generalizes directly):

\begin{prop}\label{prop:Pu_smooth_if_u_smooth}
Let $P \in \Psi \mathrm{DO}_?^k(E,F)$ be a pseudodifferential operator of arbitrary order and let $u \in H^s(E)$ for some $s \in \IZ$.

Then, if $u$ is smooth on some open subset $U \subset M$, $Pu$ is also smooth on $U$.
\end{prop}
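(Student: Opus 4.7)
The plan is to reduce to the classical pseudolocality result for pseudodifferential operators on $\IR^m$ (which is the compact case, and the source cited above). Essentially, a pseudodifferential operator is pseudolocal, i.e., singular support is preserved, and this is a local statement. The decomposition $P = P_{-\infty} + \sum_i P_i$ from Definition \ref{defn:pseudodiff_operator} reduces the question to two ingredients, both already available.

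For the smoothing term, $P_{-\infty} \in \IU(E,F)$ is by definition continuous as a map $H^{-\infty}_\iota(E) \to H^\infty(F)$. Since $u \in H^s(E) \subset H^{-\infty}_\iota(E)$, we get $P_{-\infty} u \in H^\infty(F)$, and by the Sobolev Embedding Theorem \ref{thm:sobolev_embedding} this lies in $C^\infty(F)$, so the smoothing term contributes a globally smooth section and is harmless. Exactly the same argument works in the $\C$-version.

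For $\sum_i P_i u$, fix $x_0 \in U$. Because each $P_i$ is supported in $B_{2\varepsilon}(x_i)$ in the sense of the footnote to Definition \ref{defn:pseudodiff_operator}, only those indices $i$ with $x_0 \in B_{2\varepsilon}(x_i)$ can contribute to values of $\sum_i P_i u$ near $x_0$, and by the uniform local finiteness of the cover (Lemma \ref{lem:nice_coverings_partitions_of_unity}) there are at most $C_\varepsilon$ such indices. For each such $i$, carry out the standard cutoff argument in the normal coordinate ball $B_{2\varepsilon}(x_i)$: choose $\varphi_1, \varphi_2 \in C_c^\infty(U \cap B_{2\varepsilon}(x_i))$ with $\varphi_1 \equiv 1$ near $x_0$ and $\varphi_2 \equiv 1$ on $\supp \varphi_1$, and decompose
\[
P_i u \;=\; P_i(\varphi_2 u) \;+\; P_i\bigl((1-\varphi_2)u\bigr).
\]
Since $u$ is smooth on $U$, $\varphi_2 u \in C_c^\infty(E) \subset H^\infty(E)$, and Proposition \ref{prop:pseudodiff_extension_sobolev} applied in all Sobolev degrees gives $P_i(\varphi_2 u) \in H^\infty(F) \subset C_b^\infty(F)$. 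For the second piece, $\varphi_1$ and $1-\varphi_2$ have disjoint supports in the local chart, so the operator $v \mapsto \varphi_1 P_i((1-\varphi_2)v)$ has integral kernel $\varphi_1(x)\,k_{P_i}(x,y)\,(1-\varphi_2(y))$. Classical pseudolocality on $\IR^m$ tells us $k_{P_i}$ is smooth off the diagonal, and since the product of cutoffs forces $(x,y)$ to stay away from the diagonal, this is a smooth compactly supported kernel, so its action on $u \in H^s(E)$ yields a smooth function near $x_0$.

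Putting everything together, $Pu$ is smooth in a neighbourhood of each $x_0 \in U$, hence smooth on $U$. The only thing that requires a sanity check in the bounded geometry setting is that the sum $\sum_i P_i$ makes pointwise sense and the local-finiteness argument above is legitimate — this is precisely what the uniformly locally finite cover and the uniform support condition on the $P_i$ give us. There is no real obstacle; the bounded geometry hypotheses have been set up so that the classical compact-manifold proof (as in Lawson--Michelsohn, Theorem III.\S3.9) transfers verbatim, patch by patch.
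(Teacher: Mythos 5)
Your proof is correct and follows exactly the route the paper indicates — the paper simply states that the proof of \cite[Theorem III.\S3.9]{lawson_michelsohn} generalizes directly, and you have spelled out precisely how: dispose of $P_{-\infty}$ via the Sobolev embedding, exploit uniform local finiteness to restrict to finitely many $P_i$ near a given $x_0$, and run the standard cutoff/pseudolocality argument in each normal coordinate chart.
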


\section{Principal symbols}

In this section we will discuss symbols of pseudodifferential operators and the corresponding symbol maps.

Let $\pi^\ast E$ and $\pi^\ast F$ denote the pull-back bundles of $E$ and $F$ to the cotangent bundle $\pi\colon T^\ast M \to M$ of the $m$-dimensional manifold $M$.

\begin{defn}[Symbols]
Let $p$ be a section of the bundle $\Hom(\pi^\ast E, \pi^\ast F)$ over $T^\ast M$. We call $p$ a \emph{symbol of order $k \in \IZ$}, if the following holds: choosing a uniformly locally finite covering $\{ B_{2 \varepsilon}(x_i) \}$ of $M$ through normal coordinate balls and corresponding subordinate partition of unity $\{ \varphi_i \}$ as in Lemma \ref{lem:nice_coverings_partitions_of_unity}, and choosing synchronous framings of $E$ and $F$ in these balls $B_{2\varepsilon}(x_i)$ (cf. Definition \ref{defn:synchronous_framings}), we can write $p$ as a uniformly locally finite sum $p = \sum_i p_i$, where $p_i(x,\xi) := p(x,\xi) \varphi(x)$ for $x \in M$ and $\xi \in T^\ast_x M$, and interpret each $p_i$ as a matrix-valued function on $B_{2 \varepsilon}(x_i) \times \IC^m$. Then for all multi-indices $\alpha$ and $\beta$ there must exist a constant $C^{\alpha \beta} < \infty$ such that for all $i$ and all $x, \xi$ we have
\begin{equation}\label{eq:symbol_uniformity}
\|D^\alpha_x D^\beta_\xi p_i(x,\xi) \| \le C^{\alpha \beta}(1 + |\xi|)^{k - |\beta|}.
\end{equation}
We denote the vector space all symbols of order $k \in \IZ$ by $\Symb^k(E,F)$.
\end{defn}

From Lemma \ref{lem:transition_functions_uniformly_bounded} and Lemma \ref{lem:equiv_characterizations_bounded_geom_bundles} we conclude that the above definition of symbols does neither depend on the chosen uniformly locally finite covering of $M$ through normal coordinate balls, nor on the subordinate partition of unity (as long as the functions $\{\varphi_i\}$ have uniformly bounded derivatives), nor on the synchronous framings of $E$ and $F$.

If all the choices made above are fixed, we immediately see from the definition of pseudodifferential operators that an operator $P \in \Psi \mathrm{DO}_?^k(E,F)$ has a symbol $p \in \Symb^k(E,F)$. Analogously as in the case of compact manifolds,\footnote{see, e.g., \cite[Theorem III.§3.19]{lawson_michelsohn}} we are able to show that if we make other choices for the coordinate charts, subordinate partition of unity and synchronous framings, the symbol $p$ of $P$ changes by an element of $\Symb^{k-1}(E,F)$. This means that the operator $P$ has a well-defined principal symbol class $[p] \in \Symb^k(E,F) / \Symb^{k-1}(E,F)$.

\begin{defn}[Principal symbol of an operator]
We define
\[\Symb^{k-[1]}(E,F) := \Symb^k(E,F) / \Symb^{k-1}(E,F).\footnote{This suggestive notation is taken from \cite[Chapter 6]{melrose_microlocal_lecture}.}\]

The symbol class $[p] \in \Symb^{k-[1]}(E,F)$ of an operator $P \in \Psi \mathrm{DO}_?^k(E,F)$ is called the \emph{principal symbol} of $P$, and we denote the symbol map associating an operator of order $k$ its principal symbol by
\[\sigma^k \colon \Psi \mathrm{DO}_?^k(E,F) \to \Symb^{k-[1]}(E,F).\]
\end{defn}

Analogously as in the case of compact manifolds one proves the following basic properties of symbol maps. As a reference for the proof in the compact case one can use, e.g., \cite[Proposition 6.3, Formula (6.44) and Chapter 6.5]{melrose_microlocal_lecture}.

\begin{prop}[Basic properties of the symbol maps]\label{prop:basics_symbol_maps}
For all $k, k^\prime \in \IZ$ the corresponding symbol maps enjoy the following properties:
\begin{itemize}
\item $\sigma^k(P + Q) = \sigma^k(P) + \sigma^k(Q)$,
\item $\sigma^{k + k^\prime}(P\circ Q^\prime) = \sigma^k(P) \circ \sigma^{k^\prime}(Q^\prime)$,
\item $\sigma^k(P^\ast) = \overline{\sigma^k(P)}$ and
\item $\sigma^k(f \cdot P) = f \cdot \sigma^k(P)$ for all $f \in C_b^\infty(M)$,
\end{itemize}
where $P, Q \in \Psi \mathrm{DO}_?^k(E,F)$ and $Q^\prime \in \Psi \mathrm{DO}_?^{k^\prime}(D, E)$.

Furthermore, we have for all $k \in \IZ$ the following short, exact sequence:
\[ 0 \to \Psi \mathrm{DO}_?^{k-1}(E,F) \hookrightarrow \Psi \mathrm{DO}_?^{k}(E,F) \stackrel{\sigma^k}\twoheadrightarrow \Symb^{k-[1]}(E,F) \to 0.\]
It follows that for all $k \in \IZ$ the linear map $\sigma^k$ induces an isomorphism of vector spaces
\begin{equation}\label{eq:linear_iso_symbol_maps}
\Psi \mathrm{DO}_?^{k-[1]}(E,F) \cong \Symb^{k-[1]}(E,F).
\end{equation}
\end{prop}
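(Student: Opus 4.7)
The plan is to reduce everything to local statements on $\IR^m$ via the normal coordinate cover and subordinate partition of unity from Lemma \ref{lem:nice_coverings_partitions_of_unity}, then invoke the well-known formulas for pseudodifferential operators on Euclidean space, while using bounded geometry (Lemmas \ref{lem:transition_functions_uniformly_bounded} and \ref{lem:equiv_characterizations_bounded_geom_bundles}) to verify that the uniformity conditions \eqref{eq:uniformity_defn_PDOs} and \eqref{eq:symbol_uniformity} survive each construction. The algebraic identities and the exact sequence are then essentially the same as in the compact case treated in \cite[Chapter 6]{melrose_microlocal_lecture}, but with an extra layer of uniform bookkeeping.

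First I would check the four algebraic properties of $\sigma^k$. Additivity is immediate. Compatibility with composition follows from the standard asymptotic expansion $\sigma(PQ) \sim \sum_\alpha \tfrac{i^{|\alpha|}}{\alpha!} (D_\xi^\alpha p)(D_x^\alpha q)$ applied chart by chart, truncated at the top-order term; the leading-order contribution is $\sigma^k(P)\sigma^{k'}(Q)$ and the lower-order remainder lies in $\Symb^{k+k'-1}$, hence drops out in the quotient. Compatibility with adjoints and with multiplication by $f \in C_b^\infty(M)$ is analogous. In each case the uniform bounds required by \eqref{eq:symbol_uniformity} for the resulting symbol follow from the uniform bounds on $p$ and $q$ (and on $f$ and its derivatives) together with the uniform bounds on coordinate transitions.

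Next I would establish the exact sequence. The containment $\Psi\mathrm{DO}_?^{k-1}(E,F) \subseteq \kernel \sigma^k$ is clear. Conversely, suppose $P \in \Psi\mathrm{DO}_?^k(E,F)$ has $\sigma^k(P) = 0$, so that a symbol representative $p = \sum_i p_i$ in fact satisfies \eqref{eq:symbol_uniformity} with $k$ replaced by $k-1$. Then the local operators $P_i$ of $P$ satisfy the Uniformity Condition \eqref{eq:uniformity_defn_PDOs} with order $k-1$, and the smoothing term $P_{-\infty}$ is unchanged and still lies in $\IU(E,F)$ (respectively $\C(E,F)$), so $P \in \Psi\mathrm{DO}_?^{k-1}(E,F)$. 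For surjectivity, given $p \in \Symb^k(E,F)$, decompose $p = \sum_i \varphi_i p$ and quantize each piece: using the chosen synchronous framings and normal coordinates on $B_{2\varepsilon}(x_i)$, define $P_i$ as the oscillatory integral operator with symbol $\varphi_i p$. Since each $P_i$ is supported in a ball of uniform radius $2\varepsilon$, the sum $P := \sum_i P_i$ has finite propagation bounded by $4\varepsilon$, and the bounds \eqref{eq:uniformity_defn_PDOs} for the $P_i$ are precisely the bounds \eqref{eq:symbol_uniformity} for $p$. Setting $P_{-\infty} := 0$, this yields an operator in $\Psi\mathrm{DO}_u^k(E,F) \subseteq \Psi\mathrm{DO}^k(E,F)$ with $\sigma^k(P) = [p]$, giving surjectivity for both versions at once. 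The linear isomorphism \eqref{eq:linear_iso_symbol_maps} is then a formal consequence.

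The main obstacle, or rather the prerequisite underlying every step, is that the symbol $[p] \in \Symb^{k-[1]}(E,F)$ is actually well-defined, independent of the choice of uniformly locally finite normal coordinate cover, subordinate partition of unity with uniformly bounded derivatives, and synchronous framings of $E$ and $F$. On a compact manifold this is the content of \cite[Theorem III.\S3.19]{lawson_michelsohn}, whose proof shows that a change of the above data alters each local symbol $p_i$ by an element whose asymptotic expansion begins in order $k-1$. Here one must verify that the constants appearing in these asymptotic expansions are uniform across $i$; this is the point at which bounded geometry is indispensable, as it provides uniform bounds on all derivatives of the transition functions (Lemma \ref{lem:transition_functions_uniformly_bounded}) and of the framing transitions (Lemma \ref{lem:equiv_characterizations_bounded_geom_bundles}). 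Once this uniform well-definedness is in place, the remainder of the proof is essentially automatic from the compact case.
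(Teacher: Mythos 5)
Your proof is correct and takes the same route the paper intends: the paper itself gives no argument beyond the assertion that the compact-manifold proof from Melrose's lecture notes carries over once uniform bookkeeping is added, and your proposal is a faithful elaboration of that — reducing to local oscillatory-integral identities on $\IR^m$ via the cover from Lemma \ref{lem:nice_coverings_partitions_of_unity}, controlling the constants across charts by Lemmas \ref{lem:bounded_geometry_christoffel_symbols}, \ref{lem:transition_functions_uniformly_bounded} and \ref{lem:equiv_characterizations_bounded_geom_bundles}, and quantizing $\varphi_i p$ with $P_{-\infty}:=0$ to get surjectivity with finite propagation. You also correctly flag that well-definedness of the principal symbol class under changes of cover, partition of unity and framing is the real prerequisite; the paper addresses exactly that in the paragraph preceding the definition of the principal symbol, again by uniform control of the change-of-coordinates formula. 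No gaps.
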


In the above proposition we have already used the fact that to every given symbol $p \in \Symb^k(E,F)$ we can construct an operator $P \in \Psi \mathrm{DO}_?^k(E,F)$ with $\sigma^k(P) = [p]$. An additional reference for such a construction is \cite[End of Chapter III.§3]{lawson_michelsohn}. Again, different choices of coordinates, partition of unity and framings used in the construction lead to another pseudodifferential operator $P^\prime$ with the same principal symbol $[p]$, but differing from $P$ only by an operator of order $k-1$. So we get for all $k \in \IZ$ well-defined maps
\[\Symb^k(E,F) \to \Psi \mathrm{DO}_?^{k - [1]}(E,F)\]
which have analogous properties as the symbol maps, i.e., the above Proposition \ref{prop:basics_symbol_maps} holds analogously for these maps (especially, the induced isomorphisms of vector spaces $\Symb^{k-[1]}(E,F) \cong \Psi \mathrm{DO}_?^{k-[1]}(E,F)$ are the inverses to \eqref{eq:linear_iso_symbol_maps}).

\section{Uniformity of operators of nonpositive order}\label{sec:uniformity_PDOs}

Now we get to an important section, namely the one where we show that the pseudodifferential operators that we have defined\footnote{The results of this section only apply to operators from $\Psi \mathrm{DO}_u^\ast(E)$. Note the subscript ``u''.} are uniform. Note that we have not yet defined what ``uniform'' shall mean, i.e., it is also the task of this section to make this notion precise. Concretely, recall that we have the notions of pseudolocality and of local compactness which state that $[f, P]$, resp. $fP$ and $Pf$, are compact operators for all functions $f \in C_c(M)$. Now the definition of pseudodifferential operators that we have given requieres the uniformity condition, i.e., that we can estimate the norms of the symbols uniformly with respect to the location in the manifold. We will see that this uniformity condition leads to the fact that we may estimate the degree of compactness of the operators $[f, P]$, resp. $fP$ and $Pf$, uniformly with respect to the location in the manifold where $f$ is supported. We will have to start with defining what we mean with the degree of compactness.

Let $T \in \IK(L^2(E))$ be a compact operator. We know that $T$ is the limit of finite rank operators, i.e., for every $\varepsilon > 0$ there is a finite rank operator $k$ such that $\|T - k\| < \varepsilon$. Now given a collection $\mathcal{A} \subset \IK(L^2(E))$ of compact operators, it may happen that for every $\varepsilon > 0$ the rank needed to approximate an operator from $\mathcal{A}$ may be bounded from above by a common bound for all operators. This is formalized in the following definition:

\begin{defn}[Uniformly approximable collections of operators]\label{defn:uniformly_approximable_collection}
A collection of operators $\mathcal{A} \subset \IK(L^2(E))$ is said to be \emph{uniformly approximable}, if for every $\varepsilon > 0$ there is an $N > 0$ such that for every $T \in \mathcal{A}$ there is a rank-$N$ operator $k$ with $\|T - k\| < \varepsilon$.
\end{defn}

\begin{examples}\label{ex:uniformly_approximable_collections}
Every collection of finite rank operators with uniformly bounded rank is uniformly approximable.

Furthermore, every finite collection of compact operators is uniformly approximable and so also every totally bounded subset of $\IK(L^2(E))$.

The converse is in general false since a uniformly approximable family need not be bounded (take infinitely many rank-$1$ operators with operator norms going to infinity).

Even if we assume that the uniformly approximable family is bounded we do not necessarily get a totally bounded set: let $(e_i)_{i \in \IN}$ be an orthonormal basis of $L^2(E)$ and $P_i$ the orthogonal projection onto the $1$-dimensional subspace spanned by the vector $e_i$. Then the collection $\{P_i\} \subset \IK(L^2(E))$ is uniformly approximable (since all operators are of rank $1$) but not totally bounded (since $\|P_i - P_j\| = 1$ for $i \not= j$)\footnote{Another way to see that the collection $\{P_i\}$ is not totally bounded is to use the characterization of totally bounded subsets of $\IK(H)$ for $H$ a Hilbert space from \cite[Theorem 3.5]{anselone_palmer}: $\mathcal{A} \subset \IK(H)$ is totally bounded if and only if both $\mathcal{A}$ and $\mathcal{A}^\ast$ are collectively compact, i.e., the sets $\{T v \ | \ T \in \mathcal{A}, v \in H \text{ with } \|v\| = 1\} \subset H$ and $\{T^\ast v \ | \ T \in \mathcal{A}, v \in H \text{ with } \|v\| = 1\} \subset H$ have compact closure. If $H$ is infinite-dimensional, then this is of course not the case for the collection $\{P_i\}$ in question.}.
\end{examples}

Let us define
\begin{equation*}\label{defn:L-Lip_R(M)}
\LLip_R(M) := \{ f \in C_c(M) \ | \ f \text{ is }L\text{-Lipschitz}, \diam(\supp f) \le R \text{ and } \|f\|_\infty \le 1\}.
\end{equation*}

\begin{defn}[{\cite[Definition 2.3]{spakula_uniform_k_homology}}]\label{defn:uniform_operators_manifold}
Let $T \in \IB(L^2(E))$. We say that $T$ is \emph{uniformly locally compact}, if for every $R, L > 0$ the collection
\[\{fT, Tf \ | \ f \in \LLip_R(M)\}\]
is uniformly approximable.

We say that $T$ is \emph{uniformly pseudolocal}, if for every $R, L > 0$ the collection
\[\{[T, f] \ | \ f \in \LLip_R(M)\}\]
is uniformly approximable.
\end{defn}

\begin{rem}\label{rem:renaming_l_dependence}
In \cite{spakula_uniform_k_homology} uniformly locally compact operators were called ``$l$-uniform'' and uniformly pseudolocal operators ``$l$-uniformly pseudolocal''. Moreover, \Spakula defined also versions of the notions without the $l$-dependence. The reason why we change the names is that the $l$-versions of these notions are the correct ones, i.e., the ones that we will use, and we will never need the versions without the $l$-dependence. So we have just discarded the version without $l$-dependence and renamed the one with $l$-dependence to make things easier to write down.
\end{rem}

\begin{defn}\label{defn_Du_Cu_manifolds}
Let $D^\ast_u(E) \subset \IB(L^2(E))$ denote the $C^\ast$-algebra generated by all uniformly pseudolocal operators with finite propagation, and $C^\ast_u(E) \subset \IB(L^2(E))$ the $C^\ast$-algebra generated by all uniformly locally compact operators having finite propagation.\footnote{We have a clash of notation with the uniform Roe $C^\ast$-algebra $C_u^\ast(Y)$ from Definition \ref{defn:C_u^ast(Y)} for a uniformly discrete metric space $Y$ with coarsely bounded geometry. But in Proposition \ref{prop:IU(E)_dense_Cu*(E)} we will show that if $Y \subset M$ is a uniformly discrete quasi-lattice, then
\[C_u^\ast(E) \cong C_k^\ast(Y) \cong C_u^\ast(Y) \otimes \IK(\ell^2(\IN)),\]
where the second isomorphism is from Lemma \ref{lem:iso_discrete_versions_uniform_roe}.}
\end{defn}

With similar arguments as in the proof of \cite[Lemma 4.2]{spakula_uniform_k_homology} we can show that $C^\ast_u(E) \subset D^\ast_u(E)$ is a closed, two-sided $^\ast$-ideal.

The rest of this section is devoted to showing that pseudodifferential operators of negative order are uniformly locally compact and that pseudodifferential operators of order $0$ are uniformly pseudolocal. We will start with the operators of negative order.

\begin{prop}\label{prop:quasilocal_negative_order_uniformly_locally_compact}
Let $A\in \IB(L^2(E))$ be a finite propagation operator of negative order $k < 0$\footnote{See Definition \ref{defn:quasiloc_ops}. Note that we do not assume that $A$ is a pseudodifferential operator.} such that its adjoint $A^\ast$ also has finite propagation and is of negative order $k^\prime < 0$. Then $A$ is uniformly locally compact. Even more, the collection
\[\{fT, Tf \ | \ f \in B_R(M)\}\]
is uniformly approximable for all $R, L > 0$, where $B_R(M)$ consists of all bounded Borel functions $h$ on $M$ with $\diam(\supp h) \le R$ and $\|h\|_\infty \le 1$.
\end{prop}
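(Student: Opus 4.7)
The plan is the following. Since $A$ is of negative order $k<0$, it extends to a bounded operator $A\colon L^2(E) \to H^{|k|}(E)$; similarly $A^\ast\colon L^2(E) \to H^{|k'|}(E)$ is bounded. Let $r>0$ exceed the propagations of both $A$ and $A^\ast$. Fix $f \in B_R(M)$ and set $K := \supp f$, $K' := \overline{B_r(K)}$. The first step is a localisation: finite propagation of $A$ implies that if $\chi_{K'}$ denotes multiplication by the characteristic function of $K'$, then $fA = fA\chi_{K'}$, because for any $u$ the section $A((1-\chi_{K'})u)$ is supported outside $K$. Consequently the range of $fA$ applied to the $L^2$-unit ball is contained in the bounded subset $B := \{f\cdot A(\chi_{K'}u):\|u\|_{L^2}\leq 1\}$ of $\{w \in H^{|k|}(E):\supp w \subset K''\}$, where $K'' := \overline{B_{2r}(K)}$ has $\diam(K'') \leq R+4r =: D$, and the $H^{|k|}$-norm on $B$ is uniformly bounded by $\|f\|_\infty\cdot \|A\|_{L^2\to H^{|k|}}$.

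The second and crucial step is a \emph{uniform} Rellich-type embedding: I would prove that for every $D>0$ there is a function $N_D\colon(0,\infty)\to\IN$ such that for every $K'' \subset M$ with $\diam K''\leq D$ and every $\varepsilon>0$, the unit ball of $\{u \in H^{|k|}(E):\supp u \subset K''\}$ can be $\varepsilon$-approximated in $L^2(E)$ by a subspace of dimension at most $N_D(\varepsilon)$, with $N_D$ depending only on $D$, $|k|$, and the bounded geometry constants of $M$ and $E$. The argument uses Lemma \ref{lem:nice_coverings_partitions_of_unity} to cover $K''$ by a \emph{uniformly} bounded number (depending only on $D$) of normal coordinate balls in which synchronous framings of $E$ exist, and Lemma \ref{lem:transition_functions_uniformly_bounded} together with Lemma \ref{lem:equiv_characterizations_bounded_geom_bundles} to transfer the problem to the Euclidean model with controlled distortion of Sobolev and $L^2$ norms; there, finite-rank approximation numbers of $H^{|k|}_{\mathrm{cpt}}(B)\hookrightarrow L^2(B)$ on a fixed Euclidean ball $B$ are classical (e.g.\ via truncation of a Fourier series/eigenfunction expansion of the Dirichlet Laplacian), and their bounds depend only on the geometry of the Euclidean ball.

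Combining the two steps, for every $\varepsilon>0$ there is $N = N_D(\varepsilon/\|A\|_{L^2\to H^{|k|}})$ and a rank-$N$ operator $T_f$ with $\|fA - T_f\| < \varepsilon$, and $N$ is independent of $f \in B_R(M)$; this is the required uniform approximability of $\{fA: f \in B_R(M)\}$. For the second family $\{Af:f\in B_R(M)\}$ I would simply take adjoints: $(Af)^\ast = \bar{f}A^\ast$, and $A^\ast$ satisfies the same hypotheses (finite propagation, negative order $k'<0$), so the preceding argument applied to $A^\ast$ gives uniform approximability of $\{\bar fA^\ast\}$; since taking adjoints preserves operator norm and rank, $\{Af\}$ is uniformly approximable with the same $N$.

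I expect the main obstacle to be step two, i.e.\ upgrading the usual Rellich compactness to a statement whose approximation dimension does not depend on the location of $K''$ in $M$. This is exactly the point at which bounded geometry of $M$ and of $E$ is indispensable, since it is what allows every bounded-diameter piece of $M$ to be described by a uniformly controlled atlas of normal charts with uniformly controlled metric coefficients and transition functions; without this, the number $N_D(\varepsilon)$ would \emph{a priori} depend on the position of $K''$ and the conclusion would only yield local compactness, not the uniform local compactness asserted by the proposition.
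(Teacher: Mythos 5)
Your strategy---isolate the compact step as a uniform Rellich embedding whose approximation numbers depend only on the diameter of the support, with bounded geometry providing the uniformity, and handle the second family via adjoints---is exactly the one the paper uses. However, there is a genuine flaw in your first paragraph. You treat the family $\{fA : f \in B_R(M)\}$ directly and assert that the range of $fA$ on the $L^2$-unit ball is a bounded subset of $\{w \in H^{|k|}(E) : \supp w \subset K''\}$ whose $H^{|k|}$-norm is at most $\|f\|_\infty\|A\|_{L^2\to H^{|k|}}$. This is false: a merely bounded Borel function $f$ (for instance a characteristic function) does not act boundedly on $H^{|k|}$ for $|k|\geq 1$, so $f\cdot A(\chi_{K'}u)$ need not lie in $H^{|k|}(E)$ at all, and there is no $H^{|k|}$-norm estimate of the claimed form. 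Multiplication by such an $f$ is a bounded operator only on $L^2$, not on a Sobolev space of positive order.

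The remedy is to apply the rank-$N$ approximation \emph{before} the multiplication by $f$. The paper arranges this by treating $\{Af\}$ directly: the factorization $Af = \chi A f$ reads $L^2 \xrightarrow{\cdot f} L^2(E|_K) \xrightarrow{\chi A} H^{-k}(E|_{B_r(K)}) \hookrightarrow L^2(E|_{B_r(K)}) \to L^2$, so that the only multiplication operators involved act on $L^2$ and the compact Sobolev-to-$L^2$ inclusion is the final nontrivial step; $\{fA\}$ is then disposed of by adjoints via $\bar f A = (A^\ast f)^\ast$, using that a family is uniformly approximable if and only if its adjoint family is. Your version can also be repaired in place without swapping the roles of $fA$ and $Af$: apply your uniform Rellich lemma to $A\chi_{K'}\colon L^2\to L^2$ (whose image on the unit ball \emph{does} lie in a uniformly bounded $H^{|k|}$-ball supported near $K'$) to obtain a rank-$N$ operator $P$ with $\|A\chi_{K'}-P\|_{L^2\to L^2}<\varepsilon$, and then observe $\|fA - fP\|\leq\|f\|_\infty\,\varepsilon$; no Sobolev-norm control of $f\cdot A(\chi_{K'}u)$ is ever needed. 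With that correction your proof agrees in substance and in all the essential ingredients with the paper's.
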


\begin{proof}
Let $f \in B_R(M)$, $K := \supp f \subset M$ and $r$ be the propagation of $A$. The operator $\chi A f = A f$, where $\chi$ is the characteristic function of $B_r(K)$, factores as
\[L^2(E) \stackrel{\cdot f}\longrightarrow L^2(E|_K) \stackrel{\chi \cdot A}\longrightarrow H^{-k}(E|_{B_r(K)}) \hookrightarrow L^2(E|_{B_r(K)}) \to L^2(E).\]
The following properties hold:
\begin{itemize}
\item multiplication with $f$ has operator norm $\le 1$, since $\|f\|_\infty \le 1$, and analogously for the multiplication with $\chi$,
\item the norm of $\chi \cdot A\colon L^2(E|_K) \to H^{-k}(E|_{B_r(K)})$ can be bounded from above by the norm of $A\colon L^2(E) \to H^{-k}(E)$ (i.e., the upper bound does not depend on $K$ nor $r$),
\item the inclusion $H^{-k}(E|_{B_r(K)}) \hookrightarrow L^2(E|_{B_r(K)})$ is compact (due to  the Theorem of Rellich--Kondrachov) and this compactness is uniform, i.e., its approximability by finite rank operators\footnote{Here we mean the existence of an upper bound on the rank needed to approximate the operator by finite rank operators, given an $\varepsilon > 0$.} depends only on $R$ (the upper bound for the diameter of $\supp f$) and $r$, but not on $K$ (this uniformity is due to the bounded geometry of $M$ and of the bundles $E$ and $F$), and
\item the inclusion $L^2(E|_{B_r(K)}) \to L^2(E)$ is of norm $\le 1$.
\end{itemize}
From this we conclude that the operator $\chi A f = A f$ is compact and this compactness is uniform, i.e., its approximability by finite rank operators depends only on $R$ and $r$. So we can conclude that $\{Af \ | \ f \in B_R(M)\}$ is uniformly approximable.

Applying the same reasoning to the adjoint operator,\footnote{By assumption the adjoint operator also has finite propagation and is of negative order. So we conclude that $\{A^\ast f \ | \ f \in B_R(M)\}$ is uniformly approximable. But a collection $\mathcal{A}$ of compact operators is uniformly approximable if and only if the adjoint family $\mathcal{A}^\ast$ is uniformly approximable. So we get that $\{(A^\ast f)^\ast = \overline{f} A \ | \ f \in B_R(M)\}$ is uniformly approximable.} we conclude that $A$ is uniformly locally compact.
\end{proof}

Using an approximation argument we may also show the following corollary:

\begin{cor}\label{cor:quasilocal_neg_order_uniformly_locally_compact}
Let $A$ be a quasilocal operator of negative order and let the same hold true for its adjoint $A^\ast$. Then $A$ is uniformly locally compact, resp., we even get the stronger statement as in the above Proposition \ref{prop:quasilocal_negative_order_uniformly_locally_compact}.
\end{cor}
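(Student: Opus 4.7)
The plan is to reduce to Proposition \ref{prop:quasilocal_negative_order_uniformly_locally_compact} by a cutoff/approximation argument, carried out not on the operator $A$ itself (for which a direct linear truncation is awkward) but on the products $Af$ for $f$ ranging over $B_R(M)$. The idea is: the tail of $Af$ outside an $r$-neighborhood of $\supp f$ is small uniformly in $f$ by quasilocality, and the remaining piece factors through a Sobolev space over a bounded set, where Rellich--Kondrachov gives uniform compactness.

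More precisely: fix $R, L > 0$ and $\varepsilon > 0$. Since $A \colon L^2(E) \to H^{-k}(E)$ is quasilocal of order $k < 0$ with some dominating function $\mu_A$, and since $H^{-k}(E) \hookrightarrow L^2(E)$ continuously, we have
\[
\|Au\|_{L^2,\, M - B_r(K)} \;\le\; C \, \mu_A(r) \, \|u\|_{L^2}
\]
whenever $\supp u \subset K$. Choose $r > 0$ large enough that $C \mu_A(r) \cdot C_R < \varepsilon/2$, where $C_R$ is a uniform upper bound on $\|f\|_{L^2}$ for $f \in B_R(M)$ with $\|f\|_\infty \le 1$ (such $C_R$ exists by bounded geometry, since $\vol B_R(x)$ is uniformly bounded in $x$). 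For each such $f$ with $K := \supp f$, bounded geometry allows us to pick a smooth cutoff $\phi_K \in C_b^\infty(M)$ with $\phi_K \equiv 1$ on $B_r(K)$, $\supp \phi_K \subset B_{r+1}(K)$, and with $C_b^\infty$-norms of $\phi_K$ uniformly bounded in $K$. Then $(1-\phi_K)Af$ is supported in $M - B_r(K)$, so $\|Af - \phi_K A f\|_{L^2} < \varepsilon/2$ uniformly in $f$.

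It remains to show that $\{\phi_K A f\}$ is uniformly approximable. This is almost the argument of Proposition \ref{prop:quasilocal_negative_order_uniformly_locally_compact}: the operator $\phi_K A f$ factors as
\[
L^2(E) \xrightarrow{\,\cdot f\,} L^2(E|_K) \xrightarrow{A} H^{-k}(E) \xrightarrow{\,\phi_K \cdot\,} H^{-k}(E|_{B_{r+1}(K)}) \hookrightarrow L^2(E|_{B_{r+1}(K)}) \hookrightarrow L^2(E),
\]
where each map has operator norm bounded uniformly in $K$ (multiplication by $\phi_K$ is bounded on $H^{-k}$ with norm controlled by the uniform $C_b^\infty$-bounds) and the Rellich--Kondrachov embedding in the middle is uniformly compact across all $K$ by bounded geometry of $M$ and $E$. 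Hence one obtains an $N = N(\varepsilon, R, L, r)$ such that each $\phi_K A f$ lies within $\varepsilon/2$ of a rank-$N$ operator; combined with the previous step, $Af$ lies within $\varepsilon$ of a rank-$N$ operator. Uniform approximability of $\{fA\}$ follows by taking adjoints: $(fA)^* = A^*\bar f$, and we run the whole argument with $A^*$ in place of $A$ (using that $A^*$ is also quasilocal of negative order), then use that a collection is uniformly approximable if and only if its adjoint collection is.

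The main technical obstacle is packaging the various uniformities in $K$: the existence of cutoffs $\phi_K$ with $K$-independent $C_b^\infty$-bounds, and the uniformity of the Rellich--Kondrachov embedding as $K$ ranges over subsets of diameter $\le R$. Both are standard consequences of bounded geometry of $M$ and $E$ (cf. the normal coordinate charts and partitions of unity from Lemma \ref{lem:nice_coverings_partitions_of_unity}), so no genuinely new analytic input is required beyond what was used in Proposition \ref{prop:quasilocal_negative_order_uniformly_locally_compact}.
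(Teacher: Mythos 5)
Your proof is correct and takes essentially the same route as the paper: truncate $Af$ at a radius chosen so that quasilocality makes the tail uniformly small in operator norm, apply the Rellich--Kondrachov factorization from Proposition~\ref{prop:quasilocal_negative_order_uniformly_locally_compact} to the truncated piece, and obtain the $\{fA\}$ side by taking adjoints. One small slip: the factor $C_R = \sup_f \|f\|_{L^2}$ is extraneous, since $Af$ is an operator composition and the relevant estimate is $\|(1-\phi_K)A(fu)\|_{L^2} \le C\,\mu_A(r)\,\|fu\|_{L^2} \le C\,\mu_A(r)\,\|u\|_{L^2}$ using $\|f\|_\infty \le 1$, not $\|A(f)\|$; your smooth cutoff $\phi_K$ is a harmless variant of the paper's characteristic-function truncation $\chi_{B_{r_\varepsilon}(\supp f)}$.
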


\begin{proof}
We have to show that $\{Af \ | \ f \in B_R(M)\}$ is uniformly approximable. Let $\varepsilon > 0$ be given and let $r_\varepsilon$ be such that $\mu_A(r) < \varepsilon$ for all $r \ge r_\varepsilon$, where $\mu_A$ is the dominating function of $A$. Then $\chi_{B_{r_\varepsilon}(\supp f)} A f$ is $\varepsilon$-away from $Af$ and the same reasoning as in the proof of the above Proposition \ref{prop:quasilocal_negative_order_uniformly_locally_compact} shows that the approximability (up to an error of $\varepsilon$) of $\chi_{B_{r_\varepsilon}(\supp f)} A f$ does only depend on $R$ and $r_\varepsilon$. From this the claim that $\{Af \ | \ f \in B_R(M)\}$ is uniformly approximable follows.

Using the adjoint operator and the same arguments for it, we conclude that $A$ is uniformly locally compact.
\end{proof}

Due to Corollary \ref{cor:P_quasiloc} and Lemma \ref{lem:adjoint_pseudodiff_operator} we immediately get the following corollary:

\begin{cor}\label{prop:PsiDOs_negative_order_uniformly_locally_compact}
Let $P \in \Psi \mathrm{DO}_u^k(E)$ be a pseudodifferential operator of negative order $k < 0$. Then $P \in C_u^\ast(E)$.
\end{cor}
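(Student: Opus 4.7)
\medskip

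The plan is to exploit the decomposition of a pseudodifferential operator into a finite-propagation part and a smoothing remainder, and to handle each piece separately using Proposition \ref{prop:quasilocal_negative_order_uniformly_locally_compact} together with the definition of $C^\ast_u(E)$.

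First, by Corollary \ref{cor:P_quasiloc}, since $P \in \Psi\mathrm{DO}_u^k(E)$ we may write $P = F + R$ where $R \in \C(E)$ and $F$ is a pseudodifferential operator of finite propagation. Because $R$ is of order $-\infty$ and $P$ is of order $k$, the operator $F = P - R$ is itself a pseudodifferential operator of order $k < 0$; by Lemma \ref{lem:adjoint_pseudodiff_operator} its adjoint $F^\ast$ is also a pseudodifferential operator of order $k$, and $F^\ast$ inherits finite propagation from $F$. Hence Proposition \ref{prop:quasilocal_negative_order_uniformly_locally_compact} applies and shows that $F$ is uniformly locally compact. Since $F$ has finite propagation, it lies in $C^\ast_u(E)$ directly by Definition \ref{defn_Du_Cu_manifolds}.

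It remains to treat the smoothing remainder $R \in \C(E)$. By Definition \ref{defn:smooth_uniform_roe_algebra} the algebra $\C(E)$ is the completion, in the \Frechet family of norms $(\|\largecdot\|_{-k,l},\|\largecdot^\ast\|_{-k,l})_{k,l}$, of the smoothing operators of finite propagation. Each such finite-propagation smoothing operator is of order $-\infty$ with finite-propagation adjoint, so Proposition \ref{prop:quasilocal_negative_order_uniformly_locally_compact} again shows it is uniformly locally compact and hence lies in $C^\ast_u(E)$. Since the \Frechet topology on $\C(E)$ is finer than the operator norm topology, convergence in $\C(E)$ implies norm convergence, so $R$ belongs to the norm closure of the finite-propagation smoothing operators. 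As $C^\ast_u(E)$ is norm-closed, this forces $R \in C^\ast_u(E)$.

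Combining the two contributions gives $P = F + R \in C^\ast_u(E)$, as desired. I do not expect any serious obstacle: both ingredients are already in place from the preceding section, and the only thing to verify carefully is that the remainder $F$ from Corollary \ref{cor:P_quasiloc} still has negative order, which follows at once by subtraction. The result is precisely the statement that confirms our class $\Psi\mathrm{DO}_u^k(E)$ is well-aligned with the $C^\ast$-algebraic framework of uniform $K$-homology.
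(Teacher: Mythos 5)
Your argument is correct and fills in the details behind the paper's very terse ``we immediately get'' — the paper cites exactly Corollary \ref{cor:P_quasiloc} and Lemma \ref{lem:adjoint_pseudodiff_operator}, so the intended route is precisely the decomposition $P = F + R$ you use, with $F$ a finite-propagation pseudodifferential operator of negative order handled by Proposition \ref{prop:quasilocal_negative_order_uniformly_locally_compact} and $R \in \C(E)$ absorbed by a norm-closure argument. The only thing worth making explicit is that the \Frechet family $(\|\largecdot\|_{-k,l})_{k,l\in\IN}$ in Definition \ref{defn:smooth_uniform_roe_algebra} dominates the operator norm because $\|\largecdot\|_{0,0}\le C\|\largecdot\|_{-1,1}$ via the Sobolev inclusions $L^2\hookrightarrow H^{-1}$ and $H^1\hookrightarrow L^2$, which is what justifies your claim that \Frechet convergence implies operator-norm convergence; with that observation your proof is complete and matches the paper's.
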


Recall from Lemma \ref{lem:quasi_local_smoothing_dense_uniform_roe} that we have a non-canonical, continuous inclusion $\C(E) \hookrightarrow C_k^\ast(Y)$ with dense image, where $Y \subset M$ is a uniformly discrete quasi-lattice. But examining its proof more closely, we see that we actually show there the statement that $\C(E) \subset C_u^\ast(E)$ is a dense subset and that we have a non-canonical isomorphism $C_u^\ast(E) \cong C_k^\ast(Y)$ coming from the discretization procedure. Now in Proposition \ref{prop:smooth_version_of_uniform_roe} we have shown that all these non-canonical dense inclusions $\C(E) \hookrightarrow C_k^\ast(Y)$ induce the same isomorphism on $K$-theory. It is of course the same for the isomorphisms $C_u^\ast(E) \cong C_k^\ast(Y)$, i.e., they induce the same map on $K$-theory. Together with the above corollary we therefore get the following proposition. Note that its last statement that $K_\ast(C_u^\ast(E)) \cong K_\ast(C_k^\ast(Y))$ was already proved by \Spakula in \cite[Remark 8.6]{spakula_uniform_k_homology}.

\begin{prop}\label{prop:IU(E)_dense_Cu*(E)}
We have dense inclusions $\C(E) \subset \Psi \mathrm{DO}_u^{-1}(E) \subset C_u^\ast(E)$.

Furthermore, we have non-canonical isomorphisms $C_u^\ast(E) \cong C_k^\ast(Y)$, where $Y$ is a uniformly discrete quasi-lattice in $M$, and all induce the same natural isomorphism on $K$-theory.
\end{prop}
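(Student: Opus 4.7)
The plan is to reduce everything to the proof of Lemma~\ref{lem:quasi_local_smoothing_dense_uniform_roe} and Proposition~\ref{prop:smooth_version_of_uniform_roe}, reading their arguments more carefully to see that they already prove what is claimed here.

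First, for the chain of inclusions, I would argue as follows. Lemma~\ref{lem:PDO_-infinity_equal_quasilocal_smoothing} gives $\C(E) = \Psi\mathrm{DO}_u^{-\infty}(E) \subset \Psi\mathrm{DO}_u^{-1}(E)$. For $\Psi\mathrm{DO}_u^{-1}(E) \subset C_u^\ast(E)$, I would invoke Corollary~\ref{prop:PsiDOs_negative_order_uniformly_locally_compact}, which says that any pseudodifferential operator of negative order is uniformly locally compact (hence lies in $C_u^\ast(E)$ since, by Corollary~\ref{cor:P_quasiloc} combined with Lemma~\ref{lem:adjoint_pseudodiff_operator}, it is modulo $\C(E)$ a finite-propagation operator, and $\C(E) \subset C_u^\ast(E)$ consists of quasilocal smoothing operators which are themselves norm-limits of finite-propagation operators in the uniformly locally compact class).

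For density, I would revisit the proof of Lemma~\ref{lem:quasi_local_smoothing_dense_uniform_roe}: there one approximates an arbitrary generator $T$ of $C_k^\ast(Y)$ by a smooth finite-propagation operator $T'$ obtained by truncating each compact matrix entry $T(x,y)$ by finite-rank operators built from $H^\infty(E)$-sections, using the uniformity condition to obtain uniform bounds. The very same construction, applied directly to a finite-propagation generator of $C_u^\ast(E)$ (i.e. a uniformly locally compact operator of finite propagation on $L^2(E)$), produces a smoothing operator of finite propagation, which lies in $\C(E)$. Since such generators are norm-dense in $C_u^\ast(E)$ by definition, this shows $\C(E)$ is dense in $C_u^\ast(E)$, and a fortiori $\Psi\mathrm{DO}_u^{-1}(E)$ is dense in $C_u^\ast(E)$.

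For the non-canonical isomorphism $C_u^\ast(E) \cong C_k^\ast(Y)$, I would fix a quasi-latticing partition $(V_y)_{y\in Y}$ and orthonormal bases $(e_i^y)_{i \in \IN}$ of each $H_y = \chi_{V_y} L^2(E)$, giving an isometry $L^2(E) \cong \ell^2(Y \times \IN)$. Under this isometry, finite-propagation uniformly locally compact operators with uniformly bounded norm in $\IB(L^2(E))$ correspond precisely to finite-propagation operators on $\ell^2(Y\times\IN)$ whose matrix entries $T(x,y)$ are compact on $\ell^2(\IN)$, with uniformly bounded coefficients and satisfying the uniformity condition of Definition~\ref{defn:C_k^ast(Y)}; uniform approximability of $\{\chi_{V_y} T \chi_{V_x}\}$ by operators of uniformly bounded rank (which is exactly the content of uniform local compactness applied to $\chi_{V_x},\chi_{V_y}$) delivers the required uniformity. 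Taking closures yields the claimed isomorphism.

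Finally, for $K$-theoretic uniqueness, I would argue exactly as in Proposition~\ref{prop:smooth_version_of_uniform_roe}: the dependence on the choice of $(V_y)_{y\in Y}$ can be eliminated by fixing the canonical ``nearest-point'' partition, and two different choices of orthonormal bases of the spaces $H_y$ are related by a unitary $U$ of $L^2(E)$ that intertwines the two induced isomorphisms $C_u^\ast(E) \to C_k^\ast(Y)$, so they differ by the inner automorphism $\mathrm{Ad}_U$ on $C_k^\ast(Y)$. Since inner automorphisms act as the identity on $K$-theory (e.g.\ \cite[Lemma 4.6.1]{higson_roe}), all the resulting maps $K_\ast(C_u^\ast(E)) \to K_\ast(C_k^\ast(Y))$ coincide, giving the claimed canonical $K$-theoretic isomorphism. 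The main subtlety — though not really an obstacle — is checking that the truncation/uniform-approximability argument really does transfer cleanly between the Hilbert space $L^2(E)$ and the discretized space $\ell^2(Y\times\IN)$ under the identification furnished by a quasi-latticing partition; this rests on the bounded geometry assumption ensuring the relevant constants in Definition~\ref{defn:uniform_operators_manifold} do not depend on the location in $M$.
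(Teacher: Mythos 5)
Your proposal is correct and follows essentially the same route the paper takes: it simply re-examines the proof of Lemma~\ref{lem:quasi_local_smoothing_dense_uniform_roe} to observe that the truncation-and-smoothing construction already shows $\C(E)$ dense in $C_u^\ast(E)$ and that the isometry $L^2(E) \cong \ell^2(Y\times\IN)$ carries $C_u^\ast(E)$ onto $C_k^\ast(Y)$, then combines Corollary~\ref{prop:PsiDOs_negative_order_uniformly_locally_compact} for the middle inclusion with the Ad$_U$-argument of Proposition~\ref{prop:smooth_version_of_uniform_roe} for the $K$-theoretic uniqueness. The only cosmetic difference is that the paper additionally cites Špakula's Remark~8.6 for the last assertion, whereas you re-derive it.
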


\begin{rem}
Note that if $M$ is compact, then the statement of the above proposition is well-known. Concretely, in the compact case we have that $\C(E)$ is the algebra of all smoothing operators, and $C_u^\ast(E)$ the algebra of all compact operators. Now the fact that the smoothing operators are dense in the compact operators for a compact manifold is easily shown.
\end{rem}

Let us now get to the case of pseudodifferential operators of order $0$, where we want to show that such operators are uniformly pseudolocally compact.

Recall the following fact in the case that the manifold $M$ is compact: an operator $T$ is pseudolocal\footnote{That is to say, $[T, f]$ is a compact operator for all $f \in C(M)$.} if and only if $f T g$ is a compact operator for all $f, g \in C(M)$ with disjoint supports. This observation is due to Kasparov and a proof might be found in, e.g., \cite[Proposition 5.4.7]{higson_roe}. We can add another equivalent characterization which is basically also proved in the cited proposition: an operator $T$ is pseudolocal if and only if $f T g$ is a compact operator for all bounded Borel functions $f$ and $g$ on $M$ with disjoint supports.

We have analogous equivalent characterizations for uniformly pseudolocal operators, which we will state in the following lemma. The proof of it is analogous to the compact case (and uses the fact that the subset of all uniformly pseudolocal operators is closed in operator norm, which is proved in \cite[Lemma 4.2]{spakula_uniform_k_homology}). Furthermore, in order to prove that the Points 4 and 5 in the statement of the lemma are equivalent to the other points we need the bounded geometry of $M$ since we have to smooth functions and simultaneously control their derivatives. For the convenience of the reader we will give a full proof of the lemma.

Let us introduce the notions $B_b(M)$ for all bounded Borel functions on the manifold $M$ and $B_R(M)$ for its subset consisting of all Borel function $h$ with $\diam(\supp h) \le R$ and $\|h\|_\infty \le 1$.

\begin{lem}\label{lem:kasparov_lemma_uniform_approx_manifold}
The following are equivalent for an operator $T \in \IB(L^2(E))$:
\begin{enumerate}
\item $T$ is uniformly pseudolocal,
\item for all $R, L > 0$ the following collection is uniformly approximable:
\[\{f T g, g T f \ | \ f \in B_b(M), \ \! \|f\|_\infty \le 1, \ \! g \in \LLip_R(M), \ \! \supp f \cap \supp g = \emptyset\},\]
\item for all $R, L > 0$ the following collection is uniformly approximable:
\[\{f T g, g T f \ | \ f \in B_b(M), \ \! \|f\|_\infty \le 1, \ \! g \in B_R(M), \ \! d(\supp f, \supp g) \ge L\},\]
\item for every $L > 0$ there is a sequence $(L_j)_{j \in \IN}$ of positive numbers (not depending on the operator $T$) such that
\begin{align*}
\{ f T g, g T f \ | \ & f \in B_b(M)\text{ with }\|f\|_\infty \le 1,\\
& g \in B_R(M) \cap C_b^\infty(M)\text{ with }\|\nabla^j g\|_\infty \le L_j,\text{ and}\\
& \supp f \cap \supp g = \emptyset\}
\end{align*}
is uniformly approximable for all $R, L > 0$.
\item for every $L > 0$ there is a sequence $(L_j)_{j \in \IN}$ of positive numbers (not depending on the operator $T$) such that
\[\{ [T,g] \ | \ g \in B_R(M) \cap C_b^\infty(M)\text{ with }\|\nabla^j g\|_\infty \le L_j\}\]
is uniformly approximable for all $R, L > 0$.
\end{enumerate}
\end{lem}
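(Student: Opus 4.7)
The plan follows the classical Kasparov argument (cf.\ \cite[Proposition 5.4.7]{higson_roe}) but with every estimate tracked uniformly in the location of the supports, using bounded geometry to keep the combinatorial constants under control. I will prove the cycle $(1) \Rightarrow (2) \Rightarrow (3) \Rightarrow (1)$, after which $(1) \Leftrightarrow (5)$ and $(2) \Leftrightarrow (4)$ will follow by smooth approximation and thereby close the full equivalence.

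For $(1) \Rightarrow (2)$, the identity $fTg = f[T,g]$, valid whenever $fg = 0$, transfers the uniform approximability of commutators to products without enlarging the rank of any approximant, since $\|f\|_\infty \le 1$. For $(2) \Rightarrow (3)$, given $f, g$ with $d(\supp f, \supp g) \ge L$, I interpose an auxiliary Lipschitz cutoff $\varphi$ which equals $1$ on $\supp g$, has support disjoint from $\supp f$, and whose Lipschitz constant and support diameter depend only on $L$ and $R$; then $fTg = (fT\varphi)\,g$, and right-multiplication by the bounded Borel function $g$ preserves both the rank and the error of any approximant of $fT\varphi$ produced by (2).

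The main obstacle is $(3) \Rightarrow (1)$. Fix $f \in \LLip_R(M)$ and auxiliary parameters $\delta, \Lambda > 0$. Using bounded geometry of $M$, partition $\supp f$ into at most $N_{R,\delta}$ Borel pieces $A_1, \dots, A_N$ of diameter $< \delta$, the bound $N_{R,\delta}$ depending only on $R$, $\delta$, and the geometry of $M$. Picking base points $x_i \in A_i$, the step-function approximation $\tilde f := \sum_i f(x_i) \chi_{A_i}$ satisfies $\|f - \tilde f\|_\infty \le L\delta$, hence $\|[T,f] - [T,\tilde f]\| \le 2L\delta \|T\|$. A direct computation expresses $[T,\tilde f]$ as a block-matrix sum over pairs $(i,j)$ of operators $(f(x_j) - f(x_i))\, \chi_{A_i} T \chi_{A_j}$, which splits into far pairs ($d(A_i, A_j) \ge \Lambda$), uniformly approximable by (3) applied with separation $\Lambda$ and diameter $\le \delta$, and near pairs ($d(A_i, A_j) < \Lambda$), whose combined operator norm is controlled by Schur's test from the Lipschitz bound $|f(x_i) - f(x_j)| \le L(\delta + \Lambda)$ together with the bounded-geometry bound on the number of near neighbours of each $A_i$. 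Sending $\delta, \Lambda \to 0$ at a controlled rate yields the desired uniform approximability of $[T,f]$. The uniform bound on $N_{R,\delta}$ and on the neighbour counts, both consequences of bounded geometry, are essential: only a uniformly bounded number of uniformly approximable summands can combine into a single uniformly approximable operator.

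Finally, $(1) \Rightarrow (5)$ and $(2) \Rightarrow (4)$ are immediate, since any smooth function with $\|\nabla g\|_\infty \le L_1$ is in particular $L_1$-Lipschitz. The converses follow by mollification: using normal coordinate charts of fixed radius provided by bounded geometry together with a standard Euclidean mollifier, every $f \in \LLip_R(M)$ admits sup-norm approximants $f_\varepsilon \in C_b^\infty(M)$ with $\|\nabla^j f_\varepsilon\|_\infty$ bounded by constants $L_j^\varepsilon$ depending only on $L, R, \varepsilon$, and not on the location of $\supp f$ in $M$. Passing to the limit $\varepsilon \to 0$ is justified by the closedness of the uniformly pseudolocal operators in operator norm (\cite[Lemma 4.2]{spakula_uniform_k_homology}), which also legitimises the $\delta, \Lambda \to 0$ limit in the previous paragraph.
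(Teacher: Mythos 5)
Your cycle $1 \Rightarrow 2 \Rightarrow 3 \Rightarrow 1$ with the tags $1 \Leftrightarrow 5$, $2 \Leftrightarrow 4$ is the same overall architecture as the paper, and $1 \Rightarrow 2$, $2 \Rightarrow 3$, $1 \Rightarrow 5$, $2 \Rightarrow 4$ are all handled essentially as in the paper. But in the central implication $3 \Rightarrow 1$ you actually depart from the Kasparov argument you cite: Higson--Roe (and the paper) partition the \emph{range} of $f$ into $O(1/\varepsilon)$ intervals $U_1 < \dots < U_n$ and set $\chi_i := \chi_{f^{-1}(U_i)}$; you instead partition the \emph{domain} $\supp f$ into $N_{R,\delta}$ small Borel pieces $A_i$. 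These are genuinely different decompositions, and the difference matters. The range decomposition has a one-dimensional index set, so the near pairs are exactly $|i-j|=1$, and the sum $\sum_j \bigl(f(x_{j+1})-f(x_j)\bigr)\chi_j T \chi_{j+1}$ is a shifted block-diagonal operator whose norm is the \emph{maximum} of the block norms, giving $O(\varepsilon\|T\|)$ for free. Your domain decomposition has a genuinely $m$-dimensional index set, so the near pairs do not have that block structure and you are forced into Schur's test, which is where the gap appears.

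The gap is the complement piece. Writing $\tilde f = \sum_{j\ge 1}f(x_j)\chi_{A_j}$ and inserting the identity, the correct expansion is $[T,\tilde f] = \sum_{i,j\ge 0}\bigl(f(x_j)-f(x_i)\bigr)\chi_{A_i}T\chi_{A_j}$ where $A_0 := M\setminus\supp f$ and $f(x_0):=0$; your formula with $i,j\ge 1$ only misses the cross terms $\chi_{A_0}T\tilde f$ and $\tilde f T\chi_{A_0}$. Now $A_0$ has arbitrarily many $\Lambda$-near neighbours among the $A_j$ as $\delta\to 0$ (everything within $\Lambda$ of $\partial(\supp f)$), so the ``bounded-geometry bound on the number of near neighbours'' fails precisely for $i=0$, and the $i=0$ row sum in Schur's test blows up like $\delta^{-(m-2)}$; for $m\ge 3$ the bound $\sqrt{AB}$ does not tend to zero. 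The fix is available but is not a Schur estimate: one factors $\sum_{j\ \mathrm{near}\ 0} f(x_j)\chi_{A_0}T\chi_{A_j} = \chi_{A_0}\,T\,\bigl(\sum_{j\ \mathrm{near}\ 0}f(x_j)\chi_{A_j}\bigr)$ and uses $\|\sum_{j\ \mathrm{near}\ 0}f(x_j)\chi_{A_j}\|_\infty \le L(\Lambda+2\delta)$ directly. Without this separate treatment the argument does not close.

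One further small point: the converse $4\Rightarrow 2$ cannot be done by mollifying $g$ directly, because (2) only assumes $\supp f\cap\supp g=\emptyset$ with no quantitative gap, so the mollified $g_\varepsilon$ may overlap $\supp f$. This is why the paper proves $4\Rightarrow 3$ instead (where the gap $L$ gives room to mollify a suitable cutoff supported in $B_{3L/4}(\supp g)$) and then closes the cycle through $3\Rightarrow 1\Rightarrow 2$. Your $5\Rightarrow 1$ by mollification is fine, since (5) and (1) impose no disjointness condition.
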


\begin{proof}
\bm{$1 \Rightarrow 2$}\textbf{:} Let $f \in B_b(M)$ with $\|f \|_\infty \le 1$ and $g \in \LLip_R(M)$ have disjoint supports, i.e., $\supp f \cap \supp g = \emptyset$. From the latter we conclude $f T g = f [T,g]$, from which the claim follows (because $T$ is uniformly pseudolocal and because the operator norm of multiplication with $f$ is $\le 1$). Of course such an argument also works with the roles of $f$ and $g$ changed.

\bm{$2 \Rightarrow 3$}\textbf{:} Let $f \in B_b(M)$ with $\|f \|_\infty \le 1$ and $g \in B_R(M)$ with $d(\supp f, \supp g) \ge L$. We define $g^\prime(x) := \max\big( 0, 1 - \sfrac{1}{L} \cdot d(x, \supp g) \big) \in \sfrac{1}{L}\text{-}\operatorname{Lip}_{R+2L}(M)$. Since $g^\prime g = g$, the claim follows from writing $f T g = f T g^\prime g$ and because multiplication with $g$ has operator norm $\le 1$, and we of course also may change the roles of $f$ and $g$.

\bm{$3 \Rightarrow 1$}\textbf{:} Let $f \in \LLip_R(M)$. For given $\varepsilon > 0$ we partition the range of $f$ into a sequence of non-overlapping half-open intervals $U_1, \ldots, U_n$, each having diameter less than $\varepsilon$, such that $\overline{U_i}$ intersects $\overline{U_j}$ if and only if $|i - j| \le 1$. Denoting by $\chi_i$ the characteristic function of $f^{-1}(U_i)$, we get that $\chi_i \in B_R(M)$ if $0 \notin U_i$, since the support of $f$ has diameter less than or equal to $R$, and furthermore $d(\supp \chi_i, \supp \chi_j) \ge \tfrac{\varepsilon}{L}$ if $|i-j| > 1$, since $f$ is $L$-Lipschitz.

By Point 3 we have that the collections $\{\chi_i T \chi_j, \chi_j T \chi_i\}$ are uniformly approximable for all $i,j$ with $|i-j| > 1$. Choosing points $x_1, \ldots, x_n$ from $f^{-1}(U_1), \ldots, f^{-1}(U_n)$ and defining $f^\prime := f(x_1) \chi_1 + \cdots + f(x_n) \chi_n$, we get $\|f - f^\prime\|_\infty < \varepsilon$. The operator $[T,f]$ is $2\varepsilon \|T\|$-away from $[T, f^\prime]$, and since $\chi_1 + \cdots + \chi_n = 1$ we have
\[Tf^\prime - f^\prime T = \sum_{i,j} \chi_j T f(x_i)\chi_i - f(x_j) \chi_j T \chi_i.\]
Since we already know that $\{\chi_i T \chi_j, \chi_j T \chi_i\}$ are uniformly approximable for all $i,j$ with $|i-j| > 1$, it remains to treat the sum (note that the summand for $i=j$ is zero)
\[\sum_{|i-j|=1} \chi_j T f(x_i)\chi_i - f(x_j) \chi_j T \chi_i = \sum_{|i-j|=1} \big( f(x_i) - f(x_j) \big) \chi_j T \chi_i.\]
We split the sum into two parts, one where $i = j+1$ and the other one where $i=j-1$. The first part takes the form
\[\sum_j \big( f(x_{j+1}) - f(x_j) \big) \chi_j T \chi_{j+1},\]
i.e., is a direct sum of operators from $\chi_{j+1} \cdot L^2(E)$ to $\chi_j \cdot L^2(E)$. Therefore its norm is the maximum of the norms of its summands. But the latter are $\le 2 \varepsilon \|T\|$ since $|f(x_{j+1}) - f(x_j)| \le 2\varepsilon$. We treat the second part of the sum in the above display the same way and conclude that the sum in the above display is in norm $\le 4\varepsilon T$. Putting it all together it follows that $T$ is the operator norm limit of uniformly pseudolocal operators, from which it follows that $T$ itself is uniformly pseudolocal (it is proved in \cite[Lemma 4.2]{spakula_uniform_k_homology} that the uniformly pseudolocal operators are closed in operator norm, as are also the uniformly locally compact ones).

\bm{$2 \Rightarrow 4$}\textbf{:} Clear. We have to set $L_1 := L$ and the other values $L_{j \ge 2}$ do not matter (i.e., may be set to something arbitrary).

\bm{$4 \Rightarrow 3$}\textbf{:} This is similar to the proof of $2 \Rightarrow 3$, but we have to smooth the function $g^\prime$ constructed there. Let us make this concrete, i.e., let $f \in B_b(M)$ with $\|f \|_\infty \le 1$ and $g \in B_R(M)$ with $d(\supp f, \supp g) \ge L$ be given. We define
\[g^\prime(x) := \max\big( 0, 1 - \sfrac{2}{L} \cdot d(x, B_{\sfrac{1}{4} L}(\supp g)) \big) \in \sfrac{2}{L}\text{-}\operatorname{Lip}_{R+2\sfrac{3}{4}L}(M).\]
Note that $g^\prime \equiv 1$ on $B_{\sfrac{1}{4} L}(\supp g)$ and $g^\prime \equiv 0$ outside $B_{\sfrac{3}{4} L}(\supp g)$. We cover $M$ by normal coordinate charts and choose a ``nice'' subordinate partition of unity $\varphi_i$ as in Lemma \ref{lem:nice_coverings_partitions_of_unity}. If $\psi$ is now a mollifier on $\IR^m$ supported in $B_{\sfrac{1}{8}L}(0)$, we apply it in every normal coordinate chart to $\varphi_i g^\prime$ and reassemble then all the mollified parts of $g^\prime$ again to a (now smooth) function $g^\prime{}^\prime$ on $M$. This function $g^\prime{}^\prime$ is now supported in $B_{\sfrac{7}{8} L}(\supp g)$, and is constantly $1$ on $B_{\sfrac{1}{8} L}(\supp g)$. So $f T g = f T g^\prime{}^\prime g$ from which we may conclude the uniform approximability of the collection $\{f T g\}$ for $f$ and $g$ satisfying $f \in B_b(M)$ with $\|f \|_\infty \le 1$ and $g \in B_R(M)$ with $d(\supp f, \supp g) \ge L$. Note that the constants $L_j$ appearing in $\|\nabla^j g^\prime{}^\prime\|_\infty \le L_j$ depend on $L$, $\varphi_i$ and $\psi$, but not on $f$, $g$ or $R$. The dependence on $\varphi_i$ and $\psi$ is ok, since we may just fix a particular choice of them (note that the choice of $\psi$ also depends on $L$), and the dependence on $L$ is explicitly stated in the claim.

Of course we may also change the roles of $f$ and $g$ in this argument.

\bm{$5 \Rightarrow 4$}\textbf{:} Clear. We just have to write $fTg = f[T,g]$ and analogously for $gTf$.

\bm{$1 \Rightarrow 5$}\textbf{:} Clear.
\end{proof}

With the above lemma at our disposal we may now prove the following proposition.

\begin{prop}\label{prop:PDO_order_0_l-uniformly-pseudolocal}
Let $P \in \Psi \mathrm{DO}_u^0(E)$. Then $P \in D_u^\ast(E)$.
\end{prop}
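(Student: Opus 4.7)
The plan is to reduce the statement to verifying condition 5 of Lemma \ref{lem:kasparov_lemma_uniform_approx_manifold} for the finite-propagation part of $P$. First I would use Corollary \ref{cor:P_quasiloc} to write $P = P_{-\infty} + A$, where $P_{-\infty} \in \C(E)$ and $A := \sum_i P_i$ has finite propagation $r$. By Proposition \ref{prop:IU(E)_dense_Cu*(E)} the smooth uniform Roe algebra $\C(E)$ is contained in $C_u^\ast(E) \subset D_u^\ast(E)$, so only $A \in D_u^\ast(E)$ remains to be proved. Since $A$ already has finite propagation, this reduces to showing that $A$ is uniformly pseudolocal.

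To verify condition 5 of Lemma \ref{lem:kasparov_lemma_uniform_approx_manifold}, fix $L > 0$ and consider $g \in B_R(M) \cap C_b^\infty(M)$ with $\|\nabla^j g\|_\infty \le L_j$, where the sequence $(L_j)$ will be chosen depending only on $L$ and on the fixed operator $A$. By Proposition \ref{prop:PsiDOs_filtered_algebra}, the commutator $[A,g]$ lies in $\Psi\mathrm{DO}_u^{-1}(E)$, and the symbol composition formula in Proposition \ref{prop:basics_symbol_maps} expresses its symbol as a sum in which every term involves at least one $x$-derivative of $g$; hence the Uniformity Condition constants of $[A,g]$ can be bounded purely in terms of the fixed constants of $A$ and the $L_j$'s. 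By Remark \ref{rem:bound_operator_norm_PDO} this gives a uniform bound on $\|[A,g]\colon L^2(E)\to H^1(E)\|$ as $g$ varies. Moreover $[A,g] = \chi\,[A,g]\,\chi$, where $\chi$ denotes the characteristic function of $B_r(\supp g)$: indeed, for $x \notin B_r(\supp g)$ we have $g(x) = 0$ and, since $A$ has propagation $r$, $A(gu)(x) = 0$, while inputs supported outside $B_r(\supp g)$ are annihilated for the same reason.

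Factoring $[A,g]$ as
\[
L^2(E) \xrightarrow{\chi\cdot} L^2(E|_{B_r(\supp g)}) \xrightarrow{[A,g]} H^1(E|_{B_r(\supp g)}) \hookrightarrow L^2(E|_{B_r(\supp g)}) \hookrightarrow L^2(E),
\]
the middle arrow is uniformly bounded by the previous paragraph, while the inclusion $H^1(E|_{B_r(\supp g)}) \hookrightarrow L^2(E|_{B_r(\supp g)})$ is compact by Rellich--Kondrachov. The bounded geometry of $M$ and $E$ ensures that the approximability of this compact inclusion by finite-rank operators can be controlled uniformly in terms of the diameter $R+2r$ of the domain alone, independently of the position of $\supp g$ in $M$ (this is the same mechanism used in the proof of Proposition \ref{prop:quasilocal_negative_order_uniformly_locally_compact}). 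Combining these two uniform bounds yields uniform approximability of the collection $\{[A,g]\}$, which is exactly condition 5 of Lemma \ref{lem:kasparov_lemma_uniform_approx_manifold}. Therefore $A$ is uniformly pseudolocal, hence $A \in D_u^\ast(E)$, and so $P \in D_u^\ast(E)$.

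The main obstacle is really the bookkeeping in the middle step: one needs to verify that the symbolic composition formula translates bounds on finitely many derivatives of $g$ into genuinely location-independent Uniformity Condition constants for $[A,g]$. This is exactly where our insistence on bounded geometry (so that normal coordinate charts and synchronous framings are globally comparable, cf.\ Lemmas \ref{lem:transition_functions_uniformly_bounded} and \ref{lem:equiv_characterizations_bounded_geom_bundles}) and the uniformity condition built into Definition \ref{defn:pseudodiff_operator} pay off; no comparable argument is available for the classes of Kordyukov, Shubin or Taylor without analogous hypotheses.
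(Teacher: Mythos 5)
Your argument is correct and essentially identical to the paper's proof: both decompose $P = P_{-\infty} + A$ with $P_{-\infty} \in \C(E) \subset C_u^\ast(E) \subset D_u^\ast(E)$ and $A$ of finite propagation, then show $A$ is uniformly pseudolocal by passing to the order $-1$ commutator $[A,g]$, bounding its $L^2 \to H^1$ norm uniformly in $g$ via Remark \ref{rem:bound_operator_norm_PDO} together with the composition-of-symbols estimate, localizing via the finite propagation, and invoking the uniform Rellich--Kondrachov compactness afforded by bounded geometry. The only stylistic difference is that the paper enters Lemma \ref{lem:kasparov_lemma_uniform_approx_manifold} through Point 4 and rewrites $fPg = f\chi_{B_{R'}(\supp g)}[P,g]$, whereas you enter directly through Point 5; the technical content is the same. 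One wording slip to fix: the sequence $(L_j)$ in Point 5 is handed to you by the lemma (it comes from the mollification step in the implication $4 \Rightarrow 3$) and, by the lemma's statement, \emph{must not} depend on the operator $A$---your phrase ``will be chosen depending \ldots on the fixed operator $A$'' misreads the quantifier; your actual estimates work for any prescribed $(L_j)$, so nothing in the proof actually breaks, but the sentence should say the $(L_j)$ are given, not chosen.
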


\begin{proof}
Writing $P = P_{-\infty} + \sum_i P_i$ with $P_{-\infty} \in \C(E) \subset C_u^\ast(E) \subset D_u^\ast(E)$ (the first inclusion is due to the above Corollary \ref{prop:PsiDOs_negative_order_uniformly_locally_compact}), we may without loss of generality assume that $P$ has finite propagation $R^\prime$. So we have to show that $P$ is uniformly pseudolocal.

To show this we will use the equivalent characterization in Point 4 of the above lemma: let $R, L > 0$ and the corresponding sequence $(L_j)_{j \in \IN}$ be given. We have to show that
\begin{align*}
\{ f P g, g P f \ | \ & f \in B_b(M)\text{ with }\|f\|_\infty \le 1,\\
& g \in B_R(M) \cap C_b^\infty(M)\text{ with }\|\nabla^j g\|_\infty \le L_j,\text{ and}\\
& \supp f \cap \supp g = \emptyset\}
\end{align*}
is uniformly approximable for all $R, L > 0$.

We have
\[f P g = f \chi_{B_{R^\prime}(\supp g)} P g = f \chi_{B_{R^\prime}(\supp g)} [P, g]\]
since the supports of $f$ and $g$ are disjoint.

With Proposition \ref{prop:PsiDOs_filtered_algebra} we conclude that multiplication with $g$ is a pseudodifferential operator of order $0$ (since $g \in C_b^\infty(M)$) and furthermore, that the commutator $[P, g]$ is a pseudodifferential operator of order $-1$. Therefore, by the above Corollary \ref{prop:PsiDOs_negative_order_uniformly_locally_compact}, we know that the set $\{f \chi_{B_{R^\prime}(\supp g)} [P, g] \ | \ f \in B_R(M)\}$ is uniformly approximable. So we conclude that our operators $f[P,g]$ have the needed uniformity in the functions $f$.

It remains to show that we also have the needed uniformity in the functions $g$. Writing $P = \sum_i P_i$\footnote{Recall that we assumed without loss of generality that there is no $P_{-\infty}$.}, we get $[P,g] = \sum_i [P_i,g]$. Now each $[P_i, g]$ is a pseudodifferential operator of order $-1$, their supports\footnote{Recall that an operator $P$ is \emph{supported in a subset $K$}, if $\supp Pu \subset K$ for all $u$ in the domain of $P$ and if $Pu = 0$ whenever we have $\supp u \cap K = \emptyset$.} depend only on the propagation of $P$ and on the value of $R$ (but not on $i$ nor on the concrete choice of $g$) and their operator norms as maps $L^2(E) \to H^1(E)$ are bounded from above by a constant that only depends on $P$, on $R$ and on the values of all the $L_j$ (but again, neither on $i$ nor on $g$). The last fact follows from a combination of Remark \ref{rem:bound_operator_norm_PDO} together with the estimates on the symbols of the $[P_i,g]$ that we get from the proof that they are pseudodifferential operators of order $-1$. So examining the proof of Proposition \ref{prop:quasilocal_negative_order_uniformly_locally_compact} more closely, we see that these properties suffice to conclude the needed uniformity of $f[P,g]$ in the functions $g$.

The operators $g P f$ may be treated analogously.
\end{proof}

\section{Elliptic operators}

Now we get to elliptic operators since these are the ones for which we do have index theorems. The reason for this is that ellipticity is the condition that we need so that an operator is invertible modulo smoothing operators, i.e., possesses an abstract index class in the $K$-theory of these operators.

\begin{defn}[Elliptic symbols]
Let $p \in \Symb^k(E,F)$. Recall that $p$ is a section of the bundle $\Hom(\pi^\ast E, \pi^\ast F)$ over $T^\ast M$. We will call $p$ \emph{elliptic}, if there is an $R > 0$ such that $p|_{| \xi | > R}$\footnote{This notation means the following: we restrict $p$ to the bundle $\Hom(\pi^\ast E, \pi^\ast F)$ over the space $\{(x,\xi) \in T^\ast M \ | \ |\xi| > R\} \subset T^\ast M$.} is invertible and this inverse $p^{-1}$ satisfies the Inequality \eqref{eq:symbol_uniformity} for $\alpha, \beta = 0$ and order $-k$ (and of course only for $|\xi| > R$ since only there the inverse is defined). Note that analogously as in the compact case it follows that $p^{-1}$ satisfies the Inequality \eqref{eq:symbol_uniformity} for all multi-indices $\alpha$, $\beta$.
\end{defn}

\begin{lem}\label{lem:ellipticity_independent_of_representative}
If $p \in \Symb^k(E,F)$ is elliptic, then every other representative $p^\prime$ of the class $[p] \in \Symb^{k-[1]}(E,F)$ is also elliptic.
\end{lem}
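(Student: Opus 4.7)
The plan is to exploit the fact that two representatives of the same class in $\Symb^{k-[1]}(E,F)$ differ by a symbol of strictly lower order, and then to use a Neumann series argument to transfer invertibility from $p$ to $p'$.

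Concretely, write $p' = p + q$ with $q \in \Symb^{k-1}(E,F)$. Since $p$ is elliptic, fix $R > 0$ such that $p(x,\xi)$ is invertible for $|\xi| > R$ with $\|p(x,\xi)^{-1}\| \le C_1(1+|\xi|)^{-k}$, and we also have $\|q(x,\xi)\| \le C_2(1+|\xi|)^{k-1}$ from membership in $\Symb^{k-1}(E,F)$. For $|\xi| > R$ I would factor $p' = p\,(\mathrm{id} + p^{-1}q)$ and note that
\[\|p(x,\xi)^{-1} q(x,\xi)\| \le C_1 C_2 (1+|\xi|)^{-1},\]
so there exists $R' \ge R$ such that $\|p^{-1}q\| \le 1/2$ whenever $|\xi| > R'$. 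On that region $\mathrm{id}+p^{-1}q$ is invertible via the Neumann series $\sum_{j\ge 0}(-p^{-1}q)^j$, with norm bounded by $2$, and hence $p'$ is invertible with
\[(p')^{-1} = (\mathrm{id}+p^{-1}q)^{-1}\,p^{-1}, \qquad \|(p')^{-1}(x,\xi)\| \le 2 C_1 (1+|\xi|)^{-k}.\]
This verifies the required bound \eqref{eq:symbol_uniformity} for $(p')^{-1}$ in the case $\alpha=\beta=0$ and order $-k$, which is the defining property of ellipticity, and completes the proof.

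The only point that could cause trouble is ensuring that the bound on $\|p^{-1}q\|$ is truly uniform across the manifold, so that a single $R'$ works for all base points $x \in M$. This however is immediate from the definition of $\Symb^{k-1}(E,F)$ and the ellipticity of $p$, both of which include uniform constants $C^{\alpha\beta}$ independent of the chart, as encoded in Inequality \eqref{eq:symbol_uniformity}. So there is no genuine obstacle: the whole argument is essentially the standard pointwise Neumann series estimate, but carried out with the uniform-in-$x$ constants guaranteed by the bounded geometry framework.
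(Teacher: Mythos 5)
Your proof is correct and follows the same idea as the paper's: $p' - p$ is of order $k-1$, hence for $|\xi|$ large it is a small perturbation of $p$, and the uniform constants in the definition of $\Symb^\ast$ make the enlarged radius $R'$ independent of the base point. The only stylistic difference is that the paper works directly with the lower bound $\|p_i(x,\xi)v\|\ge C(1+|\xi|)^k\|v\|$ implied by the estimate on $p^{-1}$ and perturbs it, whereas your Neumann-series factorization $p'=p(\id+p^{-1}q)$ reaches the same conclusion and has the minor advantage of delivering invertibility of $p'$ and the norm bound on $(p')^{-1}$ in one step, without tacitly invoking finite-dimensionality of the fibers to pass from injectivity to invertibility.
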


\begin{proof}
The difference $p - p^\prime$ is a symbol of order $k-1$, i.e., it locally satisfies an estimate of the form $\|(p_i - p^\prime_i)(x, \xi)\| \le C_0 (1+|\xi|)^{k-1}$ for a constant $C_0 > 0$. The Inequality \eqref{eq:symbol_uniformity} for $p^{-1}$ means that $p_i(x,\xi)$ is bounded from below by $C(1+|\xi|)^k$ for $|\xi| > R$. Since the $k$th power grows faster than the $(k-1)$st power of $1+|\xi|$ (or, in the case $k < 0$, decreases more slowly), we can get by lowering $C$ to $C^\prime$ and enlarging $R$ to $R^\prime$ that $p_i^\prime(x,\xi)$ is bounded from below by $C^\prime (1+|\xi|)^k$ for $|\xi| > R^\prime$. Since this lowering of $C$ and enlarging of $R$ can be made independently of $i$, the claim follows.
\end{proof}

Due to the above lemma we are now able to define what it means for a pseudodifferential operator to be elliptic:

\begin{defn}[Elliptic $\Psi \mathrm{DO}$s]\label{defn:elliptic_operator}
Let $P \in \Psi \mathrm{DO}_?^k(E,F)$. We will call $P$ \emph{elliptic}, if its principal symbol $\sigma(P)$ is elliptic.
\end{defn}

The importance of elliptic operators lies in the fact that they admit an inverse modulo operators of order $-\infty$. We may prove this analogously as in the case of a compact manifold. See also \cite[Theorem 3.3]{kordyukov} where Kordyukov proves the existence of parametrices for his class of pseudodifferential operators (which is our class restricted to operators of finite propagation).

\begin{thm}[Existence of parametrices]
Let $P \in \Psi \mathrm{DO}_?^k(E,F)$ be elliptic. Then there exists an operator $Q \in\Psi \mathrm{DO}_?^{-k}(F, E)$ such that
\[PQ = \id - S_1 \text{ and }QP = \id - S_2,\]
where $S_1 \in \Psi \mathrm{DO}_?^{-\infty}(F)$ and $S_2 \in \Psi \mathrm{DO}_?^{-\infty}(E)$.
\end{thm}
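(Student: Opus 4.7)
The plan is to adapt the classical parametrix construction on compact manifolds to the bounded geometry setting, with the main additional burden being to verify that the Uniformity Condition \eqref{eq:uniformity_defn_PDOs} is preserved at every step. Throughout, I will argue for both versions $\Psi \mathrm{DO}^\ast$ and $\Psi \mathrm{DO}_u^\ast$ simultaneously, noting only at the end where the subscript ``u'' changes anything.

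First I would construct a zeroth-order approximation. By ellipticity, there is an $R>0$ such that the representative $p$ of $\sigma^k(P)$ is invertible for $|\xi|>R$ with $p^{-1}$ satisfying the symbol estimates for order $-k$. Pick a smooth $\IR^m$-invariant cutoff $\chi(\xi)$ vanishing for $|\xi| \le R$ and equal to $1$ for $|\xi| \ge 2R$, and set $q_0(x,\xi) := \chi(\xi) p^{-1}(x,\xi) \in \Symb^{-k}(F,E)$; the uniformity of $q_0$ follows from the uniformity of $p^{-1}$ (Lemma \ref{lem:ellipticity_independent_of_representative}). Using the surjectivity half of Proposition \ref{prop:basics_symbol_maps}, choose $Q_0 \in \Psi \mathrm{DO}_?^{-k}(F,E)$ with $\sigma^{-k}(Q_0)=[q_0]$. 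By the symbol calculus, $\sigma^0(P Q_0 - \id) = [p q_0 - 1] = 0$, so $R_1 := \id - P Q_0 \in \Psi \mathrm{DO}_?^{-1}(F)$.

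Next I would formally invert $\id - R_1$ by a Neumann-type series. The naive candidate $Q_0(\id + R_1 + R_1^2 + \cdots)$ does not converge as an honest sum of operators, so I would pass to symbols and use a standard asymptotic summation argument \emph{\`a la} Borel. Let $q_0 \# q_1 \# \cdots$ be the iterated compositions encoding $Q_0 R_1^j$; each $q_0 \# \cdots \# q_j$ lives in $\Symb^{-k-j}$, and Proposition \ref{prop:PsiDOs_filtered_algebra} together with the composition formula $\sum_\alpha \tfrac{i^{|\alpha|}}{\alpha!}(D_\xi^\alpha p_i)(D_x^\alpha q_j)$ cited in its proof shows that all constants in the uniformity estimates depend only on finitely many of the original constants $C^{\alpha\beta}$ for $P$ and on $C$ and $R$ from ellipticity. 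Now choose cutoffs $\chi_j(\xi)$ supported in $|\xi|\ge R_j$ with $R_j\to\infty$ fast enough that the series
\[
q(x,\xi) \;:=\; \sum_{j \ge 0} \chi_j(\xi)\, (q_0 \# R_1^{\# j})(x,\xi)
\]
converges in $\Symb^{-k}(F,E)$ and that, for every $N$, the tail $\sum_{j\ge N}$ lies in $\Symb^{-k-N}$. Since each summand satisfies uniform bounds independent of the chart index $i$, a standard diagonal choice of the $R_j$ preserves uniformity. Taking any $Q \in \Psi \mathrm{DO}_?^{-k}(F,E)$ with $\sigma^{-k}(Q)=[q]$ yields $PQ = \id - S_1$ with $S_1$ of order $\le -N$ for every $N$, i.e.\ $S_1 \in \Psi \mathrm{DO}_?^{-\infty}(F) = \IU(F)$ (respectively $\C(F)$) by Lemma \ref{lem:PDO_-infinity_equal_quasilocal_smoothing}.

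Running the same argument on the left (using $q_0 p - 1$) produces $Q^\prime \in \Psi\mathrm{DO}_?^{-k}(F,E)$ with $Q^\prime P = \id - S_2^\prime$, $S_2^\prime$ of order $-\infty$. The usual identity $Q^\prime - Q = Q^\prime(PQ) - (Q^\prime P)Q = Q^\prime S_1 - S_2^\prime Q$ shows $Q - Q^\prime \in \Psi\mathrm{DO}_?^{-\infty}$, so $Q$ is also a left parametrix with remainder $S_2 \in \Psi \mathrm{DO}_?^{-\infty}(E)$. The main obstacle is the asymptotic summation step: one must arrange the cutoff radii $R_j$ to grow quickly enough to secure convergence in every seminorm defining $\Symb^{-k}$, while simultaneously preserving the global uniformity in the chart index that distinguishes our classes from their local analogues. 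For the $\Psi\mathrm{DO}_u$ version one needs the additional observation that since each partial sum is a pseudodifferential operator of order $-N$ with \emph{finite propagation smoothing term} modulo $\C$, the limit smoothing term lies in $\C$ rather than merely $\IU$; this follows because the composition and asymptotic summation operations introduced above never enlarge finite propagation into a genuinely quasilocal tail, only into operators already controlled by $\C$.
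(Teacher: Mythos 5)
Your proposal is correct and is essentially the argument the paper intends: the theorem is stated in the paper without proof, with only the remark that it may be proved ``analogously as in the case of a compact manifold'' and a pointer to Kordyukov, and the standard cutoff-plus-Borel-summation construction you carry out is exactly that adaptation, with the main extra work being the verification (which you do) that the Uniformity Condition \eqref{eq:uniformity_defn_PDOs} survives the asymptotic summation because the cutoff radii $R_j$ can be chosen globally rather than chart-by-chart. Your closing observation for the $\Psi\mathrm{DO}_u$ case is also right: the parametrix $Q$ is built from a symbol via the fixed covering and hence has finite propagation local pieces and no genuinely quasilocal tail, so $S_1 = \id - PQ \in \bigcap_N \Psi\mathrm{DO}_u^{-N} = \Psi\mathrm{DO}_u^{-\infty} = \C(F)$ follows directly from the closure of $\Psi\mathrm{DO}_u^\ast$ under composition (Proposition \ref{prop:PsiDOs_filtered_algebra}) and Lemma \ref{lem:PDO_-infinity_equal_quasilocal_smoothing}.
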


Using parametrices, we can prove a lot of the important properties of elliptic operators, e.g., \emph{elliptic regularity} (which is a converse to Proposition \ref{prop:Pu_smooth_if_u_smooth} and a proof of it may be found in, e.g. \cite[Theorem III.§4.5]{lawson_michelsohn}):

\begin{thm}[Elliptic regularity]\label{thm:elliptic_regularity}
Let $P \in \Psi \mathrm{DO}_?^k(E,F)$ be elliptic and let furthermore $u \in H^s(E)$ for some $s \in \IZ$.

Then, if $Pu$ is smooth on an open subset $U \subset M$, $u$ is already smooth on $U$. Furthermore, for $k > 0$: if $Pu = \lambda u$ on $U$ for some $\lambda \in \IC$, then $u$ is smooth on $U$.
\end{thm}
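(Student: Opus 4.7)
The key tool is the parametrix produced by the previous theorem: choose $Q \in \Psi\mathrm{DO}_?^{-k}(F,E)$ with
\[
QP = \id - S_2, \qquad S_2 \in \Psi\mathrm{DO}_?^{-\infty}(E) = \IU(E).
\]
Since $\IU(E) \subset \IB(H_\iota^{-\infty}(E), H^\infty(E))$, applying $S_2$ to any distributional section lands in $H^\infty(E)$ and hence, by the Sobolev embedding Theorem \ref{thm:sobolev_embedding}, in $C_b^\infty(E)$. So the term $S_2 u$ is globally smooth, and the whole theorem boils down to analysing $Q(Pu)$.

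For the first statement I would argue as follows. Since $u \in H^s(E)$ and $P$ has order $k$, we have $Pu \in H^{s-k}(F)$, so $Q(Pu) \in H^s(E)$ and the identity $u = Q(Pu) + S_2 u$ makes sense in $H^s(E)$. By hypothesis $Pu$ is smooth on $U$. Pseudolocality of pseudodifferential operators, which is Proposition \ref{prop:Pu_smooth_if_u_smooth}, then gives that $Q(Pu)$ is smooth on $U$, and adding the globally smooth $S_2 u$ we conclude that $u$ is smooth on $U$.

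For the second statement, with $k>0$ and $Pu = \lambda u$ on $U$, I would run a bootstrap on Sobolev regularity. Fix $x \in U$ and pick cutoffs $\varphi,\psi \in C_c^\infty(U) \subset C_b^\infty(M)$ with $\psi \equiv 1$ on a neighbourhood of $\supp\varphi$ and $\varphi \equiv 1$ on a neighbourhood of $x$. Writing
\[
\varphi u = \varphi Q(\psi Pu) + \varphi Q((1-\psi)Pu) + \varphi S_2 u,
\]
the last term is globally smooth. In the middle term, $\varphi$ and $(1-\psi)$ have disjoint supports, so pseudolocality of $Q$ (Proposition \ref{prop:Pu_smooth_if_u_smooth}, applied to the smooth function $(1-\psi)Pu|_{\text{near }\supp\varphi} \equiv 0$, or more directly the fact that $\varphi Q(1-\psi)$ has a smooth kernel and is therefore smoothing) gives that this term is smooth as well. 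Finally, on the support of $\psi$ we have $Pu = \lambda u$, so
\[
\varphi Q(\psi Pu) = \lambda\, \varphi\, Q(\psi u),
\]
and since $\psi u \in H^s(E)$ while $Q$ has order $-k$, this term lies in $H^{s+k}(E)$ (multiplication by $\varphi \in C_b^\infty(M)$ preserves Sobolev regularity). So $\varphi u \in H^{s+k}(E)$. Iterating the argument with a nested sequence of cutoffs compactly supported inside each other in $U$, one shows $\varphi' u \in H^{s+nk}(E)$ for all $n \in \IN$ and any $\varphi' \in C_c^\infty$ equal to $1$ near $x$. Since $k>0$, this places $\varphi' u$ in $H^\infty(E) \subset C_b^\infty(E)$, so $u$ is smooth near $x$. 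As $x \in U$ was arbitrary, $u$ is smooth on $U$.

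The only genuinely non-routine point is the bookkeeping in the bootstrap: one needs a nested tower $\varphi = \varphi_0 \prec \psi_0 \prec \varphi_1 \prec \psi_1 \prec \ldots$ of cutoffs in $U$ so that at step $n$ the improved regularity $\psi_n u \in H^{s+nk}$ can actually be fed into step $n+1$. On manifolds of bounded geometry this is no obstacle because $C_c^\infty(U) \subset C_b^\infty(M)$ and multiplication by such cutoffs is continuous on every $H^t(E)$; the rest of the argument is then formally identical to the compact case (e.g.\ \cite[Theorem III.\S 4.5]{lawson_michelsohn}).
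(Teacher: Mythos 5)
Your proof is correct and uses exactly the parametrix approach the paper has in mind (the paper only cites \cite[Theorem III.\S 4.5]{lawson_michelsohn} and gives no independent proof, but that reference is precisely the parametrix argument). The decomposition $u = Q(Pu) + S_2 u$ together with pseudolocality (Proposition \ref{prop:Pu_smooth_if_u_smooth}) applied to $Q$ and $Pu$ handles the first claim cleanly.

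For the second claim your bootstrap is valid, but there is a slicker reduction that avoids the cutoff tower entirely: set $P' := P - \lambda\,\id$. Since multiplication by $\lambda$ is a pseudodifferential operator of order $0$ and $k > 0$, the principal symbol of $P'$ in $\Symb^{k-[1]}(E)$ agrees with that of $P$ (Lemma \ref{lem:ellipticity_independent_of_representative} or just the exact sequence in Proposition \ref{prop:basics_symbol_maps}), so $P'$ is elliptic of order $k$. On $U$ one has $P'u = 0$, which is certainly smooth, and the first statement applied to $P'$ yields $u$ smooth on $U$. This makes visible why the hypothesis $k>0$ appears: for $k=0$ the term $\lambda$ could destroy ellipticity. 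Your bootstrap arrives at the same conclusion, at the cost of the cutoff bookkeeping; note also that the tower you sketch, $\varphi_0 \prec \psi_0 \prec \varphi_1 \prec \ldots$, should be nested \emph{inward} (each $\psi_{n+1}$ supported where $\varphi_n \equiv 1$, with all cutoffs $\equiv 1$ on a fixed small neighbourhood of $x$), which is the opposite orientation of what the chain of $\prec$'s suggests as written, though you flag the bookkeeping explicitly so this is cosmetic.
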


Later we will also need the following \emph{fundamental elliptic estimate} (the proof from \cite[Theorem III.§5.2(iii)]{lawson_michelsohn} generalizes directly):

\begin{thm}[Fundamental elliptic estimate]\label{thm:elliptic_estimate}
Let $P \in \Psi \mathrm{DO}_?^k(E,F)$ be elliptic. Then for each $s \in \IZ$ there is a constant $C_s > 0$ such that
\[\|u\|_{H^s(E)} \le C_s\big(\|u\|_{H^{s-k}(E)} + \|Pu\|_{H^{s-k}(F)}\big)\]
for all $u \in H^s(E)$.
\end{thm}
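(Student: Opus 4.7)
The plan is to imitate the standard compact-case proof by plugging in a parametrix, the existence of which was established earlier in this chapter. Since $P \in \Psi\mathrm{DO}_?^k(E,F)$ is elliptic, the parametrix theorem furnishes an operator $Q \in \Psi\mathrm{DO}_?^{-k}(F,E)$ together with $S \in \Psi\mathrm{DO}_?^{-\infty}(E)$ satisfying $QP = \id - S$. The whole estimate is read off from the identity
\[
u \;=\; QPu + Su.
\]

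First I would invoke Proposition \ref{prop:pseudodiff_extension_sobolev} to see that $Q$ extends to a bounded operator $H^{s-k}(F) \to H^s(E)$, with some norm bound $\|Q\|_{s-k,s} =: A_s < \infty$ depending only on $Q$ and $s$. Applied to $Pu \in H^{s-k}(F)$ (which indeed lies in this Sobolev space, since $P\colon H^s(E)\to H^{s-k}(F)$ is bounded by the same proposition) this yields
\[
\|QPu\|_{H^s(E)} \;\le\; A_s\,\|Pu\|_{H^{s-k}(F)}.
\]
Next, $S$ is of order $-\infty$, hence by Lemma \ref{lem:PDO_-infinity_equal_quasilocal_smoothing} lies in $\IU(E)$ (or even $\C(E)$), so it is in particular bounded as an operator $H^{s-k}(E) \to H^s(E)$; denote its norm by $B_s$. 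Thus
\[
\|Su\|_{H^s(E)} \;\le\; B_s\,\|u\|_{H^{s-k}(E)}.
\]

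Combining these two estimates through the triangle inequality applied to $u = QPu + Su$ gives
\[
\|u\|_{H^s(E)} \;\le\; A_s\,\|Pu\|_{H^{s-k}(F)} + B_s\,\|u\|_{H^{s-k}(E)} \;\le\; C_s\bigl(\|u\|_{H^{s-k}(E)} + \|Pu\|_{H^{s-k}(F)}\bigr)
\]
with $C_s := \max(A_s, B_s)$, which is the desired inequality.

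No step is really an obstacle here, because all the heavy lifting has already been done: the existence of a parametrix and the continuity of pseudodifferential operators between appropriate Sobolev spaces. The only thing worth double-checking is that these two facts hold uniformly in the bounded geometry setting, i.e., that the constants $A_s$ and $B_s$ are indeed finite and depend only on $s$ and the operator (not on any local choices). This is guaranteed by Proposition \ref{prop:pseudodiff_extension_sobolev} together with Remark \ref{rem:bound_operator_norm_PDO}, which explicitly tracks how the operator norm on Sobolev spaces can be bounded in terms of the uniformity constants $C^{\alpha\beta}$ appearing in the definition of $\Psi\mathrm{DO}_?^\ast$. So the proof is genuinely identical to the compact case, as the excerpt asserts.
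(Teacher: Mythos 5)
Your proof is correct and matches the approach the paper intends: the paper simply notes that the proof from Lawson--Michelsohn Theorem III.\S5.2(iii) generalizes directly, and that proof is precisely the parametrix argument you give, writing $u = QPu + Su$ from $QP = \id - S$ and using the Sobolev-space boundedness of $Q$ and $S$. Your closing remark about uniformity of the constants in the bounded-geometry setting (via Proposition \ref{prop:pseudodiff_extension_sobolev} and Remark \ref{rem:bound_operator_norm_PDO}) is exactly the point that makes the ``generalizes directly'' claim legitimate.
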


Another important implication of ellipticity is that symmetric\footnote{This means that we have $\langle Pu, v\rangle_{L^2(E)} = \langle u, Pv\rangle_{L^2(E)}$ for all $u,v \in C_c^\infty(E)$.}, elliptic pseudodifferential operators of positive order are essentially self-adjoint\footnote{Recall that a symmetric, unbounded operator is called \emph{essentially self-adjoint}, if its closure is a self-adjoint operator.}. We need this since we will have to consider functions of pseudodifferential operators, cf. the next section. But first we will show that a symmetric and elliptic operator is also symmetric as an operator on Sobolev spaces.

\begin{lem}\label{lem:symmetric_on_Sobolev}
Let $P \in \Psi \mathrm{DO}_?^k(E)$ with $k \ge 1$ be symmetric on $L^2(E)$ and elliptic. Then $P$ is also symmetric on the Sobolev spaces $H^{lk}(E)$ for $l \in \IZ$, where we use on $H^{lk}(E)$ the scalar product as described in the proof.
\end{lem}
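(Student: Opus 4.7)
The natural scalar product on $H^{lk}(E)$ for $l \ge 0$ that makes $P$ symmetric is the one built from iterated powers of $P$, namely
\[
\langle u, v\rangle_{H^{lk}} := \sum_{j=0}^{l}\langle P^j u, P^j v\rangle_{L^2(E)}.
\]
For $l < 0$ I would then define the scalar product by duality, equipping $H^{lk}(E) = (H^{-lk}(E))^\ast$ with the dual Hilbert space structure. Note that since $P \in \Psi\mathrm{DO}_?^k(E)$, the iterated power $P^j$ is a pseudodifferential operator of order $jk$ by Proposition \ref{prop:PsiDOs_filtered_algebra}, so $P^j \colon H^{lk}(E) \to H^{(l-j)k}(E) \hookrightarrow L^2(E)$ is continuous for $0 \le j \le l$, and every summand above makes sense.

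The first task is to verify that the above formula defines a norm equivalent to the standard Sobolev norm on $H^{lk}(E)$. The inequality $\|u\|_{H^{lk}}^2 \ge c \sum_j \|P^j u\|_{L^2}^2$ is immediate from the continuity of each $P^j$ just mentioned. The reverse direction is obtained by iterating the fundamental elliptic estimate from Theorem \ref{thm:elliptic_estimate}: applying it with $s = lk$ gives $\|u\|_{H^{lk}} \le C_l(\|u\|_{H^{(l-1)k}} + \|Pu\|_{H^{(l-1)k}})$, and one then bounds $\|u\|_{H^{(l-1)k}}$ and $\|Pu\|_{H^{(l-1)k}}$ the same way, picking up additional factors of $P$ applied to $u$ at each step, until the right-hand side is a sum of $L^2$-norms of $P^j u$ for $0 \le j \le l$.

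The symmetry of $P$ is then a one-line calculation. Considered as an unbounded operator on $H^{lk}(E)$ with domain $H^{(l+1)k}(E)$, for $u, v \in H^{(l+1)k}(E)$ we compute
\[
\langle Pu, v\rangle_{H^{lk}} - \langle u, Pv\rangle_{H^{lk}} = \sum_{j=0}^{l}\bigl(\langle P^{j+1}u, P^j v\rangle_{L^2} - \langle P^j u, P^{j+1}v\rangle_{L^2}\bigr).
\]
For each $0 \le j \le l$ we have $P^j u, P^j v \in H^{(l+1-j)k}(E) \subset H^k(E)$, so symmetry of $P$ on $L^2(E)$, extended by continuity from $C_c^\infty(E)$ to $H^k(E)$ using that $P \colon H^k(E) \to L^2(E)$ is bounded and $C_c^\infty(E) \subset H^k(E)$ is dense, forces every summand to vanish. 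This settles the case $l \ge 0$.

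For $l < 0$ one transports the symmetry across the duality pairing: the inner product on $H^{lk}(E)$ is dual to that on $H^{-lk}(E)$, and a routine diagram chase using that $P$ is its own formal adjoint (Lemma \ref{lem:adjoint_pseudodiff_operator} combined with the hypothesis that $P$ is symmetric) translates the symmetry of $P$ on $H^{-lk}(E)$ into symmetry of $P$ on $H^{lk}(E)$. The only real subtlety I anticipate is the norm-equivalence step for $l \ge 0$: one must iterate the fundamental elliptic estimate carefully to avoid introducing error terms of the wrong Sobolev degree, but this is standard and not conceptually hard. The duality bookkeeping for $l < 0$ is then the slightly more fiddly part, though again no new analytic input is required beyond what we have already established.
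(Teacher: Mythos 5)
Your proof is correct and takes essentially the same approach as the paper: both use iterated applications of the fundamental elliptic estimate to equip $H^{lk}(E)$ with an equivalent Hilbert-space norm built from $P$, reduce symmetry there to the $L^2$-symmetry of $P$ (extended by continuity to $H^k(E)$), and handle $l < 0$ by duality. The only difference is cosmetic: the paper defines the scalar product recursively (so that on $H^{lk}(E)$ one obtains $\sum_{j=0}^l \binom{l}{j}\langle P^j u, P^j v\rangle_{L^2}$), whereas you write down the unweighted sum directly—equivalent norms, same argument.
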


\begin{proof}
Due to the fundamental elliptic estimate the norm $\|u\|_{H^0} + \|Pu\|_{H^0}$ (note that $H^0(E) = L^2(E)$ by definition) on $H^k(E)$ is equivalent to the usual\footnote{We have of course possible choices here, e.g., the global norm \eqref{eq:sobolev_norm} or the local definition \eqref{eq:sobolev_norm_local}, but they are all equivalent to each other since $M$ and $E$ have bounded geometry.} norm $\|u\|_{H^k}$ on it. Now $\|u\|_{H^0} + \|Pu\|_{H^0}$ is equivalent to $\big( \|u\|^2_{H^0} + \|Pu\|^2_{H^0} \big)^{1/2}$ which is induced by the scalar product
\[\langle u, v \rangle_{H^k, P} := \langle u, v \rangle_{H^0} + \langle Pu, Pv \rangle_{H^0}.\]
Since $P$ is symmetric for the $H^0$-scalar product, we immediately see that it is also symmetric for this particular scalar product $\langle \largecdot, \largecdot \rangle_{H^k,P}$ on $H^k(E)$.

To extend to the Sobolev spaces $H^{lk}(E)$ for $l > 0$ we repeatedly invoke the above arguments, e.g., on $H^{2k}(E)$ we have the equivalent norm $\big( \|u\|^2_{H^k, P} + \|Pu\|^2_{H^k, P} \big)^{1/2}$ (again due to the fundamental elliptic estimate) which is induced by the scalar product $\langle u, v \rangle_{H^k,P} + \langle Pu, Pv \rangle_{H^k,P}$ and now we may use that we already know that $P$ is symmetric with respect to $\langle \largecdot, \largecdot \rangle_{H^k,P}$.

Finally, for $H^{lk}(E)$ for $l < 0$ we use the fact that they are the dual spaces to $H^{-lk}(E)$ where we know that $P$ is symmetric, i.e., we equip $H^{lk}(E)$ for $l < 0$ with the scalar product induced from the duality: $\langle u, v \rangle_{H^{lk},P} := \langle u^\prime, v^\prime\rangle_{H^{-lk},P}$, where $u^\prime, v^\prime \in H^{-lk}(E)$ are the dual vectors to $u,v \in H^{lk}(E)$ (note that the induced norm on $H^{lk}(E)$ is exactly the operator norm if we regard $H^{lk}(E)$ as the dual space of $H^{-lk}(E)$).
\end{proof}

Now we get to the proof that symmetric and elliptic operators are essentially self-adjoint. Note that if we work with differential operators $D$ of first order on open manifolds we do not need ellipticity for this result to hold, but weaker conditions suffice, e.g., that the symbol $\sigma_D$ of $D$ satisfies $\sup_{x \in M, \|\xi\| = 1} \|\sigma_D(x, \xi)\| < \infty$ (by the way, this condition is incorporated in our definition of pseudodifferential operators by the uniformity condition). But if we want essential self-adjointness of higher order operators, we have to assume stronger conditions (see the counterexample \cite{MO_elliptic_essentially_self_adjoint}).

\begin{prop}[Essential self-adjointness]\label{prop:elliptic_PDO_essentially_self-adjoint}
Let $P \in \Psi \mathrm{DO}_?^k(E)$ with $k \ge 1$ be symmetric and elliptic. Then the unbounded operator $P\colon H^{lk}(E) \to H^{lk}(E)$ is essentially self-adjoint for all $l \in \IZ$, where we equip these Sobolev spaces with the scalar products as described in the proof of the above Lemma \ref{lem:symmetric_on_Sobolev}.
\end{prop}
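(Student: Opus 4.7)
The plan is to establish essential self-adjointness on $L^2(E)$ first (the case $l = 0$) and then transfer the result to all $H^{lk}(E)$ via functional calculus on the resulting self-adjoint closure $\bar P$.

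For $l = 0$, I would apply the basic von Neumann criterion: essential self-adjointness of the symmetric operator $P|_{C_c^\infty(E)}$ is equivalent to $\kernel(\bar P^* \mp i) = 0$, where $\bar P$ denotes its closure. Suppose $u \in L^2(E)$ satisfies $\bar P^* u = iu$; by symmetry of $P$ on $C_c^\infty(E)$, this is equivalent to the distributional identity $Pu = iu$ on all of $M$. Elliptic regularity (Theorem~\ref{thm:elliptic_regularity}) gives $u \in C^\infty(E)$, and a parametrix argument together with iteration of the fundamental elliptic estimate (Theorem~\ref{thm:elliptic_estimate}) upgrades this to $u \in H^s(E)$ for every $s \in \IZ$; in particular $u \in H^{k-1}(E)$.

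Next, I would exploit the bounded geometry of $M$ to construct cutoffs $\chi_R \in C_c^\infty(M)$ with $\chi_R \equiv 1$ on $B_R(x_0)$, $\supp \chi_R \subset B_{2R}(x_0)$, and $\|\nabla^j \chi_R\|_\infty \le C_j / R^j$ for every $j \ge 1$ (such $\chi_R$ are obtained by smoothing $d(\largecdot, x_0)/R$ using the partition of unity from Lemma~\ref{lem:nice_coverings_partitions_of_unity}). Since $\chi_R u \in C_c^\infty(E)$ and $P$ is symmetric on test sections, $\langle P(\chi_R u), \chi_R u\rangle_{L^2}$ is real; expanding $P(\chi_R u) = \chi_R \cdot Pu + [P, \chi_R] u = i \chi_R u + [P, \chi_R] u$ one obtains
\begin{equation*}
0 \;=\; \mathrm{Im}\langle P(\chi_R u), \chi_R u\rangle_{L^2} \;=\; \|\chi_R u\|_{L^2}^2 \,+\, \mathrm{Im}\langle [P, \chi_R] u, \chi_R u\rangle_{L^2}.
\end{equation*}
By Proposition~\ref{prop:PsiDOs_filtered_algebra} the commutator $[P, \chi_R]$ is a pseudodifferential operator of order $k-1$, and its full symbol depends only on derivatives of $\chi_R$; by Remark~\ref{rem:bound_operator_norm_PDO} its operator norm $H^{k-1}(E) \to L^2(E)$ is bounded by a constant multiple of the symbolic constants from \eqref{eq:uniformity_defn_PDOs} for $[P, \chi_R]$, all of which tend to $0$ as $R \to \infty$. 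Consequently the commutator term vanishes in the limit while $\|\chi_R u\|_{L^2}^2 \to \|u\|_{L^2}^2$, forcing $u = 0$. The case $\bar P^* u = -iu$ is identical.

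For general $l$, let $\bar P$ denote the self-adjoint closure on $L^2(E)$ just obtained. Starting from the identity $\langle u, v\rangle_{H^k, P} = \langle (1 + \bar P^2) u, v\rangle_{L^2}$ and inducting on $|l|$ as in Lemma~\ref{lem:symmetric_on_Sobolev}, one checks that $\|u\|_{H^{lk}, P}^2 = \|(1 + \bar P^2)^{l/2} u\|_{L^2}^2$, so that $(1 + \bar P^2)^{l/2}$ furnishes an isometric isomorphism $H^{lk}(E) \to L^2(E)$ (using $\dom(\bar P^l) = H^{lk}(E)$, which follows from elliptic regularity, for $l \ge 0$, and the duality pairing between $H^{lk}(E)$ and $H^{-lk}(E)$ for $l < 0$). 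Since $P$ commutes with every function of $\bar P^2$, this isometry intertwines the action of $P$ on either side, and essential self-adjointness on $H^{lk}(E)$ reduces at once to the $l = 0$ case.

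The main obstacle is the $L^2$ case, and within it the commutator estimate: one must produce cutoffs $\chi_R$ whose derivatives of \emph{every} order go to zero as $R \to \infty$ (the full symbolic expansion of $[P, \chi_R]$ brings in all derivatives of $\chi_R$, not just the first), which is exactly where bounded geometry of $M$ is indispensable. The accompanying bound on $\|[P, \chi_R]\|_{H^{k-1}(E) \to L^2(E)}$ then rests crucially on the uniformity condition \eqref{eq:uniformity_defn_PDOs} built into our definition of pseudodifferential operators.
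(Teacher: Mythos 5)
Your proof is correct but takes a heavier route than the paper's. The paper runs a short argument directly on $H^{lk}(E)$ for each $l$: take $u \in \kernel(P^* \pm i)$, show $u \in H^\infty(E)$ by elliptic regularity and iteration of the fundamental elliptic estimate, and then observe that $u$ is an eigenvector of $P$ with eigenvalue $\pm i$ so that symmetry forces $u = 0$. The delicate point you identified --- applying symmetry to a non-compactly-supported $u$ --- is handled by a two-line density argument: $C_c^\infty(E)$ is dense in $H^k(E)$ by the very definition of the Sobolev spaces, so take $u_n \to u$ in $H^k(E)$ with $u_n \in C_c^\infty(E)$; boundedness of $P \colon H^k(E) \to L^2(E)$ (Proposition~\ref{prop:pseudodiff_extension_sobolev}) gives $\langle Pu_n, u_n\rangle_{L^2} \to \langle Pu, u\rangle_{L^2}$, and since each $\langle Pu_n, u_n\rangle_{L^2}$ is real, so is the limit. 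Your cutoff-and-commutator computation arrives at the same conclusion but is much more in the spirit of Chernoff-type proofs for geometric first-order operators, and it carries technical weight the density argument sidesteps: the construction of $\chi_R$ with $\|\nabla^j \chi_R\|_\infty \lesssim R^{-j}$ for every $j$, the full asymptotic expansion of the symbol of $[P, \chi_R]$ (all of whose terms involve derivatives of $\chi_R$), and the control of the remainder term of that expansion. It does have the merit of isolating precisely where bounded geometry and the Uniformity Condition~\eqref{eq:uniformity_defn_PDOs} enter, but here it is more work than the situation demands. Your second difference --- proving $l = 0$ first and transporting to general $l$ via the isometry $(1 + \bar P^2)^{l/2}$ --- is also sound, but it requires establishing $\dom(\bar P^l) = H^{lk}(E)$ and identifying $\langle\cdot,\cdot\rangle_{H^{lk},P}$ with $\langle (1 + \bar P^2)^l \cdot, \cdot\rangle_{L^2}$ as additional steps; the paper just reruns the elliptic-regularity argument in the $\|\cdot\|_{H^{lk}}$ norm for each $l$, which works directly because the modified scalar products were built to keep $P$ symmetric there.
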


\begin{proof}
This proof is an adapted version of the proof of this statement for compact manifolds from \cite{MO_elliptic_essentially_self_adjoint}.

We will use the following sufficient condition for essential self-adjointness: if we have a symmetric and densely defined operator $T$ such that $\kernel (T^\ast \pm i) = \{0\}$, then the closure $\overline{T}$ of $T$ is self-adjoint and is the unique self-adjoint extension of $T$.

So let $u \in \kernel (P^\ast \pm i) \subset H^{lk}(E)$, i.e., $P^\ast u = \pm i u$. From elliptic regularity we get that $u$ is smooth and using the fundamental elliptic estimate for $P^\ast$\footnote{Note that $P^\ast$ is elliptic if and only if $P$ is.} we can then conclude $\|u\|_{H^{k+lk}} \le C_{k+lk}\big(\|u\|_{H^{lk}} + \|P^\ast u\|_{H^{lk}}\big) = 2 C_{k+lk} \|u\|_{H^{lk}} < \infty$, i.e., $u \in H^{k+lk}(E)$. Repeating this argument gives us $u \in H^\infty(E)$, i.e., $u$ lies in the domain of $P$ itself and is therefore an eigenvector of it to the eigenvalue $\pm i$. But since $P$ is symmetric we must have $u = 0$. This shows $\kernel (P^\ast \pm i) = \{0\}$ and therefore $P$ is essentially self-adjoint.
\end{proof}

\section{Functions of symmetric, elliptic operators}\label{sec:functions_of_PDOs}

Let $P \in \Psi \mathrm{DO}^k(E)$ be a symmetric and elliptic pseudodifferential operator of positive order $k \ge 1$. Then by Proposition \ref{prop:elliptic_PDO_essentially_self-adjoint} we know that $P\colon L^2(E) \to L^2(E)$ is essentially self-adjoint. So, if $f$ is a Borel function defined on the spectrum of $P$, the operator $f(P)$ is defined by the functional calculus. In this whole section $P$ will denote such an operator, i.e., a symmetric and elliptic one of positive order.

Given such a pseudodifferential operator $P$, we will show in Section \ref{sec:homology_classes_of_PDOs} that it defines naturally a class in uniform $K$-homology. For this we will have to consider $\chi(P)$, where $\chi$ is a so-called normalizing function (Definition \ref{defn:normalizing_function}), and we will have to show that $\chi(P)$ is uniformly pseudolocal and $\chi(P)^2 - 1$ is uniformly locally compact. For this we will need the analysis done in this section, i.e., this section is purely technical in nature.

If $f$ is a Schwartz function, we have the formula $f(P) = \frac{1}{\sqrt{2\pi}}\int_\IR \hat{f}(t) e^{itP} dt$, where $\hat{f}$ is the Fourier transform of $f$. In the case that $P = D$ is an elliptic, first-order differential operator and its symbol satisfies $\sup_{x \in M, \|\xi\| = 1} \|\sigma_D(x, \xi)\| < \infty$, the operator $e^{itD}$ has finite propagation (a proof of this may be found in, e.g., \cite[Proposition 10.3.1]{higson_roe}) from which (exploiting the above formula for $f(D)$) we may deduce the needed properties of $\chi(P)$ and $\chi(P)^2 - 1$. But this is no longer the case for a general elliptic pseudodifferential operator $P$ and therefore the analysis that we have to do here in this general case is much more sophisticated.

Note that the restriction to operators of order $k \ge 1$ in this section is no restriction on the fact that elliptic pseudodifferential operators define uniform $K$-homology classes. In fact, if $P$ has order $k \le 0$, then we already know from Proposition \ref{prop:PDO_order_0_l-uniformly-pseudolocal} that $P$ is uniformly pseudolocal, i.e., there is no need to form the expression $\chi(P)$ in order for $P$ to define a uniform $K$-homology class.

We start with the following crucial technical lemma which is a generalization of the fact that $e^{itD}$ has finite propagation to pseudodifferential operators. Note that we do not have to assume something like $\sup_{x \in M, \|\xi\| = 1} \|\sigma_D(x, \xi)\| < \infty$ that we had to for first-order differential operators, since such an assumption is subsumed in the uniformity condition that we have in the definition of pseudodifferential operators.

Note that the author does not know if the analogous result of the next lemma for operators from $\Psi \mathrm{DO}_u^k(E)$ does hold, i.e., if in that case $e^{itP}$ may be approximated by finite propagation operators. Since all the following results rely on this technical lemma, the results of this section only hold for operators from $\Psi \mathrm{DO}^k(E)$, i.e., without the subscript ``u''. Of course, since $\Psi \mathrm{DO}_u^k(E) \subset \Psi \mathrm{DO}^k(E)$, the next lemma still applies to $P \in \Psi \mathrm{DO}_u^k(E)$ in the sense that we may then conclude that $e^{itP}$ is quasilocal, though maybe not approximable by finite propagation operators.

\begin{lem}\label{lem:exp(itP)_quasilocal}
Let $P \in \Psi \mathrm{DO}^{k\ge 1}(E)$ be symmetric and elliptic. Then the operator $e^{itP}$ is a quasilocal operator $H^{lk}(E) \to H^{lk-k}(E)$ for all $l \in \IZ$ and $t \in \IR$.
\end{lem}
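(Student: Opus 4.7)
The plan is to reduce the quasilocal estimate to a commutator calculation via Duhamel's formula, and then to iterate. First I would observe that, since $e^{itP}$ is unitary on $L^2(E)$ and commutes with powers of $P$ (by the functional calculus from Proposition \ref{prop:elliptic_PDO_essentially_self-adjoint}), it restricts to a continuous operator on every $H^{lk}(E)$ for $l \in \IZ$; in particular the map $H^{lk}(E) \to H^{lk-k}(E)$ is continuous. So what has to be proved is only the decay estimate: for $u$ with $\supp u \subset L$, the $H^{lk-k}$-mass of $e^{itP}u$ on $M - B_R(L)$ should tend to zero as $R \to \infty$, with a rate depending only on $t$ (and not on $L$ or $u$).

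The main tool is Duhamel's formula applied to a cutoff. Pick a smooth function $\phi$ with $\supp \phi \subset M - B_{R/2}(L)$, $\phi \equiv 1$ on $M - B_R(L)$, and derivatives of $\phi$ controlled uniformly (such $\phi$ exists by bounded geometry of $M$, as used in the proof of Lemma \ref{lem:kasparov_lemma_uniform_approx_manifold}). Set $w(s) := \phi \, e^{isP} u$, so that $w(0) = 0$ and
\[
\partial_s w(s) = i\phi P e^{isP} u = iP w(s) - i[P,\phi] e^{isP} u.
\]
Integrating this inhomogeneous evolution equation with zero initial condition gives
\[
\phi \, e^{itP} u = -i \int_0^t e^{i(t-s)P} [P,\phi] e^{isP} u \, ds.
\]
By Proposition \ref{prop:PsiDOs_filtered_algebra}, $[P,\phi]$ is a pseudodifferential operator of order $k-1$, and since $\phi$ has $C^\infty_b$-bounded derivatives with support disjoint from $L$, one obtains an $H^{lk} \to H^{lk-k+1}$ bound on $[P,\phi]$ whose constant is controlled uniformly in the position of $L$.

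The main obstacle is that $[P,\phi]$ is not localized in the finite-propagation sense; it is only a pseudodifferential operator of one order lower, so a single application of the Duhamel identity merely reduces one derivative and does \emph{not} immediately produce a kernel supported far from the diagonal. To get genuine decay in $R$ one has to iterate: apply the Duhamel formula again to the factor $e^{isP}u$ inside the integral, using a new cutoff $\phi_2$ supported in an intermediate annulus $B_R(L) - B_{R/4}(L)$, and continue this bootstrap $N$ times. At each iteration the resulting commutator drops another order, and the spatial support of the relevant cutoff can be moved closer to $L$ in controlled steps. After $N \geq lk + m + 1$ iterations the innermost commutator lies in $\Psi\mathrm{DO}^{-\infty}(E) = \IU(E)$ by Lemma \ref{lem:PDO_-infinity_equal_quasilocal_smoothing}, i.e.\ is itself quasilocal smoothing, and composing it with the factors $e^{i(t-s_j)P}$ (which are unitary on each $H^{lk}$) together with the remaining intermediate cutoffs produces a bound for $\|\phi\, e^{itP}u\|_{H^{lk-k}}$ of the form $C(t,N) \cdot \mu_{\mathrm{iter}}(R)\cdot \|u\|_{H^{lk}}$, where $\mu_{\mathrm{iter}}(R) \to 0$ follows from the fact that only finitely many geodesic balls of radius $\sim R/N$ can cluster inside any larger ball (bounded geometry) and the innermost factor is a quasilocal smoothing operator whose dominating function decays.

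Finally I would assemble these estimates into a single dominating function $\mu(R)$ independent of $L$ and $u$. The case of general $l \in \IZ$ reduces to $l = 1$ by Lemma \ref{lem:symmetric_on_Sobolev}, which provides the $P$-adapted scalar product on $H^{lk}(E)$ in which $P$ is symmetric and with respect to which $e^{itP}$ is unitary; the same Duhamel/iteration scheme then applies mutatis mutandis with $H^{lk}$-norms in place of $L^2$-norms. The hard technical point will be the iteration step: controlling all constants uniformly in $L$ and verifying that each new cutoff can be chosen with $C^\infty_b$-bounds independent of $R$, which is exactly where bounded geometry of $M$ enters in an essential way.
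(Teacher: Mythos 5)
Your first Duhamel step is exactly the paper's, but you then dismiss it prematurely, and the iteration you propose in its place does not work.

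The paper's proof uses a single application of the identity
\[
[e^{itP},\eta] \;=\; -it\int_0^1 e^{ixtP}\,[\eta,P]\,e^{i(1-x)tP}\,dx ,
\]
and the entire decay mechanism comes from the choice of cutoff: $\eta$ is taken to equal $1$ near $\supp u$, to vanish on $M-B_{R+1}(L)$, and --- crucially --- its first $N$ derivatives are bounded by $C/R$. This is possible because the transition region has width comparable to $R$, and bounded geometry lets one build such a slowly varying cutoff uniformly. Combined with the estimate $\|[\eta,P]\|_{lk,lk-(k-1)}\le \kappa\sum_{j\le N}\|\nabla^j\eta\|_\infty$ (commutators with $C^\infty_b$-functions drop the order by one, with constants controlled by the $C^\infty_b$-norm), one gets $\|[e^{itP},\eta]\|_{lk,lk-k}\lesssim |t|/R\to 0$. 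That already is the dominating function.

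In your writeup you pick $\phi$ with ``derivatives controlled uniformly'' (i.e.\ $O(1)$, not $O(1/R)$) and then assert that a single Duhamel application ``does not immediately produce decay.'' That is where you lose the argument: the decay comes from making the cutoff's derivatives small, not from any localization of $[P,\phi]$. Moreover the iteration scheme you sketch is not sound as stated. Each further Duhamel step introduces one more factor $[P,\phi_j]$, but each such commutator is of the fixed order $k-1$; the order does \emph{not} drop with successive iterations (you would need \emph{nested} commutators $[\phi_N,\ldots,[\phi_1,P]\ldots]$ for the order to descend, and Duhamel does not produce those). So your claim that after $N$ steps the innermost operator lands in $\Psi\mathrm{DO}^{-\infty}(E)$ is unjustified, and without it the whole bootstrap gives no decay. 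Replace the iteration with the single slowly varying cutoff and your proof reduces to the paper's.
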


\begin{proof}
This proof is a watered down version of the proof of \cite[Theorem 3.1]{mcintosh_morris}.

We will need the following two facts:
\begin{enumerate}
\item $\|e^{itP}\|_{lk,lk} = 1$ for all $r \in \IZ$, where $\|\largecdot\|_{lk,lk}$ denotes the operator norm of operators $H^{lk}(E) \to H^{lk}(E)$ and
\item there is a $\kappa > 0$ such that $\|[\eta, P]\|_{s,s-(k-1)} \le \kappa \cdot \sum_{j=1}^N \|\nabla^j \eta\|_\infty$ for all smooth $\eta \in C_b^\infty(M)$, where $N$ depends on $s \in \IZ$ and the dimension of $M$.
\end{enumerate}

The first one holds since $e^{itP}$ is a unitary operator (using Proposition \ref{prop:elliptic_PDO_essentially_self-adjoint}) and the second is due to the facts that by Proposition \ref{prop:PsiDOs_filtered_algebra} the commutator $[\eta, P]$ is a pseudodifferential operator of order $k-1$ (recall that smooth functions with bounded derivatives are operators of order $0$) and due to Remark \ref{rem:bound_operator_norm_PDO} (where we have to recall the formula how to compute the symbol of the composition of two pseudodifferential operators from, e.g., \cite[Theorem III.§3.10]{lawson_michelsohn}).

Let $L \subset M$ and let $u \in H^{lk}(E)$ be supported within $L$. Furthermore, we choose an $R > 0$ and a smooth, real-valued function $\eta$ with $\eta \equiv 1$ on a neighbourhood of $\supp u$, $\eta \equiv 0$ on $M - B_{R+1}(L)$ and the first $N$ derivatives of $\eta$ bounded from above by $C/R$ for a fixed constant $C$. Then we have for all $v \in H^{lk-k}(E)$ that are supported in $M - B_{R+1}(L)$
\begin{align*}
\langle e^{itP} u, v\rangle_{H^{lk-k}} & = \langle e^{itP} \eta u, v\rangle_{H^{lk-k}} - \langle e^{itP} u, \eta v\rangle_{H^{lk-k}}\\
& = \langle [e^{itP},\eta] u, v\rangle_{H^{lk-k}},
\end{align*}
i.e., $|\langle e^{itP} u, v\rangle_{H^{lk-k}}| \le \|[e^{itP},\eta]\|_{lk,lk-k} \cdot \|u\|_{H^{lk}} \cdot \|v\|_{H^{lk-k}}$ and it remains to give an estimate for $\|[e^{itP},\eta]\|_{lk,lk-k}$:

We have
\begin{align*}
[e^{itP}, \eta] & = \int_0^1 \tfrac{d}{dx} \big( e^{ixtP} \eta e^{i(1-x)tP} \big) dx\\
& = -it \int_0^1 e^{ixtP} [\eta,P] e^{i(1-x)tP} dx
\end{align*}
which gives by factorizing the integrand as
\[H^{lk}(E) \stackrel{e^{i(1-x)tP}}\longrightarrow H^{lk}(E) \stackrel{[\eta, P]}\longrightarrow H^{lk-(k-1)}(E) \hookrightarrow H^{lk-k}(E) \stackrel{e^{ixtP}}\longrightarrow H^{lk-k}(E)\]
the estimate
\[\|[e^{itP},\eta]\|_{lk,lk-k} \le |t| \int_0^1 \|[\eta, P]\|_{lk,lk-(k-1)} dx \le |t| \cdot \kappa \cdot \sum_{j=1}^N \|\nabla^j \eta\|_\infty.\]
Since $\|\nabla^j \eta\|_\infty < C/R$ for all $1 \le j \le N$, we have shown
\begin{equation}
\label{eq:dominating_function_exp(itP)}
|\langle e^{itP} u, v\rangle_{H^{lk-k}}| < \frac{|t| \kappa N C}{R} \cdot \|u\|_{H^{lk}} \cdot \|v\|_{H^{lk-k}}
\end{equation}
for all $u$ supported in $L$ and all $v$ in $M-B_{R+1}(L)$. Because $R > 0$ and $l \in \IZ$, $t \in\IR$ were arbitrary, the claim that $e^{itP}$ is a quasilocal operator $H^{lk}(E) \to H^{lk-k}(E)$ for all $l \in \IZ$ and $t \in \IR$ follows.
\end{proof}

An important technical corollary is the following one:

\begin{cor}[cf. {\cite[Lemma 1.1 in Chapter XII.§1]{taylor_pseudodifferential_operators}}]
\label{cor:lth_derivative_integrable_defines_quasilocal_operator}
Let $q(t)$ be a function on $\IR$ such that for an $n \in \IN_0$ the functions $q(t)|t|$, $q^\prime(t)|t|$,  $\ldots$, $q^{(n)}(t)|t|$ are integrable, i.e., belong to $L^1(\IR)$.

Then the operator defined by $\int_\IR q(t) e^{itP} dt$ is for all $l \in \IZ$ a quasilocal operator $H^{lk-nk+k}(E) \to H^{lk}(E)$, i.e., is of order $-nk + k$.
\end{cor}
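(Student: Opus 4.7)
The plan is to induct on $n$, using integration by parts to transfer derivatives from $e^{itP}$ onto $q(t)$. The base case $n=0$ is essentially contained in the previous lemma: the pointwise bound \eqref{eq:dominating_function_exp(itP)} gives, for $u \in H^{lk}(E)$ supported in $L$ and $v \in H^{lk-k}(E)$ supported outside $B_{R+1}(L)$, the estimate $|\langle e^{itP} u, v \rangle_{H^{lk-k}}| \le \kappa' \, |t|\, R^{-1} \, \|u\|_{H^{lk}}\, \|v\|_{H^{lk-k}}$. Pairing $\int q(t) e^{itP}\, dt$ against $u$ and $v$ and using the hypothesis $\int |q(t)|\,|t|\, dt < \infty$, we get dominating function $\mu(R) = O(1/R)$; hence the operator is quasilocal of order $k$ as claimed.

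For the inductive step I would use the identity $\partial_t e^{itP} = iPe^{itP}$. Formally, integration by parts yields
\[\int_{\IR} q(t) e^{itP}\, dt = -(iP)^{-1} \int_{\IR} q'(t) e^{itP}\, dt,\]
transferring one derivative from $e^{itP}$ to $q$ at the cost of the ``order $-k$'' operator $(iP)^{-1}$. Iterating $n$ times puts all derivatives onto $q$ and leaves an outer factor of order $-nk$; composed with the order-$k$ quasilocal operator supplied by the base case, this gives the claimed order $-nk+k$. The boundary terms from each integration by parts vanish because $q^{(j)}(t)|t| \in L^1(\IR)$ forces, at worst after a truncation and limiting argument, decay of $q^{(j)}$ at $\pm \infty$.

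Since $P$ need not be invertible, I would justify the integration by parts via a spectral cutoff: fix $\psi \in C_c^\infty(\IR)$ with $\psi \equiv 1$ near $0$, and split the operator as
\[\int q(t) e^{itP}\, dt = \psi(P) \!\int q(t) e^{itP}\, dt + (1-\psi(P)) \!\int q(t) e^{itP}\, dt.\]
The low-frequency piece is harmless: writing $\psi(P) = \tfrac{1}{\sqrt{2\pi}} \int \hat{\psi}(s) e^{isP}\, ds$ with $\hat\psi$ Schwartz, the base case applied to the Schwartz function $\hat\psi$ shows that $\psi(P)$ is a quasilocal smoothing operator of every order, so composing with the base case already gives a quasilocal operator of arbitrarily low order. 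For the high-frequency piece, $\lambda \mapsto (1-\psi(\lambda))/(i\lambda)$ is bounded and smooth, so functional calculus defines $(1-\psi(P))(iP)^{-1}$ rigorously, and integration by parts becomes legitimate: $(1-\psi(P)) \int q(t) e^{itP}\, dt = -(1-\psi(P))(iP)^{-1} \int q'(t) e^{itP}\, dt$. The main technical obstacle will be showing that $(1-\psi(P))(iP)^{-m}$ genuinely behaves as a quasilocal operator that gains $mk$ Sobolev derivatives; this I would establish by representing it via the Fourier transform of $(1-\psi(\lambda))/(i\lambda)^m$ so that Lemma \ref{lem:exp(itP)_quasilocal} applies uniformly, and by combining it with the fundamental elliptic estimate (Theorem \ref{thm:elliptic_estimate}) to promote the $L^2$-level estimates to all Sobolev levels.
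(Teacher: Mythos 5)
Your high-level strategy — establish the $n=0$ base case from Lemma~\ref{lem:exp(itP)_quasilocal} and then transfer the derivatives from $e^{itP}$ onto $q$ by integration by parts — is the same as the paper's, and your base case is correct. The difference lies in how the potential non-invertibility of $P$ is handled, and here the paper's route is both simpler and avoids a circularity that your spectral-cutoff route runs into.

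The paper does not invert $P$ at all. Instead it takes a \emph{parametrix} $Q\in\Psi\mathrm{DO}^{-k}(E)$ with $QP=\id-S_2$, $S_2\in\Psi\mathrm{DO}^{-\infty}(E)$; such a $Q$ exists because $P$ is elliptic and is already known to be quasilocal of order $-k$ by Corollary~\ref{cor:P_quasiloc}, with no functional calculus required. Integration by parts gives
\[(iQ)^n\!\int q^{(n)}(t)e^{itP}\,dt=(iQ)^n(-iP)^n\!\int q(t)e^{itP}\,dt=(\id-S_2)^n\!\int q(t)e^{itP}\,dt,\]
and since $(\id-S_2)^n=\id+R'$ with $R'$ smoothing, rearranging yields $\int q(t)e^{itP}\,dt=(iQ)^n\int q^{(n)}(t)e^{itP}\,dt-R$ with $R$ a quasilocal smoothing operator; the first term is quasilocal of order $-nk+k$ by composition and the base case, and one is done. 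Your decomposition $\psi(P)A+(1-\psi(P))A$ looks plausible, but closing it requires two facts that, at this point in the development, are unavailable: (i) that $\psi(P)$ is a quasilocal smoothing operator (you need this so that the low-frequency piece does not spoil the final order; ``the base case'' only gives order $k$), and (ii) that $(1-\psi(P))(iP)^{-m}$ is quasilocal of order $-mk$. Fact (i) is exactly Corollary~\ref{cor:schwartz_function_of_PDO_quasilocal_smoothing}, which in the paper is \emph{deduced from} the present corollary, so invoking it here is circular — and trying to deduce it directly from the inductive hypothesis only gives $\psi(P)$ of order $-(n-1)k+k$, which is not low enough. Fact (ii) is essentially Proposition~\ref{prop:f(P)_quasilocal_of_symbol_order} (the function $(1-\psi(\lambda))/(i\lambda)^m$ lies in $\mathcal{S}^{-m}(\IR)$, not in $L^1$, so Lemma~\ref{lem:exp(itP)_quasilocal} does not apply to its Fourier transform directly); that Proposition is also proved \emph{after} the present corollary and uses it. So your proposal as written would need a substantial independent argument for these two steps, whereas the parametrix does the job immediately with tools already on hand.
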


\begin{proof}
Let $Q \in \Psi \mathrm{DO}^{-k}(E)$ be a parametrix for $P$, i.e., $PQ = \id - S_1$ and also $QP = \id - S_2$, where $S_1, S_2 \in \Psi \mathrm{DO}^{-\infty}(E)$. Integration by parts $n$ times yields:
\begin{equation}
\label{eq:formula_integration_by_parts_quasilocal}
(i Q)^n \int_\IR q^{(n)}(t) e^{itP} dt = (i Q)^n (-i P)^n \int_\IR q(t) e^{itP} dt = (\id - S_2)^n \int_\IR q(t)e^{itP} dt.
\end{equation}
Since $q(t)|t|$ and $q^{(n)}(t)|t|$ are integrable and due to the Estimate \eqref{eq:dominating_function_exp(itP)}, we conclude from the above Lemma \ref{lem:exp(itP)_quasilocal} that both integrals $\int_\IR q(t)e^{itP} dt$ and $\int_\IR q^{(n)}(t)e^{itP} dt$ define quasilocal operators of order $k$ on $H^{lk}(E)$. Note that in the case of $\int_\IR q(t)e^{itP} dt$ this is just a first result which we will need now in order to show that the order of this operator is in fact lower (as claimed by this corollary).

Now $(\id - S_2)^n = \id + \sum_{j=1}^n \binom{n}{j}(-S_2)^j$ and the sum is a quasilocal smoothing operator because $S_2$ is one. Since the composition of quasilocal operators is again a quasilocal operator (see \cite[Proposition 5.2]{roe_index_1}), we conclude that the second summand $R$ of
\begin{equation}
\label{eq:formula_integration_by_parts_quasilocal_2}
(\id - S_2)^n \int_\IR q(t)e^{itP} dt = \int_\IR q(t)e^{itP} dt + \underbrace{\sum_{j=1}^n \binom{n}{j}(-S_2)^j \int_\IR q(t)e^{itP} dt}_{=: R}
\end{equation}
is also a quasilocal smoothing operator. Now Equations \eqref{eq:formula_integration_by_parts_quasilocal} and \eqref{eq:formula_integration_by_parts_quasilocal_2} together yield
\[\int_\IR q(t)e^{itP} dt = (i Q)^n \int_\IR q^{(n)}(t) e^{itP} dt - R,\]
from which the claim follows.
\end{proof}

Recall that if $f$ is a Schwartz function, then the operator $f(P)$ is given by
\begin{equation}\label{eq:schwartz_function_of_PDO}
f(P) = \frac{1}{\sqrt{2\pi}}\int_\IR \hat{f}(t) e^{itP} dt,
\end{equation}
where $\hat{f}$ is the Fourier transform of $f$. Since $\hat{f}$ is also a Schwartz function, it satisfies the assumption in Corollary \ref{cor:lth_derivative_integrable_defines_quasilocal_operator} for all $n \in \IN_0$, i.e., $f(P)$ is a quasilocal smoothing operator. Applying this argument to the adjoint operator $f(P)^\ast = \overline{f}(P)$, we get with Lemma \ref{lem:PDO_-infinity_equal_quasilocal_smoothing} our next corollary:

\begin{cor}\label{cor:schwartz_function_of_PDO_quasilocal_smoothing}
If $f$ is a Schwartz function, then $f(P) \in \Psi \mathrm{DO}^{-\infty}(E)$.
\end{cor}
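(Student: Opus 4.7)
The plan is to exploit the formula \eqref{eq:schwartz_function_of_PDO}, namely
\[f(P) = \frac{1}{\sqrt{2\pi}}\int_\IR \hat{f}(t) e^{itP} dt,\]
and apply the preceding Corollary \ref{cor:lth_derivative_integrable_defines_quasilocal_operator} repeatedly. Since $f$ is a Schwartz function, so is its Fourier transform $\hat f$, and therefore each derivative $\hat f^{(n)}$ decays faster than any polynomial. In particular, $\hat f^{(n)}(t) |t|$ lies in $L^1(\IR)$ for every $n \in \IN_0$. Thus the hypothesis of Corollary \ref{cor:lth_derivative_integrable_defines_quasilocal_operator} is satisfied for every $n$, and we may conclude that the integral $\int_\IR \hat f(t) e^{itP}\,dt$ defines a quasilocal operator $H^{lk - nk + k}(E) \to H^{lk}(E)$ for every $l \in \IZ$ and every $n \in \IN_0$.

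Letting $n$ tend to infinity and $l$ range over $\IZ$, this shows that $f(P)$ restricts to a bounded operator $H^{-k'}(E) \to H^{l'}(E)$ for all $k', l' \in \IN$, and is quasilocal in each such restriction. In particular $f(P)$ is a quasilocal smoothing operator in the sense of Section~\ref{sec:defn_quasiloc_smoothing_operators}: by Lemma~\ref{lem:smoothing_operator_iff_bounded} it extends to a continuous map $H^{-\infty}_\iota(E) \to H^\infty(E)$, and the quasilocality dominating functions assemble into a single dominating function good for all Sobolev orders (cf.\ Definition \ref{defn:quasiloc_smoothing}).

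To obtain membership in $\IU(E)$, it remains to check that the adjoint $f(P)^\ast$ is also a quasilocal smoothing operator. But since $P$ is symmetric and essentially self-adjoint (Proposition \ref{prop:elliptic_PDO_essentially_self-adjoint}), the functional calculus gives $f(P)^\ast = \overline f(P)$, and $\overline f$ is again a Schwartz function. The same argument applied to $\overline f$ yields that $\overline f(P)$ is quasilocal smoothing, so $f(P) \in \IU(E)$. Invoking Lemma \ref{lem:PDO_-infinity_equal_quasilocal_smoothing}, which identifies $\IU(E) = \Psi\mathrm{DO}^{-\infty}(E)$, finishes the argument.

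The conceptual heart of the proof is entirely packaged into the previous lemma and corollary: the real work was Lemma \ref{lem:exp(itP)_quasilocal} establishing quasilocality of $e^{itP}$ together with the integration-by-parts trick of Corollary \ref{cor:lth_derivative_integrable_defines_quasilocal_operator}. Here there is no obstacle; the only thing to verify is the uniform-in-$n$ integrability of $\hat f^{(n)}(t)|t|$, which is immediate from $\hat f$ being Schwartz.
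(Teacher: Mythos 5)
Your proposal is correct and follows exactly the paper's argument: apply the formula $f(P) = \frac{1}{\sqrt{2\pi}}\int_\IR \hat{f}(t)\,e^{itP}\,dt$, note that $\hat f$ being Schwartz satisfies the hypothesis of Corollary~\ref{cor:lth_derivative_integrable_defines_quasilocal_operator} for every $n$, repeat for the adjoint $\overline f(P)$, and conclude via Lemma~\ref{lem:PDO_-infinity_equal_quasilocal_smoothing}. The paper states this in a single compressed paragraph before the corollary; you simply spell out the bookkeeping on Sobolev orders and the assembly of dominating functions, which is harmless extra detail.
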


We recall from \cite[Lemma 4.2]{spakula_uniform_k_homology} that the uniformly pseudolocal operators are a $C^\ast$-algebra and the uniformly locally compact operators a closed, two-sided $^\ast$-ideal in it. Now since the Schwartz functions are dense in $C_0(\IR)$ and quasilocal smoothing operators are uniformly locally compact (see Corollary \ref{cor:quasilocal_neg_order_uniformly_locally_compact}), we conclude from the above Corollary \ref{cor:schwartz_function_of_PDO_quasilocal_smoothing} that $g(P)$ is uniformly locally compact if $g \in C_0(\IR)$:

\begin{cor}\label{cor:g(P)_uniformly_locally_compact_g_vanishing_at_infinity}
Let $g \in C_0(\IR)$. Then $g(P)$ is uniformly locally compact.
\end{cor}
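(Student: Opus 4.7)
The plan is to approximate $g$ by Schwartz functions in the sup norm and pass to the limit, using the closedness of the uniformly locally compact operators under operator norm convergence.

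First I would choose a sequence $(f_n)_{n \in \IN}$ of Schwartz functions on $\IR$ with $\|f_n - g\|_\infty \to 0$; such a sequence exists since the Schwartz space $\mathcal{S}(\IR)$ is dense in $C_0(\IR)$ with respect to the sup norm. By Corollary \ref{cor:schwartz_function_of_PDO_quasilocal_smoothing}, each $f_n(P)$ lies in $\Psi \mathrm{DO}^{-\infty}(E) = \IU(E)$, so each $f_n(P)$ is a quasilocal smoothing operator whose adjoint $f_n(P)^\ast = \overline{f_n}(P)$ is also a quasilocal smoothing operator. By Corollary \ref{cor:quasilocal_neg_order_uniformly_locally_compact}, each $f_n(P)$ is therefore uniformly locally compact.

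Next, since $P$ is essentially self-adjoint on $L^2(E)$ by Proposition \ref{prop:elliptic_PDO_essentially_self-adjoint}, the Borel functional calculus yields a $^\ast$-homomorphism from bounded Borel functions on $\operatorname{spec}(\overline{P}) \subset \IR$ to $\IB(L^2(E))$ that is contractive for the sup norm. Hence
\[\|f_n(P) - g(P)\|_{\mathrm{op}} \le \|f_n - g\|_\infty \to 0,\]
so $f_n(P) \to g(P)$ in operator norm on $L^2(E)$.

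Finally, recall from \cite[Lemma 4.2]{spakula_uniform_k_homology} (cited in the text just after Definition \ref{defn_Du_Cu_manifolds}) that the set of uniformly locally compact operators is closed in operator norm; equivalently, the collection $\{fT, Tf \ | \ f \in \LLip_R(M)\}$ remains uniformly approximable when $T$ is the operator norm limit of uniformly locally compact operators, since an $\varepsilon$-approximation of $T$ by a uniformly locally compact operator can be combined with the uniform approximability of the latter to produce a uniform finite-rank approximation of $fT$ and $Tf$ up to error $2\varepsilon$. Applying this closedness to the sequence $f_n(P) \to g(P)$ gives that $g(P)$ is uniformly locally compact, which completes the proof. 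I do not expect any step to be a real obstacle here: the key analytic content has already been absorbed into Corollaries \ref{cor:schwartz_function_of_PDO_quasilocal_smoothing} and \ref{cor:quasilocal_neg_order_uniformly_locally_compact}, and the remainder is a standard density-plus-closedness argument.
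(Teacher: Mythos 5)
Your proposal is correct and follows essentially the same route as the paper: approximate $g$ by Schwartz functions, use Corollaries \ref{cor:schwartz_function_of_PDO_quasilocal_smoothing} and \ref{cor:quasilocal_neg_order_uniformly_locally_compact} to see that $f_n(P)$ is uniformly locally compact, note that the functional calculus is contractive so $f_n(P)\to g(P)$ in operator norm, and conclude via the operator-norm closedness of the uniformly locally compact operators from \cite[Lemma 4.2]{spakula_uniform_k_homology}. You have merely spelled out the steps the paper states in one sentence.
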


Now we turn our attention to functions which are more general than Schwartz functions. To be concrete, we consider functions of the following type:

\begin{defn}[Symbols on $\IR$]\label{defn:symbols_on_R}
For arbitrary $m \in \IZ$ we define
\[\mathcal{S}^m(\IR) := \{f \in C^\infty(\IR) \ | \ |f^{(n)}(x)| < C_n(1 + |x|)^{m-n} \text{ for all } n \in \IN_0\}.\]

Note that we have $\mathcal{S}(\IR) = \bigcap_m \mathcal{S}^m(\IR)$, where $\mathcal{S}(\IR)$ denotes the Schwartz space.
\end{defn}

Let us state now the generalization of \cite[Theorem 5.5]{roe_index_1} from operators of Dirac type to pseudodifferential operators:

\begin{prop}[cf. {\cite[Theorem 5.5]{roe_index_1}}]\label{prop:f(P)_quasilocal_of_symbol_order}
Let $f \in \mathcal{S}^m(\IR)$. Then for all $l \in \IZ$ the operator $f(P)$ is a quasilocal operator of order $mk$ on the spaces $H^{lk}(E)$, i.e., $f(P) \colon H^{lk}(E) \to H^{lk-mk}(E)$.
\end{prop}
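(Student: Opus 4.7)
The plan is to reduce to Corollary \ref{cor:lth_derivative_integrable_defines_quasilocal_operator} via a cutoff and factorization trick. Fix $\chi \in C_c^\infty(\IR)$ with $\chi \equiv 1$ on a neighborhood of $0$, and decompose $f = f_1 + f_2$ with $f_1 := \chi f \in C_c^\infty(\IR)$ and $f_2 := (1-\chi)f \in \mathcal{S}^m(\IR)$ vanishing on a neighborhood of $0$. Since $f_1$ is a Schwartz function, Corollary \ref{cor:schwartz_function_of_PDO_quasilocal_smoothing} gives $f_1(P) \in \Psi\mathrm{DO}^{-\infty}(E) = \IU(E)$; in particular $f_1(P)$ is quasilocal of every order and contributes no difficulty.

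For the main term $f_2$, pick an integer $N \ge m+2$ and set $g(x) := f_2(x)/x^N$. Because $f_2$ vanishes identically near $0$, the quotient $g$ extends smoothly to all of $\IR$ (by zero near the origin). A direct Leibniz computation, using $|f_2^{(l)}(x)| \le C_l(1+|x|)^{m-l}$, yields $|g^{(l)}(x)| \le C'_l(1+|x|)^{m-N-l}$ for every $l$, i.e., $g \in \mathcal{S}^{m-N}(\IR)$. Since $m-N \le -2$, in particular $g \in L^1(\IR)$, so $\hat g$ is a bounded continuous function and by the functional calculus $f_2(P) = P^N g(P)$.

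Next set $n := N - m + 1 \ge 3$ and verify the hypotheses of Corollary \ref{cor:lth_derivative_integrable_defines_quasilocal_operator} for $q := \hat g$. For each $0 \le j \le n$ we have $\hat g^{(j)}(t) = \widehat{(-ix)^j g(x)}(t)$, and the same Leibniz estimate shows that the $K$-th derivative of $(-ix)^j g(x)$ is $O((1+|x|)^{j+m-N-K})$, hence lies in $L^1(\IR)$ whenever $K \ge j+m-N+2$. Taking $K = 3$ is admissible precisely when $j \le N-m+1 = n$; by Fourier duality this gives $|t|^3 |\hat g^{(j)}(t)| \le C_j$, whence $|t|\hat g^{(j)}(t) = O((1+|t|)^{-2}) \in L^1(\IR)$ for all $j = 0, 1, \ldots, n$.

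Corollary \ref{cor:lth_derivative_integrable_defines_quasilocal_operator} then shows that $g(P) = \frac{1}{\sqrt{2\pi}}\int_\IR \hat g(t) e^{itP} dt$ is a quasilocal operator of order $-nk+k = (m-N)k$, i.e., $g(P)\colon H^{lk}(E) \to H^{lk+(N-m)k}(E)$ for every $l \in \IZ$. Since $P^N \in \Psi\mathrm{DO}^{Nk}(E)$ is quasilocal of order $Nk$ by Proposition \ref{prop:PsiDOs_filtered_algebra} and Corollary \ref{cor:P_quasiloc}, and composition of quasilocal operators is quasilocal by \cite[Proposition 5.2]{roe_index_1}, the composition $f_2(P) = P^N g(P)\colon H^{lk}(E) \to H^{lk-mk}(E)$ is quasilocal of order $mk$; combined with $f_1(P)$ this proves the claim. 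The main obstacle is the tight bookkeeping between $m$, $N$, and $n$: the balance forced by $n = N - m + 1$ exactly matches the smoothness-and-decay budget provided by $g \in \mathcal{S}^{m-N}$, and there is essentially no slack in the estimates.
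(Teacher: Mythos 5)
Your decomposition $f = f_1 + f_2$ with $f_2 = x^N g$, $g \in \mathcal{S}^{m-N}(\IR)$, and the resulting order count forcing $n := N - m + 1$ in Corollary~\ref{cor:lth_derivative_integrable_defines_quasilocal_operator} are the right shape, but the verification of that corollary's hypotheses fails at the top of the range $0 \le j \le n$. A function $g \in \mathcal{S}^{m-N}(\IR)$ generically decays only at the polynomial rate $(1+|x|)^{m-N}$, so $(-ix)^j g(x) = O\big((1+|x|)^{j+m-N}\big)$ is in $L^1$ only for $j \le N-m-2 = n-3$; for $j = n-2, n-1, n$ the distributional derivative $\hat g^{(j)}$ need not be a function at all, because $\hat g$ has only finite smoothness near $t=0$. (Take, e.g., $g$ agreeing near infinity with $(1+x^2)^{(m-N)/2}$ and vanishing near the origin, consistent with your cutoff; then $\hat g$ agrees modulo smooth errors with a Bessel-type function whose derivatives of order $\ge N-m-1$ genuinely contain Dirac-type terms at $t=0$.) The ``Fourier duality'' step is where this bites: from $[(-ix)^j g]^{(3)} \in L^1$ you learn only that $(it)^3\hat g^{(j)}(t)$ is a bounded continuous function; multiplication by $t^3$ annihilates any singular part $c_0\delta + c_1\delta' + c_2\delta''$ of $\hat g^{(j)}$ supported at the origin, so this does not show that $\hat g^{(j)}$ is a function, and the asserted bound $|t|\hat g^{(j)}(t) = O\big((1+|t|)^{-2}\big)$ is false near $t = 0$: from $|\hat g^{(j)}(t)| \le C_j|t|^{-3}$ you get only $|t|\,|\hat g^{(j)}(t)| \le C_j|t|^{-2}$, which blows up as $t \to 0$.

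This is not a removable technicality. The choice $n = N-m+1$ is forced by the order count: any smaller $n$ in Corollary~\ref{cor:lth_derivative_integrable_defines_quasilocal_operator} gives $g(P)$ order $(1-n)k$ strictly larger than $(m-N)k$, hence $f(P)$ order strictly larger than $mk$, and the sharp estimate is lost. Increasing $N$ does not help either, because the deficit is always concentrated at the last two or three values of $j$ independently of $N$. The paper's remark that one needs the \emph{techniques} from the proof of Corollary~\ref{cor:lth_derivative_integrable_defines_quasilocal_operator} --- and not merely its statement --- to adapt \cite[Theorem 5.5]{roe_index_1} points at exactly this issue: the distributional contributions at $t = 0$ survive the integration by parts and give rise to correction terms proportional to powers of $P$, which have to be identified as pseudodifferential operators of the correct order and absorbed, rather than ruled out by an integrability hypothesis on $|t|\hat g^{(j)}(t)$ that does not in fact hold for $j$ near $n$.
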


The proof of it is analogous but more technical since the operators $e^{itP}$ are only quasilocal (Lemma \ref{lem:exp(itP)_quasilocal}) contrary to the operators $e^{itD}$ which have finite propagation (see, e.g., \cite[Theorem 1.3]{roe_index_1}). Moreover, we need Corollary \ref{cor:lth_derivative_integrable_defines_quasilocal_operator} and the techniques developed in its proof for the adaption of \cite[Theorem 5.5]{roe_index_1} to our case.

At last, let us turn our attention to a result regarding differences $\psi(P) - \psi(P^\prime)$ of operators defined via functional calculus. We will need the following proposition in the proof of Proposition \ref{prop:same_symbol_same_k_hom_class} where we show that elliptic pseudodifferential operators with the same symbol define the same uniform $K$-homology class.

\begin{prop}[{\cite[Proposition 10.3.7]{higson_roe}}\footnote{The cited proposition requires additionally a common invariant domain for $P$ and $P^\prime$. In our case here this domain is given by, e.g., $H^\infty(E)$.}]\label{prop:norm_estimate_difference_func_calc}
Let $\psi$ be a bounded Borel function whose distributional Fourier transform $\hat{\psi}$ is such that the product $s\hat{\psi}(s)$ is in $L^1(\IR)$.

If $P$ and $P^\prime$ are symmetric and elliptic pseudodifferential operators of positive order $k \ge 1$ such that their difference $P - P^\prime$ has order $qk$, then we have for all $l \in \IZ$
\[\|\psi(P) - \psi(P^\prime)\|_{lk,lk-qk} \le C_\psi \cdot \|P - P^\prime\|_{lk,lk-qk},\]
where the constant $C_\psi = \frac{1}{2\pi} \int |s \hat{\psi}(s)| ds$ does not depend on the operators.
\end{prop}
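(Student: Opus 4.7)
The plan is to imitate the strategy of \cite[Proposition 10.3.7]{higson_roe}: express $\psi(P)$ via Fourier inversion as $\psi(P)=\tfrac{1}{2\pi}\int\hat\psi(s)\,e^{isP}\,ds$ (understood in the strong/weak sense and, for $\psi$ only bounded Borel, using that $s\hat\psi\in L^1$ makes the derived estimates converge absolutely), and similarly for $P'$. This reduces the problem to controlling the difference $e^{isP}-e^{isP'}$ in the operator norm $\|\largecdot\|_{lk,lk-qk}$.

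Next I would use the standard interpolation trick. On a common invariant smooth domain, e.g.\ $H^\infty(E)$, consider the one-parameter family $x\mapsto e^{isxP}e^{is(1-x)P'}$ for $x\in[0,1]$. Differentiating and using that $P$ commutes with $e^{isxP}$ and $P'$ with $e^{is(1-x)P'}$ (all by functional calculus) gives the Duhamel-type identity
\[
e^{isP}-e^{isP'} \;=\; is\int_{0}^{1} e^{isxP}\,(P-P')\,e^{is(1-x)P'}\,dx.
\]
Factoring the integrand as the composition
\[
H^{lk}(E)\xrightarrow{\,e^{is(1-x)P'}\,}H^{lk}(E)\xrightarrow{\,P-P'\,}H^{lk-qk}(E)\xrightarrow{\,e^{isxP}\,}H^{lk-qk}(E),
\]
we get $\|e^{isP}-e^{isP'}\|_{lk,lk-qk}\le|s|\cdot\|P-P'\|_{lk,lk-qk}$, provided each exponential has norm $1$ on the appropriate Sobolev space. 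Inserting this into the Fourier representation yields
\[
\|\psi(P)-\psi(P')\|_{lk,lk-qk}\le\frac{1}{2\pi}\int|\hat\psi(s)|\cdot|s|\,ds\cdot\|P-P'\|_{lk,lk-qk}=C_\psi\|P-P'\|_{lk,lk-qk}.
\]

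The main obstacle is justifying that $e^{isP}$ (and $e^{isP'}$) act as unitaries, or at least as isometries of norm $1$, on both $H^{lk}(E)$ and $H^{lk-qk}(E)$. This is exactly where Lemma \ref{lem:symmetric_on_Sobolev} and Proposition \ref{prop:elliptic_PDO_essentially_self-adjoint} come in: equipping each $H^{lk}(E)$ with the inner product $\langle\largecdot,\largecdot\rangle_{H^{lk},P}$ (respectively $\langle\largecdot,\largecdot\rangle_{H^{lk},P'}$) makes $P$ (respectively $P'$) essentially self-adjoint, so $e^{isP}$ and $e^{isP'}$ are unitary in those norms, and these norms are equivalent to the standard Sobolev norms (with equivalence constants independent of the scale on which we measure). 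This forces us to absorb equivalence constants into the final inequality; to obtain the clean bound with exactly $C_\psi$ stated, one picks the inner products adapted to $P$ on the left copy and to $P'$ on the right copy of each Sobolev space in the factorization above, so that $\|e^{isxP}\|=1$ and $\|e^{is(1-x)P')}\|=1$ on the nose.

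Finally, there is a small measure-theoretic point: for general bounded Borel $\psi$ the Fourier representation of $\psi(P)$ must be interpreted in a distributional/weak-operator sense, so one should first verify the estimate on a dense set (say for Schwartz $\psi$, where everything converges absolutely by Corollary \ref{cor:schwartz_function_of_PDO_quasilocal_smoothing} and the discussion of \eqref{eq:schwartz_function_of_PDO}) and then extend by approximation, using the hypothesis $s\hat\psi(s)\in L^1(\IR)$ to control the resulting integrals uniformly. The estimate above is linear in $\|P-P'\|_{lk,lk-qk}$ and the constant $C_\psi=\tfrac{1}{2\pi}\int|s\hat\psi(s)|\,ds$ depends only on $\psi$, as asserted.
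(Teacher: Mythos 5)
Your proof is correct and follows essentially the same route as the paper: Fourier inversion to reduce to $e^{isP}-e^{isP'}$, the Duhamel identity (your $[0,1]$ parametrization is the paper's $\int_0^s$ after the substitution $t=sx$), factorization through the relevant Sobolev spaces, and an approximation argument for general bounded Borel $\psi$. The one stylistic difference is that the paper works with sesquilinear pairings $\langle(\psi(P)-\psi(P'))u,v\rangle_{H^{lk-qk}}$ against test sections $u,v\in C_c^\infty(E)$ rather than with the operator norm directly, but this is cosmetic. You also correctly flag and resolve a point the paper leaves implicit: $e^{isP}$ is unitary on $H^{lk}(E)$ only for the $P$-adapted inner product of Lemma \ref{lem:symmetric_on_Sobolev}, and $e^{isP'}$ only for the $P'$-adapted one, so the norms of the source and target Sobolev spaces in the factorization must be chosen accordingly to make both exponential factors have norm exactly $1$ and obtain the clean constant $C_\psi$.
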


\begin{proof}
We first assume that $\hat{\psi}$ is compactly supported and that $s\hat{\psi}(s)$ is a smooth function. Then we use the result \cite[Proposition 10.3.5]{higson_roe}\footnote{Though stated there only for differential operators, its proof also works word-for-word for pseudodifferential ones.}, which is a generalization of Equation \ref{eq:schwartz_function_of_PDO} to more general functions than Schwartz functions, and get
\[\Big\langle \big( \psi(P) - \psi(P^\prime) \big) u, v \Big\rangle_{H^{lk-qk}} = \frac{1}{2 \pi} \int \left\langle \big( e^{isP} - e^{isP^\prime} \big) u, v \right\rangle_{H^{lk-qk}} \cdot \hat{\psi}(s) ds,\]
for all $u,v \in C_c^\infty(E)$. From the Fundamental Theorem of Calculus we get
\[\left\langle \big( e^{isP} - e^{isP^\prime} \big) u, v \right\rangle_{H^{lk-qk}} = i \cdot \int_0^s \left\langle \big( e^{itP} (P - P^\prime) e^{i(s-t)P^\prime} \big) u, v \right\rangle_{H^{lk-qk}} dt\]
and therefore
\[\left| \left\langle \big( e^{isP} - e^{isP^\prime} \big) u, v \right\rangle_{H^{lk-qk}}\right| \le s \cdot \|P - P^\prime\|_{lk,lk-qk} \cdot \|u\|_{lk} \cdot \|v\|_{lk-qk}.\]
Putting it all together, we get
\[\left| \Big\langle \big( \psi(P) - \psi(P^\prime) \big) u, v \Big\rangle_{H^{lk-qk}} \right| \le C_\psi \cdot \|P - P^\prime\|_{lk,lk-qk} \cdot \|u\|_{lk} \cdot \|v\|_{lk-qk}.\]

Now the general claim follows from an approximation argument analogous as at the end of the proof of \cite[Proposition 10.3.5]{higson_roe}.
\end{proof}

\chapter{Uniform \texorpdfstring{$K$}{K}-homology}\label{chapter:uniform_k_homology}

In the last chapter we have defined pseudodifferential operators on manifolds of bounded geometry and this thesis is about their indices. Up to now $K$-homology was the preferred theory to contain the classes of elliptic operators and various index theorem use $K$-homology. So we could show now that elliptic pseudodifferential operators define naturally classes in $K$-homology and then use all the theory that is developed for it to investigate their indices. But we will do something else: we will first recall the definition of \emph{uniform} $K$-homology from \cite{spakula_uniform_k_homology} and then show that elliptic pseudodifferential operators define naturally classes \emph{there}.

Our reasons for doing this are the following: in the Definition \ref{defn:pseudodiff_operator} of pseudodifferential operators we have the Uniformity Condition \eqref{eq:uniformity_defn_PDOs} which states that the bounds on the symbols of these operators are uniform with respect to the location in the manifold where we compute the symbol locally. This means that our pseudodifferential operators behave equally well on every local coordinate chart of the manifold. But the definition of usual $K$-homology takes this not into account, but uniform $K$-homology does. So it seems more appropriate to work with uniform $K$-homology when working with pseudodifferential operators.

Recall that we have said that if the manifold is compact, we do have an index map $K_\ast(M) \to K_\ast(\pt) \cong \IZ$ giving us the analytic index, and that we have said that we do not have something similar in the case that $M$ is not compact. But recall furthermore that in Chapter \ref{chap:quasilocal_smoothing_operators} we have defined analytic index maps for amenable manifolds via an averaging procedure. Now the uniformity in the definition of uniform $K$-homology enables us to do the same for it, i.e., to define analytic index maps $K_0^u(M) \to \IR$ via an analogous averaging procedure. So uniform $K$-homology is exactly what we need for an index theory similar to the one developed in Chapter \ref{chap:quasilocal_smoothing_operators}. And this is something that would not be possible with the usual $K$-homology.

\Spakula introduced uniform $K$-homology in \cite{spakula_uniform_k_homology} and developed some of its basic properties there (like the existence of a Mayer--Vietoris sequence or proving a version of Paschke duality for it). But there is a crucial properties that he did not treat: the existence of an external product. Recall that the external product is the major ingredient in the theory of usual $K$-homology since from its existence we may deduce Bott periodicity and homotopy invariance of $K$-homology and combining these we may prove the Atiyah--Singer index theorem. So our major task in this chapter will be to construct an external product for uniform $K$-homology and then use it to deduce homotopy invariance of it, which will be a major ingredient in our later discussion of indices of pseudodifferential operators.

We will of course start this chapter with recalling the definition and basic properties of uniform $K$-homology. We will use this opportunity to change slightly the definition of uniform $K$-homology in order to firstly encompass locally compact, separable metric spaces (\Spakula gave his definition only for proper spaces) and secondly correct an error in one of \v{S}pakulas proofs. We will elaborate more on this directly in the section where we discuss the definition of uniform $K$-homology. But we may remark now that all the results that \Spakula proved do also hold with our changed definition, i.e., we propose our definition as the one that should have been given from the start.

After discussing the basics we will show that elliptic pseudodifferential operators define classes in it and then we will construct analytic index maps for amenable manifolds. And only at the end we will treat the external product.

\section{Gradings and multigradings}\label{sec:gradings_multigradings}

Before we discuss the definition of uniform $K$-homology, we need to discuss graded and multigraded vector spaces. Briefly speaking, a graded vector space $V$ is one which is provided with a decomposition $V = V^+ \oplus V^-$, and a multigraded one is a graded vector space together with a bunch of multigrading operators anti-commuting with each other and with the grading operator from the decomposition $V = V^+ \oplus V^-$. Though this seems at first glance as a quite unusual constellation, it happens regularly in index theory: Dirac bundles naturally come with the action of a Clifford algebra and this Clifford action gives us exactly the multigrading. And since Clifford algebras are naturally assigned to Dirac bundles, it just makes sense to take their actions into account when defining $K$-homology classes.

All the conventions that we use for the gradings, multigrading and corresponding (multi-)grading operators are from \cite[Appendix A]{higson_roe}.

\subsection*{Gradings}

In this section we revisit the notion of $\IZ_2$-graded vector spaces, algebras and tensor products. For our convenience, we will drop the ``$\IZ_2$'' from our notation.

\begin{defn}[Graded vector spaces]
A \emph{grading} of a vector space $V$ is a decomposition into a direct sum $V = V^+ \oplus V^-$, called the \emph{positive} and \emph{negative} parts of the graded vector space $V$.
\end{defn}

Given a graded vector space $V$, its \emph{grading operator $\epsilon$} is the unique involution whose $\pm  1$-eigenspaces are exactly $V^\pm$.

A \emph{graded vector bundle} is a vector bundle $E$ which is fiberwise a graded vector space and such that we have a decomposition $E = E^+ \oplus E^-$ as vector bundle.

For a Hilbert space $H$ we require the decomposition $H = H^+ \oplus H^-$ to be one into closed, orthogonal subspaces. This is equivalent to the grading operator $\epsilon$ being a selfadjoint unitary.

\begin{defn}[Opposite grading $V^\op$]\label{defn_opposite_grading}
If $V$ is a graded vector space, then its \emph{opposite} is the graded vector space $V^\op$ whose underlying vector space is $V$, but with the reversed grading, i.e., $(V^\op)^+ = V^-$ and $(V^\op)^- = V^+$. This is equivalent to $\epsilon_{V^\op} = -\epsilon_V$.
\end{defn}

An operator on a graded vector space $V$ is called \emph{even} if it maps $V^\pm$ again to $V^\pm$, and it is called \emph{odd} if it maps $V^\pm$ to $V^\mp$. Equivalently, an operator is even if it commutes with the grading operator of $V$, and it is odd if it anti-commutes with the grading operator.

\begin{defn}[Graded algebras]
A \emph{graded algebra} is an algebra $A$ provided with a direct sum decomposition $A = A^+ \oplus A^-$ such that $A^\pm \cdot A^+ \subset A^\pm$ and $A^\pm \cdot A^- \subset A^\mp$.

If $A$ is a $^\ast$-algebra, we require the involution to be an even endomorphism.
\end{defn}

\begin{example}
If $V$ is a graded vector space, its endomorphism algebra $\End(V)$ becomes in a natural way a graded algebra.
\end{example}

An element $a \in A$ is called \emph{homogeneous}, if it belongs either to $A^+$ or to $A^-$. In that case its \emph{degree $\deg(a)$} is defined as $\deg(a) = 0$ if $a \in A^+$, and $\deg(a) = 1$ if $a \in A^-$. It is convenient to define $\deg(0) = 0$.

\begin{defn}[Graded commutator]
If $a$ and $b$ are two homogeneous elements of a graded algebra $A$, then their \emph{graded commutator} is defined as
\begin{equation*}
\label{eq:defn_graded_commutator}
[a,b]_\epsilon := ab - (-1)^{\deg(a)\deg(b)} ba
\end{equation*}
and this is extended to the whole algebra $A$ by linearity.
\end{defn}

\begin{defn}[Graded trace]\label{defn:graded_trace}
If $A$ is an operator of trace class on a graded Hilbert space $H$, then the \emph{graded trace} of $A$ is defined as
\begin{equation*}
\label{eq:defn_graded_trace}
\trace_\epsilon(A) := \trace(\epsilon A).
\end{equation*}
\end{defn}

If $A$ and $B$ are both trace class operators, then one can show that the graded trace vanishes on their graded commutator, i.e.,
\begin{equation*}
\label{eq:graded_trace_vanishes_graded_commutator}
\trace_\epsilon([A,B]_\epsilon) = 0.
\end{equation*}

\begin{defn}[Graded tensor products]
Let $V_1$ and $V_2$ be two graded vector spaces (resp. Hilbert spaces). Then the \emph{graded tensor product $V = V_1 \hatotimes V_2$} of $V_1$ and $V_2$ is the algebraic tensor product (resp. the Hilbert space tensor product) of $V_1$ and $V_2$ equipped with the grading operator $\epsilon_V := \epsilon_{V_1} \otimes \epsilon_{V_2}$, i.e.,
\[V^+ = (V_1^+ \otimes V_2^+) \oplus (V_1^- \otimes V_2^-) \text{ and } V^- = (V_1^+ \otimes V_2^-) \oplus (V_1^- \otimes V_2^+).\]

If we have two graded algebras $A_1$ and $A_2$, then their graded vector space tensor product $A = A_1 \hatotimes A_2$ becomes a graded algebra by setting
\[(a_1 \hatotimes a_2)(b_1 \hatotimes b_2) := (-1)^{\deg(a_2)\deg(b_1)} (a_1 b_1) \hatotimes (a_2 b_2)\]
for homogeneous elements and extending it by linearity.
\end{defn}

So if $a \in A_1$ and $b \in A_2$ are both odd, we get
\[(a \hatotimes 1 + 1 \hatotimes b)^2 = a^2 \hatotimes 1 + 1 \hatotimes b^2.\]

\begin{lem}
For graded and finite-dimensional vector spaces $V_1$ and $V_2$ we have a canonical isomorphism
\[\End(V_1) \hatotimes \End(V_2) \cong \End(V_1 \hatotimes V_2).\]
\end{lem}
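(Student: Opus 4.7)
The plan is to write down the natural map explicitly and then reduce the question of bijectivity to the well-known ungraded statement. Concretely, I would define
\[\Phi \colon \End(V_1) \hatotimes \End(V_2) \longrightarrow \End(V_1 \hatotimes V_2)\]
on homogeneous simple tensors by
\[\Phi(a \hatotimes b)(v_1 \hatotimes v_2) := (-1)^{\deg(b)\deg(v_1)}\, (a v_1) \hatotimes (b v_2)\]
for homogeneous $a, b, v_1, v_2$, extending by linearity. The sign is exactly the one dictated by the Koszul convention in the definition of the graded tensor product of algebras, so that $\Phi$ will turn out to be a homomorphism of graded algebras.

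First I would verify that $\Phi$ is well-defined (it is bilinear in $a$ and $b$) and that $\Phi(a \hatotimes b)$ is an even operator on $V_1 \hatotimes V_2$ precisely when $\deg(a) + \deg(b)$ is even, so $\Phi$ is a grading-preserving linear map. Next I would check that $\Phi$ respects products: for homogeneous $a_1, b_1 \in \End(V_1)$ and $a_2, b_2 \in \End(V_2)$ one computes, using the sign rule
\[(a_1 \hatotimes a_2)(b_1 \hatotimes b_2) = (-1)^{\deg(a_2)\deg(b_1)}\, (a_1 b_1) \hatotimes (a_2 b_2),\]
that $\Phi\big((a_1 \hatotimes a_2)(b_1 \hatotimes b_2)\big)$ and $\Phi(a_1 \hatotimes a_2)\,\Phi(b_1 \hatotimes b_2)$ agree on an arbitrary homogeneous simple tensor $v_1 \hatotimes v_2$; the two Koszul signs that appear combine to give exactly the sign in the displayed product. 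Thus $\Phi$ is a homomorphism of graded algebras.

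Finally I would show that $\Phi$ is bijective. Forgetting the grading, $\Phi$ becomes the familiar canonical linear map $\End(V_1) \otimes \End(V_2) \to \End(V_1 \otimes V_2)$ twisted by a sign on each homogeneous basis element; in particular, if one fixes homogeneous bases of $V_1$ and $V_2$ and the associated bases of matrix units in each endomorphism algebra, $\Phi$ sends a basis to a basis up to signs $\pm 1$. Hence $\Phi$ is a linear isomorphism because its ungraded analogue is, using the finite-dimensionality of $V_1$ and $V_2$. Naturality in $V_1$ and $V_2$ is immediate from the explicit formula, giving the claimed canonical isomorphism. The only genuine issue is bookkeeping of Koszul signs; there is no analytic content, so I would not expect a serious obstacle beyond writing the signs carefully.
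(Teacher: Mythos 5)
Your proof is correct, and since the paper states this lemma without proof (treating it as a standard fact about graded tensor products, cf.\ \cite[Appendix A]{higson_roe}), your argument supplies exactly the expected verification: define the map with the Koszul sign, check it is a graded algebra homomorphism by matching the two sign bookkeepings, and deduce bijectivity from the ungraded isomorphism using finite-dimensionality.
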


\subsection*{Multigradings}

\begin{defn}[Multigraded Hilbert spaces]
Let $p \in \IN_0$. We define a \emph{$p$-multigraded Hilbert space} to be a graded Hilbert space which is equipped with $p$ odd unitary operators $\epsilon_1, \ldots, \epsilon_p$ such that $\epsilon_i \epsilon_j + \epsilon_j \epsilon_i = 0$ for $i \not= j$, and $\epsilon_j^2 = -1$ for all $j$.
\end{defn}

As we have already said, multigraded Hilbert spaces arise naturally via an action of a Clifford algebra. In fact, a $p$-multigraded Hilbert space is the same as a graded module over the Clifford algebra $\IC_p$. It will be useful to regard this module structure as a \emph{right} action of $\IC_p$ on $H$.

Note that a $0$-multigraded Hilbert space is just a graded Hilbert space in the sense of the above subsection. We make the convention that a $(-1)$-multigraded Hilbert space is an ungraded one.

\begin{lem}\label{lem:tensor_prod_multigraded}
Let $p_1, p_2 \ge 0$. Then the graded tensor product $H_1 \hatotimes H_2$ of a $p_1$-multigraded Hilbert space $H_1$ and a $p_2$-multigraded Hilbert space $H_2$ is $(p_1 + p_2)$-multigraded, if we let $\epsilon_j$ act on the tensor product as
\[\epsilon_j(v_1 \otimes v_2) := (-1)^{\deg(v_2)}\epsilon_j(v_1) \otimes v_2\]
for $1 \le j \le p_1$, and for $1 \le j \le p_2$ we let $\epsilon_{p_1 + j}$ act as
\[\epsilon_{p_1 + j}(v_1 \otimes v_2) := v_1 \otimes \epsilon_{p_1 + j}(v_2).\]
\end{lem}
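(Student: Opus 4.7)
The statement is a verification lemma, so my plan is to check, one by one, that the proposed operators $\epsilon_1,\dots,\epsilon_{p_1+p_2}$ on $H_1\hatotimes H_2$ satisfy the four conditions defining a $(p_1+p_2)$-multigrading: oddness with respect to the tensor-product grading, unitarity, the relation $\epsilon_j^2=-1$, and mutual anticommutation. The first two are immediate. For oddness: if $j\le p_1$ then $\epsilon_j(v_1)$ has degree $\deg(v_1)+1$, so $\epsilon_j(v_1)\otimes v_2$ has total degree one greater than $v_1\otimes v_2$, and the scalar $(-1)^{\deg(v_2)}$ is irrelevant for this. The case $j>p_1$ is symmetric. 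Unitarity on elementary tensors follows because a sign together with a unitary on one factor and the identity on the other is unitary, and this extends to $H_1\hatotimes H_2$ by continuity/linearity.

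For $\epsilon_j^2=-1$ the point is that the scalar prefactor squares to $1$: for $j\le p_1$,
\[
\epsilon_j^2(v_1\otimes v_2)=\epsilon_j\bigl((-1)^{\deg(v_2)}\epsilon_j(v_1)\otimes v_2\bigr)=(-1)^{2\deg(v_2)}\epsilon_j^2(v_1)\otimes v_2=-v_1\otimes v_2,
\]
using that $v_2$ still has degree $\deg(v_2)$ after the first application, and $\epsilon_j^2=-1$ on $H_1$. For $j>p_1$ the computation is even simpler since no sign is introduced.

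The real content is the anticommutation $\epsilon_i\epsilon_j+\epsilon_j\epsilon_i=0$ for $i\ne j$, and this splits into three cases. When both indices are $\le p_1$, each application produces a factor $(-1)^{\deg(v_2)}$, so the two factors combine to $(-1)^{2\deg(v_2)}=1$ and the relation reduces to the anticommutation in $H_1$. The case where both indices exceed $p_1$ reduces symmetrically to $H_2$. The mixed case $i\le p_1<j$ is the one where the sign convention earns its keep: computing $\epsilon_i\epsilon_j(v_1\otimes v_2)$ the inner $\epsilon_j$ shifts the degree of $v_2$ to $\deg(v_2)+1$, so the outer $\epsilon_i$ contributes $(-1)^{\deg(v_2)+1}$; whereas in $\epsilon_j\epsilon_i(v_1\otimes v_2)$ the outer $\epsilon_j$ does not change the sign and we only get $(-1)^{\deg(v_2)}$. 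The two contributions differ by exactly a sign, so they cancel.

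I do not expect any genuine obstacle here; the lemma is a bookkeeping statement, and the only point requiring care is precisely the mixed-index anticommutation, where the sign $(-1)^{\deg(v_2)}$ in the definition of $\epsilon_j$ for $j\le p_1$ is forced by the Koszul sign rule built into the graded tensor product. I would organise the write-up as a short list of four verifications, noting before starting that it suffices to check everything on elementary tensors of homogeneous vectors and extend by linearity and continuity.
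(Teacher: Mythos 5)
Your proof is correct. The paper states this lemma without proof, deferring to the conventions of Higson--Roe, so there is no argument to compare against; your direct verification on elementary tensors of homogeneous vectors, split into the three index cases, is exactly what is expected. One small remark on the unitarity step: the cleanest way to see it is to note that for $j\le p_1$ the operator is precisely $\epsilon_j^{H_1}\otimes\epsilon_{H_2}$, where $\epsilon_{H_2}$ is the selfadjoint unitary grading operator of $H_2$, so the operator is a Hilbert-space tensor product of unitaries; your phrasing (``a sign together with a unitary on one factor'') amounts to this, since the sign is constant on each graded piece and the pieces are orthogonal, but identifying the sign with $\epsilon_{H_2}$ makes it immediate.
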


Note that the above sign convention is compatible with the one for graded algebras, if we think of the multigrading operators as acting from the right.

\begin{defn}[Multigraded operators]
Let $H$ be a $p$-multigraded Hilbert space. Then an operator on $H$ will be called \emph{multigraded}, if it commutes with the multigrading operators $\epsilon_1, \ldots, \epsilon_p$ of $H$.
\end{defn}

\begin{prop}\label{prop:multigraded_categories_equiv}
The categories of $p$-multigraded and $(p+2)$-multigraded Hilbert spaces are equivalent for $p \ge -1$.
\end{prop}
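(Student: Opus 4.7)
The plan is to realize the equivalence as a Morita-type equivalence coming from the graded algebra isomorphism $\IC_{p+2} \cong \IC_p \hatotimes \IC_2$ together with the fact that $\IC_2$ acts (as a graded algebra) by its unique graded irreducible module. Concretely, fix once and for all the standard $2$-multigraded Hilbert space $S = \IC^2$ with grading $\epsilon = \operatorname{diag}(1,-1)$ and multigrading operators
\[
\epsilon_1^S = \begin{pmatrix} 0 & -i \\ -i & 0 \end{pmatrix}, \qquad \epsilon_2^S = \begin{pmatrix} 0 & -1 \\ 1 & 0 \end{pmatrix},
\]
which one verifies satisfy $(\epsilon_j^S)^2 = -1$, $\epsilon_1^S \epsilon_2^S + \epsilon_2^S \epsilon_1^S = 0$, and both anti-commute with $\epsilon$. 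A short computation gives $i\epsilon_1^S \epsilon_2^S = \operatorname{diag}(1,-1)$, so its $+1$-eigenspace is the one-dimensional even subspace $S_+ = \IC \cdot e_+$.

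First I would define the functor $F \colon p\text{-mult} \to (p+2)\text{-mult}$ by $F(V) := V \hatotimes S$, using the graded tensor product of multigraded Hilbert spaces from Lemma \ref{lem:tensor_prod_multigraded}; the first $p$ multigrading operators come from $V$ and the last two from $S$. In the opposite direction I would define $G \colon (p+2)\text{-mult} \to p\text{-mult}$ as follows: given a $(p+2)$-multigraded Hilbert space $H$, form the even, self-adjoint involution $\Theta := i\epsilon_{p+1}\epsilon_{p+2}$ (one checks $\Theta^2 = 1$ and $[\Theta,\epsilon]=0$), and set $G(H) := \ker(\Theta - 1)$, the $+1$-eigenspace. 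Since $\epsilon_1,\ldots,\epsilon_p$ commute with both $\epsilon_{p+1}$ and $\epsilon_{p+2}$ only through double anti-commutation, they actually commute with $\Theta$, so they preserve $G(H)$ and restrict there to $p$ odd, pairwise anti-commuting unitaries squaring to $-1$. The grading $\epsilon$ also preserves $G(H)$, so $G(H)$ is genuinely $p$-multigraded.

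Next I would verify that $F$ and $G$ are mutually quasi-inverse. For $G \circ F \cong \id$: on $V \hatotimes S$ the operator $\Theta$ acts as $1 \hatotimes \operatorname{diag}(1,-1)$, so its $+1$-eigenspace is $V \otimes S_+$, and the map $V \to V \otimes S_+$, $v \mapsto v \otimes e_+$, is an isometric isomorphism of $p$-multigraded Hilbert spaces (the signs in the graded tensor product contribute nothing because $e_+$ is even). For $F \circ G \cong \id$: using that $\epsilon_{p+1}$ anti-commutes with $\Theta$, it restricts to a unitary isomorphism $H_+ \to H_-$, so the map $H_+ \hatotimes S \to H$ sending $v \otimes e_+ \mapsto v$ and $v \otimes e_- \mapsto \epsilon_{p+1}(v)$ is an isomorphism; I would then check by direct computation that it intertwines all $p+2$ multigrading operators and the grading, where the sign rules of Lemma \ref{lem:tensor_prod_multigraded} match the sign picked up when commuting $\epsilon_{p+1}$ past the grading.

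The edge case $p=-1$ requires a small separate argument, since by convention a $(-1)$-multigraded Hilbert space is an ungraded one. Here I would define $F(V) := V \oplus V$ graded with the two summands as $\pm$-parts and $\epsilon_1$ acting as $\begin{pmatrix} 0 & -1 \\ 1 & 0 \end{pmatrix}$ (which is odd, unitary, squares to $-1$), and $G(H) := H^+$ with its Hilbert space structure; the isomorphism $i\epsilon_1 \colon H^+ \to H^-$ again makes the two functors inverse to one another. The only real obstacle I anticipate is the bookkeeping of signs in the graded tensor product: one must check carefully that the Koszul sign $(-1)^{\deg(v_2)}$ in Lemma \ref{lem:tensor_prod_multigraded} is compatible with the identification $H \cong H_+ \hatotimes S$ above, and that the construction respects the involutions on operators (so that the equivalence is in fact an equivalence of categories of Hilbert spaces, not just of vector spaces).
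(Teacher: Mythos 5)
Your proof is correct and takes the same approach as the paper's brief sketch: the paper also doubles the Hilbert space (written there as $H_1 \oplus H_1^{\op}$ with explicit $2\times 2$ block matrices for the two new multigrading operators) and inverts by passing to the $+1$-eigenspace of $e := i\epsilon_{p+1}\epsilon_{p+2}$. Packaging the doubling as $V \hatotimes S$ for a fixed standard $2$-multigraded module $S$ is a pleasant refinement, since the Koszul sign from Lemma \ref{lem:tensor_prod_multigraded} then automatically makes $\epsilon_1,\ldots,\epsilon_p$ anti-commute with the two new operators on the $V^{\op}$-summand of $V \hatotimes S \cong V \oplus V^{\op}$, a sign the paper's sketch leaves for the reader to supply.
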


For $p \ge 0$ we can see this equivalence in the following way: if $H_1$ is $p$-multigraded, then we define $H := H_1 \oplus H_1^{\op}$. $H$ is $p$-multigraded and we may make it $(p+2)$-multigraded by introducing
\[\epsilon_{p + 1} := \begin{pmatrix}0 & 1 \\ -1 & 0\end{pmatrix} \text{ and } \epsilon_{p + 2} := \begin{pmatrix}0 & i \\ i & 0\end{pmatrix}.\]
Conversely, if $H$ is $(p+2)$-multigraded, we first define $e := i \epsilon_{p+1} \epsilon_{p+2}$. Then $e$ is even, self-adjoint and squares to $1$, i.e., $H$ decomposes as an orthogonal direct sum of graded eigenspaces $H^{+1} \oplus H^{-1}$ with respect to $e$. Since $e$ commutes with $\epsilon_1, \ldots, \epsilon_p$, $H^{+1}$ is a $p$-multigraded Hilbert space. This constructions are inverse to each other up to multigraded unitary isomorphism and the case $p = -1$ is similar.

\section{Definition and basic properties}

Let us now get to the definition of uniform $K$-homology. Note that we have already encountered the uniformity condition that we will impose on the Fredholm modules in Definition \ref{defn:uniform_operators_manifold}.

Our definition will be based on multigraded Fredholm modules, since we have already said this is more suited for the study of index theory since it encompasses the natural actions of Clifford algebras that we have for Dirac bundles. But \Spakula who was the first to define uniform $K$-homology did not use multigrading for his definition, but he only defined $K_0^u$ and $K_1^u$. This is not a real restriction since usual $K$-homology also has, due to Bott periodicity, only two essentially different groups: $K_0$ and $K_1$. We mention this since if the reader wants to look up the original reference, he has to keep in mind that we work with multigraded modules, but \Spakula not.

At last, let us mention that \Spakula gives the definition of uniform $K$-homology only for proper\footnote{That means that all closed balls are compact.} metric spaces. The reason for this is that certain results of him (concretely, Sections 8-9 in \cite{spakula_uniform_k_homology}) only work for proper spaces. This results are all connected to the uniform coarse assembly map $\mu_u\colon K_\ast^u(X) \to K_\ast(C_u^\ast(Y))$, where $Y \subset X$ is a uniformly discrete quasi-lattice, and this is nor surprising: the (uniform) Roe algebra only has on proper spaces nice properties (like its $K$-theory being a coarse invariant) and therefore we expect that results of uniform $K$-homology that connect to the uniform Roe algebra also should need properness assumptions on the space. But we can see by looking into the proofs of \Spakula in all the other sections of \cite{spakula_uniform_k_homology} that all results except the ones in Sections 8-9 also hold only for locally compact, separable metric spaces (without assumptions on completeness). Note that this is a crucial fact for us since in the proof of \Poincare duality we will have to consider the uniform $K$-homology of open balls in $\IR^n$.

So let $X$ be a locally compact, separable metric space and let us first recall the usual definition of multigraded Fredholm modules:

\begin{defn}[Multigraded Fredholm modules]
Let $p \in \IZ_{\ge -1}$. A \emph{$p$-multigraded Fredholm module $(H, \rho, T)$ over $X$} is given by the following data:
\begin{itemize}
\item a separable $p$-multigraded Hilbert space $H$,
\item a representation $\rho\colon C_0(X) \to \IB(H)$ by even, multigraded operators and
\item an odd multigraded operator $T \in \IB(H)$ such that
\begin{itemize}
\item the operators $T^2 - 1$ and $T - T^\ast$ are locally compact and
\item the operator $T$ itself is pseudolocal.
\end{itemize}
Here an operator $S$ is called \emph{locally compact}, if for all $f \in C_0(X)$ the operators $\rho(f) S$ and $S \rho(f)$ are compact, and $S$ is called \emph{pseudolocal}, if for all $f \in C_0(X)$ the operator $[S, \rho(f)]$ is compact.
\end{itemize}
\end{defn}

Now we start working towards the definition of uniform Fredholm modules. For the convenience of the reader, let us restate the Definitions \ref{defn:uniformly_approximable_collection} and \ref{defn:uniform_operators_manifold} since we will need them now.

\begin{defn}[Uniformly approximable collections of operators]\label{defn:uniformly_approx_collections}
A collection of operators $\mathcal{A} \subset \IK(H)$ is said to be \emph{uniformly approximable}, if for every $\varepsilon > 0$ there is an $N > 0$ such that for every $T \in \mathcal{A}$ there is a rank-$N$ operator $k$ with $\|T - k\| < \varepsilon$.
\end{defn}

Consider also the basic Examples \ref{ex:uniformly_approximable_collections} to get again a feeling for this notion. Let us furthermore define
\begin{equation*}
\LLip_R(X) := \{ f \in C_c(X) \ | \ f \text{ is }L\text{-Lipschitz}, \diam(\supp f) \le R \text{ and } \|f\|_\infty \le 1\}.
\end{equation*}

\begin{defn}[{\cite[Definition 2.3]{spakula_uniform_k_homology}}]\label{defn:uniform_operators}
Let $T \in \IB(H)$ be an operator on a Hilbert space $H$ and $\rho\colon C_0(X) \to \IB(H)$ a representation.

We say that $T$ is \emph{uniformly locally compact}, if for every $R, L > 0$ the collection
\[\{\rho(f)T, T\rho(f) \ | \ f \in \LLip_R(X)\}\]
is uniformly approximable.

We say that $T$ is \emph{uniformly pseudolocal}, if for every $R, L > 0$ the collection
\[\{[T, \rho(f)] \ | \ f \in \LLip_R(X)\}\]
is uniformly approximable.
\end{defn}

\begin{rem}
In \cite{spakula_uniform_k_homology} uniformly locally compact operators were called ``$l$-uniform'' and uniformly pseudolocal operators ``$l$-uniformly pseudolocal''. We have already explained in Remark \ref{rem:renaming_l_dependence} our reasons for changing this names.
\end{rem}

The following lemma states that on proper spaces we may drop the $l$-dependence for uniformly locally compact operators.

\begin{lem}[{\cite[Remark 2.5]{spakula_uniform_k_homology}}]\label{lem:l_uniformly_loc_compact_without_l}
Let $X$ be a proper space. If $T$ is uniformly locally compact, then for every $R > 0$ the collection
\[\{\rho(f) T, T \rho(f) \ | \ f \in C_c(X), \diam(\supp f) \le R \text{ and } \|f\|_\infty \le 1\}\]
is also uniformly approximable (i.e., we can drop the $L$-dependence).
\end{lem}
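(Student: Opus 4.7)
The key idea is to reduce the general case to the $L$-dependent hypothesis by sandwiching each $f$ between itself and a universal Lipschitz cutoff $\eta_f$ whose Lipschitz constant and support diameter depend only on $R$, not on the individual $f$. Once such a cutoff is in place, the identity $\rho(f) = \rho(f)\rho(\eta_f)$ lets us transfer a finite-rank approximation of $\rho(\eta_f) T$ to one of $\rho(f) T$ with no loss in rank, because $\|\rho(f)\| \le \|f\|_\infty \le 1$.

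Concretely, given $R > 0$ and $f \in C_c(X)$ with $\diam(\supp f) \le R$ and $\|f\|_\infty \le 1$, I would pick any $x_0 \in \supp f$ (if $f \equiv 0$ there is nothing to prove) and define
\[ \eta_f(x) := \min\bigl(1,\ \max(0,\ R + 1 - d(x, x_0))\bigr). \]
This function is $1$-Lipschitz, takes values in $[0,1]$, equals $1$ on $\overline{B_R(x_0)} \supset \supp f$, and vanishes off $\overline{B_{R+1}(x_0)}$. Properness of $X$ enters here and only here: it guarantees that $\overline{B_{R+1}(x_0)}$ is compact, so $\eta_f$ has compact support and thus $\eta_f \in 1\text{-}\operatorname{Lip}_{2R+2}(X)$. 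The parameters $L=1$ and $R' = 2R+2$ of the Lipschitz class to which $\eta_f$ belongs depend only on $R$, never on $f$.

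Next I invoke the uniform-local-compactness hypothesis with parameters $(R', L) = (2R+2, 1)$: for every $\varepsilon > 0$ there is a single integer $N = N(\varepsilon, R)$ such that for each $\eta \in 1\text{-}\operatorname{Lip}_{2R+2}(X)$, both $\rho(\eta) T$ and $T\rho(\eta)$ lie within $\varepsilon$ of some rank-$N$ operator. Since $f = f\eta_f$ holds in $C_0(X)$, we have $\rho(f) T = \rho(f)\,\rho(\eta_f) T$; choosing a rank-$N$ operator $K$ with $\|\rho(\eta_f) T - K\| < \varepsilon$, the operator $\rho(f) K$ still has rank at most $N$ and satisfies $\|\rho(f) T - \rho(f) K\| \le \|\rho(f)\|\cdot \varepsilon \le \varepsilon$. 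The symmetric argument on the right handles $T\rho(f)$, yielding the same bound $N$ for the enlarged collection.

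There is essentially no obstacle: the argument is a one-line reduction whose only subtlety is recognising where properness is used, namely to ensure that the Lipschitz cutoff $\eta_f$ has compact support and therefore lies in the class $1\text{-}\operatorname{Lip}_{2R+2}(X)$ to which the $L$-dependent hypothesis applies.
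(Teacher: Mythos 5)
Your proof is correct: the Lipschitz cutoff $\eta_f$ supported on a closed ball (compact by properness), equal to $1$ on $\supp f$, and with Lipschitz constant and support diameter depending only on $R$, gives the reduction $\rho(f)T = \rho(f)\rho(\eta_f)T$ and then a rank-preserving transfer of the finite-rank approximation. The paper does not give its own proof but merely cites \v{S}pakula's Remark 2.5, and your argument is the standard one that remark has in mind.
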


Note that an analogous lemma for uniformly pseudolocal operators does not hold. We may see this via the following example: if we have an operator $D$ of Dirac type on a manifold $M$ and if $g$ is a smooth function on $M$, then we have the equation $([D,g]u)(x) = \sigma_D(x, dg) u(x)$, where $u$ is a section into the Dirac bundle $S$ on which $D$ acts, $\sigma_D(x, \xi)$ is the symbol of $D$ regarded as an endomorphism of $S_x$ and $\xi \in T^\ast_x M$. So we see that the norm of $[D,g]$ does depend on the first derivative of the function $g$.

\begin{defn}[Uniform Fredholm modules, cf. {\cite[Definition 2.6]{spakula_uniform_k_homology}}]\label{defn:uniform_fredholm_modules}
A Fredholm module $(H, \rho, T)$ is called \emph{uniform}, if $T$ is uniformly pseudolocal and the operators $T^2-1$ and $T - T^\ast$ are uniformly locally compact.
\end{defn}

\begin{rem}\label{rem:changed_L_cont}
Note that \v{S}pakula, who was the first to define uniform $K$-homology in his PhD thesis \cite{spakula_thesis}, uses the notion ``$L$-continuous'' instead of ``$L$-Lipschitz'' for the definition of $\LLip_R(X)$ (which he also denotes by $C_{R,L}(X)$, i.e., we have also changed the notation), so that he gets slightly differently defined uniform Fredholm modules. But the author was not able to deduce Proposition \ref{prop:compact_space_every_module_uniform} with \v{S}pakula's definition, which is why we have changed it to ``$L$-Lipschitz'' (since the statement of Proposition \ref{prop:compact_space_every_module_uniform} is a very desirable one and, in fact, we will need it crucially in the proof of \Poincare duality).

\Spakula noted that for a geodesic metric space both notions ($L$-continuous and $L$-Lipschitz) coincide. We will prove this assertion in the next Section \ref{sec:local_lipschitz_conditions}.

Note that all the results of \Spakula do also hold with our definition of uniform Fredholm modules, i.e., changing the definition to ours does not affect the validity of his results.
\end{rem}

For a totally bounded metric space uniform Fredholm modules are the same as usual Fredholm modules. Since \v{S}pakula does not give a proof of (and in fact, the author was not able to give it under the definition of $\LLip_R(X)$ that \v{S}pakula uses), we will do it now for our definition:

\begin{prop}\label{prop:compact_space_every_module_uniform}
Let $X$ be a totally bounded metric space. Then every Fredholm module over $X$ is uniform.
\end{prop}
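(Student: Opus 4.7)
The strategy is to exploit an Arzel\`a--Ascoli style observation: on a totally bounded space, a uniformly bounded, equi-Lipschitz family of functions is totally bounded in the supremum norm. Once $\LLip_R(X)$ is shown to be totally bounded in $C_0(X)$, the uniform approximability of the various operator families will follow by continuity and by appealing to the fact (recorded in Examples \ref{ex:uniformly_approximable_collections}) that every totally bounded subset of $\IK(H)$ is uniformly approximable.

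\textbf{Step 1.} The plan is to show that for fixed $R, L > 0$, the set $\LLip_R(X)$ is totally bounded as a subset of $(C_0(X), \|\largecdot\|_\infty)$. Given $\varepsilon > 0$, one picks a finite $\varepsilon/(2L)$-net $\{x_1, \ldots, x_N\} \subset X$, which exists by total boundedness of $X$. Consider the evaluation map $\Phi\colon \LLip_R(X) \to [-1,1]^N$, $\Phi(f) = (f(x_1), \ldots, f(x_N))$, whose image lies in the compact cube $[-1,1]^N$. If $f, g \in \LLip_R(X)$ satisfy $|\Phi(f) - \Phi(g)|_\infty < \varepsilon/2$, then for arbitrary $x \in X$ pick $x_i$ with $d(x, x_i) < \varepsilon/(2L)$ and estimate
\[
|f(x) - g(x)| \le |f(x) - f(x_i)| + |f(x_i) - g(x_i)| + |g(x_i) - g(x)| < \tfrac{3 \varepsilon}{2},
\]
using the $L$-Lipschitz property of both $f$ and $g$. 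A finite $\varepsilon/2$-net in $[-1,1]^N$ therefore pulls back to a finite $3\varepsilon/2$-net in $\LLip_R(X)$.

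\textbf{Step 2.} Now transfer this to the operator side. The linear maps
\[
C_0(X) \to \IB(H), \quad f \mapsto [T, \rho(f)], \ \rho(f)(T^2 - 1), \ (T^2 - 1)\rho(f), \ \rho(f)(T - T^\ast), \ (T - T^\ast)\rho(f),
\]
are all bounded (with operator norms controlled by $2\|T\|$, $\|T^2-1\|$, $\|T-T^\ast\|$), hence Lipschitz. Applying these maps to the totally bounded set $\LLip_R(X) \subset C_0(X)$ produces totally bounded subsets of $\IB(H)$. By the pseudolocality of $T$ and the local compactness of $T^2 - 1$ and $T - T^\ast$, each of these subsets is in fact contained in the closed subspace $\IK(H) \subset \IB(H)$, and therefore totally bounded in $\IK(H)$.

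\textbf{Step 3.} Finally, invoke Examples \ref{ex:uniformly_approximable_collections} to conclude that each of these totally bounded subsets of $\IK(H)$ is uniformly approximable. This gives precisely the conditions of Definitions \ref{defn:uniformly_approx_collections} and \ref{defn:uniform_operators} for $T$ to be uniformly pseudolocal and for $T^2-1$, $T-T^\ast$ to be uniformly locally compact, proving $(H,\rho,T)$ is a uniform Fredholm module. The main obstacle was Step 1, and this is exactly where the change from \v{S}pakula's ``$L$-continuous'' to our ``$L$-Lipschitz'' condition (see Remark \ref{rem:changed_L_cont}) is essential: the equicontinuity supplied by the Lipschitz bound is precisely what makes the Arzel\`a--Ascoli argument go through on a totally bounded but possibly non-geodesic space.
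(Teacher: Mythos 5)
Your proposal is correct and rests on the same essential observation as the paper's proof, namely that total boundedness of $X$ together with the $L$-Lipschitz bound makes $\LLip_R(X)$ relatively compact in the sup-norm (the Arzel\`a--Ascoli phenomenon), and you rightly identify that this is precisely why the paper replaces \v{S}pakula's $L$-continuity by $L$-Lipschitz. Where you differ is the transfer to the operator side: the paper argues by contradiction, extracting from a hypothetical failure of uniform approximability a sequence $f_N$ with no finite-rank $N$-approximant, passing to an accumulation point $f_\infty \in \LLip_R(X)$ by relative compactness, and contradicting pseudolocality of $T$ at $f_\infty$. You instead give the direct argument: prove total boundedness of $\LLip_R(X)$ explicitly via an $\varepsilon$-net (a short self-contained proof of the relevant case of Arzel\`a--Ascoli, avoiding the paper's passage to the completion $\overline{X}$), push it forward through the bounded linear maps $f \mapsto [T,\rho(f)]$, $f \mapsto \rho(f)(T^2-1)$, etc., and conclude via the remark in Examples~\ref{ex:uniformly_approximable_collections} that totally bounded subsets of $\IK(H)$ are uniformly approximable. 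The direct version is arguably cleaner, since it isolates the three elementary ingredients rather than folding them into a contradiction; the paper's version is shorter on the page because it cites Arzel\`a--Ascoli outright. Your constants in Step~1 are slightly loose in the net-pullback step (a function whose image lies near a net point $y_j$ may not be compared directly to $y_j$ but to a chosen preimage $f_j$ of a nearby point), inflating $3\varepsilon/2$ to roughly $2\varepsilon$, but this is cosmetic and does not affect total boundedness.
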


\begin{proof}
Let $(H, \rho, T)$ be a Fredholm module.

First we will show that $T$ is uniformly pseudolocal. We will use the fact that the set $\LLip_R(X) \subset C(X)$ is relatively compact (i.e., its closure is compact) by the Theorem of Arzel\`{a}--Ascoli.\footnote{Since Lipschitz functions are uniformly continuous they have a unique extension to the completion $\overline{X}$ of $X$. Since $\overline{X}$ is compact, Arzel\`{a}--Ascoli applies.} Assume that $T$ is not uniformly pseudolocal. Then there would be $R, L > 0$ and $\varepsilon > 0$, so that for all $N > 0$ we would have an $f_N \in \LLip_R(X)$ such that for all rank-$N$ operators $k$ we have $\|[T, \rho(f_N)] - k\| \ge \varepsilon$. Since $\LLip_R(X)$ is relatively compact, the sequence $f_N$ has an accumulation point $f_\infty \in \LLip_R(X)$. Then we have $\|[T, \rho(f_\infty)] - k\| \ge \varepsilon / 2$ for all finite rank operators $k$, which contradicts the pseudolocality of $T$.

The proofs that $T^2 - 1$ and $T - T^\ast$ are uniformly locally compact are analogous.
\end{proof}

Now we come to the relations that we have to impose on uniform Fredholm modules in order to get uniform $K$-homology.

A collection $(H, \rho, T_t)$ of uniform Fredholm modules is called an \emph{operator homotopy} if $t \mapsto T_t \in \IB(H)$ is norm continuous. As in the non-uniform case, we have an analogous lemma about compact perturbations:

\begin{lem}[Compact perturbations, cf. {\cite[Lemma 2.16]{spakula_uniform_k_homology}}]
\label{lem:compact_perturbations}
Let $(H, \rho, T)$ be a uniform Fredholm module and $K \in \IB(H)$ a uniformly locally compact operator.

Then $(H, \rho, T)$ and $(H, \rho, T + K)$ are operator homotopic.
\end{lem}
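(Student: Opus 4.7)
The plan is to exhibit the straight-line homotopy $T_t := T + tK$ for $t \in [0,1]$, and verify that every $(H,\rho, T_t)$ is again a uniform Fredholm module. Norm-continuity of $t \mapsto T_t$ is immediate from $\|T_t - T_s\| \le |t-s|\,\|K\|$, so the whole content of the lemma is the stability of the uniform Fredholm module axioms under addition of a uniformly locally compact perturbation.

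The key tool I would use is the algebraic structure cited right before the lemma, namely \cite[Lemma 4.2]{spakula_uniform_k_homology}: the uniformly pseudolocal operators form a $C^\ast$-subalgebra of $\IB(H)$, and the uniformly locally compact operators form a closed two-sided $^\ast$-ideal inside it. Granting this, everything else is bookkeeping. First I would observe that $K$ itself is uniformly pseudolocal, since $[K,\rho(f)] = K\rho(f) - \rho(f)K$ is a difference of two uniformly approximable collections as $f$ runs over $\LLip_R(X)$, and uniform approximability is preserved under differences (combine finite-rank approximants of both summands, the ranks adding). Consequently $T_t = T + tK$ is uniformly pseudolocal as a linear combination of two uniformly pseudolocal operators.

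Next, for the condition that $T_t^2 - 1$ be uniformly locally compact, I expand
\[T_t^2 - 1 = (T^2-1) + t(TK + KT) + t^2 K^2.\]
The term $T^2-1$ is uniformly locally compact by hypothesis. The terms $TK$ and $KT$ lie in the ideal because it is two-sided over the uniformly pseudolocal algebra (which contains $T$), and $K^2$ lies in the ideal since ideals are closed under multiplication. Hence $T_t^2 - 1$ is uniformly locally compact. For the self-adjointness defect,
\[T_t - T_t^\ast = (T - T^\ast) + t(K - K^\ast),\]
and here I use the $^\ast$-ideal property: $K^\ast$ is uniformly locally compact together with $K$, so both summands lie in the ideal.

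The only mildly delicate step, and the one I would write out carefully, is the elementary fact that uniform approximability is preserved under sums, under left/right multiplication by a bounded operator, and under composition with another uniformly approximable family, including appropriate uniformity estimates in $R$ and $L$. These are precisely the building blocks behind the ideal/algebra structure of \cite[Lemma 4.2]{spakula_uniform_k_homology}; the main obstacle in a fully self-contained writeup would be nailing down these approximation lemmas in the $L$-dependent setting of $\LLip_R(X)$, rather than in any genuinely new idea. Once these are available, the proof reduces to the three line-by-line verifications above, and the norm-continuous path $T_t$ furnishes the required operator homotopy between $(H,\rho,T)$ and $(H,\rho,T+K)$.
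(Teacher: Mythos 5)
Your proof is correct and uses the expected argument: the paper itself gives no proof, deferring instead to \cite[Lemma 2.16]{spakula_uniform_k_homology}, but the straight-line homotopy $T_t = T + tK$ together with the $C^\ast$-algebra/$^\ast$-ideal structure of uniformly pseudolocal and uniformly locally compact operators from \cite[Lemma 4.2]{spakula_uniform_k_homology} is precisely the standard (non-uniform) argument that the citation adapts to this setting.
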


\begin{defn}[Uniform $K$-homology, cf. {\cite[Definition 2.13]{spakula_uniform_k_homology}}]
We define the \emph{uniform $K$-homology group $K_{p}^u(X)$} of a locally compact and separable metric space $X$ to be the abelian group generated by unitary equivalence classes of $p$-multigraded uniform Fredholm modules with the relations:
\begin{itemize}
\item if $x$ and $y$ are operator homotopic, then $[x] = [y]$, and
\item $[x] + [y] = [x \oplus y]$,
\end{itemize}
where $x$ and $y$ are $p$-multigraded uniform Fredholm modules.
\end{defn}

\begin{rem}
Again, we have changed a definition of \v{S}pakula. To define uniform $K$-homology he does not use operator homotopy as a relation but a weaker form of homotopy (\cite[Definition 2.11]{spakula_uniform_k_homology}). The reasons why we changed this are the following: firstly, the definition of usual $K$-homology uses operator homotopy and it seems desirable to have uniform $K$-homology to be similarly defined, i.e., just imposing an additional condition on the Fredholm modules. Secondly, \v{S}pakula's proof of \cite[Proposition 4.9]{spakula_uniform_k_homology} is not correct under his notion of homotopy, but it becomes correct if we use operator homotopy as a relation. So by changing the definition we ensure that \cite[Proposition 4.9]{spakula_uniform_k_homology} does hold. And thirdly, we prove in Section \ref{sec:homotopy_invariance} that we would get the same uniform $K$-homology groups if we impose weak homotopy (Definition \ref{defn:weak_homotopy}) as a relation instead of operator homotopy. Though our notion of weak homotopies is different from \v{S}pakula's notion of homotopies, all the homotopies that he constructs in his paper \cite{spakula_uniform_k_homology} are weak homotopies, i.e., all the results of him that rely on his notion of homotopy are also true with our definition.
\end{rem}

All the basic properties of the non-uniform case do also hold in the uniform case. Let us quickly summarize some of them without proofs since the proofs are analogous as the ones for usual $K$-homology.

\begin{itemize}
\item A uniform Fredholm module is called \emph{degenerate}, if the relations defining it hold exactly and not only up to compact operators, i.e.,
\[T^2 - 1 = T - T^\ast = [T, \rho(f)] = 0\]
for all $f \in C_0(X)$. The uniform $K$-homology class of a degenerate module is $[0] \in K_{p}^u(X)$.
\item The additive inverse of $[(H, \rho, T)] \in K^u_{p}(X)$ is given by $[(H^{op}, \rho^{op}, -T^{op})]$, where $\largecdot^{op}$ denotes the change of the grading (Definition \ref{defn_opposite_grading}).
\item Every element of $K^u_{p}(X)$ can be represented as the class of a single uniform $p$-multigraded Fredholm module.
\item $[x] = [y]$ in $K_{p}^u(X)$ if and only if there is a degenerate module $z$ such that $x \oplus z$ and $y \oplus z$ are unitarily equivalent to a pair of operator homotopic modules.
\item We have a formal $2$-periodicity $K_{p}^u(X) \cong K_{p+2}^u(X)$ for all $p \ge -1$.\footnote{This comes basically from Proposition \ref{prop:multigraded_categories_equiv}, cf. \cite[Proposition 8.2.13]{higson_roe}.}
\end{itemize}

At last, let us discuss functoriality, before we will compute a basic, but important example. For the functoriality we first need the following definition:

\begin{defn}[Uniformly proper maps]
Let us call a map $g\colon X \to Y$ with the property
\[\sup_{y \in Y} \diam (g^{-1}(B_r(y))) < \infty \text{ for all }r > 0\]
\emph{uniformly proper}\footnote{\v{S}pakula calls this property \emph{uniformly cobounded} in \cite[Definition 2.15]{spakula_uniform_k_homology} and Block and Weinberger call it \emph{effectively proper} in \cite{block_weinberger_1}.}.

Note that if $X$ is proper, then every uniformly proper map is proper (i.e., preimages of compact subsets are compact).
\end{defn}

The following lemma about functoriality of uniform $K$-homology was proved by \Spakula (see the paragraph directly after \cite[Definition 2.15]{spakula_uniform_k_homology}).

\begin{lem}
Uniform $K$-homology is functorial with respect to uniformly proper, proper Lipschitz maps, i.e., if $g\colon X \to Y$ is uniformly proper, proper and Lipschitz, then it induces maps $g_\ast\colon K_\ast^u(X) \to K_\ast^u(Y)$ on uniform $K$-homology via
\[g_\ast [(H, \rho, T)] := [(H, \rho \circ g^\ast, T)],\]
where $g^\ast\colon C_0(Y) \to C_0(X)$, $f \mapsto f \circ g$ is the by $g$ induced map on functions.
\end{lem}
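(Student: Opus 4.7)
The plan is to verify the two things required for the formula $g_\ast[(H,\rho,T)]:=[(H,\rho\circ g^\ast,T)]$ to descend to a well-defined homomorphism: firstly, that $(H,\rho\circ g^\ast,T)$ really is a $p$-multigraded uniform Fredholm module over $Y$, and secondly, that the assignment respects the defining relations (operator homotopy and direct sum) and so passes to classes. Since the Hilbert space, its (multi)grading, and the operator $T$ are unchanged, only the pulled-back representation needs scrutiny. Properness of $g$ ensures $g^\ast\colon C_0(Y)\to C_0(X)$, $f\mapsto f\circ g$, is a well-defined $\ast$-homomorphism; composed with $\rho$ it yields a representation of $C_0(Y)$ by even multigraded operators. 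What must really be checked is that the uniformity conditions on $T$, $T^2-1$, and $T-T^\ast$ transfer from the representation $\rho$ of $C_0(X)$ to the representation $\rho\circ g^\ast$ of $C_0(Y)$.

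The key step, and the only one that uses the hypotheses on $g$ in a nontrivial way, is the following inclusion: for every $R,L>0$ there exist $R',L'>0$ such that
\[g^\ast\big(\LLip_R(Y)\big)\subset L'\text{-}\operatorname{Lip}_{R'}(X).\]
If $K$ is a Lipschitz constant for $g$ and $f\in\LLip_R(Y)$, then $\|f\circ g\|_\infty\le 1$ and $|f(g(x_1))-f(g(x_2))|\le LK\, d(x_1,x_2)$, so $L':=LK$ works. For the support, pick $y_0\in\supp f$; then $\supp f\subset\overline{B_R(y_0)}$ and hence $\supp(f\circ g)\subset g^{-1}(\overline{B_R(y_0)})$. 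Uniform properness of $g$ gives $R':=\sup_{y\in Y}\diam\big(g^{-1}(B_{R+1}(y))\big)<\infty$, which bounds $\diam(\supp(f\circ g))$ independently of $f$; properness of $g$ moreover ensures $\supp(f\circ g)$ is compact, so $f\circ g\in C_c(X)$.

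Granted this inclusion, the uniformity conditions transfer immediately. For instance, since $T^2-1$ is uniformly locally compact with respect to $\rho$, the collection
\[\{\rho(h)(T^2-1),(T^2-1)\rho(h)\ |\ h\in L'\text{-}\operatorname{Lip}_{R'}(X)\}\]
is uniformly approximable; the subcollection obtained by restricting $h$ to functions of the form $f\circ g$ with $f\in\LLip_R(Y)$ is also uniformly approximable, and this is exactly what uniform local compactness of $T^2-1$ for $(Y,\rho\circ g^\ast)$ demands. The same argument applies to $T-T^\ast$ (uniform local compactness) and to $[T,\rho(f)]$ (uniform pseudolocality). Hence $(H,\rho\circ g^\ast,T)$ is a uniform $p$-multigraded Fredholm module over $Y$.

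Finally, the passage to classes is routine. Unitary equivalence and direct sums are respected because they involve only the Hilbert-space data and the operator $T$, which are untouched by the construction. If $(H,\rho,T_t)$ is an operator homotopy of uniform Fredholm modules over $X$, then $(H,\rho\circ g^\ast,T_t)$ is a norm-continuous path of operators, each of which defines a uniform Fredholm module over $Y$ by the argument above, so it is an operator homotopy over $Y$. Thus $g_\ast\colon K^u_p(X)\to K^u_p(Y)$ is well defined; it is additive by construction, and functoriality $(g_1\circ g_2)_\ast=(g_1)_\ast(g_2)_\ast$ and $\operatorname{id}_\ast=\operatorname{id}$ follow from $(g_1\circ g_2)^\ast=g_2^\ast\circ g_1^\ast$. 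The only real obstacle, and it is a minor one, is the uniformity transfer of the first paragraph; everything else is formal.
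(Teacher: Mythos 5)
Your proof is correct and is the natural argument for this lemma. The paper itself does not give a proof but attributes the result to \v{S}pakula; your key observation, that the three hypotheses on $g$ combine to give the inclusion $g^\ast(\LLip_R(Y))\subset L'\text{-}\operatorname{Lip}_{R'}(X)$ (Lipschitz controls $L'$, uniform properness controls $R'$, properness gives compact support and well-definedness of $g^\ast\colon C_0(Y)\to C_0(X)$), after which the uniformity conditions transfer by mere restriction of a uniformly approximable family to a subfamily, is precisely what one expects \v{S}pakula's cited argument to contain.
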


Now we will compute the uniform $K$-homology of a uniformly discrete metric space of coarsely bounded geometry (Definition \ref{defn:coarsely_bounded_geometry}). This computation is important for us since it is crucially needed in the proof of \Poincare duality in Section \ref{sec:poincare_duality}.

\begin{lem}\label{lem:uniform_k_hom_discrete_space}
Let $Y$ be a uniformly discrete, proper metric space of coarsely bounded geometry. Then $K_0^u(Y)$ is isomorphic to the group $\ell_\IZ^\infty(Y)$ of all bounded, integer-valued sequences indexed by $Y$, and $K_1^u(Y) = 0$.
\end{lem}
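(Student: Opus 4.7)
The plan is to construct a natural map $\phi\colon K_0^u(Y) \to \ell^\infty_\IZ(Y)$ by taking local Fredholm indices, to prove it is an isomorphism via a block-diagonalisation argument, and then to run essentially the same argument with a vanishing target to conclude $K_1^u(Y) = 0$. Since $Y$ is uniformly discrete with some constant $\delta > 0$, each characteristic function $\chi_{\{y\}}$ lies in $\LLip_R(Y)$ with $R = 0$ and $L = 1/\delta$, so the family $\{\chi_{\{y\}}\}_{y \in Y}$ is a family of the type controlled by the uniformity condition of Definition~\ref{defn:uniform_operators}. Given a $0$-multigraded uniform Fredholm module $(H,\rho,T)$, let $p_y := \rho(\chi_{\{y\}})$. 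The projections $p_y$ are pairwise orthogonal; their strong sum complement $p_\infty$ yields a split-off degenerate submodule (since $\rho|_{p_\infty H} = 0$), so we may assume $H = \bigoplus_{y \in Y} H_y$ with $H_y := p_y H$ graded.

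For each $y$ define $T_y := p_y T p_y$ acting on $H_y$. A direct computation gives $T_y^2 - \mathrm{id}_{H_y} = p_y(T^2 - 1)p_y + p_y T[p_y,T]p_y$, and similarly $T_y - T_y^\ast = p_y(T - T^\ast)p_y$. By uniform local compactness of $T^2 - 1$ and $T - T^\ast$ and uniform pseudolocality of $T$ applied to $\chi_{\{y\}}$, all three right-hand pieces are uniformly approximable in $y$. Hence $T_y$ is Fredholm on $H_y$ and, if $\|T_y^2 - \mathrm{id}_{H_y} - K_y\| < 1/2$ with $\mathrm{rank}\,K_y \le N$ (uniformly in $y$), then $\dim\ker(T_y^+) + \dim\ker(T_y^-) \le N$, yielding $|n_y| \le N$ for $n_y := \operatorname{index}(T_y^+\colon H_y^+ \to H_y^-)$. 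Setting $\phi([H,\rho,T])(y) := n_y$ gives a well-defined homomorphism into $\ell^\infty_\IZ(Y)$ (additivity under direct sums, invariance under operator homotopy being Fredholm-index continuity). For surjectivity, given $(n_y) \in \ell^\infty_\IZ(Y)$ with $N := \sup_y |n_y|$, put $H_y^\pm := \ell^2(\IN)$, let $V_y \colon H_y^+ \to H_y^-$ be a shift-type partial isometry with $\operatorname{index}(V_y) = n_y$ and total rank of $1 - V_y^\ast V_y$ and $1 - V_yV_y^\ast$ at most $N$, let $\rho$ act by pointwise multiplication and $T$ block-diagonally by $\bigl(\begin{smallmatrix}0 & V_y^\ast \\ V_y & 0\end{smallmatrix}\bigr)$. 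Then $[T,\rho(f)] = 0$ for all $f$ (block-scalar $\rho$) and $\rho(f)(T^2 - 1)$ has rank bounded by $N \cdot \#(Y \cap \operatorname{supp} f)$, which is uniformly controlled by the coarsely bounded geometry of $Y$; so the triple is a uniform Fredholm module mapping to $(n_y)$.

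The main obstacle is injectivity. Here one first uses that $T_{\mathrm{off}} := T - \bigoplus_y T_y$ is a uniformly locally compact operator: writing $T_{\mathrm{off}} = \sum_y [T,p_y]p_y$ and grouping summands by coarse $R$-balls (which contain uniformly bounded numbers of points of $Y$), the uniform pseudolocality bound on $\{[T,p_y]\}_y$ assembles into uniform local compactness of the sum. Lemma~\ref{lem:compact_perturbations} then provides an operator homotopy from $(H,\rho,T)$ to the block-diagonal module $(H,\rho,\bigoplus T_y)$, which is a direct sum of local Fredholm modules $(H_y,\rho_y,T_y)$ over the one-point spaces $\{y\}$. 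If all $n_y = 0$, functional calculus applied uniformly in $y$ (using the uniform bound $|n_y|, \dim\ker T_y^\pm \le N$) deforms each $T_y$ through index-zero Fredholm operators on $H_y$ to an odd self-adjoint unitary $U_y$ with $U_y^2 = \mathrm{id}_{H_y}$; the resulting module $\bigoplus_y (H_y,\rho_y,U_y)$ is degenerate, hence represents $[0]$. The delicate part is ensuring that all these deformations are genuinely uniform in $y$—i.e., give rise to a uniformly locally compact path—which is why one must exploit the uniform bounds on both the rank of $T_y^2 - 1$ and on $|n_y|$ throughout.

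For $K_1^u(Y) = 0$, repeat the argument with a $(-1)$-multigraded uniform Fredholm module $(H,\rho,T)$, so $T$ is ungraded and self-adjoint modulo compacts. The same block-diagonalisation yields an operator-homotopic direct sum of ungraded Fredholm modules over points, each of which represents the trivial class in $K_1(\mathrm{pt}) = 0$; concretely, a uniform functional-calculus deformation sends each self-adjoint $T_y$ (with $T_y^2 - 1$ of rank $\le N$) to a symmetry, producing a degenerate module. Since the rank bound is uniform, the whole module deforms through uniform Fredholm modules to a degenerate one, giving $[H,\rho,T] = 0$ in $K_1^u(Y)$.
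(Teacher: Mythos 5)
Your proof is correct in outline and takes a genuinely different route from the paper's. The paper first invokes Proposition~\ref{prop:normalization_finite_prop_speed} to normalize every module (and every homotopy) to propagation strictly less than the minimal distance in $Y$; for such a module the decomposition $T = \bigoplus_y T_y$ holds \emph{exactly}, so the map $K_\ast^u(Y) \to \prod_y K_\ast^u(y) \cong \prod_y K_\ast(\pt)$ is immediate, and the paper then only checks that the uniformity condition restricts the image to $\ell^\infty_\IZ(Y)$. You instead work with an arbitrary uniform Fredholm module, set $T_y := p_y T p_y$, and compensate for the lack of finite propagation by showing the off-diagonal piece $T_{\mathrm{off}} = T - \bigoplus_y T_y$ is uniformly locally compact, then feeding it to Lemma~\ref{lem:compact_perturbations}; in effect you re-derive, by hand, exactly the special case of the finite-propagation normalization that the lemma needs. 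This makes your argument more self-contained, and you also give an \emph{explicit} surjectivity construction via shift-type partial isometries, whereas the paper states that the image ``consists of'' the bounded integer sequences but really only argues the containment, so your extra step is a genuine addition. Two minor corrections: to ``split off the degenerate part'' $p_\infty H$ you should either invoke normalization to non-degenerate modules (Lemma~\ref{lem:normalization_non-degenerate}) or first argue that $p T p_\infty + p_\infty T p$ is uniformly locally compact and then perturb, since $T$ need not commute with $p_\infty$; and the uniform deformation of the block-diagonal module with all $n_y = 0$ to a degenerate one, which you flag as the delicate point, is indeed where the real content of injectivity sits --- though the paper is equally terse there, simply asserting injectivity of the map to $\prod_y K_\ast^u(y)$, so you are not omitting anything the paper itself supplies.
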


\begin{proof}
For the proof we will need Proposition \ref{prop:normalization_finite_prop_speed} that states that we may normalize uniform $K$-homology to operators of finite propagation, i.e., there is an $R > 0$ such that every uniform Fredholm module over $Y$ may be represented by a module $(H, \rho, T)$ where $T$ has propagation no more than $R$\footnote{This means $\rho(f) T \rho(g) = 0$ if $d(\supp f, \supp g) > R$.}, and all homotopies may be also represented by homotopies where the operators have propagation at most $R$.

Going into the proof of Proposition \ref{prop:normalization_finite_prop_speed}, we see that in our case of a uniformly discrete metric space $Y$ we may choose $R$ less than the least distance between two points of $Y$, i.e., $0 < R < \inf_{x \not= y \in Y} d(x,y)$. So given a module $(H, \rho, T)$ where $T$ has propagation at most $R$, the operator $T$ decomposes as a direct sum $T = \bigoplus_{y \in Y} T_y$ with $T_y \colon H_y \to H_y$. The Hilbert space $H_y$ is defined as $H_y := \rho(\chi_y) H$, where $\chi_y$ is the characteristic function of the single point $y \in Y$. Note that $\chi_y$ is a continuous function since the space $Y$ is discrete. Hence $(H, \rho, T) = \bigoplus (H_y, \rho_y, T_y)$ with $\rho_y\colon C_0(Y) \to \IB(H_y)$, $f \mapsto \rho(\chi_y) \rho(f) \rho(\chi_y)$. Now each $(H_y, \rho_y, T_y)$ is a Fredholm module over the point $y$ and so we get a map
\[K_\ast^u(Y) \to \prod_{y \in Y} K_\ast^u(y).\]
Note that we need that the homotopies also all have propagation at most $R$ so that the above defined decomposition of a uniform Fredholm module descends to the level of uniform $K$-homology.

Since a point $y$ is for itself a compact space, we have $K_\ast^u(y) = K_\ast(y)$, and the latter group is isomorphic to $\IZ$ for $\ast = 0$ and it is $0$ for $\ast = 1$. Since the above map $K_\ast^u(Y) \to \prod_{y \in Y} K_\ast^u(y)$ is injective, we immediately conclude $K_1^u(Y) = 0$.

So it remains to show that the image of this map in the case $\ast = 0$ consists of the \emph{bounded} integer-valued sequences indexed by $Y$. But this follows from the uniformity condition in the definition of uniform $K$-homology: the isomorphism $K_0(y) \cong \IZ$ is given by assigning a module $(H_y, \rho_y, T_y)$ the Fredholm index of $T$ (note that $T_y$ is a Fredholm operator since $(H_y, \rho_y, T_y)$ is a module over a single point). Now since $(H, \rho, T) = \bigoplus (H_y, \rho_y, T_y)$ is a \emph{uniform} Fredholm module, we may conclude that the Fredholm indices of the single operators $T_y$ are bounded with respect to $y$.
\end{proof}

\section{\texorpdfstring{$L$}{L}-continuous functions}\label{sec:local_lipschitz_conditions}

We have mentioned in Remark \ref{rem:changed_L_cont} that we have changed \v{S}pakula's definition of $\LLip_R(X)$ (which he denotes by $C_{R,L}(X)$, i.e., we have also changed the notation), from $L$-continuous to $L$-Lipschitz. But \Spakula also writes that for a geodesic metric space both notions coincides, but without any proof. The goal of this section is to give this proof and therefore to show that for such spaces $X$ our definition of uniform Fredholm modules coincides with \v{S}pakula's.

\begin{defn}[{\cite[Section 2]{spakula_uniform_k_homology}}]
Let $(X, d_X)$ and $(Y, d_Y)$ be metric spaces and $f\colon X \to Y$ a function. We will say that $f$ is \emph{$L$-continuous}, if there is a continuously differentiable, non-decreasing function $\alpha\colon [0, \infty] \to [0, \infty)$ with $\alpha^\prime(0) \ge \tfrac{1}{L}$ so that $d_Y(f(x), f(x^\prime)) \le s$ for all $x, x^\prime \in X$ with $d_X(x, x^\prime) \le \alpha(s)$.
\end{defn}

We will compare this notion to the following one:

\begin{defn}[{\cite[Section 1]{garrido_jaramillo}}]
Let $(X, d_X)$ and $(Y, d_Y)$ be metric spaces and $f\colon X \to Y$ a function. We will say that $f$ is \emph{$L$-Lipschitz in the small}, if there is an $r > 0$ such that $d_Y(f(x), f(x^\prime)) \le L\cdot d_X(x, x^\prime)$ for all $x, x^\prime \in X$ with $d_X(x, x^\prime) \le r$.
\end{defn}

A function which is $L$-Lipschitz in the small is $L$-continuous (set $\alpha(s) := \tfrac{1}{L} s$ in a small enough interval around $0$). Conversely, an $L$-continuous function is $K$-Lipschitz in the small for all $K > L$. To prove this, use that $\alpha^\prime(s) \ge \tfrac{1}{K}$ in a small interval around $0$, i.e., $\alpha(s) \ge \tfrac{1}{K} s$ near $0$. The expense for getting $K$ arbitrarily close to $L$ is that the $r$ appearing in the definition of ``Lipschitz in the small'' may become arbitrarily small. In the end we get the following chain of implication:
\begin{align*}
& \ L\text{-Lipschitz}\\
\Rightarrow & \ L\text{-Lipschitz in the small}\\
\Rightarrow & \ L\text{-continuous}\\
\Rightarrow & \ K\text{-Lipschitz in the small for all } K > L\\
\Rightarrow & \ \text{locally Lipschitz and uniformly continuous}.
\end{align*}

Our goal is now to show that for a geodesic metric space every $L$-continuous map is $L$-Lipschitz (an observation due to \v{S}pakula). As a corollary we get that for such spaces the definition of uniform Fredholm modules that \v{S}pakula gives is the same as ours.

A metric space $X$ where every two points can be joined by a minimizing geodesic is called a \emph{geodesic metric space}. Recall that a continuous curve $\gamma\colon [a,b] \to X$ is called a \emph{minimizing geodesic} if there exists some $v > 0$ (the speed of the geodesic) such that $d_X(\gamma(t_1), \gamma(t_2)) = v |t_1 - t_2|$ for all $t_1, t_2 \in [a,b]$.

\begin{lem}
Let $X$ be a geodesic metric space.

Then every $L$-continuous map $f\colon X \to Y$ is $L$-Lipschitz.
\end{lem}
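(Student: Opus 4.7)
The plan is to use the geodesic structure of $X$ to reduce the global Lipschitz estimate to infinitely many local ones, then pass to a limit. The key preliminary observation, already recorded in the excerpt, is the chain of implications showing that an $L$-continuous map is $K$-Lipschitz in the small for every $K > L$: from $\alpha'(0) \ge 1/L$ and the continuity of $\alpha'$ one obtains $\alpha(s) \ge s/K$ on some interval $[0, s_K]$, which gives a radius $r_K := \alpha(s_K) > 0$ such that $d_Y(f(x),f(x')) \le K \, d_X(x,x')$ whenever $d_X(x,x') \le r_K$. This is the only information about $L$-continuity that the proof will use.

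Given arbitrary $x, x' \in X$, I would fix $K > L$ and the corresponding $r_K > 0$, choose a minimizing geodesic $\gamma \colon [0, d_X(x,x')] \to X$ of unit speed with $\gamma(0) = x$, $\gamma(d_X(x,x')) = x'$, and subdivide $[0, d_X(x,x')]$ into $N$ consecutive subintervals of equal length $\ell < r_K$. Setting $x_i := \gamma(i\ell)$, one has $d_X(x_i, x_{i+1}) = \ell < r_K$ by the defining property of a minimizing geodesic, so $d_Y(f(x_i), f(x_{i+1})) \le K \ell$. Summing by the triangle inequality yields
\[
d_Y(f(x), f(x')) \;\le\; \sum_{i=0}^{N-1} d_Y(f(x_i), f(x_{i+1})) \;\le\; N \cdot K \ell \;=\; K \, d_X(x, x').
\]
Since $K > L$ was arbitrary, letting $K \searrow L$ produces the desired estimate $d_Y(f(x), f(x')) \le L\, d_X(x, x')$.

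There is no real obstacle here; the only mildly delicate point is to make sure the constant $r_K$ really depends only on $K$ (and on $\alpha$), not on the points $x, x'$, so that a single value of $\ell$ works uniformly in the chosen geodesic. This is immediate from the fact that $\alpha$ is a single function attached to $f$, so $r_K$ is a genuine global constant and the subdivision argument applies to every pair $(x, x')$ at once. Once this is noted, the corollary that $\LLip_R(X)$ in our sense agrees with \v{S}pakula's definition on geodesic metric spaces, and hence both notions of uniform Fredholm modules coincide there, follows by inspection of the definitions.
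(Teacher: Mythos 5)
Your proof is correct and is essentially the paper's own argument: reduce to $K$-Lipschitz-in-the-small for each $K>L$, subdivide a minimizing geodesic into pieces shorter than $r_K$, sum the local estimates using that the geodesic is length-minimizing, and let $K\searrow L$. The only cosmetic difference is that you normalize the geodesic to unit speed while the paper carries the speed $v$ explicitly.
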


\begin{proof}
Since $f$ is $L$-continuous, it is $K$-Lipschitz in the small for all $K > L$. Denote the corresponding $r$ in the definition of ``Lipschitz in the small'' by $r_K$.

Let $x, x^\prime \in X$ and let $\gamma\colon [a,b] \to X$ be a minimizing geodesic in $X$ joining $x$ and $x^\prime$. Now for every partition $a = t_0 < t_1 < \ldots < t_N = b$ of the interval $[a,b]$ we have
\[d_X(\gamma(a), \gamma(b)) = \sum_{i=0}^{N-1} d_X(\gamma(t_i), \gamma(t_{i+1})).\]
Now we take an equidistant partition with $|t_i - t_{i+1}| = \tfrac{|a-b|}{N} < \tfrac{r_K}{v}$. Then we get, since $d_X(\gamma(t_i), \gamma(t_{i+1})) = v |t_i - t_{i+1}| < r_K$,
\begin{align*}
d_Y(f(x), f(x^\prime)) & \le \sum_{i=0}^{N-1} d_Y(f(\gamma(t_i)), f(\gamma(t_{i+1})))\\
& \le \sum_{i=0}^{N-1} K \cdot d_X(\gamma(t_i), \gamma(t_{i+1}))\\
& = \sum_{i=0}^{N-1} K \cdot \tfrac{v |a-b|}{N} = K \cdot v |a-b|\\
& = K \cdot d_X(x, x^\prime).
\end{align*}
So $f$ is $K$-Lipschitz. But since we can get the $K$ arbitrarily close to $L$ (that we then may have $r_K \to 0$ does not matter), we conclude that $f$ is in fact $L$-Lipschitz.
\end{proof}

\section{Homology classes of elliptic operators}\label{sec:homology_classes_of_PDOs}

In this section we will show that symmetric and elliptic pseudodifferential operators of positive order naturally define classes in uniform $K$-homology. This result is a crucial generalization of \cite[Theorem 3.1]{spakula_uniform_k_homology}, where it is only proved for operators of Dirac type.

First we need a definition and then we will plunge right into the main result of this section:

\begin{defn}[Normalizing functions]\label{defn:normalizing_function}
A smooth function $\chi\colon \IR \to [-1, 1]$ is called a \emph{normalizing function}, if
\begin{itemize}
\item $\chi$ is odd, i.e., $\chi(x) = -\chi(-x)$ for all $x \in \IR$,
\item $\chi(x) > 0$ for all $x > 0$ and
\item $\chi(x) \to \pm 1$ for $x \to \pm \infty$.
\end{itemize}

\begin{figure}[htbp]
\centering
\includegraphics[scale=0.6]{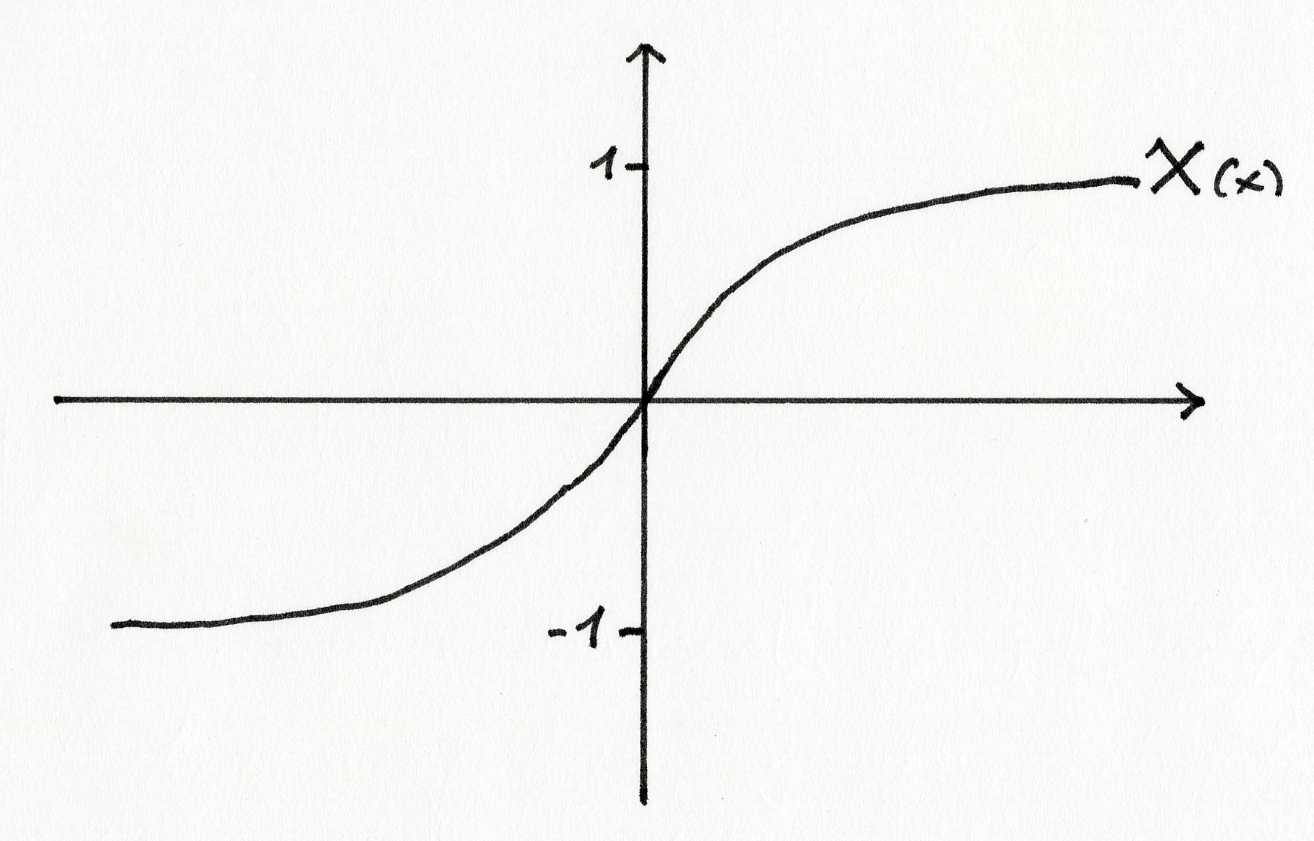}
\caption{A normalizing function.}
\end{figure}

\end{defn}

\begin{thm}\label{thm:elliptic_symmetric_PDO_defines_uniform_Fredholm_module}
Let $M$ be a manifold of bounded geometry, $E \to M$ a $p$-multigraded vector bundle of bounded geometry and let $P \in \Psi \mathrm{DO}_?^k(E)$ be a symmetric and elliptic pseudodifferential operator on $E$ of positive order $k \ge 1$, and let $P$ be odd and multigraded.

Then $(H, \rho, \chi(P))$ is a $p$-multigraded uniform Fredholm module over $M$, where the Hilbert space is $H := L^2(E)$, the representation $\rho\colon C_0(M) \to \IB(H)$ is the one via multiplication operators and $\chi$ is a normalizing function. Furthermore, the uniform $K$-homology class $[(H, \rho, \chi(P))] \in K_{p}^u(M)$ does not depend on the choice of $\chi$.
\end{thm}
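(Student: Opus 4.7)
The plan is as follows. The foundational properties are routine: Proposition 3.3.4 guarantees that $P$ is essentially self-adjoint on $H = L^2(E)$, so $\chi(P)$ is well-defined via bounded Borel functional calculus and is a bounded operator. Because $\chi$ is real-valued, $\chi(P)$ is self-adjoint, so $\chi(P)-\chi(P)^\ast = 0$ holds exactly and is trivially uniformly locally compact. Because $\chi$ is odd and $P$ is odd and multigraded, the spectral theorem gives that $\chi(P)$ is odd and commutes with the multigrading operators. The multiplication representation $\rho$ is automatically even and multigraded since the $p$-multigrading on $E$ is fiberwise.

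Next I would verify that $\chi(P)^2 - \id$ is uniformly locally compact. The function $g := \chi^2 - 1$ is smooth and tends to $0$ at $\pm\infty$ (since $\chi \to \pm 1$), hence $g \in C_0(\IR)$. By functional calculus $\chi(P)^2 - \id = g(P)$, and Corollary \ref{cor:g(P)_uniformly_locally_compact_g_vanishing_at_infinity} immediately yields that $g(P)$ is uniformly locally compact.

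The main technical step is uniform pseudolocality of $\chi(P)$, i.e.\ that $\{[\chi(P),\rho(f)] : f \in \LLip_R(M)\}$ is uniformly approximable for all $R,L > 0$. Since uniformly pseudolocal operators form a norm-closed subspace of $\IB(H)$ (\cite[Lemma 4.2]{spakula_uniform_k_homology}), it suffices to verify pseudolocality for a dense (in the sup-norm on $\sigma(P) \subseteq \IR$) family of normalizing functions: approximate $\chi$ uniformly by smooth odd functions $\chi_n \to \chi$ with $\chi_n - \chi \in C_0(\IR)$ and $\chi_n'$ of compactly supported Fourier transform, so that $\chi_n(P)$ admits the representation $\chi_n(P) = \chi_n(0) + \frac{1}{\sqrt{2\pi}}\int_\IR \hat{\chi_n'}(t)\,\tfrac{e^{itP}-1}{it}\,dt$. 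The commutator $[\chi_n(P),\rho(f)]$ is then an integral against the $L^1$ density $\hat{\chi_n'}(t)/t$ of the commutators $[e^{itP},\rho(f)]$, each of which, by the estimate \eqref{eq:dominating_function_exp(itP)} from the proof of Lemma \ref{lem:exp(itP)_quasilocal}, has norm bounded by $|t|\cdot\kappa\cdot\sum_{j\le N}\|\nabla^j f\|_\infty$ (depending on $L$ and the dimension only) and is, up to arbitrarily small error, supported in a $|t|$-neighbourhood of $\supp f$. Truncating the $t$-integral to $|t| \le T$ at an error $\varepsilon$ (using integrability of $t\hat{\chi_n'}(t)$) reduces to an operator localized on a compact set of fixed size around $\supp f$; a Rellich--Kondrachov argument identical in spirit to the proof of Proposition \ref{prop:quasilocal_negative_order_uniformly_locally_compact} shows the resulting collection is uniformly approximable, with the uniformity provided by the bounded geometry of $M$ and $E$.

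Finally, independence of the $K$-homology class from $\chi$ is the easiest point: if $\chi_1,\chi_2$ are two normalizing functions, then $\chi_1-\chi_2 \in C_0(\IR)$, so by Corollary \ref{cor:g(P)_uniformly_locally_compact_g_vanishing_at_infinity} the difference $\chi_1(P)-\chi_2(P) = (\chi_1-\chi_2)(P)$ is uniformly locally compact. Lemma \ref{lem:compact_perturbations} (compact perturbations) then produces an operator homotopy between $(H,\rho,\chi_1(P))$ and $(H,\rho,\chi_2(P))$, so they define the same class in $K_p^u(M)$. The main obstacle is the uniform pseudolocality step: since $\chi(P)$ is not itself a pseudodifferential operator in our sense, Proposition \ref{prop:PDO_order_0_l-uniformly-pseudolocal} does not apply directly, and unlike the Dirac case we cannot exploit finite propagation of $e^{itP}$ but must instead carefully bookkeep the quasilocal estimate of Lemma \ref{lem:exp(itP)_quasilocal} together with the $L$- and $R$-dependence of the norms of derivatives of $f$.
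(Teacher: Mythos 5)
Your framing of the easy parts is correct and matches the paper: $\chi(P)-\chi(P)^\ast = 0$ since $\chi$ is real and $P$ is essentially self-adjoint (Proposition \ref{prop:elliptic_PDO_essentially_self-adjoint}), $\chi(P)^2-1 = (\chi^2-1)(P)$ is uniformly locally compact by Corollary \ref{cor:g(P)_uniformly_locally_compact_g_vanishing_at_infinity}, and independence of $\chi$ follows from the same corollary together with Lemma \ref{lem:compact_perturbations}. You also correctly observe that it suffices to treat one normalizing function, since the difference of two lies in $C_0(\IR)$.

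The core step --- uniform pseudolocality of $\chi(P)$ --- is where you diverge from the paper and where there is a gap. You attempt a Fourier/wave-operator representation $\chi_n(P) = \chi_n(0) + \tfrac{1}{\sqrt{2\pi}}\int \hat{\chi_n'}(t)\,\tfrac{e^{itP}-1}{it}\,dt$, truncate in $t$, localize via the quasilocal estimate for $e^{itP}$, and then invoke ``a Rellich--Kondrachov argument identical in spirit to the proof of Proposition \ref{prop:quasilocal_negative_order_uniformly_locally_compact}''. That last step does not apply: Proposition \ref{prop:quasilocal_negative_order_uniformly_locally_compact} requires an operator of \emph{negative} Sobolev order, i.e., one that gains regularity, so that compactly supported sections are mapped into a Sobolev space that embeds compactly into $L^2$ over a bounded set. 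But the estimate from Lemma \ref{lem:exp(itP)_quasilocal} only gives $[e^{itP},\rho(f)]\colon H^{lk}(E) \to H^{lk-k}(E)$, a \emph{loss} of $k$ Sobolev degrees, and truncating and localizing the $t$-integral does nothing to repair this. A localized bounded operator is not compact without such a smoothing gain; the fact that $[\chi(P),\rho(f)]$ is effectively of order $-1$ is exactly what has to be proved, and your argument assumes rather than establishes it. There is also a secondary imprecision: the quasilocal estimate gives decay $\mu(R)\sim |t|/R$, so achieving error $\varepsilon$ requires support of size $O(|t|/\varepsilon)$, not $O(|t|)$; the $O(|t|)$-propagation you cite is special to the Dirac case and does not hold here.

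The paper avoids this difficulty by a different device: it picks the specific normalizing function $\chi(x)=x/\sqrt{1+x^2}$ and uses the Riesz-type representation
\[\chi(P) = \frac{2}{\pi}\int_0^\infty \frac{P}{1+\lambda^2+P^2}\,d\lambda,\]
whose commutator with $\rho(f)$ is an integral of terms built from $\frac{1}{1+\lambda^2+P^2}$, $\frac{1+\lambda^2}{1+\lambda^2+P^2}$ (quasilocal of order $-2k$ via Proposition \ref{prop:f(P)_quasilocal_of_symbol_order}) and $[\rho(f),P]$ (a pseudodifferential operator of order $k-1$). These resolvent-type factors are quasilocal of \emph{negative} order, so after multiplying by the cutoff they genuinely are uniformly locally compact by Corollary \ref{cor:quasilocal_neg_order_uniformly_locally_compact}, and the uniformity in $f$ is tracked through the norms of the derivatives of $f$, finishing with Point 5 of Lemma \ref{lem:kasparov_lemma_uniform_approx_manifold}. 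To repair your route you would need a substitute input playing the role of the negative-order factors --- e.g., a quantitative statement that $(1+P^2)^{-1/2}$ is uniformly locally compact and a rewriting of the Fourier integral that makes the smoothing visible --- but the Rellich step as you have written it does not hold.
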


\begin{proof}
Recall from Definition \ref{defn:uniform_fredholm_modules} that for the first statement that $(H, \rho, \chi(P))$ defines an ungraded uniform Fredholm module over $M$ we have to show that $\chi(P)$ is uniformly pseudolocal and that $\chi(P)^2 - 1$ and $\chi(P) - \chi(P)^\ast$ are uniformly locally compact.

Since $\chi$ is real-valued and $P$ essentially self-adjoint by Proposition \ref{prop:elliptic_PDO_essentially_self-adjoint}, we have $\chi(P) - \chi(P)^\ast = 0$, i.e., the operator $\chi(P) - \chi(P)^\ast$ is trivially uniformly locally compact. Moreover, since we have $\chi(P)^2 - 1 = (\chi^2 - 1)(P)$ and $\chi^2-1 \in C_0(\IR)$, we conclude with Corollary \ref{cor:g(P)_uniformly_locally_compact_g_vanishing_at_infinity} that $\chi(P)^2 - 1$ is uniformly locally compact.

Because the difference of two normalizing functions is a function from $C_0(\IR)$, we conclude from the same corollary that in order to show that $\chi(P)$ is uniformly pseudolocal, it suffices to show this for one particular normalizing function (and secondly, we get that the class $[(H, \rho, \chi(P))]$ is independent of the concrete choice of $\chi$ due to Lemma \ref{lem:compact_perturbations}).

From now on we proceed as in the proof of \cite[Theorem 3.1]{spakula_uniform_k_homology} using the same formulas: we choose the particular normalizing function $\chi(x) := \frac{x}{\sqrt{1+x^2}}$ to prove that $\chi(P)$ is uniformly pseudolocal. We have $\chi(P) = \frac{2}{\pi} \int_0^\infty \frac{P}{1 + \lambda^2 + P^2} d\lambda$ with convergence of the integral in the strong operator topology\footnote{This follows from the equality $\frac{x}{\sqrt{1 + x^2}} = \frac{2}{\pi} \int_0^\infty \frac{x}{1 + \lambda^2 + x^2} d\lambda$ for all $x \in \IR$.} and get then for $f \in \LLip_R(M)$
\[[\rho(f), \chi(P)] = \frac{2}{\pi} \int_0^\infty \frac{1}{1 + \lambda^2 + P^2} \big( (1+\lambda^2)[\rho(f), P] + P[\rho(f),P]P \big) \frac{1}{1 + \lambda^2 + P^2} d\lambda.\]

Suppose $f \in \LLip_R(M) \cap C_b^\infty(M)$. Then the integral converges in operator norm\footnote{To see this, we have to find upper bounds for the operator norms $\| \largecdot \|_{0,k-1}$ of $\frac{1+\lambda^2}{1+\lambda^2 + P^2} \frac{1}{1+\lambda^2 + P^2}$ and $\frac{P}{1+\lambda^2 + P^2} \frac{P}{1+\lambda^2 + P^2}$, that are integrable with respect to $\lambda$. This can be done by, e.g., using the estimates derived in the proof of Proposition \ref{prop:f(P)_quasilocal_of_symbol_order}. But note that we need the generalization of this proposition to all $m \in \IR$. For the definition of the corresponding Sobolev norms we have to use \eqref{eq:sobolev_norm_local} with fixed coordinate charts, corresponding partition of unity and chosen synchronous framing. Since different choices lead to equivalent norms, our needed result that the integrand is integrable with respect to $\lambda$ is independent of these choices.} and there exists an $N > 0$ which depends only on an $\varepsilon > 0$, $R = \diam (\supp f)$ and the norms of the derivatives of $f$,\footnote{The dependence on $R$ and on the derivatives of $f$ comes from the operator norm estimate of $[\rho(f), P]$.} such that there are $\lambda_1, \ldots, \lambda_N$ and the integral is at most $\varepsilon$ away from the sum of the integrands for $\lambda_1, \ldots, \lambda_N$.

Now we recall Definition \ref{defn:symbols_on_R} of the symbol classes on $\IR$:
\[\mathcal{S}^m(\IR) := \{g \in C^\infty(\IR) \ | \ |g^{(n)}(x)| < C_l(1 + |x|)^{m-n} \text{ for all } n \in \IN_0\}.\]
Since both $\frac{1}{1+\lambda^2 + x^2} \in \mathcal{S}^{-2}(\IR)$ and $\frac{1 + \lambda^2}{1+\lambda^2 + x^2} \in \mathcal{S}^{-2}(\IR)$ (with respect to the variable $x$, i.e., for fixed $\lambda$), the operators $\frac{1}{1+\lambda^2 + P^2}$ and $\frac{1 + \lambda^2}{1+\lambda^2 + P^2}$ are quasilocal operators of order $-2k$ by Proposition \ref{prop:f(P)_quasilocal_of_symbol_order}. This also holds for their adjoints and so, by Corollary \ref{cor:quasilocal_neg_order_uniformly_locally_compact}, they are uniformly locally compact. The same conclusion applies to the operators $\frac{P}{1+\lambda^2+P^2}$ and $\frac{(1+\lambda^2)P}{1+\lambda^2+P^2}$ which are quasilocal of order $-k$ and hence also uniformly locally compact.

So the first summand
\[\frac{1+\lambda^2}{1+\lambda^2 + P^2} [\rho(f), P] \frac{1}{1+\lambda^2 + P^2}\]
of the integrand is the difference of two compact operators and their approximability by finite rank operators depends only on $R = \diam (\supp f)$ and the Lipschitz constant $L$ of $f$. The same also applies to the second summand
\[\frac{1}{1+\lambda^2 + P^2}P [\rho(f), P] P \frac{1}{1+\lambda^2 + P^2}\]
of the integrand (note that $\frac{P^2}{1+\lambda^2+P^2}$ is a bounded operator).

So the operator $[\rho(f), \chi(P)]$ is for $f \in \LLip_R(M) \cap C_b^\infty(M)$ compact and its approximability by finite rank operators depends only on $R$, $L$ and the norms of the derivatives of $f$. That this suffices to conclude that the operator is uniformly pseudolocal is exactly Point 5 in Lemma \ref{lem:kasparov_lemma_uniform_approx_manifold}.

To conclude the proof we have to show that $\chi(P)$ is odd and multigraded. But this was already shown in full generality in \cite[Lemma 10.6.2]{higson_roe}.
\end{proof}

We have shown in the above Theorem that an elliptic pseudodifferential operator naturally defines a class in uniform $K$-homology. Now we will show that this class does only depend on the principal symbol of the pseudodifferential operator. Note that ellipticity of an operator does only depend on its symbol (since it is actually defined that way, see Definition \ref{defn:elliptic_operator}, which is possible due to Lemma \ref{lem:ellipticity_independent_of_representative}), i.e., another pseudodifferential operator with the same symbol is automatically also elliptic.

\begin{prop}\label{prop:same_symbol_same_k_hom_class}
The uniform $K$-homology class of a symmetric and elliptic pseudodifferential operator $P \in \Psi \mathrm{DO}_?^{k \ge 1}(E)$ does only depend on its principal symbol $\sigma(P)$, i.e., any other such operator $P^\prime$ with the same principal symbol defines the same uniform $K$-homology class.
\end{prop}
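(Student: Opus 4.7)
The plan is to construct an operator homotopy between the uniform Fredholm modules $(H,\rho,\chi(P))$ and $(H,\rho,\chi(P'))$. Since $\sigma(P)=\sigma(P')$, the difference $R:=P'-P$ is a pseudodifferential operator of order $k-1$ by the exact sequence of Proposition \ref{prop:basics_symbol_maps}. Consider the linear interpolation $P_t:=P+tR$, $t\in[0,1]$. Because ellipticity depends only on the principal symbol (Lemma \ref{lem:ellipticity_independent_of_representative}) and $P_t$ has the same principal symbol as $P$, each $P_t$ is elliptic; each is symmetric as a convex combination of symmetric operators, and odd/multigraded because $P,P'$ are. Thus by Theorem \ref{thm:elliptic_symmetric_PDO_defines_uniform_Fredholm_module}, $(H,\rho,\chi(P_t))$ is a $p$-multigraded uniform Fredholm module for every $t\in[0,1]$.

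To obtain an operator homotopy I would verify norm-continuity $t\mapsto\chi(P_t)\in\mathcal{B}(L^2(E))$. The natural tool is Proposition \ref{prop:norm_estimate_difference_func_calc}: choosing the normalizing function $\chi$ with $\chi'\in\mathcal{S}(\IR)$ (so the distributional Fourier transform $\widehat{\chi}$ satisfies $s\widehat{\chi}(s)\in L^1(\IR)$), and applying the proposition to $P_t$ and $P_s$ with $P_t-P_s=(t-s)R$ of order $qk=k-1$, one gets
\[
\|\chi(P_t)-\chi(P_s)\|_{lk,\,lk-(k-1)}\le C_\chi\,|t-s|\,\|R\|_{lk,\,lk-(k-1)}
\]
for every $l\in\IZ$, with a uniform constant. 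A cleaner route, which avoids dealing with mixed Sobolev norms, is to prove instead that $K:=\chi(P')-\chi(P)$ is uniformly locally compact and then invoke Lemma \ref{lem:compact_perturbations} directly. For this I would use the Duhamel identity $e^{i\xi P'}-e^{i\xi P}=i\xi\int_0^1 e^{is\xi P'}Re^{i(1-s)\xi P}\,ds$ and the functional-calculus formula \eqref{eq:schwartz_function_of_PDO} to represent
\[
\chi(P')-\chi(P)=\frac{1}{\sqrt{2\pi}}\int_\IR\widehat{\chi'}(\xi)\int_0^1 e^{is\xi P'}\,R\,e^{i(1-s)\xi P}\,ds\,d\xi,
\]
where $\widehat{\chi'}$ is Schwartz by our choice of $\chi$. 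Each inner integrand is (essentially) a Schwartz function of the operators multiplied by the lower-order remainder $R$; combining Corollary \ref{cor:schwartz_function_of_PDO_quasilocal_smoothing} and Corollary \ref{cor:quasilocal_neg_order_uniformly_locally_compact} yields uniform local compactness of these factors, and since the uniformly locally compact operators form a norm-closed two-sided ideal in the uniformly pseudolocal ones (\cite[Lemma 4.2]{spakula_uniform_k_homology}), $K$ itself is uniformly locally compact.

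With $K$ uniformly locally compact, Lemma \ref{lem:compact_perturbations} immediately produces an operator homotopy from $(H,\rho,\chi(P))$ to $(H,\rho,\chi(P)+K)=(H,\rho,\chi(P'))$, and therefore $[(H,\rho,\chi(P))]=[(H,\rho,\chi(P'))]$ in $K_p^u(M)$. The main obstacle is the analytic step: one must genuinely control the norm of $K$ (or of $\chi(P_t)-\chi(P_s)$) on $L^2(E)$, whereas the estimates available from Proposition \ref{prop:norm_estimate_difference_func_calc} are initially expressed in Sobolev norms $H^{lk}\to H^{lk-(k-1)}$. Handling the integrability of the Fourier representation above (making sure the $\xi$-integral converges in operator norm on $L^2$, using the rapid decay of $\widehat{\chi'}$ against the polynomial growth from the factor $R$ and the bounds on $e^{is\xi P_{\bullet}}$ from Lemma \ref{lem:exp(itP)_quasilocal}) is the technical heart of the proof; everything else (preservation of symmetry, ellipticity, multigrading, and oddness along the path) is routine.
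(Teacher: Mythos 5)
Your proposal starts in the same place as the paper (linear interpolation $P_t$, all symmetric, elliptic, multigraded), but diverges at the decisive step and the analytic route you then propose does not close as sketched.

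The paper's proof explicitly observes that norm continuity of $t\mapsto\chi(P_t)$ in $\IB(L^2(E))$ can only be expected when $P-P'$ is a bounded operator, i.e.\ when $k=1$; for $k>1$ the estimates from Proposition~\ref{prop:norm_estimate_difference_func_calc} are genuinely of mixed Sobolev type $H^{lk}\to H^{lk-(k-1)}$ and do not yield an operator homotopy. The paper therefore retreats to \emph{strong-$^\ast$} continuity and invokes Theorem~\ref{thm:weak_homotopy_equivalence_K_hom} (weakly homotopic modules define the same class), verifying the three conditions of Definition~\ref{defn:weak_homotopy}. That theorem rests on the external-product machinery of the preceding sections, which is the price the paper pays for not having to prove anything like the uniform local compactness you aim at.

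Your ``cleaner route'' --- to show $K:=\chi(P')-\chi(P)$ is uniformly locally compact and then apply Lemma~\ref{lem:compact_perturbations} --- would indeed be a stronger and more elementary argument, but the argument you give for the key analytic claim has a real gap. The inner integrand $\widehat{\chi'}(\xi)\,e^{is\xi P'}\,R\,e^{i(1-s)\xi P}$ is not a bounded operator on $L^2(E)$ for $k>1$, because $R$ has positive order $k-1\ge 1$ and the flanking factors are merely unitaries. Consequently the $(\xi,s)$-integral is not an integral of bounded operators, so you cannot invoke norm-closedness of the uniformly locally compact ideal before first doing something to regularize the integrand. The two corollaries you cite do not supply this: $y\mapsto e^{is\xi y}$ is bounded but not Schwartz, so Corollary~\ref{cor:schwartz_function_of_PDO_quasilocal_smoothing} does not apply to $e^{is\xi P'}$, and Corollary~\ref{cor:quasilocal_neg_order_uniformly_locally_compact} requires negative order whereas the product $e^{is\xi P'}Re^{i(1-s)\xi P}$ still has order $k-1>0$. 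The only mechanism in the paper for turning rapid decay of a Fourier weight into operator-norm convergence of such integrals is the integration-by-parts scheme of Corollary~\ref{cor:lth_derivative_integrable_defines_quasilocal_operator}; but that scheme exploits $\partial_\xi e^{i\xi P}=iPe^{i\xi P}$ for a \emph{single} generator, whereas your Duhamel representation has two different unitary groups with $P'$ acting on the left and $P$ on the right of $R$, so differentiating in $\xi$ produces cross terms that do not telescope, and the ``polynomial growth in $\xi$'' you allude to is not actually the obstruction (there is none --- $R$ is $\xi$-independent; the obstruction is that $R$ is unbounded in the Hilbert-space variables for each fixed $\xi$). You correctly flag this as the technical heart, but without a way around it the proposal does not establish the proposition, and you should be aware that the paper avoids this difficulty entirely by proving only a weak homotopy.
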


\begin{proof}
Consider in $\Psi \mathrm{DO}_?^k(E)$ the linear path $P_t := (1-t)P + t P^\prime$ of operators. They are all symmetric and, since $\sigma(P) = \sigma(P^\prime)$, they all have the same principal symbol. So they are all elliptic and therefore we get a family of uniform Fredholm modules $(H, \rho, \chi(P_t))$, where we use a fixed normalizing function $\chi$.

Now if the family $\chi(P_t)$ of bounded operators would be norm-continuous, the claim that we get the same uniform $K$-homology classes would follow directly from the relations defining uniform $K$-homology. But it seems that in general it is only possible to conclude the norm continuity of $\chi(P_t)$ if the difference $P - P^\prime$ is a bounded operator,\footnote{see, e.g., \cite[Proposition 10.3.7]{higson_roe}} i.e., if the order $k$ of $P$ is $1$ (since then the order of the difference $P - P^\prime$ would be $0$, i.e., it would define a bounded operator on $L^2(E)$).

In the case $k > 1$ we get continuity of $\chi(P_t)$ only in the strong-$^\ast$ operator topology\footnote{Recall that if $H$ is a Hilbert space, then the \emph{strong-$^\ast$ operator topology} on $\IB(H)$ is generated by the family of semi-norms $p_x(T) := \|Tx\| + \|T^\ast x\|$ for all $x \in H$, where $T \in \IB(H)$.} on $\IB(L^2(E))$. This is easily seen with Proposition \ref{prop:norm_estimate_difference_func_calc}.\footnote{An example of a normalizing function $\chi$ fulfilling the prerequisites of Proposition \ref{prop:norm_estimate_difference_func_calc} may be found in, e.g., \cite[Exercise 10.9.3]{higson_roe}.} To conclude in this case that $(H, \rho, \chi(P_0))$ and $(H, \rho, \chi(P_1))$ define the same class in uniform $K$-homology, we will use Theorem \ref{thm:weak_homotopy_equivalence_K_hom} (which states that weakly homotopic\footnote{see Definition \ref{defn:weak_homotopy}} uniform Fredholm modules give the same class in uniform $K$-homology), i.e., we will show now that the family $(H, \rho, \chi(P_t))$ is a weak homotopy.

The first bullet point of the definition of a weak homotopy is clearly satisfied since our representation $\rho$ is fixed, i.e., does not depend on the time $t$. Moreover, we have already incidentally discussed the second bullet point in the paragraph above, so it remains to varify that the third point is satisfied. We start with investigating the operators $[\rho(f), \chi(P_t)]$. Let $\chi$ be the normalizing function $\chi(x) = \frac{x}{\sqrt{1+x^2}}$ (this is the one used in the proof of the above Theorem \ref{thm:elliptic_symmetric_PDO_defines_uniform_Fredholm_module}). It satisfies the assumptions of Proposition \ref{prop:norm_estimate_difference_func_calc} since the integral $\int |s \hat{\chi}(s)| ds$ has a finite value (we will use this at the end of this paragraph). From the end of the proof of the above Theorem \ref{thm:elliptic_symmetric_PDO_defines_uniform_Fredholm_module} we get that the approximation of $[\rho(f), \chi(P_t)]$ up to an $\varepsilon$ via finite rank operators is done by approximating finitely many quasilocal operators of negative order times the operator $\rho(f)$. But from the proof of Proposition \ref{prop:quasilocal_negative_order_uniformly_locally_compact} (where we do this approximation), we see that we actually approximate the compact inclusions of Sobolev spaces into the $L^2$-space. So the images of these finite rank operators consist of functions from a Sobolev space of appropriate order and, this is the most important, the Sobolev norms of $L^2$-orthonormal basis of these images may be bounded from above independently of the time $t$, i.e., by the same bound for all operators $[\rho(f), \chi(P_t)]$. But this together with the norm estimate from Proposition \ref{prop:norm_estimate_difference_func_calc} shows that the third bullet point in the Definition \ref{defn:weak_homotopy} of weak homotopies is fulfilled.

The arguments for $\rho(f)(\chi(P_t)^2-1)$ are similar and the case of $\rho(f)(\chi(P_t) - \chi(P_t)^\ast)$ is clear since $\chi(P_t) - \chi(P_t)^\ast = 0$, because $P_t$ is essentially self-adjoint.
\end{proof}

\section{Paschke duality}\label{sec:paschke_duality}

Usual $K$-homology of a $C^\ast$-algebra $A$ may be equivalently defined as the $K$-theory of a certain dual algebra $\frakD(A)$ of $A$. This result is originally due to Paschke (\cite{paschke}) and its reformulation to the nowadays common definition of $K$-homology was done by Higson in \cite{higson_paschke_duality}. \Spakula proved an analogous result for uniform $K$-homology and the goal of this section is to summarize this since we will need this uniform version of Paschke duality later.

The other main point of this section is to introduce the notion of ``jointly bounded geometry'' (Definition \ref{defn:jointly_bounded_geometry}). This notion will be crucial since only if a space $X$ has it we will be able to construct the exterior product on uniform $K$-homology of $X$.

\begin{defn}[{\cite[Definition 4.1]{spakula_uniform_k_homology}}]
Let $H$ be some separable Hilbert space and $\rho \colon C_0(X) \to \IB(H)$ a representation of a locally compact and separable metric space $X$.

We denote by $\frakD^u_{\rho \oplus 0}(X) \subset \IB(H \oplus H)$ the set of all uniformly pseudolocal operators with respect to the representation $\rho \oplus 0$ of $C_0(X)$ on $H \oplus H$.

Analogously we denote by $\frakC^u_{\rho \oplus 0}(X) \subset \IB(H \oplus H)$ the set of all uniformly locally compact operators with respect to $\rho \oplus 0$.
\end{defn}

In \cite[Lemma 4.2]{spakula_uniform_k_homology} it was shown that $\frakD^u_{\rho \oplus 0}(X)$ is a $C^\ast$-algebra and that we have $\frakC^u_{\rho \oplus 0}(X) \subset \frakD^u_{\rho \oplus 0}(X)$ as a closed, two-sided $^\ast$-ideal in it.

\begin{defn}
The groups $K_{-1}^u(X; {\rho \oplus 0})$ are analogously defined as $K_{-1}^u(X)$, except that we consider only uniform Fredholm modules whose Hilbert spaces and representations are (finite or countably infinite) direct sums of $H \oplus H$ and $\rho \oplus 0$.

For $K_0^u(X; {\rho \oplus 0})$ we consider only uniform Fredholm modules modeled on $H^\prime \oplus H^\prime$ with the representation $\rho^\prime \oplus \rho^\prime$, where $H^\prime$ is a finite or countably infinite direct sum of $H \oplus H$ and $\rho^\prime$ analogously a direct sum of finitely or infinitely many $\rho \oplus 0$, and the grading is given by interchanging the two summands in $H^\prime \oplus H^\prime$.
\end{defn}

\begin{rem}\label{rem:defn_balanced_module}
Such Fredholm modules as we use for $K_0^u(X; {\rho \oplus 0})$ are called \emph{balanced} in \cite[Definition 8.3.10]{higson_roe}. Note that since we have defined $K_0^u(X)$ using graded uniform Fredholm modules, but \Spakula uses a slightly other presentation which does not use gradings at all, his definition of $K_0^u(X; {\rho \oplus 0})$ is therefore also a bit different from ours here.
\end{rem}

We define a group homomorphism
\[\varphi_0 \colon K_1(\frakD^u_{\rho \oplus 0}(X)) \to K_0^u(X; {\rho \oplus 0})\]
in the following way: let $[U] \in \Mat_{n \times n}(\frakD^u_{\rho \oplus 0}(X))$ with $U$ unitary be given. Then
\[\varphi_0([U]) := \left[\left((H \oplus H)^n \oplus (H \oplus H)^n, (\rho \oplus 0)^n \oplus (\rho \oplus 0)^n, \begin{pmatrix}0 & U^\ast \\ U & 0\end{pmatrix}\right)\right].\]
Analogously, we define a group homomorphism
\[\varphi_1 \colon K_0(\frakD^u_{\rho \oplus 0}(X)) \to K_{-1}^u(X; {\rho \oplus 0})\]
in the following way: for $[P] \in \Mat_{n \times n}(\frakD^u_{\rho \oplus 0}(X))$ with $P$ a projection, we set
\[\varphi_0([P]) := [((H \oplus H)^n, (\rho \oplus 0)^n, 2P-1)].\]

\begin{prop}[{\cite[Proposition 4.3]{spakula_uniform_k_homology}}]\label{prop:paschke_duality}
The maps
\[\varphi_\ast \colon K_{1+\ast}(\frakD^u_{\rho \oplus 0}(X)) \to K_\ast^u(X; {\rho \oplus 0})\]
for $\ast = -1, 0$ are isomorphisms.
\end{prop}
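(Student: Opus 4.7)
The plan is to mimic the classical proof of Paschke duality (as carried out by Higson for ordinary $K$-homology), keeping careful track of the uniformity conditions at every step. The main technical input is that $\frakD^u_{\rho \oplus 0}(X)$ is a $C^\ast$-algebra and $\frakC^u_{\rho \oplus 0}(X)$ is a closed two-sided $\ast$-ideal in it, which was already established in [spakula\_uniform\_k\_homology, Lemma 4.2] and recalled in the excerpt; therefore the quotient $\frakQ := \frakD^u_{\rho\oplus 0}(X)/\frakC^u_{\rho \oplus 0}(X)$ is a $C^\ast$-algebra to which ordinary $K$-theoretic functional calculus applies.

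First I would check that the maps $\varphi_\ast$ are well-defined. For $\varphi_0$, if $U \in \Mat_{n\times n}(\frakD^u)$ is unitary, then $T := \bigl(\begin{smallmatrix}0 & U^\ast \\ U & 0\end{smallmatrix}\bigr)$ satisfies $T = T^\ast$ and $T^2 = 1$ exactly, while $[T, (\rho\oplus 0)^{2n}(f)]$ is a matrix whose entries are commutators of $U$, $U^\ast$ with the representation and hence uniformly locally compact by definition of $\frakD^u$; thus $T$ assembles into an honest balanced uniform Fredholm module in the sense of Remark \ref{rem:defn_balanced_module}. A homotopy of unitaries in $\Mat_n(\frakD^u)$ is norm-continuous and therefore gives an operator homotopy of uniform Fredholm modules, and a commutator $[U V, \ldots] = U[V,\ldots] + [U,\ldots]V$ argument shows additivity corresponds to block-sum, hence $\varphi_0$ descends to $K_1$. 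The analogous verification for $\varphi_1$ is parallel, using that a projection $P \in \Mat_n(\frakD^u)$ yields the self-adjoint involution $2P-1$.

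For surjectivity, given a class $[(H', \rho', T)] \in K_\ast^u(X; \rho\oplus 0)$ I would use Lemma \ref{lem:compact_perturbations} to normalize $T$ inside its uniform $K$-homology class. Concretely, since $T^2 - 1$, $T - T^\ast$ and all commutators $[T,\rho'(f)]$ are uniformly locally compact, the image $[T] \in \frakQ$ is a self-adjoint element squaring to $1$ in the multigraded case $\ast = -1$; functional calculus in $\frakQ$ (applied to a function which equals $\mathrm{sgn}$ near $\pm 1$ and is smooth) produces a genuine self-adjoint unitary lift $T_0 \in \frakD^u$ agreeing with $T$ modulo $\frakC^u$, and by Lemma \ref{lem:compact_perturbations} the modules $(H',\rho',T)$ and $(H',\rho',T_0)$ represent the same uniform $K$-homology class; then $P := (T_0+1)/2$ is a projection in $\frakD^u$ with $\varphi_1([P]) = [(H',\rho',T_0)]$. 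In the graded case $\ast = 0$ the oddness of $T$ forces, after the same normalization, the off-diagonal form $\bigl(\begin{smallmatrix}0 & U^\ast \\ U & 0\end{smallmatrix}\bigr)$ with $U$ unitary in $\frakD^u$, yielding $\varphi_0([U])$.

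For injectivity I would run the standard argument: if $\varphi_\ast([x]) = [0]$, then after stabilization the normalized Fredholm module is operator homotopic through uniform Fredholm modules to a degenerate one; but an operator homotopy whose operators at every time are of the normalized form $T_t$ with $T_t^2 = 1$ exactly lifts to a norm-continuous path of projections/unitaries in $\frakD^u$ via the continuous functional calculus in $\frakQ$ (the spectrum of each $T_t$ lies in $\{\pm 1\}$), giving the required homotopy in $\Mat_\infty(\frakD^u)$. The main obstacle is ensuring that all spectral/polar-decomposition manipulations really take place inside $\frakD^u_{\rho\oplus 0}(X)$ rather than inside $\IB(H\oplus H)$: since $\frakD^u$ is only a $C^\ast$-subalgebra of $\IB(H\oplus H)$ but is closed under holomorphic (hence continuous) functional calculus as a $C^\ast$-algebra, this is exactly where one must use that $\frakC^u$ sits as a closed ideal, so that functional calculus in the quotient $\frakQ$ can always be lifted back to $\frakD^u$. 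The remainder of the argument is essentially the same bookkeeping as in the non-uniform Paschke duality proof in [higson\_paschke\_duality], with ``compact'' everywhere replaced by ``uniformly locally compact''.
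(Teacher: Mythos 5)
The paper does not give its own proof of this proposition; it defers entirely to \v{S}pakula's Proposition~4.3, and records the ingredients your argument would need (the normalization to involutive modules) as a separate subsequent lemma, citing \v{S}pakula's Lemmas~4.5 and~4.6 together with \cite[Lemma~8.3.5]{higson_roe}. Your sketch does follow the right overall strategy---Higson's Paschke-duality argument with \emph{compact} replaced everywhere by \emph{uniformly locally compact}---but there is a concrete gap precisely at the place you flag as the ``main obstacle.''

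You assert that ``functional calculus in the quotient $\frakD^u_{\rho\oplus 0}(X)/\frakC^u_{\rho\oplus 0}(X)$ (applied to a function which equals $\mathrm{sgn}$ near $\pm 1$ and is smooth) produces a genuine self-adjoint unitary lift $T_0\in\frakD^u_{\rho\oplus 0}(X)$ agreeing with $T$ modulo $\frakC^u_{\rho\oplus 0}(X)$.'' Functional calculus performed inside the quotient stays inside the quotient; it does not hand you a lift. If instead you mean apply a continuous function $\chi$ (with $\chi=\mathrm{sgn}$ near $\pm 1$) to $T$ itself in $\frakD^u_{\rho\oplus 0}(X)$, then after first replacing $T$ by $(T+T^\ast)/2$ the element $\chi(T)$ is self-adjoint, lies in $\frakD^u_{\rho\oplus 0}(X)$, and differs from $T$ by an element of $\frakC^u_{\rho\oplus 0}(X)$ (so Lemma~\ref{lem:compact_perturbations} applies) --- but it is \emph{not} an involution: the spectrum of $T$ in $\frakD^u_{\rho\oplus 0}(X)$ is generally strictly larger than $\{\pm 1\}$ (it is only the image in the quotient that has spectrum in $\{\pm 1\}$), and any continuous $\chi$ one can legitimately apply to $T$ will take values other than $\pm 1$ there, so $\chi(T)^2\neq 1$. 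Self-adjoint involutions do not in general lift from a quotient $C^\ast$-algebra, and this obstruction is exactly what the involutive-normalization lemma (stated right after Proposition~\ref{prop:paschke_duality} in the text) is designed to defeat --- typically by stabilizing with a degenerate module and performing a rotation homotopy, not by a single application of functional calculus. The same gap recurs in your injectivity step, where you assume the witnessing operator homotopy already consists of exact involutions $T_t$ at every $t$; that, too, requires a separate normalization lemma for homotopies that the proposal does not prove.
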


We can define on the set $\mathcal{R}$ of all unitary equivalence classes of representations of $C_0(X)$ on separable Hilbert spaces a reflexive and transitive relation $\prec$ which turns $(\mathcal{R}, \prec)$ into a directed system. Since we do not need the concrete definition of this relation, we just refer to \cite[Definition 4.7]{spakula_uniform_k_homology}. For $(H, \rho) \prec (H^\prime, \rho^\prime)$ \Spakula defines a homomorphism
\begin{equation*}
K_\ast^u(X; {\rho \oplus 0}) \to K_\ast^u(X; {\rho^\prime \oplus 0}).
\end{equation*}
such that the set $\mathcal{K}$ of all the groups $K_\ast^u(X; {\rho \oplus 0})$ together with the above maps becomes a directed system. Furthermore, the relation $\prec$ now becomes antisymmetric when it descends to $\mathcal{K}$.

Now for each $(H, \rho)$ there is an obvious homomorphisms $K_\ast^u(X; {\rho \oplus 0}) \to K_\ast^u(X)$ which is compatible with the maps from the directed system $\mathcal{K}$, i.e., we get a limit homomorphism
\begin{equation*}
\label{eq:limit_homomorphism_directed_system_paschke}
j_\ast \colon \underrightarrow{\lim} \ K_\ast^u(X; {\rho \oplus 0}) \to K_\ast^u(X).
\end{equation*}

\begin{prop}[{\cite[Proposition 4.9]{spakula_uniform_k_homology}}]\label{prop:direct_limit_version}
The map $j_\ast$ is an isomorphism.
\end{prop}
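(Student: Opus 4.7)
The plan is to establish surjectivity and injectivity of $j_\ast$ separately, in each case by exhibiting, for a given class or witness of triviality in $K_\ast^u(X)$, a sufficiently large representation $\rho$ in the directed system $\mathcal{R}$ that ``absorbs'' the data of that class or witness.

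\textbf{Surjectivity.} Given a class $[x] = [(H', \rho', T)] \in K_\ast^u(X)$, I would set $\rho := \rho'$ and replace the module by the unitarily equivalent module on $H' \oplus H'$ carrying the representation $\rho' \oplus 0$ and operator $T \oplus S$, where $S$ is chosen to make $(H', 0, S)$ a degenerate (multigraded) uniform Fredholm module---any odd, multigraded, self-adjoint unitary $S$ commuting with the (zero) representation will do, and degeneracy is automatic because $\rho$ being zero trivializes all local compactness and pseudolocality requirements. Since adding a degenerate module does not change the uniform $K$-homology class, the enlarged module represents $[x]$ in $K_\ast^u(X)$, and by construction it lies in the image of the natural map $K_\ast^u(X; \rho' \oplus 0) \to K_\ast^u(X)$. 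In the $\mathbb{Z}_2$-graded case ($\ast = 0$) one additionally has to double the module into the balanced form of Remark \ref{rem:defn_balanced_module}, which is again achieved by direct summing with a degenerate piece.

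\textbf{Injectivity.} Suppose $[y] \in K_\ast^u(X; \rho \oplus 0)$ maps to zero in $K_\ast^u(X)$. Then there exist a degenerate uniform Fredholm module $z = (H_z, \rho_z, T_z)$ and an operator homotopy inside $K_\ast^u(X)$ from $y \oplus z$ to another degenerate module $z'$ on some Hilbert space $H_{z'}$ with representation $\rho_{z'}$. I would now invoke the fact that $(\mathcal{R}, \prec)$ is directed to pick $(H^\prime, \rho^\prime) \succ (H, \rho)$ large enough that both $(H_z, \rho_z)$ and $(H_{z'}, \rho_{z'})$ are (unitarily) absorbed by $(\rho^\prime \oplus 0)^{\oplus \infty}$ on countable sums of $H^\prime \oplus H^\prime$; this is precisely what the relation $\prec$ of \cite[Definition 4.7]{spakula_uniform_k_homology} is designed to encode. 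Under the structure map $K_\ast^u(X; \rho \oplus 0) \to K_\ast^u(X; \rho^\prime \oplus 0)$, the degenerate modules $z$ and $z'$ and the ambient Hilbert spaces of the homotopy can be realized inside the model Hilbert space for $\rho^\prime \oplus 0$, so the same homotopy now takes place in $K_\ast^u(X; \rho^\prime \oplus 0)$ and witnesses that the image of $[y]$ there is zero. Hence $[y]$ vanishes in the direct limit.

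\textbf{Main obstacle.} The substantive content lies in verifying that representations and Fredholm modules can actually be absorbed into large enough $(\rho^\prime \oplus 0)^{\oplus \infty}$ while preserving the uniform (i.e.\ $L$-dependent) pseudolocality and local compactness estimates. This is the uniform analogue of a Voiculescu-type absorption and is where the direction on $\mathcal{R}$ really does its work; one has to check that the unitary equivalences which embed $(H_z, \rho_z)$ and $(H_{z'}, \rho_{z'})$ into the model space can be chosen so that uniform pseudolocality/local compactness of operators conjugated through them is retained. Once this absorption step is granted (it is exactly the content built into \v{S}pakula's definition of $\prec$), both the surjectivity and injectivity arguments reduce to the degenerate-module and operator-homotopy manipulations used in the classical Paschke-duality proof.
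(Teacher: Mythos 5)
The paper itself gives no proof of this proposition: it is cited from \v{S}pakula, together with the caveat (stated in the remark after the definition of uniform $K$-homology) that \v{S}pakula's original argument fails under his weaker notion of homotopy and is repaired precisely by replacing it with operator homotopy, as the present paper does. So there is no ``paper proof'' to match line by line; what follows is an assessment of your reconstruction on its own merits.

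Your outline is the right shape --- surjectivity by padding a given module with a degenerate piece so that it lands in the model form, injectivity by using directedness of $\mathcal{R}$ to push both the trivializing degenerate modules and the operator homotopy into a single $K_\ast^u(X;\rho'\oplus 0)$. Two points deserve correction. First, calling the absorption step a ``Voiculescu-type'' result conflates two different things. The absorption you need here is much weaker: given any uniform Fredholm module $(H_z,\rho_z,T_z)$, the relation $\prec$ is set up so that $(H_z,\rho_z)$ sits below some $(H',\rho')$ and the structure maps carry unitary equivalence classes compatibly; this is bookkeeping about the directed system, not an absorption theorem. The genuinely Voiculescu-type statement is the one used for Theorem~\ref{thm:paschke_universal}, which upgrades the directed limit to a \emph{single} ample representation and, crucially, needs the extra standing hypothesis of jointly bounded geometry. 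Proposition~4.9 has no such hypothesis, precisely because it avoids the Voiculescu input. Second, for $\ast = 0$ you say the reduction to balanced modules (Remark~\ref{rem:defn_balanced_module}) is ``again achieved by direct summing with a degenerate piece.'' That is not quite the whole story: passing a graded module to balanced form, while stabilizing the operator so that the grading becomes the interchange grading, is a separate normalization lemma (the uniform analogue of the usual balanced-module reduction in analytic $K$-homology), and it has to be carried through for the operator homotopies as well, not only for individual modules. Neither point is fatal --- the overall strategy is sound and is what one would expect \v{S}pakula's corrected proof to do --- but as written these two steps are stated as routine when they are the points that actually carry the argument.
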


In the case of usual $K$-homology, using Voiculescu's theorem one can show that the above directed system has a maximal element, i.e., it is possible to represent every Fredholm module over $X$ using a fixed, so-called universal representation. In order to show a similar result for uniform $K$-homology, we need a uniform version of Voiculescu's theorem. Such a uniform version was proved by \Spakula in \cite{spakula_universal_rep}, but under the assumption that the space $X$ has a certain property which we are going to state now.

\begin{defn}[Locally bounded geometry, {\cite[Definition 3.1]{spakula_universal_rep}}]\label{defn:locally_bounded_geometry}
A metric space $X$ has \emph{locally bounded geometry}, if it admits a countable Borel decomposition $X = \cup X_i$ such that
\begin{itemize}
\item each $X_i$ has non-empty interior,
\item each $X_i$ is totally bounded, and
\item for all $\varepsilon > 0$ there is an $N > 0$ such that for every $X_i$ there exists an $\varepsilon$-net in $X_i$ of cardinality at most $N$.
\end{itemize}

Note that \Spakula demands that the closure of each $X_i$ is compact instead of the total boundedness of them. The reason for this is that he considers only proper spaces, whereas we need a more general notion to encompass also non-complete spaces.
\end{defn}

Now \Spakula went on and proved that if $X$ has locally bounded geometry and coarsely bounded geometry (recall Definition \ref{defn:coarsely_bounded_geometry}), then the universal representation constitutes a maximal element for the above directed system (note that his proof also works with our more general notion which applies to probably non-complete spaces). But at the end of his proof of \cite[Corollary 3.3]{spakula_universal_rep} he assumes an additional property which he does not state in the formulation of his main result of that paper: he assumes that the locally bounded geometry and the coarsely bounded geometry of a space $X$ are compatible with each other. So let us state this needed compatibility condition:

\begin{defn}[Jointly bounded geometry]\label{defn:jointly_bounded_geometry}
A metric space $X$ has \emph{jointly coarsely and locally bounded geometry}, if
\begin{itemize}
\item it admits a countable Borel decomposition $X = \cup X_i$ satisfying all the properties of the above Definition \ref{defn:locally_bounded_geometry} of locally bounded geometry,
\item it admits a quasi-lattice $\Gamma \subset X$ (i.e., $X$ has coarsely bounded geometry), and
\item for all $r > 0$ we have $\sup_{y \in \Gamma} \card \{i \ | \ B_r(y) \cap X_i \not= \emptyset\} < \infty$.
\end{itemize}
The last property ensures that there is an upper bound on the number of subsets $X_i$ that intersect any ball of radius $r > 0$ in $X$.
\end{defn}

\begin{examples}
Recall from Examples \ref{ex:coarsely_bounded_geometry} that manifolds of bounded geometry and simplicial complexes of bounded geometry (i.e., the number of simplices in the link of each vertex is uniformly bounded) equipped with the metric derived from barycentric coordinates have coarsely bounded geometry. Now a moment of reflection reveals that they even have jointly bounded geometry.

In the next Figure \ref{fig:not_jointly_but_others} we give an example of a space $X$ having coarsely and locally bounded geometry, but where the quasi-lattice $\Gamma$ and the Borel decomposition $X = \cup X_i$ are not compatible with each other:

\begin{figure}[htbp]
\centering
\includegraphics[scale=0.5]{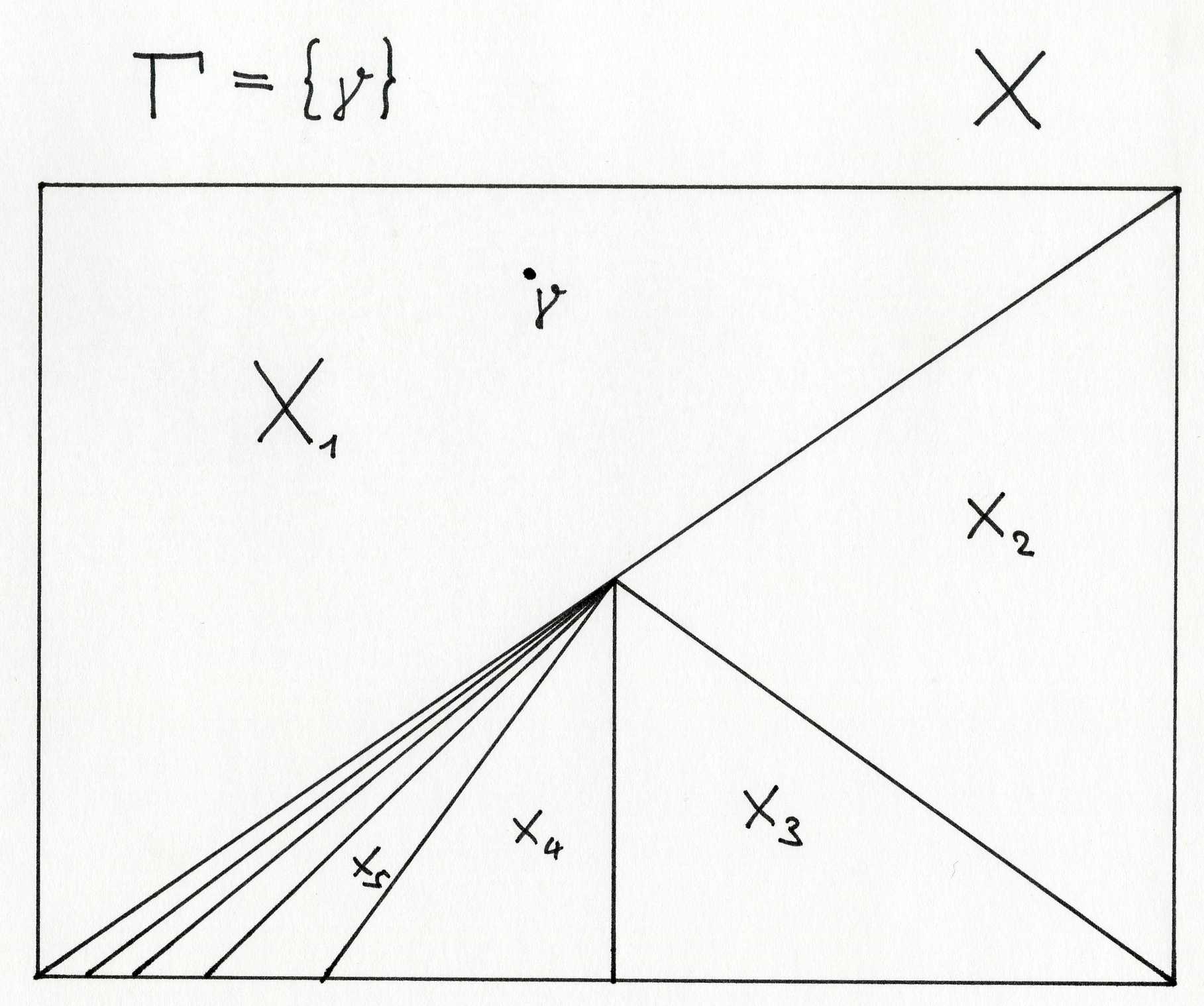}
\caption{Coarsely and locally bounded geometry, but they are not compatible.}
\label{fig:not_jointly_but_others}
\end{figure}

\end{examples}

\begin{thm}[Paschke duality for uniform $K$-homology, {\cite[Corollary 3.6]{spakula_universal_rep}}]\label{thm:paschke_universal}
Let $X$ be a locally compact and separable metric space of jointly bounded geometry and $\rho\colon C_0(X) \to \IB(H)$ an ample representation, i.e., $\rho$ is non-degenerate and $\rho(f) \in \IK(H)$ implies $f \equiv 0$.

Then $\rho$ constitutes a maximal element for the above directed system $\mathcal{K}$, i.e.,
\[K_\ast^u(X; \rho \oplus 0) \cong K_\ast^u(X)\]
for $\ast = -1,0$.
\end{thm}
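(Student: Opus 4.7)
The plan is to combine two ingredients: first, the direct limit identification already established in Proposition \ref{prop:direct_limit_version}, namely the isomorphism $j_\ast\colon \varinjlim K_\ast^u(X; \rho' \oplus 0) \to K_\ast^u(X)$ over the directed system $\mathcal{K}$; and second, a uniform version of Voiculescu's absorption theorem that shows an ample representation $\rho$ is a maximal (terminal) element of $\mathcal{K}$. If every $(H', \rho')$ satisfies $(H', \rho') \prec (H, \rho)$, then the directed system stabilizes at $(H,\rho)$, so $K_\ast^u(X; \rho \oplus 0)$ already equals the direct limit, and the conclusion $K_\ast^u(X; \rho \oplus 0) \cong K_\ast^u(X)$ follows from $j_\ast$.

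So the real work is the uniform Voiculescu theorem. Concretely, I would need to show: given any representation $\rho'\colon C_0(X) \to \IB(H')$ on a separable Hilbert space, there is an isometry $V\colon H' \to H^\infty$ (into a countable amplification) such that $V \rho'(f) - \rho^\infty(f) V$ lies in the appropriate ideal of uniformly locally compact operators for every $f \in C_0(X)$, with the collection of such intertwining defects being \emph{uniformly} approximable on $\LLip_R(X)$. The classical Voiculescu argument proceeds by constructing $V$ as an operator-norm limit of finite-rank near-isometries subordinate to finer and finer partitions of unity; what we need is that these near-isometries can be chosen to respect the Borel decomposition $X = \bigcup X_i$ furnished by locally bounded geometry. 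On each totally bounded piece $X_i$, the ample representation $\rho$ has infinite-dimensional range for $\rho(\chi_{X_i})$ (by ampleness), so we can absorb the at-most $N$-parameter family of coefficients of $\rho'$ needed to describe its action on piece $X_i$ up to an $\varepsilon$-net.

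The uniformity condition, and this is the heart of the matter, then follows by packaging these local absorptions together \emph{coherently}: the third clause of jointly bounded geometry (Definition \ref{defn:jointly_bounded_geometry}) guarantees that each ball $B_r(y)$ around a quasi-lattice point $y \in \Gamma$ meets only boundedly many pieces $X_i$, so for $f \in \LLip_R(X)$ the intertwining defect $V\rho'(f) - \rho^\infty(f) V$ is a sum of a uniformly bounded number of local contributions, each approximable by a rank bounded only in terms of $R$ and the Lipschitz constant. This is exactly the form of uniform approximability required in Definition \ref{defn:uniformly_approx_collections}, and it is exactly where jointly bounded geometry (rather than the two bounded-geometry hypotheses separately) is used.

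The hardest step will be controlling the iterative construction of $V$ so that at each stage the finite-rank corrections remain uniformly sized across all pieces $X_i$ simultaneously. In the classical Voiculescu argument one is free to choose arbitrarily fine local approximations; in the uniform setting one must commit to a single error budget that works for every piece at once, which forces the uniformity in the third clause of Definition \ref{defn:locally_bounded_geometry} (uniform cardinality of $\varepsilon$-nets) to be used in an essential way. Once $V$ is constructed, verifying that it implements $(H', \rho') \prec (H, \rho)$ in \v{S}pakula's sense is routine, and functoriality of the assignment $\rho' \mapsto K_\ast^u(X; \rho' \oplus 0)$ then gives the desired maximality, completing the proof.
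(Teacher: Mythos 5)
The paper does not prove this statement; it is cited from \cite[Corollary 3.6]{spakula_universal_rep}, with the thesis only clarifying that \Spakula{}'s argument implicitly requires the compatibility between the coarse and local bounded-geometry structures (hence the introduction of ``jointly bounded geometry''). Your proposal is a faithful reconstruction of the same route: reduce the claim to maximality of an ample $\rho$ in the directed system $\mathcal{K}$ via Proposition~\ref{prop:direct_limit_version}, and then prove that maximality by a uniform Voiculescu absorption theorem in which the third clause of Definition~\ref{defn:locally_bounded_geometry} supplies the uniform $\varepsilon$-net cardinality and the third clause of Definition~\ref{defn:jointly_bounded_geometry} supplies the uniform bound on how many pieces $X_i$ a ball $B_r(y)$ can meet. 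You have correctly located both where ampleness enters (infinite rank of $\rho(\chi_{X_i})$ on each piece) and where jointly bounded geometry, rather than the two hypotheses separately, becomes essential (committing to a single rank budget across all pieces simultaneously). As a sketch it is sound and matches the architecture of the cited proof; the one place where a fully written-out version would need care, and which you flag yourself, is verifying that the iterative construction of the near-isometry $V$ can respect a single error budget across infinitely many pieces and that the resulting intertwining defect sits in the correct ideal for \Spakula{}'s precise definition of $\prec$ (which this thesis does not reproduce, only references).
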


\section{Normalizations}\label{sec:normalization}

In this section we will first revisit some possible normalizations that we can have for uniform $K$-homology and that were proved by \v{S}pakula. And secondly, we will prove the normalization to ``uniformly traceable operators'', which is crucial for the definition of the index maps on $K_0^u(X)$ in the next Section \ref{sec:index_maps_K_hom}.

Recall that ``normalization'' for $K$-homology means that we may assume that the Fredholm modules have a certain form or a certain property and that this holds also for all homotopies. So plainly speaking, it means that we may change the whole definition of $K$-homology without changing the resulting groups.

\begin{lem}[{\cite[Lemmas 4.5 \& 4.6]{spakula_uniform_k_homology}}, cf. {\cite[Lemma 8.3.5]{higson_roe}}]
For any representation $\rho\colon C_0(X) \to \IB(H)$ we can normalize $K_\ast^u(X; {\rho \oplus 0})$ to uniform modules $((H \oplus H)^n, (\rho \oplus 0)^n, T)$ which are \emph{involutive}, i.e., $T = T^\ast$ and $T^2 = 1$.
\end{lem}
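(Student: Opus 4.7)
The proof proceeds in two independent normalization steps, each carried out using the compact perturbation lemma (Lemma \ref{lem:compact_perturbations}) together with the fact that the uniformly pseudolocal operators form a $C^\ast$-algebra and the uniformly locally compact operators form a closed two-sided $^\ast$-ideal in it (this is \cite[Lemma 4.2]{spakula_uniform_k_homology}). In both steps we replace the representing operator $T$ by a new operator $T^\prime$ that differs from $T$ by a uniformly locally compact term; invoking Lemma \ref{lem:compact_perturbations} then yields the desired operator homotopy.

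\textbf{Step 1 (self-adjointness).} Given a uniform Fredholm module $((H\oplus H)^n,(\rho\oplus 0)^n,T)$, set
\[
\tilde T := \tfrac{1}{2}(T+T^\ast).
\]
The difference $T-\tilde T = \tfrac{1}{2}(T-T^\ast)$ is uniformly locally compact by the very definition of a uniform Fredholm module. I would then verify that $\tilde T$ itself satisfies the axioms: $\tilde T - \tilde T^\ast = 0$ is trivially uniformly locally compact, and an expansion
\[
\tilde T^2 - 1 = (T^2-1) - \tfrac{1}{2}(TK+KT) + \tfrac{1}{4}K^2, \qquad K := T-T^\ast,
\]
shows $\tilde T^2-1$ is uniformly locally compact because uniformly locally compact operators form a two-sided ideal in the uniformly pseudolocal ones, and $T$, $T^\ast$ are uniformly pseudolocal. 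Uniform pseudolocality of $\tilde T$ follows by the same ideal property applied to $[\tilde T,\rho(f)]$. Lemma \ref{lem:compact_perturbations} then gives the operator homotopy $[(H',\rho',T)] = [(H',\rho',\tilde T)]$.

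\textbf{Step 2 (involutivity).} Now assume $T=T^\ast$ with $T^2-1$ uniformly locally compact. The plan is to modify $T$ via continuous functional calculus so that its spectrum is pushed into $\{-1,+1\}$. First, apply the contraction $u(x)=x/\max(1,|x|)$ to obtain $T_1:=u(T)$; since $T-T_1 = (x-u(x))(T)$ vanishes on $[-1,1]$ and $T^2-1$ being uniformly locally compact forces the functional $x-u(x)$ applied to $T$ to be uniformly locally compact (its support is contained in the spectral region where $|T|\ge 1$, which is controlled by $T^2-1$), compact perturbation again applies. Now $\|T_1\|\le 1$ and $1-T_1^2$ is a uniformly locally compact non-negative operator. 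Define
\[
T^\prime := T_1 + (1-T_1^2)^{1/2} \cdot V,
\]
where $V$ is any self-adjoint involution on the range of $1-T_1^2$ anticommuting suitably; equivalently, form the standard $2\times 2$ doubling trick and use Proposition \ref{prop:multigraded_categories_equiv} to stay in the right multigrading class. A direct computation gives $(T^\prime)^2=1$ and $T^\prime=(T^\prime)^\ast$, while $T^\prime-T_1$ is uniformly locally compact because $(1-T_1^2)^{1/2}$ is. A final application of Lemma \ref{lem:compact_perturbations} produces the operator homotopy to the involutive module.

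\textbf{Main obstacle.} Step 1 is routine. The technical difficulty lies in Step 2: the standard non-uniform proof uses the Borel functional calculus to produce the spectral projection $\chi_{[0,\infty)}(T)$, but Borel calculus does not preserve uniform pseudolocality. The plan therefore restricts to continuous functional calculus (which does stay inside the uniformly pseudolocal $C^\ast$-algebra), and the non-trivial point is producing the involution without disturbing the uniform structure. The doubling construction above circumvents this by writing the involution as a continuous-functional-calculus expression in $T_1$ plus a uniformly locally compact correction, so that the uniform conditions are preserved throughout.
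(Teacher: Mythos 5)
Since the paper does not reprove this lemma but simply defers to \cite[Lemmas 4.5 \& 4.6]{spakula_uniform_k_homology} and \cite[Lemma 8.3.5]{higson_roe}, your proposal is to be measured against those arguments, and at the level of strategy it matches them: average to get $T = T^\ast$ by a compact perturbation, contract the spectrum into $[-1,1]$, and then enforce $T^2=1$ by amplifying with a degenerate summand using $\sqrt{1-T^2}$ as the off-diagonal correction. You also correctly identify the main obstacle, namely that Borel functional calculus (spectral projections) destroys uniform pseudolocality, so everything must stay inside continuous functional calculus. Step~1 is fine.

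However, Step~2 as written has two genuine gaps. First, the claim that $T - T_1 = (x-u(x))(T)$ is uniformly locally compact does not follow from the stated reason ``its support is contained in the spectral region where $|T|\ge 1$, which is controlled by $T^2-1$.'' For your choice $u(x) = x/\max(1,|x|)$, the quotient $(x-u(x))/(x^2-1)$ jumps at $x=\pm 1$ (limit $1/2$ from outside, value $0$ inside), so one cannot write $T-T_1 = h(T)(T^2-1)$ with $h$ continuous, and the ideal property cannot be applied directly. There are two ways to close this: either replace $u$ by a smooth normalizing function such as $u(x) = 2x/(1+x^2)$, for which $x - u(x) = \frac{x}{1+x^2}\,(x^2-1)$ has a bounded continuous cofactor, or keep your $u$ and argue via $(x-u(x))^2 = h(x)(x^2-1)$ with $h(x) = (|x|-1)/(|x|+1)$ for $|x|>1$ extended by $0$ (which \emph{is} continuous), so $(T-T_1)^2 \in \frakC^u$, and then use the $C^\ast$-quotient fact that $a^\ast a \in I$ implies $a \in I$. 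Either way, the justification you gave is not a proof.

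Second, the formula $T^\prime := T_1 + (1-T_1^2)^{1/2}\cdot V$ with ``$V$ any self-adjoint involution on the range of $1-T_1^2$ anticommuting suitably'' is not a construction one can carry out on a single copy of $H$: such a $V$ would have to be defined on all of $H$, anticommute with $T_1$, square to $1$, and be uniformly pseudolocal, and there is no reason such an operator exists. You gesture at the ``equivalent'' $2\times 2$ doubling trick, but the two are not equivalent --- only the doubling is correct. The actual construction (which is what the cited proofs perform) is to pass to $H\oplus H$ with representation $\rho\oplus 0$ and take
\[
T^\prime := \begin{pmatrix} T_1 & (1-T_1^2)^{1/2} \\ (1-T_1^2)^{1/2} & -T_1 \end{pmatrix},
\]
whose off-diagonal blocks are uniformly locally compact because $1-T_1^2 \in \frakC^u$ and the ideal is closed under continuous functional calculus of positive elements. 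The degenerate summand and the explicit homotopy connecting $T_1$ plus a degenerate module to $T^\prime$ then finish the argument. Also note that the appeal to Proposition \ref{prop:multigraded_categories_equiv} is misplaced: no change of multigrading level occurs here; what is needed is addition of a degenerate module and an operator homotopy, not the $p \leftrightarrow p+2$ equivalence.
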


Together with Proposition \ref{prop:direct_limit_version} from the last section, we conclude that we can normalize uniform $K$-homology $K_\ast^u(X)$ to involutive modules for $\ast = -1,0$.

The proof of the following lemma in the non-uniform case may be found in, e.g., \cite[Lemma 8.3.8]{higson_roe}. The proof in the uniform case is analogous and the arguments similar to the ones in the proofs of \cite[Lemmas 4.5 \& 4.6]{spakula_uniform_k_homology}.

\begin{lem}\label{lem:normalization_non-degenerate}
Uniform $K$-homology $K_\ast^u(X)$ may be normalized to \emph{non-degenerate} Fredholm modules, i.e., such that all occuring representations $\rho$ are non-degenerate\footnote{This means that $\rho(C_0(X)) H$ is dense in $H$.}.
\end{lem}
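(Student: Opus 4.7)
The plan is to mimic the proof of the non-uniform version in Higson--Roe, with the extra uniformity bookkeeping handed to us for free by the uniform pseudolocality hypothesis. Given a class in $K_\ast^u(X)$ represented by $(H, \rho, T)$, set $H_0 := \overline{\rho(C_0(X))H}$ and $H_1 := H_0^\perp$. With respect to the decomposition $H = H_0 \oplus H_1$ the representation takes the form $\rho = \rho_0 \oplus 0$, where $\rho_0$ is non-degenerate by construction, and $T$ becomes a block matrix $T = \bigl(\begin{smallmatrix} T_{00} & T_{01} \\ T_{10} & T_{11} \end{smallmatrix}\bigr)$. Since the multigrading operators $\epsilon_j$ commute with $\rho(f)$ for every $f$, they preserve $H_0$ and hence the decomposition respects both the grading and the multigrading; in particular $T_{00}$ is automatically odd and multigraded.

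The first main step is to replace $T$ by its block-diagonal part $T' := T_{00} \oplus T_{11}$. A short block computation gives
\[
[T, \rho(f)] = \begin{pmatrix} [T_{00}, \rho_0(f)] & -\rho_0(f) T_{01} \\ T_{10} \rho_0(f) & 0 \end{pmatrix},
\]
so the hypothesis that $\{[T,\rho(f)] : f \in \LLip_R(X)\}$ is uniformly approximable for all $R, L > 0$ forces both $\{\rho_0(f)T_{01}\}$ and $\{T_{10}\rho_0(f)\}$ to be uniformly approximable. An identical block computation then shows that
\[
\rho(f)(T - T') = \begin{pmatrix} 0 & \rho_0(f) T_{01} \\ 0 & 0 \end{pmatrix}, \qquad (T - T')\rho(f) = \begin{pmatrix} 0 & 0 \\ T_{10}\rho_0(f) & 0 \end{pmatrix},
\]
so $T - T'$ is uniformly locally compact. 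Lemma \ref{lem:compact_perturbations} then yields an operator homotopy between $(H, \rho, T)$ and $(H, \rho, T')$.

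The second step is to dispose of the $H_1$ summand: since the representation is identically zero on $H_1$, every condition in Definition \ref{defn:uniform_fredholm_modules} for $(H_1, 0, T_{11})$ is satisfied trivially (with all relevant operators equal to $0$), so this module is degenerate and carries the trivial $K$-homology class. We conclude
\[
[(H, \rho, T)] = [(H, \rho, T')] = [(H_0, \rho_0, T_{00})] + [(H_1, 0, T_{11})] = [(H_0, \rho_0, T_{00})],
\]
and $(H_0, \rho_0, T_{00})$ is non-degenerate. Applying the same block-diagonal construction pointwise in $t$ to any operator homotopy $(H, \rho, T_t)$ (the space $H_0$ is independent of $t$ since it depends only on $\rho$) shows that homotopies between non-degenerate modules can likewise be normalized, which establishes the lemma at the level of classes and at the level of relations.

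The only point that requires care is the verification that $T - T'$ is \emph{uniformly} locally compact rather than merely locally compact; this is where uniform pseudolocality enters, and it is the sole place where we deviate from the non-uniform argument of \cite[Lemma 8.3.8]{higson_roe}. Beyond this, the proof is routine, and I therefore expect no genuine obstacle.
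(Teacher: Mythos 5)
Your proof is correct and is exactly the argument the paper intends: the paper simply cites \cite[Lemma 8.3.8]{higson_roe} and states that the uniform case is analogous, and your block decomposition $H = H_0 \oplus H_1$ with $T' := T_{00} \oplus T_{11}$, together with the verification that $T - T'$ is \emph{uniformly} locally compact by reading off blocks of $[T,\rho(f)]$, is that argument with the uniformity bookkeeping supplied. One small caution: your assertion that $(H_1, 0, T_{11})$ is \emph{degenerate} tacitly uses the Higson--Roe form of the degeneracy condition ($\rho(f)(T^2-1) = \rho(f)(T-T^*) = [\rho(f),T] = 0$), whereas the paper's stated definition reads $T^2-1 = T - T^\ast = 0$ literally, under which $(H_1, 0, T_{11})$ need not be degenerate; in that case you should instead observe that, since $\rho|_{H_1} = 0$, the straight-line path $t \mapsto (1-t)T_{11}$ is an operator homotopy of uniform Fredholm modules and a standard Eilenberg swindle then kills the class of $(H_1, 0, 0)$.
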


Note that in general we can not normalize uniform $K$-homology to be simultaneously involutive and non-degenerate, just as usual $K$-homology.

If $X$ has coarsely bounded geometry (see Definition \ref{defn:coarsely_bounded_geometry}) it will be crucial that we can normalize uniform $K$-homology to uniform finite propagation, i.e., there is an $R > 0$ depending only on $X$ such that every Fredholm module has propagation at most $R$\footnote{This means $\rho(f) T \rho(g) = 0$ if $d(\supp f, \supp g) > R$.}. This was proved by \Spakula in \cite[Proposition 7.4]{spakula_uniform_k_homology}.

Note that it is in general not possible to make this common propagation $R$ arbitrarily small. A sufficient condition for this being possible is that the space $X$ not only admits \emph{some} quasi-lattice, but \emph{arbitrarily fine} quasi-lattices, i.e., for all $c > 0$ there should be a quasi-lattice $\Gamma_c \subset X$ with $B_c(\Gamma_c) = X$ (see Definition \ref{defn:arbitrarily_fine_coarsely_bounded_geometry}).

Furthermore, we can combine the normalization to finite propagation with the other normalizations, i.e., the procedures are compatible: normalizing first to self-adjoint ($T = T^\ast$) and contractive ($\|T\| \le 1$) modules, then to modules of finite propagation and last to involutive ones, we get modules that are both involutive and of finite propagation. Also, normalizing first to finite propagation modules and then to non-degenerate ones, we get modules that are both non-degenerate and of finite propagation:

\begin{prop}[{\cite[Section 7]{spakula_uniform_k_homology}}]\label{prop:normalization_finite_prop_speed}
If $X$ has coarsely bounded geometry, then there is an $R > 0$ depending only on $X$ such that uniform $K$-homology may be normalized to Fredholm modules that have propagation at most $R$.

Furthermore, we can additionally normalize it to either involutive modules or to non-degenerate ones.
\end{prop}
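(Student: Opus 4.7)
The central idea is to truncate a given Fredholm module via a partition of unity whose supports are balls of a radius dictated only by the coarse geometry of $X$. First I would exploit the coarsely bounded geometry: fix a uniformly discrete quasi-lattice $\Gamma\subset X$ with $B_c(\Gamma)=X$ for some $c>0$, and construct a subordinate partition of unity $\{\phi_\gamma^2\}_{\gamma\in\Gamma}$ with $\supp\phi_\gamma\subset B_{2c}(\gamma)$ and a uniform Lipschitz bound $L$ on the family $\{\phi_\gamma\}$. Such a partition exists because the cover $\{B_{2c}(\gamma)\}_{\gamma\in\Gamma}$ is uniformly locally finite; concretely one may take $\phi_\gamma:=\psi_\gamma/(\sum_{\gamma'}\psi_{\gamma'}^2)^{1/2}$ for a single fixed bump $\psi$ translated to each lattice point.

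Given any uniform Fredholm module $(H,\rho,T)$, I would then set
\[
T':=\sum_{\gamma\in\Gamma}\rho(\phi_\gamma)\,T\,\rho(\phi_\gamma),
\]
the sum converging strongly by uniform local finiteness. Each summand is supported in $B_{2c}(\gamma)\times B_{2c}(\gamma)$, so $T'$ has propagation at most $R:=4c$, an invariant of $X$. Using the identity $\sum_\gamma\phi_\gamma^2=1$, the difference rewrites as
\[
T-T'=\sum_{\gamma\in\Gamma}[T,\rho(\phi_\gamma)]\,\rho(\phi_\gamma).
\]
Since all $\phi_\gamma$ belong to $\LLip_{4c}(X)$ with a common Lipschitz constant and $T$ is uniformly pseudolocal, each commutator $[T,\rho(\phi_\gamma)]$ is uniformly approximable with bounds independent of $\gamma$. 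Combined with the uniform local finiteness of $\{\supp\phi_\gamma\}$, which ensures that for any test function $f\in\LLip_{r}(X)$ only a uniformly bounded number of summands contribute to $\rho(f)(T-T')$, this should yield uniform local compactness of $T-T'$. Turning the pointwise uniform approximability of the commutators into a uniform statement for the infinite sum is the main technical obstacle.

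With $T-T'$ known to be uniformly locally compact, Lemma \ref{lem:compact_perturbations} supplies an operator homotopy $(H,\rho,T)\sim(H,\rho,T')$ in $K_\ast^u(X)$, so every class admits a representative of propagation at most $R$. Running the truncation pointwise along an operator homotopy, using one fixed partition of unity, produces a homotopy through finite-propagation modules, so the defining relations survive.

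For the combined normalizations, the non-degenerate case is immediate: passing to a non-degenerate representation as in Lemma \ref{lem:normalization_non-degenerate} does not affect propagation, so one performs that step first and then the truncation $T\mapsto T'$. The involutive case is subtler because a naive spectral adjustment of $T'$ would destroy finite propagation; following the order proposed in \cite[Section~7]{spakula_uniform_k_homology}, I would first normalize to self-adjoint and contractive $T$, then truncate to obtain $T'$ of propagation $\le R$, and only at the end make $T'$ involutive, using that $T'^2-1=T'(T'-T)+(T'-T)T$ is already uniformly locally compact so that the final adjustment itself has bounded propagation, increasing $R$ only by a constant depending on $X$.
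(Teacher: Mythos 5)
The paper offers no proof of its own here; the statement is attributed to \v{S}pakula's \cite[Proposition 7.4]{spakula_uniform_k_homology}, and the surrounding text only records the ordering of the normalizations. Your constructive truncation by a subordinate partition of unity is the natural route, and the propagation bound $R=4c$ on $T'$ is correct.

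The obstacle you flagged --- controlling the infinite sum defining $T-T'$ --- in fact resolves without any new idea, because the far terms cancel tautologically. For $f\in\LLip_{R'}(X)$, using $\sum_\gamma\phi_\gamma^2 = 1$,
\begin{align*}
\rho(f)(T-T') &= \rho(f)T - \sum_{\gamma}\rho(f\phi_\gamma)\,T\,\rho(\phi_\gamma) \\
&= \sum_{\gamma\in N_f}\rho(f\phi_\gamma^2)\,T - \sum_{\gamma\in N_f}\rho(f\phi_\gamma)\,T\,\rho(\phi_\gamma) \\
&= -\sum_{\gamma\in N_f}\rho(f\phi_\gamma)\,[T,\rho(\phi_\gamma)],
\end{align*}
where $N_f = \{\gamma : \supp\phi_\gamma \cap \supp f \neq \emptyset\}$ has uniformly bounded cardinality by uniform local finiteness; the $\gamma\notin N_f$ disappear because $f\phi_\gamma = 0$. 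The mirror computation handles $(T-T')\rho(f)$, so $T-T'$ is uniformly locally compact and Lemma \ref{lem:compact_perturbations} applies. This was not a genuine obstacle and you should have carried it out.

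Two corrections. Your identity $T'^2-1 = T'(T'-T)+(T'-T)T$ misses a term: the right-hand side equals $T'^2-T^2$, so the correct identity is $T'^2-1 = T'(T'-T)+(T'-T)T+(T^2-1)$. The conclusion survives because $T^2-1$ is already uniformly locally compact. More seriously, the final claim that making $T'$ involutive "has bounded propagation, increasing $R$ only by a constant" is unsubstantiated. The standard passage from a self-adjoint contraction $T'$ with $T'^2-1$ uniformly locally compact to an involutive $T''$ goes through a spectral deformation or a conjugating unitary built from a transcendental function of $T'$, and such operators generically do not have finite propagation. This step requires a separate mechanism which your sketch does not supply (and which the paper also does not supply, deferring entirely to \v{S}pakula); it should be flagged as a gap rather than asserted.
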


\subsection*{Uniformly pseudolocally traceable operators}

The last normalization that we will prove is normalization to \emph{uniformly pseudolocally traceable} operators (Definition \ref{defn:uniform_pseudolocal_traceability}). We need this normalization so that we may define index maps on $K_0^u(X)$ in the next section. Since such index maps are tied to the index maps on the $K$-theory of the uniform Roe algebra and since the latter is only well-behaved if the space is proper, it is of no surprise that the results in this subsection only apply to proper spaces.

To prove this normalization, we will need Paschke duality for uniform $K$-homology. Let us recall it: for every representation $\rho$ we have $K_\ast^u(X; {\rho \oplus 0}) \cong K_{1+\ast}(\frakD_{\rho \oplus 0}^u(X))$, where $\frakD_{\rho \oplus 0}^u(X) \subset \IB(H \oplus H)$ is the $C^\ast$-algebra of all uniformly pseudolocal operators with respect to the representation $\rho \oplus 0$. Our goal will be to show that there is a dense, local $C^\ast$-algebra $\frakD_{\rho \oplus 0}^{tr}(X) \subset \frakD_{\rho \oplus 0}^u(X)$, where $\frakD_{\rho \oplus 0}^{tr}(X)$ are the uniformly pseudolocally traceable operators. Then if follows with Lemma \ref{lem:loc_algebra_same_k_theory} that $K_{1+\ast}(\frakD_{\rho \oplus 0}^{tr}(X)) = K_{1+\ast}(\frakD_{\rho \oplus 0}^u(X)) \cong K_\ast^u(X; {\rho \oplus 0})$ which is the formulation of the desired normalization.

We will also have to recall Lemma \ref{lem:kasparov_lemma_uniform_approx_manifold}. Since it was stated only for manifolds, we restate it now in the more general setting of metric spaces. Let us denote by $B_b(X)$ all bounded Borel functions on $X$ and by $B_R(X)$ the subset consisting of all Borel function $h$ with $\diam(\supp h) \le R$ and $\|h\|_\infty \le 1$. Note that we can always extend a representation of $C_0(X)$ canonically to one of $B_b(X)$.

\begin{lem}\label{lem:kasparov_lemma_uniform_approx}
Let $\rho\colon C_0(X) \to \IB(H)$ be a non-degenerate representation and let $X$ be proper. Then the following are equivalent for an operator $T \in \IB(H)$:
\begin{enumerate}
\item $T$ is uniformly pseudolocal,
\item for all $R, L > 0$ the following collection is uniformly approximable:
\begin{align*}
\{\rho(f) T \rho(g), \rho(g) T \rho(f) \ | \ & f \in B_b(X)\text{ with }\|f\|_\infty \le 1,\text{and}\\
& g \in \LLip_R(X)\text{ with }\supp f \cap \supp g = \emptyset\},
\end{align*}
\item for all $R, L > 0$ the following collection is uniformly approximable:
\begin{align*}
\{\rho(f) T \rho(g), \rho(g) T \rho(f) \ | \ & f \in B_b(X)\text{ with }\|f\|_\infty \le 1,\text{and}\\
& g \in B_R(X)\text{ with }d(\supp f, \supp g) \ge L\}.
\end{align*}
\end{enumerate}
\end{lem}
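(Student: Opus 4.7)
The plan is to adapt the proof of Lemma \ref{lem:kasparov_lemma_uniform_approx_manifold} to this more general setting, following the circular implications $1 \Rightarrow 2 \Rightarrow 3 \Rightarrow 1$. None of the three conditions involves smoothness, so we do not need the smoothing-via-mollifiers step that was used to prove $3 \Rightarrow 4 \Rightarrow 5$ in the manifold lemma; those points are simply absent here. Properness of $X$ is used only to ensure that the sets of finite diameter appearing in $\LLip_R(X)$ and $B_R(X)$ have compact closure, so that the relevant cutoff functions are genuinely in $C_c(X)$ and hence in the domain of $\rho$; the non-degeneracy of $\rho$ is used so that its canonical extension to $B_b(X)$ satisfies $\rho(1_X)=1$, which we invoke in the last step.

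For $1 \Rightarrow 2$ I would use that disjoint supports give $\rho(f)\rho(g)=0$, hence $\rho(f)T\rho(g)=\rho(f)[T,\rho(g)]$. Since $\|f\|_\infty\le 1$ the operator $\rho(f)$ has norm at most $1$, so uniform approximability of $\{[T,\rho(g)]\mid g\in \LLip_R(X)\}$ (from uniform pseudolocality of $T$) transfers to uniform approximability of $\{\rho(f)T\rho(g)\}$, and similarly with $f$ and $g$ interchanged. For $2 \Rightarrow 3$, given $f\in B_b(X)$ with $\|f\|_\infty\le 1$ and $g\in B_R(X)$ with $d(\supp f,\supp g)\ge L$, I would introduce the Lipschitz cutoff $g'(x):=\max(0,1-\tfrac{1}{L}d(x,\supp g))$. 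Then $g'\in \tfrac{1}{L}\text{-}\operatorname{Lip}_{R+2L}(X)$, $\|g'\|_\infty\le 1$, $g'g=g$, and $\supp f\cap \supp g'=\emptyset$. Writing $\rho(f)T\rho(g)=\rho(f)T\rho(g')\rho(g)$ and applying 2 to the pair $(f,g')$ yields uniform approximability of the required collection, and the symmetric case is treated identically.

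The main work, and the principal obstacle, is $3 \Rightarrow 1$, which adapts Kasparov's classical argument. Given $f\in \LLip_R(X)$ and $\varepsilon>0$, I would partition the range $f(X)\subset[-1,1]$ into non-overlapping half-open intervals $U_1,\ldots,U_n$ of diameter less than $\varepsilon$ arranged so that $\overline{U_i}\cap\overline{U_j}\ne\emptyset$ exactly when $|i-j|\le 1$; the number $n$ is bounded in terms of $\varepsilon$ alone. Setting $\chi_i$ to be the characteristic function of $f^{-1}(U_i)$, one checks that for each $i$ with $0\notin U_i$ one has $\chi_i\in B_R(X)$, and that $d(\supp\chi_i,\supp\chi_j)\ge \varepsilon/L$ whenever $|i-j|>1$ because $f$ is $L$-Lipschitz. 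Choosing sample points $x_i\in f^{-1}(U_i)$, the step function $f':=\sum f(x_i)\chi_i$ satisfies $\|f-f'\|_\infty<\varepsilon$, so $[T,\rho(f)]$ is within $2\varepsilon\|T\|$ of $[T,\rho(f')]$. Using $\sum_i\rho(\chi_i)=\rho(1_X)=1$ (non-degeneracy of $\rho$), one expands
\[
[T,\rho(f')] \;=\; \sum_{|i-j|>1}\bigl(f(x_i)-f(x_j)\bigr)\rho(\chi_j)T\rho(\chi_i) \;+\; \sum_{|i-j|\le 1}\bigl(f(x_i)-f(x_j)\bigr)\rho(\chi_j)T\rho(\chi_i).
\]
The first sum has only finitely many (at most $\binom{n}{2}$) terms, each uniformly approximable by 3 applied to $(\chi_j,\chi_i)$ with $d(\supp\chi_i,\supp\chi_j)\ge \varepsilon/L$, so the total is uniformly approximable. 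The second sum, as in the manifold proof, splits into two direct-sum-type operators each of norm at most $2\varepsilon\|T\|$ since $|f(x_i)-f(x_j)|\le 2\varepsilon$. Combining these bounds shows $[T,\rho(f)]$ lies in the operator-norm closure of uniformly approximable collections, and since the uniformly pseudolocal operators form a norm-closed set (by \cite[Lemma~4.2]{spakula_uniform_k_homology}), this gives uniform pseudolocality of $T$. The subtle point, and where the main care is needed, is ensuring that the index set of the first sum has cardinality depending only on $\varepsilon$ (and not on $f$ or $T$), which is where the uniform boundedness $\|f\|_\infty\le 1$ combined with the intermediate-value-free partition of $[-1,1]$ enters decisively.
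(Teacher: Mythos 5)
Your proof is correct and follows the same route the paper itself takes, which is simply to transfer points 1--3 of the proof of Lemma \ref{lem:kasparov_lemma_uniform_approx_manifold} to the metric-space setting (the paper states Lemma \ref{lem:kasparov_lemma_uniform_approx} without an explicit proof, relying on that analogy). Your diagnoses of where the extra hypotheses enter are also right: properness is needed in $2 \Rightarrow 3$ so that the cutoff $g'$ has compact support and lies in $C_c(X) \subset C_0(X)$, and non-degeneracy is needed in $3 \Rightarrow 1$ so that $\sum_i \rho(\chi_i) = 1$; the paper confirms the latter explicitly in the remark before Lemma \ref{lem:kasparov_lemma_uniform_traceable}, where it notes that non-degeneracy was used only for the implication $3 \Rightarrow 1$.
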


The following Lemma \ref{lem:kasparov_lemma_uniform_traceable} is the analogue of Lemma \ref{lem:kasparov_lemma_uniform_approx} for uniform traceability and its proof is completely analogous. But note that now we do not have equivalence of the three different statements but only a chain of implications. The reason why the other implications are missing is that their proofs use approximation arguments in the operator norm, which may of course destroy every traceability we had. Indeed, we will show in Example \ref{ex:reverse_implications_l-univ-pseudoloc-traceable} that one of the missing implications is in general false. Furthermore, since the non-degeneracy of the representation $\rho$ in Lemma \ref{lem:kasparov_lemma_uniform_approx} was only needed for the implication $3 \Rightarrow 1$ which does not hold here, we can drop there the requirement that $\rho$ has to be non-degenerate.

\begin{lem}\label{lem:kasparov_lemma_uniform_traceable}
For $X$ a proper space, let $T \in \IB(H)$ and consider the following three statements:
\begin{enumerate}
\item $\{\|[T, \rho(f)]\|_{tr} \ | \ f \in \LLip_R(X)\}$ is a bounded subset of $\IR$ for all $R, L > 0$,
\item for all $R, L > 0$ the following subset of $\IR$ is bounded:
\[\{\|\rho(f) T \rho(g)\|_{tr} \ | \ f \in B_b(X), \|f\|_\infty \le 1, \ \! g \in \LLip_R(X), \ \! \supp f \cap \supp g = \emptyset\},\]
\item for all $R, L > 0$ the following subset of $\IR$ is bounded:
\[\{\|\rho(f) T \rho(g)\|_{tr} \ | \ f \in B_b(X), \|f\|_\infty \le 1, \ \! g \in B_R(X), \ \! d(\supp f, \supp g) \ge L\},\]
\end{enumerate}
Then we have the implications $1 \Rightarrow 2 \Rightarrow 3$.

Furthermore, we can change the roles of $f$ and $g$ in Points 2 and 3 (number these new statements 2' and 3') and then we have the analogous chain of implications $1 \Rightarrow 2^\prime \Rightarrow 3^\prime$.
\end{lem}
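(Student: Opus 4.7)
The plan is to mimic the proof of the corresponding implications $1 \Rightarrow 2 \Rightarrow 3$ in Lemma \ref{lem:kasparov_lemma_uniform_approx_manifold}, but systematically replacing each operator-norm estimate by a trace-norm estimate. The algebraic fact driving both steps is that the trace-class operators form a two-sided $^\ast$-ideal in $\IB(H)$ and the trace norm is submultiplicative against the operator norm: $\|AB\|_{tr} \le \|A\|_{op}\cdot \|B\|_{tr}$ and $\|BA\|_{tr} \le \|A\|_{op}\cdot \|B\|_{tr}$ whenever $B$ is trace class and $A$ bounded. Since each $\rho(h)$ with $\|h\|_\infty \le 1$ has operator norm at most $1$, these multipliers will silently disappear in the estimates.

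For $1 \Rightarrow 2$, given $f \in B_b(X)$ with $\|f\|_\infty \le 1$ and $g \in \LLip_R(X)$ with $\supp f \cap \supp g = \emptyset$, I would use $\rho(f)\rho(g) = 0$ to rewrite $\rho(f) T \rho(g) = \rho(f) [T, \rho(g)]$, and then estimate
\[\|\rho(f) T \rho(g)\|_{tr} \le \|\rho(f)\|_{op} \cdot \|[T, \rho(g)]\|_{tr} \le \|[T, \rho(g)]\|_{tr},\]
which is uniformly bounded in $g \in \LLip_R(X)$ by hypothesis 1.

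For $2 \Rightarrow 3$, given $f \in B_b(X)$ with $\|f\|_\infty \le 1$ and $g \in B_R(X)$ with $d(\supp f, \supp g) \ge L$, I would introduce the auxiliary cutoff $g'(x) := \max\bigl(0, 1 - \tfrac{2}{L}\, d(x, \supp g)\bigr)$. It is $\tfrac{2}{L}$-Lipschitz, bounded by $1$, has support contained in $\overline{B_{L/2}(\supp g)}$ (so its support has diameter at most $R + L$ and is disjoint from $\supp f$), and satisfies $\rho(g')\rho(g) = \rho(g)$. Writing $\rho(f) T \rho(g) = \rho(f) T \rho(g') \rho(g)$ and applying the trace-norm estimate with $\rho(g)$ on the right yields
\[\|\rho(f) T \rho(g)\|_{tr} \le \|\rho(f) T \rho(g')\|_{tr} \cdot \|\rho(g)\|_{op} \le \|\rho(f) T \rho(g')\|_{tr},\]
and the right-hand side is uniformly bounded by hypothesis 2 with parameters $R' := R + L$ and $L' := 2/L$. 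The primed chain $1 \Rightarrow 2' \Rightarrow 3'$ is handled identically after the manipulations $\rho(g) T \rho(f) = -[T, \rho(g)] \rho(f)$ and $\rho(g) T \rho(f) = \rho(g)\rho(g') T \rho(f)$, both of which collapse under the same two trace-norm inequalities.

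There is no serious obstacle here; the argument is essentially cosmetic given the earlier analogue. The only subtlety worth flagging is precisely \emph{why} one cannot push further: the proof of $3 \Rightarrow 1$ in Lemma \ref{lem:kasparov_lemma_uniform_approx_manifold} relies on approximating arbitrary Lipschitz functions in the supremum norm by step functions supported on disjoint level sets, and such operator-norm approximations are incompatible with preserving trace-class bounds on commutators. Hence one should resist attempting the reverse implications — they genuinely fail, as will be shown in Example \ref{ex:reverse_implications_l-univ-pseudoloc-traceable}.
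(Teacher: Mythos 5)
Your proof is correct and follows the same route the paper takes: the paper states that the lemma's proof is completely analogous to the $1 \Rightarrow 2 \Rightarrow 3$ steps of Lemma~\ref{lem:kasparov_lemma_uniform_approx}, replacing uniform approximability by trace-norm boundedness via the ideal property $\|AB\|_{tr} \le \|A\|_{op}\|B\|_{tr}$, which is exactly your argument. The only deviation is cosmetic: you use the cutoff $g'(x) = \max(0, 1 - \tfrac{2}{L}d(x,\supp g))$ instead of the paper's factor $\tfrac{1}{L}$, which makes $\supp f \cap \supp g' = \emptyset$ hold strictly rather than relying on $fg' \equiv 0$; both choices work.
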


It is clear that if an operator fulfills 1, 2+2' or 3+3' of the above Lemma \ref{lem:kasparov_lemma_uniform_traceable}, then it fulfills correspondingly points 1, 2 or 3 of Lemma \ref{lem:kasparov_lemma_uniform_approx}.

\begin{defn}[Uniform pseudolocal traceability]\label{defn:uniform_pseudolocal_traceability}
Let $\rho$ be a non-degenerate representation of a proper space. An operator $T \in \IB(H)$ is called \emph{uniformly pseudolocally traceable}, if it fulfills Points 3 and 3' of the above Lemma \ref{lem:kasparov_lemma_uniform_traceable}.

We denote the set of all such operators by $\frakD_\rho^{tr}(X)$.
\end{defn}

This definition is only given for non-degenerate representations since for degenerate ones it is not the right one. This may be seen by comparing Definition \ref{defn:Dtr_rho0} of $\frakD_{\rho \oplus 0}^{tr}(X)$ with the algebra that we would get if we used for the representation $\rho \oplus 0$ the above definition.

\begin{example}\label{ex:reverse_implications_l-univ-pseudoloc-traceable}
We will now construct an operator which is uniformly pseudolocally traceable, but which does not have the Properties 2 and 2' of Lemma \ref{lem:kasparov_lemma_uniform_traceable} (and therefore it does also not have the Properties 1 and 1'). Even worse, this operator will not even be approximable (in operator norm) by operators fulfilling 2 and 2' (this shows that the corresponding Lemma \ref{lem:D_tr_dense} for operators having Properties 2 and 2' does not hold).

Let $H := L^2(\IR)$ and let $\rho\colon C_0(\IR) \to L^2(\IR)$ be the representation via multiplication operators. We denote by $I_n$ the intervals $I_n := (\tfrac{1}{2^{n+1}}, \tfrac{1}{2^n}]$ and choose for every $n \in \IN$ an orthonormal basis $\{\varphi_k^n\}_{k \in \IN}$ of $L^2(I_n)$. Furthermore, we have the intervals $-I_n = [-\tfrac{1}{2^n}, -\tfrac{1}{2^{n+1}})$ with corresponding orthonormal basis $\{\psi_k^n\}_{k \in \IN}$, i.e., $\psi_k^n(x) := \varphi_k^n(-x)$. At last, we choose for every $n \in \IN$ some number $K_n \in \IN$ and then we define an operator $T\colon L^2(\IR) \to L^2(\IR)$ via
\[T(f) := \sum_{n=1}^\infty \pi_n(f),\text{where }\pi_n(f) := \sum_{k=1}^{K_n} \langle f, \varphi_k^n \rangle \psi_k^n\colon L^2(\IR) \to L^2(-I_n),\]
i.e., $\pi_n$ projects onto the subspace of $L^2(I_n)$ spanned by the first $K_n$ basis vectors of it and then reflects at $0 \in \IR$; see the next Illustration \ref{fig:illustration_counterex}.

\begin{figure}[htbp]
\centering
\includegraphics[scale=0.8]{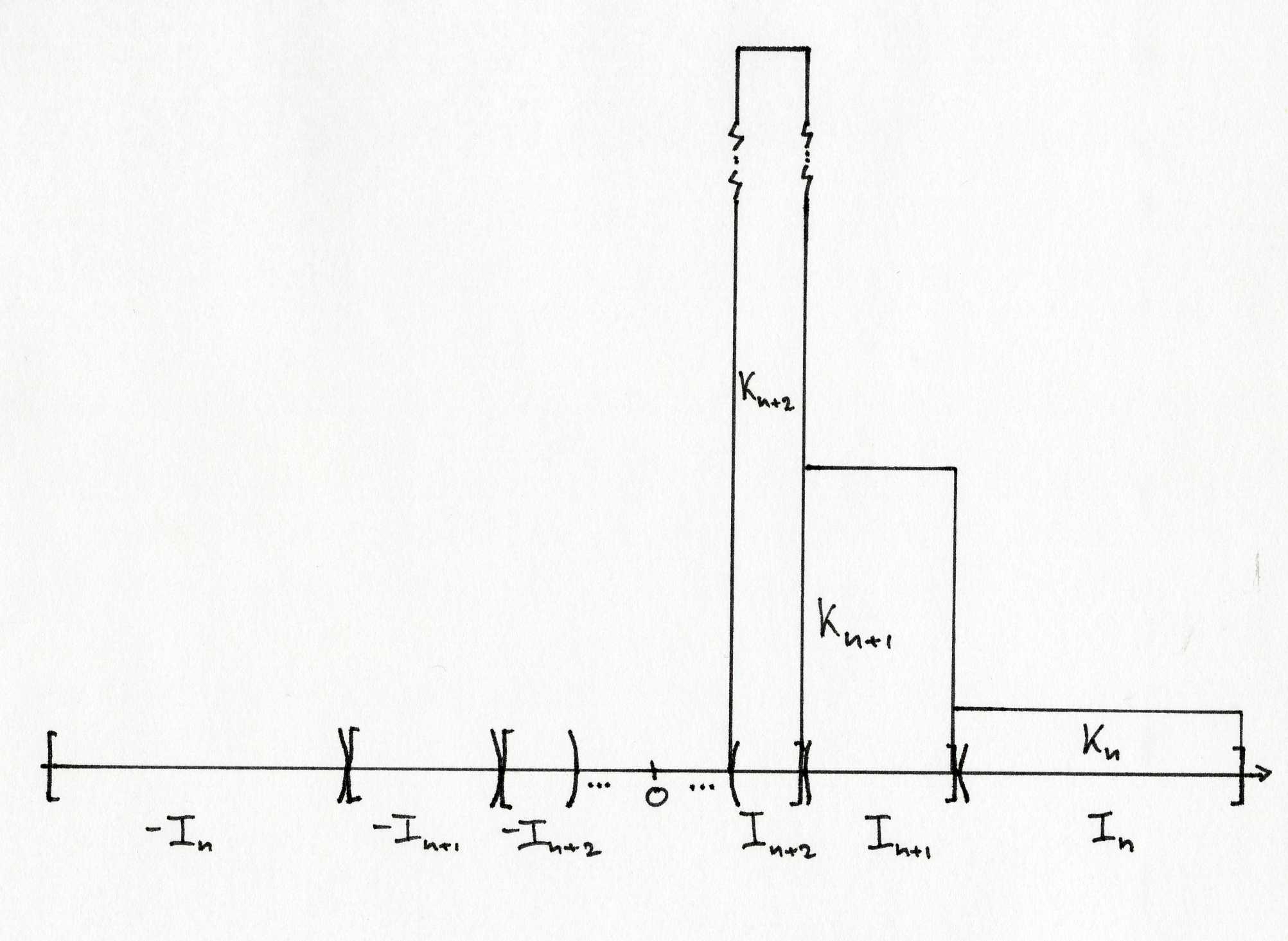}
\caption{Illustration for the operator $T\colon L^2(\IR) \to L^2(\IR)$.}
\label{fig:illustration_counterex}
\end{figure}

It is immediately clear that $T$ is uniformly pseudolocally traceable. But if the sequence $K_n$ grows too fast (e.g., $K_n \sim 2^n$ as $n \to \infty$ would suffice) then $T$ does neither have Property 2 of Lemma \ref{lem:kasparov_lemma_uniform_traceable} nor 2'. In fact, in this case $T$ can not even be approximated (in operator norm) by such operators.

\end{example}

Recall Definition \ref{defn:local_Cstar_algebra} of local $C^\ast$-algebras for the next lemma:

\begin{lem}\label{lem:D_tr_local}
Let $\rho$ be non-degenerate. Then $\frakD_\rho^{tr}(X)$ is a local $C^\ast$-algebra.
\end{lem}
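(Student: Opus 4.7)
The inclusion $\frakD_\rho^{tr}(X)\subseteq\frakD_\rho^u(X)$ is the easy part: uniformly bounded trace norm of a family of operators forces uniform approximability by finite rank operators (given $\varepsilon>0$ and $\|S\|_{tr}\le M$, spectrally truncate $|S|$ below $\varepsilon/2$ to obtain a rank at most $2M/\varepsilon$ operator within $\varepsilon$ of $S$), which converts Points~3 and~3' of Lemma~\ref{lem:kasparov_lemma_uniform_traceable} into the corresponding Points of Lemma~\ref{lem:kasparov_lemma_uniform_approx} and hence into uniform pseudolocality.

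Next, I would show that $\frakD_\rho^{tr}(X)$ is a $^\ast$-subalgebra. Closure under adjoints is built into the symmetric formulation, and addition is subadditivity of the trace norm. For multiplication, given $T,S\in\frakD_\rho^{tr}(X)$, $f\in B_b(X)$ with $\|f\|_\infty\le 1$, $g\in B_R(X)$, and $d(\supp f,\supp g)\ge L$, I let $h$ be the characteristic function of the open $L/2$-neighborhood of $\supp g$; then $\diam\supp h\le R+L$, $d(\supp f,\supp h)\ge L/2$, and $d(\supp(1-h),\supp g)\ge L/2$, so splitting
\[
\rho(f)TS\rho(g)=\rho(f)T\rho(h)\cdot S\rho(g)+\rho(f)T\cdot\rho(1-h)S\rho(g)
\]
and applying $\|AB\|_{tr}\le\|A\|_{tr}\|B\|_{op}$ to the first summand and $\|AB\|_{tr}\le\|A\|_{op}\|B\|_{tr}$ to the second bounds $\|\rho(f)TS\rho(g)\|_{tr}$ in terms of the defining seminorms of $T$ at parameters $(R+L,L/2)$ and $S$ at $(R,L/2)$; the symmetric condition~3' for $TS$ is analogous with the bump near $\supp f$. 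Equipping $\frakD_\rho^{tr}(X)$ with the countable family $\{\|\cdot\|_{op}\}\cup\{p_{R,L}\}_{R,L\in\IQ_{>0}}$, where $p_{R,L}(T):=\sup\bigl\{\|\rho(f)T\rho(g)\|_{tr}+\|\rho(g)T\rho(f)\|_{tr}\bigr\}$ over the admissible pairs, makes it a Fréchet $^\ast$-algebra with topology finer than the inherited operator norm topology: completeness uses that Cauchyness in each seminorm produces a trace class limit of each corner with a uniform bound, while joint continuity of multiplication is the estimate just given.

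The main obstacle is closure under holomorphic functional calculus. By Lemma~\ref{lem:matrix_algebras_holomorphically_closed} together with Lemma~\ref{lem:stable_calculus_power_series}, it suffices to show that for $T\in\frakD_\rho^{tr}(X)$ and $\phi(z)=\sum_{n\ge 1}a_nz^n$ with radius of convergence strictly greater than $\|T\|_{op}$ the operator $\phi(T)$ still lies in $\frakD_\rho^{tr}(X)$, and likewise in all matrix sizes. Iterating the multiplication bound naively gives the recursion
\[
p_{R,L}(T^{n+1})\;\le\;p_{R+L,L/2}(T)\,\|T\|_{op}^n+\|T\|_{op}\,p_{R,L/2}(T^n),
\]
but since $p_{R,L}(T)$ may actually diverge as $L\to 0$ (the operator of Example~\ref{ex:reverse_implications_l-univ-pseudoloc-traceable} demonstrates this), unfolding the recursion introduces an uncontrollable factor of the shape $p_{R+L,L/2^n}(T)$. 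The decisive step is to refine the splitting for $T^{n+2}$ so that the trace norm cost is paid only by the two outermost copies of $T$ while the interior $T^n$ contributes only in operator norm: fix characteristic functions $h_1,h_2$ at scales depending only on $R$ and $L$ (not on $n$) and write $\rho(f)T^{n+2}\rho(g)$ as $\rho(f)T\rho(h_1)T^n\rho(h_2)T\rho(g)$ modulo a finite sum of error terms, each of which carries an outer trace class factor bounded by $p_{R',L'}(T)$ for some $R',L'$ independent of $n$. Provided this reorganization goes through, one obtains $p_{R,L}(T^{n+2})\le C(R,L)\|T\|_{op}^n$ with $C(R,L)$ independent of $n$, and summing against $\sum|a_n|\|T\|_{op}^n$ yields $p_{R,L}(\phi(T))<\infty$. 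The identical estimate applied entry-wise handles the matrix algebras $\Mat_{k\times k}(\frakD_\rho^{tr}(X))$ required for the local $C^\ast$-algebra property.
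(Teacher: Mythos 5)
Your structural outline (show $\frakD_\rho^{tr}(X)\subseteq\frakD_\rho^u(X)$, then that it is a Fréchet $^\ast$-subalgebra, then that it is closed under holomorphic functional calculus) matches the paper's, and the multiplicative seminorm estimate via the cutoff near $\supp g$ is exactly the paper's Equation for $C_{TS}(R,L)$. The divergence comes at the functional-calculus step, and there you have a genuine gap.

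You reach for Lemma~\ref{lem:stable_calculus_power_series} and try to estimate $p_{R,L}(T^n)$ directly, correctly noting that the naive recursion forces $L\to L/2^n$. Your proposed fix --- insert two fixed cutoffs $h_1,h_2$ and isolate the ``outer'' trace-class factors from an ``inner'' $T^n$ measured only in operator norm --- does not close the gap. When you expand $\rho(f)T^{n+2}\rho(g)$ against $h_1,h_2$ and their complements, the error term in which the inner block $T^n$ transports mass from the near-$g$ region to the near-$f$ region (e.g. $\rho(f)T\rho(1-h_1)T^n\rho(h_2)T\rho(g)$ with $h_2$ a cutoff near $g$) has no trace-class factor that can be extracted at a scale independent of $n$: to control it you would need trace bounds on $\rho(\cdot)T^{n+1}\rho(\cdot)$ at separation $\ge L/2$, which is exactly the quantity you are trying to estimate. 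A multi-scale expansion (inserting cutoffs at $m$ intermediate radii in $T^m$) trades this for $\sim 2^m$ error terms each involving $p_{\cdot}(T)$ at separation $\sim L/m$, which again diverges because $p_{R,L}(T)$ has no a priori rate as $L\to 0$ (your own citation of Example~\ref{ex:reverse_implications_l-univ-pseudoloc-traceable} is precisely the obstruction). Nothing in the uniform pseudolocal traceability hypothesis supplies the needed decay, so ``provided this reorganization goes through'' is doing all the work, and I do not see how to make it go through.

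The paper circumvents this entirely by invoking Lemma~\ref{lem:local_algebra_equivalent} and proving \emph{inverse closedness} instead of a power-series estimate. For $T\in\frakD_\rho^{tr}(X)$ invertible in the ambient $C^\ast$-algebra one considers, for a fixed admissible pair $(f,g)$, the self-adjoint trace-class operator $\chi_fT^\ast\chi_g+\chi_gT\chi_f$ (trace norm uniformly bounded by hypothesis) and shows that $\chi_fT^{-1}\chi_g+\chi_g(T^\ast)^{-1}\chi_f$ is its inverse on the orthocomplement of the common kernel. Since the latter is bounded in operator norm (uniformly, by $\|T^{-1}\|$), the nonzero eigenvalues of the former are bounded below away from zero; combined with $1$-summability of those eigenvalues this gives a uniform bound on their \emph{number}, hence a uniform trace-norm bound on $\chi_fT^{-1}\chi_g+\chi_g(T^\ast)^{-1}\chi_f$, and so on $\|\rho(f)T^{-1}\rho(g)\|_{tr}$. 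This spectral argument never touches powers of $T$ and requires no scale refinement, which is why it succeeds where the power-series route stalls. You should replace your last paragraph with this inverse-closedness argument.
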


\begin{proof}
Let us first show that $\frakD_\rho^{tr}(X)$ is an algebra, i.e., that the product $TS$ of two uniformly pseudolocally traceable operators is again uniformly pseudolocally traceable: let $f \in B_b(X)$, $g \in B_R(X)$ with $\|f\|_\infty \le 1$ and $d(\supp f, \supp g) =: L > 0$ be given. Let $A := B_{L/2}(\supp g)$, $B := X - A$ and denote by $\chi_A$ the characteristic function of $A$ and by $\chi_B$ the one of $B$; see the above Figure \ref{fig:subsets_A_B}.

\begin{figure}[htbp]
\centering
\includegraphics[scale=0.6]{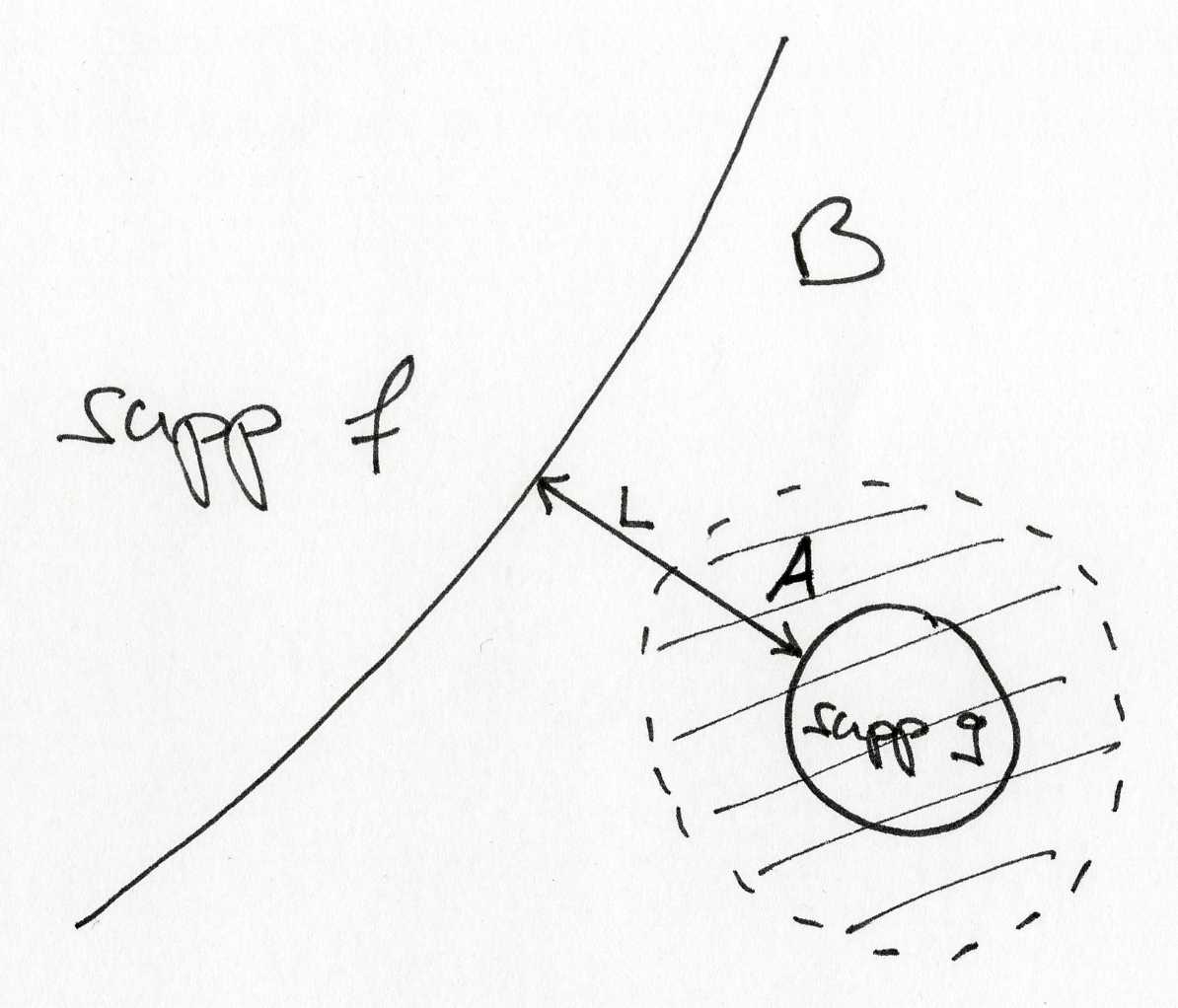}
\caption{The subsets $A$ and $B$ corresponding to $f \in B_b(X)$ and $g \in B_R(X)$.}
\label{fig:subsets_A_B}
\end{figure}

Then we have (since $\rho$ is non-degenerate)
\begin{equation}
\label{eq:D_tr_algbra}
\rho(f) TS \rho(g) =  \rho(f) T \rho(\chi_A) S \rho(g) + \rho(f) T \rho(\chi_B) S \rho(g).
\end{equation}

Denoting by $C_T(R,L)$ the least possible constant such that $\|\rho(f) T \rho(g)\|_{tr} \le C$ for all $g \in B_R(X)$, $f \in B_b(X)$ with $\|f\|_\infty \le 1$ and $d(\supp f, \supp g) \ge L$, we get from Equation \eqref{eq:D_tr_algbra}
\begin{equation}
\label{eq:D_tr_seminorms_submult}
C_{TS}(R, L) \le C_T(R+L,L/2) \cdot \|S\|_{op} + \|T\|_{op} \cdot C_S(R, L/2).
\end{equation}
Denoting by $C^\prime_T(R,L)$ the same as $C_T(R,L)$ but with the roles of $f$ and $g$ changed, we get a similar inequality for $C^\prime_{TS}(R,L)$. This shows that $TS$ is again uniformly pseudolocally traceable.

It is clear that both $C_{\largecdot}(R,L)$ and $C^\prime_{\largecdot}(R,L)$ are semi-norms on $\frakD_\rho^{tr}(X)$ for all $R, L > 0$. Furthermore, the collection $\{C_{\largecdot}(R,L), C^\prime_{\largecdot}(R,L)\}$ for all positive rational numbers $R,L \in \IQ_{> 0}$ together with the operator norm $\|\largecdot\|_{op}$ let $\frakD_\rho^{tr}(X)$ be a \Frechet algebra (Definition \ref{defn:Frechet_subalgebras}). That the multiplication is jointly continuous follows from the Estimate \eqref{eq:D_tr_seminorms_submult} together with the corresponding one for $C^\prime_{TS}(R,L)$.

From the Lemmas \ref{lem:matrix_algebras_holomorphically_closed} and \ref{lem:local_algebra_equivalent} we conclude that in order to show that $\frakD_\rho^{tr}(X)$ is a local $C^\ast$-algebra, it suffices to show that it is inverse closed, i.e., if $T \in \frakD_\rho^{tr}(X)$ is invertible in $\frakD_\rho^u(X)$\footnote{Chosing here $\frakD_\rho^u(X)$ is more or less arbitrary. We just need some $C^\ast$-superalgebra of $\frakD_\rho^{tr}(X)$, i.e., we could have also chosen $\IB(H)$.}, then $T^{-1} \in \frakD_\rho^{tr}(X)$. So let $f \in B_b(X)$ with $\|f\|_\infty \le 1$ and $g \in B_R(X)$ with $d(\supp f, \supp g) =: L > 0$ be given and w have to show that $\|\rho(f) T^{-1} \rho(g)\|_{tr} < \infty$ and that this bound does not depend on the concrete choices of $f$ or $g$.

First of all note that
\[\|\rho(f) T^{-1} \rho(g)\|_{tr} \le \|\chi_f T^{-1} \chi_g\|_{tr}\]
since $\rho(f) T^{-1} \rho(g) = \rho(f) \chi_f T^{-1} \chi_g \rho(g)$, where $\chi_f$ is $\rho(\chi_{\supp f})$ for the characteristic function $\chi_{\supp f}$ of $\supp f$ and analogously for $\chi_g$. Furthermore,
\begin{align*}
\|\chi_f T^{-1} \chi_g\|_{tr} & = \sum \big\langle \big( (\chi_f T^{-1} \chi_g)^\ast (\chi_f T^{-1} \chi_g) \big)^{1/2} e_k, e_k \big\rangle\\
& \le \| \chi_f T^{-1} \chi_g + \chi_g (T^\ast)^{-1} \chi_f \|_{tr},
\end{align*}
because we have
\begin{align*}
(\chi_f T^{-1} & \chi_g)^\ast (\chi_f T^{-1} \rho(g))\\
& \le (\chi_f T^{-1} \chi_g + \chi_g (T^\ast)^{-1} \chi_f)^\ast (\chi_f T^{-1} \chi_g + \chi_g (T^\ast)^{-1} \chi_f)
\end{align*}
in the sense of positive operators (to prove the above inequality multiply out and use that $\chi_f \chi_g = 0$ since they have disjoint supports).

Now we consider the operator $\chi_f T^\ast \chi_g + \chi_g T \chi_f$. It is self-adjoint and traceable (the latter since $T \in \frakD_\rho^{tr}(X)$), i.e., the eigenvalues of it are $1$-summable. Let $v \in H$ be an eigenvector of this operator to the eigenvalue $\lambda \not= 0$. Applying the operator to $\chi_g v$ we get $\chi_f T^\ast \chi_g v$ which must equal $\lambda \chi_f v$ (we use here multiple times the fact that the supports of $\chi_f$ and $\chi_g$ are disjoint). On the other side, applying the operator to $\chi_f v$ we get $\chi_g T \chi_f v$ which must equal $\lambda \chi_g v$. Putting this together we conclude that $\chi_f T^{-1} \chi_g + \chi_g (T^\ast)^{-1} \chi_f$ is the inverse of $\chi_f T^\ast \chi_g + \chi_g T \chi_f$ for the vector $v$. Since $v$ was arbitrary, we conclude that the first operator is the inverse of the latter on all non-zero eigenspaces of the latter. With a similar argument we get that the first operator is $0$ whenever the latter is so, i.e., their kernels coincide.

We conclude that the non-zero eigenvalues of $\chi_f T^{-1} \chi_g + \chi_g (T^\ast)^{-1} \chi_f$ are exactly the reciprocals of the eigenvalues of $\chi_f T^\ast \chi_g + \chi_g T \chi_f$, and we already know that their kernels coincide. Since $\chi_f T^{-1} \chi_g + \chi_g (T^\ast)^{-1} \chi_f$ is a bounded operator (with norm bounded by, say, $C$), all its eigenvalues are bounded from above by $C$. Therefore the eigenvalues of $\chi_f T^\ast \chi_g + \chi_g T \chi_f$ bounded from below by $1/C$. But we also know that the eigenvalues of this operator are $1$-summable, i.e., it can therefore have only finitely many non-zeroes eigenvalues (and the number of them is bounded independently of the concrete choices of $f$ and $g$, since $C$ may be chosen to not depend on $f$ or $g$ and the same also holds for the trace norm of $\chi_f T^\ast \chi_g + \chi_g T \chi_f$). So we conclude that the trace norm of $\chi_f T^{-1} \chi_g + \chi_g (T^\ast)^{-1} \chi_f$ is bounded from above and this bound may be chosen to not depend on $f$ or $g$. From this our claim that $\|\rho(f) T^{-1} \rho(g)\|_{tr} < \infty$ and that this bound does not depend on the concrete choices of $f$ or $g$ follows.

Applying the same reasoning to the adjoint operator and also with the roles of $f$ and $g$ changed, we finally conclude that $T^{-1} \in \frakD_\rho^{tr}(X)$.
\end{proof}

Now we would really like to prove that $\frakD_\rho^{tr}(X)$ is dense in $\frakD_\rho^{u}(X)$ so that we can conclude that their $K$-theory groups coincide. But unfortunately, we get here the same problems that we would get if we tried to prove that $\C(E)$ is dense in $\IU(E)$: the approximation ``at infinity'' is problematic. We will circumvent this problem by passing to representatives of finite propagation. But before we may do this, we have to define the algebra $\frakD^{tr}_{\rho \oplus 0}(X)$, i.e., we can not use the above Definition \ref{defn:uniform_pseudolocal_traceability} since $\rho \oplus 0$ is not non-degenerate. We have to define this since by Paschke duality we know that $K_\ast^u(X; {\rho \oplus 0}) \cong K_{1+\ast}(\frakD_{\rho \oplus 0}^u(X))$ and here we now want to pass to uniformly pseudolocally traceable operators.

\begin{defn}\label{defn:Dtr_rho0}
Let $\rho\colon C_0(X) \to \IB(H)$ be a non-degenerate representation. We define $\frakD^{tr}_{\rho \oplus 0}(X) \subset \IB(H \oplus H)$ to be
\[\frakD^{tr}_{\rho \oplus 0}(X) := \begin{pmatrix} \frakD^{tr}_\rho(X) & \fraklC^{tr}_\rho(X) \\ \frakCr_\rho^{tr}(X) & \IB(H)\end{pmatrix} \subset \frakD_{\rho \oplus 0}^u(X),\]
where $\fraklC^{tr}_\rho(X)$ are the operators that are uniformly locally traceable \emph{from the left}, i.e., operators $T \in \IB(H)$ for which the subset $\{\|\rho(f) T\|_{tr} \ | \ f \in \LLip_R(X)\} \subset \IR$ is bounded for all $R, L > 0$. Analogously we define $\frakCr^{tr}_\rho(X)$ to consist of the operators that are uniformly locally traceable from the right.
\end{defn}

Similar as in the proof of Lemma \ref{lem:D_tr_local} we can prove the fact that $\frakD^{tr}_{\rho \oplus 0}(X)$ is a local $C^\ast$-algebra. Note that the non-degeneracy assumption in that lemma is only used in the proof that such operators compose correctly. But in our case here for $\frakD^{tr}_{\rho \oplus 0}(X)$ this follows from the concrete form that we give the algebra by definition together with Lemma \ref{lem:D_tr_local}.

Now we define the finite propagation versions of uniformly pseudolocal and uniformly locally compact operators and then show that we may pass to them on the level of uniform $K$-homology:

\begin{defn}[cf. Definition \ref{defn_Du_Cu_manifolds}]\label{defn:_Du_Cu_spaces}
Let $\rho\colon C_0(X) \to \IB(H)$ be a non-degenerate representation. We will denote by $D_u^\ast(X) \subset \IB(H)$ the $C^\ast$-algebra generated by all uniformly pseudolocal operators having finite propagation, and by $C_u^\ast(X) \subset D_u^\ast(X)$ the closed, two-sided $^\ast$-ideal generated by all uniformly locally compact operators with finite propagation.
\end{defn}

The next lemma is similar to the results of \cite[Lemma 12.3.2]{higson_roe}.

\begin{lem}\label{lem:uniform_k_hom_via_dual_quot_2}
We have $K_\ast^u(X; \rho \oplus 0) \cong K_{1 + \ast}(D_u^\ast(X) / C_u^\ast(X))$ for $\ast = -1,0$.
\end{lem}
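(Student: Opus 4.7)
The plan is to reduce the statement to Paschke duality by comparing two quotient algebras. First, by Paschke duality (Proposition \ref{prop:paschke_duality}), we have $K_\ast^u(X; \rho \oplus 0) \cong K_{1+\ast}(\frakD^u_{\rho \oplus 0}(X))$, so it suffices to produce a natural isomorphism $K_{1+\ast}(\frakD^u_{\rho \oplus 0}(X)) \cong K_{1+\ast}(D_u^\ast(X)/C_u^\ast(X))$.

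The first step is to kill the ideal of uniformly locally compact operators on the dual side. I would show that $K_\ast(\frakC^u_{\rho \oplus 0}(X)) = 0$ by an Eilenberg swindle: since the representation $\rho \oplus 0$ vanishes on the second summand of $H \oplus H$, one can absorb any uniformly locally compact operator into an infinite direct sum of copies of itself supported in the zero-representation summand without spoiling uniform local compactness. Then the six-term exact sequence associated to
\[0 \to \frakC^u_{\rho \oplus 0}(X) \to \frakD^u_{\rho \oplus 0}(X) \to \frakD^u_{\rho \oplus 0}(X)/\frakC^u_{\rho \oplus 0}(X) \to 0\]
yields $K_{1+\ast}(\frakD^u_{\rho \oplus 0}(X)) \cong K_{1+\ast}(\frakD^u_{\rho \oplus 0}(X)/\frakC^u_{\rho \oplus 0}(X))$.

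The second step is to compare this quotient with $D_u^\ast(X)/C_u^\ast(X)$. The inclusion $T \mapsto \bigl(\begin{smallmatrix} T & 0 \\ 0 & 0 \end{smallmatrix}\bigr)$ sends $D_u^\ast(X)$ into $\frakD^u_{\rho \oplus 0}(X)$ and $C_u^\ast(X)$ into $\frakC^u_{\rho \oplus 0}(X)$, inducing a $\ast$-homomorphism between the quotients and hence a map on $K$-theory. To see that it is an isomorphism, one has to show that every class in $\frakD^u_{\rho \oplus 0}(X)/\frakC^u_{\rho \oplus 0}(X)$ is represented by a finite propagation operator supported in the first summand. Given $T \in \frakD^u_{\rho \oplus 0}(X)$, one first observes that the off-diagonal and lower-right entries are already uniformly locally compact (since $\rho \oplus 0$ annihilates the second summand), so modulo $\frakC^u_{\rho \oplus 0}(X)$ we may assume $T$ is concentrated in the upper-left corner. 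Then one applies the finite propagation normalization argument from the proof of Proposition \ref{prop:normalization_finite_prop_speed}: choose a uniformly locally finite cover of $X$ by controlled sets with a subordinate partition of unity, and truncate $T$ to a uniform neighborhood of the diagonal. Uniform pseudolocality of $T$ guarantees that the truncation error is uniformly locally compact, so the truncated operator lies in $D_u^\ast(X)$ and represents the same class modulo the ideal.

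The main obstacle is the second step, specifically the truncation argument: one must verify that the partition-of-unity truncation preserves uniform pseudolocality in the sense of Definition \ref{defn:uniform_operators} (not merely pseudolocality), and that the error is uniformly locally compact in the sense of Lemma \ref{lem:kasparov_lemma_uniform_approx}. This is where the hypothesis of coarsely bounded geometry (implicit in the normalization of Proposition \ref{prop:normalization_finite_prop_speed}) enters, because one needs the cardinality bounds on the cover to convert local approximability of $[T,\rho(f)]$ into a \emph{uniform} approximability statement for the truncated operator. Once this is in place, the induced map on $K$-theory is an isomorphism by a five-lemma comparison of the exact sequences associated to $C_u^\ast(X) \subset D_u^\ast(X)$ and $\frakC^u_{\rho \oplus 0}(X) \subset \frakD^u_{\rho \oplus 0}(X)$, combined with the fact that $K_\ast(C_u^\ast(X)) \to K_\ast(\frakC^u_{\rho \oplus 0}(X))$ lands in the zero group from step one.
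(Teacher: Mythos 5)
Your proposal follows essentially the same route as the paper: pass through Paschke duality, establish the vanishing of $K_\ast(\frakC^u_{\rho \oplus 0}(X))$ (the paper cites \Spakula's Lemma 5.3, whose mechanism is exactly the Eilenberg swindle you sketch), identify the quotient with $\frakD^u_\rho(X)/\frakC^u_\rho(X)$ by concentrating in the upper-left corner of the block matrix, and then show that the finite-propagation algebras $D_u^\ast(X), C_u^\ast(X)$ surject onto the same quotient by a partition-of-unity truncation. Your step-2 analysis of the block entries (off-diagonal and lower-right being absorbed into $\frakC^u_{\rho \oplus 0}(X)$) is the same as the paper's observation that the ideal $J$ in its block-matrix form coincides with $\frakC^u_{\rho \oplus 0}(X)$, and your remark that coarsely bounded geometry is what makes the truncation controllable is exactly right.

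The one thing to fix is your concluding five-lemma sentence, which does not work as stated: the exact sequences for $C_u^\ast(X) \subset D_u^\ast(X)$ and $\frakC^u_{\rho \oplus 0}(X) \subset \frakD^u_{\rho \oplus 0}(X)$ have $K_\ast(C_u^\ast(X))$ and $K_\ast(\frakC^u_{\rho \oplus 0}(X)) = 0$ as corresponding terms, and $K_\ast(C_u^\ast(X))$ is emphatically not zero in general, so the outer maps are not isomorphisms and the five lemma gives you nothing. But you don't need it. Your truncation argument, together with the observation that the upper-left embedding carries $C_u^\ast(X)$ precisely onto $D_u^\ast(X) \cap \frakC^u_{\rho \oplus 0}(X)$, establishes the isomorphism $D_u^\ast(X)/C_u^\ast(X) \cong \frakD^u_{\rho \oplus 0}(X)/\frakC^u_{\rho \oplus 0}(X)$ at the level of $C^\ast$-algebras, and functoriality of $K$-theory then finishes the proof immediately.
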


\begin{proof}
By Paschke duality we know
\[K_\ast^u(X; \rho \oplus 0) \cong K_{1+\ast}(\frakD^u_{\rho \oplus 0}(X)).\]
We will now show that
\begin{equation}\label{eq:uniform_K_as_quotient_D*u/C*u_2}
K_\ast(\frakD_{\rho \oplus 0}^u(X)) \cong K_\ast(\frakD^u_\rho(X) / \frakC^u_\rho(X)),
\end{equation}
and that the inclusion of $D_u^\ast(X)$ into $\frakD^u_\rho(X)$ induces an isomorphism
\begin{equation}\label{eq:quotients_frakD_D*_equal_2}
D_u^\ast(X) / C_u^\ast(X) \cong \frakD^u_\rho(X) / \frakC^u_\rho(X)
\end{equation}
of $C^\ast$-algebras. These two statements together prove the claim.

To prove the first Isomorphism \eqref{eq:uniform_K_as_quotient_D*u/C*u_2} first note that
\[\frakD^u_{\rho \oplus 0}(X) = \begin{pmatrix}\frakD_\rho^u(X) & \fraklC^u_\rho(X) \\ \frakCr_\rho^u(X) & \IB(H)\end{pmatrix} \subset \IB(H \oplus H),\]
where $\fraklC^u_\rho(X)$ contains the operators that are uniformly locally compact \emph{from the left}, i.e., operators $T \in \IB(H)$ for which the collection $\{\rho(f) T \ | \ f \in \LLip_R(X)\}$ is uniformly approximable for all $R, L > 0$ (cf. Definitions \ref{defn:uniformly_approx_collections} and \ref{defn:uniform_operators}). Analogously, $\frakCr_\rho^u(X)$ is defined as the algebra containing all operators that are uniformly locally compact from the right. Note that we have $\fraklC_\rho^u(X) \cdot \frakCr_\rho^u(X) \subset \frakC_\rho^u(X)$ and also $\IB(H) \cdot \frakCr_\rho^u(X) \subset \frakCr_\rho^u(X)$ and $\fraklC_\rho^u(X) \cdot \IB(H) \subset \fraklC_\rho^u(X)$. Furthermore, we have $\frakD_\rho^u(X) \cdot \fraklC_\rho^u(X) \subset \fraklC_\rho^u(X)$ and analogously $\frakCr_\rho^u(X) \cdot \frakD_\rho^u(X) \subset \frakCr_\rho^u(X)$.

All the above inclusions are needed to show that $J \subset \frakD^u_{\rho \oplus 0}(X)$ defined as
\[J := \begin{pmatrix}\frakC_\rho^u(X) & \fraklC^u_\rho(X) \\ \frakCr_\rho^u(X) & \IB(H)\end{pmatrix} \subset \frakD^u_{\rho \oplus 0}(X)\]
is a closed, two-sided $^\ast$-ideal in $\frakD^u_{\rho \oplus 0}(X)$. So we get a short exact sequence
\[ 0 \to J \to \frakD^u_{\rho \oplus 0}(X) \to \frakD^u_{\rho \oplus 0}(X) / J \to 0.\]
Now we may identify the quotient $\frakD^u_{\rho \oplus 0}(X) / J$ with $\frakD_\rho^u(X) / \frakC_\rho^u(X)$ and the quotient map $\frakD^u_{\rho \oplus 0}(X) \to \frakD^u_{\rho \oplus 0}(X) / J$ in the short exact sequence becomes the map
\[\frakD^u_{\rho \oplus 0}(X) \ni \begin{pmatrix}T_{11} & T_{12} \\ T_{21} & T_{22}\end{pmatrix} \mapsto [T_{11}] \in \frakD_\rho^u(X) / \frakC_\rho^u(X).\]
Furthermore, we have $J = \frakC_{\rho \oplus 0}^u(X)$. Hence the above short exact sequence becomes
\[0 \to \frakC_{\rho \oplus 0}^u(X) \to \frakD_{\rho \oplus 0}^u(X) \to \frakD_\rho^u(X) / \frakC_\rho^u(X) \to 0\]
and the claim that $K_\ast(\frakD_{\rho \oplus 0}^u(X)) \cong K_\ast(\frakD^u_\rho(X) / \frakC^u_\rho(X))$ now follows from the $6$-term exact sequence for $K$-theory and the fact that all the $K$-groups of $\frakC_{\rho \oplus 0}^u(X)$ vanish. This is a uniform analogue of the corresponding non-uniform statement which is proved in, e.g., \cite[Lemma 5.4.1]{higson_roe}, and this uniform analogue was essentially proven by \Spakula in \cite[Lemma 5.3]{spakula_uniform_k_homology} (by ``setting $Z := \emptyset$'' in that lemma).

The proof of the second Isomorphism \eqref{eq:quotients_frakD_D*_equal_2} is analogous to the proof of the corresponding non-uniform statement $D^\ast(X) / C^\ast(X) \cong \frakD_\rho(X) / \frakC_\rho(X)$ which may be found in, e.g., \cite[Lemma 12.3.2]{higson_roe}. This uniform version \eqref{eq:quotients_frakD_D*_equal_2} was also basically already shown by \Spakula in \cite[Section 7]{spakula_uniform_k_homology}.
\end{proof}

Let us now define the finite propagation versions of uniformly pseudolocally traceable and uniformly locally traceable operators:

\begin{defn}
Let $\rho\colon C_0(X) \to \IB(H)$ be a non-degenerate representation. We will denote by $D_{\mathrm{tr}}^\ast(X) \subset \IB(H)$ the \Frechet algebra\footnote{As semi-norms we use the collection $\{C_{\largecdot}(R,L), C^\prime_{\largecdot}(R,L)\}$, as defined in the proof of Lemma \ref{lem:D_tr_local}, for all positive rational numbers $R,L \in \IQ_{> 0}$ together with the operator norm $\|\largecdot\|_{op}$.} generated by all uniformly pseudolocally traceable operators having finite propagation, and by $C_{\mathrm{tr}}^\ast(X) \subset D_{\mathrm{tr}}^\ast(X)$ the closed, two-sided $^\ast$-ideal generated by all uniformly locally traceable operators with finite propagation.
\end{defn}

Both algebras $D^\ast_{\mathrm{tr}}(X)$ and $C^\ast_{\mathrm{tr}}(X)$ are local $C^\ast$-algebras. The proof for $D^\ast_{\mathrm{tr}}(X)$ is similar to the one of Lemma \ref{lem:D_tr_local}: we have additionally to argue why the inverse operator $T^{-1}$ occuring in the proof of inverse closedness is again approximable by finite propagation operators, but this follows from the fact that $T$ is so. The case of $C^\ast_{\mathrm{tr}}(X)$ may also be handled.

Now we will finally prove that $D^\ast_{\mathrm{tr}}(X)$ is dense in $D^\ast_u(X)$ and that $C^\ast_{\mathrm{tr}}(X)$ is dense in $C^\ast_u(X)$. For this we will use the notion of a \emph{quasi-latticing partition}, which we have already defined in Definition \ref{defn:quasi-latticing_partitions}. But for the convenience of the reader we will restate it:

\begin{defn}[Quasi-latticing partitions, {\cite[Definition 8.1]{spakula_uniform_k_homology}}]
Let $Y \subset X$ be a uniformly discrete quasi-lattice (see Definition \ref{defn:coarsely_bounded_geometry}).

A collection $(V_y)_{y \in Y}$ of open, disjoint subsets of $X$ is called a \emph{quasi-latticing partition with diameters $\le$ d}, if $X = \bigcup_{y \in Y} \overline{V_y}$, $\sup_{y \in Y} \diam V_y \le d$ and for every $\varepsilon > 0$ we have
\begin{equation}\label{eq:constant_in_quasi_latticing_def}
R_\varepsilon := \sup_{y \in Y} \card \{z \in Y \ | \ V_z \cap B_\varepsilon(V_y) \not= \emptyset\} < \infty.
\end{equation}
\end{defn}

Given a uniformly discrete quasi-lattice $Y \subset X$, we can immediately produce a quasi-latticing partition from it by ``putting'' a point $x \in X$ into that $V_y$ for which $y$ is the closest point of $Y$ to $x$. But for the next Lemma \ref{lem:D_tr_dense} we need quasi-latticing partitions with arbitrarily small diameters and this can in general not be achieved if $X$ is only of coarsely bounded geometry. A sufficient condition for this would be, e.g., that $X$ has arbitrarily fine coarsely bounded geometry:

\begin{defn}[Arbitrarily fine coarsely bounded geometry]\label{defn:arbitrarily_fine_coarsely_bounded_geometry}
A metric space $X$ has \emph{arbitrarily fine coarsely bounded geometry}, if for all $c > 0$ it admits quasi-lattices $\Gamma_c \subset X$ with $B_c(\Gamma_c) = X$.
\end{defn}

Recalling the notion of jointly bounded geometry from Definition \ref{defn:jointly_bounded_geometry}, we see that a space $X$ having jointly bounded geometry has arbitrarily fine coarsely bounded geometry. But the converse is also true (at least, if $X$ admits some countable quasi-lattice), which is easily seen.

\begin{lem}\label{lem:equiv_jointly_arbitrarily_fine_bounded_geometry}
Let $X$ be a metric space.

If $X$ has jointly bounded geometry, then it follows that $X$ has arbitrarily fine coarsely bounded geometry.

Conversely, if $X$ admits at least one countable quasi-lattice, then having arbitrarily fine coarsely bounded geometry implies that $X$ has jointly bounded geometry.
\end{lem}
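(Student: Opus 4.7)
For the forward direction, assume $X$ has jointly bounded geometry with Borel decomposition $X = \bigcup X_i$ and quasi-lattice $\Gamma$. Given $c > 0$, the third bullet of the definition of locally bounded geometry provides in each $X_i$ a $c$-net $N_i$ of cardinality at most some $N_c$ independent of $i$; set $\Gamma_c := \bigcup_i N_i$. Since each point of $X$ lies in some $X_i$ and hence within $c$ of a point of $N_i$, we have $B_c(\Gamma_c) = X$. To verify $\Gamma_c$ is a quasi-lattice, fix $r>0$ and $y\in X$. Pick $y'\in\Gamma$ with $d(y,y')<c_\Gamma$, where $c_\Gamma$ is the coarse density constant of $\Gamma$. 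By the third bullet of the definition of jointly bounded geometry, $B_{r+c_\Gamma}(y')$ meets at most $M$ of the subsets $X_i$, with $M$ depending only on $r+c_\Gamma$; hence $B_r(y)$ meets at most $M$ of the $X_i$, and therefore $\#(\Gamma_c\cap B_r(y))\le M\cdot N_c$, uniformly in $y$.

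For the converse, assume $X$ has arbitrarily fine coarsely bounded geometry and admits a countable quasi-lattice. By the Zorn argument explained right after Definition \ref{defn:coarsely_bounded_geometry}, we may assume this quasi-lattice $\Gamma=\{y_1,y_2,\ldots\}$ is uniformly discrete, say with minimal separation $\delta>0$ and coarse density $c_\Gamma$. I would build a Voronoi-type Borel partition $X=\bigsqcup_i X_i$ by defining
\[
X_i := \bigl\{x\in X : d(x,y_i)\le d(x,y_j)\ \forall j,\ \text{and } i=\min\{j:d(x,y_j)=d(x,y_i)\}\bigr\}.
\]
Each $X_i$ is Borel, has diameter at most $2c_\Gamma$, and contains the open ball $B_{\delta/2}(y_i)$, so it has non-empty interior. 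The decomposition is countable since $\Gamma$ is.

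To check total boundedness and the uniform $\varepsilon$-net property, fix $\varepsilon>0$ and let $\Gamma_\varepsilon$ be the quasi-lattice provided by arbitrarily fine coarsely bounded geometry, with the associated cardinality bound $K^\varepsilon_{c_\Gamma+\varepsilon}$ on balls of radius $c_\Gamma+\varepsilon$. For every $x\in X_i\subset B_{c_\Gamma}(y_i)$ there is $z\in \Gamma_\varepsilon$ with $d(x,z)<\varepsilon$, hence $z\in\Gamma_\varepsilon\cap B_{c_\Gamma+\varepsilon}(y_i)$. Thus $\Gamma_\varepsilon\cap B_{c_\Gamma+\varepsilon}(y_i)$ is an $\varepsilon$-net in $X_i$ of cardinality at most $K^\varepsilon_{c_\Gamma+\varepsilon}$, which is the required uniform bound. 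Finally, the compatibility condition is immediate: for any $y\in\Gamma$ and $r>0$, the inclusion $X_i\subset B_{c_\Gamma}(y_i)$ implies that $B_r(y)\cap X_i\ne\emptyset$ forces $d(y,y_i)<r+c_\Gamma$, so
\[
\#\{i:B_r(y)\cap X_i\ne\emptyset\}\le \#\bigl(\Gamma\cap B_{r+c_\Gamma}(y)\bigr)\le K_{r+c_\Gamma},
\]
which is finite and independent of $y\in\Gamma$ because $\Gamma$ is a quasi-lattice.

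The only real subtlety I anticipate is the converse direction, specifically arranging non-empty interior for the Voronoi cells; this is precisely why one reduces to a uniformly discrete quasi-lattice at the outset. Everything else is a book-keeping exercise in chasing the quasi-lattice cardinality bounds through the construction.
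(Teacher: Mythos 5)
Your proof is correct, and it genuinely fills a gap: the paper asserts this lemma without proof, prefacing it only with the remark that it is ``easily seen.'' Both directions check out. In the forward direction, taking $\Gamma_c$ to be the union of the $c$-nets $N_i \subset X_i$ and then controlling $\card(\Gamma_c \cap B_r(y))$ via the third bullet of Definition \ref{defn:jointly_bounded_geometry} is the right mechanism, and the factor-$M \cdot N_c$ bound is clean. In the converse direction, the Voronoi decomposition anchored at a uniformly discrete quasi-lattice is the natural choice, and you correctly identify why uniform discreteness is indispensable: it is precisely what guarantees that $B_{\delta/2}(y_i) \subset X_i$, giving each cell non-empty interior.

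Two minor points worth tightening. First, in the converse you implicitly need $\min_j d(x,y_j)$ to be attained so that the Voronoi cells actually cover $X$; this follows because $\Gamma$ is coarsely dense (so the infimum is $< c_\Gamma$) and the quasi-lattice condition makes $\Gamma \cap B_{c_\Gamma}(x)$ finite, so the minimum over the relevant finitely many indices is achieved. Second, the set $\Gamma_\varepsilon \cap B_{c_\Gamma + \varepsilon}(y_i)$ you exhibit need not be a subset of $X_i$; if ``$\varepsilon$-net in $X_i$'' is read as requiring a subset of $X_i$, replace each element $z$ of your set with $B_\varepsilon(z) \cap X_i \neq \emptyset$ by a chosen point of $B_\varepsilon(z) \cap X_i$, obtaining an internal $2\varepsilon$-net with the same cardinality bound $K^\varepsilon_{c_\Gamma+\varepsilon}$; since the uniform net condition is quantified over all $\varepsilon > 0$, the factor of $2$ is harmless. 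Neither point touches the structure of your argument.
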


We can now prove the main technical result of this section:

\begin{lem}\label{lem:D_tr_dense}
Let $\rho\colon C_0(X) \to \IB(H)$ be a non-degenerate representation and let $X$ admit quasi-latticing partitions with arbitrarily small diameters.

Then $D^\ast_{\mathrm{tr}}(X)$ is dense in $D^\ast_u(X)$ and $C^\ast_{\mathrm{tr}}(X)$ is dense in $C^\ast_u(X)$.
\end{lem}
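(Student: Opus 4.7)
The plan is to approximate $T \in D^\ast_u(X)$ (respectively $T \in C^\ast_u(X)$) in operator norm by an operator $T' \in D^\ast_{\mathrm{tr}}(X)$ (respectively $T' \in C^\ast_{\mathrm{tr}}(X)$) constructed by replacing the off-diagonal blocks of $T$ relative to a fine quasi-latticing partition with finite rank operators, so that the uniform approximability provided by the definitions of $D^\ast_u(X)$ and $C^\ast_u(X)$ is converted into uniformly bounded trace norms. By Proposition \ref{prop:normalization_finite_prop_speed} I may assume $T$ has finite propagation $R$. The hypothesis furnishes a quasi-latticing partition $(V_y)_{y \in Y}$ of any prescribed diameter $d > 0$, and setting $P_y := \rho(\chi_{V_y})$ the decomposition $T = \sum_{x,y} P_x T P_y$ has at most $M := R_R$ non-zero blocks in any row or column, by Equation \eqref{eq:constant_in_quasi_latticing_def} combined with the finite propagation of $T$.

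\textbf{The $C^\ast$ case.} When $T \in C^\ast_u(X)$, uniform local compactness makes every block $P_x T P_y$ compact, and Lemma \ref{lem:l_uniformly_loc_compact_without_l} (available since $X$ is proper) gives a single rank $N = N(\varepsilon)$ so that each block lies within $\varepsilon / M$ of a rank-$N$ operator $K(x,y)$ between $P_y H$ and $P_x H$. Setting $T' := \sum K(x,y)$, a Schur-type estimate using the uniform multiplicity $M$ yields $\|T - T'\|_{op} \le \varepsilon$. Since each block of $T'$ is finite rank with trace norm bounded by $N(\|T\| + 1)$ and only $O(M)$ blocks contribute to $\rho(f) T'$ or $T' \rho(f)$ for any $f \in \LLip_{R'}(X)$, one reads off $T' \in C^\ast_{\mathrm{tr}}(X)$.

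\textbf{The $D^\ast$ case and main obstacle.} For $T \in D^\ast_u(X)$ the complication is that uniform pseudolocality only controls blocks $P_x T P_y$ with $d(V_x, V_y) \ge L_0$ for a chosen threshold $L_0 > 0$, while the diagonal and adjacent blocks need not be compact. My plan is to split $T = T_{<L_0} + T_{\ge L_0}$ accordingly and approximate $T_{\ge L_0}$ by finite rank blocks as in the $C^\ast$ case, then iterate the same procedure on the residual $T_{<L_0}$ using progressively finer quasi-latticing partitions of diameters $d_n \to 0$ and shrinking thresholds $L_0^{(n)} \to 0$; choosing the stage-$n$ tolerance $\varepsilon_n = \varepsilon 2^{-n-1}$ makes the operator-norm corrections summable, so the cumulative sum converges to some $T'$ with $\|T - T'\|_{op} < \varepsilon$. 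The principal technical difficulty, and the main obstacle, is to verify that this limit actually lies in $D^\ast_{\mathrm{tr}}(X)$: one must bound the Fr\'echet semi-norms $C_{T'}(R',L)$ for every fixed $L, R' > 0$. The key observation making this feasible is that for any fixed $L > 0$, the residuals from stages $n$ with $L_0^{(n)} + 2 d_n < L$ contribute zero to $\rho(f) T' \rho(g)$ on account of their shrinking propagation, so only the finitely many finite-rank far-block contributions from earlier stages survive, with trace norms controlled by the multiplicities of the respective partitions. Completeness of $D^\ast_{\mathrm{tr}}(X)$ in its Fr\'echet topology then yields $T' \in D^\ast_{\mathrm{tr}}(X)$ as required.
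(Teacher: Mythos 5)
Your proposal follows the same iterative refinement scheme as the paper's proof: a nested sequence of quasi-latticing partitions with shrinking diameters, in which at each stage the far-apart blocks are replaced by finite-rank approximations (with rank bounded via Lemma \ref{lem:kasparov_lemma_uniform_approx}) and the near-diagonal residual, whose propagation shrinks, is passed to the next finer stage, so that for every fixed $L, R' > 0$ only finitely many stages contribute to the trace semi-norm $C_{T'}(R',L)$. One small imprecision: your closing appeal to completeness of $D^\ast_{\mathrm{tr}}(X)$ in its \Frechet topology is not quite the right mechanism, since the partial sums are not constructed to be \Frechet-Cauchy in that algebra; what in fact certifies $T' \in D^\ast_{\mathrm{tr}}(X)$ is exactly the directly verified finiteness of every semi-norm that your preceding sentence already establishes, which is also how the paper argues.
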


\begin{proof}
We first treat the case of $D^\ast_{\mathrm{tr}}(X) \subset D^\ast_u(X)$.

We choose for all $n \in \IN$ quasi-latticing partitions $(V_y^n)_{y \in Y_n}$ with diameters $\le 1/n$. Denoting by $\chi_y^n$ the characteristic function of the subset $V_y^n \subset X$, we get for all $n \in \IN$ direct sum decompositions $H = \bigoplus_{y \in Y_n} H_y^n$, where $H_y^n := \rho(\chi_y^n)H$. With respect to these decompositions we may write an operator $T \in \IB(H)$ as a direct sum $T = \bigoplus_{y,z \in Y_n} T_{y,z}^n$, where $T_{y,z}^n = \rho(\chi_z^n) T \rho(\chi_y^n) \colon H_y^n \to H_z^n$.

Now let $T \in D^\ast_u(X)$. By Lemma \ref{lem:kasparov_lemma_uniform_approx} this means that if we have for some $y, z \in Y_n$ that $d(\supp \chi^n_y, \supp \chi^n_z) \ge L > 0$, then $T_{y,z}^n = \rho(\chi_z^n) T \rho(\chi_y^n)$ is up to $\varepsilon$ a finite rank operator and this rank is bounded from above by a constant that depends only on $R = 1/n$ and $L$. In our construction (which we will describe in a moment) we will have to choose concrete approximations of the operators $T_{y,z}^n$ by finite rank ones: since $T_{y,z}^n$ is a compact operator, we may write it as a sum $T_{y,z}^n = \sum_{i=1}^\infty \lambda_i \langle \largecdot, e_i\rangle f_i$, where $(\lambda_i)_i$ is a null sequence. Then, given an $\varepsilon > 0$, we will choose $\sum_{i=1}^{N_\varepsilon} \lambda_i \langle \largecdot, e_i\rangle f_i$ as the approximating finite rank operator, where $N_\varepsilon$ is such that $\lambda_i \le \varepsilon$ for all $i > N_\varepsilon$ and $\lambda_i > \varepsilon$ for all $i \le N_\varepsilon$. Note that it is no problem for us that the orthonormal (but not necessarily complete) sets $\{e_i\}$, $\{f_i\}$ are not necessarily unique in this representation (but the values $\lambda_i$ are).

Let $\varepsilon > 0$ be given. We replace in the decomposition $T = \bigoplus_{y,z \in Y_1} T_{y,z}^1$ every $T_{y,z}^1$ with $d(\supp \chi^1_y, \supp \chi^1_z) \ge 1/1$ by the corresponding finite rank operator which is $\varepsilon$ away and denote the resulting operator by $T^1$. See Figure \ref{fig:approx_pseudoloc} for an illustration of this first and also the next step described in the next paragraph.

\begin{figure}[htbp]
\centering
\includegraphics[scale=0.55]{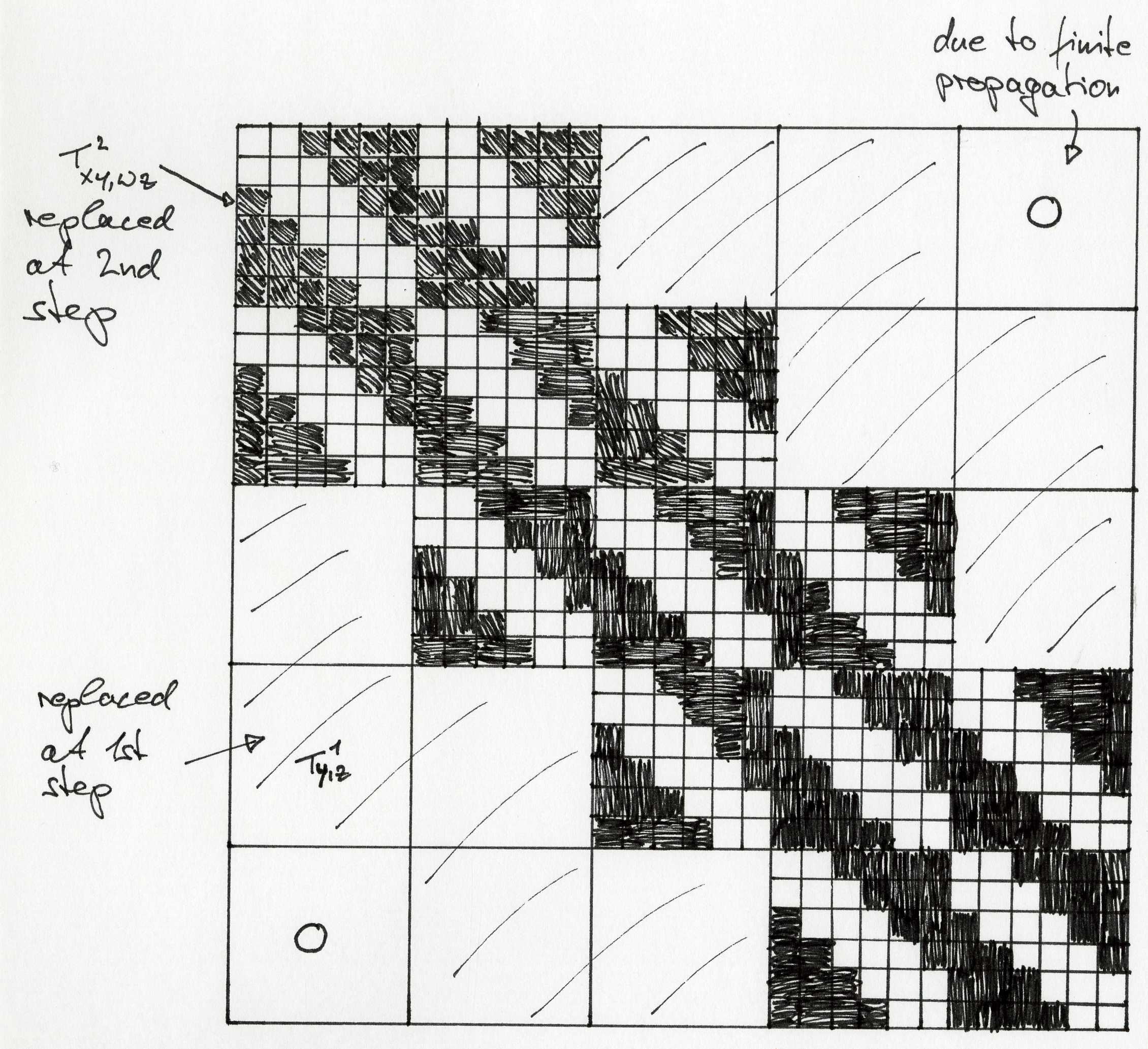}
\caption{The first and second step of the approximation of $T$.}
\label{fig:approx_pseudoloc}
\end{figure}

Let $T^1_{y,z}$ be one of the operators which were not replaced and consider the family of open sets $(V_x^2 \cap V_y^1)_{x \in Y_2}$. This is a quasi-latticing partition of $V_y^1$ with diameters $\le 1/2$. We denote the intersection $V_x^2 \cap V_y^1$ by $V_{xy}^2$ and analogously we get a quasi-latticing partition $(V_{xz}^2)_{x \in Y_2}$ of $V_z^1$ with diameters $\le 1/2$. With respect to this we may write $T_{y,z}^1 = \bigoplus_{x,w \in Y_2} T^2_{xy,wz}$ and again we replace every $T^2_{xy,wz}$ with $d(\supp \chi_{xy}^2, \supp \chi_{wz}^2) \ge 1/2$ by the corresponding finite rank operator which is $\varepsilon$ away. Now we do this for every $T^1_{y,z}$ which was not replaced in the first step and get some operator $T^2$.

So at the $n$th step we have some ``parts of $T$'' left which are not replaced, because the two characteristic functions used to define such a part have supports which are not at least $1/n$ apart from each other. In the $(n+1)$st step we then take each of this left over parts, break it down into finer parts (by passing to the induced, finer quasi-latticing partition) and replace some of these finer parts (the ones which characteristic functions have supports at least $1/(n+1)$ apart from each other). This produces a sequence of operators $T^n$ with the following properties:
\begin{enumerate}
\item $\|T^n - T\|_{op} \le R_{n,\sfrac{1}{n}} \cdot \varepsilon$ for all $n$, where $R_{n,\varepsilon}$ is the constant \eqref{eq:constant_in_quasi_latticing_def} in the definition of quasi-latticing partitions of unity corresponding to the partition $(V_y^n)_{y \in Y_n}$ used at the $n$th step,
\item for every $L > 0$ there exists an $N_L \in \IN$ such that $\rho(f) T^n \rho(g) = \rho(f) T^m \rho(g)$ for all $n, m \ge N_L$ and all $f, g \in B_b(X)$ with $d(\supp f, \supp g) \ge L$ and
\item for all $n$ there is an $L_n > 0$ such that $T^n$ is uniformly pseudolocally traceable if we restrict this notion only to all $L > L_n$, and furthermore $L_n \to 0$ as $n \to \infty$.
\end{enumerate}
Furthermore, the operators $T^n$ convergence in the weak operator topology to an operator $T^\infty$. Using the first of the above three properties of the operators $T^n$ we may force that $\|T^\infty - T\|_{op} \le \varepsilon$ by approximating at the $n$-th step not up to $\varepsilon$, but up to $\varepsilon / R_{n,\sfrac{1}{n}}$ (this makes the uniform pseudolocal traceability of the limit operator $T^\infty$ worse in $L$, but this is ok for us). From the second and third point we conclude that $T^\infty$ is uniformly pseudolocally traceable. Since $\varepsilon$ was arbitrary, the proof that $D^\ast_{\mathrm{tr}}(X)$ is dense in $D^\ast_u(X)$ is completed.

Similarly we may prove that $C^\ast_{\mathrm{tr}}(X)$ is dense in $C^\ast_u(X)$.
\end{proof}

From the above lemma we conclude $D_u^\ast(X) / C_u^\ast(X) \cong D_{\mathrm{tr}}^\ast(X) / C_{\mathrm{tr}}^\ast(X)$. Moreover, in Lemma \ref{lem:uniform_k_hom_via_dual_quot_2} we showed $D_u^\ast(X) / C_u^\ast(X) \cong \frakD^u_\rho(X) / \frakC^u_\rho(X)$ and an analogous proof shows us the corresponding statement
\[D_{\mathrm{tr}}^\ast(X) / C_{\mathrm{tr}}^\ast(X) \cong \frakD^{tr}_\rho(X) / \frakC^{tr}_\rho(X).\]
Furthermore, it was also shown in Lemma \ref{lem:uniform_k_hom_via_dual_quot_2} that we have an isomorphism of $K$-groups $K_\ast(\frakD_{\rho \oplus 0}^u(X)) \cong K_\ast(\frakD^u_\rho(X) / \frakC^u_\rho(X))$ and again we may analogously show the corresponding result
\[K_\ast(\frakD_{\rho \oplus 0}^{tr}(X)) \cong K_\ast(\frakD^{tr}_\rho(X) / \frakC^{tr}_\rho(X)).\]
Note that the proof of the last statement also requires us to show that the $K$-theory groups of $\frakC^{tr}_{\rho \oplus 0}(X)$ vanish, which may be shown analogously as the fact that the $K$-groups of $\frakC_{\rho \oplus 0}^u(X)$ vanish (on which we elaborated in the proof of Lemma \ref{lem:uniform_k_hom_via_dual_quot_2}). The reason for this is that the proof does not use any approximation in operator norm, i.e., any traceability we had is still preserved in all constructions.

Combining all the above we get
\[K_{\ast}(\frakD^u_{\rho \oplus 0}(X)) \cong K_{\ast}(\frakD_{\rho \oplus 0}^{tr}(X)).\]
So we have proved our final lemma of this section:

\begin{lem}[Normalization to uniformly pseudolocally traceable operators]\label{lem:K_groups_D_tr_D_u_equal_degenerate_rho}
Let $\rho$ be a non-degenerate representation and let $X$ admit quasi-latticing partitions with arbitrarily small diameters.

Then $\frakD_{\rho \oplus 0}^{tr}(X)$ is a local $C^\ast$-algebra and its operator $K$-groups coincide with the ones of $\frakD^u_{\rho \oplus 0}(X)$, i.e.,
\[K_\ast(\frakD_{\rho \oplus 0}^{tr}(X)) \cong K_\ast(\frakD_{\rho \oplus 0}^{u}(X)).\]
\end{lem}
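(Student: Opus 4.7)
The plan is to assemble the statement from the density result of Lemma \ref{lem:D_tr_dense}, the ``quotient = dual algebra modulo compacts'' identifications of Lemma \ref{lem:uniform_k_hom_via_dual_quot_2}, and their traceable analogues, which the author has essentially already sketched in the paragraphs just before this lemma. So what remains is really just to verify the three parallel ingredients in the traceable setting and then chain the isomorphisms together.

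First I would record that $\frakD_{\rho\oplus 0}^{tr}(X)$ is a local $C^\ast$-algebra. This is an extension of Lemma \ref{lem:D_tr_local}: the argument there (semi-norms $C_{\largecdot}(R,L)$ and $C^\prime_{\largecdot}(R,L)$ together with the operator norm make a \Frechet algebra with jointly continuous multiplication, plus inverse closedness via the spectral trick with $\chi_f T^{*} \chi_g + \chi_g T \chi_f$) carries over to the $2\times 2$ matrix form of $\frakD_{\rho\oplus 0}^{tr}(X)$ once one observes that $\fraklC_\rho^{tr}(X)$ and $\frakCr_\rho^{tr}(X)$ form a $\frakD_\rho^{tr}(X)$-$\IB(H)$-bimodule pair, and that their product lies in $\frakC_\rho^{tr}(X)$. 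The block form then forces the inverse of an element of $\frakD_{\rho\oplus 0}^{tr}(X)$, computed in $\frakD_{\rho\oplus 0}^u(X)$, to have entries in the correct trace-type ideals, by the same spectral reciprocal argument used for $\frakD_\rho^{tr}(X)$.

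Next I would run, in parallel, the two $6$-term sequences coming from the ideals $\frakC_{\rho\oplus 0}^u(X)\subset \frakD_{\rho\oplus 0}^u(X)$ and $\frakC_{\rho\oplus 0}^{tr}(X)\subset \frakD_{\rho\oplus 0}^{tr}(X)$. Exactly as in the proof of Lemma \ref{lem:uniform_k_hom_via_dual_quot_2}, one gets
\[
K_\ast(\frakD_{\rho\oplus 0}^{u}(X))\cong K_\ast\bigl(\frakD_\rho^u(X)/\frakC_\rho^u(X)\bigr),\qquad K_\ast(\frakD_{\rho\oplus 0}^{tr}(X))\cong K_\ast\bigl(\frakD_\rho^{tr}(X)/\frakC_\rho^{tr}(X)\bigr),
\]
after verifying that $K_\ast(\frakC_{\rho\oplus 0}^{tr}(X))=0$. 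The key point is that the standard Eilenberg swindle producing the vanishing of $K_\ast(\frakC_{\rho\oplus 0}^u(X))$ (as used by \Spakula{} in \cite[Lemma 5.3]{spakula_uniform_k_homology}) is built out of direct-sum constructions and isometries, not out of operator-norm approximations, so it preserves any trace-norm bounds that appear in the semi-norms defining $\frakC_{\rho\oplus 0}^{tr}(X)$. This is the only place where one has to be careful; I would write it out with explicit bookkeeping on the semi-norms $C_{\largecdot}(R,L)$.

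In the same spirit, the identification $D_u^\ast(X)/C_u^\ast(X)\cong \frakD_\rho^u(X)/\frakC_\rho^u(X)$ from Lemma \ref{lem:uniform_k_hom_via_dual_quot_2} admits a direct traceable analogue $D_{\mathrm{tr}}^\ast(X)/C_{\mathrm{tr}}^\ast(X)\cong \frakD_\rho^{tr}(X)/\frakC_\rho^{tr}(X)$, with the same proof, since that proof only uses finite-propagation cut-offs that are harmless with respect to the trace-type semi-norms. Finally, Lemma \ref{lem:D_tr_dense} gives dense inclusions $D_{\mathrm{tr}}^\ast(X)\hookrightarrow D_u^\ast(X)$ and $C_{\mathrm{tr}}^\ast(X)\hookrightarrow C_u^\ast(X)$ of local $C^\ast$-algebras in their $C^\ast$-completions, whence by Lemma \ref{lem:loc_algebra_same_k_theory} the induced maps on $K$-theory are isomorphisms, and passing to the quotients yields $K_\ast\bigl(D_{\mathrm{tr}}^\ast(X)/C_{\mathrm{tr}}^\ast(X)\bigr)\cong K_\ast\bigl(D_u^\ast(X)/C_u^\ast(X)\bigr)$. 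Chaining all four isomorphisms gives the desired $K_\ast(\frakD_{\rho\oplus 0}^{tr}(X))\cong K_\ast(\frakD_{\rho\oplus 0}^{u}(X))$.

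The main obstacle, as indicated above, is the verification that $K_\ast(\frakC_{\rho\oplus 0}^{tr}(X))=0$: one must revisit the swindle producing the vanishing in the $C^\ast$-case and check that every step respects the semi-norms defining the traceable algebras, in particular that the isometries used remain bounded in the $C_{\largecdot}(R,L)$ and $C^\prime_{\largecdot}(R,L)$ semi-norms. Once this technical point is settled, the rest is a routine assembly of the already established pieces.
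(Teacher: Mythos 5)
Your proposal reconstructs the paper's argument faithfully: the same four-step chain of isomorphisms (Paschke-type identifications on both the uniform and traceable sides, the $D^\ast/C^\ast$ quotient comparison, and the density result from Lemma \ref{lem:D_tr_dense}), with the vanishing $K_\ast(\frakC_{\rho\oplus 0}^{tr}(X))=0$ singled out as the one point requiring genuine care, justified exactly as the paper does — that the swindle uses no operator-norm approximation and hence respects the trace-norm semi-norms. The $2\times 2$ block-form argument for $\frakD_{\rho\oplus 0}^{tr}(X)$ being a local $C^\ast$-algebra also matches the paper's remark preceding the lemma.
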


\section{Index maps on even uniform \texorpdfstring{$K$}{K}-homology}
\label{sec:index_maps_K_hom}

Let $X$ be a proper metric space having coarsely bounded geometry and let $Y \subset X$ be a uniformly discrete quasi-lattice. We have already mentioned in Section \ref{sec:amenability} that \Spakula constructed in \cite[Section 9]{spakula_uniform_k_homology} a uniform coarse assembly map
\[\mu_u\colon K_\ast^u(X) \to K_\ast(C_u^\ast(Y))\]
for $\ast = -1,0$. Recall from Proposition \ref{prop:index_maps_uniform_Roe_algebras} that if $Y$ is amenable, then for every \Folner sequence $(U_i)_i$ of $Y$ and any functional $\tau \in (\ell^\infty)^\ast$ associated to a free ultrafilter on $\IN$\footnote{That is, if we evaluate $\tau$ on a bounded sequence, we get the limit of some convergent subsequence.} there is a corresponding index map $\ind_\tau \colon K_0(C_u^\ast(Y)) \to \IR$. In this section we will now construct compatible index maps on $K_0^u(X)$.

\begin{prop}
Let $X$ be a proper metric space of jointly bounded geometry\footnote{see Definition \ref{defn:jointly_bounded_geometry}} and $Y \subset X$ a uniformly discrete quasi-lattice. Then for every \Folner sequence $(U_i)_i$ of $Y$ and any functional $\tau \in (\ell^\infty)^\ast$ associated to a free ultrafilter on $\IN$ there is an index map
\[\ind_\tau \colon K_0^u(X) \to \IR\]
(defined via Formula \eqref{eq:formula_ind_T_supertrace} in the proof) such that the following diagram commutes:
\begin{equation*}\label{eq:diag_index_maps_uniform_assembly}
\xymatrix{K_0^u(X) \ar[rr]^{\mu_u} \ar[dr]_{\ind_\tau} & & K_0(C_u^\ast(Y)) \ar[dl]^{\ind_\tau} \\ & \IR}
\end{equation*}
\end{prop}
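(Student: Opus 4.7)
The plan is to produce a direct formula for the index as an averaged graded trace (supertrace) of a suitably normalized representative, and then verify well-definedness on $K_0^u(X)$ as well as compatibility with the uniform coarse assembly map $\mu_u$.

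First I would normalize representatives. Combining Lemma \ref{lem:normalization_non-degenerate} (to non-degenerate $\rho$), Proposition \ref{prop:normalization_finite_prop_speed} (to uniform finite propagation at most some $R > 0$ depending only on $X$), and Lemma \ref{lem:K_groups_D_tr_D_u_equal_degenerate_rho} (to uniformly pseudolocally traceable operators, applicable because jointly bounded geometry yields quasi-latticing partitions of arbitrarily small diameter by Lemma \ref{lem:equiv_jointly_arbitrarily_fine_bounded_geometry}), every class $[x] \in K_0^u(X)$ admits a representative $(H, \rho, T)$ with $\rho$ non-degenerate, $T = T^\ast$ odd and multigraded of finite propagation at most $R$, and $1 - T^2 \in \frakC^{tr}_\rho(X)$. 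Fixing a quasi-latticing partition $(V_y)_{y \in Y}$ of $X$, I set
\begin{equation}\label{eq:formula_ind_T_supertrace}
\ind_\tau([(H, \rho, T)]) := \tau\!\left( \left( \frac{1}{\card U_i} \sum_{y \in U_i} \trace_\epsilon\bigl( \rho(\chi_{V_y})(1 - T^2) \bigr) \right)_{\!i} \right),
\end{equation}
where $\chi_{V_y}$ is the characteristic function of $V_y$. The sequence in the bracket is bounded because uniform local traceability of $1-T^2$ forces $\|\rho(\chi_{V_y})(1 - T^2)\|_{tr}$ to be uniformly bounded in $y \in Y$, so $\tau$ applies.

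Next I would show that Formula \eqref{eq:formula_ind_T_supertrace} descends to $K_0^u(X)$. Invariance under unitary equivalence and direct sums is immediate, and independence of the quasi-latticing partition follows because any two such partitions differ only by redistributing mass within a uniformly bounded radius, so the contribution to the discrepancy is concentrated on $\partial_R U_i$ and vanishes in the \Folner limit. The substantive point is invariance under operator homotopies $T_s$: the path $s \mapsto 1 - T_s^2$ is continuous in the trace-type seminorms of $\frakC^{tr}_\rho(X)$ (which can be arranged via compact perturbations as in Lemma \ref{lem:compact_perturbations}, adapted to the traceable setting), so each fixed-$i$ partial sum is continuous in $s$. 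To complete the descent to $K$-theory one verifies the \emph{trace property}: Formula \eqref{eq:formula_ind_T_supertrace} annihilates graded commutators $[A, B]_\epsilon$ with $A \in \frakD^{tr}_\rho(X)$, $B \in \frakC^{tr}_\rho(X)$. This is exactly Roe's averaging argument from the proof of \cite[Theorem 6.7]{roe_index_1}: the finite propagation $R$ and the Følner condition $\card \partial_R U_i / \card U_i \to 0$ force the off-diagonal commutator contributions into a negligible boundary region.

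Finally, for the commutativity of the diagram I would unwind $\mu_u$. Under the Paschke identifications $K_0^u(X) \cong K_1(D_u^\ast(X)/C_u^\ast(X))$ from Lemma \ref{lem:uniform_k_hom_via_dual_quot_2} and $K_0(C_u^\ast(X)) \cong K_0(C_u^\ast(Y))$ from Proposition \ref{prop:IU(E)_dense_Cu*(E)}, $\mu_u$ is the six-term boundary map. For a normalized representative with $T = T^\ast$ the idempotent lift $P := \tfrac{1}{2}(1 + T)$ satisfies $P^2 - P = \tfrac{1}{4}(T^2 - 1) \in \frakC^{tr}_\rho(X)$, so $\mu_u[x]$ is represented, after the standard matrix correction, by a formal difference of projections whose evaluation under $\ind_\tau$ from Proposition \ref{prop:index_maps_uniform_Roe_algebras} amounts to the averaged trace of $P^2 - P$. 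Using $\tfrac{1}{4}(1 - T^2) = P - P^2$ together with the sign of $\epsilon$ on the two summands of the grading, I recover exactly the right-hand side of Formula \eqref{eq:formula_ind_T_supertrace}, proving the diagram commutes. The main obstacle in the plan is the homotopy-invariance step: making the normalization to uniformly pseudolocally traceable operators work in one-parameter families requires a parameterized version of the density argument behind Lemma \ref{lem:D_tr_dense}, since mere operator-norm continuity of $T_s$ does not by itself imply continuity in the trace seminorms.
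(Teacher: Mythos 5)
Your normalization plan and the index formula \eqref{eq:formula_ind_T_supertrace} match the paper's, and the Følner/finite-propagation argument for partition-independence is the right idea. But there are two genuine gaps.

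First, the homotopy invariance step. You correctly flag the obstacle — operator-norm continuity of $T_s$ does not give continuity of $s \mapsto 1-T_s^2$ in the trace seminorms of $\frakC^{tr}_\rho(X)$ — but you do not resolve it, and a "parameterized Lemma \ref{lem:D_tr_dense}" is not the way it is handled. The paper sidesteps the issue entirely: after applying Paschke duality together with Lemma \ref{lem:K_groups_D_tr_D_u_equal_degenerate_rho}, a class in $K_0^u(X)$ lives as a $K_1$-class of the local $C^\ast$-algebra $\frakD^{tr}_{\rho\oplus 0}(X)$, and an operator homotopy of Fredholm modules \emph{translates to a homotopy inside that algebra}. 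In $K$-theory of a $C^\ast$-algebra one may always replace a continuous path by a piecewise-smooth one, so one gets a \emph{smooth} path $T_t$ entirely within $\frakD^{tr}_{\rho\oplus 0}(X)$ for free. Then $\tfrac{d}{dt}\ind_\tau(T_{t,11})$ is an averaged graded trace of the graded commutator $[\tfrac{d}{dt}T_{t,11}, T_{t,11}]_\epsilon$, which vanishes up to a boundary error killed by the Følner condition (using the uniform bound on propagations from Proposition \ref{prop:normalization_finite_prop_speed}). No parameterized density argument is needed; the key point you are missing is that the traceability normalization converts the problem into $C^\ast$-algebra $K$-theory where smoothing the homotopy is routine.

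Second, the compatibility with $\mu_u$ is too quick. You claim that evaluating $\ind_\tau$ on $\mu_u([T])$ "amounts to the averaged trace of $P^2-P = \tfrac{1}{4}(T^2-1)$" and that this recovers Formula \eqref{eq:formula_ind_T_supertrace} directly. In fact the six-term boundary map does not produce $1-T_{11}^2$: writing $T_{11}$ off-diagonally as $U$, $\ind_\tau(\mu_u(T_{11}))$ comes out as the averaged graded trace of $\mathrm{diag}((1^+-U^\ast U)^2, (1^--UU^\ast)^2)$, i.e.\ of $(1-T_{11}^2)^2$, whereas $\ind_\tau(T_{11})$ is the averaged graded trace of $1-T_{11}^2$. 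These do not agree on the nose. One must compute the difference, identify $T_{11}^2(T_{11}^2-1)$ as $\tfrac12[T_{11}, T_{11}(T_{11}^2-1)]_\epsilon$, and use the same vanishing-of-graded-commutators-under-averaging argument as in the homotopy-invariance step. Without this graded-commutator reconciliation, the two sides of the diagram are not obviously equal.

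A smaller point on ordering: the traceability normalization (Lemma \ref{lem:K_groups_D_tr_D_u_equal_degenerate_rho}) is formulated for the algebra $\frakD^{tr}_{\rho\oplus 0}(X)$ attached to the ample degenerate representation $\rho\oplus 0$ on $H\oplus H$; one then passes to the non-degenerate corner $T_{11}=\pi_1 T\pi_1$ and deduces $1-T_{11}^2 = T_{12}T_{21} \in \frakC^{tr}_\rho(X)$ from the block structure of Definition \ref{defn:Dtr_rho0}. Trying to normalize directly to a non-degenerate representative with $1-T^2\in\frakC^{tr}_\rho(X)$ skips the step that actually yields the traceability.
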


\begin{proof}
Let $\tilde{\rho}$ be an ample representation of $C_0(X)$ on the Hilbert space $\tilde{H}$. Then by Theorem \ref{thm:paschke_universal} we get $K_0^u(X) \cong K_0^u(X; \tilde{\rho} \oplus 0)$, i.e., every graded uniform Fredholm module in $K_0^u(X)$ may be forced to be of the form $(H^\prime \oplus H^\prime, \rho^\prime \oplus \rho^\prime, T)$, where $H^\prime$ is a finite or countably infinite direct sum of $\tilde{H} \oplus \tilde{H}$ and $\rho^\prime$ analogously a direct sum of finitely or infinitely many $\tilde{\rho} \oplus 0$, and the grading is given by interchanging the two summands in $H^\prime \oplus H^\prime$.

For simplicity of our arguments, we will assume $H^\prime = \tilde{H} \oplus \tilde{H}$ and $\rho^\prime = \tilde{\rho} \oplus 0$. Furthermore, let us rearrange the space $H^\prime \oplus H^\prime = (H_1 \oplus H_2) \oplus (H_3 \oplus H_4)$ with $H_{1,2,3,4} = \tilde{H}$ so that we have $H^\prime \oplus H^\prime = H \oplus H$, where $H := H_1 \oplus H_3$, resp. $H := H_2 \oplus H_4$, is now a graded Hilbert space and the representation $\rho^\prime \oplus \rho^\prime$ is now under this rearranging of the form $(\tilde{\rho} \oplus \tilde{\rho}) \oplus (0 \oplus 0) =: \rho \oplus 0$.

Summarizing this, we assume that every element of $K_0^u(X)$ may be written as $(H \oplus H, \rho \oplus 0, T)$, where $H$ is a graded Hilbert space and $\rho$ is ample. Furthermore, we will do some normalization from Proposition \ref{prop:normalization_finite_prop_speed}: we assume that $T$ is involutive, i.e., $T = T^\ast$ and $T^2 = 1$, and that $T$ has finite propagation.

Now in order to define an index of $T$, we have to pass to a non-degenerate module (not doing this would result in a not well-defined index). Examining the procedure of passing to non-degenerate modules (Lemma \ref{lem:normalization_non-degenerate}) reveals that in our case here we arrive at the uniform module $(H, \rho, T_{11})$, where $T_{11} = \pi_1 T \pi_1$ for the projection $\pi_1 \colon H \oplus H \to H$ onto the first summand.

Let $(V_y)_{y \in Y}$ be a quasi-latticing partition with diameters $\le d$ (Definition \ref{defn:quasi-latticing_partitions}) and such that $y \in V_y$ for all $y \in Y$. Let $(U_i)_i$ be a \Folner sequence of $Y$ and we denote by $\chi_i$ the characteristic function of $\bigcup_{y \in U_i} \overline{V_y}$. Now if $\tau \in (\ell^\infty)^\ast$ is a functional associated to a free ultrafilter on $\IN$, we define
\begin{equation}
\label{eq:formula_ind_T_supertrace}
\ind_\tau (T_{11}) = \tau\big(\tfrac{1}{\card U_i} \trace_\epsilon \rho(\chi_i) (1_H - T_{11}^2) \rho(\chi_i)\big),
\end{equation}
where $\epsilon$ is the grading automorphism of the space $H$ and $\trace_\epsilon$ is the graded trace from Definition \ref{defn:graded_trace}.

First we have to show that this index is well-defined, i.e., that $\rho(\chi_i) (1-T_{11}^2) \rho(\chi_i)$ is of trace class and that the so defined sequence (indexed by $i$) is bounded from above by a constant times $\card U_i$ (so that we may evaluate $\tau$ on it). Now the problem is that this is generally false. But that is the reason why we proved Lemma \ref{lem:K_groups_D_tr_D_u_equal_degenerate_rho}. Since $X$ has jointly bounded geometry, it admits quasi-latticing partitions with arbitrarily small diameters (Lemma \ref{lem:equiv_jointly_arbitrarily_fine_bounded_geometry}). This means that Lemma \ref{lem:K_groups_D_tr_D_u_equal_degenerate_rho} is applicable, and combining it with Paschke duality, we may assume that $T \in \frakD_{\rho \oplus 0}^{tr}(X)$.

Now $\pi_1 = \rho(\chi_X)$, where $\chi_X$ is the characteristic function of the whole space $X$, which leads to $\pi_1 \rho(\chi_i) = \rho(\chi_i) = \rho(\chi_i) \pi_1$. Denoting furthermore the projection of $H \oplus H$ onto the second summand by $\pi_2$ and using $T^2=1$, we get
\[\rho(\chi_i) (1_H - T_{11}^2) \rho(\chi_i) = \rho(\chi_i) (T \pi_2 T) \rho(\chi_i).\]
Writing $T = \begin{pmatrix} T_{11} & T_{12} \\ T_{21} & T_{22}\end{pmatrix} \in \IB(H \oplus H)$, we see that $T \pi_2 T = \begin{pmatrix} T_{12} T_{21} & T_{12} T_{22} \\ T_{22} T_{21} & T^2_{22}\end{pmatrix}$, and therefore $\rho(\chi_i) (T \pi_2 T) \rho(\chi_i) = \rho(\chi_i) (T_{12} T_{21}) \rho(\chi_i)$. Since $T \in \frakD_{\rho \oplus 0}^{tr}(X)$, we have $T_{12} \in \fraklC_{\rho \oplus 0}^{tr}(X)$ and $T_{21} \in \frakCr_{\rho \oplus 0}^{tr}(X)$ (Definition \ref{defn:Dtr_rho0}), i.e., $T_{12} T_{21} \in \frakC_{\rho \oplus 0}^{tr}(X)$. From this the claim follows that $\rho(\chi_i) (1-T_{11}^2) \rho(\chi_i)$ is of trace class and that the sequence $\tfrac{1}{\card U_i} \trace_\epsilon \rho(\chi_i) (1_H - T_{11}^2) \rho(\chi_i)$ is bounded. So Formula \eqref{eq:formula_ind_T_supertrace} is well-defined.

We will now show that the index does not depend on the chosen quasi-latticing partition. So let $(V_y^\prime)_{y \in Y}$ be another one with diameters $\le d$ and $y \in V_y^\prime$ for all $y \in Y$. Then the difference between $\chi_i$ (the characteristic function of $\bigcup_{y \in U_i} \overline{V_y}$) and $\chi_i^\prime$ (the one of $\bigcup_{y \in U_i} \overline{V^\prime_y}$) is supported on the boundary of $U_i$ since in the interior both sum up to $1$. Concretely, we have $\supp(\chi_i - \chi^\prime_i) \subset B_d(U_i - \partial_{2d} U_i)$, where $\partial_r U_i$ denotes the $r$-boundary of $U_i$ in $Y$ (Definition \ref{defn:amenability_metric_spaces}). Since $(U_i)_i$ is a \Folner sequence, the resulting difference in the indices vanishes in the limit under $\tau$. Analogously we can show that the index does not depend on the value of $d$ (the upper bound on the diameters of the quasi-latticing partitions).

In order to show that $\ind_\tau$ (which is defined up to now only on modules) descends to $K_0^u(X)$, we have to show that it is invariant under operator homotopies (that it respects direct sums of modules and unitary equivalence classes, is clear). Suppose that we have an operator homotopy $T_t$. Since all the normalizations that we did prior to defining the index respect operator homotopies, we have to show that $\ind_\tau(T_{t, 11})$ is constant in $t$. Due to Paschke duality, the operator homotopy $T_t$ translates to a homotopy in the $C^\ast$-algebra $\frakD^{tr}_{\rho \oplus 0}(X)$. But here we may now replace it by a \emph{smooth} homotopy, i.e., we may assume without loss of generality that $T_t$ is differentiable. Then we get
\[\tfrac{d}{dt} \ind_\tau (T_{t, 11}) = \tau\big(\tfrac{1}{\card U_i} \trace_\epsilon \rho(\chi_i) [\tfrac{d}{dt} T_{t, 11},T_{t, 11}]_\epsilon \rho(\chi_i)\big),\]
where we already used that the operators $T_t$, and therefore also $\tfrac{d}{dt} T_t$, are odd to derive this formula.

Now if the projections $\rho(\chi_i)$ weren't there, we would say that the graded trace vanishes on the graded commutator and conclude that the time derivative of the index is $0$, i.e., the index is constant along our homotopy. But the projections are there and so we argue instead in the following way: due to the finite propagation $R$ of the operators $T_t$\footnote{Here we need the crucial fact from Proposition \ref{prop:normalization_finite_prop_speed} that we may have a common bound for the propagations of the operators $T_t$.} the quantity $\trace_\epsilon \rho(\chi_i) [\tfrac{d}{dt} T_{t, 11},T_{t, 11}]_\epsilon \rho(\chi_i)$ vanishes on $\bigcup_{y \in U_i - \partial_{2d+R}U_i} \overline{V_y}$, i.e., the non-zero part is concentrated on the boundary $\partial_{2d+R}U_i \cap U_i$. Due to the amenability of $(U_i)_i$ this becomes $0$ in the limit under $\tau$.

To complete the proof, it remains to show that the diagram
\[\xymatrix{K_0^u(X) \ar[rr]^{\mu_u} \ar[dr]_{\ind_\tau} & & K_0(C_u^\ast(Y)) \ar[dl]^{\ind_\tau} \\ & \IR}\]
commutes. Recalling the definition of the uniform assembly map $\mu_u$ (\cite[Proposition 9.1]{spakula_uniform_k_homology}) we see that we first have to normalize a given Fredholm module to being non-degenerate, i.e., $\mu_u([T])$ is defined using the operator $T_{11}$. So showing the commutativity of the above diagram boils down to showing $\ind_\tau (T_{11}) = \ind_\tau (\mu_u(T_{11}))$. Writing $T_{11} := \begin{pmatrix}0&U^\ast\\U&0\end{pmatrix}$ with respect to the grading decomposition $H = H^+ \oplus H^-$, we get
\[\ind_\tau (T_{11}) = \tau\big(\tfrac{1}{\card U_i} \trace_\epsilon \rho(\chi_i) \begin{pmatrix}1^+ - U^\ast U & 0 \\ 0 & 1^- - UU^\ast \end{pmatrix} \rho(\chi_i)\big)\]
and for the index of $\mu_u(T_{11})$ we have\footnote{Here we need again amenability of $(U_i)_i$ since we get an error on the boundary $\partial_{2d}U_i$ of $U_i$ due to the discretization which $\mu_u$ does (it passes to $Y \subset X$)}
\[\ind_\tau (\mu_u(T_{11})) = \tau\big(\tfrac{1}{\card U_i}\trace_\epsilon \rho(\chi_i) \begin{pmatrix}(1^+ - U^\ast U)^2 & 0 \\ 0 & (1^- - UU^\ast)^2 \end{pmatrix} \rho(\chi_i)\big).\]
Now we can compute their difference:
\begin{align*}
\ind_\tau (\mu_u(T_{11} & )) - \ind_\tau (T_{11}) = \\
& = \tau\left(\tfrac{1}{\card U_i} \trace_\epsilon \rho(\chi_i) \begin{pmatrix}U^\ast U (U^\ast U - 1^+) & 0 \\ 0 & U U^\ast (U U^\ast - 1^-)\end{pmatrix} \rho(\chi_i) \right)\\
& = \tau\big(\tfrac{1}{\card U_i} \trace_\epsilon \rho(\chi_i) T_{11}^2 (T_{11}^2 - 1) \rho(\chi_i) \big)\\
& = \tau\big(\tfrac{1}{\card U_i} \trace_\epsilon \rho(\chi_i) \tfrac{1}{2}[T_{11},T_{11}(T_{11}^2 - 1)]_\epsilon \rho(\chi_i) \big)
\end{align*}
which equals $0$ due to the same argument that we used to show the homotopy invariance (the fact that the graded trace vanishes on graded commutators combined with the amenability of $(U_i)_i$).
\end{proof}

Note that if the space $X$ is not amenable, then we do not have such index maps $\ind_\tau \colon K_0^u(X) \to \IR$. This follows from \cite[Theorem 11.2]{spakula_uniform_k_homology}: if $X$ is a connected graph with vertex set $Y$, then $X$ is amenable if and only if $[Y] \not= [0] \in K_0^u(X)$. From the definition of the index map $\ind_\tau$ on the level of Fredholm modules we get $\ind_\tau(Y) = 1$ (independent of the choice of functional $\tau$ or the exhaustion of $Y$ via finite subsets $U_i$). So if $X$ is not amenable, we conclude that $\ind_\tau$ can not descend to the level of uniform $K$-homology.

\section{External product}

Now we get to probably the most important technical part in this thesis: the construction of the external product for uniform $K$-homology. Its main application will be to deduce homotopy invariance of uniform $K$-homology in the next section.

Note that we can construct the product only if the involved metric spaces have jointly bounded geometry\footnote{see Definition \ref{defn:jointly_bounded_geometry}}, and since this property is crucially used, the author does not see any way to overcome this requirement. But fortunately, both major classes of spaces on which we want to apply our theory, namely manifolds and simplicial complexes of bounded geometry, do have jointly bounded geometry.

In our construction of the external product for uniform $K$-homology we follow the presentation of \cite[Section 9.2]{higson_roe}, where the product is constructed for the usual $K$-homology.

Let $X_1$ and $X_2$ be locally compact and separable metric spaces and both having jointly bounded geometry, $(H_1, \rho_1, T_1)$ a $p_1$-multigraded uniform Fredholm module over the space $X_1$ and $(H_2, \rho_2, T_2)$ a $p_2$-multigraded module over $X_2$, and both modules will be assumed to have finite propagation (see Proposition \ref{prop:normalization_finite_prop_speed}).

\begin{defn}[cf. {\cite[Definition 9.2.2]{higson_roe}}]
Define $\rho$ to be the tensor product representation of $C_0(X_1 \times X_2) \cong C_0(X_1) \otimes C_0(X_2)$ on $H := H_1 \hatotimes H_2$, i.e.,
\[\rho(f_1 \otimes f_2) = \rho_1(f_1) \hatotimes \rho_2(f_2) \in \IB(H_1) \hatotimes \IB(H_2)\]
and equip $H_1 \hatotimes H_2$ with the induced $(p_1 + p_2)$-multigrading (see Lemma \ref{lem:tensor_prod_multigraded}).

We say that a $(p_1 + p_2)$-multigraded uniform Fredholm module $(H, \rho, T)$ is \emph{aligned} with the modules $(H_1, \rho_1, T_1)$ and $(H_2, \rho_2, T_2)$, if
\begin{itemize}
\item $T$ has finite propagation,
\item for all $f \in C_0(X_1 \times X_2)$ the operators
\[\rho(f) \big( T (T_1 \hatotimes 1) + (T_1 \hatotimes 1) T \big) \rho(\bar f) \text{ and } \rho(f) \big( T (1 \hatotimes T_2) + (1 \hatotimes T_2) T \big) \rho(\bar f)\]
are positive modulo compact operators,\footnote{That is to say, they are positive in the Calkin algebra $\IB(H) / \IK(H)$.} and
\item for all $f\in C_0(X_1 \times X_2)$ the operator $\rho(f) T$ derives $\IK(H_1) \hatotimes \IB(H_2)$, i.e.,
\[[\rho(f) T, \IK(H_1) \hatotimes \IB(H_2)] \subset \IK(H_1) \hatotimes \IB(H_2).\]
\end{itemize}
Since both $H$ and $\rho$ are uniquely determined from $H_1$, $\rho_1$, $H_2$ and $\rho_2$, we will often just say that \emph{$T$ is aligned with $T_1$ and $T_2$}.
\end{defn}

Our major technical lemma is the following one. It is a uniform version of Kasparov's Technical Lemma, which is suitable for our needs.

\begin{lem}\label{lem:construction_partition_unity}
Let $X_1$ and $X_2$ be locally compact and separable metric spaces that have jointly coarsely and locally bounded geometry.

Then there exist commuting, even, multigraded, positive operators $N_1$, $N_2$ of finite propagation on $H := H_1 \hatotimes H_2$ with $N_1^2 + N_2^2 = 1$ and the following properties:

\begin{enumerate}
\item $N_1 \cdot \big\{ (T_1^2 - 1) \rho_1(f) \hatotimes 1 \ | \ f \in \LLip_{R^\prime}(X_1) \big\} \subset \IK(H_1 \hatotimes H_2)$ is uniformly approximable for all $R^\prime, L > 0$ and analogously for $(T^\ast_1 - T_1)\rho_1(f)$ and for $[T_1, \rho_1(f)]$ instead of $(T_1^2 - 1) \rho_1(f)$,
\item $N_2 \cdot \big\{ 1 \hatotimes (T_2^2 - 1) \rho_2(f) \ | \ f \in \LLip_{R^\prime}(X_2) \big\} \subset \IK(H_1 \hatotimes H_2)$ is uniformly approximable for all $R^\prime, L > 0$ and analogously for $(T_2^\ast - T_2) \rho_2(f)$ and for $[T_2, \rho_2(f)]$ instead of $(T_2^2 - 1) \rho_2(f)$,
\item $\{[N_i, T_1 \hatotimes 1]\rho(f), [N_i, 1 \hatotimes T_2]\rho(f) \ | \ f \in \LLip_{R^\prime}(X_1 \times X_2)\}$ is uniformly approximable for all $R^\prime, L > 0$ and both $i=1,2$,
\item $\big\{ [N_i, \rho(f \otimes 1)], [N_i, \rho(1 \otimes g)] \ | \ f \in \LLip_{R^\prime}(X_1), g \in \LLip_{R^\prime}(X_2) \big\}$ is uniformly approximable for all $R^\prime, L > 0$ and both $i = 1,2$, and
\item both $N_1$ and $N_2$ derive $\IK(H_1) \hatotimes \IB(H_2)$.
\end{enumerate}
\end{lem}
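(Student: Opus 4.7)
The plan is to adapt Kasparov's Technical Theorem to the uniform setting, using the jointly bounded geometry of $X_1$ and $X_2$ to replace every ``modulo compact'' statement in the classical construction by a ``uniformly approximable'' one. The key point is that the usual quasi-central approximate unit of Kasparov produces a single element $M$ with $0 \le M \le 1$ commuting with everything modulo compacts, from which one takes $N_1 = \sqrt{M}$ and $N_2 = \sqrt{1-M}$; but here we additionally need both $N_1$ and $N_2$ to have finite propagation, which forces a more explicit, partition-of-unity based construction rather than a functional calculus one.

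First I would use the jointly bounded geometry of each $X_i$ to fix a quasi-latticing partition $(V_y^i)_{y \in Y_i}$ with very small uniform diameter $d$, producing orthogonal decompositions $H_i = \bigoplus_{y \in Y_i} H_{i,y}$ with $H_{i,y} := \rho_i(\chi_{V_y^i})H_i$. The uniform pseudolocality of $T_i$ and uniform local compactness of $T_i^2 - 1$ and $T_i - T_i^\ast$ yield, for each $y \in Y_i$ and each $n \in \IN$, finite-rank projections $P_{y,n}^{(i)}$ on $H_{i,y}$ whose ranks are bounded uniformly in $y$ and such that $(1-P_{y,n}^{(i)}) \cdot (T_i^2-1) \rho_i(\chi_{V_y^i})$, and the analogous expressions with $T_i^\ast - T_i$ and $[T_i, \rho_i(\chi_{V_y^i})]$, have operator norm below $1/n$. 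Next I would fix a smooth partition of unity $\{\phi_k\}$ on $X_1 \times X_2$ subordinate to a uniformly locally finite cover by small balls, all of whose derivatives are uniformly bounded (this exists by the bounded geometry hypothesis on the product).

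The operators $N_1, N_2$ are then built by simultaneously smoothly interpolating, cell by cell, between the ``finite-rank-in-$H_2$'' regime (where $N_1$ should be large) and the ``finite-rank-in-$H_1$'' regime (where $N_2$ should be large). Concretely, assign to each cell $V_y^1 \times V_z^2$ an angle $\theta_{y,z} \in [0, \pi/2]$ and define, locally on that cell,
\[
N_1 := (\cos \theta_{y,z}) \cdot (I_{H_1} \hatotimes P_{z,n(y,z)}^{(2)}) + (\sin \theta_{y,z}) \cdot (P_{y,n(y,z)}^{(1)} \hatotimes I_{H_2}),
\]
with $N_2$ defined by a complementary recipe so that $N_1^2 + N_2^2 = I$ holds exactly, and then patch the local definitions together using the smooth partition of unity $\{\phi_k\}$. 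The ranks $n(y,z)$ are chosen to grow slowly enough that the patched operators still have uniformly finite propagation, but fast enough that the quasi-central approximation conditions (1)--(4) hold with the required uniformity. Being multigraded and even can be arranged since the $P_{y,n}^{(i)}$ may be chosen to commute with the multigrading and grading operators by replacing each $P_{y,n}^{(i)}$ by its average over the finite multigrading group action.

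The main obstacle is the tension between the finite propagation requirement and the compactness requirements (1)--(2), which forces one to abandon the clean functional-calculus construction $N_2 = \sqrt{I - N_1^2}$ (which would destroy finite propagation) in favor of the explicit cell-by-cell interpolation sketched above. Verifying condition (5) that $N_i$ derives $\IK(H_1) \hatotimes \IB(H_2)$ will follow from the fact that, on each cell, $N_i$ is of the form (compact on $H_1$) $\hatotimes$ (bounded on $H_2$) plus a term involving projections on $H_2$; and conditions (3)--(4), the uniform approximate centrality of $N_i$ with respect to the $T_j$'s and the representations, will follow by combining the uniform local smallness of the commutators $[\rho(\phi_k), T_j \hatotimes 1]$ (from uniform pseudolocality) with the uniform bounds on $\|\nabla \phi_k\|_\infty$, upon making the cell-sizes and angle-transitions fine enough. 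The delicate bookkeeping here is what makes the argument substantially more involved than the compact case.
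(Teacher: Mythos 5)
Your approach genuinely diverges from the paper: the paper follows the quasicentral-approximate-unit scheme of Kasparov's Technical Theorem (as in Higson--Roe), building an increasing sequence of finite-propagation projections $u_n$ on $H_1$ and $p_n$ on $H_2$ from the uniform approximations, forming nested products $w_n = u_{m_n}\hatotimes p_n$, taking convex combinations $\nu_n, \delta_n$, and finally setting $X := \sum \delta_n \nu_n \delta_n$, $N_1 := (1-X)^{1/2}$, $N_2 := X^{1/2}$. In particular, contrary to your claim, the paper does \emph{not} abandon the functional-calculus step $N_1 = (1-X)^{1/2}$; it keeps the finite-propagation issue under control by arranging all the auxiliary projections to be nested, so that all differences appearing are again projections and the relevant sums are of uniformly bounded propagation.

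More importantly, your explicit cell-by-cell formula has a concrete gap: with
\[
N_1 := (\cos\theta_{y,z})\,(I_{H_1}\hatotimes P^{(2)}_{z,n}) + (\sin\theta_{y,z})\,(P^{(1)}_{y,n}\hatotimes I_{H_2}),
\]
property (1) fails. Indeed
\[
N_1 \cdot \big((T_1^2-1)\rho_1(f)\hatotimes 1\big)
= \cos\theta\, \big((T_1^2-1)\rho_1(f)\hatotimes P^{(2)}_{z,n}\big)
 + \sin\theta\, \big(P^{(1)}_{y,n}(T_1^2-1)\rho_1(f)\hatotimes I_{H_2}\big),
\]
and the second summand lies in $\IK(H_1)\hatotimes\IB(H_2)$ but is \emph{not} compact on $H_1\hatotimes H_2$ when $H_2$ is infinite-dimensional; you would need $\sin\theta \equiv 0$, which collapses the construction. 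The structural point here is that $N_1$ must localize the $H_2$-direction and $N_2$ the $H_1$-direction; writing $N_1$ as a linear combination of $I\hatotimes P^{(2)}$ and $P^{(1)}\hatotimes I$ does not achieve this. In addition, the ``complementary recipe'' for $N_2$ with $N_1^2+N_2^2=1$ is never specified; since $N_1^2$ expands into $\cos^2\theta (I\hatotimes P^{(2)}) + \sin^2\theta (P^{(1)}\hatotimes I) + 2\cos\theta\sin\theta (P^{(1)}\hatotimes P^{(2)})$, the only natural choice would be $N_2 = (1-N_1^2)^{1/2}$, which is exactly the functional-calculus step you wished to avoid. Finally, the phrase ``locally on that cell'' is ill-defined for operators of the form $I_{H_1}\hatotimes P^{(2)}$ or $P^{(1)}\hatotimes I_{H_2}$, as these are not supported on $V^1_y\times V^2_z$, so the proposed partition-of-unity patching does not describe a well-defined global operator.
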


\begin{proof}
Due to the jointly bounded geometry there is a countable Borel decomposition $\{X_{1,i}\}$ of $X_1$ such that each $X_{1,i}$ has non-empty interior, the completions $\{\overline{X_{1,i}}\}$ form an admissible class\footnote{This means that for every $\varepsilon > 0$ there is an $N > 0$ such that in every $\overline{X_{1,i}}$ exists an $\varepsilon$-net of cardinality at most $N$.} of compact metric spaces and for each $R > 0$ we have
\begin{equation}
\label{eq:bound_jointly_bounded_geom}
\sup_i \card \{j \ | \ B_R(X_{1,i}) \cap X_{1,j} \not= \emptyset\} < \infty.
\end{equation}

The completions of the $1$-balls $B_1(X_{1,i})$ are also an admissible class of compact metric spaces and the collection of these open balls forms a uniformly locally finite open cover of $X_1$. We may find a partition of unity $\varphi_{1,i}$ subordinate to the cover $\{B_1(X_{1,i})\}$ such that every function $\varphi_{1,i}$ is $L_0$-Lipschitz for a fixed $L_0 > 0$ (but we will probably have to enlarge the value of $L_0$ a bit in a moment). The same holds also for a countable Borel decomposition $\{X_{2,i}\}$ of $X_2$ and we choose a partition of unity $\varphi_{2,i}$ subordinate to the cover $\{B_1(X_{2,i})\}$ such that every function $\varphi_{2,i}$ is also $L_0$-Lipschitz (by possibly enlargening $L_0$ so that we have the same Lipschitz constant for both partitions of unity).

Since $\{\overline{B_1(X_{1,i})}\}$ is an admissible class of compact metric spaces, we have for each $\varepsilon > 0$ and $L > 0$ a bound independent of $i$ on the number of functions from
\[\varphi_{1,i} \cdot \LLip_c(X_1) := \{ \varphi_{1,i} \cdot f \ | \ f\text{ is }L\text{-Lipschitz, compactly supported and }\|f\|_\infty \le 1\}\]
to form an $\varepsilon$-net in $\varphi_{1,i} \cdot \LLip_c(X_1)$, and analogously for $X_2$ (this can be proved by a similar construction as the one from \cite[Lemma 2.4]{spakula_universal_rep}). We denote this upper bound by $C_{\varepsilon, L}$.

Now for each $N \in \IN$ and $i \in \IN$ we choose $C_{1/N,N}$ functions $\{f_k^{i,N}\}_{k=1, \ldots, C_{1/N, N}}$ from $\varphi_{1,i} \cdot N\text{-}\operatorname{Lip}_c(X_{1,i})$ constituting an $1/N$-net.\footnote{If we need less functions to get an $1/N$-net, we still choose $C_{1/N,N}$ of them. This makes things easier for us to write down.} Analogously we choose $C_{1/N, N}$ functions $\{g_k^{i,N}\}_{k=1, \ldots, C_{1/N, N}}$ from $\varphi_{2,i} \cdot N\text{-}\operatorname{Lip}_c(X_{2,i})$ that are $1/N$-nets.

We choose a sequence $\{u_n \hatotimes 1\} \subset \IB(H_1) \hatotimes \IB(H_2)$ of operators in the following way: $u_n$ will be a projection operator onto a subspace $U_n$ of $H_1$. To define this subspace, we first consider the operators
\begin{equation}\label{eq:operators_X_1_to_approximate}
(T_1^2 - 1)\rho_1(f), \ (T_1 - T_1^\ast)\rho_1(f), \text{ and } [T_1, \rho_1(f)]
\end{equation}
for suitable functions $f \in C_0(X_1)$ that we will choose in a moment. These operators are elements of $\IK(H_1)$ since $(H_1, \rho_1, T_1)$ is a Fredholm module. So up to an error of $2^{-n}$ they are of finite rank and the span $V_n$ of the images of these finite rank operators will be the building block for the subspace $U_n$ on which the operator $u_n$ projects\footnote{This finite rank operators are of course not unique. Recall that every compact operator on a Hilbert space $H$ may be represented in the form $\sum_{n \ge 1} \lambda_n \langle f_n, \largecdot \rangle g_n$, where the values $\lambda_n$ are the singular values of the operator and $\{f_n\}$, $\{g_n\}$ are orthonormal (though not necessarily complete) families in $H$ (but contrary to the $\lambda_n$ they are not unique). Now we choose our finite rank operator to be the operator given by the same sum, but only with the $\lambda_n$ satisfying $\lambda_n \ge 2^{-n}$.} (i.e., we will say in a moment how to enlarge $V_n$ in order to get $U_n$). We choose the functions $f \in C_0(X_1)$ as all the functions from the set $\bigcup \{f_k^{i,N}\}_{k=1, \ldots, C_{1/N,N}}$, where the union ranges over all $i \in \IN$ and $1 \le N \le n$. Note that since the Fredholm module $(H_1, \rho_1, T_1)$ is uniform, the rank of the finite rank operators approximating \eqref{eq:operators_X_1_to_approximate} up to an error of $2^{-n}$ is bounded from above with a bound that depends only on $N$ and $n$, but not on $i$ nor $k$. Since we will have $V_n \subset U_n$, we can already give the first estimate that we will need later:
\begin{equation}\label{eq:Kasparov_estimate_383}
\|(u_n \hatotimes 1)(x \hatotimes 1) - (x \hatotimes 1)\| < 2^{-n},
\end{equation}
where $x$ is one of the operators from \eqref{eq:operators_X_1_to_approximate} for all $f_k^{i,N}$ with $1 \le N \le n$.\footnote{Actually, to have this estimate we would need that $x$ is self-adjoint. We can pass from $x$ to $\tfrac{1}{2}(x + x^\ast)$ and $\tfrac{1}{2i}(x - x^\ast)$, do all the constructions with these self-adjoint operators and get the needed estimates for them, and then we get the same estimates for $x$ but with an additional factor of $2$.} Moreover, denoting by $\chi_{1,i}$ the characteristic function of $B_1(X_{1,i})$, then $\rho_1(\chi_{1,i}) \cdot V_n$ is a subspace of $H_1$ of finite dimension that is bounded independently of $i$.\footnote{We have used here the fact that we may uniquely extend any representation of $C_0(Z)$ to one of the bounded Borel functions $B_b(Z)$ on a space $Z$.} The reason for this is because $T_1$ has finite propagation and the number of functions $f_k^{i,N}$ for fixed $N$ is bounded independently of $i$. For all $n$ we also have $V_n \subset V_{n+1}$ and that the projection operator onto $V_n$ has finite propagation which is bounded independently of $n$.

For each $n \in \IN$ we partition $\chi_{1,i}$ for all $i \in \IN$ into disjoint characteristic functions $\chi_{1,i} = \sum_{j=1}^{J_n} \chi_{1,i}^{j,n}$ such that we may write each function $f_k^{i,N}$ for all $i \in \IN$, $1 \le N \le n$ and $k = 1, \ldots, C_{1/N, N}$ up to an error of $2^{-n-1}$ as a sum $f_k^{i,N} = \sum_{j=1}^{J_n} \alpha_k^{i,N}(j,n) \cdot \chi_{1,i}^{j,n}$ for suitable constants $\alpha_k^{i,N}(j,n)$. Note that since $X_1$ has jointly coarsely and locally bounded geometry, we can choose the upper bounds $J_n$ such that they do not depend on $i$. Now we can finally set $U_n$ as the linear span of $V_n$ and $\rho_1(\chi_{1,i}^{j,n}) \cdot V_n$ for all $i \in \IN$ and $1 \le j \le J_n$. Note that $\rho_1(\chi_{1,i}) \cdot U_n$ is a subspace of $H_1$ of finite dimension that is bounded independently of $i$, that we may choose the characteristic functions $\chi_{1,i}^{j,n}$ such that we have $U_n \subset U_{n+1}$ (by possibly enlargening each $J_n$), and that the projection operator $u_n$ onto $U_n$ has finite propagation which is bounded independently of $n$. Since we have $[u_n, \rho_1(\chi_{1,i}^{j,n})] = 0$ for all $i \in \IN$, $1 \le j \le J_n$ and all $n \in \IN$, we get our second crucial estimate:
\begin{equation}\label{eq:Kasparov_estimate_384}
\|[u_n \hatotimes 1, \rho_1(f_k^{i,N}) \hatotimes 1]\| < 2^{-n}
\end{equation}
for all $i \in \IN$, $k = 1, \ldots, C_{1/N, N}$, $1 \le N \le n$ and all $n \in \IN$.

By an argument similar to the proof of the existence of quasicentral approximate units, we may conclude that for each $n \in \IN$ there exists a finite convex combination $\nu_n$ of the elements $\{u_n, u_{n+1}, \ldots\}$ such that
\begin{equation}\label{eq:Kasparov_estimate_384_2}
\|[\nu_n \hatotimes 1, T_1 \hatotimes 1]\| < 2^{-n} \text{, } \|[\nu_n \hatotimes 1, \epsilon_1 \hatotimes \epsilon_2]\| < 2^{-n} \text{ and } \|[\nu_n \hatotimes 1, \epsilon^j]\| < 2^{-n}
\end{equation}
for all $n \in \IN$, where $\epsilon_1 \hatotimes \epsilon_2$ is the grading operator of $H_1 \hatotimes H_2$ and $\epsilon^j$, $1 \le j \le p_1 + p_2$, are the multigrading operators of $H_1 \hatotimes H_2$. Note that the Estimates \eqref{eq:Kasparov_estimate_383} and \eqref{eq:Kasparov_estimate_384} also hold for $\nu_n$. Note furthermore that we can arrange that the maximal index occuring in the finite convex combination for $\nu_n$ is increasing in $n$.

Now we will construct a sequence $w_n \in \IB(H_1) \hatotimes \IB(H_2)$ with suitable properties. We have that $\nu_n$ is a finite convex combination of the elements $\{u_n, u_{n+1}, \ldots\}$. So for $n \in \IN$ we let $m_n$ denote the maximal occuring index in that combination. Furthermore, we let the projections $p_n \in \IB(H_2)$ be analogously defined as $u_n$, where we consider now the operators
\begin{equation}\label{eq:operators_X_2_to_approximate}
(T_2^2 - 1)\rho_2(g), \ (T_2 - T_2^\ast)\rho_2(g), \text{ and } [T_2, \rho_2(g)]
\end{equation}
for the analogous sets of functions $\bigcup \{g_k^{i,N}\}_{k=1, \ldots, C_{1/N,N}}$ depending on $n \in \IN$. Then we define $w_{n-1} := u_{m_n} \hatotimes  p_n$\footnote{The index is shifted by one so that we get the Estimates \eqref{eq:Kasparov_estimate_386}--\eqref{eq:Kasparov_estimate_388} with $2^{-n}$ and not with $2^{-n+1}$; though this is not necessary for the argument.} and get for all $n \in \IN$ the following:
\begin{equation}
\label{eq:Kasparov_estimate_385_-1}
w_n (\nu_n \hatotimes 1) (1 \hatotimes p_n) = (\nu_n \hatotimes 1) (1 \hatotimes p_n)
\end{equation}
and
\begin{align}
\label{eq:Kasparov_estimate_386}
\| [ w_n, x \hatotimes 1 ] \| & < 2^{-n}\\
\label{eq:Kasparov_estimate_387}
\| [ w_n, 1 \hatotimes y ] \| & < 2^{-n}\\
\label{eq:Kasparov_estimate_388}
\| [ w_n, \rho(f_k^{i,N} \otimes g_k^{i,N}) ] \| & < 2^{-n}
\end{align}
for all $i \in \IN$, $1 \le N \le n$ and $k = 1, \ldots, C_{1/N,N}$, where $x$ is one of the operators from \eqref{eq:operators_X_1_to_approximate} for all $f_k^{i,N}$ and $y$ is one of the operators from \eqref{eq:operators_X_2_to_approximate} for all $g_k^{i,N}$.

Let now $d_n := (w_n - w_{n-1})^{1/2}$. With a suitable index shift we can arrange that firstly, the Estimates \eqref{eq:Kasparov_estimate_386}--\eqref{eq:Kasparov_estimate_388} also hold for $d_n$ instead of $w_n$,\footnote{see \cite[Exercise 3.9.6]{higson_roe}} and that secondly, using Equation \eqref{eq:Kasparov_estimate_385_-1},
\begin{equation}
\label{eq:Kasparov_estimate_385}
\| d_n (\nu_n \hatotimes 1) y \| < 2^{-n},
\end{equation}
where $y$ is again one of the operators from \eqref{eq:operators_X_2_to_approximate} for all $g_k^{i,N}$ and $1 \le N \le n$.

Now as in the same way as we constructed $\nu_n$ out of the $u_n$s, we construct $\delta_n$ as a finite convex combination of the elements $\{d_n, d_{n+1}, \ldots\}$ such that
\begin{equation*}
\|[\delta_n, T_1 \hatotimes 1]\| < 2^{-n} \text{, } \|[\delta_n, 1 \hatotimes T_2]\| < 2^{-n} \text{, } \|[\delta_n, \epsilon_1 \hatotimes \epsilon_2]\| < 2^{-n} \text{ and } \|[\delta_n, \epsilon^j]\| < 2^{-n},\notag
\end{equation*}
where $\epsilon_1 \hatotimes \epsilon_2$ is the grading operator of $H_1 \hatotimes H_2$ and $\epsilon^j$ for $1 \le j \le p_1 + p_2$ are the multigrading operators of $H_1 \hatotimes H_2$. Clearly, all the Estimates \eqref{eq:Kasparov_estimate_386}--\eqref{eq:Kasparov_estimate_385} also hold 
for $\delta_n$.

Define $X := \sum \delta_n \nu_n \delta_n$. It is a positive operator of finite propagation and fulfills the Points 2--4 that $N_2$ should have. The arguments for this are analogous to the ones given at the end of the proof of \cite[Kasparov's Technical Theorem 3.8.1]{higson_roe}, but we have to use all the uniform approximations that we additionally have (to use them, we have to cut functions $f \in \LLip_{R^\prime}(X_1)$ down to the single ``parts'' $X_{1,i}$ of $X_1$ by using the partition of unity $\{\varphi_{1,i}\}$ that we have chosen at the beginning of this proof, and analogously for $X_2$). Furthermore, the operator $1-X$ fulfills the desired Points 1, 3 and 4 that $N_1$ should fulfill. That both $X$ and $1-X$ derive $\IK(H_1) \hatotimes \IB(H_2)$ is clear via construction. Since $X$ commutes modulo compact operators with the grading and multigrading operators, we can average it over them so that it becomes an even and multigraded operator and $X$ and $1-X$ still have all the above mentioned properties.

Finally, we set $N_1 := (1-X)^{1/2}$ and $N_2 := X^{1/2}$.
\end{proof}

Now we will use this technical lemma to construct the external product and to show that it is well-defined on the level of uniform $K$-homology.

\begin{prop}\label{prop:external_prod_exists}
Let $X_1$ and $X_2$ be locally compact and separable metric spaces that have jointly coarsely and locally bounded geometry.

Then there exists a $(p_1 + p_2)$-multigraded uniform Fredholm module $(H, \rho, T)$ which is aligned with the modules $(H_1, \rho_1, T_1)$ and $(H_2, \rho_2, T_2)$.

Furthermore, any two such aligned Fredholm modules are operator homotopic and this operator homotopy class is uniquely determined by the operator homotopy classes of $(H_1, \rho_1, T_1)$ and $(H_2, \rho_2, T_2)$.
\end{prop}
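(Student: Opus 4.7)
The plan is to imitate the construction of the external product on ordinary $K$-homology (\cite[Section 9.2]{higson_roe}) but using the refined operators $N_1, N_2$ supplied by Lemma \ref{lem:construction_partition_unity}, so that the additional uniformity conditions in the definition of a uniform Fredholm module can actually be checked. Concretely, first I would apply the normalization to finite propagation (Proposition \ref{prop:normalization_finite_prop_speed}) to the two input modules and then invoke Lemma \ref{lem:construction_partition_unity} to obtain commuting, even, multigraded positive operators $N_1, N_2$ of finite propagation on $H := H_1 \hatotimes H_2$ with $N_1^2 + N_2^2 = 1$ and satisfying the five listed uniform smallness/derivation properties. I then define
\[
T \; := \; N_1 (T_1 \hatotimes 1) \; + \; N_2 (1 \hatotimes T_2).
\]
Since $N_1, N_2$ are even and multigraded while $T_1 \hatotimes 1$ and $1 \hatotimes T_2$ are odd and multigraded (by Lemma \ref{lem:tensor_prod_multigraded}), the operator $T$ is odd and multigraded. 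Finite propagation of $T$ is immediate from finite propagation of $N_1, N_2, T_1, T_2$.

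Next I would verify that $(H, \rho, T)$ is a uniform Fredholm module aligned with the two input modules. The key identity is that $T_1 \hatotimes 1$ and $1 \hatotimes T_2$ \emph{graded}-anticommute, i.e.
\[
(T_1 \hatotimes 1)(1 \hatotimes T_2) + (1 \hatotimes T_2)(T_1 \hatotimes 1) \; = \; 0,
\]
so expansion of $T^2$ collapses to
\[
T^2 - 1 \; = \; N_1^2 \bigl((T_1^2 - 1) \hatotimes 1\bigr) + N_2^2 \bigl(1 \hatotimes (T_2^2 - 1)\bigr) + E,
\]
where $E$ is a finite sum of terms each built out of a commutator $[N_i, T_1 \hatotimes 1]$ or $[N_i, 1 \hatotimes T_2]$. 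After multiplying on the right by $\rho(f)$ for $f \in \LLip_{R'}(X_1 \times X_2)$, the first two summands become uniformly approximable by Points 1 and 2 of Lemma \ref{lem:construction_partition_unity}, and $E \cdot \rho(f)$ becomes uniformly approximable by Point 3; hence $T^2 - 1$ is uniformly locally compact. The same bookkeeping applied to $T - T^\ast$ and to $[T, \rho(f \otimes g)]$ (using additionally Point 4 for the commutators $[N_i, \rho(f \otimes 1)]$ and $[N_i, \rho(1 \otimes g)]$) yields uniform local compactness of $T - T^\ast$ and uniform pseudolocality of $T$. For the alignment clause, the anticommutator computation gives
\[
T(T_1 \hatotimes 1) + (T_1 \hatotimes 1) T \; = \; 2 N_1 \, (T_1^2 \hatotimes 1) + E',
\]
with $E'$ again controlled by Point 3, so modulo compacts this equals $2 N_1 \ge 0$ after absorbing $N_1(T_1^2 - 1)\hatotimes 1 \in \IK(H)$; symmetrically for $1 \hatotimes T_2$. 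The derivation property on $\IK(H_1) \hatotimes \IB(H_2)$ is obtained from Point 5 together with the obvious fact that $T_1 \hatotimes 1$ and $1 \hatotimes T_2$ both derive $\IK(H_1) \hatotimes \IB(H_2)$.

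For uniqueness up to operator homotopy I would use the standard convexity argument: if $T, T'$ are both aligned with $(T_1, T_2)$, the straight line $T_s = sT + (1-s)T'$ consists again of operators aligned with $(T_1, T_2)$, because every property used above (finite propagation, positivity mod compacts of the two anticommutators, derivation of $\IK(H_1) \hatotimes \IB(H_2)$, and the four uniform approximability conditions) is either preserved by convex combinations or can be arranged to be so after adding an operator homotopy through uniformly locally compact perturbations, to which Lemma \ref{lem:compact_perturbations} applies. Dependence only on operator homotopy classes follows by doing this construction in families: given operator homotopies $T_1^t$ and $T_2^t$, Lemma \ref{lem:construction_partition_unity} can be run for the pair $(T_1^t, T_2^t)$ uniformly in $t$ on a sufficiently fine subdivision, and the uniqueness just proved glues the pieces into a single operator homotopy between the external products at $t = 0$ and $t = 1$.

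The main obstacle will be checking, term by term, that the four uniform approximability statements of Lemma \ref{lem:construction_partition_unity} really do control all the error terms produced when one expands $T^2 - 1$, $T - T^\ast$, $[T, \rho(f)]$, and the alignment anticommutators, uniformly in $f \in \LLip_{R'}(X_1 \times X_2)$. In particular, since a general $f \in \LLip_{R'}(X_1 \times X_2)$ is not a pure tensor, one has to approximate $\rho(f)$ by sums $\sum_k \rho(f_k \otimes g_k)$ with $f_k \in \LLip_{R''}(X_1)$ and $g_k \in \LLip_{R''}(X_2)$ where the number of summands is controlled by the jointly bounded geometry of $X_1 \times X_2$; this step is exactly the place where the joint bounded geometry hypothesis is unavoidable, and it is the genuinely new ingredient beyond the ordinary $K$-homology argument.
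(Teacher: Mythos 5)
Your existence construction is exactly the paper's: run the finite‑propagation normalization, invoke Lemma~\ref{lem:construction_partition_unity} for $N_1$, $N_2$, set $T := N_1(T_1 \hatotimes 1) + N_2(1 \hatotimes T_2)$, and verify the uniform Fredholm module conditions and alignment by expanding and routing every error term through one of the five properties of the lemma (with the reduction of a general $f \in \LLip_{R'}(X_1 \times X_2)$ to sums of pure tensors via the partition of unity from the lemma's proof). That part of your plan is correct.

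The gap is in the uniqueness step. The straight line $T_s := sT + (1-s)T'$ between two arbitrary aligned operators is in general \emph{not} a path of uniform Fredholm modules. Expanding,
\begin{equation*}
T_s^2 - 1 = s^2(T^2-1) + (1-s)^2(T'^2-1) + s(1-s)\big(TT' + T'T - 2\big),
\end{equation*}
and nothing in the alignment hypotheses controls $TT' + T'T - 2$: the alignment conditions only assert that $\rho(f)\big(T(T_1\hatotimes 1) + (T_1\hatotimes 1)T\big)\rho(\bar f)$ is positive modulo compacts, not anything about $\{T, T'\}$. So $T_s^2 - 1$ has a bounded, typically non-compact term, and Lemma~\ref{lem:compact_perturbations} cannot repair that. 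The paper avoids this by running Lemma~\ref{lem:construction_partition_unity} a second time with the extra requirement $\|[w_n, \rho(f_k^{i,N}\otimes g_k^{i,N})T']\| < 2^{-n}$, which forces $N_1$, $N_2$ to commute modulo compacts with $\rho(f)T'$ for all $f \in C_0(X_1\times X_2)$. With this choice, the newly built $T = N_1(T_1\hatotimes 1) + N_2(1\hatotimes T_2)$ satisfies $\rho(f)(TT'+T'T)\rho(\bar f)\ge 0$ modulo compacts, and then a uniform version of \cite[Proposition 8.3.16]{higson_roe} (the dedicated lemma for two Fredholm modules with positive graded anticommutator modulo compacts) delivers the operator homotopy to $T'$. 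Convexity is used only in $Y$-space: since each constructed product has the form $\sqrt{1-Y}(T_1\hatotimes 1) + \sqrt{Y}(1\hatotimes T_2)$ with $Y = N_2^2$, and the set of admissible $Y$'s is convex, all constructed products are mutually homotopic. Your subdivide-and-glue idea for well-definedness on homotopy classes can be made to work, but the paper obtains a cleaner single homotopy by additionally enforcing $\|[\nu_n\hatotimes 1, T_1(j/n)\hatotimes 1]\| < 2^{-n}$ for $0 \le j \le n$ inside Lemma~\ref{lem:construction_partition_unity}, yielding one pair $N_1, N_2$ valid for the entire path and hence a single formula $T(t) := N_1(T_1(t)\hatotimes 1) + N_2(1\hatotimes T_2)$.
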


\begin{proof}
We invoke the above Lemma \ref{lem:construction_partition_unity} to get operators $N_1$ and $N_2$ and then set
\[T := N_1(T_1 \hatotimes 1) + N_2(1 \hatotimes T_2).\]

To deduce that $(H, \rho, T)$ is a uniform Fredholm module, we have to use the following facts (additionally to the ones that $N_1$ and $N_2$ have): that $T_1$ and $T_2$ have finite propagation and are odd (we need that $(T_1 \hatotimes 1)(1 \hatotimes T_2) + (1 \hatotimes T_2)(T_1 \hatotimes 1) = 0$). To deduce that it is a multigraded module, we need that we constructed $N_1$ and $N_2$ as even and multigraded operators on $H$.

It is easily seen that for all $f \in C_0(X_1 \times X_2)$
\[\rho(f) \big( T (T_1 \hatotimes 1) + (T_1 \hatotimes 1) T \big) \rho(\bar f) \text{ and } \rho(f) \big( T (1 \hatotimes T_2) + (1 \hatotimes T_2) T \big) \rho(\bar f)\]
are positive modulo compact operators and that $\rho(f)T$ derives $\IK(H_1) \hatotimes \IB(H_2)$, i.e., we conclude that $T$ is aligned with $T_1$ and $T_2$.

Since all four operators $T_1$, $T_2$, $N_1$ and $N_2$ have finite propagation, $T$ has also finite propagation.

Suppose that $T^\prime$ is another operator aligned with $T_1$ and $T_2$. We construct again operators $N_1$ and $N_2$ using the above Lemma \ref{lem:construction_partition_unity}, but we additionally enforce
\[\|[w_n, \rho(f^{i,N}_k \otimes g_k^{i,N}) T^\prime]\| < 2^{-n}\]
analogously as we did it there to get Equation \eqref{eq:Kasparov_estimate_388}. So $N_1$ and $N_2$ will commute modulo compacts with $\rho(f)T^\prime$ for all functions $f \in C_0(X_1 \times X_2)$. Again, we set $T := N_1(T_1 \hatotimes 1) + N_2(1 \hatotimes T_2)$. Since $N_1$ and $N_2$ commute modulo compacts with $\rho(f)T^\prime$ for all $f \in C_0(X_1 \times X_2)$ and since $T^\prime$ is aligned with $T_1$ and $T_2$, we conclude
\[\rho(f)(T T^\prime + T^\prime T) \rho(\bar f) \ge 0\]
modulo compact operators for all $f \in C_0(X_1 \times X_2)$. Using a uniform version of \cite[Proposition 8.3.16]{higson_roe} we conclude that $T$ and $T^\prime$ are operator homotopic via multigraded, uniform Fredholm modules. We conclude that every aligned module is operator homotopic to one of the form that we constructed above, i.e., to one of the form $N_1(T_1 \hatotimes 1) + N_2(1 \hatotimes T_2)$. But all such operators are homotopic to one another: they are determined by the operator $Y = N_2^2$ used in the proof of the above lemma and the set of all operators with the same properties as $Y$ is convex.

At last, suppose that one of the operators is varied by an operator homotopy, e.g., $T_1$ by $T_1(t)$. Then, in order to construct $N_1$ and $N_2$, we enforce in Equation \eqref{eq:Kasparov_estimate_384_2} instead of $\|[\nu_n \hatotimes 1, T_1 \hatotimes 1]\| < 2^{-n}$ the following one:
\[\|[\nu_n \hatotimes 1, T_1(j/n) \hatotimes 1]\| < 2^{-n}\]
for $0 \le j \le n$. Now we may define
\[T(t) := N_1(T_1(t) \hatotimes 1) + N_2(1 \hatotimes T_2),\]
i.e., we got operators $N_1$ and $N_2$ which are independent of $t$ but still have all the needed properties. This gives us the desired operator homotopy.
\end{proof}

\begin{defn}[External product]
The \emph{external product} of the multigraded uniform Fredholm modules $(H_1, \rho_1, T_1)$ and $(H_2, \rho_2, T_2)$ is a multigraded uniform Fredholm module $(H, \rho, T)$ which is aligned with $T_1$ and $T_2$. We will use the notation $T := T_1 \times T_2$.

By the above Proposition \ref{prop:external_prod_exists} we know that if the locally compact and separable metric spaces $X_1$ and $X_2$ both have jointly coarsely and locally bounded geometry, then the external product always exists, that it is well-defined up to operator homotopy and that it descends to a well-defined product on the level of uniform $K$-homology:
\[K_{p_1}^u(X_1) \times K_{p_2}^u(X_2) \to K_{p_1+p_2}^u(X_1 \times X_2)\]
for $p_1, p_2 \ge 0$. Furthermore, this product is bilinear.\footnote{To see this, suppose that, e.g., $T_1 = T_1^\prime \oplus T_1^{\prime \prime}$. Then it suffices to show that $T_1^\prime \times T_2 \oplus T_1^{\prime \prime} \times T_2$ is aligned with $T_1$ and $T_2$, which is not hard to do.}
\end{defn}

For the remaining products (i.e., the product of an ungraded and a multigraded module, resp., the product of two ungraded modules) we can appeal to the formal $2$-periodicity. But for the convenience of the reader, we will now explain how to construct the external products, where one module is ungraded and the other one is ordinary graded (i.e., $0$-multigraded), and where both are ungraded.

Suppose that $(H_1, \rho_1, T_1)$ is ungraded and that $(H_2, \rho_2, T_2)$ is graded. Then we form the usual, ungraded tensor products $H := H_1 \otimes H_2$ and $\rho := \rho_1 \otimes \rho_2$ and set
\[T := N_1(T_1 \otimes \epsilon) + N_2(1 \otimes T_2),\]
where $\epsilon$ is the grading operator of $H_2$. This defines an ungraded uniform Fredholm module\footnote{In the proof of Proposition \ref{prop:external_prod_exists} we needed that $T_1$ and $T_2$ are both odd to deduce $(T_1 \hatotimes 1)(1 \hatotimes T_2)+(1\hatotimes T_2)(T_1 \hatotimes 1) = 0$. We get the analogous conclusion in this case here (where $T_1$ is ungraded) since $T_2$ anti-commutes with $\epsilon$.} and we get the external product $K_{-1}^u(X_1) \times K_0^u(X_2) \to K_{-1}^u(X_1 \times X_2)$. Analogously we get the product $K_0^u(X_1) \times K_{-1}^u(X_2) \to K_{-1}^u(X_1 \times X_2)$.

Now suppose that both modules are ungraded. Again we set $H := H_1 \otimes H_2$ and $\rho := \rho_1 \otimes \rho_2$, but this time we form the graded module $(H \oplus H, \rho \oplus \rho, T)$, where $T$ is defined as
\[T := \begin{pmatrix}0 & N_1(T_1 \otimes 1) - N_2(1 \otimes i T_2) \\ N_1(T_1 \otimes 1) + N_2(1 \otimes iT_2) & 0\end{pmatrix}.\]
This defines a product $K_{-1}^u(X_1) \times K_{-1}^u(X_2) \to K_0^u(X_1 \times X_2)$.

\begin{defn}
The above constructions give us a bilinear\footnote{Again, we have left out the proof of bilinearity since it is easily deduced.} product
\[K_{p_1}^u(X_1) \times K_{p_2}^u(X_2) \to K^u_{p_1+p_2}(X_1 \times X_2)\]
for all $p_1, p_2 \ge -1$.
\end{defn}

Now we will describe basic, but crucial properties of the external product. The first is its \emph{associative} and the proof of this is analogous to the one in the non-uniform case (see, e.g., \cite[Proposition 9.2.12]{higson_roe}).

Let us denote by $\tau\colon X_1 \times X_2 \to X_2 \times X_1$ the flip map and consider the induced maps $\tau_\ast\colon K_\ast^u(X_1 \times X_2) \to K_\ast^u(X_2 \times X_1)$ on uniform $K$-homology. Then, if $[T_1] \in K_{p_1}^u(X_1)$ and $[T_2] \in K_{p_2}^u(X_2)$, we get the relation $\tau_\ast[T_1 \times T_2] = (-1)^{p_1 p_2} [T_2 \times T_1]$. The proof is straightforward and word-for-word the same as in the non-uniform case.

The external product is also natural with respect to induced maps: let $g\colon Y \to Z$ be a uniformly proper\footnote{Recall that this means $\sup_{z \in Z} \diam (g^{-1}(B_r(z))) < \infty \text{ for all }r > 0$.}, proper and Lipschitz map and denote by $g_\ast\colon K_\ast^u(Y) \to K_\ast^u(Z)$ the induced map on uniform $K$-homology, and let $[T] \in K_{p}^u(X)$. Then the following diagram commutes:
\[\xymatrix{K_\ast^u(Y) \ar[rr]^{g_\ast} \ar[d]_{[T] \times \largecdot} & & K_\ast^u(Z) \ar[d]_{[T] \times \largecdot} \\ K_{\ast + p}^u(X \times Y) \ar[rr]^{(\id_X \operatorname{\times} g)_{\ast}} & & K_{\ast + p}^u(X \times Z)}\]

At last, recall that we have $K_0^u(\pt) \cong \IZ$ and it is generated by the class of any graded Fredholm operator\footnote{Recall the fact that since $\pt$ is compact, uniform $K$-homology of $\pt$ is the same as usual $K$-homology of $\pt$ (Proposition \ref{prop:compact_space_every_module_uniform}).} of index one which acts on a graded Hilbert space equipped with the non-degenerate representation of $C_0(\pt) = \IC$. So denoting the generator of $K_0^u(\pt)$ by $[1]$, we get with a proof which is word-for-word the same as the corresponding one in the non-uniform case (see, e.g., \cite[Proposition 9.3.1]{higson_roe}) that $[1]$ is the identity for the external product, i.e.,
\[[T] \times [1] = [T] = [1] \times [T] \in K_\ast^u(X)\]
for all $[T] \in K_\ast^u(X)$ (where we of course identify $X \times \pt = X = X \times \pt$).

Let us summarize all results of this section in the following theorem:

\begin{thm}[External product for uniform $K$-homology]\label{thm:external_prod_homology}
Let $X_1$ and $X_2$ be locally compact and separable metric spaces of jointly coarsely and locally bounded geometry.\footnote{see Definition \ref{defn:jointly_bounded_geometry}} Then there exists an associative product
\[\times \colon K_{p_1}^u(X_1) \otimes K_{p_2}^u(X_2) \to K^u_{p_1 + p_2}(X_1 \times X_2)\]
for $p_1 , p_2 \ge -1$ with the following properties:
\begin{itemize}
\item for the flip map $\tau\colon X_1 \times X_2 \to X_2 \times X_1$ and all elements $[T_1] \in K_{p_1}^u(X_1)$ and $[T_2] \in K_{p_2}^u(X_2)$ we have
\[\tau_{\ast}[T_1 \times T_2] = (-1)^{p_1 p_2} [T_2 \times T_1],\]
\item we have for $g\colon Y \to Z$ a uniformly proper, proper Lipschitz map and elements $[T] \in K_{p_1}^u(X)$ and $[S] \in K_{p_2}^u(Y)$
\[(\id_X \operatorname{\times} g)_\ast [T \times S] = [T] \times g_\ast[S] \in K^u_{p_1 + p_2}(X \times Z),\]
and
\item denoting the generator of $K_0^u(\pt) \cong \IZ$ by $[1]$, we have
\[[T] \times [1] = [T] = [1] \times [T] \in K_\ast^u(X)\]
for all $[T] \in K_\ast^u(X)$.
\end{itemize}
\end{thm}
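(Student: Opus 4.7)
The plan is to assemble the theorem from Proposition \ref{prop:external_prod_exists} together with the explicit constructions of the mixed and doubly ungraded products sketched after it, and then verify the three bulleted properties one at a time. Existence and well-definedness on the level of uniform $K$-homology are already in place: Proposition \ref{prop:external_prod_exists} produces, for $p_1, p_2 \ge 0$, a $(p_1+p_2)$-multigraded uniform Fredholm module aligned with $(T_1, T_2)$ and shows that the resulting operator homotopy class depends only on the classes of $T_1$ and $T_2$. For the remaining cases ($p_1$ or $p_2$ equal to $-1$) one takes the explicit formulas $T := N_1(T_1 \otimes \epsilon) + N_2(1 \otimes T_2)$ and the $2 \times 2$ off-diagonal variant, and checks using the approximability properties (1)--(4) of Lemma \ref{lem:construction_partition_unity} that the resulting module is uniform. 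Bilinearity in each slot reduces to observing that if $T_1 = T_1' \oplus T_1''$, then $T_1' \times T_2 \oplus T_1'' \times T_2$ satisfies the alignment conditions with $T_1$ and $T_2$, and is therefore operator homotopic to $T_1 \times T_2$ by Proposition \ref{prop:external_prod_exists}.

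For associativity, I would show that both iterated products are aligned with the triple $(T_1, T_2, T_3)$ in the natural three-factor sense (that is, they essentially anti-commute with each $T_i \otimes 1 \otimes 1$, $1 \otimes T_i \otimes 1$, etc., modulo compacts after multiplying by $\rho(f)$). Then a three-factor version of Proposition \ref{prop:external_prod_exists} --- proved by running Lemma \ref{lem:construction_partition_unity} with three instead of two projections $N_i$ summing to $1$, and the same quasicentral convex combination argument --- shows that any two aligned operators are operator homotopic through uniform Fredholm modules, yielding associativity on $K$-homology classes.

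The flip property is essentially formal: if $(H, \rho, T)$ with $T = N_1(T_1 \hatotimes 1) + N_2(1 \hatotimes T_2)$ represents $T_1 \times T_2$, then the canonical graded flip $\tau_H \colon H_1 \hatotimes H_2 \to H_2 \hatotimes H_1$ (Lemma \ref{lem:tensor_prod_multigraded}) intertwines $T$ and the corresponding operator on $H_2 \hatotimes H_1$ up to the sign $(-1)^{p_1 p_2}$ dictated by the Koszul rule. Naturality under a uniformly proper, proper Lipschitz map $g \colon Y \to Z$ is immediate: the alignment conditions for $T \times S$ only see $\rho$ through multiplication by $C_0$-functions, so precomposing with $g^\ast$ preserves alignment and hence the class. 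For the unit property, represent the generator $[1] \in K_0^u(\mathrm{pt})$ by the module $(\IC \oplus \IC, \rho, \bigl(\begin{smallmatrix}0 & 1 \\ 1 & 0\end{smallmatrix}\bigr))$; the external product then collapses to $T \hatotimes \epsilon$ (one can take $N_1 = 1$, $N_2 = 0$ in the construction because the second factor already is involutive and the alignment conditions are trivially met), which under the formal $2$-periodicity isomorphism represents the same class as $[T]$.

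The genuinely hard step is already discharged in Lemma \ref{lem:construction_partition_unity} and Proposition \ref{prop:external_prod_exists}: it is there that the jointly coarsely and locally bounded geometry hypothesis is essentially used, in order to produce the partition $N_1^2 + N_2^2 = 1$ of finite propagation with uniform approximability against the operators $(T_i^2 - 1)\rho_i(f)$, $(T_i - T_i^\ast)\rho_i(f)$ and $[T_i, \rho_i(f)]$. For the present theorem the only remaining thing to monitor is that every auxiliary operator used in the associativity and unit arguments (convex combinations of $N_i$'s, quasicentral approximations, averaging over (multi)grading operators) remains of finite propagation and preserves the relevant uniform approximability; this is guaranteed because finite propagation and uniform approximability pass through convex combinations, graded commutators and norm limits, exactly as in the non-uniform framework of \cite[Chapter 9]{higson_roe}.
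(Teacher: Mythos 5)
Your overall strategy matches the paper's: the hard technical content is in Lemma~\ref{lem:construction_partition_unity} and Proposition~\ref{prop:external_prod_exists}, and the theorem is assembled by checking the three bullet points with arguments that the paper itself describes as ``analogous to'' or ``word-for-word the same as'' the non-uniform case in Higson--Roe, with uniform approximability tracked throughout. Your treatments of bilinearity, associativity (via a three-factor alignment and a three-projection version of the construction), the flip, and naturality are all consistent with what the paper refers to.

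The unit property argument, however, contains a genuine error. The module $\bigl(\IC \oplus \IC, \rho, \bigl(\begin{smallmatrix}0 & 1 \\ 1 & 0\end{smallmatrix}\bigr)\bigr)$ does \emph{not} represent the generator $[1] \in K_0^u(\pt) \cong \IZ$: here $T_2 = \bigl(\begin{smallmatrix}0 & 1 \\ 1 & 0\end{smallmatrix}\bigr)$ is an involution with $T_2 = T_2^\ast$, $T_2^2 = 1$, and $[T_2, \rho(g)] = 0$, so the module is degenerate and represents $[0]$; the off-diagonal piece $\IC \to \IC$ has Fredholm index $0$, not $1$. The paper explicitly says $[1]$ is represented by ``any graded Fredholm operator of index one,'' which forces the Hilbert space to be infinite-dimensional (the positive part of $T_2$ must have a one-dimensional cokernel and cannot be invertible). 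Once a correct representative on an infinite-dimensional $H_2$ is chosen, the shortcut $N_1 = 1$, $N_2 = 0$ is no longer available: condition~(1) of Lemma~\ref{lem:construction_partition_unity} would then require $(T_1^2-1)\rho_1(f) \hatotimes 1$ to be uniformly approximable, but tensoring a compact operator with the identity on an infinite-dimensional space destroys compactness. So one really does need the partition $N_1, N_2$ from Lemma~\ref{lem:construction_partition_unity} and the decomposition argument of \cite[Proposition 9.3.1]{higson_roe} (a degenerate piece plus a copy of $(H, \rho, T)$), which is what the paper invokes.
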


\section{Homotopy invariance}\label{sec:homotopy_invariance}

In this section we use the external product that we have constructed in the last section to prove homotopy invariance of uniform $K$-homology. Note that homotopoy invariance turns out to be the most important property that we derive from the existence of the external product: using homotopy invariance, we will be able to prove \Poincare duality, which will give us our desired index theorem for pseudodifferential operators. 

Furthermore, we will derive that weakly homotopic uniform Fredholm modules define the same uniform $K$-homology class. A fact that is needed for showing that all the results of \Spakula from \cite{spakula_uniform_k_homology} do also hold with our definition of uniform $K$-homology (recall that we use operator homotopy as a relation, whereas \Spakula uses more general homotopies). And secondly, we have already used the fact that weakly homotopic modules define the same class in uniform $K$-homology to prove that pseudodifferential operators with the same symbol define the same class in uniform $K$-homology.

And last, we will derive from homotopy invariance some neat results regarding the uniform coarse Baum--Connes conjecture in Chapter \ref{chap:uniform_coarse_indices}.

Let $X$ and $Y$ be locally compact and separable metric spaces having jointly bounded geometry and let $g_0, g_1 \colon X \to Y$ be uniformly proper, proper and Lipschitz maps which are homotopic in the following sense: there exists a uniformly proper, proper and Lipschitz map $G\colon X \times [0,1] \to Y$ with $G(x,0) = g_0(x)$ and $G(x,1) = g_1(x)$ for all $x \in X$.

\begin{thm}\label{thm:homotopy_equivalence_k_hom}
If $g_0, g_1\colon X \to Y$ are homotopic in the above sense, then they induce the same maps $(g_0)_\ast = (g_1)_\ast \colon K_\ast^u(X) \to K_\ast^u(Y)$ on uniform $K$-homology.
\end{thm}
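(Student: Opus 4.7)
The plan is to reduce the theorem to the statement that the two point-inclusions $j_0,j_1\colon \pt\to [0,1]$ induce the same map on uniform $K$-homology, and then to handle that special case directly. Concretely, factor each $g_t$ through the homotopy as $g_t=G\circ i_t$, where $i_t\colon X\to X\times[0,1]$ is the uniformly proper, proper, Lipschitz map $x\mapsto(x,t)$. Functoriality of uniform $K$-homology then gives $(g_t)_\ast=G_\ast\circ(i_t)_\ast$, so it suffices to prove $(i_0)_\ast=(i_1)_\ast\colon K^u_\ast(X)\to K^u_\ast(X\times[0,1])$.

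To handle the two inclusions $i_0,i_1$, I would invoke the external product of Theorem \ref{thm:external_prod_homology}. Note that $X$ has jointly bounded geometry by hypothesis and $[0,1]$ trivially has jointly bounded geometry since it is compact, so the product $K^u_\ast(X)\otimes K^u_0([0,1])\to K^u_\ast(X\times[0,1])$ is available. Using the unit property $[T]=[T]\times[1]\in K^u_\ast(X)\cong K^u_\ast(X\times\pt)$ together with the naturality of the external product applied to $\mathrm{id}_X\times j_t=i_t$, one obtains
\[
(i_t)_\ast[T]\;=\;(\mathrm{id}_X\times j_t)_\ast\bigl([T]\times[1]\bigr)\;=\;[T]\times (j_t)_\ast[1]
\]
in $K^u_\ast(X\times[0,1])$. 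Thus everything reduces to the equality $(j_0)_\ast[1]=(j_1)_\ast[1]$ in $K^u_0([0,1])$.

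The last equality I would obtain by exploiting compactness of $[0,1]$: by Proposition \ref{prop:compact_space_every_module_uniform} uniform $K$-homology of a compact space coincides with ordinary $K$-homology, and for the contractible space $[0,1]$ ordinary $K$-homology is homotopy invariant and isomorphic to $\IZ$, so both point inclusions represent the same generator. Alternatively, one can exhibit an explicit operator homotopy directly on the level of uniform Fredholm modules: the class $(j_t)_\ast[1]$ is represented by the module $(\IC,\rho_t,0)$ with $\rho_t(f)=f(t)$, and the family $t\mapsto\rho_t$ is a norm-continuous path of representations; since the underlying Hilbert space and operator are fixed and the uniformity conditions are automatic over a compact space, this directly yields an operator homotopy (in fact, a weak homotopy in the sense of the forthcoming Definition \ref{defn:weak_homotopy}, which by Theorem \ref{thm:weak_homotopy_equivalence_K_hom} also suffices).

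The main obstacle I anticipate is bookkeeping rather than substance: one must verify that the maps $i_t$ and $G$ satisfy the standing hypotheses for induced maps on uniform $K$-homology (uniform properness, properness, Lipschitz), that $X\times[0,1]$ inherits jointly bounded geometry from $X$ so the external product is actually defined on the right, and that the external product formula is applied under the correct identifications $X\cong X\times\pt$ and $\mathrm{id}_X\times j_t=i_t$. All of these are routine once the framework of Theorem \ref{thm:external_prod_homology} is in place, so the argument should be quite short.
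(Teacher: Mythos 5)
Your argument is the same as the paper's: factor $g_t$ through $G$ and the inclusion $X \cong X\times\pt \hookrightarrow X\times[0,1]$, use the unit and naturality properties of the external product from Theorem~\ref{thm:external_prod_homology} to reduce to $(j_0)_\ast[1]=(j_1)_\ast[1]$ in $K_0^u([0,1])$, and finish by invoking Proposition~\ref{prop:compact_space_every_module_uniform} together with homotopy invariance of ordinary $K$-homology. The only slip is in your optional alternative finish: a path $(\IC,\rho_t,0)$ with varying representations is not an operator homotopy in the sense of this paper (which fixes $(H,\rho)$ and varies only $T_t$), though it is a weak homotopy as you note; the primary route you give is the correct and intended one.
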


\begin{proof}
Let $\varepsilon_0, \varepsilon_1\colon \pt \to [0,1]$ be the inclusions defined by $\varepsilon_0(\pt) := 0$ and $\varepsilon_1(\pt) := 1$. The homotopy $G\colon X \times [0,1] \to Y$ between $g_0$ and $g_1$ induces maps on uniform $K$-homology $G_\ast\colon K_\ast^u(X \times [0,1]) \to K_\ast^u(Y)$ and since
\[G \circ (\id_X \operatorname{\times} \varepsilon_i) = g_i \circ \pi_X \colon X \times \pt \to Y,\]
where $\pi_X\colon X \times \pt \to X$ is the projection onto $X$, we have
\[G_\ast \circ (\id_X \operatorname{\times} \varepsilon_i)_\ast = \big( G \circ (\id_X \operatorname{\times} \varepsilon_i) \big)_\ast = (g_i \circ \pi_X)_\ast \colon K_\ast^u(X \times \pt) \to K_\ast^u(Y).\]
Since of course $(\pi_X)_\ast\colon K_\ast^u(X \times \pt) \to K_\ast^u(X)$ is an isomorphism, we see that it suffices to show
\[(\id_X \operatorname{\times} \varepsilon_0)_\ast = (\id_X \operatorname{\times} \varepsilon_1)_\ast \colon K_\ast^u(X \times \pt) \to K_\ast^u(X \times [0,1])\]
to conclude $(g_0)_\ast = (g_1)_\ast$.

We know from the third bullet point of Theorem \ref{thm:external_prod_homology} that taking the product with $[1] \in K_0^u(\pt)$ gives an isomorphism from $K_\ast^u(X)$ to $K_\ast^u(X \times \pt)$, so we have to check that for all $[T] \in K_\ast^u(X)$ we have
\[(\id_X \operatorname{\times} \varepsilon_0)_\ast([T] \times [1]) = (\id_X \operatorname{\times} \varepsilon_1)_\ast([T] \times [1]).\]
But this is equivalent to $[T] \times (\varepsilon_0)_\ast [1] = [T] \times (\varepsilon_1)_\ast [1]$ by the second bullet point of Theorem \ref{thm:external_prod_homology}, i.e., it suffices to show $(\varepsilon_0)_\ast [1] = (\varepsilon_1)_\ast [1]$. But this is known, since we are now in the case of ordinary $K$-homology, because $\varepsilon_0, \varepsilon_1 \colon \pt \to [0,1]$ are maps between compact spaces. So now we can use to the homotopy invariance of ordinary $K$-homology.
\end{proof}

We come to the definition of weak homotopies of uniform Fredholm modules then to the proof that such modules define the same class in uniform $K$-homology. Though our definition of weak homotopies is probably not the most general one, it suffices for all our applications. Furthermore, note that the above theorem is a special case of the one that follows. Indeed, given a uniform Fredholm module $(H, \rho, T)$ over $X$, the push-forward of it under $g_i$ is defined as $(H, \rho \circ g_i^\ast, T)$ and it is easily seen that these modules are weakly homotopic via the map $G$.

\begin{defn}[Weak homotopies]\label{defn:weak_homotopy}
Let $(H, \rho_t, T_t)$ for $t \in [0,1]$ be a family of uniform Fredholm modules over $X$ with the following three properties:
\begin{itemize}
\item the family $\rho_t$ is pointwise strong-$^\ast$ operator continuous, i.e., for all $f \in C_0(X)$ we get a path $\rho_t(f)$ in $\IB(H)$ that is continuous in the strong-$^\ast$ operator topology\footnote{Recall that if $H$ is a Hilbert space, then the \emph{strong-$^\ast$ operator topology} on $\IB(H)$ is generated by the family of seminorms $p_v(T) := \|Tv\| + \|T^\ast v\|$ for all $v \in H$, where $T \in \IB(H)$.} on $\IB(H)$,
\item the family $T_t$ is continuous in the strong-$^\ast$ operator topology on $\IB(H)$, i.e., for all $v \in H$ we get norm continuous paths $T_t(v)$ and $T_t^\ast(v)$ in $H$, and
\item for every $\varepsilon > 0$ and $f \in C_0(X)$ we have the following:

For all $t \in [0, 1]$ the operator $[T_t, \rho_t(f)]$ is a compact operator since $(H, \rho_t, T_t)$ is a Fredholm module and can therefore be approximated up to $\varepsilon$ by some finite rank operator $k_t$.\footnote{This finite rank operator $k_t$ is of course not unique. Recall that every compact operator on a Hilbert space $H$ may be represented in the form $\sum_{n \ge 1} \lambda_n \langle f_n, \largecdot \rangle g_n$, where the values $\lambda_n$ are the singular values of the operator and $\{f_n\}$, $\{g_n\}$ are orthonormal (though not necessarily complete) families in $H$ (but contrary to the $\lambda_n$ they are not unique). Now we choose $k_t$ to be the operator given by the same sum, but only with the $\lambda_n$ satisfying $\lambda_n \ge \varepsilon$.} So let $\{v_i\}_{i = 1, \ldots, I}$ be an orthonormal basis of the image of $k_t$ and consider the strong-$^\ast$ operator neighbourhood $U(\varepsilon; v_1, \ldots, v_I)$\footnote{For an operator $A \in \IB(H)$ we define
\[U(\varepsilon; v_1, \ldots, v_I) := \{B \in \IB(H) \ | \ \|(B-A)v_i\| + \|(B-A)^\ast v_i)\| < \varepsilon\text{ for all }i=1, \ldots, I\}.\]
Note that the collection of all such sets $U(\varepsilon; \mathcal{V})$ for all $\varepsilon > 0$ and all finite collections $\mathcal{V} \subset H$ forms a neighbourhood basis of the strong-$^\ast$ operator topology at $A \in \IB(H)$.} of $[T_t, \rho_t(f)]$ in $\IB(H)$. Now for every $[T_s, \rho_s(f)]$ in that neighbourhood we also consider its finite rank approximation $k_s$ up to $\varepsilon$ and an orthonormal basis $\{w_j\}_{j = 1, \ldots, J}$ of its image. Then we require that $[T_t, \rho_t(f)]$ lies in the strong-$^\ast$ operator neighbourhood $U(\varepsilon; w_1, \ldots, w_J)$ of $[T_s, \rho_s(f)]$ in $\IB(H)$.\footnote{It follows that $\|k_t - k_s\|_{op} < 2\varepsilon$, which is the crucial thing that we need.}

Additionally, we require the analogous property for the operators $(T_t^2 - 1)\rho_t(f)$ and $(T_t - T_t^\ast)\rho_t(f)$.
\end{itemize}
Then we call $(H, \rho_t, T_t)$ a \emph{weak homotopy} between $(H, \rho_0, T_0)$ and $(H, \rho_1, T_1)$.
\end{defn}

Note that if $\rho_t$ is pointwise norm continuous and $T_t$ is also norm continuous, then the modules are clearly weakly homotopic. Especially, weak homotopy generalizes operator homotopy.

\begin{rem}
Since the family $T_t$ is continuous in the strong-$^\ast$ operator topology and since it is defined on the compact interval $[0,1]$, we conclude with the uniform boundedness principle $\sup_t \|T_t\|_{op} < \infty$. Furthermore, we have $\|\rho_t(f)\|_{op} \le \|f\|_\infty$ for all $t \in [0,1]$ since $\rho_t$ are representations of $C^\ast$-algebras. Now though multiplication is not continuous as a map $\IB(H) \times \IB(H) \to \IB(H)$, where $\IB(H)$ is equipped with the strong-$^\ast$ operator topology, it is continuous if restricted to norm bounded subsets of $\IB(H)$. So all three families $[T_t, \rho_t(f)]$, $(T_t^2 - 1)\rho_t(f)$ and $(T_t - T_t^\ast)\rho_t(f)$ are also continuous in the strong-$^\ast$ operator topology.
\end{rem}

\begin{thm}\label{thm:weak_homotopy_equivalence_K_hom}
Let $(H, \rho_0, T_0)$ and $(H, \rho_1, T_1)$ be weakly homotopic uniform Fredholm modules over a locally compact and separable metric space $X$ of jointly bounded geometry. Then they define the same uniform $K$-homology class.
\end{thm}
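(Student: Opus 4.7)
The plan is to reduce a weak homotopy to a finite chain of operator homotopies by subdividing $[0,1]$, exploiting the fact that while $T_t$ is only continuous in the strong-$^\ast$ operator topology, the \emph{compact-valued} maps $t \mapsto [T_t, \rho_t(f)]$, $t \mapsto (T_t^2 - 1)\rho_t(f)$, and $t \mapsto (T_t - T_t^\ast)\rho_t(f)$ are operator-norm continuous in $t$. This is the content of the third bullet in Definition \ref{defn:weak_homotopy} (see the footnote there, which records $\|k_t - k_s\|_{\mathrm{op}} < 2\varepsilon$). This norm continuity of the compact pieces is the engine that makes the weak homotopy still respect the Fredholm module structure, even though $T_t$ itself need not vary in norm.

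First I would pass to the Paschke-dual picture: by Proposition \ref{prop:paschke_duality} and Lemma \ref{lem:uniform_k_hom_via_dual_quot_2}, the class of $(H,\rho,T)$ corresponds to a class in $K_{\ast}(D^\ast_u(X)/C^\ast_u(X))$. After normalizing both endpoints via Proposition \ref{prop:normalization_finite_prop_speed} to involutive representatives with a common finite propagation bound, each image $[T_t] \in D^\ast_u(X)/C^\ast_u(X)$ becomes a self-adjoint unitary in the quotient. The task then becomes producing a norm-continuous path of such self-adjoint unitaries connecting $[T_0]$ and $[T_1]$, from which standard operator $K$-theory gives equality of classes.

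Next I would subdivide $[0,1]$ into $0 = t_0 < t_1 < \cdots < t_N = 1$ so finely that on each subinterval $[t_{i-1}, t_i]$ the straight-line interpolation $T_u := (1-u) T_{t_{i-1}} + u T_{t_i}$ furnishes an operator homotopy of uniform Fredholm modules. Since the uniformly locally compact operators form a convex norm-closed two-sided ideal inside the uniformly pseudolocal operators, the expressions $T_u^2 - 1$, $T_u - T_u^\ast$, and $[T_u, \rho(f)]$ remain uniformly locally compact along the interpolation, with uniform bounds coming from the operator-norm closeness of these compact-valued expressions at the two endpoints. The path $u \mapsto T_u$ is trivially norm-continuous, so this is an honest operator homotopy, and chaining over $i = 1, \ldots, N$ yields $[(H,\rho_0, T_0)] = [(H,\rho_1, T_1)]$ in $K^u_\ast(X)$.

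The main obstacle will be the simultaneous variation of $\rho_t$: if $\rho_t$ were constant, the linear interpolation sketched above would be essentially automatic. To absorb the varying representations, I would first use the universal representation theorem (Theorem \ref{thm:paschke_universal}) to embed all the $(H, \rho_t)$ inside a single ample representation on a common (larger) Hilbert space, with the embedding depending on $t$ strong-$^\ast$-continuously. The pointwise strong-$^\ast$ continuity of $\rho_t$ together with the uniformity afforded by the jointly bounded geometry of $X$ should then suffice to verify the required uniform local compactness of the interpolated commutators, so that the subdivision argument can be carried out inside one fixed Hilbert space and produces a norm-continuous path in $D^\ast_u(X)/C^\ast_u(X)$ as needed.
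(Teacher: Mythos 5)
Your central step --- that the straight-line interpolation $T_u := (1-u)T_{t_{i-1}} + uT_{t_i}$ over a fine enough subdivision yields an operator homotopy of uniform Fredholm modules --- does not hold. Expand
\[T_u^2 - 1 = (1-u)^2\bigl(T_{t_{i-1}}^2-1\bigr) + u^2\bigl(T_{t_i}^2-1\bigr) + u(1-u)\bigl(T_{t_{i-1}}T_{t_i}+T_{t_i}T_{t_{i-1}}-2\bigr).\]
The cross term is the problem: after multiplying by $\rho(f)$ it need not be uniformly locally compact. If the endpoints are involutive this term equals $-u(1-u)(T_{t_{i-1}}-T_{t_i})^2$, and $\rho(f)(T_{t_{i-1}}-T_{t_i})^2$ is simply not under control --- the weak-homotopy hypothesis gives norm-continuity of the compact-valued maps $t\mapsto[T_t,\rho_t(f)]$, $t\mapsto(T_t^2-1)\rho_t(f)$, $t\mapsto(T_t-T_t^\ast)\rho_t(f)$, but says nothing about $\rho(f)(T_s-T_t)$ being compact (it typically is not). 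Refining the subdivision does not help: $T_t$ is only strong-$^\ast$ continuous, so $\|T_{t_i}-T_{t_{i-1}}\|$ stays large no matter how fine the partition. Your appeal to convexity of the uniformly locally compact ideal applies to the convex combination of $T_{t_{i-1}}^2-1$ and $T_{t_i}^2-1$, but $T_u^2-1$ is not that convex combination.

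The paper avoids this by not interpolating at all. It mimics Kasparov's argument: form the auxiliary graded Hilbert space $\IH := H \hatotimes (L^2[0,2\pi]\oplus L^2[0,2\pi])$, observe that strong-$^\ast$ continuity of $T_t$ and $\rho_t$ is exactly what is needed for the fields $\{T_t\}$ and $\{\rho_t(f)\}$ to define \emph{bounded} operators on $\IH$ (this is where the strong-$^\ast$ hypothesis is actually used, not to extract norm closeness), build an external-product-style module $(\IH,\rho_t,N_1(T_t)+N_2(1\hatotimes T(f)))$ with a Kasparov partition of unity $N_1,N_2$ coming from Lemma \ref{lem:construction_partition_unity} (this is where the third bullet of Definition \ref{defn:weak_homotopy} enters, yielding the uniform approximability needed for that lemma), and then show, via two choices of the boundary function $f$, that this single module is operator homotopic both to $(H,\rho_0,T_0)\times[1]\oplus(\text{degenerate})$ and to $(H,\rho_1,T_1)\times[1]\oplus(\text{degenerate})$. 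The time parameter is absorbed into $L^2[0,2\pi]$ rather than discretized, which is precisely what sidesteps the lack of norm continuity of $T_t$ itself.
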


\begin{proof}
Let our weakly homotopic family $(H, \rho_t, T_t)$ be parametrized on the interval $t \in [0, 2\pi]$, so that our notation here will coincide with the one in the proof of \cite[Theorem 1 in §6]{kasparov_KK} that we mimic. Furthermore, we assume that $\rho_t$ and $T_t$ are constant in the intervals $[0, 2\pi/3]$ and $[4\pi / 3, 2\pi]$

We consider the graded Hilbert space $\IH := H \hatotimes (L^2[0,2\pi] \oplus L^2[0, 2\pi])$ (where the space $L^2[0,2\pi] \oplus L^2[0, 2\pi]$ is graded by interchanging the summands).

The family $T_t$ maps continuous paths $v_t$ in $H$ again to continuous paths $T_t(v_t)$: indeed, if $t_n \to t$ is a convergent sequence, we get
\begin{align*}
\|T_{t_n} (v_{t_n}) - T_t (v_t)\| & \le \|T_{t_n} (v_{t_n}) - T_{t_n} (v_t)\| + \|T_{t_n} (v_t) - T_t (v_t)\|\\
& \le \underbrace{\|T_{t_n}\|_{op}}_{< \infty} \cdot \underbrace{\|v_{t_n} - v_t\|}_{\to 0} + \underbrace{\|(T_{t_n} - T_t)(v_t)\|}_{\to 0},
\end{align*}
where the second limit to $0$ holds due to the continuity of $T_t$ in the strong-$^\ast$ operator topology. So the family $T_t$ maps the dense subspace $H \otimes C[0,2\pi]$ of $H \otimes L^2[0,2\pi]$ into itself, and since it is norm bounded from above by $\sup_t \|T_t\|_{op} < \infty$, it defines a bounded operator on $H \otimes L^2[0,2\pi]$. We define an odd operator $\begin{pmatrix} 0 & T_t^\ast \\ T_t & 0\end{pmatrix}$ on $\IH$, which we also denote by $T_t$ (there should arise no confusion by using the same notation here).

Since $\rho_t(f)$ is strong-$^\ast$ continuous in $t$, we can analogously show that it maps continuous paths $v_t$ in $H$ again to continuous paths $\rho_t(f) (v_t)$, and it is norm bounded from above by $\|f\|_\infty$. So $\rho_t(f)$ defines a bounded operator on $H \otimes L^2[0,2\pi]$ and we can get a representation $\rho_t \oplus \rho_t$ of $C_0(X)$ on $\IH$ by even operators, that we denote by the symbol $\rho_t$ (again, no confusion should arise by using the same notation).

We consider now the uniform Fredholm module
\[(\IH, \rho_t, N_1(T_t) + N_2(1 \hatotimes T(f)),\]
where $T(f)$ is defined in the proof of \cite[Theorem 1 in §6]{kasparov_KK} (unfortunately, the overloading of the symbol ``$T$'' is unavoidable here). For the convenience of the reader, we will recall the definition of the operator $T(f)$ in a moment. That we may find a suitable partition of unity $N_1, N_2$ is due to the last bullet point in the definition of weak homotopies, and the construction of $N_1, N_2$ proceeds as in the end of the proof of our Proposition \ref{prop:external_prod_exists}.

To define $T(f)$, we first define an operator $d\colon L^2[0, 2\pi] \to L^2[0, 2\pi]$ using the basis $1, \ldots, \cos nx, \ldots, \sin nx, \ldots$ by the formulas
\[d(1) := 0 \text{, } d(\sin nx) := \cos nx \text{ and } d(\cos nx) := - \sin nx.\]
This operator $d$ is anti-selfadjoint, $d^2 + 1 \in \IK(L^2[0, 2\pi])$, and $d$ commutes modulo compact operators with multiplication by functions from $C[0, 2\pi]$. Let $f \in C[0, 2\pi]$ be a continuous, real-valued function with $|f(x)| \le 1$ for all $x \in [0, 2\pi]$, $f(0) = 1$ and $f(2\pi) = -1$. Then we set $T_1(f) := f - \sqrt{1 - f^2}\cdot d \in \IB(L^2[0, 2\pi])$. This operator $T_1(f)$ is Fredholm and with $1$, both $1 - T_1(f) \cdot T_1(f)^\ast$ and $1 - T_1(f)^\ast \cdot T_1(f)$ are compact, and $T_1(f)$ commutes modulo compacts with multiplication by functions from $C[0, 2\pi]$. Furthermore, any two operators of the form $T_1(f)$ (for different $f$) are connected by a norm continuous homotopy consisting of operators having the same form. Finally, we define $T(f) := \begin{pmatrix}0 & T_1(f)^\ast \\ T_1(f) & 0\end{pmatrix} \in \IB(L^2[0,2\pi] \oplus L^2[0, 2\pi])$.

We assume the our homotopies $\rho_t$ and $T_t$ are constant in the intervals $[0, 2\pi/3]$ and $[4\pi / 3, 2\pi]$. Furthermore, we set
\[f(t) := \begin{cases} \cos 3t, & 0 \le t \le \pi / 3,\\ -1, & \pi/3 \le t \le 2 \pi.\end{cases}\]
Then $T_1(f)$ commutes with the projection $P$ onto $L^2[0, 2\pi / 3]$, $P \cdot T_1(f)$ is an operator of index $1$ on $L^2[0, 2\pi / 3]$, and $(1-P) T_1(f) \equiv -1$ on $L^2[2\pi/3, 2\pi]$. We choose $\alpha(t) \in C[0, 2\pi]$ with $0 \le \alpha(t) \le 1$, $\alpha(t) = 0$ for $t \le \pi / 3$, and $\alpha(t) = 1$ for $t \ge 2\pi / 3$. Using a norm continuous homotopy, we replace $N_1$ and $N_2$ by
\[\widetilde{N_1} := \sqrt{1 \hatotimes (1 - \alpha)} \cdot N_1 \cdot \sqrt{1 \hatotimes (1 - \alpha)}\]
and
\[\widetilde{N_2} := 1 \hatotimes \alpha + \sqrt{1 \hatotimes (1 - \alpha)} \cdot N_2 \cdot \sqrt{1 \hatotimes (1 - \alpha)}.\]
The operator $\widetilde{N_1}(T_t) + \widetilde{N_2}(1 \hatotimes T(f))$ commutes with $1 \hatotimes (P \oplus P)$ and we obtain for the decomposition $L^2[0,2\pi] \oplus L^2[0, 2\pi] = \image(P \oplus P) \oplus \image(1 - P \oplus P)$
\[\big( \IH, \rho_t, \widetilde{N_1}(T_t) + \widetilde{N_2}(1 \hatotimes T(f) \big) = \big( (H, \rho_0, T_0) \times [1] \big) \oplus \big(\text{degenerate}\big),\]
where $[1] \in K_0^u(\pt)$ is the multiplicative identity from the third bullet point of Theorem \ref{thm:external_prod_homology} (recall that we assumed that $\rho_t$ and $T_t$ are constant in the intervals $[0, 2\pi/3]$ and $[4\pi / 3, 2\pi]$).

Setting
\[f(t) := \begin{cases} 1, & 0 \le t \le 5\pi / 3,\\ -\cos 3t, & 5\pi/3 \le t \le 2 \pi,\end{cases}\]
we get analogously
\[\big( \IH, \rho_t, \overline{N_1}(T_t) + \overline{N_2}(1 \hatotimes T(f) \big) = \big(\text{degenerate}\big) \oplus \big( (H, \rho_1, T_1) \times [1] \big),\]
for suitably defined operators $\overline{N_1}$ and $\overline{N_2}$ (their definition is similar to the one of $\widetilde{N_1}$ and $\widetilde{N_2}$). Putting all the homotopies of this proof together, we get that the modules $\big( (H, \rho_0, T_0) \times [1] \big) \oplus \big(\text{degenerate}\big)$ and $\big( (H, \rho_1, T_1) \times [1] \big) \oplus \big(\text{degenerate}\big)$ are operator homotopic, from which the claim follows.
\end{proof}

\chapter{Uniform \texorpdfstring{$K$}{K}-theory}\label{chap:uniform_k_theory}

In this chapter we will identify the dual theory of uniform $K$-homology: the uniform $K$-theory $K^\ast_u(M)$ of a manifold $M$ of bounded geometry. Here we mean ``dual'' in the strongest possible sense: we will show in Section \ref{sec:poincare_duality} that we have \Poincare duality $K^\ast_u(M) \cong K_{m - \ast}^u(M)$ if $M$ is an $m$-dimensional \spinc manifold. Moreover, if $M$ is amenable, we will discuss in Section \ref{sec:props_uniform_k_hom} the possibility of defining index pairings $\langle \largecdot, \largecdot \rangle_\tau \colon K_u^i(M) \otimes K^u_i(M) \to \IR$ which will have all the properties that we have in the usual case of a compact manifold.

The construction of uniform $K$-theory emerges as follows: for a compact manifold we have $K^\ast(M) \cong K_{m-\ast}(M)$ and $K^0(M)$ may be described as consisting of formal differences $[E] - [F]$ of isomorphism classes of vector bundles over $M$. More generally, for a possibly non-compact manifold $M$, we have $K^\ast_{\mathrm{cpt}}(M) \cong K_{m-\ast}^c(M)$, where for $K^0_{\mathrm{cpt}}(M)$ we now use vector bundles that are trivial outside a compact subset of $M$ and $K_r^c(M)$ is defined as the direct limit $\underrightarrow{\lim} \ K_r(L)$ where $L \subset M$ runs over all compact subsets of $M$ (see \cite[Exercise 11.8.11]{higson_roe}).

So if we want to find the dual theory of uniform $K$-theory, it is naturally to search for a version of $K$-theory that firstly, consists of vector bundles that may be non-trivial at infinity (since the Fredholm modules of $K^u_\ast(M)$ are in general not trivial outside compact subsets), secondly, it must somehow depend on the Riemannian metric of $M$ (since uniform $K$-homology does depend on it), and thirdly, it must have some sort of uniformity build into it. The natural conclusion is then the following: to fulfill the first requirement, we consider general vector bundles over $M$, i.e., vector bundles that are not necessarily trivial outside a compact subset. Furthermore, we equip the vector bundles with metrics and compatible connections such that they become vector bundles of bounded geometry. This gets the metric of $M$ into the game and at the same time provides a uniformity condition. We will see that in fact this first idea of a definition of a uniform $K$-theory is fruitful.

Another quite useful property of usual $K$-theory for compact spaces is that the Chern character induces an isomorphism $K^\ast(X) \otimes \IQ \cong H^\ast(X; \IQ)$. So it is natural to ask if there is some cohomology theory for non-compact manifolds with the same property with respect to uniform $K$-theory. Again, we have to search for a theory that consists of cohomology classes that may be non-trivial at infinity, that depends on the Riemannian metric of $M$, and where we have a uniformity condition somehow. But such a cohomology theory is easily found: in Definition \ref{defn:bounded_de_rham} we have defined bounded de Rham cohomology and we see that it fulfills all three requirements. And indeed, in Section \ref{sec:proof_chern_iso} we will a proof of the corresponding Chern character isomorphism in our uniform case.

\section{Definition and basic properties}

Our definition of uniform $K$-theory will be based on the following fact: the topological $K$-theory $K_{\mathrm{cpt}}^\ast(X)$ of a space $X$ equals the operator $K$-theory $K_\ast(C_0(X))$ of the $C^\ast$-algebra $C_0(X)$. Applying the proof of this fact to vector bundles of bounded geometry over a manifold $M$ of bounded geometry, we get that the uniform $K$-theory as we have explained it in the introduction to this chapter should coincide with the operator $K$-theory of the local $C^\ast$-algebra $C_b^\infty(M)$. Since the completion of $C_b^\infty(M)$ is the $C^\ast$-algebra $C_u(M)$ of all bounded, uniformly continuous functions on $M$, we therefore obtain a definition of uniform $K$-theory for all metric spaces. So our presentation of uniform $K$-theory will be reversed: we will define it as the operator $K$-theory of $C_u(X)$ and then, in the next Section \ref{sec:interpretation_uniform_k_theory}, we will prove that if $X$ is a manifold of bounded geometry, this coincides with the description via vector bundles of bounded geometry.

So let us start with the definition:

\begin{defn}[Uniform $K$-theory]
Let $X$ be a metric space. The \emph{uniform $K$-theory groups of $X$} are defined as
\[K^p_u(X) := K_{-p}(C_u(X)),\]
where $C_u(X)$ is the $C^\ast$-algebra of bounded, uniformly continuous functions on $X$.
\end{defn}

The introduction of the minus sign in the index $-p$ in the above definition is just a convention which ensures that the indices in formulas, like the one for the cap product between uniform $K$-theory and uniform $K$-homology, coincide with the indices from the corresponding formulas for (co-)homology. Since complex $K$-theory is $2$-periodic, the minus sign does not change anything in the formulas.

Denoting by $\overline{X}$ the completion of the metric space $X$, we have $K^\ast_u(\overline{X}) = K^\ast_u(X)$ because every uniformly continuous function on $X$ has a unique extension to $\overline{X}$, i.e., $C_u(\overline{X}) = C_u(X)$. This means that, e.g., the uniform $K$-theories of the spaces $[0,1]$, $[0,1)$ and $(0,1)$ are all equal. Furthermore, since on a compact space $X$ we have $C_u(X) = C(X)$, uniform $K$-theory coincides for compact spaces with usual $K$-theory. Let us state this as a small lemma:

\begin{lem}
If $X$ is compact, then $K^\ast_u(X) = K^\ast(X)$.
\end{lem}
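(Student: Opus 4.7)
The plan is essentially a two-line verification: reduce the statement to an identification of $C^\ast$-algebras and then invoke the standard identification of topological $K$-theory with operator $K$-theory.

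First I would observe that on a compact metric space $X$ every continuous function is bounded, and by the Heine--Cantor theorem every continuous function on a compact metric space is automatically uniformly continuous. Hence we have the equality of $C^\ast$-algebras $C_u(X) = C(X)$. Plugging this into the definition yields
\[ K^p_u(X) = K_{-p}(C_u(X)) = K_{-p}(C(X)). \]

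Next I would invoke the standard identification $K^p(X) \cong K_{-p}(C(X))$ of topological $K$-theory of a compact Hausdorff space $X$ with the operator $K$-theory of its algebra of continuous functions. This goes through the Serre--Swan correspondence, which identifies isomorphism classes of complex vector bundles over $X$ with isomorphism classes of finitely generated projective modules over $C(X)$, together with Bott periodicity to make sense of the negative indices; the minus sign in the index $-p$ is precisely the convention fixed in the definition above, so the formal $2$-periodicity of complex $K$-theory makes the reindexing harmless. Combining the two displays finishes the proof.

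There is essentially no obstacle here, since both facts used are entirely standard; the whole content of the lemma is that our definition $K^p_u(X) := K_{-p}(C_u(X))$ has been chosen to specialize to the correct thing in the compact case, and the verification reduces to the $C^\ast$-algebraic identity $C_u(X) = C(X)$ for compact $X$.
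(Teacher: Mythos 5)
Your proof is correct and takes exactly the same route as the paper: the paper simply observes that $C_u(X) = C(X)$ for compact $X$ and concludes from the definition $K^p_u(X) := K_{-p}(C_u(X))$ together with the standard identification $K^p(X) \cong K_{-p}(C(X))$. You merely spell out the two standard inputs (Heine--Cantor for the equality of algebras, Serre--Swan plus Bott periodicity for the operator-$K$-theory identification) in a bit more detail.
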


\begin{rem}
Note some subtle differences between uniform $K$-theory and uniform $K$-homology. Whereas uniform $K$-theory of $X$ coincides with the uniform $K$-theory of the completion $\overline{X}$, this is in general not true for uniform $K$-homology.

Recall that in Proposition \ref{prop:compact_space_every_module_uniform} we have shown that if $X$ is totally bounded, then the uniform $K$-homology of $X$ coincides with the usual $K$-homology of $X$. So for, e.g., the open unit ball in $\IR^n$ uniform and usual $K$-homology coincide, and also for the closed ball. But of course the usual $K$-homologies of the open and closed balls are not always equal.

Contrary to this the uniform $K$-theory of the open ball equals the uniform $K$-theory of the closed ball, as we have seen in the discussion above. But we generally do not have that, as for uniform $K$-homology, uniform $K$-theory of a totally bounded space which is not compact equal usual $K$-theory.
\end{rem}

The second-to-last thing that we will do in this section is to compute the uniform $K$-theory groups of uniformly discrete spaces. Recall that in Lemma \ref{lem:uniform_k_hom_discrete_space} we have shown that the uniform $K$-homology group $K_0^u(Y)$ of such a space is isomorphic to the group $\ell^\infty_\IZ(Y)$ of all bounded, integer-valued sequences indexed by $Y$, and that $K_1^u(Y) = 0$. Since we want uniform $K$-theory to be dual to uniform $K$-homology, we need the corresponding result for uniform $K$-theory:

\begin{lem}\label{lem:uniform_k_th_discrete_space}
Let $Y$ be a uniformly discrete metric space. Then $K^0_u(Y)$ is isomorphic to $\ell^\infty_\IZ(Y)$ and $K^1_u(Y) = 0$.
\end{lem}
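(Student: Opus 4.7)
The plan is to unpack the definition directly: since $Y$ is uniformly discrete, every bounded function on $Y$ is automatically uniformly continuous (the minimum distance $\delta_0 := \inf_{x \neq y} d(x,y) > 0$ ensures the uniform continuity criterion is trivially satisfied by taking $\delta < \delta_0$). Hence $C_u(Y) = \ell^\infty(Y)$, and the task reduces to computing $K_0(\ell^\infty(Y))$ and $K_1(\ell^\infty(Y))$.

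For $K_0$, I would identify projections in $M_n(\ell^\infty(Y)) \cong \ell^\infty(Y, M_n(\IC))$ with bounded families $(p_y)_{y \in Y}$ of projections $p_y \in M_n(\IC)$. Murray--von Neumann equivalence of two such projections $(p_y)$ and $(q_y)$ is equivalent to pointwise equivalence, i.e., $\operatorname{rk}(p_y) = \operatorname{rk}(q_y)$ for all $y$; the necessary partial isometries $v_y$ can be chosen with $\|v_y\| \le 1$, so that $v = (v_y)$ is automatically a bounded element of $M_n(\ell^\infty(Y))$. Thus equivalence classes of projections in $M_n(\ell^\infty(Y))$ correspond bijectively to functions $Y \to \{0, 1, \ldots, n\}$ via the rank function, which are automatically bounded. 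Passing to the direct limit over $n$ and then to the Grothendieck group, the monoid $\ell^\infty(Y, \IN_0)$ (with pointwise addition) gives $K_0(\ell^\infty(Y)) \cong \ell^\infty_\IZ(Y)$.

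For $K_1$, I would show that every unitary $u \in U_n(\ell^\infty(Y))$ is norm-homotopic to the identity. Write $u = (u_y)_{y \in Y}$ with each $u_y \in U(n)$ and, for each $y$ separately, choose a self-adjoint $h_y \in M_n(\IC)$ with $\|h_y\| \le \pi$ and $u_y = e^{i h_y}$ (take the branch of logarithm with imaginary part in $[-\pi, \pi]$ applied to the eigenvalues in a diagonalization of $u_y$). The uniform bound $\|h_y\| \le \pi$ guarantees that $h := (h_y)_y$ is a genuine element of $M_n(\ell^\infty(Y))$, self-adjoint with $\|h\| \le \pi$. Then $u = e^{ih}$ via continuous functional calculus in the $C^*$-algebra $M_n(\ell^\infty(Y))$, and $t \mapsto e^{ith}$ is a norm-continuous path of unitaries from $1$ to $u$. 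Hence $K_1(\ell^\infty(Y)) = 0$.

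The only mildly subtle point — and the one I would emphasize in the write-up — is the need in both steps for \emph{uniform} control: in the $K_0$ computation the implementing partial isometries must assemble to a bounded element of $M_n(\ell^\infty(Y))$, and in the $K_1$ computation the pointwise logarithms must satisfy a uniform bound so that their family lies in $M_n(\ell^\infty(Y))$. Both are handled by the standard choice (partial isometries of norm $\le 1$; logarithms with imaginary part in $[-\pi,\pi]$) and cause no genuine obstacle.
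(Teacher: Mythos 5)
Your proposal is correct and takes the same route as the paper: reduce to $C_u(Y) = \ell^\infty(Y) \cong \prod_{y \in Y} \IC$ and then compute the operator $K$-theory of this $C^\ast$-algebra. The paper simply cites an exercise in Higson--Roe for that second step, whereas you carry out the computation explicitly and correctly emphasize the two uniform-boundedness controls (on the implementing partial isometries and on the pointwise logarithms) that make the pointwise argument assemble to a global one.
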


The proof is an easy consequence of the fact that $C_u(Y) \cong \prod_{y \in Y} C(y) \cong \prod_{y \in Y} \IC$ for a uniformly discrete space $Y$, where the direct product of $C^\ast$-algebras is equipped with the pointwise algebraic operations and the sup-norm. The computation of the operator $K$-theory of $\prod_{y \in Y} \IC$ is now easily done (cf. \cite[Exercise 7.7.3]{higson_roe}).

And last, we will give a relation of uniform $K$-theory with amenability. Note that an analogous relation for bounded de Rham cohomology is already well-known, and also for other, similar (co-)homology theories (see, e.g., \cite[Section 8]{block_weinberger_large_scale}).

\begin{lem}
Let $M$ be a metric space with amenable fundamental group\footnote{see Definition \ref{defn:amenable_group}}.

We let $X$ be the universal cover of $M$ and we denote the covering projection by $\pi\colon X \to M$. Then the pull-back map $K^\ast_u(M) \to K^\ast_u(X)$ is injective.
\end{lem}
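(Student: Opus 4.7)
The map on uniform $K$-theory is induced by the pullback $\pi^\ast \colon C_u(M) \to C_u(X)$, $f \mapsto f \circ \pi$, so I need to show that this unital $\ast$-homomorphism is split-injective on operator $K$-theory. The first step would be to identify $\pi^\ast$ as the inclusion of a fixed-point algebra: letting $\Gamma := \pi_1(M)$ act on $X$ by deck transformations and on $C_u(X)$ by $\gamma \cdot f := f \circ \gamma^{-1}$, the fact that $\pi$ is a local isometry and that $M$ carries the quotient metric yields $\pi^\ast C_u(M) = C_u(X)^\Gamma$.

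Next, I would use amenability of $\Gamma$ to build a one-sided inverse at the level of $C^\ast$-algebras. Fix a left-invariant mean $m\colon \ell^\infty(\Gamma) \to \IC$ and define $E \colon C_u(X) \to C_u(M)$ by
\[
E(f)(\bar x) \;:=\; m\bigl(\gamma \mapsto f(\gamma \cdot \tilde x)\bigr),
\]
for any lift $\tilde x \in X$ of $\bar x \in M$; left-invariance of $m$ makes this independent of the lift. Boundedness of $E(f)$ is immediate from $\|m\|=1$, and uniform continuity follows because $\Gamma$ acts by isometries, so the orbit family $\{f\circ\gamma\}_{\gamma\in\Gamma}$ shares a single modulus of continuity and this modulus passes through the mean. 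Thus $E$ is a well-defined positive, unital, contractive, $C_u(M)$-bimodular linear map with $E \circ \pi^\ast = \id_{C_u(M)}$.

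The main obstacle is that $E$ is not multiplicative on all of $C_u(X)$ (only on the fixed-point subalgebra), so it is not a $\ast$-homomorphism and does not directly induce a map on $K$-theory. To upgrade the splitting to $K$-theory I would use the Hilbert module formalism associated to the conditional expectation $E$: completing $C_u(X)$ in the $C_u(M)$-valued inner product $\langle f_1, f_2\rangle := E(f_1^\ast f_2)$ produces a Hilbert $C_u(M)$-module $\mathcal E$ on which $C_u(X)$ acts from the left by bounded adjointable operators. This Hilbert-module datum yields a $KK$-class in $KK(C_u(X), C_u(M))$ whose Kasparov product with $[\pi^\ast] \in KK(C_u(M), C_u(X))$ returns $[\id_{C_u(M)}]$, giving the desired retraction on $K$-theory and hence the injectivity of $\pi^\ast_\ast$.

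As an alternative that avoids $KK$-machinery, one can replace $m$ by a F\o{}lner sequence $(F_n)_n \subset \Gamma$ and work with the finite averages $E_n(f)(\bar x) := \tfrac{1}{|F_n|}\sum_{\gamma\in F_n} f(\gamma\tilde x)$. Given an idempotent $e \in M_N(C_u(M))$ with $[\pi^\ast e]=0 \in K_0(C_u(X))$, one would lift a homotopy of idempotents realizing this vanishing, apply $E_n$ entrywise, and use the $C_u(M)$-bimodularity together with the F\o{}lner property to control $\|E_n(p_t)^2 - E_n(p_t)\|$ along the path, then smooth the resulting approximate idempotents into genuine ones via functional calculus. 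The hardest point in either approach is precisely the same: compensating for the non-multiplicativity of the averaging, and this is where amenability of $\Gamma$ enters decisively.
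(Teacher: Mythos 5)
Your core construction is the same as the paper's: you average a function over the fiber using amenability of $\pi_1(M)$ to produce a $C_u(M)$-bimodular, completely positive, unital contraction $E\colon C_u(X)\to C_u(M)$ with $E\circ\pi^\ast=\id$. The paper does this with a F\o{}lner sequence followed by an ultrafilter limit; your left-invariant mean is the weak-$\ast$ limit form of the same thing. Where you go beyond the paper is in explicitly observing that $E$ is not multiplicative and hence does not automatically induce a map on $K$-theory --- the paper constructs $p$, states that it is a left inverse to $\pi^\ast$, and stops there. You are right to flag this: a c.c.p.\ retraction onto the image of a unital $\ast$-monomorphism need not give a retraction on $K_0$, so the ``split injectivity'' step is not free.

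However, both of your proposed repairs have problems. The Kasparov cycle $(\mathcal{E},\phi,0)$ is not a valid element of $KK(C_u(X),C_u(M))$: with operator $F=0$, the Kasparov axioms force $\phi(f)$ to be a $C_u(M)$-compact operator on $\mathcal{E}$ for every $f$, and in particular $\phi(1)=\id_{\mathcal{E}}$ must be compact, i.e.\ $\mathcal{E}$ must be finitely generated projective over $C_u(M)$. That fails precisely when $\Gamma=\pi_1(M)$ is infinite, which is the interesting case. You would need a genuine Fredholm operator $F$ capturing the non-compactness, and it is not clear where it should come from. The F\o{}lner route has a different difficulty: for an idempotent $p$ over $C_u(X)$, the error $E_n(p)^2-E_n(p)=E_n(p)^2-E_n(p^2)$ involves the cross-terms $\tfrac{1}{|F_n|^2}\sum_{\gamma\neq\gamma'}p(\gamma\,\tilde x)p(\gamma'\,\tilde x)$, and these are bulk terms, not boundary terms; the F\o{}lner condition $\tfrac{|F_n\triangle\gamma F_n|}{|F_n|}\to 0$ gives no control over them. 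If the $p(\gamma\,\tilde x)$ vary appreciably as $\gamma$ ranges over $F_n$ --- which they will, for a generic idempotent that trivializes only after pulling back to $X$ --- then $E_n(p)$ will be far from idempotent. So the instinct that ``compensating for non-multiplicativity is where the difficulty lies'' is correct, but neither proposed fix actually compensates for it; something more is needed here (and, it should be said, the paper's own one-line conclusion leaves this step unaddressed as well).
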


\begin{proof}
The projection $\pi$ induces a map $\pi^\ast \colon C_u(M) \to C_u(X)$ which then induces the pull-back map $K^\ast_u(M) \to K^\ast_u(X)$. We will prove the lemma by constructing a left inverse to the above map $\pi^\ast$, i.e., we will construct $p \colon C_u(X) \to C_u(M)$ with $p \circ \pi^\ast = \id \colon C_u(M) \to C_u(M)$.

Let $F \subset X$ be a fundamental domain for the action of the deck transformation group on $X$. Since $\pi_1(M)$ is amenable, we choose a \Folner sequence $(E_i)_i \subset \pi_1(M)$ in it. Now given a function $f \in C_u(X)$, we set
\[f_i(y) := \frac{1}{\card E_i} \sum_{x \in \pi^{-1}(y) \cap E_i \cdot F} f(x)\]
for $y \in M$. This gives us a sequence of functions $f_i$ on $M$, but they are in general not even continuous.

Now choosing a functional $\tau \in (\ell^\infty)^\ast$ associated to a free ultrafilter on $\IN$, we define $p(f)(y) := \tau(f_i(y))$. Due to the \Folner condition on $(E_i)_i$ all discontinuities that the functions $f_i$ may have vanish in the limit under $\tau$, and we get a bounded, uniformly continuous function $p(f)$ on $M$.

It is clear that $p$ is a left inverse to $\pi^\ast$.
\end{proof}

\section{Interpretation via vector bundles}\label{sec:interpretation_uniform_k_theory}

As we have announced in the last section, we will show now that if $M$ is a manifold of bounded geometry, we have a description of the uniform $K$-theory of $M$ via vector bundles of bounded geometry.

To show this, we first need to show that the operator $K$-theory of $C_u(M)$ coincides with the operator $K$-theory of $C_b^\infty(M)$. This is established via the following two lemmas:

\begin{lem}\label{lem:C_b_infty_local}
Let $M$ be a manifold of bounded geometry.

Then $C_b^\infty(M)$ is a local $C^\ast$-algebra.
\end{lem}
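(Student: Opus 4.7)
The plan is to verify the two structural ingredients that together characterize local $C^\ast$-algebras (as summarized in Lemmas \ref{lem:matrix_algebras_holomorphically_closed} and \ref{lem:local_algebra_equivalent}): first, endow $C_b^\infty(M)$ with a \Frechet algebra topology that is finer than the sup-norm inherited from $C_u(M)$; and second, show that $C_b^\infty(M)$ is inverse-closed in $C_u(M)$, from which (together with the finer topology) the closure under holomorphic functional calculus follows and, via the cited lemma on matrix algebras, the full local $C^\ast$-property as well.

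First, I would equip $C_b^\infty(M)$ with the countable family of seminorms $\|f\|_k := \sum_{i=0}^k \|\nabla^i f\|_\infty$ for $k \in \IN_0$. Since $M$ has bounded geometry, Remark \ref{rem:derivatives_uniformly_bounded} together with Lemmas \ref{lem:bounded_geometry_christoffel_symbols} and \ref{lem:transition_functions_uniformly_bounded} ensures that these global seminorms are equivalent to the natural local ones built from sup-norms of coordinate derivatives in a fixed uniformly locally finite cover by normal coordinate balls (Lemma \ref{lem:nice_coverings_partitions_of_unity}). Completeness in this family of seminorms is then a routine consequence of the completeness of $C_b^r$ in each $C^r$-norm: a Cauchy sequence has a uniform limit together with uniform limits of all derivatives, and standard arguments identify the latter as the derivatives of the limit. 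Joint continuity of multiplication follows from the Leibniz rule, which gives $\|fg\|_k \le C_k \|f\|_k \|g\|_k$, and continuity of the involution $f \mapsto \overline f$ is immediate. This shows $C_b^\infty(M)$ is a \Frechet $^\ast$-algebra, and the inclusion of $\|\largecdot\|_\infty = \|\largecdot\|_0$ in the defining family makes its topology finer than the norm topology on $C_u(M)$.

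Second, by the cited structural lemmas it suffices to verify that $C_b^\infty(M)$ is inverse-closed in $C_u(M)$, i.e.\ if $f \in C_b^\infty(M)$ and $f$ is invertible in $C_u(M)$, then $1/f \in C_b^\infty(M)$. Invertibility in $C_u(M)$ means $|f(x)| \ge \varepsilon > 0$ for all $x \in M$, so $1/f$ is uniformly bounded. The Fa\`a di Bruno formula (or a direct induction) expresses $\nabla^i (1/f)$ as a universal polynomial in the tensors $\nabla^j f$ for $j \le i$ divided by a power of $f$; since every $\|\nabla^j f\|_\infty$ is finite and $|f| \ge \varepsilon$, we obtain $\|\nabla^i(1/f)\|_\infty < \infty$ for all $i$, hence $1/f \in C_b^\infty(M)$. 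Inverse closedness combined with the finer \Frechet topology implies closure under holomorphic functional calculus via the standard Riesz integral $g(f) = \frac{1}{2\pi i} \oint_\Gamma g(z)(z-f)^{-1}\,dz$, where the contour $\Gamma$ surrounds the spectrum (the closure of $f(M)$) and the integrand lies in $C_b^\infty(M)$ with seminorms depending continuously on $z$.

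Finally, to promote this to a genuine local $C^\ast$-algebra property, I would invoke Lemma \ref{lem:matrix_algebras_holomorphically_closed}, which reduces the matrix-valued case to the scalar one once the Fr\'echet algebra has a topology finer than the norm and is inverse-closed (for matrix entries, inverse closedness passes through the adjugate-over-determinant formula, combined with the Leibniz/Fa\`a di Bruno argument above applied to each entry). I do not anticipate any real obstacle: the only step with genuine content is the inverse closedness, and even there the computation is elementary. The proof is therefore short, and its main role in the chapter is purely preparatory, ensuring that $K_\ast(C_b^\infty(M))$ coincides with $K_\ast(C_u(M)) = K^{-\ast}_u(M)$ so that the vector bundle interpretation to be developed in the next section is legitimate.
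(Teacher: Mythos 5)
Your proof is correct and takes essentially the same approach as the paper: exhibit $C_b^\infty(M)$ as a \Frechet subalgebra with a finer topology, reduce to scalar holomorphic closure via Lemma \ref{lem:matrix_algebras_holomorphically_closed}, and verify inverse-closedness via Lemma \ref{lem:local_algebra_equivalent}. The paper leaves the Fa\`a di Bruno computation and the Riesz integral to the reader, so your write-up merely fills in those details; the only small redundancy is the parenthetical adjugate-over-determinant remark when invoking Lemma \ref{lem:matrix_algebras_holomorphically_closed}, since that lemma already handles matrices with no separate inverse-closedness check needed.
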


\begin{proof}
Since $C_b^\infty(M)$ is a $^\ast$-subalgebra of the $C^\ast$-algebra $C_b(M)$ of bounded continuous functions on $M$, then norm completion of $C_b^\infty(M)$, i.e., its closure in $C_b(M)$, is surely a $C^\ast$-algebra.

So we have to show that $C_b^\infty(M)$ and all matrix algebras over it are closed under holomorphic functional calculus. Since $C_b^\infty(M)$ is naturally a \Frechet algebra with a \Frechet topology which is finer than the sup-norm topology, it remains to show that $C_b^\infty(M)$ itself is closed under holomorphic functional calculus (Lemma \ref{lem:matrix_algebras_holomorphically_closed}).

But that $C_b^\infty(M)$ is closed under holomorphic functional calculus is easily seen using Lemma \ref{lem:local_algebra_equivalent}.
\end{proof}

\begin{lem}\label{lem:norm_completion_C_b_infty}
Let $M$ be a manifold of bounded geometry.

Then the sup-norm completion of $C_b^\infty(M)$ is the $C^\ast$-algebra $C_u(M)$ of bounded, uniformly continuous functions on $M$.
\end{lem}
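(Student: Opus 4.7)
The plan is to prove the two inclusions separately. For $C_b^\infty(M) \subseteq C_u(M)$, a smooth function $f$ with $\|\nabla f\|_\infty < \infty$ is Lipschitz along any minimizing geodesic, hence globally Lipschitz (bounded geometry implies geodesic completeness), hence uniformly continuous. Since $C_u(M)$ is sup-norm closed, the closure of $C_b^\infty(M)$ is contained in $C_u(M)$.

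For the reverse inclusion, given $f \in C_u(M)$ and $\varepsilon > 0$, I will construct $g \in C_b^\infty(M)$ with $\|f - g\|_\infty < \varepsilon$ by local mollification in normal coordinates. First, invoke Lemma \ref{lem:nice_coverings_partitions_of_unity} to choose a uniformly locally finite cover $\{B_{2\varepsilon_0}(x_i)\}$ of $M$ by normal coordinate balls (with $2\varepsilon_0$ less than the injectivity radius) together with a subordinate partition of unity $\{\varphi_i\}$ whose derivatives of all orders are uniformly bounded in normal coordinates. Fix a standard Euclidean mollifier $\psi_\delta$ supported in $B_\delta(0) \subset \mathbb{R}^m$ with $\delta < \varepsilon_0$, and in the $i$-th chart define $f_{i,\delta} := \psi_\delta \ast (f \circ \exp_{x_i})$, the ordinary Euclidean mollification. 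Set
\[g := \sum_i \varphi_i \cdot \bigl(f_{i,\delta} \circ \exp_{x_i}^{-1}\bigr).\]
The sum is locally finite, hence $g$ is smooth. Using the uniform continuity of $f$, pick $\delta$ so small that $|f(x) - f(y)| < \varepsilon$ whenever $d(x,y) < \delta$; then $\|f_{i,\delta} \circ \exp_{x_i}^{-1} - f\|_{\infty, B_{2\varepsilon_0}(x_i)} < \varepsilon$ for every $i$ simultaneously, and the partition of unity property yields $\|g - f\|_\infty < \varepsilon$.

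The main obstacle, and what actually needs bounded geometry, is showing $g \in C_b^\infty(M)$, i.e., that all covariant derivatives of $g$ are uniformly bounded on $M$. By Lemma \ref{lem:bounded_geometry_christoffel_symbols} together with Remark \ref{rem:derivatives_uniformly_bounded}, this is equivalent to bounding the ordinary partial derivatives of $g$ in every normal coordinate chart by constants independent of the chart. In the $i$-th chart, the function $\varphi_i \cdot (f_{i,\delta} \circ \exp_{x_i}^{-1})$ has all derivatives bounded by the product rule: those of $\varphi_i$ are uniformly bounded by construction, and Young's inequality gives $\|\partial^\alpha f_{i,\delta}\|_\infty \le \|f\|_\infty \cdot \|\partial^\alpha \psi_\delta\|_{L^1}$, a bound depending only on $\alpha$ and $\delta$, not on $i$. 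The contributions from the finitely many neighbouring terms $\varphi_j \cdot (f_{j,\delta} \circ \exp_{x_j}^{-1})$ visible in the $i$-th chart are handled by invoking Lemma \ref{lem:transition_functions_uniformly_bounded}: the transition functions between overlapping normal coordinate charts have uniformly bounded derivatives of all orders, so converting the above bounds from the $j$-th to the $i$-th chart costs only uniformly bounded factors. Combined with the uniform local finiteness of the cover, this gives $\|\nabla^k g\|_\infty < \infty$ for every $k$, completing the proof.
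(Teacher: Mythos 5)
Your proof is correct and follows essentially the same route as the paper's: mollification in normal coordinate charts against a uniformly nice partition of unity from Lemma~\ref{lem:nice_coverings_partitions_of_unity}, with Young's inequality plus bounded geometry (Lemmas~\ref{lem:bounded_geometry_christoffel_symbols} and~\ref{lem:transition_functions_uniformly_bounded}) supplying uniform bounds on all derivatives and uniform continuity of $f$ supplying sup-norm convergence. The only organizational difference --- you mollify $f \circ \exp_{x_i}$ and then multiply by $\varphi_i$, whereas the paper mollifies $\varphi_i f$ directly, invoking uniform equicontinuity of the family $\{\varphi_i f\}$ --- is immaterial; if anything, your version of the error estimate is a bit cleaner since $\sum_i \varphi_i = 1$ makes $g - f$ a convex combination of small terms, though you should really pick $\delta$ against the modulus of continuity at scale $L\delta$ where $L$ is the uniform bi-Lipschitz constant of $\exp_{x_i}$ on $B_{2\varepsilon_0}(0)$, not at scale $\delta$ itself.
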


\begin{proof}
We surely have $\overline{C_b^\infty(M)} \subset C_u(M)$. To show the converse inclusion, we have to approximate a bounded, uniformly continuous function by a smooth one with bounded derivatives. This can be done by choosing a nice cover of $M$ with subordinate partitions of unity via Lemma \ref{lem:nice_coverings_partitions_of_unity} and then apply in every coordinate chart the same mollifier to the uniformly continous function.

Let us elaborate a bit more on the last sentence of the above paragraph: after choosing the nice cover and cutting a function $f \in C_u(M)$ with the subordinate partition of unity $\{\varphi_i\}$, we have transported the problem to Euclidean space $\IR^n$ and our family of functions $\varphi_i f$ is uniformly equicontinuous (this is due to the uniform continuity of $f$ and will be crucially important at the end of this proof). Now let $\psi$ be a mollifier on $\IR^n$, i.e., a smooth function with $\psi \ge 0$, $\supp \psi \subset B_1(0)$, $\int_{\IR^n} \psi d\lambda = 1$ and $\psi_\varepsilon := \varepsilon^{-n} \psi(\largecdot / \varepsilon) \stackrel{\varepsilon \to 0}\longrightarrow \delta_0$. Since convolution satisfies $D^\alpha (\varphi_i f \ast \psi_\varepsilon) = \varphi_i f \ast D^\alpha \psi_\varepsilon$, where $D^\alpha$ is a directional derivative on $\IR^n$ in the directions of the multi-index $\alpha$ and of order $|\alpha|$, we conclude that every mollified function $\varphi_i f \ast \psi_\varepsilon$ is smooth with bounded derivatives. Furthermore, we know $\| \varphi_i f \ast D^\alpha \psi_\epsilon \|_\infty \le \| \varphi_i f \|_\infty \cdot \| D^\alpha \psi_\varepsilon \|_1$ from which we conclude that the bounds on the derivatives of $\varphi_i f \ast \psi_\varepsilon$ are uniform in $i$, i.e., if we glue the functions $\varphi_i f \ast \psi_\epsilon$ together to a function on the manifold $M$ (note that the functions $\varphi_i f \ast \psi_\epsilon$ are supported in our chosen nice cover since convolution with $\psi_\varepsilon$ enlarges the support at most by $\varepsilon$), we get a function $f_\varepsilon \in C_b^\infty(M)$. It remains to show that $f_\varepsilon$ converges to $f$ in sup-norm, which is equivalent to the statement that $\varphi_i f \ast \psi_\varepsilon$ converges to $\varphi_i f$ in sup-norm and uniformly in $i$. But we know that
\[ \big| (\varphi_i f \ast \psi_\epsilon) (x) - (\varphi_i f) (x) \big| \le \sup_{\substack{x \in \supp \varphi_i f\\y \in B_\varepsilon(0)}} \big| (\varphi_i f) (x - y) - (\varphi_i f) (x) \big| \]
from which the claim follows since the family of functions $\varphi_i f$ is uniformly equicontinuous (recall that this followed from the uniform continuity of $f$ and this here is actually the only point in this proof where we need that property of $f$).
\end{proof}

Since $C_b^\infty(M)$ is an $m$-convex \Frechet algebra\footnote{That is to say, a \Frechet algebra such that its topology is given by a countable family of submultiplicative seminorms (see Definition \ref{defn:Frechet_subalgebras}).}, we can also use the $K$-theory for $m$-convex \Frechet algebras as developed by Phillips in \cite{phillips} to define the $K$-theory groups of $C_b^\infty(M)$. But this produces the same groups as operator $K$-theory, since $C_b^\infty(M)$ is an $m$-convex \Frechet algebra with a finer topology than the norm topology and therefore its $K$-theory for $m$-convex \Frechet algebras coincides with its operator $K$-theory (see Lemma \ref{lem:k_theory_frechet_algebras_coincide}).

We summarize this observations in the following lemma:

\begin{lem}\label{lem:equivalent_defns_uniform_k_theory}
Let $M$ be a manifold of bounded geometry.

Then the operator $K$-theory of $C_u(M)$, the operator $K$-theory of $C_b^\infty(M)$ and Phillips $K$-theory for $m$-convex \Frechet algebras of $C_b^\infty(M)$ are all pairwise naturally isomorphic.
\end{lem}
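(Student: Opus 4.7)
The plan is to combine the two immediately preceding lemmas with two general facts that the excerpt flags as already available, namely that a local $C^\ast$-algebra has the same operator $K$-theory as its norm completion (\verb|Lemma \ref{lem:loc_algebra_same_k_theory}|) and that for an $m$-convex \Frechet algebra whose topology is finer than the norm topology, Phillips' $K$-theory agrees with operator $K$-theory (\verb|Lemma \ref{lem:k_theory_frechet_algebras_coincide}|). No hard analysis is needed; the work is essentially bookkeeping.

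First I would establish the isomorphism between the operator $K$-theory of $C_b^\infty(M)$ and the operator $K$-theory of $C_u(M)$. Lemma \ref{lem:C_b_infty_local} says $C_b^\infty(M)$ is a local $C^\ast$-algebra and Lemma \ref{lem:norm_completion_C_b_infty} identifies its sup-norm completion as $C_u(M)$. The general fact that the inclusion of a local $C^\ast$-algebra into its completion induces an isomorphism on operator $K$-theory then yields $K_\ast(C_b^\infty(M)) \cong K_\ast(C_u(M))$, and naturality of this isomorphism is built into the statement of that general fact.

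Next I would handle the comparison with Phillips' $K$-theory. I need to check two properties of $C_b^\infty(M)$: that it is an $m$-convex \Frechet algebra (i.e.\ its topology is generated by a countable family of submultiplicative seminorms), and that this topology is finer than the sup-norm topology. For the second point this is immediate, since $\|\largecdot\|_\infty$ is already one of the defining seminorms on $C_b^\infty(M)$. For the first, the natural seminorms $\|f\|_{C^k} := \max_{0 \le j \le k} \|\nabla^j f\|_\infty$ satisfy a Leibniz-type estimate $\|fg\|_{C^k} \le C_k \|f\|_{C^k}\|g\|_{C^k}$ with $C_k$ depending only on $k$ and the bounded geometry of $M$; rescaling by $C_k$ produces an equivalent countable family of submultiplicative seminorms. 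Once this is in place, Lemma \ref{lem:k_theory_frechet_algebras_coincide} gives the identification of Phillips' $K$-theory of $C_b^\infty(M)$ with its operator $K$-theory, again naturally.

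Chaining the two isomorphisms then produces the three-way natural identification. There is no genuine obstacle here; the only small point to be careful about is verifying submultiplicativity of a canonical family of seminorms on $C_b^\infty(M)$, which on a manifold of bounded geometry is a routine consequence of the Leibniz rule together with the uniform bounds on the Christoffel symbols provided by Lemma \ref{lem:bounded_geometry_christoffel_symbols}.
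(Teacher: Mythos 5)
Your argument matches the paper's proof exactly: combine Lemma \ref{lem:C_b_infty_local} and Lemma \ref{lem:norm_completion_C_b_infty} with Lemma \ref{lem:loc_algebra_same_k_theory} to get $K_\ast(C_b^\infty(M)) \cong K_\ast(C_u(M))$, then invoke Lemma \ref{lem:k_theory_frechet_algebras_coincide} after noting that $C_b^\infty(M)$ is an $m$-convex \Frechet algebra with topology finer than the sup-norm. Your extra verification of $m$-convexity via rescaled $C^k$-seminorms is sound (one small nit: the Leibniz estimate is pointwise and needs only binomial coefficients, so the constants $C_k$ do not actually depend on the geometry of $M$ or its Christoffel symbols).
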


So we have shown $K^\ast_u(M) \cong K_{-\ast}(C_b^\infty(M))$. In order to conclude the description via vector bundles of bounded geometry, we will need to establish the correspondence between vector bundles of bounded geometry and idempotent matrices with entries in $C_b^\infty(M)$. This will be done in the next two subsections.

\subsection*{Isomorphism classes and complements}

Let $M$ be a manifold of bounded geometry and $E$ and $F$ two complex vector bundles equipped with Hermitian metrics and compatible connections.

\begin{defn}[$C^\infty$-boundedness / $C_b^\infty$-isomorphy of vector bundle homomorphisms]\label{defn:C_infty_bounded}
We will call a vector bundle homomorphism $\varphi\colon E \to F$ \emph{$C^\infty$-bounded}, if with respect to synchronous framings of $E$ and $F$ the matrix entries of $\varphi$ are bounded, as are all their derivatives, and these bounds do not depend on the chosen base points for the framings or the synchronous framings themself.

$E$ and $F$ will be called \emph{$C_b^\infty$-isomorphic}, if there exists an isomorphism $\varphi\colon E \to F$ such that both $\varphi$ and $\varphi^{-1}$ are $C^\infty$-bounded. In that case we will call the map $\varphi$ a $C_b^\infty$-isomorphism. Often we will write $E \cong F$ when no confusion can arise with mistaking it with algebraic isomorphy.
\end{defn}

Using the characterization of bounded geometry via the matrix transition functions from Lemma \ref{lem:equiv_characterizations_bounded_geom_bundles}, we immediately see that if $E$ and $F$ are $C_b^\infty$-isomorphic, than $E$ is of bounded geometry if and only if $F$ is.

It is clear that $C_b^\infty$-isomorphy is compatible with direct sums and tensor products, i.e., if $E \cong E^\prime$ and $F \cong F^\prime$ then $E \oplus F \cong E^\prime \oplus F^\prime$ and $E \otimes F \cong E^\prime \otimes F^\prime$.

We will now give a useful global characterization of $C_b^\infty$-isomorphisms if the vector bundles have bounded geometry:

\begin{lem}\label{lem:C_b_infty_Iso_equivalent}
Let $E$ and $F$ have bounded geometry and let $\varphi\colon E \to F$ be an isomorphism. Then $\varphi$ is a $C_b^\infty$-isomorphism if and only if
\begin{itemize}
\item $\varphi$ and $\varphi^{-1}$ are bounded, i.e., $\|\varphi(v)\| \le C \cdot \|v\|$ for all $v \in E$ and a fixed $C > 0$ and analogously for $\varphi^{-1}$, and
\item $\nabla^E - \varphi^\ast \nabla^F$ is bounded and also all its covariant derivatives.
\end{itemize}
\end{lem}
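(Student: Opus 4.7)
\medskip

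The plan is to treat the two directions separately, using the local characterization of bounded geometry for vector bundles via synchronous framings (Lemma \ref{lem:equiv_characterizations_bounded_geom_bundles}) to translate between the global conditions in the statement and the pointwise Christoffel-symbol conditions in the definition of $C_b^\infty$-isomorphy.

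For the forward direction, assume $\varphi$ is a $C_b^\infty$-isomorphism. Since synchronous framings are pointwise orthonormal, the pointwise operator norm of $\varphi$ agrees (up to a dimensional constant) with the $\ell^2$-norm of the matrix entries $\varphi^\beta_\alpha$ in synchronous framings, so the uniform boundedness of those entries gives the first bullet; the same reasoning applied to $\varphi^{-1}$ finishes that bullet. For the second bullet, note that the tensor $A := \nabla^E - \varphi^\ast \nabla^F$ is a section of $T^\ast M \otimes \operatorname{End}(E)$ whose components in synchronous framings are a universal polynomial expression in the matrix entries of $\varphi$, the matrix entries of $\varphi^{-1}$, their first partial derivatives, and the Christoffel symbols of $\nabla^E$ and $\nabla^F$ with respect to these framings. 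All four ingredients are uniformly bounded with all their derivatives (the first two by hypothesis, the latter two by Lemma \ref{lem:equiv_characterizations_bounded_geom_bundles}), and the same is true for the covariant derivatives $\nabla^k A$ because covariant differentiation in these framings is again a polynomial combination of partial derivatives and Christoffel symbols.

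For the backward direction, assume the two bullets. The pointwise bound $\|\varphi(v)\| \le C\|v\|$ immediately gives the uniform bound $|\varphi^\beta_\alpha| \le C$ in synchronous framings, and analogously for $\varphi^{-1}$. To pass to derivatives I use the identity
\[
\partial_i \varphi^\beta_\alpha \;=\; \varphi^\beta_\gamma \,\Gamma^{E,\gamma}_{i\alpha} \;-\; \Gamma^{F,\beta}_{i\gamma}\,\varphi^\gamma_\alpha \;+\; A^\beta_{i\alpha},
\]
which one reads off from $\nabla^E = \varphi^\ast \nabla^F + A$ in the chosen synchronous framings. The right-hand side is uniformly bounded (by the hypotheses on $A$, on $\varphi$, and by Lemma \ref{lem:equiv_characterizations_bounded_geom_bundles} for the Christoffel symbols), so $\partial_i \varphi^\beta_\alpha$ is uniformly bounded. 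Differentiating this identity repeatedly and using induction on the order, each higher partial derivative $\partial^\mu \varphi^\beta_\alpha$ is expressed as a polynomial in lower-order derivatives of $\varphi$, derivatives of the Christoffel symbols, and derivatives of the components of $A$, all of which are uniformly bounded; this handles $\varphi$. For $\varphi^{-1}$, differentiating $\varphi\,\varphi^{-1} = \operatorname{id}$ yields $\partial_i(\varphi^{-1}) = -\varphi^{-1}(\partial_i\varphi)\varphi^{-1}$ and analogous Leibniz-type formulas for higher derivatives, from which the already established bounds on $\varphi$, $\varphi^{-1}$ and derivatives of $\varphi$ give uniform bounds on all derivatives of $\varphi^{-1}$.

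The main technical obstacle is organizing the induction in the backward direction cleanly: one has to keep track that covariant derivatives of $A$ translate into partial derivatives of $A^\beta_{i\alpha}$ modulo controlled Christoffel-symbol corrections, and that synchronous framings are the right frames for this bookkeeping precisely because they are orthonormal and, by Lemma \ref{lem:equiv_characterizations_bounded_geom_bundles}, have uniformly $C^\infty$-bounded Christoffel symbols. Once that inductive scheme is set up, both directions reduce to algebraic manipulations of bounded quantities.
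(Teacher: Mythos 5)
Your proof is correct and takes essentially the same route as the paper's: both derive the explicit local formula for $\nabla^E - \varphi^\ast\nabla^F$ in synchronous framings, use Lemma \ref{lem:equiv_characterizations_bounded_geom_bundles} to control the Christoffel symbols, and in the converse direction solve that identity for $\partial_i\varphi$ to bootstrap the boundedness of all higher derivatives. One small index slip: since $A := \nabla^E - \varphi^\ast\nabla^F$ is $\End(E)$-valued, the last term of your displayed identity should be $-\,\varphi^\beta_\gamma A^\gamma_{i\alpha}$ rather than $+\,A^\beta_{i\alpha}$, but as both $\varphi$ and $A$ are bounded this does not affect the argument; and where you finish $\varphi^{-1}$ by differentiating $\varphi\,\varphi^{-1}=\operatorname{id}$, the paper instead invokes Cramer's rule — an equivalent piece of bookkeeping.
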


\begin{proof}
For a point $p \in M$ let $B \subset M$ be a geodesic ball centered at $p$, $\{ x_i \}$ the corresponding  normal coordinates of $B$, and let $\{ E_\alpha(y) \}$, $y \in B$, be a framing for $E$. Then we may write every vector field $X$ on $B$ as $X = X^i \frac{\partial}{\partial x_i} = (X^1 , \ldots, X^n)^T$ and every section $e$ of $E$ as $e = e^\alpha E_\alpha = (e^1, \ldots, e^k)^T$, where we assume the Einstein summation convention and where $\largecdot^T$ stands for the transpose of the vector (i.e., the vectors are actually column vectors). Furthermore, after also choosing a framing for $F$, $\varphi$ becomes a matrix for every $y \in B$ and $\varphi(e)$ is then just the matrix multiplication $\varphi(e) = \varphi \cdot e$. Finally, $\nabla^E_X e$ is locally given by
\[\nabla^E_X e = X(e) + \Gamma^E(X)\cdot e,\]
where $X(e)$ is the column vector that we get after taking the derivative of every entry $e^j$ of $e$ in the direction of $X$ and $\Gamma^E$ is a matrix of $1$-forms (i.e., $\Gamma^E(X)$ is then a usual matrix that we multiply with the vector $e$). The entries of $\Gamma^E$ are called the connection $1$-forms.

Since $\varphi$ is an isomorphism, the pull-back connection $\varphi^\ast \nabla^F$ is given by
\[(\varphi^\ast \nabla^F)_X e = \varphi^\ast (\nabla^F_X (\varphi^{-1})^\ast e),\]
so that locally we get
\[(\varphi^\ast \nabla^F)_X e = \varphi^{-1}\cdot \big( X(\varphi \cdot e) + \Gamma^F(X) \cdot \varphi \cdot e\big).\]
Using the product rule we may rewrite $X(\varphi \cdot e) = X(\varphi) \cdot e + \varphi \cdot X(e)$, where $X(\varphi)$ is the application of $X$ to every entry of $\varphi$. So at the end we get for the difference $\nabla^E - \varphi^\ast \nabla^F$ in local coordinates and with respect to framings of $E$ and $F$
\begin{equation}\label{eq:difference_connections_local}
(\nabla^E - \varphi^\ast \nabla^F)_X e = \Gamma^E(X) \cdot e - \varphi^{-1} \cdot X(\varphi) \cdot e - \varphi^{-1} \cdot \Gamma^F(X) \cdot \varphi \cdot e.
\end{equation}

Since $E$ and $F$ have bounded geometry, by Lemma \ref{lem:equiv_characterizations_bounded_geom_bundles} the Christoffel symbols of them with respect to synchronous framings are bounded and also all their derivatives, and these bounds are independent of the point $p \in M$ around that we choose the normal coordinates and the framings. Assuming that $\varphi$ is a $C_b^\infty$-isomorphism, the same holds for the matrix entries of $\varphi$ and $\varphi^{-1}$ and we conclude with the above Equation \eqref{eq:difference_connections_local} that the difference $\nabla^E - \varphi^\ast \nabla^F$ is bounded and also all its covariant derivatives (here we also need to consult the local formula for covariant derivatives of tensor fields).

Conversely, assume that $\varphi$ and $\varphi^{-1}$ are bounded and that the difference $\nabla^E - \varphi^\ast \nabla^F$ is bounded and also all its covariant derivatives. If we denote by $\Gamma^{\text{diff}}$ the matrix of $1$-forms given by
\[\Gamma^{\text{diff}}(X) = \Gamma^E(X) - \varphi^{-1} \cdot X(\varphi) - \varphi^{-1} \cdot \Gamma^F(X) \cdot \varphi,\]
we get from Equation \eqref{eq:difference_connections_local}
\[X(\varphi) = \varphi \cdot (\Gamma^E(X) - \Gamma^{\text{diff}}(X)) - \Gamma^F(X) \cdot \varphi.\]
Since we assumed that $\varphi$ is bounded, its matrix entries must be bounded. From the above equation we then conclude that also the first derivatives of these matrix entries are bounded. But now that we know that the entries and also their first derivatives are bounded, we can differentiate the above equation once more to conclude that also the second derivatives of the matrix entries of $\varphi$ are bounded, on so on. This shows that $\varphi$ is $C^\infty$-bounded. At last, it remains to see that the matrix entries of $\varphi^{-1}$ and also all their derivatives are bounded. But since locally $\varphi^{-1}$ is the inverse matrix of $\varphi$, we just have to use Cramer's rule.
\end{proof}

An important property of vector bundles over compact spaces is that they are always complemented, i.e., for every bundle $E$ there is a bundle $F$ such that $E \oplus F$ is isomorphic to the trivial bundle. Note that this fails in general for non-compact spaces, which prevents to define $K$-theory the same way for non-compact spaces as for compact one (i.e., this is one of the reasons why for non-compact spaces $X$ we consider the compactly supported $K$-theory $K_{\mathrm{cpt}}^\ast(X)$). So our important task is now to show that we have an analogous proposition for vector bundles of bounded geometry, i.e., that they are always complemented (in a suitable way).

\begin{defn}[$C_b^\infty$-complemented vector bundles]
A vector bundle $E$ will be called \emph{$C_b^\infty$-complemented}, if there is some vector bundle $E^\perp$ such that $E \oplus E^\perp$ is $C_b^\infty$-isomorphic to a trivial bundle with the flat connection.
\end{defn}

Since a bundle with a flat connection is trivially of bounded geometry, we get that $E \oplus E^\perp$ is of bounded geometry. And since a direct sum $E \oplus E^\perp$ of vector bundles is of bounded geometry if and only if both vector bundles $E$ and $E^\perp$ are of bounded geometry, we conclude that if $E$ is $C_b^\infty$-complemented, then both $E$ and its complement $E^\perp$ are of bounded geometry. It is also clear that if $E$ is $C_b^\infty$-complemented and $F \cong E$, then $F$ is also $C_b^\infty$-complemented.

We will now prove the crucial fact that every vector bundle of bounded geometry is $C_b^\infty$-complemented. The proof is just the usual one for vector bundles over compact Hausdorff spaces, but we additionally have to take care of the needed uniform estimates. As a source for this usual proof the author used \cite[Proposition 1.4]{hatcher_VB}.

\begin{prop}\label{prop:every_bundle_complemented}
Let $M$ be a manifold of bounded geometry and let $E \to M$ be a vector bundle of bounded geometry.

Then $E$ is $C_b^\infty$-complemented.
\end{prop}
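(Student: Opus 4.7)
The plan is to mimic the classical compact-space proof that every vector bundle embeds as a direct summand of a trivial one, but carrying through uniform control at every step using the bounded geometry hypotheses. First, invoke Lemma \ref{lem:nice_coverings_partitions_of_unity} to fix a uniformly locally finite cover of $M$ by normal coordinate balls $\{B_{2\varepsilon}(x_i)\}$ with $\varepsilon$ small enough that $E$ is trivial on each $B_{2\varepsilon}(x_i)$ via a synchronous framing (Definition \ref{defn:synchronous_framings}), together with a subordinate partition of unity $\{\varphi_i\}$ whose derivatives are uniformly bounded in normal coordinates. The uniformly locally finite cover has multiplicity at most $C_\varepsilon$, so a standard greedy coloring of the indices yields a partition $\{1,2,\ldots\} = I_1 \sqcup \cdots \sqcup I_K$ with $K \le C_\varepsilon$ such that for each fixed color $k$ the balls $\{B_{2\varepsilon}(x_i)\}_{i \in I_k}$ are pairwise disjoint.

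Next, on each $B_{2\varepsilon}(x_i)$ fix a synchronous framing $\{s_{i,1},\ldots,s_{i,r}\}$ of $E$, where $r = \operatorname{rank} E$. For each color $k$ and each index $\alpha \in \{1,\ldots,r\}$, the sections $\{\sqrt{\varphi_i}\, s_{i,\alpha}\}_{i \in I_k}$ have pairwise disjoint supports and so assemble into a single globally defined section $\sigma_{k,\alpha}$ of $E$. Define a vector bundle homomorphism
\[
\Phi \colon E \longrightarrow M \times \mathbb{C}^{Kr}, \qquad v \longmapsto \bigl(\pi_M(v),\, (\langle v, \sigma_{k,\alpha}(\pi_M(v))\rangle)_{k,\alpha}\bigr).
\]
Because $\sum_i \varphi_i \equiv 1$, for every $x \in M$ there exists at least one color $k$ with $\sum_{i\in I_k}\varphi_i(x) \ge 1/K$, and on that color class the sections $\sigma_{k,\alpha}(x)$ span $E_x$. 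Hence $\Phi$ is fiberwise injective, so $\widetilde{E} := \Phi(E)$ is a subbundle of the trivial bundle $\underline{\mathbb{C}^{Kr}} := M \times \mathbb{C}^{Kr}$; let $E^\perp$ be its orthogonal complement with respect to the standard Hermitian metric and let $\underline{\mathbb{C}^{Kr}}$ carry the flat connection, so that $\widetilde{E} \oplus E^\perp = \underline{\mathbb{C}^{Kr}}$ as smooth Hermitian bundles.

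The main task is then to show that $\Phi \colon E \to \widetilde{E}$ is a $C_b^\infty$-isomorphism and that $E^\perp$ has bounded geometry; this reduces, via the orthogonal projection $P \colon \underline{\mathbb{C}^{Kr}} \to \widetilde{E}$, to showing that $P$ is $C^\infty$-bounded with respect to synchronous framings of $\widetilde{E}$ and $E^\perp$. Using the characterization of bounded geometry via transition functions between synchronous framings (Lemma \ref{lem:equiv_characterizations_bounded_geom_bundles}) together with the uniform bounds on $\varphi_i$ and on the change-of-coordinate maps (Lemmas \ref{lem:transition_functions_uniformly_bounded} and \ref{lem:nice_coverings_partitions_of_unity}), one checks that in any synchronous framing the matrix entries of $\Phi$ are polynomials in $\sqrt{\varphi_i}$ and in the entries of the framing transition functions, all of which are $C^\infty$-bounded uniformly in $i$. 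Applying Lemma \ref{lem:C_b_infty_Iso_equivalent}, it then suffices to verify that $\Phi$ and $\Phi^{-1}$ are uniformly bounded and that $\nabla^E - \Phi^*\nabla^{\widetilde{E}}$ has uniformly bounded covariant derivatives of all orders, which follows by local calculation in synchronous framings.

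The main obstacle is purely one of uniform bookkeeping: one must verify that the fiberwise inverse $\Phi^{-1}$ on $\widetilde{E}$ is itself uniformly $C^\infty$-bounded. This boils down to a uniform lower bound on the smallest singular value of the matrix expressing $\Phi$ in synchronous framings, which is supplied by the inequality $\sum_{i\in I_k}\varphi_i(x) \ge 1/K$ for some $k$ together with the uniform bound on transition matrices between overlapping synchronous framings of $E$. Once this bound is in place, Cramer's rule gives uniform $C^\infty$-bounds on $\Phi^{-1}$, and hence on the projection $P$ and on $E^\perp$, completing the proof.
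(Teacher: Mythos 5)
Your proof follows essentially the same route as the paper's: fix a uniformly locally finite cover by normal coordinate balls (Lemma \ref{lem:nice_coverings_partitions_of_unity}) with synchronous framings of $E$, color the balls so that same-colored balls are disjoint, and assemble a fiberwise-injective, uniformly bounded bundle map of $E$ into a finite-rank trivial bundle; $E^\perp$ is then the orthogonal complement. Where the paper finishes by handing the resulting idempotent matrix to Proposition \ref{prop:image_proj_matrix_complemented}, you run the same estimates more directly through Lemmas \ref{lem:C_b_infty_Iso_equivalent} and \ref{lem:equiv_characterizations_bounded_geom_bundles}; the content is the same (and in fact the lower-bound-on-the-smallest-singular-value step you sketch is precisely what makes the paper's ``due to our choice of $G$'' assertion true), so this is not a genuinely different argument.

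There is one genuine gap: you build the global sections $\sigma_{k,\alpha}$ from $\sqrt{\varphi_i}\,s_{i,\alpha}$, but $\sqrt{\varphi_i}$ need not be smooth. A nonnegative smooth function can have a non-$C^1$ square root (e.g.\ $x\mapsto x^4\sin^2(1/x)$ extended by $0$), and Lemma \ref{lem:nice_coverings_partitions_of_unity} asserts nothing about $\sqrt{\varphi_i}$. The paper avoids this by scaling the trivialization by $\varphi_i$ itself rather than $\sqrt{\varphi_i}$; this changes nothing in the rest of your argument, since the pointwise bound $\varphi_i(x)\ge 1/K$ for some color still yields the uniform lower singular-value bound you need (just with $1/K$ in place of $1/\sqrt K$). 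Alternatively you may record that the partition of unity can be chosen to consist of squares of $C^\infty_b$ functions. Either fix closes the gap, and the proof is then complete.
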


\begin{proof}
Since $M$ and $E$ have bounded geometry, we can find a uniformly locally finite cover of $M$ by normal coordinate balls of a fixed radius together with a subordinate partition of unity whose derivatives are all uniformly bounded and such that over each coordinate ball $E$ is trivialized via a synchronous framing. This follows basically from Lemma \ref{lem:nice_coverings_partitions_of_unity}.

Now we color the coordinate balls with finitely many colors so that no two balls with the same color do intersect.\footnote{\label{footnote:coloring}Construct a graph whose vertices are the coordinate balls and two vertices are connected by an edge if the corresponding coordinate balls do intersect. We have to find a coloring of this graph with only finitely many colors (where of course connected vertices do have different colors). To do this, we firstly use the theorem of de Bruijin--Erd\"{o}s stating that an infinite graph may be colored by $k$ colors if and only if every of its finite subgraphs may be colored by $k$ colors. Secondly, since the coordinate balls have a fixed radius and since our manifold has bounded geometry, the number of balls intersecting a fixed one is uniformly bounded from above. It follows that the number of edges attached to each vertex in our graph is uniformly bounded from above, i.e., the maximum vertex degree of our graph is finite. But this also holds for every subgraph of our graph, with the maximum vertex degree possibly only decreasing by passing to a subgraph. Now a simple greedy algorithm shows that every finite graph may be colored with one more color than its maximum vertex degree.} This gives a partition of the coordinate balls into $N$ families $U_1, \ldots, U_N$ such that every $U_i$ is a collection of disjoint balls, and we get a corresponding subordinate partition of unity $1 = \varphi_1 + \ldots + \varphi_N$ with uniformly bounded derivatives (each $\varphi_i$ is the sum of all the partition of unity functions of the coordinate balls of $U_i$). Furthermore, $E$ is trivial over each $U_i$ and we denote these trivializations coming from the synchronous framings by $h_i \colon p^{-1}(U_i) \to U_i \times \IC^k$, where $p\colon E \to M$ is the projection.

Now we set
\[g_i\colon E \to \IC^k, \ g_i(v) := \varphi_i(p(v)) \cdot \pi_i (h_i (v)),\]
where $\pi_i \colon U_i \times \IC^k \to \IC^k$ is the projection. Each $g_i$ is a linear injection on each fiber over $\varphi_i^{-1}(0,1]$ and so, if we define
\[g \colon E \to \IC^{Nk}, \ g(v) := (g_1(v), \ldots, g_N(v)),\]
we get a map $g$ that is a linear injection on each fiber of $E$. Finally, we define a map
\[G\colon E \to M \times \IC^{Nk}, \ G(v) := (p(v), g(v)).\]
This establishes $E$ as a subbundle of a trivial bundle.

If we equip $M \times \IC^{Nk}$ with a constant metric and the flat connection, we get that the induced metric and connection on $E$ is $C_b^\infty$-isomorphic to the original metric and connection on $E$ (this is due to our choice of $G$). Now let us denote by $e$ the projection matrix of the trivial bundle $\IC^{Nk}$ onto the subbundle $G(E)$ of it, i.e., $e$ is an $Nk \times Nk$-matrix with functions on $M$ as entries and $\image e = E$. Now, again due to our choice of $G$, we can conclude that these entries of $e$ are bounded functions with all derivatives of them also bounded, i.e., $e \in \Idem_{Nk \times Nk}(C_b^\infty(M))$. Now the claim follows with the Proposition \ref{prop:image_proj_matrix_complemented} which establishes the orthogonal complement $E^\perp$ of $E$ in $\IC^{Nk}$ with the induced metric and connection as a $C_b^\infty$-complement to $E$.
\end{proof}

We have seen in the above proposition that every vector bundle of bounded geometry is $C_b^\infty$-complemented. Now if we have a manifold of bounded geometry $M$, then its tangent bundle $TM$ is of bounded geometry and so we know that it is $C_b^\infty$-complemented (although $TM$ is real and not a complex bundle, the above proof of course also holds for real vector bundles). But in this case we usually want the complement bundle to be given by the normal bundle $NM$ coming from an embedding $M \hookrightarrow \IR^N$. We will prove this now under the assumption that the embedding of $M$ into $\IR^N$ is ``nice'':

\begin{cor}\label{cor:tangent_bundle_complemented}
Let $M$ be a manifold of bounded geometry and let it be isometrically embedded into $\IR^N$ such that the second fundamental form is $C^\infty$-bounded.

Then its tangent bundle $TM$ is $C_b^\infty$-complemented by the normal bundle $NM$ corresponding to this embedding $M \hookrightarrow \IR^N$, equipped with the induced metric and connection.
\end{cor}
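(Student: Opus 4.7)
The plan is to invoke the global criterion for $C_b^\infty$-isomorphisms from Lemma \ref{lem:C_b_infty_Iso_equivalent} and to identify the obstruction with the second fundamental form of the embedding, which is $C^\infty$-bounded by hypothesis.

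First I would set up the candidate isomorphism. Because the embedding $\iota\colon M\hookrightarrow \IR^N$ is isometric, restricting the trivial bundle $\IR^N_M := M\times \IR^N$ to $M$ gives an orthogonal splitting $\IR^N_M = TM\oplus NM$ as Euclidean vector bundles, where $TM$ carries the metric induced from $\IR^N$ (which coincides with the original Riemannian metric on $M$ by the isometric embedding) and $NM$ carries the induced metric. The tautological map $\varphi\colon TM\oplus NM\to \IR^N_M$ is a fiberwise linear isometry, hence both $\varphi$ and $\varphi^{-1}$ are automatically bounded and the first bullet of Lemma \ref{lem:C_b_infty_Iso_equivalent} is immediate. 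We equip $\IR^N_M$ with the flat connection $\nabla^0$ inherited from the trivial bundle $M\times \IR^N$ with its constant metric; this is the model bundle against which we have to compare.

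Next I would analyze the connection difference. Let $\nabla^M$ be the Levi-Civita connection of $M$ and $\nabla^{NM}$ the induced connection on the normal bundle, so that the connection on $TM\oplus NM$ is $\nabla^M\oplus \nabla^{NM}$. The Gauss and Weingarten formulas assert that for tangent fields $X,Y$ and a normal field $\nu$,
\begin{align*}
\nabla^0_X Y &= \nabla^M_X Y + II(X,Y),\\
\nabla^0_X \nu &= -A_\nu(X) + \nabla^{NM}_X \nu,
\end{align*}
where $II$ is the second fundamental form and $A_\nu$ its associated shape operator (determined by $II$ via $\langle A_\nu X, Y\rangle = \langle II(X,Y),\nu\rangle$). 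Hence the tensor field
\[ \nabla^0 - (\nabla^M\oplus \nabla^{NM}) \]
is, in block form with respect to the splitting $TM\oplus NM$, built entirely out of $II$ and $A$. Since $II$ is $C^\infty$-bounded by hypothesis and $A$ is an algebraic contraction of $II$ with the metric, the difference tensor and all its covariant derivatives (taken with respect to $\nabla^M\oplus \nabla^{NM}$) are uniformly bounded; here we also use that $M$ has bounded geometry so that the Levi-Civita connection itself is well-controlled in normal coordinates (Lemma \ref{lem:bounded_geometry_christoffel_symbols}). Translating back via $\varphi$, this says exactly that $\nabla^{TM\oplus NM} - \varphi^\ast \nabla^0$ is bounded with all covariant derivatives bounded.

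Applying Lemma \ref{lem:C_b_infty_Iso_equivalent} then yields that $\varphi\colon TM\oplus NM \to M\times\IR^N$ is a $C_b^\infty$-isomorphism onto the trivial bundle with flat connection, which is the definition of $TM$ being $C_b^\infty$-complemented by $NM$. The main technical point is really just a careful bookkeeping exercise: ensuring that "second fundamental form is $C^\infty$-bounded" is indeed precisely the data needed to feed into Lemma \ref{lem:C_b_infty_Iso_equivalent}, and that the Weingarten term on the normal side is controlled by the same data. No deeper obstacle arises, because the Gauss--Weingarten decomposition makes the connection comparison entirely explicit.
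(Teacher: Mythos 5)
Your route is genuinely different from the paper's. The paper does not invoke Lemma~\ref{lem:C_b_infty_Iso_equivalent} at all: it writes down the orthogonal projection $\pi\colon T\IR^N|_M \to TM$ explicitly as $\pi_{ij}(y) = \sum_\alpha \langle E_\alpha(y), v_j\rangle\langle E_\alpha(y), v_i\rangle$ for a Gram--Schmidt orthonormal frame $E_\alpha$ of $TM$, differentiates using the Gauss formula $\widetilde{\nabla}_{\partial_k}E_\alpha = \nabla^M_{\partial_k}E_\alpha + \operatorname{I\!\!\;I}(\partial_k, E_\alpha)$ to see the entries are $C^\infty$-bounded, and then appeals to Point~1 of Proposition~\ref{prop:image_proj_matrix_complemented}. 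Both approaches feed in the same geometric input (Gauss--Weingarten plus $C^\infty$-boundedness of $\operatorname{I\!\!\;I}$ and bounded geometry of $M$), but you verify the global characterization of a $C_b^\infty$-isomorphism, while the paper verifies that the projection is an idempotent over $C_b^\infty(M)$; the latter is arguably the more streamlined path since Proposition~\ref{prop:image_proj_matrix_complemented} was already in hand and directly produces $NM = \image(1-\pi)$ as the complement.

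There is, however, a gap in your argument as written: Lemma~\ref{lem:C_b_infty_Iso_equivalent} has as a standing hypothesis that \emph{both} bundles $E$ and $F$ have bounded geometry, and its proof of the converse direction really uses this (it differentiates $X(\varphi) = \varphi(\Gamma^E - \Gamma^{\mathrm{diff}}) - \Gamma^F \varphi$, so boundedness of $\Gamma^E$ and $\Gamma^F$ in synchronous framings is needed). Here $F = M\times\IR^N$ is flat and $TM$ has bounded geometry since $M$ does, but you have not yet established that $NM$ (with its induced connection) has bounded geometry, and this is not automatic from the statement. To close the gap you should observe, before invoking the lemma, that the Ricci equation for a submanifold of flat $\IR^N$ expresses $R^{NM}$ as a quadratic contraction of $\operatorname{I\!\!\;I}$, so $C^\infty$-boundedness of $\operatorname{I\!\!\;I}$ forces $R^{NM}$ and all its covariant derivatives to be bounded, i.e.\ $NM$ has bounded geometry. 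Once that is in place your application of Lemma~\ref{lem:C_b_infty_Iso_equivalent} is legitimate and the rest of your argument goes through. The paper's route via the idempotent sidesteps this entirely, since Proposition~\ref{prop:image_proj_matrix_complemented} produces the complement $\image(1-\pi)$ with the required properties from the boundedness of the entries of $\pi$ alone, without needing to verify bounded geometry of $NM$ in advance.
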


\begin{proof}
Let $M$ be isometrically embedded in $\IR^N$. Then its tangent bundle $TM$ is a subbundle of $T\IR^N$ and we denote the projection onto it by $\pi\colon T\IR^N \to TM$. Because of Point 1 of the following Proposition \ref{prop:image_proj_matrix_complemented} it suffices to show that the entries of $\pi$ are $C^\infty$-bounded functions.

Let $\{v_i\}$ be the standard basis of $\IR^N$ and let $\{E_\alpha(y)\}$ be the orthonormal frame of $TM$ arising out of normal coordinates $\{\partial_k\}$ of $M$ via the Gram-Schmidt process. Then the entries of the projection matrix $\pi$ with respect to the basis $\{v_i\}$ are given by
\[\pi_{ij}(y) = \sum_\alpha \langle E_\alpha(y), v_j\rangle \langle E_\alpha(y), v_i\rangle.\]

Let $\widetilde{\nabla}$ denote the flat connection on $\IR^N$. Since $\widetilde{\nabla}_{\partial_k} v_i = 0$ we get
\[\partial_k \pi_{ij} (y) = \sum_\alpha \langle \widetilde{\nabla}_{\partial_k} E_\alpha (y), v_j\rangle \langle E_\alpha(y), v_i\rangle + \langle E_\alpha(y), v_j\rangle \langle \widetilde{\nabla}_{\partial_k} E_\alpha (y), v_i\rangle.\]
Now if we denote by $\nabla^M$ the connection on $M$, we get
\[\widetilde{\nabla}_{\partial_k} E_\alpha(y) = \nabla^{M}_{\partial_k} E_\alpha(y) + \operatorname{I\!\!\;I}(\partial_k, E_\alpha),\]
where $\operatorname{I\!\!\;I}$ is the second fundamental form. So to show that $\pi_{ij}$ is $C^\infty$-bounded, we must show that $E_\alpha(y)$ are $C^\infty$-bounded sections of $TM$ (since by assumption the second fundamental form is a $C^\infty$-bounded tensor field). But that these $E_\alpha(y)$ are $C^\infty$-bounded sections of $TM$ follows from their construction (i.e., applying Gram-Schmidt to the normal coordinate fields $\partial_k$) and because $M$ has bounded geometry (we need here the characterization of bounded geometry via the metric coefficients from Lemma \ref{lem:bounded_geometry_christoffel_symbols}).
\end{proof}

\subsection*{Interpretation of \texorpdfstring{$K^0_u(M)$}{even uniform K(M)}}

Recall for the understanding of the following proposition that if a vector bundle is $C_b^\infty$-complemented, then it is of bounded geometry. Furthermore, this proposition is the crucial one that gives us the description of uniform $K$-theory via vector bundles of bounded geometry.

\begin{prop}\label{prop:image_proj_matrix_complemented}
Let $M$ be a manifold of bounded geometry.
\begin{enumerate}
\item Let $e \in \Idem_{N \times N}(C_b^\infty(M))$ be an idempotent matrix.

Then the vector bundle $E := \image e$, equipped with the induced metric and connection, is $C_b^\infty$-complemented.

\item Let $E$ be a $C_b^\infty$-complemented vector bundle, i.e., there is a vector bundle $E^\perp$ such that $E \oplus E^\perp$ is $C_b^\infty$-isomorphic to the trivial $N$-dimensional bundle $\IC^N \to M$.

Then all entries of the projection matrix $e$ onto the subspace $E \oplus 0 \subset \IC^N$ with respect to a global synchronous framing of $\IC^N$ are $C^\infty$-bounded, i.e., we have $e \in \Idem_{N \times N}(C_b^\infty(M))$.
\end{enumerate}
\end{prop}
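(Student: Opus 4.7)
\emph{Part (1).} The trivial bundle $\IC^N$ with standard Hermitian metric and flat connection $d$ splits orthogonally as $\IC^N = E \oplus E^\perp$ with $E = \image e$ and $E^\perp = \image(1-e)$, and I would equip both with the restricted metric and induced connections $\nabla^E = e \circ d$, $\nabla^{E^\perp} = (1-e) \circ d$. The claim is that (a) $E$ and $E^\perp$ have bounded geometry and (b) the tautological identification $E \oplus E^\perp \to \IC^N$ is a $C_b^\infty$-isomorphism. For (a), the idempotent identities $(de)e + e(de) = de$ and $e(de)e = 0$ (both consequences of $e^2 = e$) reduce the curvature of $\nabla^E$ on a section $s \in \Gamma(E)$ to the polynomial expression
\begin{equation*}
R^E(X,Y)\,s \;=\; e \cdot [d_X e,\, d_Y e]\, s,
\end{equation*}
where the would-be contribution from the curvature of $d$ vanishes by flatness; hence $R^E$ and all its covariant derivatives are $C^\infty$-bounded, and the same argument handles $E^\perp$. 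For (b), a direct computation with the same identities shows that the difference of connections, viewed as an endomorphism-valued $1$-form on $\IC^N$, equals
\begin{equation*}
d - \nabla^{E \oplus E^\perp} \;=\; (1-2e)\,de,
\end{equation*}
which is $C^\infty$-bounded; combined with the bounded geometry of both sides, Lemma \ref{lem:C_b_infty_Iso_equivalent} then identifies the identity map as a $C_b^\infty$-isomorphism.

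\emph{Part (2).} Given a $C_b^\infty$-isomorphism $\varphi\colon E \oplus E^\perp \to \IC^N$ and synchronous framings $\{E_1,\dots,E_k\}$ of $E$ together with the standard framing of $\IC^N$, let $V$ be the $N \times k$ matrix whose $i$-th column is the coordinate vector of $\varphi(E_i)$. Its entries are $C^\infty$-bounded since $\varphi$ is, and the desired projection onto $\varphi(E \oplus 0)$ is the classical formula
\begin{equation*}
e \;=\; V(V^*V)^{-1}V^*.
\end{equation*}
The key point is that $(V^*V)^{-1}$ is $C^\infty$-bounded: orthonormality of $\{E_i\}$ and boundedness of $\varphi^{-1}$ by some constant $C>0$ give, for every $\alpha \in \IC^k$,
\begin{equation*}
\|V\alpha\| \;=\; \bigl\|\varphi\bigl(\textstyle\sum_i \alpha_i E_i\bigr)\bigr\| \;\ge\; C^{-1}\,\bigl\|\textstyle\sum_i \alpha_i E_i\bigr\| \;=\; C^{-1}\|\alpha\|,
\end{equation*}
so the Gram matrix $V^*V$ has eigenvalues uniformly bounded below by $C^{-2}$; since its entries are $C^\infty$-bounded, Cramer's rule (exactly as in the end of the proof of Lemma \ref{lem:C_b_infty_Iso_equivalent}) yields $C^\infty$-boundedness of $(V^*V)^{-1}$, and therefore of $e$.

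\emph{Main obstacle.} The one nontrivial step is the curvature computation in part (1); it is a careful algebraic manipulation that crucially exploits $e(de)e = 0$ to collapse $R^E$ to a polynomial in $e$ and $de$ alone. Once this is in hand, the rest of (1) and all of (2) reduce to linear algebra and the general criterion in Lemma \ref{lem:C_b_infty_Iso_equivalent}.
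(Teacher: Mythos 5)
Your proposal is correct, and both parts take genuinely different routes from the paper. For Part 1, the paper proves that the tautological map $\varphi\colon E \oplus E^\perp \to \IC^N$ has $C^\infty$-bounded entries by a hands-on ODE argument: choose synchronous framings of $E$ and $E^\perp$, write out the first-order ODE they satisfy along radial geodesics in terms of the Christoffel symbols $\Gamma^\mu_{i\nu}$, observe that these are polynomial in $e$ and $\partial e$ and hence $C^\infty$-bounded, apply the theory of ODEs to bound the change-of-basis matrix $\varphi$, and finish with Cramer's rule for $\varphi^{-1}$. You instead derive the coordinate-free algebraic identities $R^E(X,Y) = e[d_X e, d_Y e]$ and $d - \nabla^{E\oplus E^\perp} = (1-2e)\,de$ from $e^2 = e$ and $e(de)e = 0$, obtain bounded geometry of $E$ and $E^\perp$ directly, and then invoke Lemma \ref{lem:C_b_infty_Iso_equivalent} --- a criterion the paper states earlier but does not actually use in its own proof of this proposition. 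Your route is cleaner and avoids the ODE estimates, at the cost of having the ``$C_b^\infty$-isomorphism iff bounded geometry plus bounded connection-difference'' characterization do all the work. For Part 2, the paper produces the conjugated idempotent $e = \varphi^{-1} P \varphi$ (with $P$ the constant coordinate projection in the $E\oplus E^\perp$ frame), which is a not-necessarily-self-adjoint element of $\Idem_{N\times N}(C_b^\infty(M))$; you produce the genuine orthogonal projection $e = V(V^*V)^{-1}V^*$, which requires the extra step of bounding $V^*V$ uniformly below via $\|\varphi^{-1}\|$ so that Cramer's rule gives $C^\infty$-boundedness of $(V^*V)^{-1}$. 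Both yield a valid element of $\Idem_{N\times N}(C_b^\infty(M))$ representing the same $K_0$-class, and both proofs are correct.
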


\begin{proof}[Proof of point 1]
We denote by $E$ the vector bundle $E := \image e$ and by $E^\perp$ its complement $E^\perp := \image (1-e)$ and equip them with the induced metric and connection. So we have to show that $E \oplus E^\perp$ is $C_b^\infty$-isomorphic to the trivial bundle $\IC^N \to M$.

Let $\varphi\colon E \oplus E^\perp \to \IC^N$ be the canonical algebraic isomorphism $\varphi(v,w) := v + w$. We have to show that both $\varphi$ and $\varphi^{-1}$ are $C^\infty$-bounded. 

Let $p \in M$. Let $\{ E_\alpha \}$ be an orthonormal basis of the vector space $E_p$ and $\{ E^\perp_\beta \}$ an orthonormal basis of $E^\perp_p$. Then the set $\{ E_\alpha, E^\perp_\beta \}$ is an orthonormal basis for $\IC_p^N$. We extend $\{E_\alpha\}$ to a synchronous framing $\{E_\alpha(y)\}$ of $E$ and $\{E^\perp_\beta\}$ to a synchronous framing $\{E^\perp_\beta(y)\}$ of $E^\perp$. Since $\IC^N$ is equipped with the flat connection, the set $\{ E_\alpha, E^\perp_\beta \}$ forms a synchronous framing for $\IC^N$ at all points of the normal coordinate chart. Then $\varphi(y)$ is the change-of-basis matrix from the basis $\{E_\alpha(y), E_\beta^\perp(y)\}$ to the basis $\{ E_\alpha, E^\perp_\beta \}$ and vice versa for $\varphi^{-1}(y)$; see the Illustration \ref{fig:frames} on the next page.

\begin{figure}[htbp]
\centering
\includegraphics[scale=0.7]{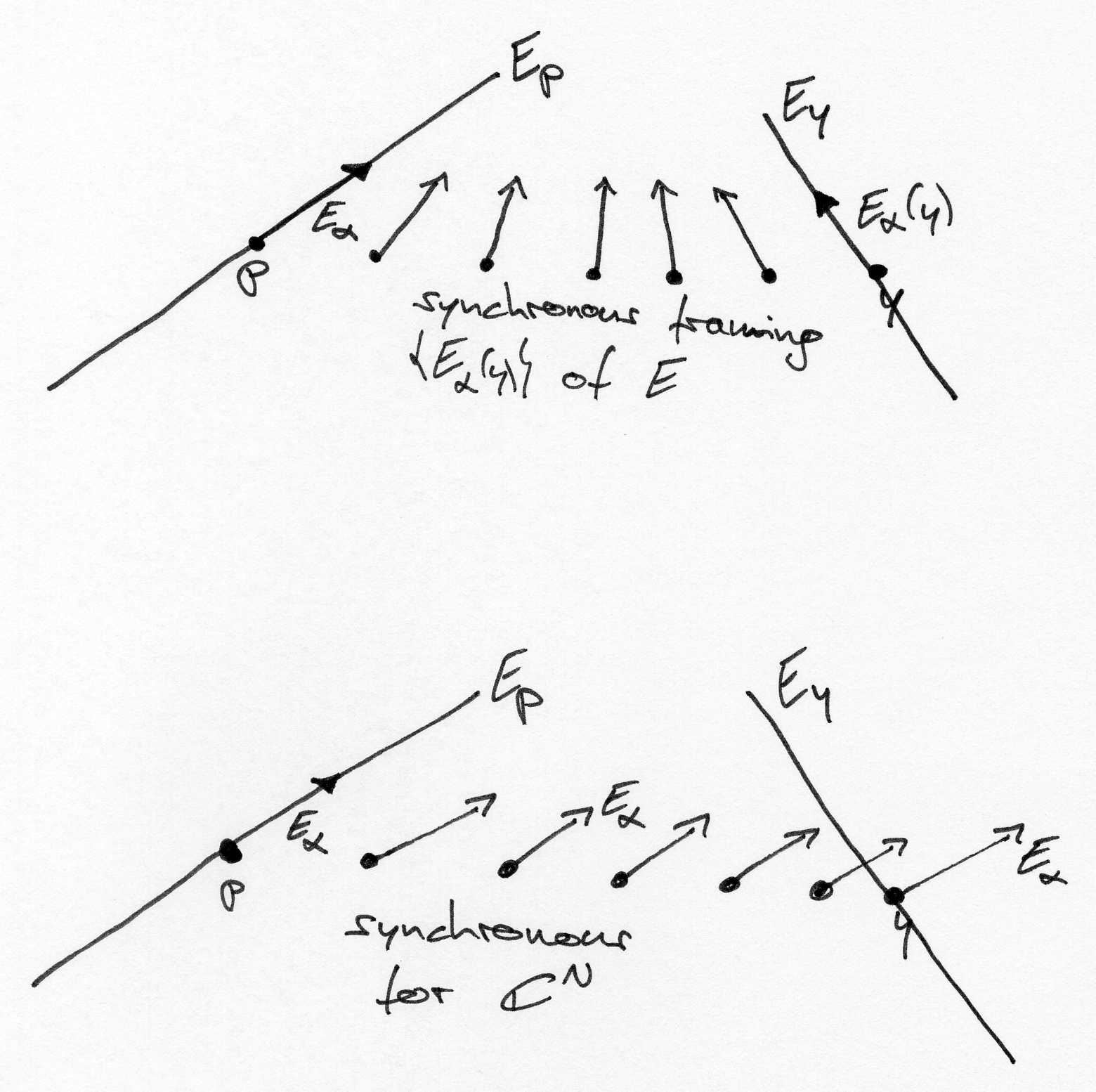}
\caption{The frames $\{E_\alpha(y)\}$ and $\{E_\alpha\}$.}
\label{fig:frames}
\end{figure}

We have $e(p)(E_\alpha) = E_\alpha$. Since the entries of $e$ are $C^\infty$-bounded and the rank of a matrix is a lower semi-continuous function of the entries, there is some geodesic ball $B$ around $p$ such that $\{ e(y)(E_\alpha) \}$ forms a basis of $E_y$ for all $y \in B$ and the diameter of the ball $B$ is bounded from below independently of $p \in M$. We denote by $\Gamma_{i \nu}^\mu(y)$ the Christoffel symbols of $E$ with respect to the frame $\{ e(y)(E_\alpha) \}$. Let $\gamma(t)$ be a radial geodesic in $M$ with $\gamma(0) = p$. If we now let $E_\alpha(\gamma(t))^\mu$ denote the $\mu$th entry of the vector $E_\alpha(\gamma(t))$ represented in the basis $\{ e(\gamma(t))(E_\alpha) \}$, then (since it is a synchronous frame) it satisfies the ODE
\[ \tfrac{d}{dt} E_\alpha(\gamma(t))^\mu = -\sum_{i,\nu} E_\alpha(\gamma(t))^\nu \cdot \tfrac{d}{dt}{\gamma_i}(t) \cdot \Gamma_{i \nu}^\mu(\gamma(t)),\]
where $\{\gamma_i\}$ is the coordinate representation of $\gamma$ in normal coordinates $\{x_i\}$. Since $\gamma$ is a radial geodesic, its representation in normal coordinates is $\gamma_i(t) = t \cdot \gamma_i(0)$ and so the above formula simplifies to
\begin{equation}\label{eq:ODE_synchronous_frame}
\tfrac{d}{dt} E_\alpha(\gamma(t))^\mu = -\sum_{i,\nu} E_\alpha(\gamma(t))^\nu \cdot \gamma_i(0) \cdot \Gamma_{i \nu}^\mu(\gamma(t)).
\end{equation}

Since $\Gamma_{i \nu}^\mu(y)$ are the Christoffel symbols with respect to the frame $\{ e(y)(E_\alpha) \}$, we get the equation
\begin{equation}\label{eq:christoffel_symbols_representation}
\sum_{\mu} \Gamma_{i \nu}^\mu(y) \cdot e(y)(E_\mu) = \nabla^E_{\partial_i} e(y)(E_\nu).
\end{equation}
Now using that $\nabla^E$ is induced by the flat connection, we get
\[\nabla^E_{\partial_i} e(y)(E_\nu) = e (\partial_i(e(y)(E_\nu))) = e((\partial_i e)(y)(E_\nu)),\]
i.e., $e((\partial_i e)(y)(E_\nu))$ is the representation of $\nabla^E_{\partial_i} e(y)(E_\nu)$ with respect to the frame $\{E_\alpha, E_\beta^\perp\}$. Since the entries of $e$ are $C^\infty$-bounded, the entries of this representation $e((\partial_i e)(y)(E_\nu))$ are also $C^\infty$-bounded. From Equation \eqref{eq:christoffel_symbols_representation} we see that $\Gamma_{i \nu}^\mu(y)$ is the representation of $\nabla^E_{\partial_i} e(y)(E_\nu)$ in the frame $\{e(y)(E_\mu)\}$. So we conclude that the Christoffel symbols $\Gamma_{i \nu}^\mu(y)$ are $C^\infty$-bounded functions.

Equation \eqref{eq:ODE_synchronous_frame} and the theory of ODEs now tell us that the functions $E_\alpha(y)^\mu$ are $C^\infty$-bounded. Since these are the representations of the vectors $E_\alpha(y)$ in the basis $\{ e(y)(E_\alpha) \}$, we can conclude that the entries of the representations of the vectors $E_\alpha(y)$ in the basis $\{E_\alpha, E_\beta^\perp\}$ are $C^\infty$-bounded. But now these entries are exactly the first $(\dim E)$ columns of the change-of-basis matrix $\varphi(y)$.

Arguing analogously for the complement $E^\perp$, we get that the other columns of $\varphi(y)$ are also $C^\infty$-bounded, i.e., $\varphi$ itself is $C^\infty$-bounded.

It remains to show that the inverse homomorphism $\varphi^{-1}$ is $C^\infty$-bounded. But since pointwise it is given by the inverse matrix, i.e., $\varphi^{-1}(y) = \varphi(y)^{-1}$, this claim follows immediately from Cramer's rule, because we already know that $\varphi$ is $C^\infty$-bounded.
\end{proof}

\begin{proof}[Proof of point 2]
Let $\{E_\alpha(y)\}$ be a synchronous framing for $E$ and $\{E_\beta^\perp(y)\}$ one for $E^\perp$. Then $\{E_\alpha(y), E_\beta^\perp(y)\}$ is one for $E \oplus E^\perp$. Furthermore, let $\{v_i(y)\}$ be a synchronous framing for the trivial bundle $\IC^N$ and let $\varphi\colon E \oplus E^\perp \to \IC^N$ be the $C_b^\infty$-isomorphism.

Then projection matrix $e \in \Idem_{N \times N}(C^\infty(M))$ onto the subspace $E \oplus 0$ is given with respect to the basis $\{E_\alpha(y), E_\beta^\perp(y)\}$ of $E \oplus E^\perp$ and of $\IC^N$ by the usual projection matrix onto the first $(\dim E)$ vectors, i.e., its entries are clearly $C^\infty$-bounded since they are constant. Now changing the basis to $\{v_i(y)\}$, the representation of $e(y)$ with respect to this new basis is given by $\varphi^{-1}(y) \cdot e \cdot \varphi(y)$, i.e., $e \in \Idem_{N \times N}(C_b^\infty(M))$.
\end{proof}

If we have a $C_b^\infty$-complemented vector bundle $E$, then different choices of complements and different choices of isomorphisms with the trivial bundle lead to similar projection matrices. The proof of this is analogous to the corresponding proof in the usual case of vector bundles over compact Hausdorff spaces. We also get that $C_b^\infty$-isomorphic vector bundles produce similar projection matrices. Of course this also works the other way round, i.e., similar idempotent matrices give us $C_b^\infty$-isomorphic vector bundles. Again, the proof of this is the same as the one in the topological category.

\begin{defn}
Let $M$ be a manifold of bounded geometry. We define
\begin{itemize}
\item $\Vect_u(M)/_\sim$ as the abelian monoid of all complex vector bundles of bounded geometry over $M$ modulo $C_b^\infty$-isomorphism (the addition is given by the direct sum $[E] + [F] := [E \oplus F]$) and
\item $\Idem(C_b^\infty(M))/_\sim$ as the abelian monoid of idempotent matrizes of arbitrary size over the \Frechet algebra $C_b^\infty(M)$ modulo similarity (with addition defined as $[e] + [f] := \left[\begin{pmatrix}e&0\\0&f\end{pmatrix}\right]$).
\end{itemize}
\end{defn}

Let $f\colon M \to N$ be a $C^\infty$-bounded map\footnote{We use covers of $M$ and of $N$ via normal coordinate charts of a fixed radius and demand that locally in this charts the derivatives of $f$ are all bounded and these bounds are independent of the chart used.} and $E$ a vector bundle of bounded geometry over $N$. Then it is clear that the pullback bundle $f^\ast E$ equipped with the pullback metric and connection is a vector bundle of bounded geometry over $M$.

The above discussion together with Proposition \ref{prop:image_proj_matrix_complemented} prove the following corollary:

\begin{cor}\label{cor:two_monoids_isomorphic}
The monoids $\Vect_u(M)/_\sim$ and $\Idem(C_b^\infty(M))/_\sim$ are isomorphic and this isomorphism is natural with respect to $C^\infty$-bounded maps between manifolds.
\end{cor}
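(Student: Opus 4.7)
The plan is to assemble the corollary from the machinery already in place, namely Proposition \ref{prop:every_bundle_complemented} (every bundle of bounded geometry is $C_b^\infty$-complemented) together with the two directions of Proposition \ref{prop:image_proj_matrix_complemented}, and the well-definedness statements announced in the paragraph immediately preceding the corollary. Concretely, I would define two maps and check that they are mutually inverse monoid homomorphisms that are natural in $C^\infty$-bounded maps.

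First I would define $\Phi \colon \Vect_u(M)/_\sim \to \Idem(C_b^\infty(M))/_\sim$ as follows. Given a vector bundle $E$ of bounded geometry, Proposition \ref{prop:every_bundle_complemented} produces a $C_b^\infty$-complement $E^\perp$ together with a $C_b^\infty$-isomorphism $E \oplus E^\perp \to \IC^N$. Point~2 of Proposition \ref{prop:image_proj_matrix_complemented} then says that the projection of $\IC^N$ onto the subbundle $E \oplus 0$, written in a global synchronous framing, lies in $\Idem_{N \times N}(C_b^\infty(M))$, and I set $\Phi([E]) := [e]$. In the reverse direction I define $\Psi([e]) := [\image e]$, where $\image e$ carries the induced metric and connection from the trivial bundle; Point~1 of Proposition \ref{prop:image_proj_matrix_complemented} guarantees that this image is $C_b^\infty$-complemented and in particular of bounded geometry.

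The next block of steps is to verify that $\Phi$ and $\Psi$ are well-defined monoid homomorphisms. Well-definedness of $\Phi$ requires showing that two different choices of complement, of trivializing isomorphism, or of synchronous framing give rise to similar idempotent matrices, and moreover that $C_b^\infty$-isomorphic bundles produce similar matrices; similarly one needs that similar idempotents in $\Idem(C_b^\infty(M))$ have $C_b^\infty$-isomorphic images. These are precisely the assertions indicated just before the statement of the corollary, and the argument runs exactly as in the topological Serre--Swan theorem over a compact Hausdorff space, except that one uses Lemma \ref{lem:C_b_infty_Iso_equivalent} to check that each step respects the $C^\infty$-bounded estimates on the implementing matrices and their inverses. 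The homomorphism property for direct sums is immediate, since $E \oplus F$ is complemented by $E^\perp \oplus F^\perp$ inside $\IC^{N+N'}$ and the corresponding projection is the block-diagonal sum of the projections for $E$ and $F$.

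That $\Psi$ and $\Phi$ are mutually inverse then comes for free from the two points of Proposition \ref{prop:image_proj_matrix_complemented}: starting from a bundle $E$, the bundle $\image e$ attached to its projection is canonically $C_b^\infty$-isomorphic to $E$ by construction, and starting from $e$, forming the projection onto $\image e \oplus 0$ in the trivial bundle yields back a matrix similar to $e$. Finally, for naturality, let $f\colon M \to N$ be $C^\infty$-bounded and $E \to N$ a bundle of bounded geometry represented by $e \in \Idem_{N \times N}(C_b^\infty(N))$; then $f^*E$ is represented by the matrix $f^*e$ with entries $e_{ij}\circ f$, and the chain rule together with the $C^\infty$-boundedness of $f$ ensures $f^*e \in \Idem_{N \times N}(C_b^\infty(M))$, so pullback on bundles matches pullback on idempotents under $\Phi$. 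I expect the main obstacle to be the well-definedness part: one must carry out the classical Serre--Swan similarity arguments while at every step producing similarities whose implementing matrices are $C^\infty$-bounded with $C^\infty$-bounded inverses, which is a routine but tedious application of Lemma \ref{lem:C_b_infty_Iso_equivalent} and Cramer's rule (as already used in the proof of Proposition \ref{prop:image_proj_matrix_complemented}).
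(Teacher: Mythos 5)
Your proposal is correct and follows essentially the same route as the paper: the two directions come from Proposition \ref{prop:image_proj_matrix_complemented} (with Proposition \ref{prop:every_bundle_complemented} ensuring totality of $\Phi$), well-definedness is the similarity discussion in the paragraph preceding the corollary, and naturality is the pullback observation the paper states just before the corollary. The only thing worth emphasizing a touch more is that the ``canonical'' $C_b^\infty$-isomorphism between $E$ and $\image e$ in the composite $\Psi\circ\Phi$ does rely on the specific construction in the proof of Proposition \ref{prop:every_bundle_complemented}, which explicitly arranges that the metric and connection induced on $E$ as a subbundle of the trivial bundle are $C_b^\infty$-isomorphic to the given ones.
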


From this Corollary \ref{cor:two_monoids_isomorphic}, Lemma \ref{lem:C_b_infty_local} and Proposition \ref{prop:every_bundle_complemented} we immediately get the following interpretation of the $0$th uniform $K$-theory group $K^0_u(M)$ of a manifold of bounded geometry:

\begin{thm}[Interpretation of $K^0_u(M)$]\label{thm:interpretation_K0u}
Let $M$ be a manifold of bounded geometry. Then every element of $K^0_u(M)$ is of the form $[E] - [F]$, where $[E]$ and $[F]$ are $C_b^\infty$-isomorphism classes of complex vector bundles of bounded geometry over $M$.

Furthermore, every such vector bundle defines an element in $K^0_u(M)$.
\end{thm}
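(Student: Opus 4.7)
The plan is to assemble the result from ingredients already established in the chapter. First I would apply Lemma \ref{lem:equivalent_defns_uniform_k_theory} to rewrite
\[
K^0_u(M) = K_0(C_u(M)) \cong K_0(C_b^\infty(M)),
\]
and then use the standard description of the operator $K_0$-group of a unital local $C^\ast$-algebra as the Grothendieck group of the abelian monoid of similarity classes of idempotent matrices; for $C_b^\infty(M)$ this is legitimate because $C_b^\infty(M)$ is a local $C^\ast$-algebra by Lemma \ref{lem:C_b_infty_local}, so its operator $K$-theory agrees with that of its completion and can be computed from finite idempotent matrices over $C_b^\infty(M)$ itself. Thus
\[
K^0_u(M) \cong \operatorname{Groth}\bigl(\Idem(C_b^\infty(M))/_\sim\bigr).
\]

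Next I would invoke the monoid isomorphism
\[
\Idem(C_b^\infty(M))/_\sim \;\cong\; \Vect_u(M)/_\sim
\]
from Corollary \ref{cor:two_monoids_isomorphic}. Combining these two steps yields a natural identification
\[
K^0_u(M) \;\cong\; \operatorname{Groth}\bigl(\Vect_u(M)/_\sim\bigr),
\]
and hence every class in $K^0_u(M)$ has the claimed form $[E]-[F]$ with $E,F$ vector bundles of bounded geometry over $M$, $C_b^\infty$-isomorphism classes being the equivalence in question.

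For the second assertion, I would argue directly: given an arbitrary vector bundle $E$ of bounded geometry over $M$, Proposition \ref{prop:every_bundle_complemented} guarantees that $E$ is $C_b^\infty$-complemented, i.e.\ there is $E^\perp$ with $E \oplus E^\perp \cong_{C_b^\infty} \mathbb{C}^N$. By Point 2 of Proposition \ref{prop:image_proj_matrix_complemented}, the orthogonal projection onto $E$ then has entries in $C_b^\infty(M)$, producing an idempotent $e \in \Idem_{N\times N}(C_b^\infty(M))$ whose similarity class corresponds to $[E]$ under the isomorphism above. This idempotent represents a class in $K_0(C_b^\infty(M)) = K^0_u(M)$, giving the desired element.

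There is no serious obstacle here; the work has been shifted into the preceding lemmas. The only point that requires a moment of care is the passage from the Grothendieck picture (which a priori uses idempotents in matrix algebras over the unitization of $C_b^\infty(M)$) to idempotents in matrix algebras over $C_b^\infty(M)$ itself: this is automatic because $C_b^\infty(M)$ is unital (the constant function $1$ lies in it) and because all bundles appearing from vector bundles of bounded geometry are genuine subbundles of a trivial bundle via the projection $e$, so no formal unit adjunction is ever needed. Everything else is a direct concatenation of Lemma \ref{lem:equivalent_defns_uniform_k_theory}, Corollary \ref{cor:two_monoids_isomorphic}, Proposition \ref{prop:every_bundle_complemented}, and Proposition \ref{prop:image_proj_matrix_complemented}.
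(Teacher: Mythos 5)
Your proof is correct and follows essentially the same route as the paper, which also derives the theorem immediately from Corollary \ref{cor:two_monoids_isomorphic}, the fact that $C_b^\infty(M)$ is a local $C^\ast$-algebra, and Proposition \ref{prop:every_bundle_complemented}. You simply spell out the intermediate steps (passing through Lemma \ref{lem:equivalent_defns_uniform_k_theory} and Proposition \ref{prop:image_proj_matrix_complemented}) that the paper compresses into a single sentence.
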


\subsection*{Interpretation of \texorpdfstring{$K^1_u(M)$}{odd uniform K(M)}}

For the interpretation of $K^1_u(M)$ we will make use of suspensions of algebras. The suspension isomorphism theorem for operator $K$-theory states that we have an isomorphism $K_1(C_u(M)) \cong K_0(S C_u(M))$, where $S C_u(M)$ is the suspension of $C_u(M)$:
\begin{align*}
S C_u(M) & := \{ f \colon S^1 \to C_u(M) \ | \ f \text{ continuous and } f(1) = 0\}\\
& \cong \{ f \in C_u(S^1 \times M) \ | \ f(1, x) = 0 \text{ for all }x \in M\}.
\end{align*}
Equipped with the sup-norm this is again a $C^\ast$-algebra. Since functions $f \in S C_u(M)$ are uniformly continuous, the condition $f(1, x) = 0$ for all $x \in M$ is equivalent to $\lim_{t \to 1} f(t, x) = 0 \text{ uniformly in } x$.

Now in order to interpret $K_0(S C_u(M))$ via vector bundles of bounded geometry over $S^1 \times M$, we will need to find a suitable \Frechet subalgebra of $S C_u(M)$ so that we can again use Proposition \ref{prop:image_proj_matrix_complemented}. Luckily, this was already done by Phillips in his paper \cite{phillips}:

\begin{defn}[Smooth suspension of a \Frechet algebras, {\cite[Definition 4.7]{phillips}}]\label{defn:smooth_suspension_algebra}
Let $A$ be a \Frechet algebra. Then the \emph{smooth suspension $S_\infty A$ of $A$} is defined as the \Frechet algebra
\[ S_\infty A := \{ f\colon S^1 \to A \ | \ f \text{ smooth and } f(1) = 0\}\]
equipped with the topology of uniform convergence of every derivative in every seminorm of $A$.
\end{defn}

For a manifold $M$ we have
\begin{align*}
S_\infty C_b^\infty(M) & \cong \{ f \in C_b^\infty(S^1 \times M) \ | \ f(1, x) = 0 \text{ for all }x \in M\} \\
& = \{ f \in C_b^\infty(S^1 \times M) \ | \ \forall k \in \IN_0 \colon \lim_{t \to 1} \nabla^k_x f(t, x) = 0 \text{ uniformly in } x\}.
\end{align*}

The proof of the following lemma is analogous to the proof of the Lemma \ref{lem:C_b_infty_local}:

\begin{lem}
Let $M$ be a manifold of bounded geometry.

Then the sup-norm completion of $S_\infty C_b^\infty(M)$ is $S C_u(M)$ and $S_\infty C_b^\infty(M)$ is a local $C^\ast$-algebra.
\end{lem}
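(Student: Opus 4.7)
The plan is to mirror the combined strategy of Lemmas \ref{lem:C_b_infty_local} and \ref{lem:norm_completion_C_b_infty}, adjusting only for the extra $S^1$-factor and the vanishing condition at $1 \in S^1$.

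For the identification of the sup-norm completion, I would first check the obvious inclusion $S_\infty C_b^\infty(M) \subset S C_u(M)$: a smooth function on $S^1 \times M$ with bounded derivatives (and all $\nabla_x^k$-derivatives vanishing uniformly as $t \to 1$) is automatically uniformly continuous on $S^1 \times M$ and vanishes on $\{1\} \times M$. For the reverse inclusion, given $f \in S C_u(M)$ and $\varepsilon > 0$, I would first cut $f$ away from $\{1\} \times M$: pick a smooth cutoff $\chi_\delta \in C^\infty(S^1)$ with $\chi_\delta \equiv 0$ on a neighbourhood of $1$, $\chi_\delta \equiv 1$ outside a slightly larger neighbourhood, and $0 \le \chi_\delta \le 1$. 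Since $f$ is uniformly continuous and vanishes on $\{1\} \times M$, the function $f(t,x)$ tends to $0$ as $t \to 1$ uniformly in $x$, hence $\|f - \chi_\delta f\|_\infty < \varepsilon/2$ for $\delta$ small enough. Then I would apply the mollification argument from Lemma \ref{lem:norm_completion_C_b_infty} to $\chi_\delta f$ on the product manifold $S^1 \times M$ (which itself has bounded geometry), producing a smooth approximant $f_\varepsilon \in C_b^\infty(S^1 \times M)$ with $\|f_\varepsilon - \chi_\delta f\|_\infty < \varepsilon/2$. Because $\chi_\delta f$ vanishes on a definite neighbourhood of $\{1\} \times M$, choosing the mollifier support sufficiently smaller than this neighbourhood ensures that $f_\varepsilon$ still vanishes in a (slightly shrunken) neighbourhood of $\{1\} \times M$, so in particular $f_\varepsilon \in S_\infty C_b^\infty(M)$.

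For the local $C^\ast$-algebra statement I would follow Lemma \ref{lem:C_b_infty_local} verbatim. The algebra $S_\infty C_b^\infty(M)$ is a $^\ast$-subalgebra of $S C_u(M)$, and by Definition \ref{defn:smooth_suspension_algebra} its natural Fréchet topology (uniform convergence of every $t$- and $x$-derivative) is strictly finer than the sup-norm topology. By Lemma \ref{lem:matrix_algebras_holomorphically_closed} it suffices to show that $S_\infty C_b^\infty(M)$ itself is closed under holomorphic functional calculus, and by Lemma \ref{lem:local_algebra_equivalent} this reduces to inverse-closedness (for elements of the unitization): if $a \in S_\infty C_b^\infty(M)$ and $1+a$ is invertible in the unitization of $S C_u(M)$, then pointwise $(1+a(t,x))^{-1}$ is a smooth function of $(t,x)$ with all derivatives bounded (by the chain rule and the uniform lower bound on $|1+a|$), and it satisfies the same vanishing condition at $t=1$ as $a$ does, so its difference with $1$ lies in $S_\infty C_b^\infty(M)$.

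The only genuinely new step compared with the cited lemmas is the compatibility of the cutoff-and-mollify procedure with the boundary condition at $\{1\} \times M$, and I expect this to be the main (but still mild) obstacle: one must choose the cutoff width $\delta$ and the mollifier support in the correct order so that uniform smallness of $\|f - \chi_\delta f\|_\infty$ is preserved while the mollified function remains identically zero near $\{1\} \times M$. Once this ordering is fixed, both assertions drop out of the arguments already developed for $C_b^\infty(M)$.
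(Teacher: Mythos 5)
Your argument is correct and is exactly the intended one: the paper gives no details here, merely stating that the proof is analogous to Lemma \ref{lem:C_b_infty_local} (and implicitly Lemma \ref{lem:norm_completion_C_b_infty}), and your cutoff-then-mollify step together with the inverse-closedness check via Lemmas \ref{lem:matrix_algebras_holomorphically_closed} and \ref{lem:local_algebra_equivalent} is precisely how those earlier proofs transfer to the suspension. The one point you rightly flag as new — that the cutoff $\chi_\delta$ makes $\chi_\delta f$ vanish on a fixed neighbourhood of $\{1\}\times M$, so that a subsequently chosen mollifier of sufficiently small support preserves the vanishing condition at $t=1$ — is handled with the correct order of choices ($\delta$ first, mollifier radius second), so the proof stands.
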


Putting it all together, we get $K^1_u(M) = K_0(S_\infty C_b^\infty(M))$, and Proposition \ref{prop:image_proj_matrix_complemented}, adapted to our case here, gives us the following interpretation of the $1$st uniform $K$-theory group $K^1_u(M)$ of a manifold of bounded geometry:

\begin{thm}[Interpretation of $K^1_u(M)$]\label{thm:interpretation_K1u}
Let $M$ be a manifold of bounded geometry. Then every elements of $K^1_u(M)$ is of the form $[E] - [F]$, where $[E]$ and $[F]$ are $C_b^\infty$-isomorphism classes of complex vector bundles of bounded geometry over $S^1 \times M$ with the following property: there is some neighbourhood $U \subset S^1$ of $1$ such that $[E|_{U \times M}]$ and $[F|_{U \times M}]$ are $C_b^\infty$-isomorphic to a trivial vector bundle with the flat connection (the dimension of the trivial bundle is the same for both $[E|_{U \times M}]$ and $[F|_{U \times M}]$).

Furthermore, every such vector bundle defines an element of $K^1_u(M)$.
\end{thm}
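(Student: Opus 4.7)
The plan is to follow the same strategy as for Theorem \ref{thm:interpretation_K0u}, but now applied to the smooth suspension. By the suspension isomorphism for operator $K$-theory and the identity $K^1_u(M) = K_{-1}(C_u(M))$, we have $K^1_u(M) \cong K_0(SC_u(M))$. The preceding lemma tells us that $S_\infty C_b^\infty(M)$ is a local $C^\ast$-algebra with sup-norm completion $SC_u(M)$, so by Lemma \ref{lem:loc_algebra_same_k_theory} we get $K^1_u(M) \cong K_0(S_\infty C_b^\infty(M))$. Thus elements of $K^1_u(M)$ are represented by formal differences $[e]-[p_0]$ of idempotents in matrix algebras over the unitalization $\big(S_\infty C_b^\infty(M)\big)^+$, where $p_0$ is the scalar part of $e$.

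Now I would identify such an idempotent $e \in \mathrm{Mat}_{N \times N}\big((S_\infty C_b^\infty(M))^+\big)$ with an idempotent matrix $\tilde e \in \mathrm{Mat}_{N \times N}(C_b^\infty(S^1 \times M))$ satisfying $\tilde e(1,x) = p_0$ for a fixed scalar idempotent matrix $p_0$, independent of $x \in M$. Since $S^1 \times M$ is itself a manifold of bounded geometry, Proposition \ref{prop:image_proj_matrix_complemented}(1) applies: the image $E := \mathrm{image}(\tilde e)$ is a $C_b^\infty$-complemented vector bundle over $S^1 \times M$, and the trivial subbundle $F := \mathrm{image}(p_0) \subset \underline{\mathbb{C}^N}$ on $S^1 \times M$ (equipped with the constant metric and flat connection) is obviously of bounded geometry. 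Over $\{1\} \times M$ the two bundles coincide by construction, so it remains only to propagate this coincidence to an open neighbourhood $U \subset S^1$ of $1$.

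The main technical step is precisely this extension. Since $\tilde e$ is smooth on $S^1 \times M$ with all derivatives bounded uniformly in $x$, the uniform continuity of $\tilde e$ in the $S^1$-variable gives a neighbourhood $U$ of $1$ on which $\|\tilde e(t,x) - p_0\| < \tfrac{1}{2}$ uniformly in $x$. On $U \times M$ the standard formula $v := p_0 \tilde e + (1-p_0)(1-\tilde e)$ then produces an element with $v^{-1}\tilde e v = p_0$, and since $v$ and $v^{-1}$ are built from $\tilde e$ and $p_0$ using only polynomial operations and the power series for $(1-x)^{-1/2}$ applied to $1-(v-1)^2$, both $v$ and $v^{-1}$ are $C^\infty$-bounded. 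Via Lemma \ref{lem:C_b_infty_Iso_equivalent} together with Proposition \ref{prop:image_proj_matrix_complemented}, this similarity translates into a $C_b^\infty$-isomorphism of $E|_{U \times M}$ with the trivial bundle of rank $\mathrm{rank}(p_0)$ equipped with the flat connection. The class $[e]-[p_0]$ then corresponds to $[E]-[F]$, where $F$ is the trivial bundle of the appropriate dimension; both bundles satisfy the required trivialization property on $U \times M$.

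Conversely, given vector bundles $E$ and $F$ of bounded geometry over $S^1 \times M$ that are $C_b^\infty$-isomorphic to a trivial bundle with the flat connection on some common neighbourhood $U \times M$ of $\{1\} \times M$ and of equal dimension there, Proposition \ref{prop:image_proj_matrix_complemented}(2) and the two trivializations produce idempotent matrices $\tilde e_E, \tilde e_F \in \mathrm{Mat}_{N \times N}(C_b^\infty(S^1 \times M))$ that agree with a common scalar idempotent on a (possibly smaller) neighbourhood of $1$. By multiplying with a smooth cut-off function that equals $1$ outside a neighbourhood of $1$ and vanishes in a smaller neighbourhood, and using the standard argument that nearby idempotents are similar, one arranges $\tilde e_E$ and $\tilde e_F$ to lie in $\mathrm{Mat}_{N \times N}\big((S_\infty C_b^\infty(M))^+\big)$ with the same scalar part. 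The class $[\tilde e_E] - [\tilde e_F] \in K_0(S_\infty C_b^\infty(M))$ then yields the desired element of $K^1_u(M)$, and naturality of the constructions shows the two assignments are mutually inverse modulo stabilization and the usual equivalence relations.
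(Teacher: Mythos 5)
Your proposal follows the paper's route exactly: the suspension isomorphism $K^1_u(M)\cong K_0(SC_u(M))$, the dense local $C^\ast$-subalgebra $S_\infty C_b^\infty(M)$, and the application of Proposition~\ref{prop:image_proj_matrix_complemented} on the bounded-geometry manifold $S^1\times M$ to translate idempotents into bundles trivial near $\{1\}\times M$ --- the paper gives this last step only as ``Proposition~\ref{prop:image_proj_matrix_complemented}, adapted to our case here,'' so your explicit $v=p_0\tilde e+(1-p_0)(1-\tilde e)$ argument is a faithful filling-in of what it leaves implicit. The one imprecise phrase is ``multiplying with a smooth cut-off function'' in the converse direction, since a cut-off of an idempotent is not an idempotent; the standard repair is to conjugate by an invertible element that interpolates between the trivializing similarity near $1$ and the identity farther away, which is evidently what you intend.
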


\section{External and internal product}

In this section we will briefly recall how the external and internal products for the operator $K$-theory of $C_u(M)$ translate to the level of vector bundles.

Recall that we have an associative external product
\[K_{-p}(C_u(X)) \otimes K_{-q}(C_u(Y)) \to K_{-p-q}(C_u(X) \otimes C_u(Y))\]
on $K$-theory and, since $C_u(X)$ is commutative, also an internal product
\begin{equation}\label{eq:ext_prod_uniform_K}
K_{-p}(C_u(X)) \otimes K_{-q}(C_u(X)) \to K_{-p-q}(C_u(X))
\end{equation}
induced from the external one via composing with the map on $K$-theory induced from $C_u(X) \otimes C_u(X) \to C_u(X)$, $f \otimes g \mapsto fg$.

To get a corresponding external product on the uniform $K$-theories of $X$ and $Y$, we compose the above external product with the from $C_u(X) \otimes C_u(Y) \to C_u(X \times Y)$\footnote{Note that this is in general (i.e., if $X$ and $Y$ are not totally bounded) not an isomorphism, contrary to the case $C_0(X) \otimes C_0(Y) \cong C_0(X \times Y)$.} induced map on $K$-theory. This leads to an associative external product
\[\times \colon K_u^p(X) \otimes K_u^q(Y) \to K_u^{p+q}(X \times Y).\]
To get a corresponding internal product on uniform $K$-theory we compose with the map on uniform $K$-theory induced from the embedding $X \hookrightarrow X \times X$ and get
\[\otimes \colon K_u^p(X) \otimes K_u^q(X) \to K_u^{p+q}(X).\]
This internal product coincides with the above \eqref{eq:ext_prod_uniform_K} since $C_u(X) \otimes C_u(X) \to C_u(X)$ factors as $C_u(X) \otimes C_u(X) \to C_u(X \times X) \to C_u(X)$.

As in the case of compact manifolds and topological $K$-theory, the interpretation of these products for vector bundles of bounded geometry is the following one: if $E$ and $F$ are two vector bundles of bounded geometry over $M$ and $N$, then $[E] \times [F] \in K_u^0(M \times N)$ is given by $[E \boxtimes F]$. If both vector bundles are defined over the same manifold $M$, then their internal product $[E] \otimes [F] \in K_u^0(M)$ is given by $[E \otimes F]$.

Let us summarize the properties of these product in the following proposition:

\begin{prop}
Their exists an associative external product
\[\times \colon K_u^p(X) \otimes K_u^q(Y) \to K_u^{p+q}(X \times Y)\]
and an associative internal product
\[\otimes \colon K_u^p(X) \otimes K_u^q(X) \to K_u^{p+q}(X).\]

For manifolds $M$ and $N$ of bounded geometry the external product of two vector bundles of bounded geometry $E$ over $M$ and $F$ over $N$ is given by their external tensor product
\[[E] \times [F] = [E \boxtimes F] \in K_u^0(M \times N)\]
and for $M = N$ their internal product is given by the tensor product
\[[E] \otimes [F] = [E \otimes F] \in K_u^0(M).\]
\end{prop}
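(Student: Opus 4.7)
The plan is to build everything on the already-existing external product on operator $K$-theory of $C^\ast$-algebras and then transport it to uniform $K$-theory via the canonical maps of function algebras, before finally translating the construction into the language of vector bundles of bounded geometry using the interpretation from Theorem \ref{thm:interpretation_K0u}.

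First I would recall that for any two $C^\ast$-algebras $A$ and $B$ the standard theory provides an associative external product
\[\times \colon K_{-p}(A) \otimes K_{-q}(B) \to K_{-p-q}(A \otimes B),\]
where we may take either the minimal or maximal tensor product (since $C_u(X)$ is commutative and hence nuclear, they agree here). Applying this to $A = C_u(X)$ and $B = C_u(Y)$ and postcomposing with the map on $K$-theory induced by the obvious $^\ast$-homomorphism $C_u(X) \otimes C_u(Y) \to C_u(X \times Y)$, $f \otimes g \mapsto \bigl((x,y) \mapsto f(x) g(y)\bigr)$, produces the desired associative product $\times\colon K_u^p(X) \otimes K_u^q(Y) \to K_u^{p+q}(X \times Y)$. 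Associativity is inherited, because the $C^\ast$-algebra level product is associative and the comparison maps $C_u(X) \otimes C_u(Y) \otimes C_u(Z) \to C_u(X \times Y \times Z)$ fit into the evident commutative diagram. The internal product is then defined by composing the external product with the pullback along the diagonal $\Delta\colon X \hookrightarrow X \times X$, i.e., with the map $K_u^\ast(X \times X) \to K_u^\ast(X)$ induced by $\Delta^\ast\colon C_u(X \times X) \to C_u(X)$; associativity is immediate.

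Next I would identify the product on representatives. For the external product in degree zero, represent classes in $K_u^0(M)$ and $K_u^0(N)$ by idempotent matrices $e \in \Idem_{N_1 \times N_1}(C_b^\infty(M))$ and $f \in \Idem_{N_2 \times N_2}(C_b^\infty(N))$, using the equivalence of operator $K$-theory of $C_u$ with that of $C_b^\infty$ from Lemma \ref{lem:equivalent_defns_uniform_k_theory} together with the interpretation of Theorem \ref{thm:interpretation_K0u}. On the $C^\ast$-algebra level the external product sends $[e] \otimes [f]$ to the class of $e \otimes f \in \Idem_{N_1 N_2 \times N_1 N_2}(C_b^\infty(M) \otimes C_b^\infty(N))$, which under the canonical map into $C_b^\infty(M \times N) \subset C_u(M \times N)$ becomes the idempotent $(x,y) \mapsto e(x) \otimes f(y)$. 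Invoking Proposition \ref{prop:image_proj_matrix_complemented}, the image of this idempotent is the subbundle of the trivial bundle whose fiber at $(x,y)$ is $\image e(x) \otimes \image f(y) = E_x \otimes F_y$, which is exactly the external tensor product $E \boxtimes F$ with its induced metric and connection. This gives $[E] \times [F] = [E \boxtimes F]$.

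For the internal product, observe that pullback along the diagonal $\Delta\colon M \to M \times M$ corresponds on idempotents to $(e \otimes f) \circ \Delta$, which is the matrix $x \mapsto e(x) \otimes f(x)$; its image bundle is fiberwise $E_x \otimes F_x$, i.e., the bundle $E \otimes F$ with its induced (tensor product) metric and connection. Hence $[E] \otimes [F] = [E \otimes F]$. The only mild technicality here is to check that the tensor-product data on $E \boxtimes F$ and on $E \otimes F$ coming from the idempotent construction agree (up to $C_b^\infty$-isomorphism) with the intrinsic tensor-product data on these bundles of bounded geometry; but this is a direct calculation of Christoffel symbols in synchronous framings, exactly parallel to the computations carried out in the proof of Proposition \ref{prop:image_proj_matrix_complemented}. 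The potentially annoying step is ensuring that the comparison map $C_b^\infty(M) \otimes C_b^\infty(N) \to C_b^\infty(M \times N)$ induces the right map on $K$-theory (so that one may work with $C_b^\infty$ throughout rather than first completing); this is handled by noting that the image is dense in the appropriate $m$-convex \Frechet topology and applying Lemma \ref{lem:k_theory_frechet_algebras_coincide}. The remaining odd-degree cases reduce to the even-degree statement by the suspension picture of Section 5.2 and formal $2$-periodicity.
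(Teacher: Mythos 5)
Your proposal follows essentially the same route as the paper: build the external product from the standard operator $K$-theory external product, push forward along $C_u(X) \otimes C_u(Y) \to C_u(X \times Y)$, obtain the internal product by composing with the diagonal restriction, and identify the products on vector bundles via the idempotent picture. The only difference is that you flesh out the vector-bundle identification (tracking the idempotents through $C_b^\infty$, using Proposition \ref{prop:image_proj_matrix_complemented} and the density of $C_b^\infty(M)\otimes C_b^\infty(N)$ in $C_b^\infty(M\times N)$), whereas the paper asserts this step simply by analogy with the compact case; your elaboration is correct and fills a genuine gap in exposition, not in method.
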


\section{Bounded de Rham cohomology}\label{sec:bounded_de_rham_cohomology}

The rest of this chapter is devoted to the discussion and proof of the Chern character isomorphism theorem in our uniform setting. For this we first have to discuss the target of the Chern character, i.e., the corresponding cohomology theory. We have already encountered it in our discussion of amenability, but for the convenience of the reader we will recall its definition:

\begin{defn}[Bounded de Rham cohomology]
Let $\Omega_b^p(M)$ denote the vector space of $p$-forms on $M$, which are bounded in the norm
\[\| \alpha \| := \sup_{x \in M} \{\|\alpha(x)\| + \|d \alpha(x)\|\}.\]
We define the \emph{bounded de Rham cohomology of $M$} as
\[ H_{b, \mathrm{dR}}^p(M) := \kernel d_p / \image d_{p-1}.\]
\end{defn}

\begin{rem}\label{rem:reduced_bounded_de_Rham_cohomology}
Since in general the subspace $\image d_{p-1} \subset \kernel d_p$ is not closed, the induced norm on the bounded de Rham cohomology vector space is in general just a seminorm, i.e., in general there are elements with induced norm $0$ in $H_{b, \mathrm{dR}}^\ast(M)$. The bounded de Rham cohomology as we have defined it is sometimes called \emph{unreduced}. The \emph{reduced bounded de Rham cohomology} is then defined as
\[\bar{H}_{b, \mathrm{dR}}^p(M) := \kernel d_p / \closure(\image d_{p-1}).\]
\end{rem}

We will now give an example that reduced and unreduced bounded de Rham cohomology differ on the manifold $\IR$ of bounded geometry.

\begin{example}\label{ex:difference_(un)reduced_de_rham}
Consider the $1$-form $\tfrac{1}{x} dx$ on $\IR_{> 1}$. It is clearly a bounded form and its exterior derivative vanishes, since there are no $2$-forms on $\IR_{> 1}$, so it defines a class $[\tfrac{1}{x} dx] \in H^1_{b, \mathrm{dR}}(\IR_{>1})$. This class is not $0$, because the antiderivative of it is $\log x$ which is not a bounded $0$-form.

But in reduced bounded de Rham cohomology we have $[\tfrac{1}{x} dx] = [0] \in \bar{H}^1_{b, \mathrm{dR}}(\IR_{>1})$, because we can cut off $\tfrac{1}{x}$ sufficiently far to the right, which reveals that $[\tfrac{1}{x} dx]$ is a norm limit of derivatives of bounded $0$-forms (those are given by cutting of $\log x$).

Note that if we extend this $1$-form $\tfrac{1}{x} dx$ constantly $0$ to the left (so that it is defined on $\IR$), we get the same counterexample on the manifold $\IR$ of bounded geometry.
\end{example}

Let $f\colon M \to N$ be a smooth map such that $\|f_\ast X\| \le C \|X\|$ for all $X \in TM$ and a fixed $C > 0$. In this case we say that $f$ has \emph{bounded dilatation}. Such a map induces a map on the bounded de Rham cohomology. We say that the map $f$ is a \emph{smooth quasi-isometry}, if it is a diffeomorphism and both $f$ and $f^{-1}$ are of bounded dilatation. We conclude that smoothly quasi-isometric Riemannian manifolds have isomorphic bounded de Rham cohomology.

Let $f, g\colon M \to N$ be two maps of bounded dilatation. We say that they are \emph{boundedly homotopic}, if there is a homotopy $H\colon M \times [0,1] \to N$ from $f$ to $g$, which itself is of bounded dilatation. The same argument as for usual de Rham cohomology (see, e.g., \cite[15.Homotopy Invariance]{lee_smooth}) gives us the homotopy invariance of bounded de Rham cohomology. Note that this is one of the major ingredients in the proof of the uniform Chern character isomorphism theorem.

\begin{prop}[Homotopy invariance]\label{prop:homotopy_invariance_de_rham}
Let $f, g\colon M \to N$ be smooth maps of bounded dilatation.

If $f$ and $g$ are boundedly homotopic, then the induced maps $f^\ast$ and $g^\ast$ on bounded de Rham cohomology are equal.
\end{prop}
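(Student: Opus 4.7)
The plan is to mimic the standard proof of homotopy invariance of de~Rham cohomology via the fibre-integration chain homotopy, and then verify that the construction respects the boundedness conditions in the norm $\|\alpha\|=\sup_x(\|\alpha(x)\|+\|d\alpha(x)\|)$.

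First I would set up the homotopy operator. Let $H\colon M\times[0,1]\to N$ be the given bounded-dilatation homotopy, write $\iota_t\colon M\to M\times[0,1]$ for the inclusion at height $t$, and denote by $\partial_t$ the vertical vector field $\partial/\partial t$. For $\alpha\in\Omega^p(N)$ define
\[
K\alpha \ :=\ \int_0^1 \iota_t^{\ast}\bigl(\iota_{\partial_t} H^{\ast}\alpha\bigr)\, dt \ \in\ \Omega^{p-1}(M).
\]
The classical Cartan-calculus computation (identical to the compact case, see e.g.\ \cite[Ch.~15]{lee_smooth}) gives the chain homotopy identity
\[
g^{\ast}\alpha - f^{\ast}\alpha \ =\ dK\alpha + K\,d\alpha
\]
on all of $\Omega^{\ast}(N)$, and hence $f^{\ast}=g^{\ast}$ on cohomology classes represented by closed forms, \emph{once} we know that $K$ preserves the subcomplex $\Omega_b^{\ast}$.

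The heart of the matter is therefore the boundedness check, and this is exactly where bounded dilatation of $H$ (and of $f$, $g$) is used. Since $H$ has bounded dilatation, there is a constant $C>0$ with $\|H_\ast X\|\le C\|X\|$ for all $X\in T(M\times[0,1])$ equipped with the product metric. Dually, $H^{\ast}$ maps $p$-forms on $N$ of pointwise norm $\le r$ to $p$-forms on $M\times[0,1]$ of pointwise norm $\le C^p r$; the contraction $\iota_{\partial_t}$ is bounded because $\partial_t$ has unit norm; and $\iota_t^\ast$ is a pointwise norm-nonincreasing restriction. Integrating over the compact interval $[0,1]$ preserves boundedness. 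Hence $\|K\alpha(x)\|$ is bounded by a constant times $\sup_{y\in N}\|\alpha(y)\|$, uniformly in $x\in M$. The same argument applied to $d\alpha$ (via $d\circ H^{\ast}=H^{\ast}\circ d$ and Cartan's formula $\mathcal{L}_{\partial_t}=d\iota_{\partial_t}+\iota_{\partial_t}d$) yields a bound on $\|dK\alpha(x)\|$ in terms of $\|\alpha\|$. Consequently $K$ restricts to a map $\Omega_b^p(N)\to\Omega_b^{p-1}(M)$.

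With $K$ well-defined on bounded forms, the chain homotopy identity descends to $H_{b,\mathrm{dR}}^{\ast}$, giving $f^{\ast}=g^{\ast}$ there. I expect the only subtlety to be bookkeeping of the constants so that \emph{both} $K\alpha$ and $dK\alpha$ are controlled simultaneously; this is harmless because Cartan's formula expresses $dK+Kd$ as an honest fibre integral of a Lie derivative of a bounded form, and integration over the compact $[0,1]$ absorbs any time-dependence. No further ingredients beyond bounded dilatation of $H$ (and therefore of $f=H\circ\iota_0$ and $g=H\circ\iota_1$) are needed.
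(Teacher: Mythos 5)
Your proposal is correct and follows exactly the approach the paper indicates: the paper's proof consists of the single remark that "the same argument as for usual de~Rham cohomology" applies, and your fibre-integration chain-homotopy operator $K$ together with the bounded-dilatation check that $K$ and $dK$ preserve $\Omega_b^\ast$ is precisely that argument made explicit. The one small streamlining you could make is to bound $\|dK\alpha\|$ directly from the identity $dK\alpha = g^\ast\alpha - f^\ast\alpha - K\,d\alpha$ (all three terms on the right are pointwise bounded by what you have already shown), rather than invoking Cartan's formula a second time.
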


We call two manifolds \emph{boundedly homotopy equivalent}, if there are maps of bounded dilatation $f\colon M \to N$ and $g\colon N \to M$ such that both $f \circ g$ and $g\circ f$ are boundedly homotopic to the corresponding identity maps. From the above Proposition \ref{prop:homotopy_invariance_de_rham} we immediately get the following corollary:

\begin{cor}\label{cor:homotopy_equivalence_de_rham}
Let $M$ and $N$ be two boundedly homotopy equivalent manifolds.

Then their bounded de Rham cohomology groups are isomorphic.
\end{cor}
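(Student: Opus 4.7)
The plan is to deduce this corollary as a direct formal consequence of the homotopy invariance Proposition \ref{prop:homotopy_invariance_de_rham}, following exactly the standard argument by which homotopy equivalence implies isomorphism of cohomology in the classical de Rham setting. Since the proposition is already granted to us, there is essentially no analytic content left to verify; the work is purely categorical.

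First, I would unpack the hypothesis: a bounded homotopy equivalence between $M$ and $N$ means we are given smooth maps $f \colon M \to N$ and $g \colon N \to M$, each of bounded dilatation, together with bounded homotopies $H_1 \colon M \times [0,1] \to M$ from $g \circ f$ to $\id_M$ and $H_2 \colon N \times [0,1] \to N$ from $f \circ g$ to $\id_N$. Because $f$ and $g$ have bounded dilatation, they pull bounded forms back to bounded forms and commute with $d$, so they induce well-defined linear maps $f^\ast \colon H^p_{b,\mathrm{dR}}(N) \to H^p_{b,\mathrm{dR}}(M)$ and $g^\ast \colon H^p_{b,\mathrm{dR}}(M) \to H^p_{b,\mathrm{dR}}(N)$ on bounded de Rham cohomology.

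Next, I would apply Proposition \ref{prop:homotopy_invariance_de_rham} to each of the two homotopies. From $H_1$ we obtain $(g \circ f)^\ast = \id_M^\ast = \id$ on $H^p_{b,\mathrm{dR}}(M)$, and functoriality of the pullback (which is immediate from $(g \circ f)^\ast \alpha = f^\ast g^\ast \alpha$ for any differential form $\alpha$) yields $f^\ast \circ g^\ast = \id_{H^p_{b,\mathrm{dR}}(M)}$. Similarly, applying the proposition to $H_2$ gives $g^\ast \circ f^\ast = \id_{H^p_{b,\mathrm{dR}}(N)}$. Therefore $f^\ast$ and $g^\ast$ are mutually inverse, hence isomorphisms between $H^p_{b,\mathrm{dR}}(N)$ and $H^p_{b,\mathrm{dR}}(M)$ in every degree $p$.

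There is no serious obstacle here; the only small point worth flagging is that one must check the induced maps $f^\ast$ and $g^\ast$ genuinely land in bounded de Rham cohomology, but this is exactly the content of the bounded dilatation assumption (forms with bounded sup-norm and bounded $d$ pull back to forms of the same kind, with norms controlled by the dilatation constant). Everything else is formal.
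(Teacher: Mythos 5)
Your argument is correct and is exactly the standard deduction the paper has in mind; the paper simply states that the corollary follows immediately from Proposition \ref{prop:homotopy_invariance_de_rham} without spelling out the details you give. Nothing to add.
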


Since especially $[0,1] \times M$ and $M$ are boundedly homotopy equivalent, we conclude
\begin{equation}\label{eq:bdR_M_M01}
H_{b, \mathrm{dR}}^p([0,1] \times M) \cong H_{b, \mathrm{dR}}^p(M).
\end{equation}
We will need this fact in the following discussion.

Recall that for topological spaces the cohomology groups of the suspension of the space do coincide (up to a degree shift) with the cohomology groups of the space itself. We will need this fact for bounded de Rham cohomology (to interpret the image of $K^1_u(M)$ under the Chern character), but since the suspension of a smooth manifold is in general not a smooth manifold, we have to reinterpret things a bit. So let us define for $p \ge 1$ and $i=0,1$ the following spaces of forms, which emulate the cohomology of a suspension and of a cone of a topological space:
\begin{align}
S \Omega_b^p(M) & := \{ \alpha \in \Omega_b^p(S^1 \times M) \ | \ \alpha|_{\{1\} \times M} = d\alpha|_{\{1\} \times M} = 0\}\label{eq:defn_suspension_de_Rham}\\
C_i \Omega_b^p(M) & := \{ \alpha \in \Omega_b^p([0,1] \times M) \ | \ \alpha|_{\{i\} \times M} = d\alpha|_{\{i\} \times M} = 0\}\notag
\end{align}
and for $p = 0$ we only require $d\alpha|_{\{1\} \times M} = 0$, resp. $d\alpha|_{\{i\} \times M} = 0$.

The same argument as in case of usual de Rham cohomology (see, e.g., \cite[Theorem 15.9]{lee_smooth}) gives us the exactness of the short sequence
\[0 \to S \Omega_{b, \mathrm{dR}}^p(M) \to C_0 \Omega_{b, \mathrm{dR}}^p(M) \oplus C_1 \Omega_{b, \mathrm{dR}}^p(M) \to \Omega^p_{b, \mathrm{dR}}([0,1] \times M) \to 0\]
from which we get for the corresponding cohomology groups\footnote{We denote by $S H^{p+1}_{b, \mathrm{dR}}(M)$ the group $\kernel d_{p+1} / \image d_p$, where $d_\ast$ is defined on $S \Omega_{b, \mathrm{dR}}^\ast(M)$. Analogously for $C_0$ and $C_1$ instead of $S$.} the long, exact sequence
\begin{align}\label{eq:MV_suspension}
\cdots & \to C_0 H_{b, \mathrm{dR}}^p(M) \oplus C_1 H_{b, \mathrm{dR}}^p(M) \to H_{b, \mathrm{dR}}^p([0,1] \times M) \to S H^{p+1}_{b, \mathrm{dR}}(M) \to\notag\\
& \to C_0 H_{b, \mathrm{dR}}^{p+1}(M) \oplus C_1 H_{b, \mathrm{dR}}^{p+1}(M) \to \cdots
\end{align}
which is an analogue of the Mayer--Vietoris sequence for the suspension of a space.

The same homotopy which we used to show \eqref{eq:bdR_M_M01} gives us $C_i H^p_{b, \mathrm{dR}}(M) = 0$ for all $p \ge 1$ and $i=0,1$. So putting all together we conclude the following lemma:

\begin{lem}[Suspension isomorphism]\label{lem:bounded_deRham_suspension_equal}
For all $p \ge 1$ we have
\[H^{p}_{b, \mathrm{dR}}(M) \cong SH^{p+1}_{b, \mathrm{dR}}(M).\]
\end{lem}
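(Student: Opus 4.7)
The plan is to deduce the isomorphism directly from the long exact sequence \eqref{eq:MV_suspension}, once we establish the vanishing of $C_i H^p_{b,\mathrm{dR}}(M)$ for $p \ge 1$ and $i = 0,1$. Indeed, if these groups vanish, then substituting into \eqref{eq:MV_suspension} for both the term in degree $p \ge 1$ (on the left) and in degree $p+1 \ge 2$ (on the right) produces the four-term exact sequence $0 \to H^p_{b,\mathrm{dR}}([0,1] \times M) \to SH^{p+1}_{b,\mathrm{dR}}(M) \to 0$, so that the connecting map is an isomorphism. Composing with the homotopy-invariance isomorphism $H^p_{b,\mathrm{dR}}([0,1] \times M) \cong H^p_{b,\mathrm{dR}}(M)$ from \eqref{eq:bdR_M_M01} then yields the claim.

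So the real work is to verify $C_i H^p_{b,\mathrm{dR}}(M) = 0$ for $p \ge 1$. I would take $i = 0$ (the case $i = 1$ being symmetric). Let $r_0 \colon [0,1] \times M \to [0,1] \times M$, $(s,x) \mapsto (0,x)$, and consider the bounded homotopy $H_t(s,x) := ((1-t)s, x)$, which has bounded dilatation uniformly in $t$ and interpolates between the identity and $r_0$. The standard chain homotopy operator associated with $H_t$, given by fibre integration $(K\alpha)_{(s,x)} := \int_0^s \iota_{\partial_t} \alpha_{(t,x)}\, dt$, satisfies the Cartan-type formula $dK + Kd = \mathrm{id} - r_0^*$ on $\Omega_b^p([0,1] \times M)$, and sends bounded forms to bounded forms (the bound on $K\alpha$ and on $dK\alpha$ follows from the uniform bound on $\alpha$ and on $d\alpha$ together with the compactness of $[0,1]$).

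Now let $\alpha \in C_0\Omega^p_b(M)$ be closed. Because $\alpha|_{\{0\} \times M} = 0$, we have $r_0^*\alpha = 0$, and hence $\alpha = dK\alpha$. It remains to check that $K\alpha \in C_0\Omega^{p-1}_b(M)$. By construction the integral $K\alpha$ vanishes identically on $\{0\} \times M$, and furthermore $d(K\alpha)|_{\{0\} \times M} = \alpha|_{\{0\} \times M} - r_0^*\alpha|_{\{0\} \times M} = 0$ since both summands vanish. Thus $K\alpha$ lies in $C_0\Omega^{p-1}_b(M)$, proving $C_0 H^p_{b,\mathrm{dR}}(M) = 0$.

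The main obstacle is purely bookkeeping: one must confirm that the fibre-integration operator $K$ both preserves the $L^\infty$-bounds in the norm defining $\Omega_b^p$ and respects the boundary-vanishing conditions defining $C_i\Omega_b^p(M)$. Once this is done, the long exact sequence argument is essentially automatic, and the suspension isomorphism drops out.
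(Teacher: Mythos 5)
Your proof is correct and follows the same route as the paper: the paper also obtains the isomorphism from the long exact sequence \eqref{eq:MV_suspension} together with the vanishing $C_i H^p_{b,\mathrm{dR}}(M)=0$ for $p \ge 1$ (which it attributes to the same contraction used for \eqref{eq:bdR_M_M01}) and the identification $H^p_{b,\mathrm{dR}}([0,1]\times M)\cong H^p_{b,\mathrm{dR}}(M)$. You merely supply the explicit Cartan homotopy operator $K$ and verify that it preserves the $L^\infty$-bounds and the vanishing conditions along $\{i\}\times M$, details the paper leaves implicit.
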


\begin{rem}[Mayer--Vietoris sequences]\label{rem:mayer_vietoris_de_rham}
The above Sequence \eqref{eq:MV_suspension} resembles a Mayer--Vietoris sequence. In general, given an open cover $\{U, V\}$ of a manifold $M$, we do have a Mayer--Vietoris sequence for unreduced bounded de Rham cohomology associated to this cover if there exists a partition of unity $\{\varphi, \psi\}$ subordinate to this cover such that both $1$-forms $d\varphi$ and $d\psi$ are bounded forms (i.e., the first derivatives of $\varphi$ and $\psi$ with respect to unit tangent vectors are bounded).

Note that the existence of such subordinate partitions of unity is very unlikely (just consider, e.g., $U$ and $V$ being balls in $\IR^2$ which intersect). But at least it exists for balls in $\IR$ which gives us the above Sequence \eqref{eq:MV_suspension} and enables us to compute $H^1_{b, \mathrm{dR}}(\IR)$ in Example \ref{ex:computation_de_rham_real_line}.
\end{rem}

Note that a Mayer--Vietoris sequence exists in general only for unreduced bounded de Rham cohomology. To show that a Mayer--Vietoris sequence for the reduced version does in general not exist, we can use the $1$-form $\tfrac{1}{x}|_{> 1} dx \in \Omega_b^1(\IR)$ from Example \ref{ex:difference_(un)reduced_de_rham} (we use the notation $\tfrac{1}{x}|_{> 1}$ to denote a smooth function which is $\tfrac{1}{x}$ for $x > 1$ and which is constantly $0$ to the left). We will do this in the following Example \ref{ex:computation_de_rham_real_line}, where we first show that $H^1_{b, \mathrm{dR}}(\IR)$ is a vector space of uncountable algebraic dimension and then use parts of this computation to show that we do not have a Mayer--Vietoris sequence for the reduced bounded de Rham cohomology.

\begin{example}[Computation of $H^1_{b, \mathrm{dR}}(\IR)$]\label{ex:computation_de_rham_real_line}
We define the following open cover of $\IR$:
\begin{align*}
U & := \ldots \cup (-3, 0) \cup (1, 4) \cup (5, 8) \cup (9, 12) \cup \ldots\\
V & := \ldots \cup (-1, 2) \cup (3, 6) \cup (7, 10) \cup (11, 14) \cup \ldots
\end{align*}
Then we have $U \cap V = \ldots \cup (-1, 0) \cup (1, 2) \cup (3, 4) \cup (5, 6) \cup (7, 8) \cup \ldots$ Since all the occuring intervals are of length at most $3$, we see that all three spaces $U$, $V$ and $U \cap V$ are boundedly homotopy equivalent to a countable infinite number of points with the discrete metric (it is the discrete metric, since the intervals are all at least the distance $1$ apart). So the reduced and unreduced bounded de Rham cohomologies of $U$, $V$ and $U \cap V$ coincide (and are given by $\ell^\infty(\IZ)$ in degree $0$ and the trivial space in all other degrees, where $\ell^\infty(\IZ)$ denotes the vector space of all bounded sequences indexed by $\IZ$).

The needed part of the Mayer--Vietoris sequence is
\[\to \underbrace{H^0_{b, \mathrm{dR}}(U) \oplus H^0_{b, \mathrm{dR}}(V)}_{\cong \ell^\infty(\IZ) \oplus \ell^\infty(\IZ)} \stackrel{\Delta}\to \underbrace{H^0_{b, \mathrm{dR}}(U \cap V)}_{\cong \ell^\infty(\IZ)} \stackrel{\delta}\twoheadrightarrow H^1_{b, \mathrm{dR}}(\IR) \to \underbrace{H^1_{b, \mathrm{dR}}(U) \oplus H^1_{b, \mathrm{dR}}(V)}_{\cong 0} \to\]
The map $\Delta$ is given by $\Delta((a_n), (b_n)) = (c_n)$ with $c_{2k} = a_{k+1} - b_k$ and $c_{2k+1} = a_k - b_k$. The other way round, given a sequence $(c_n)$ and a value for $a_0$ we can compute inductively the sequences $(a_n)$ and $(b_n)$ with $\Delta((a_n), (b_n)) = c_n$: for $k \ge 0$ their values are given as $a_k = a_0 - c_1 + c_2 - c_3 \pm \ldots \pm c_{2k}$ and $b_k = a_0 - c_1 + c_2 - c_3 \pm \ldots \pm c_{2k+1}$. The formulas for $k \le 0$ are analogous. But even if the sequence $(c_n)$ is bounded, the corresponding sequences $(a_n)$ and $(b_n)$ may not be bounded.

We know from the above exact sequence
\[H^1_{b, \mathrm{dR}}(\IR) \cong \ell^\infty(\IZ) / \image \Delta.\]
Let $Z \subset \ell^\infty(\IN)$ be the subspace consisting of all bounded sequences such that their partial sums form again a bounded sequence, i.e., $(z_n) \in Z :\Leftrightarrow \left(\sum_{i = 1}^n z_i\right) \in \ell^\infty(\IN)$. Now we have an injective inclusion $\ell^\infty(\IN) / Z \hookrightarrow \ell^\infty(\IZ) / \image \Delta$ given by $[(x_n)] \mapsto [(c_n)]$ with $c_{2k} := x_k$ for $k \ge 1$ and all other entries of $(c_n)$ are zero. Since the algebraic vector space dimension of $\ell^\infty(\IN) / Z$ is uncountably infinite (see, e.g., \cite{MathSE_codim_uncountable}) we conclude that $H^1_{b, \mathrm{dR}}(\IR)$ is a vector space of uncountable algebraic dimension.

Now to the reduced bounded de Rham cohomology. Note that in the above part of the Mayer--Vietoris sequence the space $H^1_{b, \mathrm{dR}}(\IR)$ is the only one for which the reduced and unreduced versions differ. Let us define a sequence $(c_n) \in \ell^\infty(\IZ)$ as $c_{2k} := \tfrac{1}{k}$ for $k \ge 1$ and all other entries are $0$. Then its image in $H^1_{b, \mathrm{dR}}(\IR)$ is some non-zero multiple of the form $[\tfrac{1}{x}|_{> 1} dx]$. Now if we had a Mayer--Vietoris sequence for the reduced bounded de Rham cohomology, then that would mean that this sequence $(c_n)$ is mapped to zero, i.e., it would have a lift to $\ell^\infty(\IZ) \oplus \ell^\infty(\IZ)$. But we know that this is not the case since the corresponding sequences $(a_n)$ and $(b_n)$ with $\Delta((a_n), (b_n)) = (c_n)$ are not bounded.
\end{example}

\section{Chern character on uniform \texorpdfstring{$K$}{K}-theory}

Let $(E, h^E, \nabla^E)$ and $(F, h^F, \nabla^F)$ be two complex vector bundles over the manifold $M$ equipped with metric connections ($M$ is not necessarily assumed to be of bounded geometry). We will call a homomorphism $f\colon E \to F$ \emph{bounded}, if $\|f(v)\| \le C \cdot \|v\|$ for all $v \in E$ and a fixed constant $C > 0$ (so a map $f\colon M \to N$ is of bounded dilatation, if and only if the map $f_\ast \colon TM \to TN$ is bounded). Analogously to the manifold case, we say that $f$ is a \emph{smooth quasi-isomorphism}, if it is a bundle isomorphism and both $f$ and $f^{-1}$ are bounded. Finally, we call $f$ a \emph{smooth strict quasi-isomorphism}, if it is a smooth quasi-isomorphism and additionally the form $\nabla^E - f^\ast \nabla^F$ is bounded.

Now we can state the bounded Chern--Weil theorem:

\begin{thm}[Bounded Chern--Weil theorem, {\cite[Theorem 3.8]{roe_index_1}}]\label{thm:bounded_chern_weil}
Let $E$ be a complex vector bundle of bounded geometry over $M$ ($M$ itself not required to have bounded geometry) and denote the curvature tensor of $E$ by $R^E$.

If $f$ is a power series with real coefficients, then
\[ \left[ \trace f \left( \tfrac{i}{2 \pi}R^E \right) \right] \in H^\ast_{b, \mathrm{dR}}(M)\]
and this class does only depend on the smooth strict quasi-isomorphism class of $E$.
\end{thm}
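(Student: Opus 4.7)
The plan is to follow the classical Chern--Weil construction, but to check boundedness at every step, using the bounded geometry of $E$ throughout. I begin by arguing that $\trace f(\tfrac{i}{2\pi}R^E)$ is a bounded form. By Lemma \ref{lem:equiv_characterizations_bounded_geom_bundles}, in synchronous framings the Christoffel symbols of $E$ and all their derivatives are uniformly bounded, so the curvature $R^E$ and all its covariant derivatives are uniformly bounded as $\End(E)$-valued $2$-forms. Any finite polynomial in $R^E$ therefore yields a bounded form, with bound controlled by the coefficients of the polynomial times a constant depending on the uniform bounds on $R^E$. For a general power series with real coefficients I would truncate, apply the polynomial estimate, and pass to the limit; at the cohomology level only a polynomial of degree $\le \tfrac{1}{2}\dim M$ contributes, so convergence is not an issue. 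The exterior derivative of $\trace f(\tfrac{i}{2\pi}R^E)$ vanishes by the usual Bianchi identity argument (which is a pointwise algebraic computation), and $d$ of the truncations is again bounded by the same estimate. Hence $\trace f(\tfrac{i}{2\pi}R^E) \in \Omega_b^\ast(M)$ is a bounded closed form and defines a class in $H^\ast_{b,\mathrm{dR}}(M)$.

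Next I would prove independence of the class on the choice of metric connection within the smooth strict quasi-isomorphism class. The core is a bounded version of Chern--Simons transgression. Given two metric connections $\nabla^0, \nabla^1$ on $E$ such that $\alpha := \nabla^1 - \nabla^0$ is bounded with all covariant derivatives bounded, form the path of connections $\nabla^t := \nabla^0 + t\alpha$ for $t \in [0,1]$, with curvatures $R^t = R^0 + t\, d_{\nabla^0}\alpha + t^2 \alpha \wedge \alpha$. The standard transgression formula gives
\[
\trace f\!\left(\tfrac{i}{2\pi}R^1\right) - \trace f\!\left(\tfrac{i}{2\pi}R^0\right) = d \int_0^1 \trace\!\left( \tfrac{i}{2\pi}\alpha \cdot f'\!\left(\tfrac{i}{2\pi}R^t\right)\right) dt,
\]
interpreted in the symmetric-polynomial sense. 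The transgression form is visibly a polynomial expression in the bounded tensor fields $\alpha$, $R^0$, $d_{\nabla^0}\alpha$, and $\alpha\wedge\alpha$, so it is bounded and has bounded exterior derivative. Hence the two Chern--Weil forms differ by $d$ of a bounded form, proving equality of classes in $H^\ast_{b,\mathrm{dR}}(M)$.

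To pass from the two-connection statement to the two-bundle statement, suppose $\varphi\colon E \to F$ is a smooth strict quasi-isomorphism. Pull back: $R^F$ corresponds under $\varphi$ to the curvature of $\varphi^\ast \nabla^F$ on $E$, and $\trace f(\tfrac{i}{2\pi}R^F) = \trace f(\tfrac{i}{2\pi}R^{\varphi^\ast\nabla^F})$ because the trace is invariant under conjugation. By hypothesis $\nabla^E - \varphi^\ast\nabla^F$ is bounded with all covariant derivatives bounded, so the previous paragraph applies to the two connections $\nabla^E$ and $\varphi^\ast\nabla^F$ on the \emph{same} bundle $E$, yielding equality of Chern--Weil classes.

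The main obstacle I anticipate is carefully verifying that the transgression form and its exterior derivative are genuinely bounded, not merely bounded pointwise. Concretely, the polynomial expression in $\alpha$, $R^t$, and their covariant derivatives has to be estimated in synchronous framings uniformly over $M$, and one must remember to control one derivative of the transgression form as well (since the norm on $\Omega_b^\ast$ involves $d\alpha$). This reduces to the observation that boundedness of $\alpha$ together with the bounded-geometry bounds on $R^0$ gives boundedness of $R^t$ and of $d_{\nabla^0}\alpha = d\alpha + [\nabla^0,\alpha]$, after which the estimate is a book-keeping argument of the same flavor as the proof of Lemma \ref{lem:C_b_infty_Iso_equivalent}. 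No new analytic input is needed beyond the uniform local estimates already established in Section~\ref{sec:interpretation_uniform_k_theory}.
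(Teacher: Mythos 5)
The paper does not prove this theorem at all: it is stated as a citation to \cite[Theorem 3.8]{roe_index_1} and used as a black box, so there is no proof of the paper's own to compare against. Your argument is, nevertheless, the standard Chern--Weil transgression and is structurally sound, but two details conflict with the hypotheses as stated and should be repaired.

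First, the theorem is explicit that $M$ is \emph{not} required to have bounded geometry. You invoke Lemma \ref{lem:equiv_characterizations_bounded_geom_bundles} and synchronous framings (and, at the end, ``the uniform local estimates already established in Section~\ref{sec:interpretation_uniform_k_theory}''), all of which are set up only for $M$ of bounded geometry. The appeal to Lemma \ref{lem:equiv_characterizations_bounded_geom_bundles} is also circular in direction: the conclusion you extract --- that $R^E$ and its covariant derivatives are uniformly bounded --- \emph{is} the definition of bounded geometry of a vector bundle, so no lemma is needed. Just use the definition directly; then no hypothesis on $M$ enters.

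Second, the definition of ``smooth strict quasi-isomorphism'' in the text gives only that $\alpha = \nabla^E - \varphi^\ast\nabla^F$ is bounded; it says nothing about its covariant derivatives. Your transgression argument assumes ``bounded with all covariant derivatives bounded'' and leans on it to bound $R^t$. The fix is cheap but should be made explicit: from $R^1 = R^0 + d_{\nabla^0}\alpha + \alpha\wedge\alpha$ one gets $d_{\nabla^0}\alpha = R^1 - R^0 - \alpha\wedge\alpha$, so $d_{\nabla^0}\alpha$ is bounded once both curvatures and $\alpha$ are bounded --- which is exactly what bounded geometry of $E$ and $F$ plus the smooth strict quasi-isomorphism provide. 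That bounds $R^t$ for every $t\in[0,1]$, hence the transgression form; and $d$ of the transgression form is the difference of the two Chern--Weil forms, which is bounded, so the full $\Omega_b^\ast$ norm is controlled without ever needing higher covariant derivatives of $\alpha$. With these corrections the proof is correct and requires no geometry of $M$ beyond a Riemannian metric.
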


Now if the manifold $M$ has bounded geometry and the two bundles $E$ and $F$ are of bounded geometry and $C^\infty_b$-isomorphic, then they are smoothly strictly quasi-isomorphic (Lemma \ref{lem:C_b_infty_Iso_equivalent}). So using the interpretation of $K^0_u(M)$ from Theorem \ref{thm:interpretation_K0u}, we get a bounded Chern character homomorphism $\ch \colon K^0_u(M) \to H_{b, \mathrm{dR}}^{\mathrm{ev}}(M)$ by using the function $f := \exp$.

Now we also want a map on $K^1_u(M) \to H^{\mathrm{odd}}_{b, \mathrm{dR}}(M)$. Because of the interpretation of $K^1_u(M)$ from Theorem \ref{thm:interpretation_K1u}, we first get a map to $H_{b, \mathrm{dR}}^{\mathrm{ev}}(S^1 \times M)$. But because the vector bundles from $K^1_u(M)$ are trivial on a neighbourhood of $\{1\} \times M$, we actually have a map to $SH^{\mathrm{ev}}_{b, \mathrm{dR}}(M)$ (recall \eqref{eq:defn_suspension_de_Rham} for the definition),
and its image does furthermore not contain any forms of degree $0$. Now all that remains is to invoke the suspension isomorphism from Lemma \ref{lem:bounded_deRham_suspension_equal}.

Putting everything together, we have now proved the existence of a Chern character map on $K^\ast_u(M)$. That it is a ring homomorphism is proved analogously as in the case of a compact manifold.

\begin{prop}[Existence of the Chern character]
Let $M$ be a manifold of bounded geometry.

Then we have a ring homomorphism $\ch\colon K_u^\ast(M) \to H^\ast_{b, \mathrm{dR}}(M)$ with
\[\ch(K^0_u(M)) \subset H_{b, \mathrm{dR}}^{\mathrm{ev}}(M) \text{ and }\ch(K^1_u(M)) \subset H_{b, \mathrm{dR}}^{\mathrm{odd}}(M).\]
\end{prop}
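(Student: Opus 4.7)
The plan is to assemble the pieces already established: the bounded Chern--Weil machinery (Theorem \ref{thm:bounded_chern_weil}), the vector-bundle interpretations of $K^0_u$ and $K^1_u$ (Theorems \ref{thm:interpretation_K0u} and \ref{thm:interpretation_K1u}), and the suspension isomorphism (Lemma \ref{lem:bounded_deRham_suspension_equal}). For the even part, I would define, for a vector bundle $E$ of bounded geometry over $M$, the class $\ch(E) := [\trace \exp(\tfrac{i}{2\pi} R^E)] \in H^{\mathrm{ev}}_{b,\mathrm{dR}}(M)$. Theorem \ref{thm:bounded_chern_weil} guarantees that this class depends only on the smooth strict quasi-isomorphism class, and Lemma \ref{lem:C_b_infty_Iso_equivalent} upgrades any $C_b^\infty$-isomorphism between bundles of bounded geometry to a smooth strict quasi-isomorphism. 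Additivity $\ch(E \oplus F) = \ch(E) + \ch(F)$ is immediate from the block-diagonal form of the curvature of a direct sum, so $\ch$ descends from the monoid $\Vect_u(M)/_\sim$ to its Grothendieck group, which by Theorem \ref{thm:interpretation_K0u} is exactly $K^0_u(M)$.

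For the odd part, I would invoke Theorem \ref{thm:interpretation_K1u} to represent a class in $K^1_u(M)$ as a formal difference $[E] - [F]$ of bundles of bounded geometry over $S^1 \times M$ whose restrictions to $U \times M$ are $C_b^\infty$-isomorphic to a fixed trivial bundle with flat connection, for some neighborhood $U$ of $1 \in S^1$. Applying the even Chern character yields a class in $H^{\mathrm{ev}}_{b,\mathrm{dR}}(S^1 \times M)$, but the refinement I need is a representative lying in $S\Omega^{\mathrm{ev}}_b(M)$ as defined in \eqref{eq:defn_suspension_de_Rham}. This is achieved by modifying the connection on $E$ via a partition of unity along $S^1$ so that, on a smaller neighborhood $V \subset U$ of $\{1\}$, it coincides with the pull-back of the flat connection under the given $C_b^\infty$-isomorphism; the Chern--Weil form $\trace \exp(\tfrac{i}{2\pi} R^E)$ then vanishes identically on $V \times M$, and the same for $F$. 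Since the degree-$0$ component just records the rank, which agrees near $\{1\} \times M$, the difference $\ch(E) - \ch(F)$ lies in $\bigoplus_{p \geq 2} SH^p_{b,\mathrm{dR}}(M)$, and applying the suspension isomorphism of Lemma \ref{lem:bounded_deRham_suspension_equal} delivers the desired class in $H^{\mathrm{odd}}_{b,\mathrm{dR}}(M)$.

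The ring homomorphism property follows exactly as in the compact case from the identity $R^{E \otimes F} = R^E \otimes \mathrm{id}_F + \mathrm{id}_E \otimes R^F$, which gives $\ch(E \otimes F) = \ch(E) \wedge \ch(F)$ on the level of Chern--Weil forms; combined with the interpretation of the internal product on $K^\ast_u(M)$ as tensor product of bundles, this yields multiplicativity. The main obstacle I expect is the connection-interpolation step in the odd case: one must verify that the resulting modified connection on $E$ remains compatible with the Hermitian metric and still has $C_b^\infty$-bounded curvature (so that the Chern--Weil form genuinely sits in $\Omega^{\mathrm{ev}}_b(S^1 \times M)$, not merely in $\Omega^{\mathrm{ev}}(S^1 \times M)$), and moreover that all derivatives of the form vanish at $\{1\} \times M$ to the order required by the definition of $S\Omega^{\mathrm{ev}}_b(M)$. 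The uniform bounds on the $C_b^\infty$-isomorphism supplied by Lemma \ref{lem:C_b_infty_Iso_equivalent} should make this a bookkeeping exercise, but it is the one place where the uniformity inherent in bounded geometry is genuinely used rather than merely inherited.
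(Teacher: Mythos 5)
Your argument matches the paper's proof step for step: the even part is obtained from the bounded Chern--Weil theorem (Theorem \ref{thm:bounded_chern_weil}) with $f=\exp$ together with the vector-bundle interpretation of $K^0_u(M)$, and the odd part from the $K^1_u(M)$-interpretation over $S^1\times M$ followed by the suspension isomorphism of Lemma \ref{lem:bounded_deRham_suspension_equal}. The connection-interpolation step you describe makes explicit a point the paper's brief proof glosses over---being $C_b^\infty$-isomorphic to a trivial flat bundle near $\{1\}\times M$ does not by itself make the Chern--Weil form vanish there, so one must deform the connection to obtain a representative in $S\Omega^{\mathrm{ev}}_b(M)$---and your fill of this gap is correct.
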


In \cite{atiyah_hirzebruch} Atiyah and Hirzebruch constructed for a finite CW complex $X$ (i.e., not only for manifolds) a ring homomorphism $K^\ast(X) \to H^\ast(X; \IQ)$ which induces an isomorphism $K^\ast(X) \otimes \IQ \cong H^\ast(X; \IQ)$. This was later generalized to the statement $K^\ast(X) \otimes \IQ \cong \check{H}^\ast(X; \IQ)$ for any compact Hausdorff space $X$, where $\check{H}^\ast(X; \IQ)$ denotes \v{C}ech cohomology (see, e.g., \cite{karoubi_cartan_16}\footnote{Though there was only $K^0(X) \otimes \IQ \cong \check{H}^{\mathrm{ev}}(X; \IQ)$ shown, Th\'{e}or\`{e}me 2 is also applicable to $K^\ast(X) \otimes \IQ$ and $\check{H}^\ast(X; \IQ)$.}). Note that for CW complexes \v{C}ech and singular cohomology coincide.

Now we will state an analogous result for uniform $K$-theory:

\begin{chernthm}\label{thm:chern_character_iso}
Let $M$ be a manifold of bounded geometry. Then the Chern character induces a linear isomorphism
\[K_u^\ast(M)\barotimes\IR \cong H^\ast_{b, \mathrm{dR}}(M).\]
\end{chernthm}

The proof of this theorem will be given in the next section and the tensor product $K_u^\ast(M)\barotimes\IR$ will be discussed now.

\subsection*{Completed topological tensor product with \texorpdfstring{$\IR$}{R}}

We will need the notion of the \emph{free (abelian) topological group}: if $X$ is any completely regular\footnote{That is to say, every closed set $K$ can be separated with a continuous function from every point $x \notin K$. Note that this does not necessarily imply that $X$ is Hausdorff.} topological space, then the free topological group $F(X)$ on $X$ is a topological group such that we have
\begin{itemize}
\item a topological embedding $X \hookrightarrow F(X)$ of $X$ as a closed subset, so that $X$ generates $F(X)$ algebraically as a free group (i.e., the algebraic group underlying the free topological group on $X$ is the free group on $X$), and we have
\item the following universal property: for every continuous map $\phi\colon X \to G$, where $G$ is any topological group, we have a unique extension $\Phi\colon F(X) \to G$ of $\phi$ to a continuous group homomorphism on $F(X)$:
\[\xymatrix{X \ar@{^{(}->}[r] \ar[d]_{\phi} & F(X) \ar@{-->}[dl]^{\exists ! \Phi}\\ G}\]
\end{itemize}
The free abelian topological group $A(X)$ has the corresponding analogous properties. Furthermore, the commutator subgroup $[F(X), F(X)]$ of $F(X)$ is closed and the quotient $F(X) / [F(X), F(X)]$ is both algebraically and topologically $A(X)$.

As an easy example consider $X$ equipped with the discrete topology. Then $F(X)$ and $A(X)$ also have the discrete topology.

It seems that free (abelian) topological groups were apparently introduced by Markov in \cite{markoff_original}. But unfortunately, the author could not obtain any (neither russian nor english) copy of this article. A complete proof of the existence of such groups was given by Markov in \cite{markoff}. Since his proof was long and complicated, several other authors gave other proofs, e.g., Nakayama in \cite{nakayama}, Kakutani in \cite{kakutani} and Graev in \cite{graev}.

Now let us construct for any abelian topological group $G$ the complete topological vector space $G \barotimes \IR$. We form the topological tensor product $G \otimes \IR$ of abelian topological groups in the usual way: we start with the free abelian topological group $A(G \times \IR)$ over the topological space $G \times \IR$ equipped with the product topology\footnote{Note that every topological group is automatically completely regular and therefore the product $G \times \IR$ is also completely regular.} and then take the quotient $A(G \times \IR) / \mathcal{N}$ of it,\footnote{Since $A(X)$ is both algebraically and topologically the quotient of $F(X)$ by its commutator subgroup, we could also have started with $F(G \times \IR)$ and additionally put the commutator relations into $\mathcal{N}$.} where $\mathcal{N}$ is the closure of the normal subgroup generated by the usual relations for the tensor product.\footnote{That is to say, $\mathcal{N}$ contains $(g_1 + g_2) \times r - g_1 \times r - g_2 \times r$, $g \times (r_1 + r_2) - g \times r_1 - g \times r_2$ and $zg \times r - z(g \times r)$, $g \times zr - z(g \times r)$, where $g, g_1, g_2 \in G$, $r, r_1, r_2 \in \IR$ and $z \in \IZ$.} Now we may put on $G \otimes \IR$ the structure of a topological vector space by defining the scalar multiplication to be $\lambda (g \otimes r) := g \otimes \lambda r$.

What we now got is a topological vector space $G \otimes \IR$ together with a continuous map $G \times \IR \to G \otimes \IR$ with the following universal property: for every continuous map $\phi\colon G \times \IR \to V$ into any topological vector space $V$ and such that $\phi$ is bilinear\footnote{That is to say, $\phi(\largecdot, r)$ is a group homomorphism for all $r \in \IR$ and $\phi(g, \largecdot)$ is a linear map for all $g \in G$. Note that we then also have $\phi(zg,r) = z\phi(g,r) = \phi(g,zr)$ for all $z \in \IZ$, $g \in G$ and $r \in \IR$.\label{footnote:univ_prop_TVS}}, there exists a unique, continuous linear map $\Phi\colon G \otimes \IR \to V$ such that the following diagram commutes:
\[\xymatrix{G \times \IR \ar[r] \ar[d]_{\phi} & G \otimes \IR \ar@{-->}[dl]^{\exists ! \Phi} \\ V}\]

Since every topological vector space may be completed we do this with $G \otimes \IR$ to finally arrive at $G \barotimes \IR$. Since every continuous linear map of topological vector spaces is automatically uniformly continuous, i.e., may be extended to the completion of the topological vector space, $G \barotimes \IR$ enjoys the following universal property which we will raise to a definition:

\begin{defn}[Completed topological tensor product with $\IR$]
Let $G$ be an abelian topological group. Then $G \barotimes \IR$ is a complete topological vector space over $\IR$ together with a continuous map $G \times \IR \to G \barotimes \IR$ that enjoy the following universal property: for every continuous map $\phi\colon G \times \IR \to V$ into any complete topological vector space $V$ and such that $\phi$ is bilinear\footnote{See the above Footnote \ref{footnote:univ_prop_TVS}.}, there exists a unique, continuous linear map $\Phi\colon G \barotimes \IR \to V$ such that the following diagram commutes:
\[\xymatrix{G \times \IR \ar[r] \ar[d]_{\phi} & G \barotimes \IR \ar@{-->}[dl]^{\exists ! \Phi} \\ V}\]
\end{defn}

We equip the group $K^\ast_u(M)$ with the discrete topology and then form the complete topological vector space $K^\ast_u(M) \barotimes \IR$. The map
\[K_u^\ast(M) \times \IR \to H^\ast_{b, \mathrm{dR}}(M), \ ([E], r) \mapsto r \cdot [\ch(E)]\]
fulfills the requirements of the universal property of $K_u^\ast(M) \barotimes \IR$ (for $V:= H^\ast_{b, \mathrm{dR}}(M)$) and therefore induces a map on it into $H^\ast_{b, \mathrm{dR}}(M)$, which is the isomorphism from the above Chern Character Isomorphism Theorem \ref{thm:chern_character_iso}. But note that $H^\ast_{b, \mathrm{dR}}(M)$ is usually non-Hausdorff, i.e., it seems that the inverse of the Chern Character isomorphism may not be continuous.

We will give now two examples for the computation of $G \barotimes \IR$. The first one is easy and just a warm-up for the second which we will need in the proof of the Chern Character Isomorphism Theorem in the next section. Both examples are proved by checking the universal property.

\begin{examples}\label{ex:completed_tensor_prod}
The first one is $\IZ \barotimes \IR \cong \IR$.

For the second example consider the group $\ell^\infty_\IZ$ consisting of bounded, integer-valued sequences. Then $\ell^\infty_\IZ \barotimes \IR \cong \ell^\infty$.
\end{examples}

For the proof of the Chern Character Isomorphism Theorem we will also need that the functor $G \mapsto G \barotimes \IR$ is exact, i.e., maps exact sequences again to exact sequences. But we have to be careful here: though taking the tensor product with $\IR$ is exact, passing to completions is usually not---at least if the exact sequence we started with was only algebraically exact. Let us explain this a bit more thoroughly: if we have a sequence of topological vector spaces
\[\ldots \longrightarrow V_i \stackrel{\varphi_i}\longrightarrow V_{i+1} \stackrel{\varphi_{i+1}}\longrightarrow V_{i+2} \longrightarrow \ldots\]
which is exact in the algebraic sense (i.e., $\image \varphi_i = \kernel \varphi_{i+1}$), and if the maps $\varphi_i$ are continuous such that they extend to maps on the completions $\overline{V_i}$, we do not necessarily get that
\[\ldots \longrightarrow \overline{V_i} \stackrel{\overline{\varphi_i}}\longrightarrow \overline{V_{i+1}} \stackrel{\overline{\varphi_{i+1}}}\longrightarrow \overline{V_{i+2}} \longrightarrow \ldots\]
is again algebraically exact. The problem is that though we always have $\overline{\kernel \varphi_i} = \kernel \overline{\varphi_i}$, we generally only get $\overline{\image \varphi_i} \supset \image \overline{\varphi_i}$. To correct this problem we have to start with an exact sequence which is also topologically exact, i.e., we need that not only $\image \varphi_i = \kernel \varphi_{i+1}$, but we also need that $\varphi_i$ induces a \emph{topological} isomorphism $V_i / \kernel \varphi_i \cong \image \varphi_i$.

To prove that in this case we get $\overline{\image \varphi_i} = \image \overline{\varphi_i}$ we consider the inverse map
\[\psi_i := \varphi_i^{-1}\colon \image \varphi_i \to V_i / \kernel \varphi_i.\]
Since $\psi_i$ is continuous (this is the point which breaks down without the additional assumption that $\varphi_i$ induces a topological isomorphism $V_i / \kernel \varphi_i \cong \image \varphi_i$), we may extend it to a map
\[\overline{\psi_i} \colon \overline{\image \varphi_i} \to \overline{V_i / \kernel \varphi_i} = \overline{V_i} / \overline{\kernel \varphi_i},\]
which obviously is the inverse to $\overline{\varphi_i} \colon \overline{V_i} / \overline{\kernel \varphi_i} \to \overline{\image \varphi_i}$ showing the desired equality $\overline{\image \varphi_i} = \image \overline{\varphi_i}$.

Coming back to our functor $G \mapsto G \barotimes \IR$, we may now prove the following lemma:

\begin{lem}\label{lem:functor_exact}
Let
\[\ldots \longrightarrow G_i \stackrel{\varphi_i}\longrightarrow G_{i+1} \stackrel{\varphi_{i+1}}\longrightarrow G_{i+2} \longrightarrow \ldots\]
be an exact sequence of topological groups and continuous maps, which is in addition topologically exact, i.e., for all $i \in \IZ$ the from $\varphi_i$ induced map $G_i / \kernel \varphi_i \to \image \varphi_i$ is an isomorphism of topological groups.

Then
\[\ldots \longrightarrow G_i \barotimes \IR \longrightarrow G_{i+1} \barotimes \IR \longrightarrow G_{i+2} \barotimes \IR \longrightarrow \ldots\]
with the induced maps is an exact sequence of complete topological vector spaces, which is also topologically exact.
\end{lem}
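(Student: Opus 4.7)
The plan is to reduce to the case of a short exact sequence, establish the lemma first for the uncompleted tensor product $\otimes \IR$, and then transfer to the completed version $\barotimes \IR$ by the argument reproduced in the paragraph preceding the lemma. Any exact sequence of topological groups decomposes into short exact sequences $0 \to I_i \hookrightarrow G_{i+1} \twoheadrightarrow I_{i+1} \to 0$ with $I_i := \image \varphi_i$, and topological exactness of the long sequence at $G_{i+1}$ is precisely the assertion that each of these short sequences is topologically exact. So it suffices to treat a single topologically exact short sequence $0 \to G' \stackrel{\varphi}\to G \stackrel{\psi}\to G'' \to 0$.

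For such a sequence, I would first show algebraic exactness of
\[0 \to G' \otimes \IR \stackrel{\varphi \otimes \id}\to G \otimes \IR \stackrel{\psi \otimes \id}\to G'' \otimes \IR \to 0,\]
which is standard: surjectivity on the right holds on elementary tensors and hence everywhere, while exactness in the middle and injectivity on the left follow from flatness of $\IR$ as a $\IZ$-module. For topological exactness on the right, I would use a universal-property computation: continuous bilinear maps out of $G'' \times \IR$ correspond, via the topological quotient $G \twoheadrightarrow G''$, to continuous bilinear maps out of $G \times \IR$ that vanish on $\varphi(G') \times \IR$, so $(G \otimes \IR)/\image(\varphi \otimes \id)$ with its quotient topology satisfies the universal property of $G'' \otimes \IR$ and is therefore identified with it as a topological vector space. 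For topological exactness on the left, namely that $\varphi \otimes \id$ is a topological embedding, I would work directly with the construction $G \otimes \IR = A(G \times \IR)/\mathcal{N}$, exploiting that $\varphi$ is itself a topological embedding to pull back a neighbourhood basis of $0$ in $G' \otimes \IR$ to the subspace topology inherited from $G \otimes \IR$.

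Once the uncompleted short sequence is known to be topologically exact, I would invoke the argument reproduced immediately before the lemma verbatim: topological exactness furnishes a continuous inverse $(\varphi \otimes \id)^{-1}\colon \image(\varphi \otimes \id) \to G' \otimes \IR$ which extends uniquely to the completions and inverts $\overline{\varphi \otimes \id}$ on closures, yielding $\overline{\image(\varphi \otimes \id)} = \image \overline{\varphi \otimes \id}$ and hence algebraic exactness of the completed sequence. Topological exactness of the completed sequence then follows because completion of Hausdorff topological vector spaces commutes with quotients by closed subspaces and with topological embeddings, applied to the short exact pieces.

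The main obstacle will be the topological embedding assertion for $\varphi \otimes \id$ in the uncompleted setting. The universal-property technique dispatches the quotient side cleanly, but it does not dualize to handle subspaces, so this part must be unpacked directly from the concrete construction of $G \otimes \IR$ via the free abelian topological group $A(G \times \IR)$, whose topology is notoriously subtle. Specifically, one must control how the closed normal subgroup $\mathcal{N} \subset A(G \times \IR)$ intersects the analogous $\mathcal{N}' \subset A(G' \times \IR)$, ensuring that the subspace topology on the image of $A(G' \times \IR)/\mathcal{N}'$ inside $A(G \times \IR)/\mathcal{N}$ coincides with the intrinsic topology on $G' \otimes \IR$.
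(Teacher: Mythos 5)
Your proposal and the paper share the same two-phase structure (topological exactness of the uncompleted tensored sequence, then completion via the discussion preceding the lemma) and the same flatness argument for algebraic exactness, but you diverge from the paper at exactly the point where it has a slick shortcut. You reduce to short exact sequences and then, as you note yourself, stall on the topological-embedding side, which you propose to attack by unwinding the free abelian topological group $A(G \times \IR)/\mathcal{N}$ directly. The paper neither decomposes into short exact sequences nor ever proves an embedding statement from scratch. Instead it exploits functoriality of $\otimes\,\IR$: using the standard kernel formula for a tensor product of surjections, applied to the quotient map $G_i \to G_i/\kernel\varphi_i$ and $\id_\IR$, it computes $\kernel(\varphi_i \otimes \id) = (\kernel\varphi_i) \otimes \IR$, identifies the induced map with $(G_i/\kernel\varphi_i)\otimes\IR \to (\image\varphi_i)\otimes\IR$, and then notes that this map has a continuous inverse \emph{for free} — namely, the tensor with $\id_\IR$ of the continuous inverse that the topological-exactness hypothesis already hands you. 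Pushing the given continuous inverse through the functor, rather than trying to re-establish the topological isomorphism from the construction of $A(G\times\IR)$, is the key idea you are missing; once it is in place, your universal-property computation for the quotient side and your whole analysis of how $\mathcal{N}'$ sits inside $\mathcal{N}$ become unnecessary, and the passage to completions goes through just as you laid it out.
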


\begin{proof}
We first tensor with $\IR$ (without the completion afterwards). This is known to be an exact functor and our sequence also stays topologically exact. To see this last claim, we need the following fact about tensor products: if $\varphi\colon M \to M^\prime$ and $\psi\colon N \to N^\prime$ are surjective, then the kernel of $\varphi \otimes \psi \colon M \otimes M^\prime \to N \otimes N^\prime$ is the submodule given by
\[\kernel (\varphi \otimes \psi) = (\iota_M \otimes 1) \big( (\kernel \varphi) \otimes N \big) + (1 \otimes \iota_N) \big( M \otimes (\kernel \psi) \big),\]
where $\iota_M \colon \kernel \varphi \to M$ and $\iota_N \colon \kernel \psi \to N$ are the inclusion maps. We will suppress the inclusion maps from now on to shorten the notation.

We apply this with $\varphi \colon M \to M^\prime$ being the quotient map $G_i \to G_i / \kernel \varphi_i$ and $\psi \colon N \to N^\prime$ being the identity $\id \colon \IR \to \IR$ to get
\[\kernel (\varphi_i \otimes \id) = (\kernel \varphi_i) \otimes \IR.\]
Since we have $(\image \varphi_i) \otimes \IR = \image (\varphi_i \otimes \id)$, we get that $\varphi \otimes \id \colon G_i \otimes \IR \to G_i \otimes \IR$ induces an algebraic isomorphism $(G_i / \kernel \varphi_i) \otimes \IR \to \image \varphi_i \otimes \IR$. But this has now an inverse map given by tensoring the inverse of $G_i / \kernel \varphi_i \to \image \varphi_i$ with $\id \colon \IR \to \IR$. So the isomorphism $(G_i / \kernel \varphi_i) \otimes \IR \cong \image \varphi_i \otimes \IR$ is also topological.

Now we apply the discussion before the lemma to show that the completion of this new sequence is still exact and also topologically exact.
\end{proof}

\section{Proof of the isomorphism theorem}\label{sec:proof_chern_iso}

We will need the following Theorem \ref{thm:triangulation_bounded_geometry} about manifolds of bounded geometry. To state it, we have to recall some notions:

\begin{defn}[Bounded geometry simplicial complexes]\label{defn:simplicial_complex_bounded_geometry}
A simplicial complex has \emph{bounded geometry} if there is a uniform bound on the number of simplices in the link of each vertex.

A subdivision of a simplicial complex of bounded geometry is called a \emph{uniform subdivision} if
\begin{itemize}
\item each simplex is subdivided a uniformly bounded number of times on its $n$-skeleton, where the $n$-skeleton is the union of the $n$-dimensional sub-simplices of the simplex, and
\item the distortion $\operatorname{length}(e) + \operatorname{length}(e)^{-1}$ of each edge $e$ of the subdivided complex is uniformly bounded in the metric given by barycentric coordinates of the original complex.
\end{itemize}
\end{defn}

\begin{defn}[Continuous quasi-isometries]
Two metric spaces $X$ and $Y$ are said to be \emph{quasi-isometric} if there is a homeomorphism $f\colon X \to Y$ with
\[\tfrac{1}{C} d_X(x,x^\prime) \le d_Y(f(x), f(x^\prime)) \le C d_X(x,x^\prime)\]
for all $x,x^\prime \in X$ and some constant $C > 0$.
\end{defn}

\begin{thm}[{\cite[Theorem 1.14]{attie_classification}}]\label{thm:triangulation_bounded_geometry}
Let $M$ be a manifold of bounded geometry. Then $M$ admits a triangulation as a simplicial complex of bounded geometry whose metric given by barycentric coordinates is quasi-isometric to the metric on $M$ induced by the Riemannian structure. This triangulation is unique up to uniform subdivision.

Conversely, if $M$ is a simplicial complex of bounded geometry which is a triangulation of a smooth manifold, then this smooth manifold admits a metric of bounded geometry with respect to which it is quasi-isometric to $M$.
\end{thm}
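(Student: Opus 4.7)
The plan is to prove both directions using bounded geometry to produce (or recover) a simplicial structure with uniform combinatorial and metric control. For the forward direction, I would first invoke the nice-coverings lemma to obtain a maximal $\varepsilon$-separated net $Y \subset M$ for some fixed $\varepsilon < \tfrac{1}{3}\injrad(M)$. Bounded geometry ensures $Y$ is a uniformly discrete quasi-lattice with a uniform bound on the number of points in any ball of fixed radius. I would then form the Voronoi cells $V_i$ around each $x_i \in Y$: since geodesic balls of radius below the injectivity radius are convex, each $V_i$ has diameter at most $2\varepsilon$ and a uniformly bounded number of codimension-one faces (by the coarse-bounded-geometry estimate on neighbors of $x_i$). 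The nerve of this covering is an abstract simplicial complex with uniformly bounded vertex link sizes, i.e. of bounded geometry in the sense already introduced.

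To realize this nerve as an honest triangulation of $M$, I would work in the normal coordinate charts where the metric coefficients and all their derivatives are uniformly bounded. There the Voronoi decomposition is $C^\infty$-bounded-close to a Euclidean Voronoi decomposition of a net in $\IR^m$, and the latter is dual to a triangulation after a small general-position perturbation of the vertex set. Uniform control on the transition functions between overlapping normal charts allows one to carry out these perturbations coherently via a bounded-geometry partition of unity, yielding a global piecewise-smooth map $f\colon|K|\to M$ which on each simplex is a $C^\infty$-bounded diffeomorphism onto a geodesic simplex of diameter at most $O(\varepsilon)$ and uniformly positive volume. The bilipschitz comparison of the barycentric metric on $|K|$ with the Riemannian distance on $M$ then follows from these local estimates together with the quasi-lattice property of $Y$. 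Uniqueness up to uniform subdivision falls out of a standard argument: given two such triangulations, one takes the common refinement by intersecting simplices, and the uniform lower bounds on volumes together with the uniform upper bounds on diameters force the number of sub-simplices produced in each original simplex to be uniformly bounded.

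For the converse, equip $|K|$ with the piecewise-flat metric obtained by identifying each top-dimensional simplex with a standard Euclidean simplex, then smooth this metric in a neighborhood of the codimension-$\ge 1$ skeleta by averaging against a smooth reference metric built from coordinate patches around each vertex, using a partition of unity subordinate to the open vertex stars. The bounded-geometry hypothesis on $K$ (uniform bound on link sizes, controlled barycentric distortion on every uniform subdivision) supplies exactly the uniform estimates needed so that the averaged metric is smooth with uniformly bounded curvature tensor and uniformly bounded covariant derivatives thereof, and has uniformly positive injectivity radius. The identity map from the smoothed Riemannian manifold to $|K|$ is a quasi-isometry by construction, since the smoothing is supported in a uniformly small neighborhood of the skeleta and distorts distances only by a uniformly bounded factor.

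The main technical obstacle will be the general-position issue in the forward direction: perturbing the Voronoi vertex set into a configuration yielding genuine (non-degenerate) simplices while preserving the uniform bounds globally on the non-compact manifold. This is the uniform analogue of the Whitney--Cairns smoothing argument, and it is feasible only because bounded geometry delivers uniform $C^\infty$-bounds on the transition functions between normal coordinate charts, which in turn permits one to execute the finitely-many-steps Whitney perturbation simultaneously in all charts with a single global error estimate rather than chart-by-chart.
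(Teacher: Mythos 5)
The paper does not actually prove this theorem; it is cited verbatim from Attie \cite[Theorem 1.14]{attie_classification}, and the only original content here is the remark that follows, which observes that Attie's construction in the converse direction happens to produce bounded geometry in the strong sense (bounds on all covariant derivatives of curvature) even though Attie assumes only a weaker hypothesis. So your proposal cannot be matched against an argument in the paper, only evaluated on its own merits.

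Evaluated as such, the forward direction contains a genuine gap at the step you flag as the ``main technical obstacle.'' The nerve of a Voronoi decomposition of a net $Y$ in a Riemannian manifold (equivalently, the intrinsic Delaunay complex) need \emph{not} be a triangulation of $M$, and this is not a mere general-position issue to be fixed by a small perturbation: on a curved manifold the Delaunay complex can fail to be a combinatorial $m$-manifold unless $Y$ satisfies an additional quantitative ``protection'' condition, and producing a protected net with uniform constants across a non-compact $M$ is itself a substantial theorem (this is precisely what the Boissonnat--Dyer--Ghosh theory of intrinsic Delaunay triangulations addresses). The Whitney--Cairns--Whitehead constructions you appeal to do not go through Voronoi cells at all; they either embed $M$ in Euclidean space and intersect with a fine cubical lattice, or smooth a coordinate-patch triangulation, and that is essentially the route Attie takes. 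Two further points to tighten: your uniqueness argument treats ``intersecting simplices'' as if it yielded simplices, whereas it yields convex cells that must themselves be (uniformly) re-triangulated, and the convexity of Voronoi cells needs the convexity radius, not merely the injectivity radius, to be uniformly positive; in the converse direction, your claim that the averaged metric has uniformly bounded covariant derivatives of curvature is exactly the non-obvious point that the paper's remark flags as requiring an inspection of Attie's proof, since Attie states only the bound on $\|\Rm\|$ itself.
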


\begin{rem}
Attie uses in \cite{attie_classification} a weaker notion of bounded geometry as we do: additionally to a uniformly positive injectivity radius he only requires the sectional curvatures to be bounded in absolute value (i.e., the curvature tensor is bounded in norm), but he assumes nothing about the derivatives (see \cite[Definition 1.4]{attie_classification}). But going into his proof of \cite[Theorem 1.14]{attie_classification}, we see that the Riemannian metric constructed for the second statement of the theorem is actually of bounded geometry in our strong sense (i.e., also with bounds on the derivatives of the curvature tensor).

As a corollary we get that for any manifold of bounded geometry in Attie's weak sense there is another Riemannian metric of bounded geometry in our strong sense that is quasi-isometric the original one (in fact, this quasi-isometry is just the identity map of the manifold, as can be seen from the proof).
\end{rem}

We need the above theorem to prove the following technical lemma which will play a crucial role in our proof of the Chern Character Isomorphism Theorem.

\begin{lem}\label{lem:suitable_coloring_cover_M}
Let $M$ be a manifold of bounded geometry.

Then there is an $\varepsilon > 0$ and a countable collection of uniformly discretely distributed points $\{x_i\} \subset M$ such that $\{B_{\varepsilon}(x_i)\}$ is a uniformly locally finite cover of $M$.

Furthermore, it is possible to partition $\IN$ into a finite amount of subsets $I_1, \ldots, I_N$ such that for each $1 \le j \le N$ the subset $U_j := \bigcup_{i \in I_j} B_{\varepsilon}(x_i)$ is a disjoint union of balls that are a uniform distance apart from each other, and such that for each $1 \le K \le N$ the connected components of $U_K := U_1 \cup \ldots \cup U_k$ are also a uniform distance apart from each other (see Figure \ref{fig:not_allowed_cover} on the next page).
\end{lem}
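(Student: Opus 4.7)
The first part of the lemma is immediate from Lemma \ref{lem:nice_coverings_partitions_of_unity}: pick $\varepsilon \in (0,\operatorname{injrad}(M)/3)$ and let $\{x_i\}$ be the centers of the balls it supplies. Then $\{x_i\}$ is uniformly $\varepsilon$-discrete, $\{B_\varepsilon(x_i)\}$ covers $M$, and $\{B_{2\varepsilon}(x_i)\}$ is uniformly locally finite with a universal bound $C_\varepsilon$.

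For the partition $I_1,\dots,I_N$ my plan is to build the coloring from a bounded-geometry triangulation of $M$. By Theorem \ref{thm:triangulation_bounded_geometry} I equip $M$ with a simplicial complex $T$ of bounded geometry, quasi-isometric to $M$, and after a global rescaling or uniform subdivision all simplex diameters lie in a fixed small window. I then replace $\{x_i\}$ by the barycenters $b_\sigma$ of all simplices of $T$ and define $I_{k+1}$ to consist of the indices of the barycenters of $k$-dimensional simplices, giving $N = \dim(M)+1$ color classes; the uniformity of $T$ keeps $\{x_i\}$ uniformly discrete and keeps $\{B_\varepsilon(x_i)\}$ a uniformly locally finite cover, provided $\varepsilon$ is tuned as below.

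Next I will select $\varepsilon$ so that three conditions hold simultaneously: (i) $\{B_\varepsilon(x_i)\}$ still covers $M$, (ii) the barycenters of distinct simplices of the same dimension stay uniformly $\delta$-apart for some $\delta > 0$, and (iii) for any face inclusion $\tau \subsetneq \sigma$ the balls around $b_\tau$ and $b_\sigma$ meet. Condition (ii) yields property~(a) of the lemma directly. For property~(b), observe that the cumulative union $U_K = \bigcup_{k < K}\bigcup_{\dim\sigma=k} B_\varepsilon(b_\sigma)$ is an open thickening of the $(K-1)$-skeleton $T^{(K-1)}$, and~(iii) guarantees that two barycenter-balls lying in $U_K$ share a connected component of $U_K$ whenever their underlying simplices can be joined by a chain of face inclusions through simplices of dimension $<K$. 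For connected $M$ and $K \ge 2$, path-connectedness of $T^{(1)} \subseteq T^{(K-1)}$ thus forces $U_K$ to consist of a single component (so the apartness clause is vacuous), while $U_1$ is a disjoint union of uniformly apart vertex balls by~(ii). The disconnected case reduces to the connected one component-wise.

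The main technical obstacle is the simultaneous achievement of (i)--(iii) with a single $\varepsilon$: in a generic bounded-geometry triangulation the minimum intra-dimensional barycenter spacing can be comparable to the maximum simplex diameter, and the two quantities scale together under subdivision. Resolving this will require either replacing pure barycenters by landmarks pushed slightly into the interior of each simplex (widening the intra-dimensional spacings while keeping face-related pairs close) or passing to the dual cell decomposition of $T$; in either variant, uniformity of $T$ makes the adjustment uniform across $M$ and produces a single admissible $\varepsilon$, completing the construction.
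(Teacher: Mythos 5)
Your first paragraph is fine and essentially mirrors the paper. But for the coloring you take a genuinely different route from the paper, and the gap you flag yourself is a real one that your closing sentence does not actually close.

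The paper's construction is considerably simpler: it triangulates $M$ via Theorem~\ref{thm:triangulation_bounded_geometry}, takes only the \emph{vertices} of the triangulation as the $x_i$, chooses $\varepsilon$ on the order of the edge length, and then observes that two balls $B_\varepsilon(x_i)$, $B_\varepsilon(x_j)$ meet if and only if $x_i$ and $x_j$ are adjacent vertices, with a uniform gap otherwise. The ball-intersection graph then has uniformly bounded degree, so by de~Bruijn--Erd\H{o}s plus greedy coloring it admits a proper coloring with finitely many colors $I_1,\dots,I_N$. Crucially, the cumulative apartness of the components of $U_K$ comes \emph{for free} from any such proper coloring: two distinct connected components of $U_K$ are such that every ball in one is disjoint from every ball in the other (otherwise they would be a single component), so all such cross-pairs have non-adjacent centers and are therefore uniformly apart, hence so are the components. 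No connectedness of $U_K$, and nothing like your condition~(iii), is needed.

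Your barycenter-by-dimension scheme requires condition~(iii) precisely because you want to force $U_K$ to be connected (so that the apartness clause becomes vacuous), and this is what introduces the incompatibility. Concretely, already for a single equilateral $2$-simplex of side length $a$: for a vertex ball and the $2$-simplex barycenter ball to meet you need $2\varepsilon > a/\sqrt{3} \approx 0.58a$, i.e.\ $\varepsilon \gtrsim 0.29a$, while for two edge-midpoint balls to be disjoint you need $2\varepsilon < a/2$, i.e.\ $\varepsilon < 0.25a$. These are incompatible for any single $\varepsilon$, and this tension persists under uniform subdivision since, as you note, both length scales shrink together. Your suggested repairs (pushing landmarks into the interiors, or passing to the dual cell decomposition) are not carried out, and it is not clear either one resolves the conflict without a more careful quantitative analysis; as written, the argument does not establish the lemma. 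The quickest fix is to discard~(iii) entirely, keep only the vertex balls (as the paper does), and invoke bounded-degree graph coloring together with the automatic apartness argument above.
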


\begin{figure}[htbp]
\centering
\includegraphics[scale=0.5]{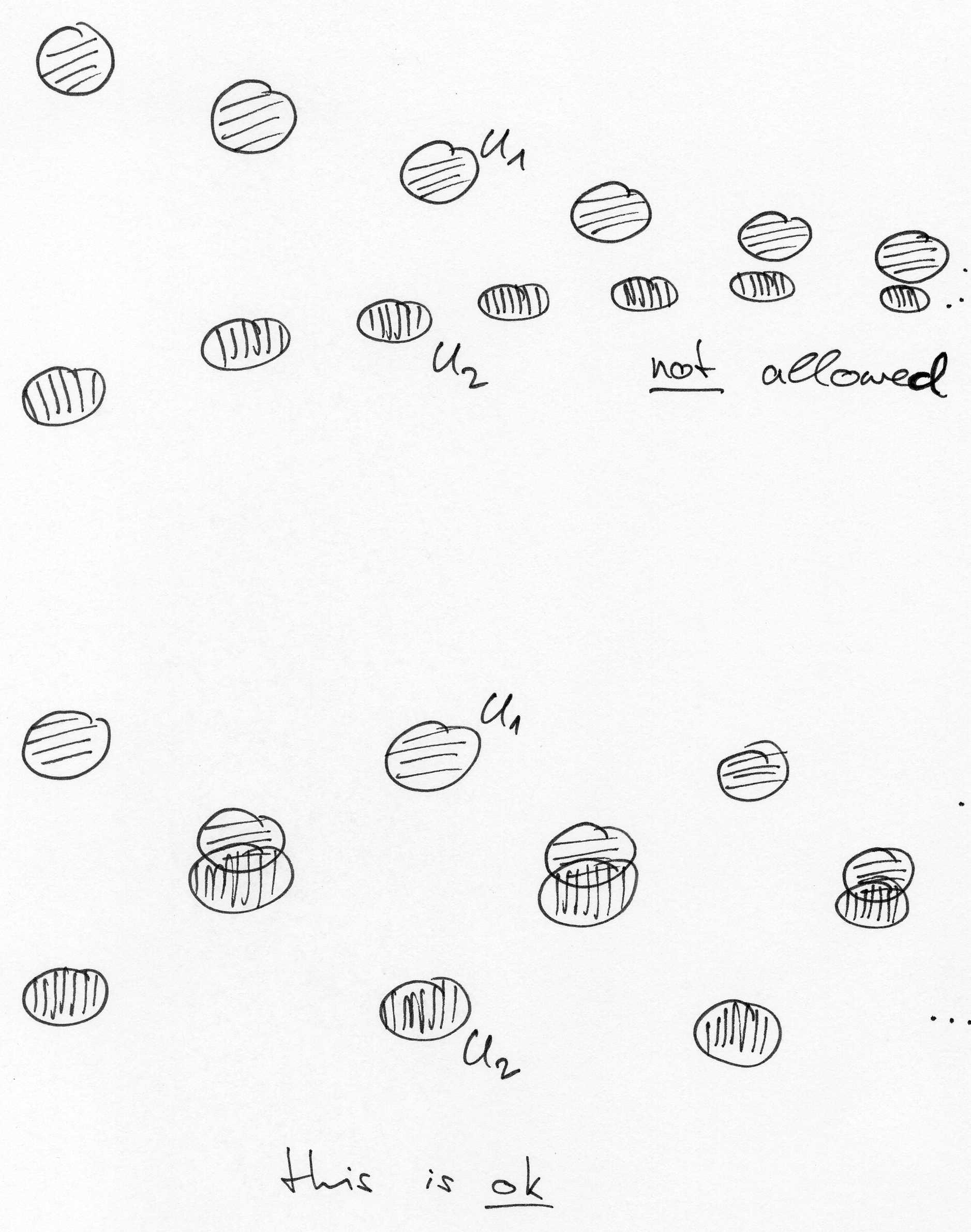}
\caption{Illustration for Lemma \ref{lem:suitable_coloring_cover_M}.}
\label{fig:not_allowed_cover}
\end{figure}

\begin{proof}
We triangulate $M$ via the above Theorem \ref{thm:triangulation_bounded_geometry}. Then we may take the vertices of this triangulation as our collection of points $\{x_i\}$ and set $\varepsilon$ to $2/3$ of the length of an edge multiplied with the constant $C$ which we get since the metric derived from barycentric coordinates is quasi-isometric to the metric derived from the Riemannian structure.

Two balls $B_\varepsilon(x_i)$ and $B_\varepsilon(x_j)$ for $x_i \not= x_j$ intersect if and only if $x_i$ and $x_j$ are adjacent vertices, and in the case that they are not adjacent, these balls are a uniform distance apart from each other. Hence it is possible to find a coloring of all these balls $\{B_\varepsilon(x_i)\}$ with finitely many colors having the claimed property.\footnote{see Footnote \ref{footnote:coloring}}
\end{proof}

To prove the Chern Character Isomorphism Theorem we will need Mayer--Vietoris sequences for uniform $K$-theory and bounded de Rham cohomology. But recall from Remark \ref{rem:mayer_vietoris_de_rham} that for the latter the existence of a Mayer--Vietoris sequence is very unlikely. We will work around this problem by introducing for open subsets $O \subset M$ slightly differently defined groups $K^\ast_{u, \mathrm{MV}}(O)$ and $H^\ast_{b, \mathrm{MV}}(O)$ (which coincide in the case $O = M$ with $K^\ast_u(M)$ and $H^\ast_{b, \mathrm{dR}}(M)$).

Let us start with bounded de Rham cohomology:

\begin{defn}
Let $O \subset M$ be an open subset, not necessarily connected. We define $\Omega_{b, \mathrm{MV}}^p(O)$ to consist of all bounded $p$-forms $\alpha \in \Omega_b^p(O)$ such that there exists a $\delta > 0$ (which will depend on $\alpha$) such that $\alpha$ has an extension to a bounded $p$-form $\widetilde{\alpha} \in \Omega_b^p(B_\delta(O))$ on a $\delta$-neighbourhood of $O$.
\end{defn}

Let the open subsets $U_j$ and $U_K$ for $1 \le j,K \le N$, of $M$ be as in Lemma \ref{lem:suitable_coloring_cover_M} and consider the sequence
\begin{equation*}
0 \to \Omega_{b, \mathrm{MV}}^p(U_K \cup U_{k+1}) \to \Omega_{b, \mathrm{MV}}^p(U_K) \oplus \Omega_{b, \mathrm{MV}}^p(U_{k+1}) \to \Omega_{b, \mathrm{MV}}^p(U_K \cap U_{k+1}) \to 0
\end{equation*}
(an analogue of the sequence usually used for the proof of the existence of the Mayer--Vietoris sequence for usual de Rham cohomology).

The hardest part in showing that the sequence is exact, is to show the surjectivity of $\Omega_{b, \mathrm{MV}}^p(U_K) \oplus \Omega_{b, \mathrm{MV}}^p(U_{k+1}) \to \Omega_{b, \mathrm{MV}}^p(U_K \cap U_{k+1})$. So given a form $\alpha \in \Omega_{b, \mathrm{MV}}^p(U_K \cap U_{k+1})$, we can always extend it to a form $\widetilde{\alpha} \in \Omega_{b, \mathrm{MV}}^p(U_K)$ in the following way: let $\delta$ be such that $\alpha$ has an extension to $B_\delta(U_K \cap U_{k+1})$, but not bigger than half the distance which are the balls of $U_{k+1}$ are apart from each other. Let $f \in C_b^\infty[0, \delta]$ be such that $f|_{[0, \delta / 3]} \equiv 1$ and $f|_{[\delta 2 / 3, \delta]} \equiv 0$ and define $F \in C_b^\infty(M)$ to be $f$ on every radial geodesic emenating orthogonally from the boundary of every ball of $U_{k+1}$, to be $1$ on the inside of every such ball and $0$ otherwise. Now we multiply $\alpha$ with $F$, which gives us the desired extension $\widetilde{\alpha}$ of $\alpha$ to $U_K$ (see Figure \ref{fig:extension_forms_MV}). So we get for bounded de Rham cohomology our claimed Mayer--Vietoris sequence, which we state in the next lemma.

\begin{figure}[htbp]
\centering
\includegraphics[scale=0.65]{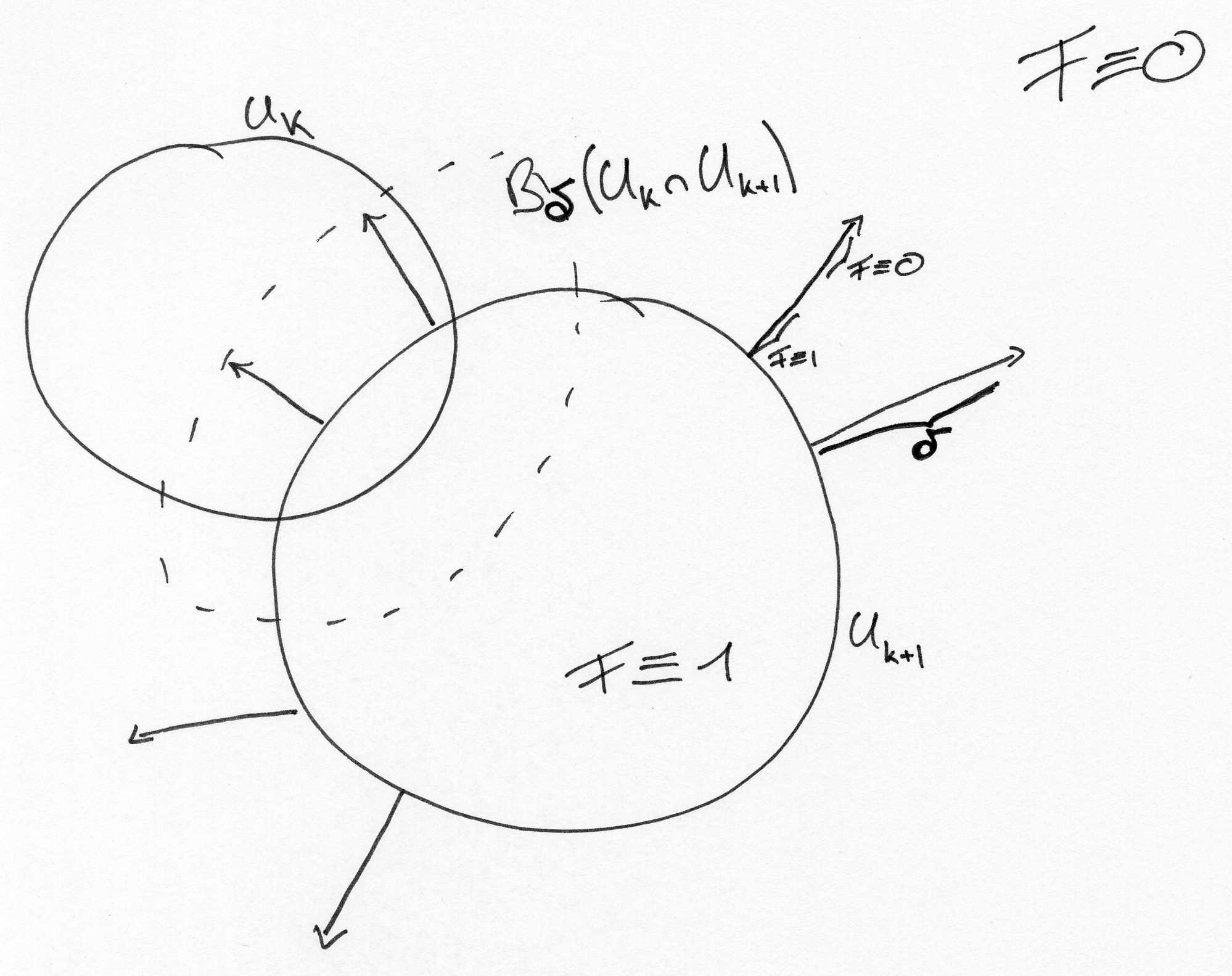}
\caption{Extension of $\alpha \in \Omega_{b, \mathrm{MV}}^p(U_K \cap U_{k+1})$ to $\widetilde{\alpha} \in \Omega_{b, \mathrm{MV}}^p(U_K)$.}
\label{fig:extension_forms_MV}
\end{figure}

\begin{lem}\label{lem:MV_bounded_dR}
Let the subsets $U_j$ and $U_K$ for $1 \le j,K \le N$, of $M$ be defined as in Lemma \ref{lem:suitable_coloring_cover_M}. Then we have Mayer--Vietoris sequences
\begin{alignat*}{4}
& && && \!\!\!\!\!\!\!\! \ldots \to H^{p-1}_{b, \mathrm{MV}}(U_K \cap U_{k+1}) \to \\
& \to H^p_{b, \mathrm{MV}}(U_K \cup U_{k+1}) && \to H^p_{b, \mathrm{MV}}(U_K) \oplus H^p_{b, \mathrm{MV}}(U_{k+1}) && \to H^p_{b, \mathrm{MV}}(U_K \cap U_{k+1}) \to \\
& \to H^{p+1}_{b, \mathrm{MV}}(U_K \cup U_{k+1}) && \to \ldots
\end{alignat*}
\end{lem}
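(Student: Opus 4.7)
The plan is to derive the Mayer--Vietoris sequence from a short exact sequence of cochain complexes
\[0 \to \Omega_{b, \mathrm{MV}}^\bullet(U_K \cup U_{k+1}) \stackrel{i}\to \Omega_{b, \mathrm{MV}}^\bullet(U_K) \oplus \Omega_{b, \mathrm{MV}}^\bullet(U_{k+1}) \stackrel{j}\to \Omega_{b, \mathrm{MV}}^\bullet(U_K \cap U_{k+1}) \to 0,\]
where $i(\alpha) := (\alpha|_{U_K}, \alpha|_{U_{k+1}})$ and $j(\alpha, \beta) := \alpha|_{U_K \cap U_{k+1}} - \beta|_{U_K \cap U_{k+1}}$, and then to apply the zig-zag lemma of homological algebra. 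Injectivity of $i$ is immediate since $U_K \cup U_{k+1}$ is covered by its two pieces, and exactness in the middle follows by noting that a pair of forms agreeing on the intersection glues smoothly to $U_K \cup U_{k+1}$; the MV-extension property of the glued form is inherited from the common extension domains of its two constituents.

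The main obstacle, as already flagged in the excerpt, is surjectivity of $j$, where the special geometry from Lemma~\ref{lem:suitable_coloring_cover_M} will be used crucially. Given $\alpha \in \Omega_{b, \mathrm{MV}}^p(U_K \cap U_{k+1})$ with extension $\widetilde{\alpha} \in \Omega_b^p(B_\delta(U_K \cap U_{k+1}))$, I would shrink $\delta$ so that it is strictly less than half the pairwise distance between the balls constituting $U_{k+1}$ and also strictly less than half the distance between distinct connected components of $U_K$ (both positive by Lemma~\ref{lem:suitable_coloring_cover_M}). Choose $f \in C_b^\infty[0,\delta]$ with $f|_{[0,\delta/3]} \equiv 1$ and $f|_{[2\delta/3,\delta]} \equiv 0$, and define $F \in C_b^\infty(M)$ to be $1$ on each ball of $U_{k+1}$, to equal $f(t)$ at the point of distance $t \le \delta$ along the radial geodesic emanating orthogonally from the boundary of each such ball, and to be $0$ elsewhere. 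Then set
\[\alpha_1 := F \cdot \widetilde{\alpha} \quad \text{(extended by zero to }U_K\text{)}, \qquad \alpha_2 := -(1-F) \cdot \widetilde{\alpha} \quad \text{(extended by zero to }U_{k+1}\text{)}.\]
By construction $\supp F$ lies in the $\delta$-neighbourhood of $U_{k+1}$, which is contained in the domain of $\widetilde{\alpha}$, so $\alpha_1$ is a well-defined smooth bounded $p$-form on $U_K$; symmetrically for $\alpha_2$. Both possess an MV-extension beyond $U_K$, resp.\ $U_{k+1}$, because $\widetilde{\alpha}$ was defined on a full $\delta$-neighbourhood (one should initially pick $\delta$ slightly smaller than the actual extension radius of $\widetilde{\alpha}$ to make room). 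Finally, on the intersection we compute
\[j(\alpha_1, \alpha_2) = F \widetilde{\alpha} + (1-F) \widetilde{\alpha} = \widetilde{\alpha}|_{U_K \cap U_{k+1}} = \alpha,\]
which establishes surjectivity.

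The hard part is arranging the cutoff $F$ to have derivatives bounded \emph{uniformly in the point of $M$}, so that $\alpha_1, \alpha_2$ remain bounded forms in the strong sense required by $\Omega_{b, \mathrm{MV}}^p$. This succeeds precisely because the coloring of Lemma~\ref{lem:suitable_coloring_cover_M} guarantees that the balls making up $U_{k+1}$, as well as the connected components of $U_K$, are a uniform distance apart: the same one-dimensional profile $f$ can then be pasted in around every ball by a normal-coordinate construction without interference between neighbouring balls, and the uniform bounds on $\nabla^i F$ follow from the bounded geometry of $M$ together with the uniform lower bound on the injectivity radius. Without the uniform separation built into Lemma~\ref{lem:suitable_coloring_cover_M} this construction would break down (as noted in Remark~\ref{rem:mayer_vietoris_de_rham}), which is exactly why the slightly modified groups $\Omega^\bullet_{b, \mathrm{MV}}$ and the specific inductive cover were introduced rather than working with an arbitrary open cover.
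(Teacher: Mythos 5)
Your proof takes essentially the same route as the paper: form the short exact sequence of cochain complexes, reduce everything to surjectivity of the restriction--difference map $j$, and produce the lift by cutting the extended form $\widetilde\alpha$ off with a function $F$ built from a fixed one-dimensional profile $f$ pushed out along radial geodesics from the boundaries of the balls constituting $U_{k+1}$, with the uniform derivative bounds on $F$ coming from Lemma~\ref{lem:suitable_coloring_cover_M} together with bounded geometry. Two small remarks: on $U_{k+1}$ we have $F \equiv 1$, so your $\alpha_2 = -(1-F)\widetilde\alpha$ vanishes identically there and your lift is really $(F\widetilde\alpha, 0)$, i.e.\ the paper's; and the assertion that $B_\delta(U_{k+1})$ is contained in $\dom\widetilde\alpha = B_\delta(U_K \cap U_{k+1})$ is false in general (balls of $U_{k+1}$ disjoint from $U_K$ are not near $U_K \cap U_{k+1}$) --- what the well-definedness of $\alpha_1$ on $U_K$ actually needs is only $\supp F \cap U_K \subset B_\delta(U_K \cap U_{k+1})$, which does hold once $\delta$ is taken below the uniform separation of Lemma~\ref{lem:suitable_coloring_cover_M}, and this is the same point the paper leaves tacit.
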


\begin{rem}
The sequence here coincides in the setting of Example \ref{ex:computation_de_rham_real_line} with the sequence there. This shows that there is no version of this Mayer--Vietoris sequence here for the reduced groups $\bar{H}^\ast_{b, \mathrm{MV}}$.
\end{rem}

Now let us get to the Mayer--Vietoris sequence for uniform $K$-theory. Let $O \subset M$ be an open subset, not necessarily connected. We denote by $(M, d)$ the metric space $M$ endowed with the metric induced from the Riemannian metric $g$ on $M$, and by $C_u(O, d)$ we denote the $C^\ast$-algebra of all bounded, uniformly continuous functions on $O$, where we regard $O$ as a metric space equipped with the subset metric induced from $d$ (i.e., we do not equip $O$ with the induced Riemannian metric and consider then the corresponding induced metric structure; see Figure \ref{fig:different_metrics}).

\begin{figure}[htbp]
\centering
\includegraphics[scale=0.6]{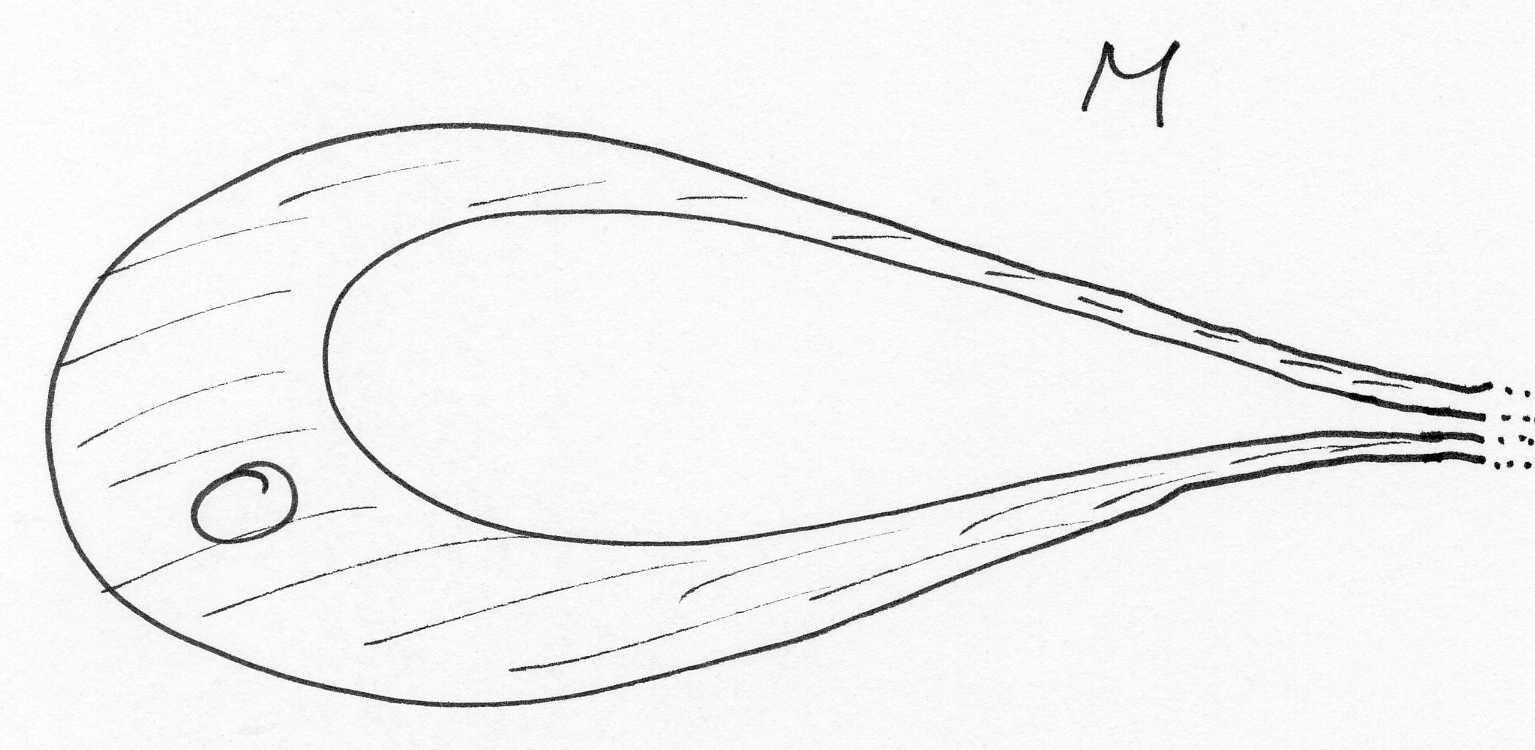}
\caption{The metric on $O$ induced from being a subset of the metric space $(M,d)$ may differ vastly from the induced Riemannian metric.}
\label{fig:different_metrics}
\end{figure}

\begin{defn}
Let $O \subset M$ be an open subset, not necessarily connected. We define $K^p_{u, \mathrm{MV}}(O) := K_{-p}(C_u(O,d))$.
\end{defn}

To deduce a Mayer--Vietoris sequence analogous to the one from the above lemma, we will need the following technical theorem about the existence of extensions of uniformly continuous functions:

\begin{lem}\label{lem:extension_samuel_compactification}
Let $O \subset M$ be open, not necessarily connected. Then every function $f \in C_u(O, d)$ has an extension to an $F \in C_u(M, d)$.
\end{lem}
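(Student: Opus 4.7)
The plan is to use a McShane-type extension formula after first extending to the closure. The crucial subtlety is that $f$ is uniformly continuous for the subset metric on $O$ inherited from $(M,d)$, which is the \emph{restriction} of the ambient metric to $O$ (not the intrinsic Riemannian metric on $O$); this is exactly what makes the extension possible, since the gluing is done using distances measured in $M$.

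First I would reduce to extending $f$ from the closure $\overline{O}$ in $(M,d)$. Since $f$ is uniformly continuous on $O$ with respect to the restricted metric, and $\overline{O}$ is the completion of $O$ inside $M$ (any Cauchy sequence in $O$ with respect to $d$ converges in $M$, and its limit lies in $\overline{O}$), the standard extension theorem for uniformly continuous maps into the complete target $\IC$ gives a unique uniformly continuous extension of $f$ to $\overline{O}$, which I still denote by $f$. Boundedness is preserved since $\|f\|_\infty$ does not increase.

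Next, treating the real and imaginary parts separately, I may assume $f$ is real-valued with $|f| \le C := \|f\|_\infty$. Let $\omega\colon[0,\infty)\to[0,\infty)$ be a modulus of continuity for $f$, i.e.\ a non-decreasing function with $\omega(0^+)=0$ such that $|f(x)-f(y)|\le\omega(d(x,y))$ for all $x,y\in\overline{O}$; replacing $\omega$ by its subadditive, concave majorant (which still satisfies $\omega(0^+)=0$ by uniform continuity) we may assume $\omega$ is subadditive. Then define the McShane extension
\[
F(x) \;:=\; \inf_{y\in\overline{O}} \bigl(f(y)+\omega(d(x,y))\bigr),\qquad x\in M.
\]
For $x\in\overline{O}$ taking $y=x$ gives $F(x)\le f(x)$, while subadditivity combined with the bound $f(y)\ge f(x)-\omega(d(x,y))$ gives the reverse inequality; hence $F$ extends $f$. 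For $x_1,x_2\in M$, subadditivity of $\omega$ and monotonicity of the infimum yield $|F(x_1)-F(x_2)|\le \omega(d(x_1,x_2))$, so $F$ is uniformly continuous on $(M,d)$. Finally, truncating $F$ by $F'(x):=\max(-C,\min(C,F(x)))$ preserves uniform continuity and bounds $\|F'\|_\infty\le\|f\|_\infty$ without altering $F$ on $\overline{O}$.

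Putting the real and imaginary parts back together produces the required $F\in C_u(M,d)$. No feature of the manifold structure enters the argument beyond the metric; the only genuine issue to watch is that the modulus of continuity is with respect to the ambient metric $d$, which is exactly the hypothesis. There is no serious obstacle here — the argument is the standard metric-space McShane extension, and the reason it is recorded is to emphasize that the uniform-continuity hypothesis is relative to the restricted ambient metric, not to the intrinsic one.
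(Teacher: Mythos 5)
Your argument is correct, but it takes a genuinely different route from the paper's. The paper invokes the Samuel compactification: it identifies $uO$ with the closure $\closure_{uM}(O)$ via a cited topological fact, then applies the Tietze extension theorem on the compact Hausdorff space $uM$ and restricts back to $M$. You instead give a direct metric-space construction via the McShane formula $F(x)=\inf_{y}\bigl(f(y)+\omega(d(x,y))\bigr)$ with a subadditive (concave) modulus $\omega$, followed by truncation. Your approach is more elementary and self-contained --- it requires no compactification machinery and makes transparent why the restricted ambient metric $d|_O$ is the right hypothesis, which is exactly the subtlety the paper wants to highlight. The only inessential detour is the preliminary extension of $f$ to $\overline{O}$ (which tacitly uses completeness of $M$, true here since $M$ has bounded geometry); you could equally well take the infimum over $y\in O$ directly and avoid this step. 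One place worth a sentence more of justification is the existence of a subadditive, concave modulus with $\omega(0^+)=0$: the concave majorant $\hat{\omega}(t):=\inf\{a+bt : a+bs\ge\omega_0(s)\ \forall s\ge 0\}$ of the canonical modulus $\omega_0$ works because $\omega_0$ is bounded (by $2\|f\|_\infty$), which forces $b\ge 0$ and lets one verify $\hat{\omega}(0^+)=0$ from $\omega_0(0^+)=0$; this argument uses nothing about the ambient space, so the standard fact carries over to general metric spaces. With that gloss the proof is complete and correct.
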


\begin{proof}
For a metric space $X$ let $uX$ denote the Gelfand space of $C_u(X)$, i.e., this is a compactification of $X$ (the \emph{Samuel compactification}) with the following universal property: a bounded, continuous function $f$ on $X$ has an extension to a continuous function on $uX$ if and only if $f$ is uniformly continuous. We will use the following property of Samuel compactifications (see \cite[Theorem 2.9]{woods}): if $S \subset X \subset uX$, then the closure $\closure_{uX}(S)$ of $S$ in $uX$ is the Samuel compactification $uS$ of $S$.

So given $f \in C_u(O, d)$, we can extend it to a continuous function $\tilde{f} \in C(uO)$. Since $uO = \closure_{uM}(O)$, i.e., a closed subset of a compact Hausdorff space, we can extend $\tilde{f}$ by the Tietze extension theorem to a bounded, continuous function $\tilde{F}$ on $uM$. Its restriction $F := \tilde{F}|_M$ to $M$ is then a bounded, uniformly continuous function of $M$ extending $f$.
\end{proof}

\begin{lem}\label{lem:MV_uniform_K}
Let the subsets $U_j$, $U_K$ of $M$ for $1 \le j,K \le N$ be as in Lemma \ref{lem:suitable_coloring_cover_M}. Then we have Mayer--Vietoris sequences
\[\xymatrix{
K^0_{u, \mathrm{MV}}(U_K \cup U_{k+1}) \ar[r] & K^0_{u, \mathrm{MV}}(U_K) \oplus K^0_{u, \mathrm{MV}}(U_{k+1}) \ar[r] & K^0_{u, \mathrm{MV}}(U_K \cap U_{k+1}) \ar[d]\\
K^1_{u, \mathrm{MV}}(U_K \cap U_{k+1}) \ar[u] & K^1_{u, \mathrm{MV}}(U_K) \oplus K^1_{u, \mathrm{MV}}(U_{k+1}) \ar[l] & K^1_{u, \mathrm{MV}}(U_K \cup U_{k+1}) \ar[l]}\]
where the horizontal arrows are induced from the corresponding restriction maps.
\end{lem}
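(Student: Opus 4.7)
The plan is to deduce the six-term Mayer--Vietoris sequence from a short exact sequence of commutative $C^*$-algebras
\begin{equation*}
0 \to C_u(U_K \cup U_{k+1}, d) \xrightarrow{\alpha} C_u(U_K, d) \oplus C_u(U_{k+1}, d) \xrightarrow{\beta} C_u(U_K \cap U_{k+1}, d) \to 0,
\end{equation*}
with $\alpha(f) := (f|_{U_K}, f|_{U_{k+1}})$ and $\beta(g, h) := g|_{U_K \cap U_{k+1}} - h|_{U_K \cap U_{k+1}}$. Once this sequence is established, the standard six-term exact sequence for $K$-theory of $C^*$-algebras, combined with the definition $K^p_{u,\mathrm{MV}}(O) = K_{-p}(C_u(O, d))$ and naturality, immediately yields the claimed Mayer--Vietoris sequence with restriction maps as the horizontal arrows.

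Injectivity of $\alpha$ is immediate. For surjectivity of $\beta$, given $f \in C_u(U_K \cap U_{k+1}, d)$ I first extend it to $F \in C_u(M, d)$ via Lemma \ref{lem:extension_samuel_compactification}. Using the structure from Lemma \ref{lem:suitable_coloring_cover_M}, namely that $U_{k+1}$ is a disjoint union of balls $B_\varepsilon(x_i)$ with uniformly separated centers $S_{k+1} := \{x_i : i \in I_{k+1}\}$, I introduce the Lipschitz cutoff
\begin{equation*}
\varphi(x) := \max\bigl(0,\, 1 - 2d(x, S_{k+1})/\varepsilon\bigr) \in C_u(M, d),
\end{equation*}
which satisfies $\supp \varphi \subset B_{\varepsilon/2}(S_{k+1}) \subset U_{k+1}$ and $\varphi|_{S_{k+1}} \equiv 1$. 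Setting $g := \varphi F|_{U_K}$ (extended by zero across $\partial U_K$ inside $M$) and $h := -(1-\varphi)F|_{U_{k+1}}$ produces elements of the required algebras, and on $U_K \cap U_{k+1}$ one computes $\beta(g,h) = \varphi F + (1-\varphi)F = F = f$.

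The hard part will be exactness at the middle term, for which I plan to pass to the Samuel compactification $uM$. Under the identification $C_u(O, d) \cong C(\overline{O}^{uM})$ recalled in the proof of Lemma \ref{lem:extension_samuel_compactification}, the sequence above corresponds to the ordinary closed-subset Mayer--Vietoris sequence for the compact Hausdorff space $\overline{U_K \cup U_{k+1}}^{uM}$ with subsets $\overline{U_K}^{uM}$ and $\overline{U_{k+1}}^{uM}$. The union identity $\overline{U_K \cup U_{k+1}}^{uM} = \overline{U_K}^{uM} \cup \overline{U_{k+1}}^{uM}$ is elementary, so the entire issue reduces to verifying the intersection identity
\begin{equation*}
\overline{U_K}^{uM} \cap \overline{U_{k+1}}^{uM} = \overline{U_K \cap U_{k+1}}^{uM}.
\end{equation*}
This is where the geometry of Lemma \ref{lem:suitable_coloring_cover_M} must be used in full. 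I plan to establish the sufficient condition that for every $\varepsilon' > 0$ there exists $\delta > 0$ such that every pair $(x,y) \in U_K \times U_{k+1}$ with $d(x, y) < \delta$ satisfies $\max\{d(x, U_K \cap U_{k+1}), d(y, U_K \cap U_{k+1})\} < \varepsilon'$. The uniform separation of connected components of $U_K$ and the uniform lower bound on pairwise distances of ball centers within each single color class reduce the verification to a local analysis inside a connected component of $U_K \cup U_{k+1}$ meeting both sets, which is essentially a finite chain of balls of uniformly bounded combinatorics by bounded geometry.

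The delicate geometric regime, and the principal obstacle of the proof, is the one in which $x$ sits near $\partial B_\varepsilon(x_i)$ with $x_i \in I_K$ and $y$ sits near $\partial B_\varepsilon(x_j)$ with $x_j \in I_{k+1}$, so that both balls are almost tangent; the argument must track how the overlap region between these balls (or neighbouring intersecting balls inside the same connected component) supplies points of $U_K \cap U_{k+1}$ at controlled distance from $x$ and $y$. Once this closure identity is in hand, the short exact sequence follows at once, and invoking the six-term exact sequence in $K$-theory for $C^*$-algebras closes the proof of the lemma.
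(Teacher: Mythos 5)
Your approach and the paper's are essentially the same in substance but differently packaged. The paper invokes Blackadar's Mayer--Vietoris theorem for operator $K$-theory, which takes a pullback square of $C^\ast$-algebras with surjective legs directly to the six-term sequence; you instead build a three-term short exact sequence and apply the standard six-term sequence, using $K_*(A_1\oplus A_2)\cong K_*(A_1)\oplus K_*(A_2)$. These are interchangeable: exactness of your sequence at the middle term is precisely the statement that the Blackadar pullback $P=\{(a_1,a_2)\colon a_1|_{U_K\cap U_{k+1}}=a_2|_{U_K\cap U_{k+1}}\}$ equals $C_u(U_K\cup U_{k+1},d)$, and surjectivity of your $\beta$ is implied by surjectivity of either restriction map $\varphi_i$, which the paper obtains from Lemma~\ref{lem:extension_samuel_compactification}. (Your construction of a preimage for $\beta$ via the cutoff $\varphi$ is correct but more elaborate than necessary: once $F\in C_u(M,d)$ extends $f$, the pair $(F|_{U_K},0)$ already works.) Where you go beyond the paper is in making the pullback identification explicit: the paper merely asserts ``due to the property of the sets $U_K$ as stated in Lemma~\ref{lem:suitable_coloring_cover_M}'', whereas you translate it through the Samuel compactification to the closure identity $\overline{U_K}^{uM}\cap\overline{U_{k+1}}^{uM}=\overline{U_K\cap U_{k+1}}^{uM}$, reduce that to a uniform-proximity condition, and sketch why the geometry of Lemma~\ref{lem:suitable_coloring_cover_M} should yield it. This is a genuine and worthwhile unpacking of the step the paper compresses.

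The one place your argument still needs to be closed is precisely that uniform-proximity verification: you assert it ``reduces to a local analysis inside a connected component'' without writing the estimate. To complete it you should use (i) the fact, built into Lemma~\ref{lem:suitable_coloring_cover_M} via the triangulation, that any two balls in the cover either intersect or are a fixed positive distance apart, so for $\delta$ small the relevant balls $B_\varepsilon(x_i)\subset U_K$ and $B_\varepsilon(x_j)\subset U_{k+1}$ containing $p$ and $q$ with $d(p,q)<\delta$ actually overlap; (ii) that the centers are vertices of a bounded-geometry triangulation, so the overlap $B_\varepsilon(x_i)\cap B_\varepsilon(x_j)$ is uniformly fat (the distance $d(x_i,x_j)$ is bounded away from $2\varepsilon$); and (iii) geodesic convexity of the balls to produce a point of $B_\varepsilon(x_i)\cap B_\varepsilon(x_j)\subset U_K\cap U_{k+1}$ at distance $O(\delta)$ from $p$. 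Without (ii), the worry you flag about almost-tangent balls and thin lens-shaped intersections is a real one; with it, the constant in the Lipschitz estimate is uniform over all adjacent pairs and the closure identity follows.
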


\begin{proof}
Recall the Mayer--Vietoris sequence for operator $K$-theory of $C^\ast$-algebras (see, e.g., \cite[Theorem 21.2.2]{blackadar}): given a commutative diagram of $C^\ast$-algebras
\[\xymatrix{P \ar[r]^{\sigma_1} \ar[d]_{\sigma_2} & A_1 \ar[d]^{\varphi_1}\\ A_2 \ar[r]^{\varphi_2} & B}\]
with $P = \{(a_1, a_2) \ | \ \varphi_1(a_1) = \varphi_2(a_2)\} \subset A_1 \oplus A_2$ and $\varphi_1$ and $\varphi_2$ surjective, then there is a long exact sequence (via Bott periodicity we get the $6$-term exact sequence)
\[\ldots \to K_n(P) \stackrel{({\sigma_1}_\ast,{\sigma_2}_\ast)}\longrightarrow K_n(A_1) \oplus K_n(A_2) \stackrel{{\varphi_2}_\ast - {\varphi_1}_\ast}\longrightarrow K_n(B) \to K_{n-1}(P) \to \ldots\]

We set $A_1 := C_u(U_K, d)$, $A_2 := C_u(U_{k+1}, d)$, $B := C_u(U_K \cap U_{k+1}, d)$ and $\varphi_1$, $\varphi_2$ the corresponding restriction maps. Due to the property of the sets $U_K$ as stated in the Lemma \ref{lem:suitable_coloring_cover_M} we get $P = C_u(U_K \cup U_{k+1}, d)$ and $\sigma_1$, $\sigma_2$ again just the restriction maps. To show that the maps $\varphi_1$ and $\varphi_2$ are surjective, we just have to use the above Lemma \ref{lem:extension_samuel_compactification}.
\end{proof}

The last ingredient that we will need for the proof of the Chern Character Isomorphism Theorem are the Chern character maps $\ch\colon K_{u, \mathrm{MV}}^\ast(O) \to H_{b, \mathrm{MV}}^\ast(O)$:

\begin{lem}\label{lem:MV_chern_character}
Let $O \subset M$ be open, not necessarily connected. Then there is a Chern character map $\ch\colon K_{u, \mathrm{MV}}^\ast(O) \to H_{b, \mathrm{MV}}^\ast(O)$ which coincides in the case $O = M$ with the usual one.
\end{lem}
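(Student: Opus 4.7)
I would first introduce a smooth subalgebra $C^\infty_{b,\mathrm{MV}}(O) \subset C_u(O,d)$ consisting of those functions $f$ on $O$ which admit a smooth extension with all derivatives uniformly bounded to some open neighbourhood $B_\delta(O) \subset M$ (with $\delta > 0$ depending on $f$). The plan is to verify that $C^\infty_{b,\mathrm{MV}}(O)$ is a local $C^\ast$-algebra whose sup-norm completion is $C_u(O,d)$, so that by the argument of Lemma~\ref{lem:equivalent_defns_uniform_k_theory} we have $K^\ast_{u,\mathrm{MV}}(O) \cong K_{-\ast}(C^\infty_{b,\mathrm{MV}}(O))$. Density follows by combining Lemma~\ref{lem:extension_samuel_compactification} (extending $f \in C_u(O,d)$ to $F \in C_u(M,d)$) with the mollification argument from Lemma~\ref{lem:norm_completion_C_b_infty}, whose restriction to $O$ then lies in $C^\infty_{b,\mathrm{MV}}(O)$. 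Inverse closedness comes from shrinking the $\delta$-neighbourhood: if $f$ is invertible in $C_u(O,d)$ with $|f|\ge c$ on $O$, then by uniform continuity of its smooth extension we can shrink $\delta$ so that $|\tilde f|\ge c/2$ on $B_\delta(O)$, giving $\tilde f^{-1}\in C^\infty_b(B_\delta(O))$.

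Given this smooth model, I would represent a class in $K^0_{u,\mathrm{MV}}(O)$ by an idempotent matrix $e$ with entries in $C^\infty_{b,\mathrm{MV}}(O)$, and lift it to a genuine idempotent on a neighbourhood. Extending the entries of $e$ entrywise gives $\tilde e \in \Mat(C_b^\infty(B_{\delta_1}(O)))$ with $\tilde e|_O = e$, but $\tilde e^2 - \tilde e$ need not vanish off $O$. Since the entries of $\tilde e$ are Lipschitz with a uniform constant and the spectrum of $\tilde e(x)$ equals $\{0,1\}$ for $x\in O$, we can shrink to some $\delta \le \delta_1$ on which the spectrum of $\tilde e(x)$ is disjoint from a fixed contour around $\tfrac{1}{2}$; applying holomorphic functional calculus with $\chi$ equal to $0$ near $0$ and $1$ near $1$ produces $\tilde e' := \chi(\tilde e)$, a bona fide idempotent in $\Mat(C_b^\infty(B_\delta(O)))$ restricting to $e$ on $O$. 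By Proposition~\ref{prop:image_proj_matrix_complemented} the image of $\tilde e'$ is a $C_b^\infty$-complemented vector bundle over $B_\delta(O)$, and the bounded Chern--Weil Theorem~\ref{thm:bounded_chern_weil} yields a bounded form $\trace \exp\!\bigl(\tfrac{i}{2\pi} R^{\tilde e'}\bigr) \in \Omega^{\mathrm{ev}}_b(B_\delta(O))$. Restricting to $O$ gives an element of $\Omega^{\mathrm{ev}}_{b,\mathrm{MV}}(O)$, whose class in $H^{\mathrm{ev}}_{b,\mathrm{MV}}(O)$ will be declared $\ch[e]$.

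Well-definedness on $K$-theory classes follows the standard Chern--Weil pattern: different extensions, different choices of $\delta$, and homotopies of idempotents (Murray--von Neumann equivalence) lead to Chern--Simons transgressions which, crucially, already live on some common $B_{\delta'}(O)$ and hence descend to $H^{\mathrm{ev}}_{b,\mathrm{MV}}(O)$ after restriction. For the odd part I would use the smooth suspension $S_\infty C^\infty_{b,\mathrm{MV}}(O)$ in analogy with Theorem~\ref{thm:interpretation_K1u}, represent classes by idempotents over $B_\delta(O) \times S^1$ trivialised near $\{1\}$, apply the even construction on the suspension, and invoke the suspension isomorphism of Lemma~\ref{lem:bounded_deRham_suspension_equal} (whose proof applies verbatim to the $\mathrm{MV}$-variant). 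In the case $O = M$ one may take $\delta$ arbitrary since $B_\delta(O) = M$, and $C^\infty_{b,\mathrm{MV}}(M) = C_b^\infty(M)$, so the construction literally reduces to the Chern character built in the previous section.

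The main obstacle, in my view, is the lifting step: guaranteeing that an idempotent over $C^\infty_{b,\mathrm{MV}}(O)$ can be lifted to a genuine idempotent on a fixed neighbourhood whose diameter shrinks in a controlled way. The spectral-gap/holomorphic-functional-calculus argument above should handle this, but verifying that the procedure behaves naturally under Murray--von Neumann equivalence (so that the induced map descends from idempotents to $K$-theory) is the point where care is required; everything downstream is then a routine adaptation of Chern--Weil.
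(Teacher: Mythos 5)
Your proposal matches the paper's argument: introduce the smooth subalgebra $C^\infty_{b,\mathrm{MV}}(O)$, show it is a dense local $C^\ast$-subalgebra of $C_u(O,d)$ (density via Lemma~\ref{lem:extension_samuel_compactification} plus the mollification of Lemma~\ref{lem:norm_completion_C_b_infty}), and define the Chern character through the idempotent-to-bounded-geometry-bundle correspondence and bounded Chern--Weil, with smooth suspension and Lemma~\ref{lem:bounded_deRham_suspension_equal} handling the odd part. You additionally supply a spectral-gap/holomorphic-functional-calculus argument to promote the entrywise extension of an idempotent over $C^\infty_{b,\mathrm{MV}}(O)$ to a genuine idempotent on $B_\delta(O)$ --- a point the paper's proof passes over silently when it writes $E := \image\, e$ over $B_\delta(O)$, even though the extended matrix need not square to itself off $O$ --- so your write-up is, if anything, tighter on that step.
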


\begin{proof}
We define the \Frechet space $C_{b, \mathrm{MV}}^\infty(O)$ as the set of all smooth functions $f$ on $O$ for which there is a $\delta > 0$ (depending on the function $f$) such that $f$ has an extension $F$ to $B_\delta(O)$, where $F$ and all its derivatives are bounded, i.e., $\|\nabla^i F\|_\infty < C_i$ for all $0 \le i$. Note that we have an inclusion $C_{b, \mathrm{MV}}^\infty(O) \subset C_u(O, d)$.

But we also know from Lemma \ref{lem:extension_samuel_compactification} that given a function $f \in C_u(O, d)$, there exists a bounded, uniformly continuous extension $F$ of it to all of $M$. We can approximate $F$ in sup-norm by functions from $C_b^\infty(M)$ (see Lemma \ref{lem:norm_completion_C_b_infty}) which gives us an approximation of $f$ by functions from $C_{b, \mathrm{MV}}^\infty(O)$, i.e., we have shown that $C_{b, \mathrm{MV}}^\infty(O)$ is dense in $C_u(O, d)$. Since $C_{b, \mathrm{MV}}^\infty(O)$ is a local $C^\ast$-algebra (Definition \ref{defn:local_Cstar_algebra}), the $K$-groups of it coincide with the ones of $C_u(O, d)$, i.e., $K^p_{u, \mathrm{MV}}(O) = K_{-p}(C_{b, \mathrm{MV}}^\infty(O))$.

Given an idempotent matrix $e \in \Idem_{N \times N}(C_{b, \mathrm{MV}}^\infty(O))$, we can define a vector bundle $E := \image e$ over $B_\delta(O)$ for some $\delta > 0$ and equip it with the metric and connection induced from the trivial bundle of which $E$ is a subbundle. Going through the proof of Statement 1 of Proposition \ref{prop:image_proj_matrix_complemented} we see that $E$ is $C_b^\infty$-complemented. Here we reinterpret the notion of ``$C^\infty$-bounded'' from Definition \ref{defn:C_infty_bounded} in the following way: for the synchronous framings we take the radii of the normal coordinate balls at most $\delta$ and demand the boundedness of the matrix entries of $\varphi$ and all their derivatives only with respect to synchronous framings which base points are located inside $O$. In fact, with this reinterpretation all results of Section \ref{sec:interpretation_uniform_k_theory} do hold. So we get a Chern character map $\ch\colon K_{u, \mathrm{MV}}^\ast(O) \to H_{b, \mathrm{MV}}^\ast(O)$.
\end{proof}

\begin{proof}[Proof of the Chern Character Isomorphism Theorem]
We invoke Lemma \ref{lem:suitable_coloring_cover_M} to get the subsets $U_j$ and $U_K = U_1 \cup \ldots \cup U_k$ for $1 \le j,K \le N$. We will do an induction over $k$.

We start with $k = 1$ and invoke the Lemmas \ref{lem:MV_bounded_dR} and \ref{lem:MV_uniform_K} to get the Mayer--Vietoris sequences and Lemma \ref{lem:MV_chern_character} to get the corresponding Chern character maps. Then we have the following commutative diagram:\footnote{Note that it may happen that $U_1 \cap U_2 = \emptyset$. In this case both the uniform $K$-theory and the bounded de Rham cohomology of the empty set is defined to be the trivial group. This is compatible (in the sense that the Mayer--Vietoris sequence gives the same result) with the computation of these groups for the spaces $U_1$, $U_2$ and $U_1 \cup U_2$, where the groups of the latter space turn out in this case to be exactly the direct sum of the groups of the former two spaces.}\vspace{\baselineskip}
\[\xymatrix{
K_{u, \mathrm{MV}}^\ast(U_{1} \cap U_2) \ar[r] \ar[d] & K_{u, \mathrm{MV}}^\ast(U_{1} \cup U_2) \ar[r] \ar[d] & K_{u, \mathrm{MV}}^\ast(U_1) \oplus K_{u, \mathrm{MV}}^\ast(U_2) \ar[d] \ar@/_1.5pc/[ll] \\
H_{b, \mathrm{MV}}^{\ast}(U_{1} \cap U_2) \ar[r] & H_{b, \mathrm{MV}}^{\ast}(U_{1} \cup U_2) \ar[r] & H_{b, \mathrm{MV}}^{\ast}(U_1) \oplus H_{b, \mathrm{MV}}^{\ast}(U_2) \ar@/^1.5pc/[ll]
}\vspace{\baselineskip}\]

The square containing the horizontal maps from the intersection to the union (i.e., the one with the boundary maps) is the only one where commutativity may not be immediately clear (since the other horizontal maps are induced by the corresponding restriction maps). So we have to give an argument why the above diagram commutes: in the proof of the above Lemma \ref{lem:MV_uniform_K} we have discussed the Mayer--Vietoris sequence for uniform $K$-theory by using the corresponding sequence for operator $K$-theory. A reference for this is, e.g., \cite[Theorem 21.2.2]{blackadar}, where we find the following description of the boundary map: it is given by composing the inverse of the suspension isomorphism with an isomorphism called there $\psi_\ast$ and finally composing with the map induced from a restriction map. The Chern character map clearly commutes with restriction maps. Furthermore, the map $\psi_\ast$ is an isomorphism because of the homotopy invariance of uniform $K$-theory and the Chern character commutes with homotopies. So it remains to show that it commutes with the suspension isomorphism. For bounded de Rham cohomology we have discussed the suspension isomorphism in Lemma \ref{lem:bounded_deRham_suspension_equal} and the smooth suspension of \Frechet algebras was discussed during the proof of Theorem \ref{thm:interpretation_K1u}. Interpreting the suspension map, defined by us only on the level of algebras, on the level of vector bundles, we get that the Chern character map commutes with taking suspensions. To prove that the Chern character commutes with the boundary maps in the corresponding Mayer--Vietoris sequences we finally have to show one last thing, namely that it commutes with the Bott periodicity map (since we use it to reduce the uniform $K$-theory sequence to a $6$-term sequence). But this map is given by multiplication with a certain element, namely the Bott generator. Since the Chern character is multiplicative and maps the Bott generator to the identity, we get that it commutes with the Bott periodicity map.

We equip the $K$-groups with the discrete topology and take the completed tensor product with $\IR$. Due to our choice of topology on the groups we conclude that the top horizontal sequence is topologically exact (we already know that it is algebraically exact). We will now show that the arising left and right vertical arrows are isomorphisms, since then we can conclude with the five lemma that the middle vertical arrow will also be an isomorphism (here we need Lemma \ref{lem:functor_exact} and the fact that we endow the uniform $K$-theory groups with the discrete topology to deduce that the top row stays exact). Now $U_1$, $U_2$ and $U_1 \cap U_2$ are each a disjoint union of geodesically convex sets which are a uniform distance from each other apart and have a uniform bound on their diameters. So all three sets are boundedly homotopy equivalent\footnote{\label{footnote:boundedly_homotopic}Let $f, g\colon M \to N$ be two maps of bounded dilatation. We say that they are \emph{boundedly homotopic}, if there is a homotopy $H\colon M \times [0,1] \to N$ from $f$ to $g$, which itself is of bounded dilatation. Recall that a map $h$ has \emph{bounded dilatation}, if $\|h_\ast V\| \le C \|V\|$ for all tangent vectors $V$. Bounded homotopy invariance of uniform $K$-theory follows from its definition $K_u^p(X) = K_{-p}(C_u(X))$ and the corresponding homotopy invariance of operator $K$-theory. Bounded homotopy invariance of bounded de Rham cohomology was shown in Corollary \ref{cor:homotopy_equivalence_de_rham}.} to a collection of uniformly discrete points.

So we have to show that the Chern character induces an isomorphism of vector spaces $K^\ast_u(X) \barotimes \IR \cong H^\ast_{b, \mathrm{dR}}(X)$, where $X$ is a countable set of uniformly discrete points. Since complex bundles over $S^1$ are trivial, we have $K^1_u(X) = 0$, and also clearly $H^{\mathrm{odd}}_{b, \mathrm{dR}}(X) = 0$, since $X$ is $0$-dimensional. Furthermore, a vector bundle of bounded geometry over $X$ is a Hermitian vector space over each point such that the dimensions are bounded from above. Since we can always find an isometric isomorphism between any two Hermitian vector spaces of the same dimension, $C_b^\infty$-isomorphism classes are completely determined by the dimensions of the vector spaces at each point. So we see that $K^0_u(X)$ is the group $\ell^\infty_\IZ(X)$ that consists of all bounded, integer-valued sequences indexed by $X$ (we already know this from Lemma \ref{lem:uniform_k_th_discrete_space}, but we need here the description of this isomorphism via vector bundles since we want to apply the Chern character). The Chern character of such a class in $K^0_u(X)$ is just the function assigning every point of $X$ the dimension of the corresponding vector space over this point. So the claim follows with the second example from Examples \ref{ex:completed_tensor_prod} since $H^{\mathrm{ev}}_{b, \mathrm{dR}}(X) \cong \ell^\infty(X)$. This completes the first step of the induction.

Now let $\ch\colon K_{u, \mathrm{MV}}^\ast(U_1 \cup \ldots \cup U_i) \barotimes \IR \to H_{b, \mathrm{MV}}^\ast(U_1 \cup \ldots \cup U_i)$ be an isomorphism for all $1 \le i \le k$ for some $k \ge 2$. We want to show that it is also an isomorphism for $i = k + 1$. We take a look at the diagram resulting from the Mayer--Vietoris sequences for the cover $\{U_K, U_{k+1}\}$ of $U_K \cup U_{k+1}$:
\vspace{\baselineskip}
\[\xymatrix{
K_{u, \mathrm{MV}}^\ast(U_K \cap U_{k+1}) \ar[r] \ar[d] & K_{u, \mathrm{MV}}^\ast(U_K \cup U_{k+1}) \ar[r] \ar[d] & K_{u, \mathrm{MV}}^\ast(U_K) \oplus K_{u, \mathrm{MV}}^\ast(U_{k+1}) \ar[d] \ar@/_1.5pc/[ll] \\
H_{b, \mathrm{MV}}^{\ast}(U_K \cap U_{k+1}) \ar[r] & H_{b, \mathrm{MV}}^{\ast}(U_K \cup U_{k+1}) \ar[r] & H_{b, \mathrm{MV}}^{\ast}(U_K) \oplus H_{b, \mathrm{MV}}^{\ast}(U_{k+1}) \ar@/^1.5pc/[ll]
}\vspace{\baselineskip}\]

Again, we form the completed tensor products with $\IR$ and then have to show that the arising left and right vertical arrows are isomorphisms, so that we can conclude by the five lemma that the middle one will be also an isomorphism. The resulting right vertical arrow is an isomorphism since for $U_K$ we know it from the induction hypothesis and for $U_{k+1}$ it is the same argument as in the first step of the induction, (i.e., $U_{k+1}$ is boundedly homotopy equivalent to a collection of uniformly discrete points). To see that the resulting left vertical arrow will be an isomorphism, we have to write $U_K \cap U_{k+1} = (U_1 \cap U_{k+1}) \cup \ldots \cup (U_k \cap U_{k+1})$. This is a union of $k$ geodesically convex open sets. So by a separate induction we get that the arising left vertical arrow is also an isomorphism. This completes the induction step and therefore the whole proof.
\end{proof}

\chapter{Index Theorem}\label{chapter:index_theorem}

In this chapter we will finally prove the generalization of Roe's Index Theorem to pseudodifferential operators as we have defined them. Our proof will rely heavily on \Poincare duality $K^\ast_u(M) \cong K^u_{m - \ast}(M)$, which we will prove in Section \ref{sec:poincare_duality}, where $m$ is the dimension of the \spinc manifold $M$. As for compact manifolds, the \Poincare duality map $K^\ast_u(M) \to K^u_{m - \ast}(M)$ is given by the cap product with the fundamental class $[M] \in K_m^u(M)$ of $M$ which is defined as the uniform $K$-homology class associated to the \spinc structure of $M$. We will define the cap product in Section \ref{sec:cap_product} and we will discuss \spinc structures for manifolds of bounded geometry in the next Section \ref{sec:spinc_manifolds}.

But first we will take \Poincare duality for granted and discuss the proof of the index theorem for pseudodifferential operators now.

Let us recall Roe's Index Theorem from \cite{roe_index_1}. So let $M$ be an oriented and $m$-dimensional manifold of bounded geometry and let $D$ be the Dirac operator of a graded Dirac bundle $S$ of bounded geometry over $M$ (see the next Section \ref{sec:spinc_manifolds} for a definition of Dirac bundles and their associated Dirac operators over manifolds of bounded geometry).

Using the asymptotic expansion of the integral kernel of the operator $e^{-t D^2}$, Roe defined the \emph{topological index class $I_t(D) \in H_{b, \mathrm{dR}}^m(M)$} of the operator $D$ analogously one does it in the proof of the local index theorem using the heat kernel method. We assume furthermore that $M$ is amenable. Then let $\theta$ be a fundamental class\footnote{see Definition \ref{defn:fundamental_class}} for $M$ corresponding to a choice of a F{\o}lner sequence of $M$ and to a functional $\tau \in (\ell^\infty)^\ast$ associated to a free ultrafilter on $\IN$.\footnote{That is, if we evaluate $\tau$ on a bounded sequence, we get the limit of some convergent subsequence.} The \emph{topological index of $D$} if now defined as $\theta(I_t(D)) \in \IR$. Note that it depends on $\theta$ contrary to the case of a compact manifold where the topological index is uniquely defined.

Let us recall Roe's definition of analytic indices. In \cite[Lemma 7.6]{roe_index_1} he shows that the operator $D$ is an abstractly elliptic operator between the $\mathcal{U}(S)$\footnote{$\mathcal{U}(S) = \bigcup_{k \in \IZ} \mathcal{U}_k(S)$ is a filtered algebra where $\mathcal{U}_k(S)$ are the quasilocal operators of order $k \in \IZ$.}-modules given by the eigen\-projections $(1+\epsilon)/2$ and $(1-\epsilon)/2$, where $\epsilon$ is the grading operator of $S$. So $D$ has an \emph{analytic index class $I_a(D) \in K_0^{\mathrm{alg}}(\mathcal{U}_{-\infty}(M))$}\footnote{Since Roe works with algebraic $K$-theory, he has no need to assume the algebra to be a $^\ast$-algebra. So he uses $\mathcal{U}_{-\infty}(M)$ whereas we use $\IU(M)$. Note furthermore that a priori $D$ has an analytic index class in $K_0^{\mathrm{alg}}(\mathcal{U}_{-\infty}(S))$ and that Roe showed that we have canonical maps from these $K$-groups to $K_0^{\mathrm{alg}}(\mathcal{U}_{-\infty}(M))$, analogously to our result from Proposition \ref{prop:analytic_index_map_quasiloc_smoothing}.}. The \emph{analytic index of $D$} is then defined as $\ind_\tau(I_a(D)) \in \IR$, where the analytic index map $\ind_\tau$ is defined analogously as the one of Proposition \ref{prop:analytic_index_map_quasiloc_smoothing}.

\begin{roesthm}[{\cite[Index Theorem 8.2]{roe_index_1}}]
Let $M$ be an amenable, oriented manifold of bounded geometry and $D$ a graded operator of Dirac type\footnote{see Definition \ref{defn:dirac_type_operator}}.

Then for all choices of F{\o}lner sequences for $M$ and all choices of functionals $\tau \in (\ell^\infty)^\ast$ associated to a free ultrafilter on $\IN$, we have
\[\ind_\tau(I_a(D)) = \theta(I_t(D)),\]
i.e., the analytic and the topological indices of $D$ coincide.
\end{roesthm}

We will now put the above theorem into our context. From Theorem \ref{thm:elliptic_symmetric_PDO_defines_uniform_Fredholm_module} we know that the operator $D$ defines a uniform $K$-homology class $[D] \in K_0^u(M)$ given by the uniform Fredholm module $(L^2(S), \rho, \chi(D))$, where $\rho \colon C_0(M) \to \IB(L^2(S))$ is the representation via multiplication operators and $\chi$ is a normalizing function. Comparing our computation of $\ind_\tau([D])$ from Section \ref{sec:index_maps_K_hom}, where $\ind_\tau \colon K_0^u(M) \to \IR$ is the index map defined there, with the formula from \cite[Lemma 4.1]{roe_index_1} combined with the arguments in the proof of \cite[Proposition 8.1]{roe_index_1}, we conclude
\begin{equation}\label{eq:equal_roe_our_ana_index}
\ind_\tau([D]) = \ind_\tau(I_a(D)).
\end{equation}

The uniform coarse assembly map $\mu_u\colon K_\ast^u(M) \to K_\ast(C_u^\ast(Y))$, where $Y \subset M$ is a uniformly discrete quasi-lattice in $M$, was constructed by \v{S}pakula in his article \cite[Section 9]{spakula_uniform_k_homology}. We know from Section \ref{sec:uniform_roe_algebra} that $\C(S)$ is a ``smooth version'' of $C_u^\ast(Y)$, which especially means $K_\ast(C_u^\ast(Y)) \cong K_\ast(\C(S))$, and we also know that the latter is isomorphic to $K_\ast(\C(M))$ (Corollary \ref{cor:natural_receptacle_iso}). So we get a \emph{uniform coarse index class $\mu_u([D]) \in K_0(\C(M))$ of $D$} and may conclude that it ``equals''\footnote{We have inclusions $\C(M) \subset \IU(M) \subset \mathcal{U}_{-\infty}(M)$, i.e., we may compare the classes using the induced maps on $K$-theory.} Roe's analytic index class $I_a(D) \in K_0^{\mathrm{alg}}(\mathcal{U}_{- \infty}(M))$ by comparing the construction of $I_a(D)$ from \cite[Section 4]{roe_index_1} with the one of $\mu_u$. From Section \ref{sec:index_maps_uniform_roe} we get
\[\ind_\tau([D]) = \ind_\tau(\mu_u([D]))\]
and combining this with the above Equation \eqref{eq:equal_roe_our_ana_index}, we get
\[\ind_\tau(I_a(D)) = \ind_\tau(\mu_u([D])).\]
Note that this was basically also shown by Roe in \cite[Proposition 8.1]{roe_index_1}. We summarize this discussion in the following lemma:

\begin{lem}
The analytic index $\ind_\tau(I_a(D))$ as defined by Roe coincides with the analytic indices $\ind_\tau([D])$ and $\ind_\tau(\mu_u([D]))$ defined in Sections \ref{sec:index_maps_K_hom}, \ref{sec:analytic_indices_quasiloc_smoothing} (resp. \ref{sec:index_maps_uniform_roe}).
\end{lem}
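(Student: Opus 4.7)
The proof proposal is to split the claim into two equalities: (a) $\ind_\tau([D]) = \ind_\tau(I_a(D))$ and (b) $\ind_\tau([D]) = \ind_\tau(\mu_u([D]))$. Equality (b) is essentially already in place: the final proposition of Section \ref{sec:index_maps_K_hom} shows that the index map on $K_0^u(M)$ intertwines $\mu_u$ with the index map on $K_0(C_u^\ast(Y))$, and the last proposition of Section \ref{sec:index_maps_uniform_roe} shows that this latter map agrees, under the canonical isomorphism $K_\ast(\C(S)) \cong K_\ast(C_k^\ast(Y))$ of Proposition \ref{prop:smooth_version_of_uniform_roe}, with the analytic index map on $K_0(\C(S))$. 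Because the construction of $\mu_u([D])$ from Špakula's paper and Roe's construction of $I_a(D) \in K_0^{\mathrm{alg}}(\mathcal{U}_{-\infty}(M))$ are both built from the same parametrix inside the functional calculus of $D$, they correspond under the maps induced by the chain of inclusions $\C(S) \subset \IU(S) \cong \Psi\mathrm{DO}^{-\infty}(S)$ together with the natural map to $K_0^{\mathrm{alg}}(\mathcal{U}_{-\infty}(M))$ coming from Lemma \ref{lem:PDO_-infinity_equal_quasilocal_smoothing} and Proposition \ref{prop:analytic_index_map_quasiloc_smoothing}. So the substantive work is (a).

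For (a), the plan is to unfold both sides into an explicit averaged-trace expression and then compare them directly. On the uniform $K$-homology side, normalizing the Fredholm module $(L^2(S),\rho,\chi(D))$ to an involutive, finite-propagation, uniformly pseudolocally traceable representative $T$ as in Section \ref{sec:index_maps_K_hom} gives
\[\ind_\tau([D]) \;=\; \tau\Bigl(\tfrac{1}{\card U_i}\,\trace_\epsilon\,\rho(\chi_i)(1-T_{11}^2)\rho(\chi_i)\Bigr).\]
On the Roe side, by combining Roe's Lemma 4.1 and Proposition 8.1 from \cite{roe_index_1}, choosing any Schwartz function $f$ with $f(0)=1$,
\[\ind_\tau(I_a(D)) \;=\; \tau\Bigl(\tfrac{1}{\vol M_i}\int_{M_i}\trace_\epsilon\,k_{f(D^2)}(x,x)\,dM\Bigr).\]
I would choose the normalizing function $\chi$ so that $1-\chi^2$ is a Schwartz function $f$ with $f(0)=1$ (for instance $\chi(x)=\tanh x$). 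Corollary \ref{cor:schwartz_function_of_PDO_quasilocal_smoothing} then identifies $1-\chi(D)^2 = f(D^2)$ as an element of $\IU(S)$, and Proposition \ref{prop:smoothing_op_kernel} provides the smooth, uniformly bounded integral kernel $k_{f(D^2)}(x,x)$ whose pointwise graded trace is the integrand in Roe's formula. Thus the right-hand side of the first display becomes $\tau\bigl(\tfrac{1}{\card U_i}\int_{\bigcup_{y\in U_i}\overline{V_y}}\trace_\epsilon k_{f(D^2)}(x,x)\,dM\bigr)$ for any chosen quasi-latticing partition $(V_y)_{y\in Y}$.

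The final step is to reconcile the two averaged expressions. Choosing the Følner sequence $(M_i)_i$ compatibly with the Følner sequence $(U_i)_i$ on the quasi-lattice $Y\subset M$ via $M_i := \bigcup_{y\in U_i}\overline{V_y}$, as is done at the end of Section \ref{sec:index_maps_uniform_roe}, the ratio $\card U_i / \vol M_i$ is bounded above and below by uniform constants, and the discrepancy between the two normalisations is just a constant scaling that can be absorbed into the choice of fundamental class $\theta$ (which by Proposition \ref{prop:analytic_index_map_quasiloc_smoothing} and the discussion before it depends only on the Følner exhaustion and $\tau$). The differences between $T_{11}^2$ and $\chi(D)^2$ coming from the normalisation procedure are uniformly locally trace class operators concentrated, after cutting off by $\rho(\chi_i)$, in a uniformly bounded neighbourhood of $\partial U_i$; by the Følner property of $(U_i)_i$ they are killed by $\tau$. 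The main obstacle is precisely this bookkeeping: one must verify in detail that each of the normalisations performed in Section \ref{sec:index_maps_K_hom} (involutive, finite-propagation, uniformly pseudolocally traceable) only alters the averaged graded trace by boundary contributions of a uniformly bounded width, so that all such corrections vanish in the Følner limit. Once this is done, equality (a) holds and, together with (b), the lemma follows.
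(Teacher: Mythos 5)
Your approach matches the paper's, which proves this lemma only in the form of the discussion immediately preceding its statement: the equality $\ind_\tau([D])=\ind_\tau(I_a(D))$ is obtained by ``comparing our computation of $\ind_\tau([D])$ [\,\dots] with the formula from \cite[Lemma 4.1]{roe_index_1} combined with the arguments in the proof of \cite[Proposition 8.1]{roe_index_1}'', and $\ind_\tau([D])=\ind_\tau(\mu_u([D]))$ is read off from the compatibility propositions of Sections~\ref{sec:index_maps_K_hom} and~\ref{sec:index_maps_uniform_roe}. Your split into (a) and (b), your choice of a normalizing function $\chi$ with $1-\chi^2$ Schwartz, your appeal to Corollary~\ref{cor:schwartz_function_of_PDO_quasilocal_smoothing} and Proposition~\ref{prop:smoothing_op_kernel} to produce the smooth kernel, and your use of the compatible Følner sequences $M_i=\bigcup_{y\in U_i}\overline{V_y}$ are exactly the ingredients the paper invokes, only spelled out in more detail.

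One genuine weak spot in your final paragraph: you assert that the discrepancy between normalising by $\card U_i$ (on the uniform $K$-homology side) and by $\vol M_i$ (on Roe's side) ``is just a constant scaling that can be absorbed into the choice of fundamental class $\theta$.'' That cannot be right as written. The ratio $\card U_i/\vol M_i$ is a bounded \emph{sequence}, not a constant, and even if it were constantly equal to some $c\neq 1$ one would only obtain proportionality of the two indices, not equality; a fundamental class $\theta$ is a positive linear functional with $\theta\circ d = 0$, not a dial one turns to cancel a scalar factor. What you actually need is that, for the quasi-latticing partition chosen, the two normalisations produce the same $\tau$-limit. This is the content of the last proposition of Section~\ref{sec:index_maps_uniform_roe}, and its own one-sentence justification (``the difference of $m_i$ and $t_i$ is concentrated on the boundary of $U_i$'') is precisely where this reconciliation has to happen; if you want a self-contained argument you should make that step explicit rather than defer to an ``absorption'' that does not occur. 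Aside from this, the remaining Følner-boundary bookkeeping you flag for the normalizations in Section~\ref{sec:index_maps_K_hom} is accurate and is exactly what the paper leaves to the reader.
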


With this lemma we may now rephrase Roe's Index Theorem in a way that will be well suited for our extension of it to pseudodifferential operators. For the rephrasing we will also need the cap product pairing $\cap \colon K_u^p(M) \otimes K_q^u(M) \to K_{q-p}^u(M)$ from Section \ref{sec:cap_product} and its property $[E] \cap [D] = [D_E] \in K_0^u(M)$ for a vector bundle $E$ of bounded geometry and a graded operator $D$ of Dirac type, where $D_E$ denotes the twisted operator (see Definition \ref{defn:twisted_op}).

\begin{thm}[Roe's Index Theorem rephrased]\label{thm:roe_index_thm_rephrased}
Let $M$ be an amenable, oriented manifold of bounded geometry and let $D$ be a graded operator of Dirac type\footnote{see Definition \ref{defn:dirac_type_operator}}.

Then for all choices of F{\o}lner sequences for $M$ and all choices of functionals $\tau \in (\ell^\infty)^\ast$ associated to a free ultrafilter on $\IN$, the following diagram commutes:
\[\xymatrix{K^0_u(M) \ar[rr]^{\largecdot \cap [D]} \ar[drr]_{\theta(I_t(D_{\largecdot}))} & & K_0^u(M) \ar[d]^{\ind_\tau} \ar[rr]^{\mu_u} & & K_0(\C(M)) \ar[dll]^{\ind_\tau} \\ & & \IR}\]
\end{thm}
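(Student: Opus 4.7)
The strategy is to reduce the commutativity of the diagram to Roe's original index theorem for operators of Dirac type, using the cap product with the fundamental Dirac class. For an arbitrary class $[E] \in K^0_u(M)$ represented (by Theorem~\ref{thm:interpretation_K0u}) as a formal difference of vector bundles of bounded geometry, the plan is to invoke the compatibility $[E] \cap [D] = [D_E] \in K_0^u(M)$, where $D_E$ denotes the Dirac-type operator on the twisted bundle $S \otimes E$. Since $E$ is of bounded geometry, so is $S \otimes E$, and $D_E$ is again a graded operator of Dirac type to which Roe's original theorem applies.

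First I would handle the right triangle. This is essentially content-free given the lemma stated immediately above the theorem: for any graded Dirac-type operator (including $D_E$) we have
\[\ind_\tau([D_E]) = \ind_\tau(\mu_u([D_E])),\]
and both coincide with $\ind_\tau(I_a(D_E))$ as defined by Roe. Substituting $[E]\cap[D] = [D_E]$ yields the commutativity of the triangle on the right.

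Second, and this is the substantive step, I would deduce the left triangle. Writing $[E]\cap[D] = [D_E]$ and using the identification $\ind_\tau([D_E]) = \ind_\tau(I_a(D_E))$ from the preceding lemma, Roe's Index Theorem applied to the graded Dirac-type operator $D_E$ on the amenable manifold $M$ gives
\[\ind_\tau(I_a(D_E)) = \theta(I_t(D_E)),\]
which is exactly the diagonal arrow $\theta(I_t(D_{\largecdot}))$ evaluated at $[E]$. Linearity in $[E]$ then extends the equality from classes of the form $[E]$ to formal differences, and hence to all of $K^0_u(M)$.

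The only non-routine point is ensuring that the cap product identity $[E]\cap[D] = [D_E]$, the compatibility of $\ind_\tau$ on $K_0^u(M)$ with Roe's analytic index $\ind_\tau\circ I_a$, and the \Folner/ultrafilter data $(\theta,\tau)$ all refer to the \emph{same} choices on both sides of the equation; in particular the fundamental class $\theta$ used to pair with $I_t(D_E)$ must be the one induced by the same \Folner sequence and functional $\tau$ that defines the index map on $K_0^u(M)$. Once these identifications are in place, the theorem follows formally from Roe's theorem applied pointwise at $D_E$, without any further analysis.
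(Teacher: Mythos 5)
Your proposal is correct and matches the paper's approach exactly: the theorem is presented there as a repackaging of Roe's original index theorem, obtained by applying it to the twisted operators $D_E$, together with the cap-product identity $[E]\cap[D]=[D_E]$ and the preceding lemma identifying $\ind_\tau(I_a(D))$, $\ind_\tau([D])$, and $\ind_\tau(\mu_u([D]))$. Your remark that the same \Folner sequence and functional $\tau$ must underlie all three index maps is precisely the consistency implicit in the theorem's quantifier ``for all choices.''
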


Note that for the statement of Roe's Index Theorem we just need the left hand side of the aobve diagram (which asserts that the topological and the analytic indices coincide). But we will need the right hand side of the diagram in order to conclude that the index \emph{classes} in $\bar{H}^m_{b, \mathrm{dR}}(M)$ coincide (the above theorem goes a step further and contains already the evaluations of these cohomology classes). We will prove this extension of the index theorem to the equality of classes in the Section \ref{sec:equality_classes}.

Recall that our goal is to get an index theorem for pseudodifferential operators. So let $P \in \Psi \mathrm{DO}_?^k(S)$ be an odd, $p$-multigraded, elliptic and symmetric pseudodifferential operator of positive order $k > 0$. By Theorem \ref{thm:elliptic_symmetric_PDO_defines_uniform_Fredholm_module} we get a uniform $K$-homology class $[P] \in K_p^u(M)$. Since Roe's Index Theorem applies only for $K_0^u(M)$, we assume that $p$ is even and use the formal periodicity $K_{p}^u(M) \cong K_{p+2}^u(M)$ to get a class $[P] \in K_0^u(M)$. Note that the assumption that $p$ is even is a very restrictive one (e.g., in the case of the usual geometric Dirac operators like the signature operator or the Atiyah--Singer operator $\slashed{D}$ associated to a spin manifold it restricts the index theorem to even-dimensional manifolds). Though there are index theorem for the case that $p$ is odd, they require substantially new ideas. See our corresponding discussion in Section \ref{sec:partitioned_manifold}.

Suppose that this class $[P] \in K_0^u(M)$ lies in the image of the cap product, i.e., there is an element $[E] - [F] \in K^0_u(M)$ and a graded operator $D$ of Dirac type, such that we have
\[[P] = ([E] - [F]) \cap [D] = [D_E] - [D_F] \in K_0^u(M).\]
We define the \emph{topological index $\tind_\tau(P) \in \IR$ of $P$} as $\theta(I_t(D_E) - I_t(D_F)) \in \IR$ and get from Theorem \ref{thm:roe_index_thm_rephrased} the following equality of indices of $P$:
\begin{equation}\label{eq:index_thm_pseudodiff}
\tind_\tau(P) = \ind_\tau([P]).
\end{equation}
Note that we did not write $\theta(I_t(P))$ for the topological index $\tind_\tau(P)$ of $P$ since we can not compute the index class $I_t(P)$ as we can do it for operators of Dirac type. The reason for this is that $I_t(D)$ is defined using the asymptotic expansion of the integral kernel of the operator $e^{-t D^2}$, and for pseudodifferential operators we generally do not have such an expansion (this even fails for compact manifolds, i.e, it is not a problem of our extension from compact to non-compact manifolds).

Of course it might be the case that there are different operators $D$ and elements $[E] - [F]$ such that $[P] = ([E] - [F]) \cap [D]$. So a priori the topological index $\tind_\tau(P)$ may depend on this choices. But Equation \eqref{eq:index_thm_pseudodiff} shows that this is not the case, i.e., the topological index $\tind_\tau(P)$ of $P$ is well-defined \emph{if $[P]$ lies in the image of the cap product}, and only in this case we do have an index theorem for $P$.

Since we now know that the topological index of $P$ is well-defined, we can state its definition:

\begin{defn}[Topological index $\tind_\tau(P)$]\label{defn:topological_index_P}
The \emph{topological index $\tind_\tau(P) \in \IR$ of the pseudodifferential operator $P$} is defined as $\theta(I_t(D_E) - I_t(D_F))$, where $D$ is a graded operator of Dirac type over $M$ and the element $[E]-[F] \in K^0_u(M)$ is such that we have $([E]-[F]) \cap [D] = [P] \in K_0^u(M)$.
\end{defn}

To solve the problem that $[P] \in K_0^u(M)$ must lie in the image of the cap product, we restrict ourselves to even-dimensional spin$^c$ manifolds (see the next section for a definition of spin$^c$ manifolds). In this case we have a distinguished element in the uniform $K$-homology of $M$, called the fundamental class $[M] \in K_m^u(M)$ of $M$, such that cap product with it induces an isomorphism $K^\ast_u(M) \cong K_{m-\ast}^u(M)$, where $m$ is the dimension of $M$. We will prove this \Poincare duality in Section \ref{sec:poincare_duality}. We assume that $m$ is even so that we may use formal periodicity to write \Poincare duality for $\ast = 0$ as $K_u^0(M) \cong K_0^u(M)$,\footnote{If $m$ is odd, then \Poincare duality reduces to $K^1_u(M) \cong K_0^u(M)$. But for $K^1_u(M)$ we no longer have an interpretation via vector bundles \emph{over $M$.}} and therefore we conclude that $[P] \in K_0^u(M)$ is \emph{always} given by the cap product of a graded operator of Dirac type with a formal difference of vector bundles of bounded geometry over $M$, i.e., the topological index of $P$ is always defined.

So we have finally proved the main result of this thesis: the generalization of Roe's Index Theorem to pseudodifferential operators.

\begin{indexthm}\label{thm:index_thm}
Let $M$ be an amenable and even-dimensional spin$^c$ manifold of bounded geometry and let $P \in \Psi \mathrm{DO}_?^k(S)$ be a graded, elliptic and symmetric pseudodifferential operator of positive order $k > 0$.

Then for all choices of F{\o}lner sequences for $M$ and all choices of functionals $\tau \in (\ell^\infty)^\ast$ associated to a free ultrafilter on $\IN$, we have
\[\tind_\tau(P)  = \ind_\tau([P]).\]
\end{indexthm}

\section{\texorpdfstring{Spin$^c$}{Spin-c} manifolds of bounded geometry}\label{sec:spinc_manifolds}

In this section we will define \spinc manifolds of bounded geometry. The definition of them is the same as for compact manifolds, but of course we have to assume that the involved bundles have bounded geometry. For the convenience of unfamiliar readers, we will recall all needed definitions.

Let $M$ be a manifold of bounded geometry. We will denote by $\ICl(M)$ the complexified Clifford algebra bundle of $M$, i.e., its fiber over a point $x \in M$ is the complexified Clifford algebra $Cl(T_x M) \otimes \IC$ of $T_x M$. Note that $\ICl(M)$ has a natural connection $\nabla$ extending the Levi--Civita connection of $M$, which is characterized by $\nabla(\varphi \psi) = (\nabla \varphi)\psi + \varphi \nabla \psi$ for all smooth sections $\varphi, \psi$ of $\ICl(M)$.

\begin{defn}[Dirac bundles of bounded geometry]
Let $S$ be a bundle of left modules over $\ICl(M)$. We will call $S$ a \emph{Dirac bundle of bounded geometry} if it is equipped with a Hermitian metric and compatible connection $\nabla^S$ turning it into a vector bundle of bounded geometry and such that:
\begin{itemize}
\item for each unit vector $v \in T_x M$ the Clifford multiplication $S_x \to S_x$, $s \mapsto v \cdot s$ is an isometry, and
\item for all smooth sections $\varphi$ of $\ICl(M)$ and $s$ of $S$ we have $\nabla^S (\varphi \cdot s) = \nabla \varphi \cdot s + \varphi \cdot \nabla^S s$.
\end{itemize}
If $S$ is equipped with an involution $\epsilon$ which is compatible with the connection and which anticommutes with the Clifford action of tangent vectors, we will call $S$ a \emph{graded} Dirac bundle of bounded geometry.

A \emph{$p$-multigraded Dirac bundle} is a graded Dirac bundle $S$ equipped with $p$ odd endomorphisms $\epsilon_1, \ldots, \epsilon_p$ such that
\[\epsilon_i^\ast = - \epsilon_i, \ \epsilon_i^2 = -1, \text{ and } \epsilon_i \epsilon_j = - \epsilon_j \epsilon_i \text{ for } i \not= j,\]
i.e., each fiber $S_x$ is a $p$-multigraded vector space, and such that each $\epsilon_i$ commutes with every Clifford multiplication operator on every fiber $S_x$.
\end{defn}

On every Clifford bundle $S$ of bounded geometry we have a natural first-order differential operator $D$ which is called the \emph{Dirac operator of $S$} and which is defined by the composition
\[C^\infty(S) \to C^\infty(T^\ast M \otimes S) \to C^\infty(TM \otimes S) \to C^\infty(S).\]
The first arrow is given by the connection, the second is derived from the Riemannian metric, and the third is Clifford multiplication. If $S$ is graded, then the operator $D$ will be odd, and of course if $S$ is $p$-multigraded, then $D$ is also $p$-multigraded.

If $(v_1, \ldots, v_n)$ is an orthonormal basis of $T_x M$, we get locally for $s \in C^\infty(S)$
\[(Ds)(x) = \sum_k v_k \cdot (\nabla^S_{v_k} s) (x)\]
and may therefore conclude due to the bounded geometry of $M$ and $S$ that $D$ is a pseudodifferential operator of order $1$.\footnote{Note that we have to check here the Uniformity Condition \ref{eq:uniformity_defn_PDOs}. It follows, of course, from the bounded geometry of $M$ and $S$.} Its symbol $p_D(x,\xi) \colon S_x \to S_x$ is given by Clifford multiplication with $\xi$ (converted to a tangent vector via the Riemannian metric) from which is follows that $D$ is an elliptic operator. Since $D$ is symmetric, we conclude with Theorem \ref{thm:elliptic_symmetric_PDO_defines_uniform_Fredholm_module} that $D$ defines a class in the uniform $K$-homology of $M$.

\begin{defn}[Operators of Dirac type]\label{defn:dirac_type_operator}
We will say that some operator $D$ is of \emph{Dirac type}, if it arises in the above way.
\end{defn}

Let $S$ be a Dirac bundle of bounded geometry and let $E$ be an arbitrary vector bundle of bounded geometry. Then the tensor product $S \otimes E$ becomes a bundle of left modules over $\ICl(M)$ via $\varphi \cdot (s \otimes e) := (\varphi \cdot s) \otimes e$ and equipping $S \otimes E$ with the tensor product metric and connection, $S \otimes E$ becomes a Dirac bundle of bounded geometry. If $S$ is $p$-multigraded, then $S \otimes E$ is again $p$-multigraded.

\begin{defn}[Twisted operators]\label{defn:twisted_op}
Let $D$ be an operator of Dirac type associated to the Dirac bundle $S$. If $E$ is any vector bundle of bounded geometry, we define the \emph{twisted operator $D_E$} to be the Dirac operator associated to the Dirac bundle $S \otimes E$.
\end{defn}

Now we will work towards the definition of \spinc manifolds of bounded geometry. First we need the following definition:

\begin{defn}[Complex Clifford algebra for $\IR^m$, {\cite[Definition 11.2.2]{higson_roe}}]
The \emph{complex Clifford algebra for $\IR^m$} is the complex $^\ast$-algebra $\IC_m$ generated by elements $e_1, \ldots, e_m$ corresponding to the standard orthonormal basis of $\IR^m$, such that
\[e_i^\ast = -e_i, \ e_i^2 = -1, \text{ and } e_i e_j = - e_j e_i \text{ for } i \not= j.\]
We define an inner product on $\IC_m$ by declaring the basis
\[\{ e_{i_1} \cdots e_{i_k} \ | \ i_1 < \ldots < i_k \text{ and } 0 \le k \le m\}\]
of $\IC_m$ to be orthonormal. The action of the algebra $\IC_m$ on the Hilbert space $\IC_m$ by left multiplication is a faithful $^\ast$-representation which gives $\IC_m$ the structure of a $C^\ast$-algebra. We grade $\IC_m$ by assigning each monomial $e_{i_1} \cdots e_{i_k}$ its degree mod $2$.
\end{defn}

If $M$ is an $m$-dimensional Riemannian manifold, we may define locally Dirac bundle over it. Let $e_1, \ldots, e_m$ be a local orthonormal frame for $T^\ast M$ defined over an open subset $U \subset M$, then the trivial bundle $U \times \IC_m$ over $U$ may be given the structure of an $m$-multigraded Dirac bundle in the following way: Clifford multiplication by an element $e_i$ of the frame is left multiplication by the $i$th generator of $\IC_m$, and the $m$-multigrading operators $\epsilon_1, \ldots, \epsilon_m$ of the bundle are given by right multiplication by the same generators.

Having defined canonical local Dirac bundles, we may now define what a complex spinor bundle on $M$ is: it is a Dirac bundle which is locally of the above described canonical type. Since we may recover from the canonical local Dirac bundles the orientation of frames, the definition of complex spinor bundles is only sensible for oriented manifolds. Note that we now have to incorporate the bounded geometry into the definition.

\begin{defn}[Complex spinor bundles, {\cite[Definition 11.2.3]{higson_roe}}]
Let $M$ be an $m$-dimensional oriented manifold of bounded geometry. A \emph{complex spinor bundle of bounded geometry on $M$} is an $m$-multigraded Dirac bundle $S$ of bounded geometry over $M$ with the following property: locally in synchronous framings of a fixed radius (i.e., around every point of $M$ we consider normal coordinates of a radius that does not depend on the point) it is isomorphic to the trivial bundle with fiber $\IC_m$ (i.e., at each fiber $S_x$ the synchronous framing is mapped to the standard orthonormal basis of $\IR^m$), and the Clifford multiplication is determined from a local synchronous framing $e_1, \ldots, e_m$ of $T^\ast M$ as above.
\end{defn}

Since $\IC_m$ has dimension $2^m$, it follows that complex spinor bundles have always fiber dimension $2^m$.

\begin{defn}[Spin$^c$-manifolds of bounded geometry]
Let $M$ be a manifold of bounded geometry. A \emph{\spinc structure on $M$} is the choice of a complex spinor bundle of bounded geometry over $M$. The \emph{fundamental class $[M] \in K_m^u(M)$} is the class of the Dirac operator associated to the spin$^c$ structure of $M$.
\end{defn}

Since our Index Theorem \ref{thm:index_thm} applies only to even-dimensional \spinc manifolds, we will discuss that case now a bit more thorough. But first we need the following observation:

\begin{lem}[{\cite[Lemma 11.3.1]{higson_roe}}]
Let $M$ be an $m$-dimensional manifold of bounded geometry. Then every $m$-multigraded Dirac bundle on $M$ of fiber dimension $2^m$ is a spinor bundle.
\end{lem}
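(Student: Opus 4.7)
The plan is to reduce the problem to a pointwise algebraic statement, extend it locally by parallel transport along radial geodesics, and then use the bounded geometry of $S$ to ensure the resulting trivialization is uniform in the sense required by the definition of a complex spinor bundle.

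First I would work at a fixed point $x \in M$. The fiber $S_x$ carries two commuting actions: the Clifford action of $Cl(T_xM) \otimes \IC \cong \IC_m$ (generated by unit tangent covectors), and the ``right'' action by the multigrading operators $\epsilon_1, \ldots, \epsilon_m$, which by assumption commute with Clifford multiplication, square to $-1$, and pairwise anticommute. Together these produce a $^\ast$-representation of $\IC_m \hatotimes \IC_m \cong \End(\IC_m)$ on $S_x$. Since $\End(\IC_m)$ is simple with unique irreducible module of dimension $2^m$, and $\dim_\IC S_x = 2^m$ by hypothesis, $S_x$ is isomorphic to $\IC_m$ as a $(\IC_m, \IC_m)$-bimodule. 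Fix a unit vector $s_x \in S_x$ corresponding to $1 \in \IC_m$ under such an isomorphism.

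Next I would spread $s_x$ out locally. Let $U \subset M$ be the normal coordinate ball at $x$ of a radius $\varepsilon > 0$ less than the injectivity radius of $M$; by bounded geometry we may take $\varepsilon$ independent of $x$. Parallel-transport $s_x$ along radial geodesics from $x$ to obtain a smooth section $s$ of $S$ over $U$ that is synchronous in the sense of Definition \ref{defn:synchronous_framings}. Choose a synchronous orthonormal framing $e_1, \ldots, e_m$ of $T^\ast M$ on $U$. Then I would define the trivialization $\Phi_x \colon U \times \IC_m \to S|_U$ by $\Phi_x(y, a) = a \cdot s(y)$, where $a \in \IC_m$ acts on $s(y)$ through the identification of the Clifford generators of $\IC_m$ with $e_1(y), \ldots, e_m(y)$, and the multigrading generators on the right with $\epsilon_1, \ldots, \epsilon_m$. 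At $x$ this is the chosen isomorphism $\IC_m \cong S_x$; since the Clifford action, the multigrading, the metric, and the connection on $S$ are all parallel along radial geodesics with respect to a synchronous framing of $T^\ast M$ (which is parallel along radial geodesics by bounded geometry of $M$), $\Phi_x$ is a fiberwise isometric isomorphism of $m$-multigraded Dirac bundles with the standard model on $U$.

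The main technical obstacle, and the essential uniform ingredient, is to check that $\Phi_x$ and $\Phi_x^{-1}$ are $C_b^\infty$-bounded with bounds independent of $x$. For this I would invoke Lemma \ref{lem:C_b_infty_Iso_equivalent}: since $\Phi_x$ is a fiberwise isometry, $\Phi_x$ and $\Phi_x^{-1}$ have operator norm one, so it suffices to control $\nabla^{\IC_m} - \Phi_x^\ast \nabla^S$ and all of its covariant derivatives uniformly in $x$, where $\nabla^{\IC_m}$ is the flat connection on $U \times \IC_m$. Unwinding the definition of $\Phi_x$ via the generators $e_i$ and $\epsilon_j$, this difference tensor is a polynomial in the Christoffel symbols of $T^\ast M$ and of $S$ in synchronous framings, whose covariant derivatives are therefore uniformly bounded by Lemmas \ref{lem:bounded_geometry_christoffel_symbols} and \ref{lem:equiv_characterizations_bounded_geom_bundles} (applied to $M$ and to $S$, which is of bounded geometry by hypothesis). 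This yields the uniform local triviality required in the definition of a complex spinor bundle of bounded geometry, completing the proof.
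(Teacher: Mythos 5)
The paper offers no proof of this lemma; it is stated verbatim with a citation to Higson and Roe (Lemma 11.3.1 of their book), so there is no argument in the paper to compare against. Your proof follows the route one would expect: identify $S_x \cong \IC_m$ pointwise using the representation theory of the combined Clifford and multigrading actions, spread the identification out by radial parallel transport to a synchronous trivialization, and use bounded geometry to make the radius (and, if one wishes, the $C_b^\infty$-bounds) uniform in $x$. This is the natural adaptation of Higson--Roe's argument to the bounded geometry setting, and the overall structure is sound.

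Two details deserve more care. First, your identification ``$\IC_m \hatotimes \IC_m \cong \End(\IC_m)$'' is correct as an abstract isomorphism of graded algebras (since $\IC_m \hatotimes \IC_m \cong \IC_{2m} \cong M_{2^m}(\IC)$), but it does not directly match the sign convention in the paper's definition of a $p$-multigraded Dirac bundle, which stipulates that each $\epsilon_i$ commutes \emph{ungradedly} with Clifford multiplication and views the multigrading as a right action. The algebra generated on $S_x$ is then more precisely $\IC_m$ acting on the left together with $\IC_m^{\mathrm{op}}$ acting on the right, and one must do some bookkeeping (Koszul signs, graded versus ungraded opposite) to see that the resulting subalgebra of $\End(S_x)$ is indeed the full matrix algebra, from which the conclusion $S_x\cong\IC_m$ follows as you say. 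You should either carry out this bookkeeping explicitly in the paper's conventions or cite the algebraic fact directly from Higson--Roe. Second, and more importantly, the parallel-transport step silently assumes that the multigrading operators $\epsilon_i$ are parallel with respect to $\nabla^S$; this is required so that $\epsilon_i$ commutes with radial parallel transport, which is exactly what makes $\Phi_x$ intertwine the multigradings. The paper's definition imposes parallelism on the grading operator $\epsilon$ (``compatible with the connection'') but does not state it for the $\epsilon_i$, so you should make this hypothesis explicit (it is intended, and is assumed in Higson--Roe's setting).

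Finally, your uniformity argument via Lemma \ref{lem:C_b_infty_Iso_equivalent} and the Christoffel-symbol estimates of Lemmas \ref{lem:bounded_geometry_christoffel_symbols} and \ref{lem:equiv_characterizations_bounded_geom_bundles} is correct, but it is arguably more than the definition of a complex spinor bundle of bounded geometry strictly demands. That definition only asks that, over a normal coordinate ball of a fixed radius, the synchronous framing of $S$ conjugates the Clifford and multigrading structures to the standard ones on $U\times\IC_m$; since the metric, connection, Clifford action, grading, and (given the hypothesis above) multigrading are all parallel along radial geodesics, once the pointwise isomorphism $S_x\cong\IC_m$ is in place this compatibility follows automatically, with the only role of bounded geometry being to supply the uniform radius.
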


Now we recall from Proposition \ref{prop:multigraded_categories_equiv} that the categories of $p$-multigraded and $(p+2)$-multigraded Hilbert spaces are equivalent. Transferring its proof to our present case and using the above lemma, we get the following result:

\begin{prop}[{\cite[Proposition 11.3.2]{higson_roe}}]
Let $M$ be a manifold of bounded geometry and of even dimension $m = 2k$. Then there is a one-to-one correspondence between isomorphism classes of complex spinor bundles on $M$ and isomorphism classes of Dirac bundles of dimension $2^k$.
\end{prop}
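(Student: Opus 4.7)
The strategy is to iterate the fiberwise equivalence of categories of $p$-multigraded and $(p+2)$-multigraded Hilbert spaces from Proposition \ref{prop:multigraded_categories_equiv}, carrying it out at the level of bundles of bounded geometry and checking that the bounded geometry is preserved at each step. Combined with the lemma stated just before the proposition (every $m$-multigraded Dirac bundle on $M$ of fibre dimension $2^m$ is a spinor bundle), this will establish the bijection.

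In one direction, given a complex spinor bundle $S$ of bounded geometry, with multigrading operators $\epsilon_1,\dots,\epsilon_m$, form the even self-adjoint involutions $e_j := i\epsilon_{2j-1}\epsilon_{2j}$ for $j=1,\dots,k$. I would verify the following: (i) the $e_j$ pairwise commute, each commutes with Clifford multiplication by tangent covectors and with the grading operator $\epsilon$, and each anti-commutes with $\epsilon_{2j-1}$ and $\epsilon_{2j}$ (so the $\pm 1$ eigenbundles of $e_j$ have equal rank, being interchanged by $\epsilon_{2j-1}$); (ii) the $e_j$ are parallel, because they are algebraically built from the multigrading operators of $S$ (which the local description of a spinor bundle shows are covariantly constant); hence the joint $+1$-eigenbundle $S':=\bigcap_{j=1}^{k}\ker(e_j-1)$ is a smooth, parallel subbundle of rank $2^{m}/2^{k}=2^{k}$. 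Clifford multiplication by $T^\ast M$ preserves $S'$, the Levi-Civita-compatible connection restricts, and $\epsilon$ restricts to a grading, so $S'$ becomes a graded Dirac bundle of rank $2^{k}$.

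Conversely, given a graded Dirac bundle $S'$ of rank $2^{k}$, I would iterate the construction $H\mapsto H\oplus H^{op}$ from the proof of Proposition \ref{prop:multigraded_categories_equiv} fibrewise, adding at each step two new multigrading operators
\[\epsilon_{p+1}=\begin{pmatrix}0&1\\-1&0\end{pmatrix},\qquad \epsilon_{p+2}=\begin{pmatrix}0&i\\i&0\end{pmatrix},\]
while extending Clifford multiplication and the connection diagonally. After $k$ steps we obtain an $m$-multigraded Dirac bundle $S$ of rank $2^{m}$, and the preceding lemma identifies it as a complex spinor bundle. A direct calculation (the fibrewise statement of Proposition \ref{prop:multigraded_categories_equiv}) shows that the two constructions are mutually inverse up to canonical isomorphism, and the isomorphism is given fibrewise by algebraic formulae in the parallel operators $\epsilon_j$, hence is a $C_b^\infty$-isomorphism.

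The main technical obstacle will be the bounded geometry bookkeeping: I must verify that both constructions send $C_b^\infty$-isomorphism classes of bundles of bounded geometry to such classes. In the forward direction this amounts to showing that the orthogonal projection onto the joint eigenbundle $S'$ is a $C^\infty$-bounded endomorphism of $S$ (which follows because the $e_j$ are polynomials in the parallel, hence trivially $C_b^\infty$-bounded, multigrading operators) and then invoking Proposition \ref{prop:image_proj_matrix_complemented}, point 1, to conclude that $S'$ with the induced metric and connection has bounded geometry. In the reverse direction bounded geometry is preserved trivially because $H\oplus H^{op}$ and the constant endomorphisms $\epsilon_{p+1},\epsilon_{p+2}$ do not affect curvature estimates. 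Once these steps are in place, the correspondence on isomorphism classes follows formally.
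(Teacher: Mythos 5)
Your proposal correctly fleshes out the paper's one-line indication: transfer the proof of Proposition \ref{prop:multigraded_categories_equiv} (equivalence of $p$- and $(p+2)$-multigraded Hilbert spaces) to the bundle setting, and invoke the preceding lemma to recognize the resulting $m$-multigraded bundle of rank $2^m$ as a spinor bundle. The fibrewise involutions $e_j = i\epsilon_{2j-1}\epsilon_{2j}$, their parallelness, and the bounded-geometry bookkeeping via Proposition \ref{prop:image_proj_matrix_complemented} are exactly the details the paper suppresses, so this is the same argument in full.
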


We may use this to show that complex manifolds are \spinc manifolds:

\begin{example}
Let $M$ be a complex manifold of bounded geometry and of complex dimension $k$ (so that its real dimension is $m = 2k$). Then the Dolbeault operator acts on a Dirac bundle of dimension $2^k$, i.e., we get a canonical \spinc structure. The uniform $K$-homology class $[D] \in K_0^u(M)$ of the Dolbeault operator corresponds under the periodicity isomorphisms $K_p^u(M) \cong K_{p+2}^u(M)$ to the fundamental class $[M] \in K_m^u(M)$ of $M$.
\end{example}

\section{Cap product}\label{sec:cap_product}

In this section we will define the cap product $\cap \colon K_u^p(X) \otimes K_q^u(X) \to K_{q-p}^u(X)$.

In the following we will need two facts: firstly, that we can canonically extend a representation $\rho$ of $C_0(X)$ to $C_u(X)$ (in fact, we may even extend it canonically to the bounded Borel functions $B_b(X)$), and secondly, that if $f \in \LLip_R(X)$ and $g \in {L^\prime\text{-}\operatorname{Lip}}_{R^\prime}(X)$, then their product $fg \in {N\text{-}\operatorname{Lip}}_{T}(X)$, where $T = \min\{R, R^\prime\}$ and $N = 2 \max\{L, L^\prime\}$. Recall that we have
\begin{equation*}
\LLip_R(X) := \{ f \in C_c(X) \ | \ f \text{ is }L\text{-Lipschitz}, \diam(\supp f) \le R \text{ and } \|f\|_\infty \le 1\}.
\end{equation*}

Let us first describe the cap product of $K_u^0(X)$ with $K^u_\ast(X)$ on the level of uniform Fredholm modules. The general definition of it will be given via dual algebras.

\begin{lem}\label{lem:proj_again_uniform_fredholm_module}
Let $P$ be a projection in $\Mat_{n \times n}(C_u(X))$ and let $(H, \rho, T)$ be a uniform Fredholm module.

We set $H_n := H \otimes \IC^n$, $\rho_n(\largecdot) := \rho(\largecdot) \otimes \id_{\IC^n}$, $T_n := T \otimes \id_{\IC^n}$ and by $\pi$ we denote the matrix $\pi_{ij} := \rho(P_{ij}) \in \Mat_{n \times n}(\IB(H)) = \IB(H_n)$.

Then $(\pi H_n, \pi \rho_n \pi, \pi T_n \pi)$ is again a uniform Fredholm module, with an induced (multi-)grading if $(H, \rho, T)$ was (multi-)graded.
\end{lem}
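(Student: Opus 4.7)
The plan is to reduce the verification to two observations. First, since the entries $P_{ij}$ lie in the commutative algebra $C_u(X)$, to which $\rho$ extends canonically, the projection $\pi$ commutes with $\rho_n(f)$ for every $f \in C_0(X)$. From this the representation axiom $\pi \rho_n(f) \pi = \rho_n(f)\pi$ on $\pi H_n$ is immediate, the self-adjointness condition
\[
\pi T_n \pi - (\pi T_n \pi)^\ast = \pi(T_n - T_n^\ast)\pi
\]
is uniformly locally compact by sandwiching (since the uniformly locally compact operators form a two-sided ideal in the uniformly pseudolocal ones), and the uniform pseudolocality condition reduces for $f \in \LLip_R(X)$ to
\[
[\pi T_n \pi,\ \pi\rho_n(f)\pi] = \pi [T_n,\rho_n(f)] \pi,
\]
which is uniformly approximable in $f$ by uniform pseudolocality of $T$.

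The hard part is to show that $C := [\pi,T_n]$ is itself uniformly locally compact: this is nontrivial precisely because the entries $P_{ij}$ sit in $C_u(X)$ rather than in $C_0(X)$, so uniform pseudolocality of $T$ does not apply to them directly. For $f \in \LLip_R(X)$ I expand
\[
\rho(f)\,[\rho(P_{ij}),T] = [\rho(fP_{ij}),T] - [\rho(f),T]\,\rho(P_{ij});
\]
the second summand is uniformly approximable in $f$. For the first summand I approximate $P_{ij}$ in supremum norm by $m$-Lipschitz inf-convolutions of its real and imaginary parts,
\[
u^{(m)}(x) := \inf_{y \in X}\bigl[u(y) + m\,d(x,y)\bigr],
\]
which converge uniformly to $u$ as $m \to \infty$ by uniform continuity of $u$. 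For fixed $m$, the family $\{f P_{ij}^{(m)} : f \in \LLip_R(X)\}$ consists of Lipschitz functions supported in diameter $\le R$ with a common Lipschitz constant depending only on $m, L,$ and $\|P_{ij}\|_\infty$, so uniform pseudolocality of $T$ supplies uniform approximability of $\{[\rho(fP_{ij}^{(m)}), T]\}$. Letting $m \to \infty$ preserves this uniformity in $f$, and a symmetric argument using adjoints (noting that $T^\ast$ is likewise uniformly pseudolocal and $\overline{P_{ij}} \in C_u(X)$) handles right multiplication by $\rho(f)$.

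With the uniform local compactness of $C$ in hand, the remaining condition follows from the expansion
\[
(\pi T_n\pi)^2 - \pi = \pi(T_n^2 - 1)\pi + \pi\, T_n\, [\pi,T_n]\, \pi.
\]
The first summand is uniformly locally compact by sandwiching; for the second, $T_n C$ is uniformly locally compact by the ideal property, and a further sandwich by $\pi$ preserves this. The (multi-)grading claim is routine: $\pi$ is assembled from even, multigrading-commuting multiplication operators tensored with identities on $\IC^n$, so the grading and each multigrading operator on $H_n$ restrict correctly to $\pi H_n$. The main technical obstacle throughout is the inf-convolution step, namely tracking the uniformity of Lipschitz constants as $f$ varies in $\LLip_R(X)$ so that a single set of constants feeds into the uniform pseudolocality of $T$ and survives the passage to the limit $m \to \infty$.
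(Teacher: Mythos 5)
Your proof is correct, and it takes a route that is related to but noticeably cleaner than the paper's. The key difference is where the Lipschitz approximation of the entries $P_{ij}\in C_u(X)$ is used. The paper verifies uniform pseudolocality of $\pi T_n\pi$ via the formula $[\pi T_n\pi,\pi\rho_n(f)\pi]=\pi[T_n,\pi\rho_n(f)]\pi$ and then expands the matrix entries $[T,\rho(P_{ij}f)]$, which forces it to approximate each $P_{ij}$ in sup-norm by Lipschitz functions already at this stage. You instead push the outer $\pi$'s through, using $\pi^2=\pi$ and $[\pi,\rho_n(f)]=0$, to get the identical operator $\pi[T_n,\rho_n(f)]\pi$; since $[T_n,\rho_n(f)]$ is block-diagonal with blocks $[T,\rho(f)]$, uniform pseudolocality of $\pi T_n\pi$ then follows by sandwiching, with no reference to the $P_{ij}$ at all. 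This is a genuine simplification of that step. The $P_{ij}$-approximation work reappears, as it must, when you treat the uniform local compactness of $(\pi T_n\pi)^2-\pi$: your decomposition $(\pi T_n\pi)^2-\pi=\pi(T_n^2-1)\pi+\pi T_n[\pi,T_n]\pi$ isolates the nontrivial cross-term, and your expansion $\rho(f)[\rho(P_{ij}),T]=[\rho(fP_{ij}),T]-[\rho(f),T]\rho(P_{ij})$ together with the inf-convolution $P_{ij}^{(m)}$ (applied to real and imaginary parts) is a correct and concrete realisation of the sup-norm Lipschitz approximation the paper invokes abstractly. The paper disposes of the local compactness conditions with a one-line ``analogous'' remark plus a hint about interchanging $T_n$ and $\rho(P_{ij}^\varepsilon f)$; you spell this part out, which is where the real content lies once the pseudolocality step is seen to be trivial. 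The adjoint argument for the right-hand ideal conditions (using that $T^\ast=T-(T-T^\ast)$ is again uniformly pseudolocal) and the routine grading observation both check out.
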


\begin{proof}
Let us first show that the operator $\pi T_n \pi$ is a uniformly pseudolocal one. Let $R, L > 0$ be given and we have to show that $\{[\pi T_n \pi, \pi \rho_n(f) \pi] \ | \ f \in \LLip_R(X)\}$ is uniformly approximable. This means that we must show that for every $\varepsilon > 0$ there exists an $N > 0$ such that for every $[\pi T_n \pi, \pi \rho_n(f) \pi]$ with $f \in \LLip_R(X)$ there is a rank-$N$ operator $k$ with $\|[\pi T_n \pi, \pi \rho_n(f) \pi] - k\| < \varepsilon$.

We have
\[[\pi T_n \pi, \pi \rho_n(f) \pi] = \pi [T_n, \pi \rho_n(f)] \pi,\]
because $\pi^2 = \pi$ and $\pi$ commutes with $\rho_n(f)$. So since $(\pi \rho_n(f))_{ij} = \rho(P_{ij} f) \in \IB(H)$, we get for the matrix entries of the commutator
\[([T_n, \pi \rho_n(f)])_{ij} = [T, \rho(P_{ij} f)].\]

Since the $P_{ij}$ are bounded and uniformly continuous, they can be uniformly approximated by Lipschitz functions, i.e., there are $P_{ij}^\varepsilon$ with
\[\|P_{ij} - P_{ij}^\varepsilon\|_\infty < \varepsilon / (4n^2 \|T\|).\]
Note that we have $P_{ij}^\varepsilon f \in {L_{ij}\text{-}\operatorname{Lip}}_{R}(X)$, where $L_{ij}$ depends only on $L$ and $P_{ij}^\varepsilon$. We define $L^\prime := \max\{L_{ij}\}$.

Now we apply the uniform pseudolocality of $T$, i.e., we get a maximum rank $N^\prime$ corresponding to $R, L^\prime$ and $\varepsilon / 2n^2$. So let $k_{ij}^\varepsilon$ be the rank-$N^\prime$ operators corresponding to the functions $P_{ij}^\varepsilon f$, i.e.,
\[\|[T, \rho(P_{ij}^\varepsilon f)] - k_{ij}^\varepsilon\| < \varepsilon / 2n^2.\]

We set $k := \pi (k_{ij}^\varepsilon) \pi$, where $(k_{ij}^\varepsilon)$ is viewed as a matrix of operators. Then $k$ has rank at most $N := n^2 N^\prime$. Then we compute
\begin{align*}
\|[\pi T_n & \pi, \pi \rho_n(f) \pi] - k\|\\
& = \|\pi[T_n, \pi \rho_n(f)]\pi - \pi (k_{ij}^\varepsilon) \pi\|\\
& \le \|\pi\|^2 \cdot n^2 \cdot \max_{i,j}\{\|[T, \rho(P_{ij} f)] - k_{ij}^\varepsilon\|\}\\
& \le \|\pi\|^2 \cdot n^2 \cdot \max_{i,j}\{\underbrace{\|[T, \rho(P_{ij} f)] - [T, \rho(P_{ij}^\varepsilon f)]\|}_{=\|[T, \rho(P_{ij} - P_{ij}^\varepsilon)\rho(f)]\|} + \underbrace{\|[T, \rho(P_{ij}^\varepsilon f)] - k_{ij}^\varepsilon\|\}}_{\le \varepsilon / 2n^2}\\
& \le \|\pi\|^2 \cdot n^2 \cdot \max_{i,j}\{2 \|T\| \cdot \underbrace{\|\rho(P_{ij} - P_{ij}^\varepsilon)\| \cdot \|\rho(f)\|}_{\le \varepsilon/(4n^2 \|T\|)} + \varepsilon / 2n^2\}\\
& \le \|\pi\|^2 \cdot \varepsilon,
\end{align*}
which concludes the proof of the uniform pseudolocality of $\pi T_n \pi$.

That $(\pi T_n \pi)^2 - 1$ and $\pi T_n \pi - (\pi T_n \pi)^\ast$ are uniformly locally compact can be shown analogously. Note that because $T$ is uniformly pseudolocal we may interchange the order of the operators $T_n$ and $\rho(P_{ij}^\varepsilon f)$ in formulas (since for fixed $R$ and $L$ the subset $\{[T_n, \rho(P_{ij}^\varepsilon f)] \ | \ f \in \LLip_R(X) \} \subset \IB(H_n)$ is uniformly approximable).

We have shown that $(\pi H_n, \pi \rho_n \pi, \pi T_n \pi)$ is a uniform Fredholm module. That it inherits a (multi-)grading from $(H, \rho, T)$ is clear and this completes the proof.
\end{proof}

That the construction from the above lemma is compatible with the relations defining $K$-theory and uniform $K$-homology and that it is bilinear is quickly deduced and completely analogous to the non-uniform case. So we get a well-defined pairing
\[\cap\colon K_u^0(X) \otimes K_\ast^u(X) \to K_\ast^u(X)\]
which exhibits $K_\ast^u(X)$ as a module over the ring $K_u^0(X)$.\footnote{Compatibility with the internal product on $K_u^0(X)$, i.e., $(P \otimes Q) \cap T = P \cap (Q \cap T)$, is easily deduced. It mainly uses the fact that the isomorphism $\Mat_{n \times n}(\IC) \otimes \Mat_{m \times m}(\IC) \cong \Mat_{nm \times nm}(\IC)$ is canonical up to the ordering of basis elements. But different choices of orderings result in isomorphisms that differ by inner automorphisms, which makes no difference at the level of $K$-theory.}

To define the cap product in its general form, we will use the dual algebra picture of uniform $K$-homology, i.e., Paschke duality, from Section \ref{sec:paschke_duality}. So let us therefore first recall the needed definitions:

\begin{defn}[{\cite[Definition 4.1]{spakula_uniform_k_homology}}]\label{defn:frakD_frakC}
Let $H$ be a separable Hilbert space and $\rho \colon C_0(X) \to \IB(H)$ a representation.

We denote by $\frakD^u_{\rho \oplus 0}(X) \subset \IB(H \oplus H)$ the $C^\ast$-algebra of all uniformly pseudolocal operators with respect to the representation $\rho \oplus 0$ of $C_0(X)$ on the space $H \oplus H$ and by $\frakC^u_{\rho \oplus 0}(X) \subset \IB(H \oplus H)$ the $C^\ast$-algebra of all uniformly locally compact operators.
\end{defn}

That the algebras $\frakD^u_{\rho \oplus 0}(X)$ and $\frakC^u_{\rho \oplus 0}(X)$ are indeed $C^\ast$-algebras was shown in \cite[Lemma 4.2]{spakula_uniform_k_homology}. There it was also shown that $\frakC^u_{\rho \oplus 0}(X) \subset \frakD^u_{\rho \oplus 0}(X)$ is a closed, two-sided $^\ast$-ideal.

The following lemma is a uniform analog of the fact \cite[Lemma 5.4.1]{higson_roe} and is essentially proven in \cite[Lemma 5.3]{spakula_uniform_k_homology} (by ``setting $Z := \emptyset$'' in that lemma).

\begin{lem}\label{lem:K_theory_frakC_zero}
We have
\[K_\ast(\frakC^u_{\rho \oplus 0}(X)) = 0\]
and so the quotient map $\frakD^u_{\rho \oplus 0}(X) \to \frakD^u_{\rho \oplus 0}(X) / \frakC^u_{\rho \oplus 0}(X)$ induces an isomorphism
\begin{equation}\label{eq:K_theory_frakC_zero}
K_\ast(\frakD^u_{\rho \oplus 0}(X)) \cong K_\ast(\frakD^u_{\rho \oplus 0}(X) / \frakC^u_{\rho \oplus 0}(X))
\end{equation}
due to the $6$-term exact sequence for $K$-theory.
\end{lem}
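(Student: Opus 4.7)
The strategy is a standard Eilenberg swindle adapted to the uniform setting, mirroring the proof of the non-uniform analogue \cite[Lemma 5.4.1]{higson_roe}. The decisive structural observation is that an operator $T = \begin{pmatrix} T_{11} & T_{12} \\ T_{21} & T_{22}\end{pmatrix}$ on $H\oplus H$ lies in $\frakC^u_{\rho\oplus 0}(X)$ precisely when the products $(\rho\oplus 0)(f)\cdot T$ and $T\cdot (\rho\oplus 0)(f)$ form uniformly approximable families as $f$ ranges over $\LLip_R(X)$. Since $(\rho\oplus 0)(f) = \rho(f)\oplus 0$ is zero on the second summand, these products reduce to the nonzero blocks $\rho(f)T_{11}, \rho(f)T_{12}, T_{11}\rho(f), T_{21}\rho(f)$; the $(2,2)$-block $T_{22}$ is a completely unrestricted element of $\IB(H)$. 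So the algebra has the explicit matrix description $\begin{pmatrix} \frakC^u_\rho(X) & \fraklC^u_\rho(X) \\ \frakCr^u_\rho(X) & \IB(H)\end{pmatrix}$, with the bottom-right corner being all of $\IB(H)$.

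To exploit this, I would identify the second summand $H$ with an infinite orthogonal direct sum $\bigoplus_{n=1}^\infty H_n$ of copies of $H$; this is a unitary identification which carries $\frakC^u_{\rho\oplus 0}(X)$ onto itself, since the representation on the reservoir becomes $0\oplus 0\oplus\cdots$. Using a shift isometry on the reservoir, I would construct a $^\ast$-endomorphism $\beta$ of $\frakC^u_{\rho\oplus 0}(X)$ realizing an infinite amplification $T \mapsto T\oplus T\oplus T\oplus\cdots$, where each summand "lives" in a different pairing $H\oplus H_n$ (interpreted via the unitary identification). The endomorphism $\beta$ then satisfies $\beta\oplus\operatorname{id}\cong\beta$ by inner unitary conjugation using the shift on the reservoir, so on $K$-theory $[\beta]_\ast = [\beta]_\ast + [\operatorname{id}]_\ast$, forcing $[\operatorname{id}]_\ast = 0$ on $K_\ast(\frakC^u_{\rho\oplus 0}(X))$, i.e.\ $K_\ast(\frakC^u_{\rho\oplus 0}(X))=0$.

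Once this vanishing is established, the displayed isomorphism \eqref{eq:K_theory_frakC_zero} follows immediately from the six-term exact sequence in $K$-theory associated to the short exact sequence $0\to\frakC^u_{\rho\oplus 0}(X)\to\frakD^u_{\rho\oplus 0}(X)\to\frakD^u_{\rho\oplus 0}(X)/\frakC^u_{\rho\oplus 0}(X)\to 0$, exactly as stated in the lemma.

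The main technical obstacle will be verifying rigorously that $\beta$ really maps $\frakC^u_{\rho\oplus 0}(X)$ into itself, since the uniform approximability conditions of Definition \ref{defn:uniform_operators} involve both an $L$-Lipschitz parameter and a diameter parameter $R$ that must remain uniformly controlled under the infinite amplification. The key point making this work is that uniform approximability only "sees" the first summand $H$: the reservoir blocks are in the $(2,2)$-position and therefore carry no constraint at all, while the nonzero products $\rho(f)\beta(T)_{1k}$ and $\beta(T)_{k1}\rho(f)$ are for each $k$ just unitary copies of the corresponding products for $T$, whose uniform approximability data are inherited directly. Once this bookkeeping is done, the swindle closes without difficulty.
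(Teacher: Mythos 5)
Your matrix description of $\frakC^u_{\rho\oplus 0}(X)$ is correct, but the Eilenberg swindle as you set it up does not close, and the obstacle is not the uniform-approximability bookkeeping you flag at the end. The proposed endomorphism $\beta(T) = T\oplus T\oplus\cdots$ with the $n$-th copy acting on $H\oplus H_n$ cannot be a bounded operator at all: every one of those "pairings" shares the same first summand $H$ (the one carrying $\rho$), so the $(1,1)$-block of $\beta(T)$ would have to be the formal sum $T_{11}+T_{11}+\cdots$, which diverges. If instead you only let the first copy of $T$ see the genuine $H$ and push the remaining copies entirely into the reservoir, $\beta$ does become a $^\ast$-homomorphism into $\frakC^u_{\rho\oplus 0}(X)$ — but then the swindle identity $\beta\oplus\operatorname{id}\cong\beta$ fails for a structural reason: the left-hand side carries the representation $(\rho\oplus 0^\infty)\oplus(\rho\oplus 0)$, i.e.\ \emph{two} copies of $\rho$, while the right-hand side carries only one. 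No unitary on the ambient Hilbert space can intertwine these, and any intertwiner at the algebra level would need to commute with the $\rho$-action on the first $H$, which is exactly what forbids the shift from reaching there.

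The correct way to exploit the degeneracy of $\rho\oplus 0$ is not to amplify through the $\rho$-summand but to observe that the reservoir projection $e_{22}=\begin{pmatrix}0&0\\0&1_H\end{pmatrix}$ is a \emph{full} projection in $\frakC^u_{\rho\oplus 0}(X)$ (an approximate unit for $\frakC^u_\rho(X)$ shows $\overline{\fraklC^u_\rho\cdot\frakCr^u_\rho}\supset\frakC^u_\rho$, hence $e_{22}$ generates a dense ideal), and that the corresponding corner $e_{22}\frakC^u_{\rho\oplus 0}(X)e_{22}$ is all of $\IB(H)$, whose $K$-theory vanishes by the (ordinary, well-posed) Eilenberg swindle on $\IB(H)$. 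The vanishing of $K_\ast(\frakC^u_{\rho\oplus 0}(X))$ then follows from invariance of $K$-theory under passage to full corners. This is the route taken by Higson and Roe in the non-uniform analogue and by \v{S}pakula in the uniform setting, which the present paper cites rather than reproves. Your instinct — that the reservoir is what makes the $K$-theory die — is right, but the swindle has to run inside the reservoir corner, not across the $H$-summand.
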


The last ingredient that we will need is the inclusion
\begin{equation}\label{eq:commutator_C_u_with_frakD}
[C_u(X), \frakD^u_{\rho \oplus 0}(X)] \subset \frakC^u_{\rho \oplus 0}(X).
\end{equation}
It is proven in the following way: let $\varphi \in C_u(X)$ and $T \in \frakD^u_{\rho \oplus 0}(X)$. We have to show that $[\varphi, T] \in \frakC^u_{\rho \oplus 0}(X)$. By approximating $\varphi$ uniformly by Lipschitz functions we may without loss of generality assume that $\varphi$ itself is already Lipschitz. Now the claim follows immediately from $f[\varphi, T] = [f \varphi, T] - [f,T]\varphi$ since $T$ is uniformly pseudolocal.

Now we are able to define the cap product. Consider the map
\[ \sigma \colon C_u(X) \otimes \frakD^u_{\rho \oplus 0}(X) \to \frakD^u_{\rho \oplus 0}(X) / \frakC^u_{\rho \oplus 0}(X), \ f \otimes T \mapsto [fT].\]
It is a multiplicative $^\ast$-homomorphism due to the above Equation \eqref{eq:commutator_C_u_with_frakD} and hence induces a map on $K$-theory
\begin{equation*}
\sigma_\ast \colon K_\ast(C_u(X) \otimes \frakD^u_{\rho \oplus 0}(X)) \to K_\ast(\frakD^u_{\rho \oplus 0}(X) / \frakC^u_{\rho \oplus 0}(X)).
\end{equation*}
Recall from Section \ref{sec:paschke_duality} that $K_\ast^u(X)$ is the direct limit of the groups $K_\ast^u(X; \rho \oplus 0)$ and that these are isomorphic to $K_{1+\ast}(\frakD^u_{\rho \oplus 0}(X))$. Now on $K_\ast^u(X; \rho \oplus 0)$ we may define the cap product as the composition
\begin{align*}
K_u^p(X) \otimes K_q^u(X; \rho \oplus 0) & \ = \ K_{-p}(C_u(X)) \otimes K_{1+q}(\frakD^u_{\rho \oplus 0}(X))\\
& \ \to \ \! K_{-p+1+q}(C_u(X) \otimes \frakD^u_{\rho \oplus 0}(X))\\
& \ \stackrel{\sigma_\ast}\to \ \! K_{-p+1+q}(\frakD^u_{\rho \oplus 0}(X) / \frakC^u_{\rho \oplus 0}(X))\\
& \stackrel{\eqref{eq:K_theory_frakC_zero}}\cong K_{-p+1+q}(\frakD^u_{\rho \oplus 0}(X))\\
& \ = \ K_{q-p}^u(X; \rho \oplus 0),
\end{align*}
where the first arrow is the external product on $K$-theory. Since all above occuring maps are compatible with the connecting homomorphisms in the directed system $K_\ast^u(X) \cong \underrightarrow{\lim} \ K_\ast^u(X; \rho \oplus 0)$, we get the cap product
\[\cap \colon K_u^p(X) \otimes K_q^u(X) \to K_{q-p}^u(X).\]

Let us state in a proposition some properties of it that we will need. The proofs of these properties are analogous to the non-uniform case.

\begin{prop}\label{prop:properties_general_cap_product}
The cap product has the following properties:
\begin{itemize}
\item the pairing of $K_u^0(X)$ with $K_\ast^u(X)$ coincides with the one in Lemma \ref{lem:proj_again_uniform_fredholm_module},
\item the fact that $K_\ast^u(X)$ is a module over $K_u^0(X)$ generalizes to
\begin{equation}\label{eq:general_cap_compatibility_module}
(P \otimes Q) \cap T = P \cap (Q \cap T)
\end{equation}
for all elements $P, Q \in K_u^\ast(X)$ and $T \in K_\ast^u(X)$, where $\otimes$ is the internal product on uniform $K$-theory,
\item if $X$ and $Y$ have jointly bounded geometry, then we have the following compatibility with the external products:
\begin{equation}\label{eq:compatibility_cap_external}
(P \times Q) \cap (S \times T) = (-1)^{qs} (P \cap S) \times (Q \cap T),
\end{equation}
where $P \in K_u^p(X)$, $Q \in K_u^q(Y)$ and $S \in K^u_s(X)$, $T \in K^u_t(Y)$, and
\item if we have a manifold of bounded geometry $M$, a vector bundle of bounded geometry $E \to M$ and an operator $D$ of Dirac type, then
\begin{equation}\label{eq:cap_twisted_Dirac}
[E] \cap [D] = [D_E] \in K_\ast^u(M),
\end{equation}
where $D_E$ is the twisted operator (see Definition \ref{defn:twisted_op}).
\end{itemize}
\end{prop}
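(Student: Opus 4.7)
My plan is to follow the excerpt's suggestion and port each statement from the non-uniform setting, verifying at each step that the constructions respect the uniformity built into $\frakD_{\rho \oplus 0}^u(X)$ and $C_u(X)$. The unifying observation is that the cap product was defined as the composite of an external product in operator $K$-theory with the multiplicative $^\ast$-homomorphism $\sigma$, so each identity reduces to a compatibility of $\sigma$ with a familiar $K$-theoretic structure.

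For the first bullet, I would start with a projection $P \in \Mat_{n \times n}(C_u(X))$ and a Fredholm module $(H,\rho,T)$ representing a class in $K^u_\ast(X; \rho \oplus 0)$. Tracing $[P] \otimes [T]$ through the external product in $K$-theory gives the class of the matrix $\pi T_n \pi$ inside $\Mat_{n \times n}(\frakD^u_{\rho \oplus 0}(X))$, and $\sigma_\ast$ sends this to the same element in the quotient. Paschke duality (Proposition \ref{prop:paschke_duality}) then identifies this class with the Fredholm module $(\pi H_n, \pi \rho_n \pi, \pi T_n \pi)$ constructed in Lemma \ref{lem:proj_again_uniform_fredholm_module}; this is essentially a bookkeeping exercise once one matches the Paschke isomorphism with the explicit $\varphi_\ast$ formulas.

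For the second bullet, I would use associativity of the internal product on $K_\ast(C_u(X))$ and the fact that $\sigma$ is a multiplicative $^\ast$-homomorphism: chasing $[P] \otimes [Q] \otimes [T]$ through $K_\ast\bigl(C_u(X) \otimes C_u(X) \otimes \frakD^u_{\rho \oplus 0}(X)\bigr)$ and collapsing the two copies of $C_u(X)$ either before or after applying $\sigma$ produces the two sides of \eqref{eq:general_cap_compatibility_module}. The fourth bullet \eqref{eq:cap_twisted_Dirac} follows from the first: given $E \subset \IC^N$ with projection $e \in \Mat_{N\times N}(C_b^\infty(M))$, the module $(\pi L^2(S)^N, \pi \rho_N \pi, \pi \chi(D)_N \pi)$ from Lemma \ref{lem:proj_again_uniform_fredholm_module} is unitarily equivalent to $(L^2(S \otimes E), \rho_E, \chi(D)|_{S \otimes E})$, which in turn differs from $(L^2(S \otimes E), \rho_E, \chi(D_E))$ by a uniformly locally compact operator (because the difference of the two symmetric elliptic operators $D|_{S\otimes E}$ and $D_E$ is of order $0$, so Proposition \ref{prop:norm_estimate_difference_func_calc} combined with Corollary \ref{cor:g(P)_uniformly_locally_compact_g_vanishing_at_infinity} yields a uniformly locally compact perturbation); Lemma \ref{lem:compact_perturbations} then identifies their classes in $K^u_\ast(M)$.

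The hard part is the third bullet, the compatibility \eqref{eq:compatibility_cap_external} with the external products, because the external product on uniform $K$-homology was constructed in Section 4.7 via Kasparov-style alignments rather than via dual algebras. I would argue as follows: the aligned module $T_1 \times T_2 = N_1(T_1 \hatotimes 1) + N_2(1 \hatotimes T_2)$ represents the Paschke-dual class of $N_1(T_1 \hatotimes 1) + N_2(1 \hatotimes T_2)$ inside $\frakD^u_{\rho_1 \otimes \rho_2, \oplus 0}(X_1 \times X_2)$, and the partition of unity $N_1, N_2$ is absorbed modulo $\frakC^u_{\rho \oplus 0}$ by any function in $C_u(X_1 \times X_2)$ of product form $f_1 \otimes f_2$ (this is where joint bounded geometry of $X_1 \times X_2$ is needed, so that Lemma \ref{lem:construction_partition_unity} applies). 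Consequently the composite $\sigma \circ (\text{external product})$ applied to $(P \times Q) \otimes (T_1 \times T_2)$ agrees in the quotient with the tensor product of $\sigma(P \otimes T_1)$ and $\sigma(Q \otimes T_2)$, up to the Koszul sign $(-1)^{qs}$ coming from the graded tensor product convention on multigraded Fredholm modules (Lemma \ref{lem:tensor_prod_multigraded}). The main technical obstacle here is verifying that the absorption of $N_1, N_2$ really is modulo uniformly locally compacts and not just modulo ordinary compacts, which is exactly what the properties of $N_1, N_2$ listed in Lemma \ref{lem:construction_partition_unity} guarantee.
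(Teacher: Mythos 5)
Your overall plan --- port the non-uniform proofs, checking at each step that the uniformity built into $\frakD^u_{\rho \oplus 0}(X)$ and $C_u(X)$ survives --- is exactly what the paper has in mind when it says the proofs are ``analogous to the non-uniform case,'' and your treatment of the first two bullets is sound. Your sketch of the third bullet is also on the right track: the key is that property 4 of Lemma \ref{lem:construction_partition_unity} makes the commutators $[N_i,\rho(f)]$ uniformly locally compact, so the operators $N_1, N_2$ disappear in the quotient $\frakD^u_{\rho\oplus 0}/\frakC^u_{\rho\oplus 0}$; that part would need to be spelled out but is not wrong.

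The fourth bullet, however, has a genuine gap. You assert that $e\chi(D)^{\oplus N}e - \chi(D_E)$ is uniformly locally compact and justify this by saying $D|_{S\otimes E} - D_E$ has order $0$ and then invoking Proposition \ref{prop:norm_estimate_difference_func_calc} together with Corollary \ref{cor:g(P)_uniformly_locally_compact_g_vanishing_at_infinity}. This does not work. First, $e\chi(D)^{\oplus N}e$ is the \emph{compression} of the functional calculus, not the functional calculus $\chi(eD^{\oplus N}e)$ of the compressed operator, so a statement about $\chi(P)-\chi(P')$ for two elliptic operators does not even apply without an additional pseudolocality argument comparing the two. Second, and more fundamentally, when $P - P'$ is merely bounded (order $0$ against an order-$1$ operator, so $q = 0$ in the notation of Proposition \ref{prop:norm_estimate_difference_func_calc}), that proposition gives only a norm bound $\|\chi(P)-\chi(P')\|_{0,0}\le C_\chi\|P-P'\|_{0,0}$ and certainly not local compactness, and Corollary \ref{cor:g(P)_uniformly_locally_compact_g_vanishing_at_infinity} concerns $g(P)$ for a \emph{single} operator, not differences of functional calculi of different operators. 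In fact $\chi(P)-\chi(P')$ for a bounded perturbation need not be uniformly locally compact. The correct tool is Proposition \ref{prop:same_symbol_same_k_hom_class}: $eD^{\oplus N}e$ and $D_E$ are both first-order, symmetric, elliptic with the same principal symbol, so they define the same class in $K^u_\ast(M)$. The mechanism there is not a compact perturbation but a norm-continuous operator homotopy $t \mapsto \chi(P_t)$ (available precisely because the difference is bounded, i.e.\ the order is $1$). You should route the fourth bullet through that proposition rather than through Lemma \ref{lem:compact_perturbations}.
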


\section{\Poincare duality}\label{sec:poincare_duality}

Let $M$ be an $m$-dimensional spin$^c$ manifold of bounded geometry. In this section we will show that the cap product $\largecdot \cap [M] \colon K_u^\ast(M) \to K^u_{m-\ast}(M)$ with its fundamental class $[M] \in K_m^u(M)$ is an isomorphism.

The proof is analogous to the proof of the Chern Character Isomorphism Theorem. Let us recall its basic idea: we showed in Lemma \ref{lem:suitable_coloring_cover_M} that we have a particular cover of $M$ with finitely many open subsets $U_j$, where each $U_j$ is a disjoint union of balls $B_\varepsilon(x_i)$. For these subsets we could show in Lemmas \ref{lem:MV_bounded_dR} and \ref{lem:MV_uniform_K} that we have corresponding Mayer--Vietoris sequences for (modified versions of) bounded de Rham cohomology and uniform $K$-theory and in Lemma \ref{lem:MV_chern_character} we showed that we have suitable Chern character maps between these Mayer--Vietoris sequences. This was all that was needed to do the induction step in the proof of the Chern Character Isomorphism Theorem. For the first step of the induction we needed two ingredients: the first was the homotopy invariance of bounded de Rham cohomology and uniform $K$-theory to reduce from the subsets $U_1$, $U_2$ and $U_1 \cap U_2$ to collections of uniformly discrete points, and the second ingredient was the direct computation for such a uniformly discrete collection of points, so that we could see that in this case the Chern character indeed induces the corresponding isomorphism.

So for our proof of \Poincare duality we have to show the following things: that uniform $K$-homology is homotopy invariant (we have actually already done this in Theorem \ref{thm:homotopy_equivalence_k_hom}) and that the cap product induces an isomorphism on a collection of uniformly discrete open balls (we will explain in the next paragraph why we have to show it for open balls instead of points). This will complete the first step of the induction. For the higher induction steps we have to show that we have a suitable version of the Mayer--Vietoris sequence for uniform $K$-homology (for our particular cover of $M$ with the subsets $U_j$) together with cap product maps that are compatible with these exact sequences, i.e., make the squares in the occuring diagrams commutative.

The induction starts with the subsets $U_1$, $U_2$ and $U_1 \cap U_2$, which are collections of uniformly discretely distributed open balls (resp., in the case of $U_1 \cap U_2$ it is a collection of intersections of open balls, which is boundedly homotopy equivalent to a collection of open balls). Now uniform $K$-theory of a space coincides with the uniform $K$-theory of its completion, and furthermore, uniform $K$-theory is a bounded homotopy invariant\footnote{See Footnote \ref{footnote:boundedly_homotopic} to recall the definition of a bounded homotopy.}. So the uniform $K$-theory of a collection of open balls is the same as the uniform $K$-theory of a collection of points. But uniform $K$-homology is homotopy invariant only with respect to bounded, \emph{proper} homotopies (see Theorem \ref{thm:homotopy_equivalence_k_hom}), and for totally bounded spaces it coincides with usual $K$-homology (see Proposition \ref{prop:compact_space_every_module_uniform}). So the uniform $K$-homology of a collection of open balls is not the same as the uniform $K$-homology of a collection of points. In fact, we have the following lemma:

\begin{lem}
Let $M$ be an $m$-dimensional manifold of bounded geometry and let $U \subset M$ be a subset consisting of uniformly discretely distributed geodesic balls in $M$ having radius less than the injectivity radius of $M$ (i.e., each geodesic ball is diffeomorphic to the standard ball in Euclidean space $\IR^m$). Let the balls be indexed by a set $Y$ (usually $Y \subset M$ is a quasi-lattice).

Then $K_m^u(U) \cong \ell^\infty_\IZ(Y)$, the group of all bounded, integer-valued sequences indexed by $Y$, and $K_p^u(U) = 0$ for $p \not= m$.
\end{lem}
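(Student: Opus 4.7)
The plan is to mimic the proof of Lemma \ref{lem:uniform_k_hom_discrete_space}, replacing the role of points by the balls $B_y$. Let $\delta > 0$ be a uniform lower bound on the distance between distinct balls of $U$; such a $\delta$ exists because the balls are uniformly discretely distributed. Each open ball admits arbitrarily fine quasi-lattices, so $U$ has arbitrarily fine coarsely bounded geometry in the sense of Definition \ref{defn:arbitrarily_fine_coarsely_bounded_geometry}. By Proposition \ref{prop:normalization_finite_prop_speed} I may therefore normalise every class in $K_\ast^u(U)$, and every operator homotopy between such classes, to representatives of propagation at most $R$ for any prescribed $R > 0$. I fix $R < \delta / 2$. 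The characteristic function $\chi_y$ of $\overline{B_y}$ is continuous on $U$, and for a normalised representative $(H, \rho, T)$ the finite propagation condition forces $\rho(\chi_y) T \rho(\chi_{y'}) = 0$ whenever $y \neq y'$. Writing $H_y := \rho(\chi_y) H$, $\rho_y(f) := \rho(\chi_y)\rho(f)\rho(\chi_y)$ and $T_y := \rho(\chi_y) T \rho(\chi_y)$, I obtain a direct sum decomposition $(H, \rho, T) = \bigoplus_{y \in Y} (H_y, \rho_y, T_y)$ into Fredholm modules over the individual balls $B_y$, and since homotopies split in the same way this descends to a map
\[
\Phi_\ast \colon K_\ast^u(U) \longrightarrow \prod_{y \in Y} K_\ast^u(B_y),
\]
which is injective by construction.

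Next I would identify each target factor. Since $B_y$ is a geodesic ball of radius less than the injectivity radius of $M$, its closure in $M$ is compact, so $B_y$ is totally bounded as a metric space. Hence Proposition \ref{prop:compact_space_every_module_uniform} yields $K_\ast^u(B_y) = K_\ast(B_y)$, and because $B_y$ is homeomorphic to $\mathbb{R}^m$, Bott periodicity gives $K_p(B_y) \cong K_{p-m}(\mathrm{pt})$, which is $\mathbb{Z}$ for $p \equiv m \pmod 2$ and $0$ otherwise. Combined with the injectivity of $\Phi_\ast$, this already shows $K_p^u(U) = 0$ whenever $p \not\equiv m \pmod 2$, which, under the $2$-periodicity of uniform $K$-homology, is exactly the second assertion. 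For $p \equiv m \pmod 2$ the image of $\Phi_p$ lies a priori in $\prod_{y \in Y} \mathbb{Z}$, and the uniformity built into the definition of a uniform Fredholm module translates, under the chosen isomorphism $K_m(B_y) \cong \mathbb{Z}$, into a uniform bound on the integers $n_y$ labelling the summands (exactly as in the proof of Lemma \ref{lem:uniform_k_hom_discrete_space}); hence the image lies in $\ell^\infty_\mathbb{Z}(Y)$.

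It remains to construct a splitting of $\Phi_p$ onto $\ell^\infty_\mathbb{Z}(Y)$. Given a bounded sequence $(n_y)_{y \in Y}$, I would choose for each $y$ a Fredholm module $(H_y, \rho_y, T_y)$ on $B_y$ representing $n_y \in K_m(B_y)$, and then form their direct sum on $U$; the uniform lower bound $\delta$ on the distances between the balls ensures that the assembled representation of $C_0(U)$ decomposes as a literal direct sum, so that on the assembled data the uniform pseudolocality and uniform local compactness conditions reduce to their pointwise counterparts together with uniform control in $y$. The main obstacle is precisely this uniformity: I must choose the representatives so that the ranks of finite-rank approximations, the Lipschitz data, and the propagation are all uniformly bounded in $y$. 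This is where I would use the bounded geometry of $M$ together with the hypothesis that every $B_y$ is diffeomorphic to the standard open $m$-ball, which allows me to transport a single representative on $\mathbb{R}^m$ (of class $1 \in K_m(\mathbb{R}^m)$, together with its opposite) to all balls at once via uniformly bi-Lipschitz identifications, and then combine $|n_y|$ copies with the appropriate sign in each $B_y$; the boundedness of $(n_y)_y$ makes the multiplicities uniformly bounded and so preserves the uniformity conditions.
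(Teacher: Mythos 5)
Your proposal is correct and follows the same route as the paper: the paper's own proof is a one-line reduction to the proof of Lemma~\ref{lem:uniform_k_hom_discrete_space}, asking the reader to replace points by the balls $B_y$ and the fact $K_0(\mathrm{pt})\cong\IZ$, $K_1(\mathrm{pt})=0$ by the fact $K_m(O)\cong\IZ$, $K_p(O)=0$ for $p\not\equiv m\pmod 2$ when $O$ is an open $m$-ball. That is precisely what you do: normalize to propagation $R<\delta/2$ (justified, as in the cited lemma, by arbitrarily fine coarsely bounded geometry), split the module and its homotopies into a direct sum over the clopen pieces $B_y$, identify each summand's $K$-homology via Proposition~\ref{prop:compact_space_every_module_uniform} plus the suspension isomorphism, and read off boundedness of the resulting $\IZ$-valued sequence from the uniformity condition. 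You also correctly note that the vanishing statement should be read modulo the formal $2$-periodicity, i.e.\ $K_p^u(U)=0$ for $p\not\equiv m\pmod 2$, which is what the paper means even though it writes $p\neq m$. Where you go beyond the paper is in sketching the surjectivity/splitting onto $\ell^\infty_\IZ(Y)$ by transporting one fixed generator on $\IR^m$ to all the balls via uniformly bi-Lipschitz identifications; the paper's reference proof (Lemma~\ref{lem:uniform_k_hom_discrete_space}) only argues that the image lands inside $\ell^\infty_\IZ(Y)$ and leaves the splitting implicit, so this extra paragraph of yours is a genuine and welcome strengthening rather than a departure.
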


\begin{proof}
The proof is analogous to the proof of Lemma \ref{lem:uniform_k_hom_discrete_space}. It uses the fact that for an open Ball $O \subset \IR^m$ we have $K_m(O) \cong \IZ$, and $K_p(O) = 0$ for $p \not= m$.
\end{proof}

Fortunately, the cap product has exactly this shift of indices built in, i.e., the cap product is an isomorphism $K_u^\ast(U) \cong K_{m-\ast}^u(U)$, where $U$ is as in the above lemma. Here we have to note that if $M$ is a \spinc manifold, then the restriction of its complex spinor bundle to any ball of $U$ is isomorphic to the complex spinor bundle on the open ball $O \subset \IR^m$. This means that the cap product on $U$ coincides on each open ball of $U$ with the usual cap product on the open ball $O \subset \IR^m$. This all, together with the bounded, proper homotopy invariance of uniform $K$-homology (used to treat the case of $U_1 \cap U_2$ since it does not consist of open balls, but of intersections of them), completes the first step of the induction.

So it remains to show that we have suitable Mayer--Vietoris sequences for uniform $K$-homology. Note that ``suitable'' means here that we need it for the wrong-way maps on uniform $K$-homology (since the arrows must go in the same direction as the arrows in the Mayer--Vietoris sequence for uniform $K$-theory, i.e., from the ``big'' into the ``small'' subset). Let a not necessarily connected subset $O \subset M$ be given. We define $K_\ast^{u, \mathrm{MV}}(O)$ to be generated by uniform Fredholm modules over $B_\delta(O)$, where $\delta > 0$ may depend on the concrete uniform Fredholm module at hand. Now the existence of suitable Mayer--Vietoris sequences for this modified uniform $K$-homology of the subsets in the covers $\{U_K, U_j\}$ of $U_K \cup U_{k+1}$ (recall that we used Lemma \ref{lem:suitable_coloring_cover_M} to get these subsets and note that these subsets are open, i.e., we do have wrong-way maps on uniform $K$-homology for them) follows by using excision for uniform $K$-homology which is proved by \v{S}pakula in \cite[Theorem 5.1]{spakula_uniform_k_homology}.

We denote by $[M]|_O \in K_m^{u, \mathrm{MV}}(O)$ the class of the Dirac operator associated to the restriction to a neighbourhood of $O$ of the complex spinor bundle of bounded geometry defining the spin$^c$-structure of $M$ (i.e., we equip the neighbourhood with the induced spin$^c$-structure). The cap product of $K^\ast_{u, \mathrm{MV}}(O)$ with $[M]|_O$ is then analogously defined as the one on $K^\ast_u(M)$.

It remains to argue why we get commutative squares between the Mayer--Vietoris sequences of uniform $K$-theory and uniform $K$-homology using the cap product. This is known for usual $K$-theory and $K$-homology; see, e.g., \cite[Exercise 11.8.11(c)]{higson_roe}. Since the cap product is in our uniform case completely analogously defined (see the second-to-last display before Proposition \ref{prop:properties_general_cap_product}), we may analogously conclude that we get commutative squares between our uniform Mayer--Vietoris sequences.

This completes the proof of \Poincare duality.

\section{Equality of the index classes}\label{sec:equality_classes}

We have defined the topological index $\tind_\tau(P)$ of a graded pseudodifferential operator $P$ as $\theta(I_t(D_E) - I_t(D_F))$, where the difference of vector bundles $[E] - [F] \in K_u^0(M)$ is such that $([E] - [F]) \cap [D] = [P] \in K_0^u(M)$ for a graded operator $D$ of Dirac type over $M$, and then we have shown the index theorem $\tind_\tau(P) = \ind_\tau([P])$. Now of course the question arises whether we can leave out the evaluation to $\IR$, i.e., if there is an analytic index class $\aind(P) \in H^m_{b, \mathrm{dR}}(M)$ with $\theta(\aind(P)) = \ind_\tau([P])$, and such that we can strengthen the index theorem to the equality $\tind(P) = \aind(P)$ with $\tind(P) := I_t(D_E) - I_t(D_F) \in H^m_{b, \mathrm{dR}}(M)$.

Our major ingredient for this will be the fact that every fundamental class $\theta$ for $M$ arises through \Folner sequences on $M$, i.e., every element in the dual space of $\bar{H}^m_{b, \mathrm{dR}}(M)$\footnote{It is clear that we have to restrict here to the reduced bounded de Rham cohomology of $M$ (see Remark \ref{rem:reduced_bounded_de_Rham_cohomology}) since fundamental classes are continuous.} can be constructed via \Folner sequences. This result follows basically from the ideas in \cite[Part II.§4]{sullivan}. Furthermore, it means that Roe's Index Theorem is equivalent to the one showing the equality of the corresponding index classes in $\bar{H}^m_{b, \mathrm{dR}}(M)$.

Let us make the following definition of the topological index class of $P$:

\begin{defn}[Topological index classes $\tind(P)$]
The \emph{topological index class $\tind(P) \in \bar{H}^m_{b, \mathrm{dR}}(M)$ of the graded pseudodifferential operator $P$} is defined as
\[\tind(P) := I_t(D_E) - I_t(D_F),\]
where $D$ is a graded operator of Dirac type over $M$ and $[E]-[F] \in K^0_u(M)$ is such that we have $([E]-[F]) \cap [D] = [P] \in K_0^u(M)$.
\end{defn}

Analogously as with the Definition \ref{defn:topological_index_P} of the topological index of $P$, this definition assumes that $[P]$ is in the image of the cap product. Furthermore, we have to show that it is well-defined: due to our Index Theorem \ref{thm:index_thm}, i.e., $\tind_\tau(P) = \ind_\tau([P])$, we know that possibly different index classes of $P$ coincide under all evaluations with fundamental classes associated to \Folner sequences since $\tind_\tau(P) = \theta(\tind(P))$. But the fundamental classes associated to \Folner sequences exhaust the whole dual space of $\bar{H}^m_{b, \mathrm{dR}}(M)$.

To define the analytic index class $\aind(P)$ of $P$ we will use the right hand side of the diagram in Theorem \ref{thm:roe_index_thm_rephrased}, i.e., $\ind_\tau([P]) = \ind_\tau(\mu_u([P]))$. Now we recall from Section \ref{sec:analytic_indices_quasiloc_smoothing} how the index map $\ind_\tau \colon K_0(\C(M)) \to \IR$ was defined: given a \Folner sequence $(M_i)_i$ of $M$ and an operator $A \in \C(S)$, we define a sequence $(m_i)_i$ via
\begin{equation*}
m_i := \frac{1}{\vol M_i} \int_{M_i} \trace k_A(x,x) dM,
\end{equation*}
where $k_A \in C_b^\infty(S \boxtimes S^\ast)$ is the uniformly bounded integral kernel of $A$. Since $(m_i)_i$ is bounded, we then may evaluate $\tau$ on it. This defines a trace\footnote{That is to say, it vanishes on commutators.} on $\C(S)$ and therefore we get an induced map $\ind_\tau \colon K_0(\C(S)) \to \IR$. Then we use the diagram in Proposition \ref{prop:analytic_index_map_quasiloc_smoothing} to go from $K_0(\C(S))$ to $K_0(\C(M))$.

So we could try to define an analytic index class map $K_0(\C(S)) \to \bar{H}^m_{b, \mathrm{dR}}(M)$ as the map induced from
\[\aind_{-\infty} \colon \C(S) \to \bar{H}^m_{b, \mathrm{dR}}(M; \IC) \text{, }A \mapsto \trace k_A(x,x) dM,\]
i.e., we would have to show that $\aind_{-\infty}$ is a trace. That the induced map then takes values in $\bar{H}^m_{b, \mathrm{dR}}(M)$ (i.e., that it is real-valued) follows from the fact that if $A$ is self-adjoint, then $\trace k_A(x,x) \in \IR$.

We know that $\theta(\aind_{-\infty}([A,B])) = \ind_\tau([A,B])$ via definition, and we know that $\ind_\tau$ is a trace. So $\theta_{-\infty}(\aind([A,B])) = 0$ for all fundamental classes $\theta$ associated to \Folner sequences, i.e., $\aind_{-\infty}([A,B]) = 0$ for all $A, B \in \C(S)$. So we get an induced map $K_0(\C(S)) \to \bar{H}^m_{b, \mathrm{dR}}(M)$ that we will also call $\aind_{-\infty}$, and may now define the analytic index class of a pseudodifferential operator:

\begin{defn}[Analytic index class $\aind(P)$]
We define the \emph{analytic index class $\aind(P) \in \bar{H}^m_{b, \mathrm{dR}}(M)$ of a graded pseudodifferential operator $P$} as
\[\aind(P) := \aind_{-\infty}(\mu_u([P])),\]
where we identify $K_0(\C(M))$ with $K_0(\C(S))$ via Corollary \ref{cor:natural_receptacle_iso} for the bundle $S$ on which the operator $P$ acts.
\end{defn}

Since $\theta(\tind(P)) = \tind_\tau(P)$ and also $\theta(\aind(P)) = \ind_\tau([P])$, the equality $\tind(P) = \aind(P)$ of the index classes of $P$ follows from the Index Theorem \ref{thm:index_thm} and the often mentioned fact that the dual of $\bar{H}^m_{b, \mathrm{dR}}(M)$ consists only of fundamental classes associated to \Folner sequences. So we have proved the following strengthening of the index theorem:

\begin{indexthm}
Let $M$ be an amenable, even-dimensional spin$^c$ manifold of bounded geometry and let $P \in \Psi \mathrm{DO}_?^k(S)$ be a graded, elliptic and symmetric pseudodifferential operator of positive order $k > 0$.

Then the topological and analytic index classes of $P$ coincide:
\[\tind(P) = \aind(P) \in \bar{H}^m_{b, \mathrm{dR}}(M).\]
\end{indexthm}

\chapter{Uniform coarse indices}\label{chap:uniform_coarse_indices}

Roe's Index Theorem, and therefore also ours, holds only for amenable manifolds, since if a manifold is not amenable, then its top-dimensional bounded de Rham cohomology $H^m_{b, \mathrm{dR}}(M)$ vanishes. But as we have already written in the introduction, the homotopy invariance of uniform $K$-homology may be seen as some sort of index theorem, and since homotopy invariance does not rely on amenability, we therefore have some sort of index theorem also for non-amenable manifolds.

The goal of this chapter is to make the above said precise, i.e., we will use homotopy invariance of uniform $K$-homoloy to investigate the uniform coarse assembly map $\mu_u \colon K_\ast^u(X) \to K_\ast(C_u^\ast(X))$ and the crucial conjecture about it: if $X$ is uniformly contractible and of coarsely bounded geometry, this map should be an isomorphism. This uniform coarse Baum--Connes conjecture, as one may call it, was already investigated by \Spakula in \cite{spakula_uniform_k_homology}, but since he did not have homotopy invariance to his disposal, he could not derive important implications from the conjecture like we will do here.

So this chapter rounds this thesis up by investigating the uniform coarse Baum--Connes conjecture as an index theorem which is also applicable to non-amenable spaces. Since our results rely heavily on the homotopy invariance that we have proved for uniform $K$-homology, this chapter fits perfectly into this thesis.

Since we are working with the uniform Roe algebra, all metric spaces in this section must be assumed to be proper.

\section{Assembly map}

We will describe here the construction of the \emph{uniform coarse assembly map}
\[\mu_u \colon K_\ast^u(X) \to K_\ast(C_u^\ast(Y)),\]
where $Y \subset X$ is a uniformly discrete quasi-lattice, using the techniques that we have introduced in this thesis.

\v{S}pakula was the first to construct it in \cite[Section 9]{spakula_uniform_k_homology}, but his presentation is vastly different from ours, though at the end we get the same map. Our construction will be analogous to the construction of the non-uniform coarse assembly map, as it may be found in, e.g., \cite[Section 12.3]{higson_roe}.

Let $X$ be a proper metric space of jointly bounded geometry\footnote{This assumption is only for simplification of the arguments, since with it Paschke duality assumes a simple form (Theorem \ref{thm:paschke_universal}), i.e., we do not have to work with direct limits (Proposition \ref{prop:direct_limit_version}).} (Definition \ref{defn:jointly_bounded_geometry}). From Section \ref{sec:paschke_duality} we know that if $\rho\colon C_0(X) \to \IB(H)$ is an ample representation\footnote{That is to say, $\rho$ is non-degenerate and $\rho(f) \in \IK(H)$ implies $f \equiv 0$.}, then $K_\ast^u(X)$ is isomorphic to $K_\ast^u(X; \rho \oplus 0)$, and the latter groups are isomorphic to $K_{1+\ast}(\frakD^u_{\rho \oplus 0}(X))$ for $\ast = -1,0$.

Recall that $\frakD^u_{\rho \oplus 0}(X) \subset \IB(H \oplus H)$ is the $C^\ast$-algebra of all uniformly pseudolocal operators with respect to the representation $\rho \oplus 0$ of $C_0(X)$ on the Hilbert space $H \oplus H$. Recall furthermore that $\frakC^u_{\rho \oplus 0}(X) \subset \frakD^u_{\rho \oplus 0}(X)$ is the closed, two-sided $^\ast$-ideal of all uniformly locally compact operators.

\begin{defn}[cf. Definition \ref{defn:_Du_Cu_spaces}]
Let $\rho\colon C_0(X) \to \IB(H)$ be an ample representation. We denote by $D_u^\ast(X) \subset \IB(H)$ the $C^\ast$-algebra generated by all uniformly pseudolocal operators having finite propagation, and by $C_u^\ast(X) \subset D_u^\ast(X)$ the closed, two-sided $^\ast$-ideal generated by all uniformly locally compact operators with finite propagation.
\end{defn}

We also have to recall Lemma \ref{lem:uniform_k_hom_via_dual_quot_2}. The additional assumption of jointly bounded geometry allows us to restate it by using Theorem \ref{thm:paschke_universal}: $K_\ast^u(X) \cong K_\ast^u(X; \rho \oplus 0)$, where $\rho\colon C_0(X) \to \IB(H)$ is an ample representation. Since we will need some facts from the proof of this lemma, we also quickly summarize its main steps.

\begin{lem}\label{lem:uniform_k_hom_via_dual_quot}
Let $X$ have jointly bounded geometry. Then
\[K_\ast^u(X) \cong K_{1 + \ast}(D_u^\ast(X) / C_u^\ast(X))\]
for $\ast = -1,0$.
\end{lem}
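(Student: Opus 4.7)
The plan is to assemble this statement from three earlier ingredients: Paschke duality for uniform $K$-homology, the vanishing of the $K$-theory of the $2\times 2$-matrix ideal of uniformly locally compact operators, and the identification of the $D_u^\ast/C_u^\ast$-quotient with the $\frakD_\rho^u/\frakC_\rho^u$-quotient. Indeed, this lemma is literally a reformulation of Lemma \ref{lem:uniform_k_hom_via_dual_quot_2} once we invoke the stronger form of Paschke duality available under the jointly bounded geometry assumption, so all the real work is packaged in results that have already been established.

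First I would fix an ample representation $\rho\colon C_0(X)\to\IB(H)$ and apply Theorem \ref{thm:paschke_universal}, which uses exactly the jointly bounded geometry hypothesis, to identify $K_\ast^u(X)\cong K_\ast^u(X;\rho\oplus 0)$. Then Proposition \ref{prop:paschke_duality} gives the further isomorphism $K_\ast^u(X;\rho\oplus 0)\cong K_{1+\ast}(\frakD_{\rho\oplus 0}^u(X))$. So the task reduces to exhibiting a natural isomorphism
\[
K_{1+\ast}(\frakD_{\rho\oplus 0}^u(X))\;\cong\;K_{1+\ast}(D_u^\ast(X)/C_u^\ast(X)).
\]

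Next I would carry out the two-step comparison used in the proof of Lemma \ref{lem:uniform_k_hom_via_dual_quot_2}. Writing $\frakD_{\rho\oplus 0}^u(X)$ in its $2\times 2$ block form with corners $\frakD_\rho^u(X)$ and $\IB(H)$ and off-diagonal ideals $\fraklC_\rho^u(X)$, $\frakCr_\rho^u(X)$, the subset
\[
J:=\begin{pmatrix}\frakC_\rho^u(X) & \fraklC_\rho^u(X)\\ \frakCr_\rho^u(X) & \IB(H)\end{pmatrix}
\]
is a closed two-sided $^\ast$-ideal, and the quotient is canonically $\frakD_\rho^u(X)/\frakC_\rho^u(X)$. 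A uniform analogue of \cite[Lemma 5.4.1]{higson_roe} (stated in the proof of Lemma \ref{lem:uniform_k_hom_via_dual_quot_2}) shows that $K_\ast(J)=K_\ast(\frakC_{\rho\oplus 0}^u(X))=0$, so the six-term exact sequence of the extension yields
\[
K_\ast(\frakD_{\rho\oplus 0}^u(X))\cong K_\ast\bigl(\frakD_\rho^u(X)/\frakC_\rho^u(X)\bigr).
\]
In parallel, the inclusion $D_u^\ast(X)\hookrightarrow\frakD_\rho^u(X)$ induces, by the same density argument used to normalize uniform Fredholm modules to finite propagation (Section \ref{sec:normalization}), an isomorphism of $C^\ast$-algebras $D_u^\ast(X)/C_u^\ast(X)\cong \frakD_\rho^u(X)/\frakC_\rho^u(X)$.

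Putting these identifications together in the required degrees $\ast=-1,0$ and using the formal $2$-periodicity of uniform $K$-homology finishes the proof. The only step that is not purely formal is the density-based identification of the two quotient $C^\ast$-algebras: one has to show that every uniformly pseudolocal (resp. uniformly locally compact) operator is, modulo a uniformly locally compact operator, a norm limit of such operators with finite propagation, and that this identification respects the ideal structure. That is exactly the content of the normalization to finite propagation from Proposition \ref{prop:normalization_finite_prop_speed}, so I expect this to be the main technical obstacle, but it is one that has already been overcome earlier in the thesis.
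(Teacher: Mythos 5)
Your proposal is correct and follows the same route as the paper's proof: apply Theorem \ref{thm:paschke_universal} (using jointly bounded geometry) to collapse Paschke duality to the ample representation, then transport the result through the block-matrix ideal argument and the $D_u^\ast/C_u^\ast \cong \frakD_\rho^u/\frakC_\rho^u$ identification exactly as in Lemma \ref{lem:uniform_k_hom_via_dual_quot_2}. The only stray remark is the appeal to formal $2$-periodicity at the end, which is not actually needed since the degrees $\ast=-1,0$ are both handled directly; otherwise this matches the paper.
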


\begin{proof}
By Paschke duality together with Theorem \ref{thm:paschke_universal} we know
\[K_\ast^u(X) \cong K_\ast^u(X; \rho \oplus 0) \cong K_{1+\ast}(\frakD^u_{\rho \oplus 0}(X)).\]
We will now show that
\begin{equation}\label{eq:uniform_K_as_quotient_D*u/C*u}
K_\ast(\frakD_{\rho \oplus 0}^u(X)) \cong K_\ast(\frakD^u_\rho(X) / \frakC^u_\rho(X)),
\end{equation}
and that the inclusion of $D_u^\ast(X)$ into $\frakD^u_\rho(X)$ induces an isomorphism
\begin{equation}\label{eq:quotients_frakD_D*_equal}
D_u^\ast(X) / C_u^\ast(X) \cong \frakD^u_\rho(X) / \frakC^u_\rho(X)
\end{equation}
of $C^\ast$-algebras. These two statements together prove the claim.

To prove the first Isomorphism \eqref{eq:uniform_K_as_quotient_D*u/C*u} first note that
\[\frakD^u_{\rho \oplus 0}(X) = \begin{pmatrix}\frakD_\rho^u(X) & \fraklC^u_\rho(X) \\ \frakCr_\rho^u(X) & \IB(H)\end{pmatrix} \subset \IB(H \oplus H),\]
where $\fraklC^u_\rho(X)$ contains the operators that are uniformly locally compact \emph{from the left}, i.e., operators $T \in \IB(H)$ for which the collection $\{\rho(f) T \ | \ f \in \LLip_R(X)\}$ is uniformly approximable for all $R, L > 0$. Analogously, $\frakCr_\rho^u(X)$ is defined as the algebra containing all operators that are uniformly locally compact from the right.

We define $J \subset \frakD^u_{\rho \oplus 0}(X)$ as
\[J := \begin{pmatrix}\frakC_\rho^u(X) & \fraklC^u_\rho(X) \\ \frakCr_\rho^u(X) & \IB(H)\end{pmatrix} \subset \frakD^u_{\rho \oplus 0}(X),\]
which is a closed, two-sided $^\ast$-ideal in $\frakD^u_{\rho \oplus 0}(X)$. So we get a short exact sequence
\[ 0 \to J \to \frakD^u_{\rho \oplus 0}(X) \to \frakD^u_{\rho \oplus 0}(X) / J \to 0.\]
Now we may identify the quotient $\frakD^u_{\rho \oplus 0}(X) / J$ with $\frakD_\rho^u(X) / \frakC_\rho^u(X)$ and the quotient map $\frakD^u_{\rho \oplus 0}(X) \to \frakD^u_{\rho \oplus 0}(X) / J$ in the short exact sequence becomes the map
\[\frakD^u_{\rho \oplus 0}(X) \ni \begin{pmatrix}T_{11} & T_{12} \\ T_{21} & T_{22}\end{pmatrix} \mapsto [T_{11}] \in \frakD_\rho^u(X) / \frakC_\rho^u(X).\]
Furthermore, we have $J = \frakC_{\rho \oplus 0}^u(X)$. Hence the above short exact sequence becomes
\[0 \to \frakC_{\rho \oplus 0}^u(X) \to \frakD_{\rho \oplus 0}^u(X) \to \frakD_\rho^u(X) / \frakC_\rho^u(X) \to 0\]
and the claim that $K_\ast(\frakD_{\rho \oplus 0}^u(X)) \cong K_\ast(\frakD^u_\rho(X) / \frakC^u_\rho(X))$ now follows from the $6$-term exact sequence for $K$-theory and the fact that all the $K$-groups of $\frakC_{\rho \oplus 0}^u(X)$ vanish. This is a uniform analogue of the corresponding non-uniform statement which is proved in, e.g., \cite[Lemma 5.4.1]{higson_roe}, and this uniform analogue was essentially proven by \Spakula in \cite[Lemma 5.3]{spakula_uniform_k_homology} (by ``setting $Z := \emptyset$'' in that lemma).

The proof of the second Isomorphism \eqref{eq:quotients_frakD_D*_equal} is analogous to the proof of the corresponding non-uniform statement $D^\ast(X) / C^\ast(X) \cong \frakD_\rho(X) / \frakC_\rho(X)$ which may be found in, e.g., \cite[Lemma 12.3.2]{higson_roe}. This uniform version \eqref{eq:quotients_frakD_D*_equal} was also basically already shown by \Spakula in \cite[Section 7]{spakula_uniform_k_homology}.
\end{proof}

Now we are able to define the uniform coarse assembly map:

\begin{defn}
The \emph{uniform coarse assembly map}
\[\mu_u \colon K_\ast^u(X) \to K_\ast(C_u^\ast(X))\]
is the boundary map in the $6$-term exact sequence for the pair $(D^\ast_u(X), C^\ast_u(X))$, where we identify $K_\ast^u(X)$ with $K_{1 + \ast}(D_u^\ast(X) / C_u^\ast(X))$ via the above Lemma \ref{lem:uniform_k_hom_via_dual_quot} and we appeal to the formal $2$-periodicity to extend the map to all $\ast \in \IZ$.
\end{defn}

The following lemma is proved just by comparing our construction of the uniform coarse assembly map with the construction of \v{S}pakula.

\begin{lem}
If we identify $K_\ast(C_u^\ast(X))$ with $K_\ast(C_u^\ast(Y))$ (as in Proposition \ref{prop:IU(E)_dense_Cu*(E)} together with Lemma \ref{lem:iso_discrete_versions_uniform_roe}), where $Y \subset X$ is a uniformly discrete quasi-lattice, the above defined uniform coarse assembly map will coincide with the one defined by \Spakula in \cite[Section 9]{spakula_uniform_k_homology}.
\end{lem}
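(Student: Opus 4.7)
The plan is to chase a representative through both constructions and verify that the two procedures produce the same class in $K_\ast(C_u^\ast(Y))$. First I would recall \Spakula's construction explicitly: given a uniform Fredholm module $(H,\rho,T)$ over $X$, he cuts $T$ by a suitable partition of unity associated to the quasi-lattice $Y\subset X$ in order to produce from it directly an element of $K_\ast(C_u^\ast(Y))$ (or rather of $K_\ast(C_k^\ast(Y))$, which is identified with $K_\ast(C_u^\ast(Y))$ via Lemma \ref{lem:iso_discrete_versions_uniform_roe}). After normalizing $T$ to be involutive and of finite propagation (Proposition \ref{prop:normalization_finite_prop_speed}), the output of \Spakula's recipe is essentially the class associated to the operator $T_{11}$ appearing in the proof of Lemma \ref{lem:uniform_k_hom_via_dual_quot} (in the even case this is the projection $\tfrac12(1+T_{11})$, in the odd case the unitary $T_{11}$), viewed through the isomorphism $C_u^\ast(X)\cong C_k^\ast(Y)$ of Proposition \ref{prop:IU(E)_dense_Cu*(E)}.

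Next I would unwind my own construction. By the Paschke duality isomorphism $K_{1+\ast}(\frakD^u_{\rho\oplus 0}(X))\cong K_\ast^u(X;\rho\oplus 0)$ of Proposition \ref{prop:paschke_duality}, together with the compression $\frakD^u_{\rho\oplus 0}(X)\to \frakD^u_\rho(X)/\frakC^u_\rho(X)$, $\begin{pmatrix} T_{11} & T_{12}\\ T_{21} & T_{22}\end{pmatrix}\mapsto [T_{11}]$ used in Lemma \ref{lem:uniform_k_hom_via_dual_quot}, a class $[T]\in K_\ast^u(X)$ corresponds to the class represented by $T_{11}$ in $\frakD^u_\rho(X)/\frakC^u_\rho(X)$, hence, via the second isomorphism $D_u^\ast(X)/C_u^\ast(X)\cong\frakD^u_\rho(X)/\frakC^u_\rho(X)$, to a class in $K_{1+\ast}(D_u^\ast(X)/C_u^\ast(X))$ with an explicit finite-propagation lift provided by the normalizations of Proposition \ref{prop:normalization_finite_prop_speed}. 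Then $\mu_u([T])$ is, by definition, the image of this class under the boundary map of the six-term sequence associated to
\[0\to C_u^\ast(X)\to D_u^\ast(X)\to D_u^\ast(X)/C_u^\ast(X)\to 0,\]
which, on a finite-propagation lift, is computed by the usual formula (for example, $[e^{2\pi i T_{11}}]$ in degree zero for an involutive self-adjoint lift, and analogously in degree one).

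The core of the comparison is then to identify these two explicit formulas. Both recipes start from the same operator $T_{11}$, compressed to a finite-propagation self-adjoint involutive element of $D_u^\ast(X)$; both produce from it the same canonical $K$-theory element of the ideal $C_u^\ast(X)$ (the image under the exponential map, respectively the Bott class), since the index map in any short exact sequence of $C^\ast$-algebras is computed by exactly this procedure. The remaining bookkeeping is to check that translating from $C_u^\ast(X)$ to $C_k^\ast(Y)$ via the non-canonical isomorphism of Proposition \ref{prop:IU(E)_dense_Cu*(E)} is the same identification as the one implicit in \Spakula's partition-of-unity cut-down to $Y$. This amounts to the observation in Proposition \ref{prop:smooth_version_of_uniform_roe} that, although the isomorphism depends on choices, all choices induce the same map on $K$-theory because they differ by inner automorphisms induced by isometries.

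I expect the main obstacle to be purely notational: keeping straight the several layers of identifications (Paschke duality in the form used by \Spakula versus the form used in Section \ref{sec:paschke_duality}, the directed-limit description of Proposition \ref{prop:direct_limit_version} versus the maximal representation provided by Theorem \ref{thm:paschke_universal}, and the two dense inclusions $\C(E)\hookrightarrow C_k^\ast(Y)$ and $C_u^\ast(X)\cong C_k^\ast(Y)$) and verifying that all the compatibility diagrams commute. No genuinely new analysis is required; the lemma follows once one has matched the two finite-propagation cocycle representatives of $\mu_u([T])$.
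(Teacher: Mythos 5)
Your proposal is correct and takes essentially the same approach as the paper: the paper's own ``proof'' is just the one-line statement that the lemma ``is proved just by comparing our construction of the uniform coarse assembly map with the construction of \Spakula,'' and your paragraph-by-paragraph matching of the two boundary-map recipes (Paschke duality, compression to $T_{11}$, finite-propagation normalization, and the $K$-theoretic independence of the non-canonical isomorphism $C_u^\ast(X)\cong C_k^\ast(Y)$) is exactly the comparison the paper leaves implicit.
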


\section{Uniform coarse Baum--Connes conjecture}

Given a metric space $X$ of coarsely bounded geometry, we often used a quasi-lattice $Y \subset X$ in order to change from a continuous to a discrete version of the same space (in the same coarse equivalence class). Now we will have to go the other way round:

\begin{defn}[Rips complexes]
Let $Y$ be a discrete metric space and let $d \ge 0$. The \emph{Rips complex $P_d(Y)$ of $Y$} is a simplicial complex, where
\begin{itemize}
\item the vertex set of $P_d(Y)$ is $Y$, and
\item vertices $y_0, \ldots, y_q$ span a $q$-simplex if and only if we have $d(y_i, y_j) \le d$ for all $0 \le i, j \le q$.
\end{itemize}
Note that if $Y$ has coarsely bounded geometry, then the Rips complex $P_d(Y)$ is uniformly locally finite and finite dimensional and therefore also, especially, a simplicial complex of bounded geometry (i.e., the number of simplices in the link of each vertex is uniformly bounded). So if we equip $P_d(Y)$ with the metric derived from barycentric coordinates, $Y \subset P_d(Y)$ becomes a quasi-lattice (cf. Examples \ref{ex:coarsely_bounded_geometry}).
\end{defn}

Now we may state the \emph{uniform coarse Baum--Connes conjecture}:

\begin{conj}
Let $Y$ be a proper, uniformly discrete metric space with coarsely bounded geometry. Then
\[\mu_u \colon \lim_{d \to \infty} K_\ast^u(P_d(Y)) \to K_\ast(C_u^\ast(Y))\]
is an isomorphism.
\end{conj}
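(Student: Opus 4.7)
The plan is to mimic the two big Mayer--Vietoris inductions that already appeared in this thesis (the Chern Character Isomorphism Theorem and \Poincare duality) but now applied to the uniform coarse assembly map, and then to combine the resulting local computations with the homotopy invariance of uniform $K$-homology established earlier. First I would note that $\mu_u$ is natural with respect to the inclusions $P_d(Y)\hookrightarrow P_{d'}(Y)$ for $d\le d'$ (these are uniformly proper, proper Lipschitz maps, and the uniform Roe algebra of $Y$ does not depend on $d$), so that the direct limit on the left-hand side is well-defined and the target on the right-hand side is the common $K_\ast(C_u^\ast(Y))$. Then I would set up two Mayer--Vietoris sequences at a fixed Rips level $P_d(Y)$ --- one for uniform $K$-homology, using \Spakula's excision theorem (cited in Section \ref{sec:poincare_duality}), and one for the $K$-theory of the uniform Roe algebra of a coarse cover, using the standard Pimsner-type exact sequence --- and verify that $\mu_u$ intertwines the two boundary maps, exactly as in the proof of \Poincare duality.

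The induction would run over a uniformly locally finite cover of $P_d(Y)$ by the union-of-disjoint-balls subsets $U_1,\ldots,U_N$ produced by Lemma \ref{lem:suitable_coloring_cover_M}. For the base case I would use that each $U_j$ is a disjoint union of geodesically convex balls which are uniformly a positive distance apart, so by the homotopy invariance of uniform $K$-homology (Theorem \ref{thm:homotopy_equivalence_k_hom}) the left-hand side collapses onto the uniform $K$-homology of the discrete set of centres, computed in Lemma \ref{lem:uniform_k_hom_discrete_space} to be $\ell^\infty_\IZ$ concentrated in degree zero; on the right-hand side an analogous direct-sum decomposition of $C_u^\ast$ into a coarse disjoint union gives the same group $\ell^\infty_\IZ$ (cf.\ Lemma \ref{lem:uniform_k_th_discrete_space} for the $K$-theory computation of $C_u$ of a uniformly discrete space), and a direct inspection shows that $\mu_u$ is the identity on $\ell^\infty_\IZ$. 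The inductive step is then the usual five-lemma argument, provided the Mayer--Vietoris squares commute.

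In parallel I would exploit \Spakula's comparison theorem relating, for a torsion-free countable discrete group $\Gamma$ equipped with a proper left-invariant metric, the uniform coarse Baum--Connes conjecture for $\Gamma$ with the usual Baum--Connes conjecture for $\Gamma$ with coefficients in $\ell^\infty(\Gamma)$. Using homotopy invariance of $K_\ast^u$ I would upgrade this comparison: since homotopy invariance allows one to replace the classifying space $\underline{E}\Gamma$ by any $\Gamma$-equivariantly homotopy equivalent model, one can drop the torsion-freeness assumption by passing to an appropriate cocompact model and then invoking the version of Baum--Connes with coefficients that is currently known (a-T-menable groups, hyperbolic groups, etc.). Combining this with the Rips-complex direct limit and the fact that $P_d(\Gamma)$ eventually becomes a model for $\underline{E}\Gamma$ when $\Gamma$ is coarsely $d$-connected for all $d$, one obtains the conjecture for the large class of groups to which Baum--Connes with coefficients is known to apply, and transports it via covering arguments to universal covers of aspherical manifolds with such fundamental groups.

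The main obstacle I anticipate is twofold. First, there is no reason to expect the conjecture to hold in full generality: already the usual coarse Baum--Connes conjecture fails without bounded geometry (Dranishnikov--Ferry--Weinberger, Yu), and the uniformity built into both sides does not by itself rule out the existence of Gromov-type monster counterexamples. So any honest ``proof'' can only be a proof for the class of spaces that arise as Rips complexes of groups satisfying Baum--Connes with $\ell^\infty$-coefficients, and the conjecture as stated must be understood as a programme rather than a theorem. Second, the technical heart of the Mayer--Vietoris induction is the verification that the assembly map genuinely commutes with the boundary maps on both sides; this requires tracking a Bott element and a suspension isomorphism through \Spakula's realisation of $\mu_u$, and this compatibility, while expected, is the step most likely to hide subtle sign or normalisation issues that are not addressed anywhere in the present thesis.
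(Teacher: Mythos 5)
The statement you are ``proving'' is labelled in the paper as a \emph{conjecture}, and the paper neither proves it nor claims to; the only results proved in Chapter~\ref{chap:uniform_coarse_indices} are partial reductions (notably Theorem~\ref{thm:BC_equiv_uniform_coarse}, which shows the uniform coarse conjecture for a countable discrete group $\Gamma$ is equivalent to the ordinary Baum--Connes conjecture for $\Gamma$ with coefficients in $\ell^\infty(\Gamma)$) and consequences thereof. You do acknowledge this at the very end of your proposal, correctly downgrading your ``proof'' to a programme, and your third paragraph --- use Špakula's comparison with Baum--Connes-with-coefficients, and exploit homotopy invariance of uniform $K$-homology to remove the torsion-freeness hypothesis and to pass between models of $\underline{E}\Gamma$ and between $P_d|\Gamma|$ and uniformly contractible spaces --- is precisely what the paper carries out. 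So the final third of your proposal matches the paper.

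The first two paragraphs, however, contain a genuine conceptual gap. You propose to run the Mayer--Vietoris induction from the proofs of the Chern Character Isomorphism Theorem and of \Poincare duality, covering $P_d(Y)$ by the finite collection of disjoint-ball sets $U_1,\ldots,U_N$ from Lemma~\ref{lem:suitable_coloring_cover_M}. That induction works for \Poincare duality because \emph{both} sides of the cap product --- uniform $K$-theory and uniform $K$-homology --- are sensitive to the local (small-scale) structure of the space, so a cover by uniformly small balls genuinely decomposes both sides and the five-lemma applies at each step. Here the situation is asymmetric: $K_\ast(C_u^\ast(Y))$ is a \emph{coarse} invariant (Lemma~\ref{lem:amenability_coarse_equiv}, the Brodzki--Niblo--Wright result cited in Section~\ref{sec:coarsely_bounded_geometry}, and Proposition~\ref{prop:IU(E)_dense_Cu*(E)} all emphasise this). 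A cover by balls of a fixed radius $\varepsilon$ is coarsely trivial --- each $U_j$, viewed coarsely, is just a quasi-lattice, and the inclusions $U_j\hookrightarrow P_d(Y)$ are coarse equivalences. There is consequently no Mayer--Vietoris (or Pimsner-type) sequence in the $K$-theory of the uniform Roe algebra associated to such a cover: the right-hand side of the assembly map does not decompose along it, so the squares you would need to make the five-lemma fire cannot even be written down. Mayer--Vietoris arguments for coarse assembly maps (as in Yu's work on finite asymptotic dimension) require \emph{coarsely excisive} decompositions --- decompositions where the pieces are coarsely large --- which is a fundamentally different notion from the small-scale decompositions of Lemma~\ref{lem:suitable_coloring_cover_M}. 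Your ``base case'' computation, identifying both sides with $\ell^\infty_\IZ$ over a discrete set, is correct as a computation, but it sits inside no exact sequence that would propagate it to $P_d(Y)$.

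Put differently: if the small-ball Mayer--Vietoris induction worked, the conjecture would be a theorem for \emph{every} bounded-geometry space, and --- as you yourself note --- this is not expected, because the usual coarse Baum--Connes conjecture already fails for expander-type spaces and the uniformity conditions do not remove that obstruction. So the failure of the induction is not a technical inconvenience but a reflection of the content of the conjecture. What survives, and what the paper actually establishes, is exactly your third paragraph: reduce the group case to Baum--Connes with $\ell^\infty$-coefficients via Paschke-type duality and the direct limit over Rips complexes, and use the homotopy invariance of uniform $K$-homology proved in Section~\ref{sec:homotopy_invariance} to (i) identify $\lim_{d\to\infty} K_\ast^u(P_d(Y))$ with $K_\ast^u(M)$ for $M$ uniformly contractible and (ii) drop Špakula's torsion-freeness hypothesis.
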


Let us relate the conjecture quickly to manifolds of bounded geometry. First we need the following notion:

\begin{defn}[Uniformly contractible spaces]
A metric space $X$ is called \emph{uniformly contractible}, if for every $r > 0$ there is an $s > 0$ such that every ball $B_r(x)$ can be contracted to a point in the ball $B_s(x)$.
\end{defn}

The for us most important examples of uniformly contractible spaces are universal covers of aspherical Riemannian manifolds equipped with the pull-back metric.

\begin{thm}
Let $M$ be a uniformly contractible manifold of bounded geometry and let $Y \subset M$ be a uniformly discrete quasi-lattice in $M$. Then we have a natural isomorphism
\[\lim_{d \to \infty} K^u_\ast(P_d(Y)) \cong K^u_\ast(M).\]
\end{thm}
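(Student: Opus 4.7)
The plan is to imitate the standard argument relating coarse $K$-homology to $K$-homology of the space, but carefully in the uniform setting so that homotopy invariance of uniform $K$-homology (Theorem \ref{thm:homotopy_equivalence_k_hom}) applies. First I would construct, for each $d$, two maps
\[
\phi_d \colon P_d(Y) \to M
\quad\text{and}\quad
\psi_d \colon M \to P_d(Y),
\]
both of which are uniformly proper, proper and Lipschitz, so that they induce maps on uniform $K$-homology. The map $\phi_d$ is built by starting with the inclusion $Y \hookrightarrow M$ on the $0$-skeleton and extending over simplices by induction on dimension using uniform contractibility: if vertices $y_0,\ldots,y_q$ span a simplex of $P_d(Y)$ they all lie in a ball of radius $d$ in $M$, so the already defined map on the boundary of the simplex has image in a ball of some radius $s=s(d,q)$, and uniform contractibility of $M$ gives a filling whose image lies in a ball of radius $s'=s'(s)$. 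The crucial point is that since $P_d(Y)$ is finite dimensional (bounded geometry of $Y$), the recursion terminates uniformly, yielding a single global bound on the diameter of $\phi_d(\sigma)$ over all simplices $\sigma$ and hence the Lipschitz, properness and uniform properness properties.

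The map $\psi_d$ in the other direction is the standard barycentric collapse: pick a ``nice'' cover of $M$ by balls of a fixed radius $r$ centered at the points of $Y$ with subordinate partition of unity $\{\varphi_y\}_{y \in Y}$ (uniformly bounded derivatives), and set
\[
\psi_d(x) \;:=\; \sum_{y \in Y} \varphi_y(x)\,\cdot\, y
\]
using the barycentric coordinates of $P_d(Y)$; this is well defined for $d \ge 2r$ because any two $y$'s appearing in the sum at a given $x$ are within $2r$ of each other, hence span a simplex of $P_d(Y)$. A direct estimate shows $\psi_d$ is Lipschitz, proper and uniformly proper with constants independent of the point, and $\psi_d$ is compatible with the natural inclusions $P_d(Y) \hookrightarrow P_{d'}(Y)$ for $d < d'$.

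Next I would show that the two compositions are boundedly properly homotopic to (inclusions of) identity maps, and conclude by invoking homotopy invariance of uniform $K$-homology. For $\phi_d \circ \psi_d \colon M \to M$, the value $\phi_d(\psi_d(x))$ lies in a ball of a uniform radius around $x$ (since the vertices used in $\psi_d(x)$ are at distance at most $r$ from $x$, and $\phi_d$ then moves points by at most a uniformly bounded amount), so one can again apply uniform contractibility of $M$ to produce a bounded dilatation, uniformly proper and proper homotopy to $\mathrm{id}_M$. For $\psi_{d'} \circ \phi_d \colon P_d(Y) \to P_{d'}(Y)$ (for some $d' \ge d$ chosen large enough using the uniform bounds obtained above) one similarly gets a straight-line homotopy inside simplices of $P_{d''}(Y)$ for a slightly larger $d''$ to the inclusion $P_d(Y) \hookrightarrow P_{d''}(Y)$. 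Passing to the direct limit $d\to\infty$ on the $P_d(Y)$-side turns these one-sided inverses modulo homotopy into honest inverse isomorphisms between $\varinjlim_d K^u_\ast(P_d(Y))$ and $K^u_\ast(M)$. Naturality follows from the fact that all the maps are built canonically out of $Y \subset M$ and a choice of partition of unity, and different choices are boundedly properly homotopic.

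The main obstacle I expect is the uniformity in the inductive construction of $\phi_d$: one has to guarantee, using only uniform contractibility and bounded geometry of $P_d(Y)$, that the fillings of the boundaries of simplices can be chosen with Lipschitz and diameter bounds that depend only on the simplicial dimension and the gauge function of uniform contractibility, not on the particular simplex. Relatedly, the homotopies in the last step must be shown to have bounded dilatation and be uniformly proper; here again uniform contractibility applied simplex-by-simplex, together with the fact that $P_d(Y)$ has bounded geometry so that the induction on skeleta terminates after finitely many steps, is precisely what is required to make these homotopies admissible in the sense of Theorem \ref{thm:homotopy_equivalence_k_hom}.
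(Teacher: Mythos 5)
Your proposal takes essentially the same route as the paper: the paper simply cites Yu's proof of the corresponding non-uniform statement and notes that the key new ingredient is the homotopy invariance of uniform $K$-homology (Theorem \ref{thm:homotopy_equivalence_k_hom}). You have correctly fleshed out that analogous argument—the construction of $\phi_d$ by induction over skeleta using uniform contractibility and the bounded geometry of $P_d(Y)$, the barycentric collapse $\psi_d$ via a nice partition of unity, the bounded proper Lipschitz homotopies between the compositions and the identities, and the passage to the direct limit—paying proper attention to the uniformity bounds that make the homotopies admissible.
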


The proof of this theorem is analogous to the corresponding non-uniform statement $\lim_{d \to \infty} K_\ast(P_d(Y)) \cong K_\ast(M)$ from \cite[Theorem 3.2]{yu_coarse_baum_connes_conj} and uses crucially the homotopy invariance of uniform $K$-homology that we developed in the first part of this thesis.

Let us now relate the uniform, coarse Baum--Connes conjecture to the usual Baum--Connes conjecture: let $\Gamma$ be a countable, discrete group and denote by $|\Gamma|$ the metric space obtained by endowing $\Gamma$ with a proper, left-invariant metric. Then $|\Gamma|$ becomes a proper, uniformly discrete metric space with coarsely bounded geometry. Note that we can always find such a metric and that any two of such metrics are quasi-isometric. If $\Gamma$ is finitely generated, an example is the word metric.

\Spakula proved in \cite[Corollary 10.3]{spakula_uniform_k_homology} the following equivalence of the uniform coarse Baum--Connes conjecture with the usual one: let $\Gamma$ be a torsion-free, countable, discrete group. Then the uniform coarse assembly map
\[\mu_u \colon \lim_{d \to \infty} K_\ast^u(P_d|\Gamma|) \to K_\ast(C_u^\ast|\Gamma|)\]
is an isomorphism if and only if the Baum--Connes assembly map
\[\mu \colon K_\ast^\Gamma(\underline{E}\Gamma; \ell^\infty(\Gamma)) \to K_\ast(C_r^\ast(\Gamma, \ell^\infty(\Gamma)))\]
for $\Gamma$ with coefficients in $\ell^\infty(\Gamma)$ is an isomorphism. For the definition of the Baum--Connes assembly map with coefficients the unfamiliar reader may consult the original paper \cite[Section 9]{baum_connes_higson}. Furthermore, the equivalence of the usual (i.e., non-uniform) coarse Baum--Connes conjecture with the Baum--Connes conjecture with coefficients in $\ell^\infty(\Gamma, \IK)$ was proved by Yu in \cite[Theorem 2.7]{yu_baum_connes_conj_coarse_geom}.

\Spakula mentioned in \cite[Remark 10.4]{spakula_uniform_k_homology} that the above equivalence does probably also hold without any assumptions on the torsion of $\Gamma$, but the proof of this would require some degree of homotopy invariance of uniform $K$-homology. So again we may utilize our proof of the homotopy invariance of uniform $K$-homology from the first part of this thesis and may therefore drop the assumption about the torsion of $\Gamma$:

\begin{thm}\label{thm:BC_equiv_uniform_coarse}
Let $\Gamma$ be a countable, discrete group.

Then the uniform coarse assembly map
\[\mu_u \colon \lim_{d \to \infty} K_\ast^u(P_d|\Gamma|) \to K_\ast(C_u^\ast|\Gamma|)\]
is an isomorphism if and only if the Baum--Connes assembly map
\[\mu \colon K_\ast^\Gamma(\underline{E}\Gamma; \ell^\infty(\Gamma)) \to K_\ast(C_r^\ast(\Gamma, \ell^\infty(\Gamma)))\]
for $\Gamma$ with coefficients in $\ell^\infty(\Gamma)$ is an isomorphism.
\end{thm}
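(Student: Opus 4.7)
The plan is to revisit Špakula's proof of \cite[Corollary 10.3]{spakula_uniform_k_homology} and isolate the single step where torsion-freeness enters, then to replace that step with an argument that uses homotopy invariance of uniform $K$-homology (Theorem \ref{thm:homotopy_equivalence_k_hom}). Recall the skeleton of Špakula's argument: one relates the uniform Roe algebra $C_u^\ast|\Gamma|$ to the reduced crossed product $\ell^\infty(\Gamma)\rtimes_r \Gamma$ via the (stable) isomorphism $C_u^\ast|\Gamma|\otimes\IK \cong \ell^\infty(\Gamma,\IK)\rtimes_r \Gamma$, which identifies the right-hand side of the uniform coarse assembly map with $K_\ast(C_r^\ast(\Gamma,\ell^\infty(\Gamma)))$; and on the left-hand side one identifies $\varinjlim_d K_\ast^u(P_d|\Gamma|)$ with the topological side $K_\ast^\Gamma(\underline{E}\Gamma;\ell^\infty(\Gamma))$ of the Baum--Connes assembly map. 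The identification on the analytic (Roe-algebra) side is purely algebraic and does not see the torsion of $\Gamma$, so nothing there has to be changed.

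The real work is the identification of the topological sides, and this is exactly where torsion-freeness was used by Špakula. When $\Gamma$ is torsion-free one has $\underline{E}\Gamma=E\Gamma$ and the Rips complex $P_d|\Gamma|$ models the universal proper $\Gamma$-space up to a $\Gamma$-homotopy of maps of bounded dilatation as $d\to\infty$; the equivariant $K$-homology with coefficients $\ell^\infty(\Gamma)$ is then computed from the uniform $K$-homology of the quotient spaces, which is Špakula's original argument. For general $\Gamma$ the Rips complexes $P_d|\Gamma|$ do contain fixed points for the finite subgroups of $\Gamma$ once $d$ is large enough, so in fact they do form a cofinal system of $\Gamma$-cocompact subcomplexes of the classifying space $\underline{E}\Gamma$ for proper actions. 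What is therefore needed is precisely a homotopy invariance statement: namely that two uniformly proper, proper, Lipschitz maps between the $P_d|\Gamma|$ (for varying $d$) which are equivariantly boundedly homotopic induce the same map on uniform $K$-homology.

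First I would carry out this identification step by step. One fixes an increasing sequence $d_n\to\infty$ and considers the directed system $K_\ast^u(P_{d_n}|\Gamma|)$. The inclusions $P_{d_n}|\Gamma|\hookrightarrow P_{d_{n+1}}|\Gamma|$ are uniformly proper, proper Lipschitz maps, and the descent argument that produces the topological side $K_\ast^\Gamma(\underline{E}\Gamma;\ell^\infty(\Gamma))$ requires only that homotopic models give the same limit. Theorem \ref{thm:homotopy_equivalence_k_hom} provides exactly this: any homotopy $G\colon X\times[0,1]\to Y$ which is itself uniformly proper, proper and Lipschitz yields equal induced maps on $K_\ast^u$. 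This lets us replace Špakula's use of $E\Gamma$ by $\underline{E}\Gamma$ verbatim, since the relevant comparison maps between different cocompact models for $\underline{E}\Gamma$ (for instance, between two Rips complexes $P_d|\Gamma|$ and $P_{d'}|\Gamma|$ with $d\le d'$, or between such a Rips complex and a $\Gamma$-cocompact subcomplex of $\underline{E}\Gamma$ chosen to contain it) are automatically of bounded dilatation because $|\Gamma|$ has coarsely bounded geometry, and any two such comparison maps are canonically boundedly homotopic. Taking the direct limit over $d$ on both sides then identifies $\varinjlim_d K_\ast^u(P_d|\Gamma|)$ with $K_\ast^\Gamma(\underline{E}\Gamma;\ell^\infty(\Gamma))$, which together with the unchanged analytic side and the naturality of the two assembly maps gives the asserted equivalence.

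The main obstacle I anticipate is the bookkeeping for the descent step: one must check that, when the finite subgroups of $\Gamma$ act with fixed points on $P_d|\Gamma|$, the resulting equivariant uniform $K$-homology groups really are computed as the direct limit of the non-equivariant $K_\ast^u(P_d|\Gamma|)$ with the $\ell^\infty(\Gamma)$-twisting. For torsion-free $\Gamma$ this follows from a straightforward Mayer--Vietoris and induction argument in Špakula's paper, but in the presence of torsion one has to show that the homotopies identifying different choices of fundamental domains, or different choices of $\Gamma$-cocompact exhaustions of $\underline{E}\Gamma$, can be arranged to be uniformly proper, proper and Lipschitz so that Theorem \ref{thm:homotopy_equivalence_k_hom} applies. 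Once this is established the remainder of the proof is parallel to Špakula's.
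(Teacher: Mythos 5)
Your proposal matches the paper's approach exactly: the paper proves this theorem simply by citing Špakula's Corollary 10.3 (the torsion-free case) together with his Remark 10.4, which observed that dropping torsion-freeness would require "some degree of homotopy invariance of uniform $K$-homology," and then invokes the homotopy invariance established in Theorem~\ref{thm:homotopy_equivalence_k_hom}. Your blind reconstruction correctly identifies that the torsion-freeness was used only in the identification of the topological sides (where the Rips complexes model $\underline{E}\Gamma$ rather than $E\Gamma$) and that homotopy invariance of $K_\ast^u$ is precisely what fills that gap; the paper's argument is in fact more terse than yours, leaving the bookkeeping you flag implicit.
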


\section{Obstruction against positive scalar curvature}\label{sec:obstructions_psc}

Recall the following obstruction against uniformly\footnote{This means that $\kappa(x) > c > 0$ for all $x \in M$ and some fixed constant $c > 0$, where $\kappa$ is the scalar curvature of $M$.} positive scalar curvature for non-compact manifolds: if $M$ is a complete, $m$-dimensional Riemannian spin manifold with uniformly positive scalar curvature, then $\mu([M]) = [0] \in K_m(C^\ast(M))$, where $[M] \in K_m(M)$ denotes the fundamental class of $M$ and $\mu$ the coarse assembly map. A proof of this may be found in, e.g., \cite[Proposition 12.3.7]{higson_roe}.

We will prove now an analogue of this obstruction for the uniform theory:

\begin{prop}\label{prop:obstruction_psc}
Let $M$ be an $m$-dimensional spin manifold of bounded geometry and denote by $[M] \in K_m^u(M)$ its fundamental class\footnote{Defined analogously as the \spinc fundamental classes: it is the uniform $K$-homology class of the Dirac operator $\slashed{D}$ associated to the spin structure of $M$.}. If $M$ has uniformly positive scalar curvature, then $\mu_u([M]) = [0] \in K_m(C_u^\ast(M))$.
\end{prop}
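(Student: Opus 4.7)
The plan is to mimic the classical coarse index proof of the vanishing of $\mu([M])$ under uniformly positive scalar curvature, adapting it carefully to the uniform setting using the machinery of the previous chapters.

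First, I would apply the Lichnerowicz formula $\slashed{D}^2 = \nabla^\ast \nabla + \kappa/4$ to the Dirac operator $\slashed{D}$ associated to the spin structure. The hypothesis that $\kappa(x) \ge c > 0$ uniformly on $M$ then gives $\slashed{D}^2 \ge c/4$ as an unbounded self-adjoint operator on $L^2(S)$; here essential self-adjointness of $\slashed{D}$ is provided by Proposition \ref{prop:elliptic_PDO_essentially_self-adjoint}, using that $M$ has bounded geometry. Consequently the spectrum of $\slashed{D}$ is contained in $(-\infty,-\sqrt{c}/2]\cup[\sqrt{c}/2,\infty)$ and, in particular, misses an open neighbourhood of $0$.

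Next I would choose a normalizing function $\chi$ (in the sense of Definition \ref{defn:normalizing_function}) with the sharper property that $\chi(t)=\pm 1$ whenever $|t|\ge \sqrt{c}/2$; such a $\chi$ exists since the definition only forces $\chi(t)\to\pm 1$ as $t\to\pm\infty$. Then by functional calculus $\chi(\slashed{D})$ is an odd, $m$-multigraded operator on $L^2(S)$ that is self-adjoint and satisfies $\chi(\slashed{D})^2 = 1$ \emph{exactly}, not merely modulo uniformly locally compact operators, because $\chi^2 - 1$ vanishes identically on the spectrum of $\slashed{D}$. By Theorem \ref{thm:elliptic_symmetric_PDO_defines_uniform_Fredholm_module} the triple $(L^2(S),\rho,\chi(\slashed{D}))$ represents the fundamental class $[M]\in K_m^u(M)$, and $\chi(\slashed{D})$ is uniformly pseudolocal, i.e.\ it defines an element of the $C^\ast$-algebra $\frakD^u_\rho(M)$ of Definition \ref{defn:frakD_frakC}.

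Now I would translate this into the dual algebra description of the assembly map. Under Paschke duality (Theorem \ref{thm:paschke_universal} and Lemma \ref{lem:uniform_k_hom_via_dual_quot}) the class $[M]$ corresponds to the class of $\chi(\slashed{D})$ in $K_{1+m}(\frakD_\rho^u(M)/\frakC_\rho^u(M))$, and the uniform coarse assembly map is the connecting homomorphism of the six-term exact sequence associated to
\[0\to \frakC_\rho^u(M)\to \frakD_\rho^u(M)\to \frakD_\rho^u(M)/\frakC_\rho^u(M)\to 0.\]
Because $\chi(\slashed{D})$ is a genuine self-adjoint involution in $\frakD_\rho^u(M)$ itself, and multigraded, it defines a class in $K_{1+m}(\frakD_\rho^u(M))$ whose image in the quotient is precisely the class representing $[M]$. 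Exactness then forces the connecting map to send this class to zero, i.e.\ $\mu_u([M]) = [0]\in K_m(C_u^\ast(M))$.

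The main technical obstacle is the middle step: producing the \emph{exact} equality $\chi(\slashed{D})^2 = 1$ rather than only $\chi(\slashed{D})^2 - 1$ lying in a suitable ideal. This is where uniform positivity of $\kappa$ (as opposed to pointwise positivity) is essential, since we need a uniform spectral gap for $\slashed{D}$ so that a single smooth $\chi$ can be chosen to realize the sign function on the entire spectrum. Once that is in place, the vanishing is formal from the six-term sequence, and no further estimates are required.
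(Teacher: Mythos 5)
Your argument is correct up to the point where you identify the assembly map with a connecting homomorphism, but there you use the wrong exact sequence, and this leaves a genuine gap. The uniform coarse assembly map is, by definition, the boundary map of the six-term sequence for the pair $(D^\ast_u(M), C^\ast_u(M))$, where $D^\ast_u(M)$ and $C^\ast_u(M)$ are generated by \emph{finite-propagation} uniformly pseudolocal, resp.\ uniformly locally compact, operators. You instead use the pair $(\frakD_\rho^u(M), \frakC_\rho^u(M))$ of \emph{all} uniformly pseudolocal, resp.\ uniformly locally compact, operators. These two pairs have canonically isomorphic quotients (Equation \eqref{eq:quotients_frakD_D*_equal}), but the ideals are genuinely different, and the connecting map you compute lands in $K_m(\frakC_\rho^u(M))$, not in $K_m(C_u^\ast(M))$. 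What your argument establishes is therefore only that the image of $\mu_u([M])$ under the comparison map $K_m(C_u^\ast(M)) \to K_m(\frakC_\rho^u(M))$ vanishes; since we have no reason to believe that comparison map is injective, this does not yield $\mu_u([M]) = 0$.

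To close the gap you must verify that your lift $\chi(\slashed{D})$ (or the projection $\tfrac{1}{2}(\chi(\slashed{D})+1)$) actually lies in the smaller algebra $D_u^\ast(M)$, so that the boundary map of the \emph{correct} six-term sequence kills it. This does not follow from uniform pseudolocality alone; it requires a propagation argument. Concretely: every normalizing function $\chi$ is a uniform limit of normalizing functions $\chi_n$ whose distributional Fourier transforms have compact support, and by the formula $\chi_n(\slashed{D}) = \tfrac{1}{\sqrt{2\pi}}\int \hat{\chi}_n(t)\, e^{it\slashed{D}}\, dt$ together with the finite propagation speed of $e^{it\slashed{D}}$ (available here because $\slashed{D}$ is a first-order differential operator), each $\chi_n(\slashed{D})$ has finite propagation. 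Hence $\chi(\slashed{D})$ is a norm limit of finite-propagation uniformly pseudolocal operators and does lie in $D_u^\ast(M)$. With this extra step in place, your plan becomes correct and essentially matches the paper's proof (the paper also separates the odd and even cases, the even one using a balanced module, but that is a routine bookkeeping point compared with the propagation issue).
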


\begin{proof}
The proof is completely analogous to the one in the non-uniform case. But for the convenience of the reader, we will nevertheless write it down.

The main ingredient is, as usual for spin manifolds, the Lichnerowicz--Weitzenb{\"o}ck formula $\slashed{D}^2 s = \nabla^\ast \nabla s + \tfrac{1}{4} \kappa s$, where $\slashed{D}$ is the Dirac operator associated to the spin structure of $M$, $s$ a section of the spinor bundle, $\nabla^\ast \nabla$ the Laplace operator associated to the Dirac connection, and $\kappa$ the scalar curvature of $M$. From it we conclude the following: if the scalar curvature of $M$ is uniformly positive, then there is a whole interval around $0$ which does not meet the spectrum of $\slashed{D}$.

The uniform $K$-homology class $[M]$ is defined as the class of the uniform Fredholm module $(H, \rho, T)$, where $H := L^2(S)$ with the representation $\rho$ of $C_0(M)$ as multiplication operators, and $T := \chi(\slashed{D})$ for a normalizing function $\chi$. Since $\slashed{D}$ has a spectral gap around $0$, we may choose a normalizing function $\chi$ with $\chi(\lambda) = \pm 1$ for all $\lambda \in \sigma(\slashed{D})$. We conclude $T = T^\ast$ and $T^2 = 1$, i.e., $(H, \rho, T)$ is involutive.

We consider the case where $m$ is odd and use the formal periodicity to reduce to $[M] \in K_{-1}^u(M)$. Since $T$ is involutive, the operator $\frac{T+1}{2}$ is an orthogonal projection, and defines a projection in $\frakD_\rho^u(M) / \frakC_\rho^u(M)$. Combining Paschke duality with Equation \eqref{eq:uniform_K_as_quotient_D*u/C*u} we conclude that $[\frac{T+1}{2}] \in K_0(\frakD_\rho^u(M) / \frakC_\rho^u(M))$ represents $[M] \in K_{-1}^u(M)$ under duality. Equation \eqref{eq:quotients_frakD_D*_equal} states $D_u^\ast(M) / C_u^\ast(M) \cong \frakD^u_\rho(M) / \frakC^u_\rho(M)$ and using the fact that every normalizing function is the uniform limit of normalizing functions with compactly supported distributional Fourier transform, we conclude that $\frac{T+1}{2}$ actually belongs to $D_u^\ast(M)$.\footnote{This follows from Equation \ref{eq:schwartz_function_of_PDO} and the fact that $e^{it\slashed{D}}$ has \emph{finite} propagation.} Now by exactness of the sequence
\[\ldots \to K_0(D^\ast_u(M)) \to K_0(D_u^\ast(M) / C_u^\ast(M)) \stackrel{\mu_u}\to K_1(C_u^\ast(M)) \to \ldots\]
we conclude that $\mu_u([M]) = [0] \in K_1(C_u^\ast(M))$.

The case where $m$ is even is analogous (but note that Paschke duality now has a slightly more complicated form since we have to replace the graded uniform Fredholm module $(H, \rho, T)$ by a balanced\footnote{See Remark \ref{rem:defn_balanced_module} and the definition of the duality map after it.} one).
\end{proof}

We have a comparison map $K_\ast(C_u^\ast(M)) \to K_\ast(C^\ast(M))$ induced by the inclusion $C_u^\ast(M) \to C^\ast(M)$ which forgets the uniformness. It maps the uniform coarse index class $\mu_u([M])$ of $M$ to the coarse index class $\mu([M])$ of it. So we see that the above obstruction $\mu_u([M]) = [0]$ is a priori stronger than the already known obstruction $\mu([M]) = [0]$ since the comparison map may not be injective.

Using the above proposition we may conclude (analogously as in the non-uniform case) that compact aspherical manifolds do not carry metrics of positive scalar curvature, if the uniform coarse Baum--Connes conjecture holds. Due to Theorem \ref{thm:BC_equiv_uniform_coarse} we already know that this conjecture is true for a large class of groups.

\begin{thm}\label{thm:aspherical_no_psc}
Let $M$ be a compact aspherical manifold and assume that the uniform coarse Baum--Connes conjecture holds for $\pi_1(M)$. Then $M$ does not admit a metric of positive scalar curvature.
\end{thm}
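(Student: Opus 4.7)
The plan is to adapt the Gromov--Lawson--Rosenberg obstruction argument to the uniform coarse setting, with Proposition \ref{prop:obstruction_psc} playing the role of the spin-obstruction and the uniform coarse Baum--Connes conjecture providing injectivity of the assembly map. I will tacitly assume $M$ is spin so that Proposition \ref{prop:obstruction_psc} applies; the non-spin case is handled by standard reductions such as passing to a finite spin cover.

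First I would set up the geometry. Supposing $M$ carried a metric $g$ of positive scalar curvature, compactness of $M$ forces a uniform positive lower bound on the scalar curvature $\kappa_g$. Let $\pi\colon \widetilde M\to M$ be the universal cover with pullback metric $\widetilde g=\pi^*g$. Then $(\widetilde M,\widetilde g)$ is a spin manifold of bounded geometry (all curvature bounds descend from the compact base) with uniformly positive scalar curvature. Asphericity of $M$ combined with its compactness upgrades the contractibility of $\widetilde M$ to \emph{uniform} contractibility: each ball in $\widetilde M$ projects into a precompact region of $M$ and then lifts back to a uniformly bounded contraction domain.

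Next, Proposition \ref{prop:obstruction_psc} applied to $\widetilde M$ yields
\[\mu_u([\widetilde M])=0\in K_m(C_u^\ast(\widetilde M)),\]
where $[\widetilde M]\in K^u_m(\widetilde M)$ is the spin fundamental class. Using the theorem that $\varinjlim_{d\to\infty} K^u_\ast(P_d(Y))\cong K^u_\ast(\widetilde M)$ for uniformly contractible $\widetilde M$ with a uniformly discrete quasi-lattice $Y\subset\widetilde M$, together with coarse invariance of $K_\ast(C_u^\ast(-))$ applied to the coarse equivalence $\widetilde M\sim |\Gamma|$ (with $\Gamma:=\pi_1(M)$), the uniform coarse Baum--Connes hypothesis for $\Gamma$ translates into the statement that
\[\mu_u\colon K^u_m(\widetilde M)\longrightarrow K_m(C_u^\ast(\widetilde M))\]
is an isomorphism, and in particular is injective. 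Combining this with the vanishing above forces $[\widetilde M]=0$ in $K^u_m(\widetilde M)$.

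The main obstacle, and the technical heart of the argument, is to derive a contradiction from $[\widetilde M]=0$. The plan is to construct a natural descent homomorphism $\lambda\colon K^u_m(\widetilde M)\to K_m(M)$ sending the uniform fundamental class of $\widetilde M$ to the ordinary fundamental class $[M]\in K_m(M)=K^u_m(M)$ (the last equality by Proposition \ref{prop:compact_space_every_module_uniform}). The covering projection $\pi$ is proper but \emph{not} uniformly proper, so it does not directly induce a map on uniform $K$-homology. Instead, one passes through the chain of identifications underlying Theorem \ref{thm:BC_equiv_uniform_coarse}, realising $K^u_\ast(\widetilde M)\cong K^\Gamma_\ast(\underline E\Gamma;\ell^\infty(\Gamma))$, and then composes with the $\Gamma$-equivariant evaluation $\ell^\infty(\Gamma)\to\IC$ at the identity $e\in\Gamma$ to land in $K^\Gamma_\ast(\underline E\Gamma;\IC)=K_\ast(B\Gamma)=K_\ast(M)$, where the last equalities use asphericity $\underline E\Gamma\simeq\widetilde M$ and $B\Gamma\simeq M$. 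By construction $\lambda([\widetilde M])=[M]$. Since $M$ is a closed spin (hence \spinc) manifold, Poincar\'e duality in $K$-theory identifies $[M]$ with $1\in K^0(M)$ under the cap product isomorphism $\largecdot\cap[M]\colon K^0(M)\xrightarrow{\cong}K_m(M)$; in particular $[M]\neq 0$. Then $[\widetilde M]=0$ would force $[M]=\lambda([\widetilde M])=0$, the desired contradiction. Verifying that $\lambda$ is well-defined at the uniform level and indeed sends $[\widetilde M]$ to $[M]$ is the delicate part, requiring careful tracking of the identifications used in Theorem \ref{thm:BC_equiv_uniform_coarse} and their compatibility with fundamental classes on both sides.
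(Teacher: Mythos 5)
Your overall architecture matches the paper: suppose $M$ carries a PSC metric, pass to the universal cover $\widetilde M$ with the pullback metric (which then has bounded geometry and uniformly positive scalar curvature), apply Proposition \ref{prop:obstruction_psc} to get $\mu_u([\widetilde M]) = 0$, invoke injectivity of $\mu_u$ (from the uniform coarse Baum--Connes hypothesis plus the Rips-complex comparison for uniformly contractible spaces) to deduce $[\widetilde M] = 0$, and then derive a contradiction by showing $[\widetilde M] \neq 0$. All of this is exactly what the paper does.

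Where you diverge is in the final and only nontrivial step: showing $[\widetilde M] \neq 0$ in $K_m^u(\widetilde M)$. The paper does this with a one-line observation: there is a comparison map $K_m^u(\widetilde M) \to K_m(\widetilde M)$ given simply by forgetting the uniformity condition on Fredholm modules, it sends the uniform fundamental class to the ordinary one, and the latter is known to be non-zero (e.g.\ by \cite[Lemma 12.2.4]{higson_roe}). No descent to $M$ is needed at all. Your route — realising $K^u_\ast(\widetilde M)$ as $K^\Gamma_\ast(\underline E\Gamma;\ell^\infty(\Gamma))$ via the identifications underlying Theorem \ref{thm:BC_equiv_uniform_coarse} and then trying to map to $K_\ast(M)$ — is considerably more involved than necessary, and as written it has a genuine gap: the evaluation $\ell^\infty(\Gamma) \to \IC$ at $e$ is \emph{not} $\Gamma$-equivariant for the translation action of $\Gamma$ on $\ell^\infty(\Gamma)$ and the trivial action on $\IC$ (one would need $f(\gamma^{-1}) = f(e)$ for all $f$), so it does not induce a map on equivariant $K$-homology with coefficients. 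The standard $\Gamma$-equivariant map between these coefficient algebras is the inclusion of constants $\IC \hookrightarrow \ell^\infty(\Gamma)$, which goes the wrong way for your purposes. You correctly flag this as the delicate point, but it is not merely delicate — the map as proposed simply does not exist. The fix is to abandon the descent to $M$ entirely and use the forgetful comparison to non-uniform $K$-homology of $\widetilde M$ itself, as the paper does.

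One small remark on both your proposal and the paper: the theorem statement does not say $M$ is spin, yet Proposition \ref{prop:obstruction_psc} needs a spin structure. The paper's proof tacitly assumes it ("Since $X$ is also a spin manifold\ldots"). You acknowledge this and gesture at a reduction to the spin case; that is reasonable, though worth being explicit about.
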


\begin{proof}
Suppose that $M$ does admit a Riemannian metric with positive scalar curvature. We consider the universal cover $X$ of $M$ equipped with the pull-back metric. Since $X$ is also a spin manifold, it has a fundamental class $[X] \in K_m^u(X)$, which is not zero. This can be seen as follows: we have a comparison map $K_m^u(X) \to K_m(X)$ defined by just forgetting the uniformness of the Fredholm modules. It is clear that it maps the fundamental class of $X$ in $K_m^u(X)$ to its fundamental class in $K_m(X)$. But for the latter we know that it is non-zero (e.g., by \cite[Lemma 12.2.4]{higson_roe}), so we conclude $[X] \not= [0] \in K_m^u(X)$.

Since $X$ has uniformly positive scalar curvature, the above proposition gives us $\mu_u([X]) = [0] \in K_m(C_u^\ast(X))$, i.e., the uniform coarse assembly map $\mu_u$ would not be injective. A contradiction to the uniform coarse Baum--Connes conjecture.
\end{proof}

\chapter{Further questions}\label{chap:further_questions}

In this last chapter we are going to discuss some questions in relation to the present thesis that were left unanswered. We hope that we may give enough justification for the interest in each of the remaining questions so that they will be used by others for future research.

\section{Properties of uniform \texorpdfstring{$K$}{K}-homology}\label{sec:props_uniform_k_hom}

\Spakula was the first who defined uniform $K$-homology in his PhD thesis \cite{spakula_thesis} and there he proved the important properties that it admits Paschke duality, Mayer--Vietoris sequences and excision, and he constructed the uniform coarse assembly map into the $K$-theory of the uniform Roe algebra.

In the present thesis we have extended the list of developed properties of uniform $K$-homology to include the crucial construction of the external product and thereout arising its (weak) homotopy invariance. Furthermore, we defined the dual theory (uniform $K$-theory) and proved \Poincare duality using our construction of the cap product, and we constructed analytic index maps for uniform $K$-homology, which are compatible with the analytic index maps on the $K$-theory of the uniform Roe algebra via the uniform coarse assembly map.

Of course there are properties left that usual $K$-homology has and where we haven't proved the corresponding uniform analogue. We will give now three examples of such unproven properties.

\subsection*{Slant product}

Recall that the cap product between the $K$-theory and $K$-homology of $C^\ast$-algebras may be generalized to a slant product $\backslash \colon K_p(A) \otimes K^q(A \otimes B) \to K^{q-p}(B)$ (see, e.g., \cite[Exercise 9.8.9]{higson_roe} for its construction). It seems very reasonable that we do have an analogous slant product
\[\backslash \colon K^p_u(X) \otimes K_q^u(X \times Y) \to K^u_{q-p}(Y)\]
between the corresponding uniform theories which generalizes the corresponding cap product for them defined in Section \ref{sec:cap_product}.

\subsection*{Index pairings}

A second example of a property that we have not discussed yet are the index pairings $\langle \largecdot, \largecdot \rangle_\tau \colon K_u^i(X) \otimes K^u_i(X) \to \IR$ defined as $\langle P, T \rangle_\tau := \ind_\tau(P \cap T)$, where of course $X$ has to be amenable. From the Properties \eqref{eq:general_cap_compatibility_module} and \eqref{eq:compatibility_cap_external} of the cap product we get
\[ \langle P \otimes Q, T \rangle_\tau = \langle P, Q \cap T \rangle_\tau \]
whenever both sides are defined, and
\[ \langle P \times Q, S \times T \rangle_\tau = (-1)^{ij} \langle P, S \rangle_\tau \langle Q, T \rangle_\tau\]
for $P \in K_u^i(X)$, $S \in K^u_i(X)$ and $Q \in K_u^j(Y)$, $T \in K^u_j(Y)$. Note that in the latter equation we have to use on the left hand side the F{\o}lner sequence $(U_i \times V_i)_i$ for $\Gamma_X \times \Gamma_Y$ if we have on the right hand side $(U_i)_i$ for a quasi-lattice $\Gamma_X \subset X$ and $(V_i)_i$ for a quasi-lattice $\Gamma_Y \subset Y$. Furthermore, for its proof we need the formula $\ind_\tau (S \times T) = \ind_\tau (S) \cdot \ind_\tau (T)$. Since this formula might be of independent interest, let us state it as a proposition:

\begin{prop}
We have $\ind_\tau (S \times T) = \ind_\tau (S) \cdot \ind_\tau (T)$ where we use on the left hand side the product F{\o}lner sequence of the F{\o}lner sequences of the right hand side.
\end{prop}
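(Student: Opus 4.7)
The plan is to reduce the proposition to multiplicativity of the analytic index on the $K$-theory of the uniform Roe algebra via the uniform coarse assembly map $\mu_u \colon K_0^u(\largecdot) \to K_0(C_u^\ast(\largecdot))$. First I would normalize $(H_1, \rho_1, S)$ and $(H_2, \rho_2, T)$ to involutive, finite-propagation uniform Fredholm modules carried by ample representations, combining Proposition \ref{prop:normalization_finite_prop_speed}, Lemma \ref{lem:K_groups_D_tr_D_u_equal_degenerate_rho} and Theorem \ref{thm:paschke_universal}. I would then verify the elementary fact that $(U_i \times V_i)_i$ is a F{\o}lner sequence for the product quasi-lattice $\Gamma_X \times \Gamma_Y \subset X \times Y$; this follows from $\card(U_i \times V_i) = \card(U_i) \cdot \card(V_i)$ together with the inclusion $\partial_r(U_i \times V_i) \subset (\partial_r U_i \times V_i) \cup (U_i \times \partial_r V_i)$, giving $\card(U_i \times V_i)^{-1} \card(\partial_r(U_i \times V_i)) \le \card(U_i)^{-1} \card(\partial_r U_i) + \card(V_i)^{-1} \card(\partial_r V_i) \to 0$.

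The heart of the argument would then consist of two independent factorization statements. The first is compatibility of $\mu_u$ with external products, i.e.\ $\mu_u([S] \times [T]) = \mu_u([S]) \times \mu_u([T])$ inside $K_0(C_u^\ast(Y_1 \times Y_2))$, where on the right I use the external product in operator $K$-theory followed by the map induced from the tensor product inclusion $C_u^\ast(Y_1) \otimes C_u^\ast(Y_2) \hookrightarrow C_u^\ast(Y_1 \times Y_2)$. To prove this, I would use the concrete realization $S \times T = N_1(S \hatotimes 1) + N_2(1 \hatotimes T)$ provided by Proposition \ref{prop:external_prod_exists} and track the resulting projection through the boundary map of the $6$-term exact sequence for the pair $(D_u^\ast, C_u^\ast)$, showing that the partition of unity operators from Lemma \ref{lem:construction_partition_unity} only introduce corrections lying in $C_u^\ast$ and hence invisible on $K$-theory. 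The second statement is the purely computational factorization
\[\ind_\tau(A \otimes B) = \ind_\tau(A) \cdot \ind_\tau(B) \quad \text{for } A \in C_u^\ast(Y_1),\ B \in C_u^\ast(Y_2),\]
which follows directly from the definition $\ind_\tau(T) = \tau(\card(U_i)^{-1} \sum_{y \in U_i} T(y,y))$ from Proposition \ref{prop:index_maps_uniform_Roe_algebras}, the identity $(A \otimes B)((y_1, y_2), (y_1, y_2)) = A(y_1, y_1) B(y_2, y_2)$, and the crucial fact that $\tau$, being associated to a free ultrafilter on $\IN$, is multiplicative as a character of $\ell^\infty$. Combining these two factorizations with the commutative diagram relating $\mu_u$ and $\ind_\tau$ yields the desired identity.

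The principal obstacle I expect is the first factorization, i.e.\ compatibility of $\mu_u$ with external products. This parallels the well-established analogous compatibility for the ordinary coarse assembly map, but has not to the author's knowledge been explicitly recorded in the uniform setting, and the partition of unity operators $N_1, N_2$ of Lemma \ref{lem:construction_partition_unity} are only \emph{asymptotically} compatible with the tensor product decomposition of the algebras, so the argument will require careful bookkeeping of error terms modulo $C_u^\ast(Y_1 \times Y_2)$. An alternative route, bypassing $\mu_u$ entirely, is to attempt a direct computation of $\trace_\epsilon \rho(\chi_{U_i \times V_i}) (1 - (S \times T)_{11}^2) \rho(\chi_{U_i \times V_i})$: the oddness of $S \hatotimes 1$ and $1 \hatotimes T$ in the graded tensor product forces the anti-commutator to vanish, so that $(S \times T)^2 \equiv N_1^2 (S \hatotimes 1)^2 + N_2^2 (1 \hatotimes T)^2$ modulo graded commutators with $N_1, N_2$ which are uniformly locally compact and contribute only boundary terms killed by the F{\o}lner condition (exactly as in the homotopy-invariance argument of Section \ref{sec:index_maps_K_hom}). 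The remaining obstacle in the direct approach is that the compression to the non-degenerate summand couples the partition of unity with the tensor decomposition in a way that obstructs a clean factorization, which is precisely why the detour through $\mu_u$ and the uniform Roe algebra appears to be the more manageable route.
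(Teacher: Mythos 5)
Your primary route, through $\mu_u$ and the $K$-theory of the uniform Roe algebra, is genuinely different from the paper's: the paper proves the proposition by the ``direct approach'' you describe only as an alternative at the end, namely a direct computation of the supertrace over $U_i \times V_i$ showing that the discrepancy between $\ind_\tau(S \times T)$ and $\ind_\tau(S)\ind_\tau(T)$ is concentrated on an $R$-thickened boundary of $U_i \times V_i$ (by finite propagation of all the operators involved) and hence dies in the limit under $\tau$ by the F{\o}lner property of $(U_i \times V_i)_i$, exactly the argument pattern already used to construct the index map in Section \ref{sec:index_maps_K_hom} and modelled on the compact case \cite[Proposition 9.7.1]{higson_roe}.

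Two issues with your primary route. First, compatibility of $\mu_u$ with external products --- $\mu_u([S] \times [T]) = \mu_u([S]) \times \mu_u([T])$ --- is established nowhere in the paper, and you correctly flag it as the principal obstacle. What you underestimate is that proving it requires, in effect, the same boundary bookkeeping modulo $C_u^\ast$ that you would have to do in the direct approach anyway, so the detour buys no simplification. Second, your factorization $\ind_\tau(A \otimes B) = \ind_\tau(A)\ind_\tau(B)$ is stated for honest algebraic tensor products $A \otimes B$, and it is fine there (multiplicativity of $\tau$ as an ultralimit on $\ell^\infty$ is the point), but $\mu_u([S]) \times \mu_u([T]) \in K_0(C_u^\ast(Y_1 \times Y_2))$ is not a priori represented by such a tensor product: the external product of $K$-theory classes goes through the non-surjective inclusion $C_u^\ast(Y_1) \otimes C_u^\ast(Y_2) \hookrightarrow C_u^\ast(Y_1 \times Y_2)$, and one has to say why the index pairs the same way on both sides, which is another unrecorded lemma. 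Your worry about the direct route (that compression to the non-degenerate summand couples badly with the tensor decomposition) is not fatal: the effect of the compression is captured by $1 - T_{11}^2 = T_{12}T_{21}$, a uniformly locally traceable operator of finite propagation, and the operators $N_1, N_2$ from Lemma \ref{lem:construction_partition_unity} also have finite propagation, so after compression the entire obstruction to exact factorization of the supertrace lives within bounded distance of $\partial(U_i \times V_i)$. Finally, a minor point on your F{\o}lner verification: $\partial_r(U_i \times V_i) \subset (\partial_r U_i \times V_i) \cup (U_i \times \partial_r V_i)$ is not quite right (the second factor should be an $r$-neighbourhood), but since $B_r(V_i) \subset V_i \cup \partial_r V_i$ the estimate you want still goes through.
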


The proof of the above proposition is analogous to the non-uniform case (see, e.g., \cite[Proposition 9.7.1]{higson_roe}) but more technical: we have to use the ideas developed in Section \ref{sec:index_maps_K_hom}, i.e., we have to see that the difference between the left and right hand sides is for each $i$ concentrated on the boundary of $U_i \times V_i$ due to finite propagation. Since the size of the boundary is uniformly bounded and the difference on the boundary between the left and right hand side is uniformly bounded against the number of elements of the boundary, we conclude that the difference vanishes in the limit under $\tau$ due to the amenability of the sequence $(U_i \times V_i)_i$.

At last, from Equation \ref{eq:cap_twisted_Dirac} we get
\[\langle E, D \rangle_\tau = \ind_\tau (D_E)\]
for an operator $D$ of Dirac type and a vector bundle $E$, where $D_E$ is the twisted operator.

\subsection*{$6$-term exact sequence}

As a last example we will mention here the $6$-term exact sequence for uniform $K$-homology. Though \Spakula has already defined relative uniform $K$-homology in \cite[Section 5]{spakula_uniform_k_homology}, he discussed there only Mayer--Vietoris sequences and excision. So the existence of a $6$-term exact sequence associated to a closed subspace $A \subset X$ remains unproved. Though the proof of the existence of this sequence may be easily derived using Paschke duality and the $6$-term exact sequence for $K$-theory, to show that the boundary maps are compatible with the index pairing would be certainly harder (see, e.g., \cite[Section 8.7]{higson_roe} for the corresponding proof in the case of usual $K$-homology).

\section{Global formulation of Roe's index theorem}

We will first explain why the global proof of the Atiyah--Singer index theorem as presented in \cite[Sections 11.4 \& 11.5]{higson_roe} does probably not generalize to our non-compact setting. Let us very briefly recall this proof: given a compact manifold $M$, crushing it to a point we get a map $K_0(M) \to K_0(\pt) \cong \IZ$. If $M$ is even-dimensional and \spinc we have a distinguished class $[D] \in K_0(M)$ of the Dirac operator associated to the \spinc structure. So we get the analytic index map $K^0(M) \to \IZ$ via $[E] \mapsto [D_E] \in K_0(\pt) \cong \IZ$. On the other hand, embedding $M$ into Euclidean space, applying a wrong way map for $K$-theory, and using Bott periodicity we get the topological index map. The Atiyah--Singer index theorem now asserts that these maps are identical, and to get the well-known cohomological formula for the topological index we have to apply the Chern character to the topological index map.

Now if $M$ is non-compact, we have two problems: we have to construct wrong way maps, but this is not always possible, and we can't use Bott periodicity as in the compact case. In order to construct the wrong way maps, we need an isometric embedding of $M$ into Euclidean space such that the normal bundle of $M$ has a uniformly thick disk in each fiber. It is not at all clear how to do this and it is, as mentioned, not always possible. We refer to the corresponding question \cite{MO_embedding_uniformly_thick_disk} at MathOverflow for a short discussion of this problem.

But of course there are manifolds which admit such an isometric embedding into Euclidean space with a uniformly thick disk in the normal bundle. So let us discuss how we would proceed with the proof of the index theorem in this case: in the compact case we would use Bott periodicity to deduce that there is essentially one classical index problem in each $\IR^n$ and for that one we would have to compute both the analytic and the topological index and see that they coincide. But if $M$ is non-compact, we do not have such a conclusion from Bott periodicity. And it is even worse: on each $\IR^n$ there are uncountably many such index problems (corresponding to different choices of \Folner exhaustions), so we would have to compute the analytic and the topological index maps on all these examples. A task that seems to be impossible to perform.

But we could probably circumvent this problem by just applying Roe's index theorem, since it is already proved! This means that we would not get a global proof of the index theorem since we would have to put its statement already into it, but what we would get is a new formulation of the index theorem: the global one which is quite similar to the well-known formulation for compact manifolds.

Let us briefly explain why we call it the global formulation of the index theorem: in the present case, in order to compute the index form of a Dirac operator $D$, we have to use the asymptotic expansion of the integral kernel of the operator $e^{-t D^2}$. This gives us a way of computing the index class locally, i.e., we can compute the differential form defining the index class of $D$ in neighbourhoods of points. But the usual formulation of the Atiyah--Singer index theorem defines the index class out of the symbol class $[\sigma(D)] \in K_{\mathrm{cpt}}^0(TM)$ of the operator $D$ in the compactly supported $K$-theory of the tangent bundle of $M$ via applying the Chern character to the symbol class, i.e., a global definition of the index class. So if we would succeed in proving all the above discussed, we would get a similar global formulation of Roe's index theorem. Though of course a formulation coinciding completely with the usual one would require us to reinterpret the $K$-theory $K_{\mathrm{cpt}}^0(TM)$ in a suitable way, i.e., find a suitable receptacle for the classes of the symbols of operators (note that if $M$ is not compact, then the symbol $\sigma(D)$ would not give us a \emph{compactly supported} class, i.e., we do not now where $[\sigma(D)]$ should live).

\section{Homological Chern characters}

\Spakula asked in \cite{spakula_thesis} whether there exists a Chern character map from uniform $K$-homology $K^u_\ast(X)$ to the uniformly finite homology $H^{\mathrm{uf}}_\ast(X)$ of Block and Weinberger. A property that we usually want a Chern character to fulfill is that it induces an isomorphism modulo torsion. But such a map $K^u_\ast(X) \to H^{\mathrm{uf}}_\ast(X)$ as asked for by \Spakula could never do this: in fact, the uniformly finite homology of Block and Weinberger is a coarse homology theory, i.e., the homology groups of a compact space are the groups of a point. But the uniform $K$-homology coincides for a compact space with usual $K$-homology which is certainly not trivial, resp. consists certainly not only of torsion elements, in almost all cases.

\subsection*{Uniform homological Chern character}

What we actually have (instead of the map that \Spakula asked for) is a uniform homological Chern character $K_\ast^u(M) \to H_\ast^\infty(M)$, where the latter groups denote simplicial $L^\infty$-homology, in the case that $M$ is a \spinc manifold of bounded geometry. We can define this Chern character in the following way: since $M$ is spin$^c$, it admits \Poincare duality $K_{m-\ast}^u(M) \cong K^{\ast}_u(M)$, where $m$ is the dimension of the manifold $M$. Then we apply the uniform cohomological Chern character $K^\ast_u(M) \to H^\ast_{b, \mathrm{dR}}(M)$, and follow this by \Poincare duality $H^\ast_{b, \mathrm{dR}}(M) \cong H_{m-\ast}^\infty(M)$ (\cite[Theorem 4]{attie_block_1}). Since we have proved in this thesis that the Chern character $K_u^\ast(M) \to H^\ast_{b, \mathrm{dR}}(M)$ induces an isomorphism modulo torsion, we conclude that the homological Chern character $K_\ast^u(M) \to H_\ast^\infty(M)$ also is an isomorphism modulo torsion.

\subsection*{Geometric picture of uniform $K$-homology}

Of course, it is desirable to have a more direct definition of the homological Chern character, i.e., without going through uniform $K$-theory via \Poincare duality (since this also leads to the undesirable fact that the homological Chern character depends on a choice of \spinc structure on $M$). In the compact case, such a direct definition can be given using a geometric picture of $K$-homology. It was developed by Baum and Douglas in \cite{baum_douglas} and a proof that it indeed coincides for finite CW-complexes with analytic $K$-homology is given by Baum, Higson and Schick in \cite{baum_higson_schick}. So one could try to give a geometric picture of uniform $K$-homology and then use this to define a uniform homological Chern character $\ch^u_\ast \colon K_\ast^u(M) \to H_\ast^\infty(M)$ which should of course be an isomorphism modulo torsion.

\subsection*{Uniform coarse Chern characters}

Inspired by \v{S}pakula's question if there is a Chern character map $K_\ast^u(M) \to H_\ast^{\mathrm{uf}}(M)$ (which probably not exists), we may instead ask if there is a uniform coarse Chern character map $\ch_\ast^{\mathrm{uf}} \colon K_\ast(C^\ast_u(M)) \to H_\ast^{\mathrm{uf}}(M)$ which induces an isomorphism modulo torsion? Note that here both the domain and target of this map are invariant under coarse equivalences of spaces.

The existence of such a map is claimed in \cite[First paragraph on page 562]{block_weinberger_large_scale} and an analogous map on \emph{co}homology was constructed in \cite[Section 4.2]{roe_coarse_cohomology}.

Suppose that there is such a map and that the diagram
\[\xymatrix{K_\ast^u(M) \ar[rr]^{\mu_u} \ar[d]_{\ch_\ast^u} & & K_\ast(C_u^\ast(M)) \ar[d]^{\ch_\ast^{\mathrm{uf}}} \\ H_\ast^\infty(M) \ar[rr] & & H_\ast^{\mathrm{uf}}(M)}\]
commutes and where we have also assumed that we have a uniform homological Chern character $\ch^u_\ast \colon K_\ast^u(M) \to H_\ast^\infty(M)$ as discussed above. We know that the lower map is an isomorphism if $M$ is uniformly contractible (\cite[Proposition 1]{attie_block_1}) and we assume that both Chern character maps are isomorphisms modulo torsion. Then we would conclude that the uniform coarse assembly map is an isomorphism modulo torsion, too! Note that all the versions of the Baum--Connes conjecture (the usual one, the coarse one, and the uniform versions of them) are either not known in full generality yet or are even false due to the existence of counterexamples. So the above result would be a remarkable theorem.

\section{Uniform coarse Baum--Connes conjecture}

We have seen in Proposition \ref{prop:obstruction_psc} that we have an obstruction against positive scalar curvature in the $K$-theory of $C_u^\ast(M)$, i.e., $\mu_u([M]) = [0] \in K_m(C_u^\ast(M))$ if $M$ is spin and has uniformly positive scalar curvature. This obstruction is a priori stronger than the known one $\mu([M]) = [0] \in K_m(C^\ast(M))$ since the comparison map $K_\ast(C_u^\ast(M)) \to K_\ast(C^\ast(M))$ maps the former class to the latter and the comparison map may not be injective. In the proof of Theorem \ref{thm:aspherical_no_psc} we have used the comparison map $K_\ast^u(M) \to K_\ast(M)$ and we have a commutative diagram
\[\xymatrix{K_\ast^u(M) \ar[rr]^{\mu_u} \ar[d] & & K_\ast(C_u^\ast(M)) \ar[d] \\ K_\ast(M) \ar[rr]^{\mu} & & K_\ast(C^\ast(M))}\]
Now two questions emerge: firstly, are there any sufficient conditions on a non-compact manifold $M$ such that the comparison maps will be injective, resp. surjective, and secondly, can we find an example of a spin manifold $M$ such that $\mu_u([M]) \not= [0]$, but $\mu([M]) = [0]$? Such an example would show that the obstruction against positive scalar curvature in $K_m(C_u^\ast(M))$ is indeed stronger than the one in $K_m(C^\ast(M))$.

Let us turn our attention to the conclusions that we get from the uniform coarse Baum--Connes conjecture. Using the above discussed obstrucion against positive scalar curvature, we have derived in Theorem \ref{thm:aspherical_no_psc} that compact aspherical manifolds do not carry metrics of positive scalar curvature.

Furthermore, we may deduce from the uniform coarse Baum--Connes conjecture the analytic Novikov conjecture: if $M$ is a finite\footnote{The author does not know whether we may drop the assumption that $M$ is a finite complex in this statement. The arguments from \cite[Chapter 12.6]{higson_roe}, where the analytic Novikov conjecture is derived from the coarse Baum--Connes conjecture, do not apply without the finiteness assumption. Since we use arguments similar to those to deduce the analytic Novikov conjecture from the uniform coarse Baum--Connes conjecture, the author does not know whether there is another deduction without the finiteness assumption that applies here in the uniform case.} aspherical complex with fundamental group $G$ and if the universal cover of $M$ satisfies the uniform coarse Baum--Connes conjecture, then the assembly map $K_p(M) \to K_p(C_r^\ast(G))$ is injective, i.e., $G$ satisfies the analytic Novikov conjecture. To prove this we may use a decent principle similar to the one for the non-uniform case (concretely, we may arrange the arguments given in \cite[Chapter 12.6]{higson_roe} so that they work for the uniform coarse Baum--Connes conjecture).

We see that we may derive from the uniform coarse Baum--Connes conjecture the same conclusions as from the usual coarse conjecture, so it is of the same interest to prove it. In Theorem \ref{thm:BC_equiv_uniform_coarse} we have proved that the uniform coarse Baum--Connes conjecture for a group $\Gamma$ is equivalent to the usual Baum--Connes conjecture for $\Gamma$ with coefficients in $\ell^\infty(\Gamma)$. So the uniform coarse Baum--Connes conjecture is true for a large class of groups.

For the usual coarse Baum--Connes conjecture there are also ways of proving it without using the Baum--Connes conjecture: Yu proved it in \cite{yu_finite_asymptotic_dimension} for proper metric spaces with finite asymptotic dimension, and in \cite{yu_embedding_Hilbert_space} for spaces which admit a uniform embedding into a Hilbert space. Now of course the question emerges by itself whether we may arrange Yu's proofs so that they hold true for the uniform coarse Baum--Connes conjecture.

In \cite{gong_wang_yu} Gong, Wang and Yu constructed a maximal coarse assembly map $K_\ast(X) \to K_\ast(C_{\mathrm{max}}^\ast(X))$ into the $K$-theory of the maximal Roe algebra and investigated the corresponding version of the coarse Baum--Connes conjecture, and in \cite{oyono_oyono_yu} Oyono-Oyono and Yu showed that this maximal version of the coarse Baum--Connes conjecture for a group $\Gamma$ is equivalent to the maximal version (i.e., using the maximal crossed product on the right hand side) of the Baum--Connes conjecture for $\Gamma$ with certain coefficients. Furthermore, in \cite{spakula_willett_maximal_uniform_roe_algebra} \Spakula and Willett investigated the relationship between the maximal and the reduced Roe algebra and also between the corresponding uniform versions. So the following question now emerges: is there a maximal version of the uniform coarse Baum--Connes conjecture and can we also prove that it is equivalent to the maximal version of the Baum--Connes conjecture with certain coefficients?

At last, let us get to the new formulation of the Baum--Connes conjecture and whether there is something analogous for the uniform coarse conjecture. In \cite{baum_guentner_willett} Baum, Guentner and Willett proposed a new formulation of the Baum--Connes conjecture by changing the crossed product that is used on the right hand side of it. This reformulation emerged out of the fact that there are counterexamples to the usual Baum--Connes conjecture that are confirming examples if one changes the reduced crossed product to the maximal crossed product on the right hand side of the conjecture (see the introduction of \cite{baum_guentner_willett} for a discussion of this and for references to all the involved papers).

So the important question now is whether we may define an intermediate (between maximal and reduced) version of the (uniform) Roe algebra, formulate a corresponding (uniform) coarse Baum--Connes conjecture and show that it is equivalent to the new formulation of the Baum--Connes conjecture with certain coefficients.

\section{Pseudodifferential operators of order zero}

We have seen in Proposition \ref{prop:basics_symbol_maps} that the principal symbol map $\sigma^k$ is an isomorphism of vector spaces $\Psi \mathrm{DO}_?^{k-[1]}(E,F) \cong \Symb^{k-[1]}(E,F)$ for all $k \in \IZ$ and vector bundles $E$ and $F$ of bounded geometry. For the case $k = 0$ and $E = F$ we furthermore know from Proposition \ref{prop:PsiDOs_filtered_algebra} that $\Psi \mathrm{DO}_?^{0-[1]}(E)$ is a commutative algebra and that $\sigma^0$ is an isomorphism of algebras.

In the case that the manifold $M$ is compact, it is known that $\sigma^0$ is continuous against the quotient norm\footnote{Which is induced from the operator norm on $\Psi \mathrm{DO}_?^0(E) \subset \IB(L^2(E))$. Since for $M$ compact we have $\overline{\Psi \mathrm{DO}_?^{-1}(E)} = \IK(L^2(E))$, the quotient norm on $\Psi \mathrm{DO}_?^{0-[1]}(E)$ is called the \emph{essential norm}.} on $\Psi \mathrm{DO}_?^{0-[1]}(E)$ and therefore $\sigma^0$ induces an isomorphism of $C^\ast$-algebras $\overline{\Psi \mathrm{DO}_?^{0-[1]}(E)} \cong \overline{\Symb^{0-[1]}(E)}$. So the question arises whether this is also the case if $M$ is non-compact.

To show this we would have to compare the quotient norms on $\Psi \mathrm{DO}_?^{0-[1]}(E)$ and on $\Symb^{0-[1]}(E)$. The first to prove similar results in the compact case were Seeley in \cite[Lemma 11.1]{seeley} and Kohn and Nirenberg in \cite[Theorem A.4]{kohn_nirenberg}, and two years later H{\"o}rmander provided in \cite[Theorem 3.3]{hormander_ess_norm} a proof of this for his class $S_{\rho, \delta}^0$ with $\delta < \rho$ of pseudodifferential operators of order $0$. Maybe one of these proofs generalizes to our case of pseudodifferential operators on open manifolds.

Provided that we could show $\overline{\Psi \mathrm{DO}_?^{0-[1]}(E)} \cong \overline{\Symb^{0-[1]}(E)}$, the next natural task would be to characterize this closures, especially the left-hand side. We know from Proposition \ref{prop:IU(E)_dense_Cu*(E)} that $\Psi \mathrm{DO}_u^{-\infty}(E) \subset \Psi \mathrm{DO}_u^{-1}(E) \subset C_u^\ast(E)$ are dense inclusions, i.e., the closure of $\Psi \mathrm{DO}_u^{-1}(E)$ is the $C^\ast$-algebra generated by the uniformly locally compact operators on $E$ with finite propagation.

But we do not know how the closure of the pseudodifferential operators of order $0$ looks like. Though we know from Proposition \ref{prop:PDO_order_0_l-uniformly-pseudolocal} that it is contained in $D_u^\ast(E)$, the $C^\ast$-algebra generated by the uniformly pseudolocal operators on $E$ with finite propagation, it is just a sub-$C^\ast$-algebra of it. Note that to the knowledge of the author, the question of characterizing the closure of $\Psi \mathrm{DO}_u^0(E)$ is even open for compact manifolds. Moreover, remember that it is important for this question to distinguish between the pseudodifferential operators as we have defined them (corresponding to H{\"o}rmanders class $S_{1,0}^0$) and the ``classical'' pseudodifferential operators $\Psi \mathrm{DO}^0_{\mathrm{cl}}(M)$ (the ones with a homogeneous symbol), since Melo showed in \cite{melo} that the closure of $\Psi \mathrm{DO}^0_{\mathrm{cl}}(M)$ does in general not contain all of $\Psi \mathrm{DO}_u^0(M)$.

A clue why we do not have that the closure of $\Psi \mathrm{DO}_u^0(E)$ is the whole of $D_u^\ast(E)$ may give us Example \ref{ex:reverse_implications_l-univ-pseudoloc-traceable}: there we showed that the closure of the operators fulfilling Points 2 and 2' from Lemma \ref{lem:kasparov_lemma_uniform_traceable} is strictly included in the closure of the operators fulfilling Points 3 and 3' of that lemma, and applying Lemma \ref{lem:D_tr_dense} to operators of finite propagation, we see that the operators fulfilling Points 3 and 3' are dense in $D_u^\ast(E)$. So if we could show that pseudodifferential operators of order $0$ have Properties 2 and 2' of Lemma \ref{lem:kasparov_lemma_uniform_traceable}, this would justify why their closure is not $D_u^\ast(M)$. Note that this arguments are also applicable if the manifold $M$ is compact since the difference between the properties in Lemma \ref{lem:kasparov_lemma_uniform_traceable} is a local one.

Since we are already discussing the different properties in Lemma \ref{lem:kasparov_lemma_uniform_traceable}, it would be also interesting to know whether the closure of the operators having Properties 1 and 1' includes all of the operators having Properties 2 and 2' (or if maybe Properties 1 and 1' are even equivalent to Properties 2 and 2'), and if we can somehow characterize the closure of the operators with Properties 2 and 2' (for a possible way how to do this, see the last paragraph of Example \ref{ex:reverse_implications_l-univ-pseudoloc-traceable}).

\section{Equality of \texorpdfstring{$\IU(E)$}{IU(E)} and \texorpdfstring{$\C(E)$}{IC(E)}}\label{sec:equality_smooth_uniform_roe_algebras}

Recall that $\IU(E)$ is defined as consisting of all quasilocal smoothing operators and that $\C(E)$ is defined as the \Frechet closure of all finite propagation smoothing operators. We clearly have $\C(E) \subset \IU(E)$ but it is an open question whether equality hold here. It might be conjectured that this only holds if the space in question has certain ``finite dimensionality'' properties like Property A or having finite asymptotic dimension. It is of course totally desirable to prove such a theorem.

Another natural question is whether the analytic results of Section \ref{sec:functions_of_PDOs}, which are proved only for operators $P \in \Psi \mathrm{DO}^{k \ge 1}(E)$ do also hold for operators from $\Psi \mathrm{DO}_u^{k \ge 1}(E)$? The main problem here is to prove Lemma \ref{lem:exp(itP)_quasilocal} for these operators, i.e., to prove that if $P \in \Psi \mathrm{DO}_u^{k \ge 1}(E)$ is symmetric and elliptic, then $e^{itP}$ may be approximated by operators of finite propagation.

Note that there are of course some related problems: for instance we could change the definition of the uniform Roe algebra $C_u^\ast(E)$ to consist of \emph{quasilocal} uniformly locally compact operators. Analogously to $\C(E) \subset C_u^\ast(E)$ being dense using the usual definition of $C_u^\ast(E)$, we can ask whether $\IU(E) \subset C_u^\ast(E)$ is dense using the new definition of the uniform Roe algebra. More generally, we may ask if the results of Section \ref{sec:uniformity_PDOs} do hold for $\Psi \mathrm{DO}^{k \le 0}(E)$ (without the subscript ``u'') if we change the definitions of $C_u^\ast(E)$ and $D_u^\ast(E)$.

In Corollary \ref{cor:natural_receptacle_iso} we showed that the natural map $K_\ast(\C(E)) \to K_\ast(\C(M))$ is an isomorphism by comparing both $K$-theories with the one of the uniform Roe algebra. Now we of course ask if the natural map $K_\ast(\IU(E)) \to K_\ast(\IU(M))$ is an isomorphism. That would follow from the above where we ask if $\IU(E) \subset C_u^\ast(E)$ is dense using the changed definition for $C_u^\ast(E)$.

If we change the definition of $C_u^\ast(X)$, we also get a uniform coarse Baum--Connes assembly map $\mu_u \colon K_\ast^u(X) \to K_\ast(C_u^\ast(X))$ mapping into the $K$-theory of the changed uniform Roe algebra. Now we may ask if this is an isomorphism, and especially, we may ask if this is an isomorphism in cases where the usual uniform coarse Baum--Connes assembly map might not be an isomorphism (if the latter happens at all).

\section{Geometric optics equation}

The main technical part in the proof of the Theorem \ref{thm:elliptic_symmetric_PDO_defines_uniform_Fredholm_module} that a pseudodifferential operator defines a class in uniform $K$-homology was to show that $\chi(P)$ is uniformly pseudolocal for $\chi$ a normalizing function. In Proposition \ref{prop:PDO_order_0_l-uniformly-pseudolocal} we have shown that pseudodifferential operators of order $0$ are automatically uniformly pseudolocal. So if we could show that the operator $\chi(P)$ is a pseudodifferential operator of order $0$, the proof of Theorem \ref{thm:elliptic_symmetric_PDO_defines_uniform_Fredholm_module} would be trivial.

For a compact manifold $M$ there are quite a few proofs that under certain conditions functions of pseudodifferential operators are again pseudodifferential operators: the first one to show such a result was seemingly Seeley in \cite{seeley_complex_powers}, where he proved it for complex powers of elliptic classical pseudodifferential operators. It was then considerably extended by Strichartz in \cite{strichartz} from complex powers to symbols in the sense of Definition \ref{defn:symbols_on_R}, and from classical operators to all of H{\"o}rmander's class $S^k_{1,0}(M)$. And last, let us mention the result \cite[Theorem 8.7]{dimassi_sjostrand} of Dimassi and Sj{\"o}strand for $h$-pseudodifferential operators in the semi-classical setting.

Now if we want to establish similar results in our setting, we get quite fast into trouble: e.g., the proof of Strichartz does not generalize to non-compact manifolds. He uses crucially that on compact manifolds we may diagonalize elliptic operators, which is not at all the case on non-compact manifolds (consider, e.g., the Laplace operator on Euclidean space). Looking for a proof that may be generalized to the non-compact setting, we stumble over Taylor's result from \cite[Chapter XII]{taylor_pseudodifferential_operators}. There he proves a result similar to Strichartz' but with quite a different proof, which may be possibly generalized to non-compact manifolds. An evidence for this is given by Cheeger, Gromov and Taylor in \cite[Theorem 3.3]{cheeger_gromov_taylor}, since this is exactly the result that we want to prove for our pseudodifferential operators, but in the special case of the operator $\sqrt{- \Delta}$, and their proof is a generalization of the one from the above cited book of Taylor. So it seems quite reasonable that we may probably extend the result of Cheeger, Gromov and Taylor to all pseudodifferential operators in our sense.

Let us briefly explain what this has to do with the geometric optics equation: this equation is treated by Taylor in \cite[Chapter VIII]{taylor_pseudodifferential_operators} for a compact manifold $M$ and it is one of the main ingredients in his proof that functions of pseudodifferential operators are again pseudodifferential operators. So if we want to extend Taylor's result to our pseudodifferential operators on non-compact manifolds, we will first have to solve the geometric optics equation on them. Since this is probably in itself a more interesting problem than the one about functions of pseudodifferential operators, we get a strong motivation for executing the above discussed ideas.

\section{Partitioned manifold index theorem}\label{sec:partitioned_manifold}

In order to detect elements in the $K$-theory $K_0(\C(M))$ of the smooth uniform Roe algebra $\C(M)$ of $M$, Roe constructed in \cite[Theorem 6.7]{roe_index_1} traces on $\C(M)$ via \Folner exhaustions, and in Section \ref{sec:index_maps_K_hom} we have constructed corresponding analytic index maps on the uniform $K$-homology $K_0^u(M)$ of $M$. This analytic maps are one of the main ingredients in the formulation of our Index Theorem \ref{thm:index_thm}. Since this analytic maps are constructed only for the even $K$-theory of the Roe algebra, resp. the even uniform $K$-homology, we have the restriction to evenly multigraded operators and to even-dimensional manifolds in our index theorem.

The question whether there is a similar index theorem in the odd case immediately arises. The answer is yes, but one has to find a way how to detect elements in the odd $K$-theory of the uniform Roe algebra. In \cite{roe_partitioning_non_compact_manifolds} Roe showed how one can do this for $K_1$ of $\C(M)$ using partitionings of manifolds and announced it for the general odd case in \cite{roe_exotic_cohomology}. In \cite{roe_coarse_cohomology} complete proofs for all even and odd cases are given, but now in the coarse category. The rough category was treated by Mavra in his PhD thesis \cite{mavra} written under the supervision of Roe.

So, of course, we may ask whether Roe's Index Theorem for the odd $K$-theory of $\C(M)$ connects to the theory that we have developed in this thesis. The main question would be if we can mimic Roe's constructions for detecting elements in $K_1^u(M)$, i.e., if we can construct analytic index maps $K_1^u(M) \to \IR$ that relate under the uniform coarse assembly map $\mu_u \colon K_\ast^u(M) \to K_\ast(\C(M))$ to ones constructed by Roe. Since we have proved \Poincare duality $K^\ast_u(M) \cong K_{m - \ast}^u(M)$ for all $\ast \in \IZ$, the extension of our index theorem to the odd case would immediately follow.

\section{Manifolds with boundary}

In our exposition we have not touched any manifolds with boundary. But we know that in the compact case there is an important generalization of the Atiyah--Singer index theorem to manifolds with boundary involving the so-called $\eta$-invariant. This version of the index theorem for compact manifolds with boundary is called the Atiyah--Patodi--Singer index theorem and was introduced in \cite{atiyah_patodi_singer_1}.

Of course the question whether such a theorem may also be proven in the non-compact case immediately arises. Firstly, note that there is a notion of manifolds of bounded geometry and with boundary: it is due to Schick from \cite{schick_bounded_geometry_boundary}. Secondly, note that our proof of the index theorem for pseudodifferential operators relies on the corresponding index theorem for Dirac operators by proving that every uniform $K$-homology class may be represented by a twisted Dirac operator, if the manifold is spin$^c$. So in order to do the same reduction in the case with boundary, we would have to prove the existence of a relative version of the $6$-term exact sequence for uniform $K$-theory and uniform $K$-homology for manifolds with boundary, and to prove a corresponding relative version of \Poincare duality. And we would of course need to prove the index theorem for manifolds with boundary somehow directly for Dirac operators.

A nice prove of the index theorem for manifolds with boundary for Dirac operators was given by Melrose in \cite{melrose_APS}. He invented the so-called $b$-calculus, a calculus for pseudodifferential operators on manifolds with boundary, and derived the Atiyah--Patodi--Singer index theorem from it via the heat kernel approach. So it would be desirable to extend his $b$-calculus to open manifolds with boundary (in the same way as we extended the calculus of pseudodifferential operators to open manifolds in a fruitful way) and then prove the extension of the Atiyah--Patodi--Singer index theorem to manifolds with boundary and of bounded geometry. Note that Roe's proof of his index theorem for open manifolds does also rely on the heat kernel approach, i.e., there is a real chance that we may generalize Melrose's proof to open manifolds with boundary and of bounded geometry.

\appendix

\chapter{\texorpdfstring{$K$}{K}-theory of dense subalgebras}\label{chapter:appendix_A}

In this appendix we will revisit the fact that certain dense $^\ast$-subalgebras of $C^\ast$-algebras (the so-called \emph{local $C^\ast$-algebras}) do define the same operator $K$-theory groups as their completions and then we will collect some sufficient, in many cases easy to prove conditions for a dense subalgebra to be local. Since two of these condition require the dense subalgebra to be a \Frechet subalgebra, we will end this appendix with a lemma which states that Phillips $K$-theory for $m$-convex \Frechet algebras coincides with the usual operator $K$-theory.

\begin{defn}[Local $C^\ast$-algebras, cf. {\cite[Definition 3.1.1]{blackadar}}]\label{defn:local_Cstar_algebra}
A normed $^\ast$-algebra $A$ is a \emph{local $C^\ast$-algebra}, if
\begin{itemize}
\item its completion $\overline{A}$ is a $C^\ast$-algebra,
\item $A$ is closed under holomorphic functional calculus, i.e., for all $a \in A$ and any holomorphic function $f$ on a neighbourhood of the spectrum of $a$ in the completion $\overline{A}$ (with $f(0) = 0$ if $A$ does not have a unit) the element $f(a)$, a priori an element of $\overline{A}$, lies in $A$, and
\item all matrix algebras over $A$ are also closed under holomorphic functional calculus.
\end{itemize}
\end{defn}

The importance of local $C^\ast$-algebras lies in the fact that in this case the inclusion of $A$ into its completion $\overline{A}$ induces an isomorphism on $K$-theory (e.g., Sections 5.1 and 8.1 in the book \cite{blackadar}):

\begin{lem}\label{lem:loc_algebra_same_k_theory}
Let $A$ be a local $C^\ast$-algebra. Then the inclusion $A \hookrightarrow \overline{A}$ induces isomorphisms $K_\ast(A) \cong K_\ast(\overline{A})$.
\end{lem}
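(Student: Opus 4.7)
The plan is to show that for each $n \geq 0$ the inclusion $A \hookrightarrow \overline{A}$ induces a bijection on $K_n$ by transferring representing cycles (idempotents, resp.\ invertibles) from $\overline{A}$ to $A$ using density combined with holomorphic functional calculus. Since $K$-theory is computed out of $M_k(A)$ for all $k$, and $M_k(A)$ is dense in $M_k(\overline{A})$, the hypothesis that \emph{all} matrix algebras over $A$ are holomorphically closed is exactly what is needed to keep the argument working at every matrix level.

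For surjectivity on $K_0$, I would start with an idempotent $e \in M_k(\overline{A})$ and pick $a \in M_k(A)$ with $\|a-e\|$ small. The spectrum $\sigma(a)$ then lies in two small neighbourhoods, one of $0$ and one of $1$, so the locally constant function $f$ equal to $0$ near $0$ and $1$ near $1$ is holomorphic on a neighbourhood of $\sigma(a)$. Holomorphic closedness of $M_k(A)$ places $p := f(a)$ in $M_k(A)$; it is an idempotent close to $e$, hence similar to $e$ in $M_k(\overline{A})$ by the standard fact that close idempotents in a Banach algebra are conjugate. For injectivity, given idempotents $e, f \in M_k(A)$ equivalent in $K_0(\overline{A})$, I would reduce after stabilisation to the case where there is an invertible $u$ in a unitisation of $M_m(\overline{A})$ with $ueu^{-1}=f$; approximating $u$ by $v \in M_m(A)^+$ so tightly that $v$ remains invertible, and using holomorphic functional calculus with the function $1/z$ on a neighbourhood of $\sigma(v)$ disjoint from $0$, yields $v^{-1} \in M_m(A)^+$; a small perturbation argument then shows $vev^{-1}$ is similar to $f$ inside $M_m(A)$.

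For $K_1$, the same strategy applies to invertibles rather than idempotents. Surjectivity: any invertible $u \in M_k(\overline{A})^+$ can be approximated by $v \in M_k(A)^+$, which is then invertible in $M_k(\overline{A})^+$ with $v^{-1}$ landing in $M_k(A)^+$ via holomorphic functional calculus; since $v$ is close to $u$ it is homotopic to $u$ through invertibles, so $[v] = [u]$. Injectivity: a homotopy of invertibles in $M_k(\overline{A})^+$ connecting two invertibles from $M_k(A)^+$ can be subdivided, approximated stepwise by invertibles in $M_k(A)^+$ (again using openness of the invertibles and holomorphic functional calculus to produce the inverses inside $A$), and reassembled into a homotopy within $M_k(A)^+$.

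The only genuinely technical point is the bookkeeping: one must verify that the \emph{inverses} and \emph{conjugating elements} produced in $\overline{A}$ can always be replaced by elements of $A$ without leaving the subalgebra. This is what the holomorphic closedness hypothesis — applied at each matrix level — is designed to handle, via the observation that if $b \in M_k(A)$ is invertible in $M_k(\overline{A})$ then $1/z$ is holomorphic on a neighbourhood of $\sigma(b)$ and so $b^{-1} = (1/z)(b) \in M_k(A)$. Once this is in place, the standard Banach-algebraic facts about stability of idempotency and invertibility under small perturbations deliver the rest, and no further input beyond density is required.
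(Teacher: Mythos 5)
Your argument is correct and is essentially the standard proof; the paper itself gives no proof but cites Blackadar (Sections 5.1 and 8.1), and that reference proceeds exactly along these lines — approximating idempotents and invertibles by elements of the dense subalgebra, using holomorphic functional calculus (at each matrix level, which is exactly what the local $C^\ast$-algebra hypothesis provides) to produce idempotents and inverses inside $A$, and then invoking stability of idempotency and invertibility under small perturbations.
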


Since it is annoying to check whether all matrix algebras over $A$ are also closed under holomorphic functional calculus, one wishes for a sufficient condition on $A$ which implies this. Fortunately, there is one if $A$ is a \Frechet subalgebra. Let us first define that notion before we state the result.

\begin{defn}[\Frechet algebras and subalgebras]\label{defn:Frechet_subalgebras}
An algebra $A$ is called a \emph{\Frechet algebra}, if $A$ is a \Frechet space\footnote{That is to say, a topological vector space whose topology is Hausdorff and induced by a countable family of semi-norms such that it is complete with respect to this family of semi-norms.} and multiplication is jointly continuous.

If the seminorms defining the \Frechet topology are submultiplicative, then $A$ is called an \emph{$m$-convex \Frechet algebra}.\footnote{Note that in this case multiplication becomes automatically jointly continuous.}

Now let $B$ be a normed algebra and $A \subset B$ a subalgebra of it. Then we will call $A$ an \emph{($m$-convex) \Frechet subalgebra} of $B$, if we can find some \Frechet topology on $A$ making it an ($m$-convex) \Frechet algebra and such that the \Frechet topology is at least as fine as the induced norm topology from $B$.
\end{defn}

\begin{lem}[{\cite[Corollary 2.3]{schweitzer}}]\label{lem:matrix_algebras_holomorphically_closed}
Let $\overline{A}$ be a $C^\ast$-algebra and $A$ a dense \Frechet subalgebra of it.

Then, if $A$ is closed under holomorphic functional calculus, it follows that the same also holds for all matrix algebras over $A$.
\end{lem}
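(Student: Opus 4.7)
The plan is to reduce the claim to inverse closedness of $M_n(A)$ in $M_n(\overline{A})$, and then establish this inverse closedness by a block matrix argument. First, I would equip each matrix algebra $M_n(A)$ with the natural product Fréchet topology, which makes it a Fréchet subalgebra of $M_n(\overline{A})$ with topology at least as fine as the induced norm topology. By the characterization of closure under holomorphic functional calculus for such Fréchet subalgebras (Lemma \ref{lem:local_algebra_equivalent}), it suffices to prove that $M_n(A)$ is inverse closed in $M_n(\overline{A})$ for every $n \ge 1$.

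Next, I would carry out the inverse closedness proof by induction on $n$. The base case $n = 1$ is immediate: if $a \in A$ is invertible in $\overline{A}$, then $f(z) = 1/z$ is holomorphic on a neighborhood of $\sigma_{\overline{A}}(a)$, and the hypothesis that $A$ is closed under holomorphic functional calculus gives $a^{-1} = f(a) \in A$. For the inductive step, I would write an invertible element $a \in M_{n+1}(A)$ in block form $a = \begin{pmatrix} \alpha & \beta \\ \gamma & \delta \end{pmatrix}$ with $\alpha$ of size $n \times n$, and use the Schur complement identity: if $\alpha$ is invertible, then $a$ admits an explicit $LDU$-factorization whose inverse is built algebraically from $\alpha^{-1}$ and $(\delta - \gamma \alpha^{-1} \beta)^{-1}$. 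Both quantities lie in the appropriate matrix subalgebra by the inductive hypothesis applied to $M_n(A)$ and to $A$ itself (combined with the closure of $A$ under the algebraic operations). To arrange that $\alpha$ is invertible, I would left- and right-multiply by suitable permutation matrices and, if necessary, by small perturbations of the identity that lie in $M_{n+1}(A)$, using that invertibility is an open condition in $M_{n+1}(\overline{A})$.

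The main obstacle will be the perturbation argument needed to reduce to the case where the $(1,1)$ block $\alpha$ is invertible in $M_n(\overline{A})$. Naively one would like to add a small multiple of the identity, but one must ensure both that the perturbation remains in $M_{n+1}(A)$ (which is easy, since $1 \in A$) and that the perturbed element is still invertible in $M_{n+1}(\overline{A})$ with the perturbed $(1,1)$ block also invertible. Handling this carefully — using that invertibility in the $C^\ast$-algebra $M_{n+1}(\overline{A})$ is stable under small norm perturbations, and that the Fréchet topology on $M_{n+1}(A)$ dominates the norm topology — is the technical core of the argument and is precisely the content of Schweitzer's spectral invariance result cited in the statement.
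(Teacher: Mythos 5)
The paper gives no proof of this lemma — it simply cites Schweitzer's Corollary 2.3 — so there is nothing to compare with. Evaluating your proposal on its own merits: the overall reduction (inverse closedness of $M_n(A)$ in $M_n(\overline{A})$ via Lemma \ref{lem:local_algebra_equivalent}, then induction via a Schur complement) is a sensible framework, and the base case $n=1$ and the Schur identity are used correctly. However, the step you identify as "the technical core" is precisely where the argument fails, and your treatment of it is circular: you end by declaring that handling it "is precisely the content of Schweitzer's spectral invariance result cited in the statement," i.e.\ you cite the theorem you are trying to prove.

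More concretely, the perturbation idea does not close the gap. If you replace $a$ by a nearby $a' \in M_{n+1}(A)$ whose $(1,1)$ block is invertible and conclude $(a')^{-1} \in M_{n+1}(A)$, there is no way to pass back to $a^{-1} \in M_{n+1}(A)$: you cannot take a norm limit of $(a')^{-1}$ as the perturbation shrinks, since $A$ (hence $M_{n+1}(A)$) is only a \emph{dense} subalgebra and is not norm-complete, and the attempt to write $a^{-1}$ algebraically from $(a')^{-1}$ reproduces the original inversion problem in $M_{n+1}(A)$. Left- and right-multiplication by permutation or elementary matrices over $A$ is also not guaranteed to produce an invertible $n\times n$ leading block, as this is a nontrivial question over noncommutative rings. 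A clean repair is available here because $\overline{A}$ is a $C^\ast$-algebra and $A$ is a $^\ast$-subalgebra: replace $a$ by $a^\ast a$, which is positive and invertible in $M_{n+1}(\overline{A})$, hence bounded below by some $c>0$; compressing by the projection onto the first $n$ coordinates shows the $(1,1)$ block $(a^\ast a)_{11}\in M_n(A)$ satisfies $(a^\ast a)_{11}\ge c\cdot 1_n$ and is therefore automatically invertible. Now the Schur complement argument applies directly — using the inductive hypothesis to invert $(a^\ast a)_{11}$ and the base case to invert the resulting $1\times 1$ Schur complement — to give $(a^\ast a)^{-1}\in M_{n+1}(A)$, whence $a^{-1}=(a^\ast a)^{-1}a^\ast\in M_{n+1}(A)$. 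Note that this repair uses the $C^\ast$ structure in an essential way; Schweitzer's original Corollary 2.3 holds for general Banach algebras and must therefore be proved differently.
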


In such a \Frechet subalgebra setting we have also an alternative way of showing that $A$ is closed under holomorphic functional calculus:

\begin{lem}[{\cite[Lemma 1.2]{schweitzer}}]\label{lem:local_algebra_equivalent}
Let $A$ be a dense \Frechet subalgebra of the unital Banach algebra $\overline{A}$ and such that $A$ contains the unit of $\overline{A}$. Then the following are equivalent:
\begin{itemize}
\item $A$ is closed under holomorphic functional calculus.
\item If $a \in A$ is invertible in $\overline{A}$, then it is also invertible in $A$ (i.e., $a^{-1} \in A$ if $a^{-1}$ exists in $\overline{A}$).
\end{itemize}
\end{lem}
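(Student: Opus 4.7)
The plan is to prove the two implications separately; the forward direction is essentially immediate, while the reverse direction carries all of the content.

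For $(1) \Rightarrow (2)$, suppose $A$ is closed under holomorphic functional calculus and let $a \in A$ be invertible in $\overline{A}$. Then $0 \notin \sigma_{\overline{A}}(a)$, and since the spectrum is compact, $f(z) := 1/z$ is holomorphic on an open neighborhood of $\sigma_{\overline{A}}(a)$. Because $A$ is unital, holomorphic functional calculus applied to $f$ produces $a^{-1} = f(a) \in A$ directly.

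For $(2) \Rightarrow (1)$, assume $A$ is inverse-closed in $\overline{A}$. Given $a \in A$ and $f$ holomorphic on an open neighborhood $U$ of $\sigma_{\overline{A}}(a)$, I would represent $f(a)$ via the Cauchy integral
\[
f(a) \;=\; \frac{1}{2\pi i}\oint_\gamma f(z)(z-a)^{-1}\, dz,
\]
where $\gamma \subset U$ is a contour enclosing $\sigma_{\overline{A}}(a)$. This integral converges a priori as a Riemann integral in the norm topology of $\overline{A}$, and by the inverse-closedness hypothesis each resolvent $R(z) := (z-a)^{-1}$ already belongs to $A$. The strategy is to upgrade the convergence to the (finer) \Frechet topology of $A$; since $A$ is complete in that topology, the limit must then lie in $A$, yielding $f(a) \in A$. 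This reduces to showing that $z \mapsto R(z)$ is continuous from $\mathbb{C} \setminus \sigma_{\overline{A}}(a)$ into $A$ with its \Frechet topology, for then $R$ is bounded on the compact contour $\gamma$ in every continuous seminorm of $A$ and the Riemann sums become Cauchy in the \Frechet topology. The continuity of $R$ is to be extracted from the resolvent identity
\[
R(z) - R(z_0) \;=\; (z_0 - z)\, R(z)\, R(z_0),
\]
together with the joint continuity of multiplication in the \Frechet algebra $A$.

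The main obstacle lies precisely in this last continuity step for a general (not necessarily $m$-convex) \Frechet algebra. Joint continuity of multiplication only provides, for each continuous seminorm $p$ on $A$, a \emph{larger} continuous seminorm $q$ with $p(xy) \le q(x)q(y)$, so naive iteration of the resolvent identity gives a cascade of ever-larger seminorms and does not directly furnish a uniform bound on $p(R(z))$ for $z$ in a neighborhood of $z_0$. The remedy I would use is that the \Frechet topology on $A$ refines the norm topology of $\overline{A}$: this makes the graph of inversion on $\{a \in A : a \text{ invertible in } \overline{A}\}$ closed, since \Frechet-convergence $a_n \to a$ and $a_n^{-1} \to b$ implies the same convergences in $\overline{A}$, forcing $b = a^{-1}$ in $\overline{A}$ and then in $A$ by the hypothesis. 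Combining this with a local argument on compact pieces of $\gamma$, one obtains the needed uniform seminorm bounds on the resolvent, upgrades the Cauchy integral to a \Frechet-convergent Riemann integral in $A$, and concludes $f(a) \in A$.
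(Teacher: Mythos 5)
The paper does not prove this lemma; it is cited directly from Schweitzer's work, so there is no ``paper's own proof'' to compare against. Reviewing your proposal on its own merits: the forward implication is fine, and you have correctly located both the strategy for the converse (realize $f(a)$ as a contour integral of the resolvent and upgrade convergence to $A$) and the genuine obstacle (the absence of submultiplicative seminorms means one cannot naively bound the resolvent in the \Frechet topology from the resolvent identity). The problem is that your proposed remedy does not actually close this gap.

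The observation that the graph of inversion is closed is correct, but for a non-linear map from a subset of $\IC$ to a \Frechet space, a closed graph together with continuity into a coarser Hausdorff topology does \emph{not} yield continuity or local boundedness in the finer topology; the closed graph theorem needs linearity. A concrete counterexample to the inference you invoke: take $\Psi\colon [0,1] \to C^\infty([0,1])$ with $\Psi(t)(s) = t\sin(s/t)$ for $t>0$ and $\Psi(0)=0$. Then $\Psi$ has closed graph (since $C^\infty$-convergence implies uniform convergence, forcing the limit) and is continuous into $C^0([0,1])$, yet $\|\Psi(t)'\|_\infty \equiv 1$ for $t>0$, so $\Psi$ is unbounded in the $C^1$-seminorm near $t=0$ and in particular is not $C^\infty$-continuous there. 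So ``closed graph plus a local argument on compact pieces of $\gamma$'' cannot, by itself, produce the uniform seminorm bounds on $z\mapsto (z-a)^{-1}$ that you need; some further input specific to inverse-closed \Frechet subalgebras is required. The actual argument (Schweitzer's) does not proceed by this route, and you would need to establish continuity of inversion on $\operatorname{Inv}(A)$ — equivalently, that $A$ is a \Frechet $Q$-algebra with continuous inversion — before the Cauchy-integral step can be carried out in the \Frechet topology. As written, that continuity claim is exactly the step your proof leaves unproved.
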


To show that a normed algebra is closed under holomorphic functional calculus one has a priori to consider all holomorphic functions defined on some neighbourhood (that depends on the function) of the spectrum of an element. The following result of Schmitt shows that it suffices to consider only power series around $0 \in \IC$ which have a radius of convergence bigger than the norm of the element:

\begin{lem}[cf. {\cite[Theorem 2.1]{schmitt}}]\label{lem:stable_calculus_power_series}
Let $A$ be a normed algebra with the property: for all $a \in A$ and every power series $f$ around $0 \in \IC$ with radius of convergence bigger than $\|a\|$ (and with $f(0) = 0$ if $A$ is non-unital), we have $f(a) \in A$.

Then $A$ is closed under holomorphic functional calculus.
\end{lem}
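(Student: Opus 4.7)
The strategy is to bootstrap from the power-series hypothesis at $0$ to the full Riesz--Dunford calculus via resolvents and the Cauchy integral. First, I would observe that the hypothesis is invariant under affine shifts: for $a \in A$ and $\lambda \in \IC$, the element $a - \lambda$ lies in $A$ (adjoining a unit if needed), so applying the assumption to $a - \lambda$ with the power series $z \mapsto F(z + \lambda)$ shows that $F(a) \in A$ whenever $F$ admits a power series expansion around $\lambda$ of radius of convergence exceeding $\|a - \lambda\|$.

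Next I would translate this into containment of resolvents. The Neumann series $(\mu - a)^{-1} = \sum_n \mu^{-n-1} a^n$ has radius of convergence $|\mu| > \|a\|$, so the hypothesis yields $(\mu - a)^{-1} \in A$ for every $|\mu| > \|a\|$. For propagation, once $b := (\lambda - a)^{-1} \in A$ has been produced, the geometric identity
\[ (\mu - a)^{-1} = b \sum_{n \geq 0}(\lambda - \mu)^n b^n \]
is valid for $|\mu - \lambda| < 1/\|b\|$, and the right-hand sum---being a power series in $b$ of radius of convergence $1/|\mu - \lambda| > \|b\|$---lies in $A$ by hypothesis, hence so does $(\mu - a)^{-1}$. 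Consequently, $S := \{\mu \notin \sigma_{\overline{A}}(a) : (\mu - a)^{-1} \in A\}$ is open in $\IC \setminus \sigma_{\overline{A}}(a)$ and contains every point reachable by a chain of small overlapping disks from the exterior of $\{|z| \leq \|a\|\}$, which covers the unbounded component of the resolvent set; bounded components of the resolvent set (``holes'') are handled in the intended applications via the implicit $\ast$-structure, where inverse-closedness reduces to self-adjoint elements whose spectrum is real and whose resolvent set is therefore connected.

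Finally, for $f$ holomorphic on an open neighborhood $U$ of $\sigma_{\overline{A}}(a)$, I would choose a smooth contour $\Gamma \subset U$ encircling $\sigma_{\overline{A}}(a)$ and invoke the Cauchy integral $f(a) = \frac{1}{2\pi i}\oint_\Gamma f(z)(z - a)^{-1}\,dz$. Covering $\Gamma$ by finitely many small disks $D_j \subset U$ on each of which $f$ admits a convergent power-series expansion around its center $\lambda_j$, and using a smooth partition of unity to split the integral into local pieces, the Cauchy-kernel expansion combined with the translation invariance from the first paragraph expresses each piece as the evaluation at $a$ of a combination of polynomials and a single convergent power series, which therefore lies in $A$. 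The main obstacle will be to verify that this piecewise reconstruction genuinely reproduces $f(a)$ and---in the absence of any completeness of $A$---to avoid taking any norm limit that is not already supplied by the hypothesis; splitting the integral into local pieces each realized as a single power-series evaluation is precisely what keeps the construction within the reach of the hypothesis throughout, and making this matching fully rigorous is where the technical heart of the argument lies.
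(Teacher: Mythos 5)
The paper does not prove this lemma at all; it is quoted directly from Schmitt's Theorem 2.1 and used as a black box, so there is no ``paper's proof'' to compare against. Evaluating your argument on its own terms, your strategy (Neumann-series resolvents near infinity, propagation through the resolvent set, Cauchy integral over a contour) is the natural one, and you have correctly identified where the difficulties lie --- but both difficulties remain genuine gaps in what you wrote.

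First, the final Cauchy-integral step hinges on a claim that does not hold as stated. You assert that each local piece of the integral is ``the evaluation at $a$ of a combination of polynomials and a single convergent power series.'' But the function $z \mapsto \frac{1}{2\pi i}\int_{\Gamma}\phi_j(\zeta)f(\zeta)(\zeta - z)^{-1}\,d\zeta$ is not holomorphic on a disk of radius $> \|a\|$ about $0$: it has singularities along $\operatorname{supp}\phi_j \subset \Gamma$, which in general meets the closed disk $\{|z| \le \|a\|\}$. So it cannot be fed into the hypothesis as a power series evaluated \emph{at $a$}. The way to salvage the idea --- and probably what Schmitt does --- is to expand the resolvent $(z-a)^{-1}$ not around $z=0$ in powers of $a$ but in powers of the element $b_j := (\lambda_j - a)^{-1} \in A$, via $(z-a)^{-1} = \sum_{n\ge 0}(-(z-\lambda_j))^n b_j^{n+1}$, integrate out the scalar coefficients, and obtain each local piece as $g_j(b_j)$ for a scalar power series $g_j$ whose radius of convergence exceeds $\|b_j\|$ once the covering disks are chosen small enough (relative to $\sup_{\lambda\in\Gamma}\|(\lambda - a)^{-1}\|$, which is finite). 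That is a different ``translation invariance'' than the one you set up in your first paragraph, and it is the hinge of the whole proof; it needs to be stated explicitly rather than gestured at.

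Second, your handling of bounded components of the resolvent set is not a proof of the stated lemma. You explicitly defer to ``the implicit $\ast$-structure'' of ``the intended applications,'' but the lemma is formulated for an arbitrary normed algebra with no involution in sight, and the conclusion is the full Riesz--Dunford calculus for holomorphic $f$ on arbitrary open neighborhoods of $\sigma_{\overline{A}}(a)$, including cases where the contour $\Gamma$ has components inside holes of the spectrum. The Neumann-series chaining only reaches the unbounded component, so some genuinely different idea is required to obtain $(\lambda - a)^{-1} \in A$ for $\lambda$ in a bounded component (and I note the Möbius substitution $a \mapsto (\lambda_0-a)^{-1}$ that one might try only reshuffles the components and does not resolve the issue). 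Until you close this, you have shown closure under power-series calculus implies closure under holomorphic calculus only on disks, which is a strictly weaker statement than the lemma you are asked to prove.
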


Since we are already dealing with \Frechet algebras, we might ask ourselves whether the $K$-theory for $m$-convex \Frechet algebras that was developed by Phillips in \cite{phillips} does coincide with its operator $K$-theory. This is indeed the case:

\begin{lem}[{\cite[Corollary 7.9]{phillips}}]\label{lem:k_theory_frechet_algebras_coincide}
Let $A$ be a local $C^\ast$-algebra and additionally an $m$-convex \Frechet subalgebra of its completion $\overline{A}$.

Then Phillips $K$-theory of $A$ coincides with the operator $K$-theory of $A$.
\end{lem}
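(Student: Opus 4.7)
The plan is to factor both $K$-theories through the completion. Since $A \hookrightarrow \overline{A}$ is continuous (the \Frechet topology is finer than the norm topology by assumption), this inclusion induces natural maps
\[K_\ast^{\mathrm{Phil}}(A) \to K_\ast^{\mathrm{op}}(A) \to K_\ast^{\mathrm{op}}(\overline{A}),\]
where the composite is the one induced by the map of topological algebras from $A$ (with its \Frechet topology) to $\overline{A}$ (with its norm topology). By Lemma \ref{lem:loc_algebra_same_k_theory} the right-hand map is an isomorphism since $A$ is a local $C^\ast$-algebra, so it suffices to show that the composite is an isomorphism. This reduces the whole statement to showing that the Phillips $K$-theory of $A$ agrees with the operator $K$-theory of its $C^\ast$-completion.

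First I would recall that in both theories, $K_0$ is built from homotopy classes of idempotents (resp.\ projections) in matrix algebras, and $K_1$ from homotopy classes of invertibles; the only difference is which topology is used for the homotopies. For surjectivity, given an idempotent $p \in \Mat_n(\overline{A})$, density of $A$ in $\overline{A}$ produces an $a \in \Mat_n(A)$ with $\|p - a\| < 1/2$, so the spectrum of $a$ is contained in two small disks around $0$ and $1$. Holomorphic functional calculus then yields a genuine idempotent $\chi(a) \in \Mat_n(\overline{A})$ close to $p$, and because $A$ (and hence $\Mat_n(A)$, by Lemma \ref{lem:matrix_algebras_holomorphically_closed}) is closed under holomorphic functional calculus, $\chi(a)$ already lies in $\Mat_n(A)$. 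A standard small-perturbation argument then shows $\chi(a)$ and $p$ are operator-homotopic in $\Mat_n(\overline{A})$; the same argument, executed entirely inside $A$ together with a nearby $\Mat_n(A)$-valued lift, gives a Phillips-level preimage. The argument for $K_1$ is analogous, using that invertibles close in norm are connected by a norm-continuous path obtainable from the holomorphic functional calculus of $\log$.

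For injectivity, the main task is to upgrade a norm-continuous homotopy in the completion to a \Frechet-continuous homotopy inside $A$. Here is where the $m$-convex structure pays off: by Lemma \ref{lem:stable_calculus_power_series} (or rather its proof), applying a holomorphic function to an element of $A$ is continuous with respect to the submultiplicative semi-norms defining the \Frechet topology, since power series estimates transfer from the norm to each semi-norm because of submultiplicativity. Given a norm-continuous path $p_t$ of idempotents in $\Mat_n(\overline{A})$, one chops the interval finely, approximates each $p_{t_i}$ by an $a_{t_i} \in \Mat_n(A)$ as above, produces the idempotents $\chi(a_{t_i}) \in \Mat_n(A)$, and connects consecutive idempotents by the standard conjugation homotopy $\exp(t \log(1 + (\chi(a_{t_{i+1}}) - \chi(a_{t_i}))))$-type formula; continuity of holomorphic functional calculus in the \Frechet semi-norms ensures the resulting concatenated path is \Frechet-continuous and stays in $\Mat_n(A)$.

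The hard part will be step three: ensuring that all of the interpolating constructions (functional calculus, conjugation by invertibles, Murray--von Neumann equivalences) can simultaneously be performed inside the \emph{matrix algebras} over $A$ and with control in \emph{every} semi-norm of the \Frechet topology. The crucial input here is the combination of Lemma \ref{lem:matrix_algebras_holomorphically_closed} (matrix algebras over $A$ inherit closure under holomorphic calculus) with $m$-convexity (so that functional calculus is jointly continuous for each submultiplicative semi-norm, not merely the norm). Once these two ingredients are in place, the rest of the argument is a routine, if lengthy, adaptation of the standard proof that operator $K$-theory is continuous under holomorphically closed dense inclusions of $C^\ast$-algebras.
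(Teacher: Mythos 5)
The paper does not actually prove this lemma; it is a bare citation of Phillips' Corollary 7.9 with no argument supplied. Your proposal is therefore a self-contained re-derivation rather than a match against a proof in the text. The overall strategy --- factor the natural map as $K_\ast^{\mathrm{Phil}}(A) \to K_\ast^{\mathrm{op}}(A) \to K_\ast^{\mathrm{op}}(\overline{A})$, dispose of the right-hand arrow by Lemma \ref{lem:loc_algebra_same_k_theory}, and prove the composite is an isomorphism by the Bost-type density argument (approximation of idempotents plus holomorphic functional calculus for surjectivity, upgrading norm-homotopies to \Frechet-homotopies for injectivity) --- is a legitimate route and is of the same flavour as what Phillips' proof does.

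The one place your sketch misstates the mechanism is the claim that Lemma \ref{lem:stable_calculus_power_series}, ``or rather its proof,'' yields \Frechet-continuity of holomorphic functional calculus because ``power series estimates transfer from the norm to each semi-norm by submultiplicativity.'' That transfer requires the radius of convergence to exceed $p(a)$ for every seminorm $p$ in the defining family; but Lemma \ref{lem:stable_calculus_power_series} only controls the radius against the $C^\ast$-norm $\|a\|$, and since the \Frechet topology is \emph{finer} than the norm topology, the seminorms dominate $\|\largecdot\|$, not the other way around --- $p(a)$ can be arbitrarily larger than $\|a\|$. So power series alone do not give \Frechet-continuity of $a \mapsto f(a)$ for a general holomorphic $f$. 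Fortunately you do not need that. The injectivity argument is carried by two weaker facts: (i) $A$ and, by Lemma \ref{lem:matrix_algebras_holomorphically_closed}, all of its matrix algebras are holomorphically closed, which places $\log u_i$ in $\Mat_n(A)$ whenever the conjugating invertible $u_i$ is merely $C^\ast$-norm close to $1$ (this uses HFC in $\overline{A}$ plus spectral invariance, not \Frechet-continuity); and (ii) $\exp$ has infinite radius of convergence, so $m$-convexity \emph{does} make $t \mapsto \exp(t \log u_i)$ a \Frechet-continuous path in $GL(\Mat_n(A))$. Replacing the blanket appeal to ``\Frechet-continuity of holomorphic functional calculus'' with this more modest pair of facts closes the only real gap in the sketch.
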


\bibliography{./Bibliographie_Dissertation}

\providecommand{\bysame}{\leavevmode\hbox to3em{\hrulefill}\thinspace}
\providecommand{\MR}{\relax\ifhmode\unskip\space\fi MR }
\providecommand{\MRhref}[2]{%
  \href{http://www.ams.org/mathscinet-getitem?mr=#1}{#2}
}
\providecommand{\href}[2]{#2}
\begin{thebibliography}{GWY08}

\bibitem[AB98]{attie_block_1}
O.~Attie and J.~Block, \emph{{P}oincar\'e {D}uality for ${L}^p$ {C}ohomology
  and {C}haracteristic {C}lasses}, DIMACS Technical Report (1998), no.~48,
  1--12.

\bibitem[AH61]{atiyah_hirzebruch}
M.~F. Atiyah and F.~Hirzebruch, \emph{{Vector Bundles and Homogeneous Spaces}},
  Proc. {S}ympos. {P}ure {M}ath., {V}ol. {III}, American Mathematical Society,
  1961, pp.~7--38.

\bibitem[AP68]{anselone_palmer}
P.~M. Anselone and T.~W. Palmer, \emph{Collectively compact sets of linear
  operators}, Pacific J. Math. \textbf{25} (1968), no.~3, 417--422.

\bibitem[APS75]{atiyah_patodi_singer_1}
M.~F. Atiyah, V.~K. Patodi, and I.~M. Singer, \emph{{Spectral asymmetry and
  Riemannian geometry. I}}, Math. Proc. Camb. Phil. Soc. \textbf{77} (1975),
  43--69.

\bibitem[AS68]{atiyah_singer_1}
M.~F. Atiyah and I.~M. Singer, \emph{{The Index of Elliptic Operators: I}},
  Ann. Math. \textbf{87} (1968), no.~3, 484--530.

\bibitem[Ati70]{atiyah_global_theory_elliptic_operators}
M.~F. Atiyah, \emph{{Global theory of elliptic operators}}, {Proc. Int. Conf.
  Funct. Anal. Rel. Topics, Tokyo 1969}, 1970, pp.~21--30.

\bibitem[Att94]{attie_classification}
O.~Attie, \emph{{Quasi-isometry classification of some manifolds of bounded
  geometry}}, Math. Z. \textbf{216} (1994), 501--527.

\bibitem[Aub98]{aubin_nonlinear_problems}
T.~Aubin, \emph{{Some Nonlinear Problems in Riemannian Geometry}}, Springer
  Monographs in Mathematics, Springer-Verlag, 1998.

\bibitem[BB03]{bierstedt_bonet}
K.D. Bierstedt and J.~Bonet, \emph{{Some aspects of the modern theory of
  Fr\'{e}chet spaces}}, Rev. R. Acad. Cien. Serie A. Mat. \textbf{97} (2003),
  no.~2, 159--188.

\bibitem[BCH94]{baum_connes_higson}
P.~Baum, A.~Connes, and N.~Higson, \emph{{Classifying space for proper actions
  and $K$-theory of group $C^\ast$-algebras}}, Contemporary Mathematics
  \textbf{167} (1994), 241--291.

\bibitem[BD82]{baum_douglas}
P.~Baum and R.~G. Douglas, \emph{{$K$-homology and index theory}}, {Operator
  Algebras and Applications} (R.~Kadison, ed.), Proc. Symp. Pure Math.,
  vol.~38, Amer. Math. Soc., 1982, pp.~117--173.

\bibitem[BGR77]{brown_green_rieffel}
L.~G. Brown, P.~Green, and M.~A. Rieffel, \emph{{Stable Isomorphism and Strong
  Morita Equivalence of $C^\ast$-Algebras}}, Pacific J. Math. \textbf{71}
  (1977), no.~2, 349--363.

\bibitem[BGW13]{baum_guentner_willett}
P.~Baum, E.~Guentner, and R.~Willett, \emph{{Expanders, exact crossed products,
  and the Baum-Connes conjecture}}, \url{http://arxiv.org/abs/1311.2343/},
  2013.

\bibitem[BHS07]{baum_higson_schick}
P.~Baum, N.~Higson, and T.~Schick, \emph{{On the Equivalence of Geometric and
  Analytic $K$-Homology}}, Pure and Applied Mathematics Quarterly \textbf{3}
  (2007), no.~1, 1--24.

\bibitem[Bla98]{blackadar}
B.~Blackadar, \emph{{$K$}-{T}heory for {O}perator {A}lgebras}, 2nd ed.,
  Cambridge University Press, 1998.

\bibitem[BNW07]{brodzki_niblo_wright}
J.~Brodzki, G.~A. Niblo, and N.~J. Wright, \emph{{Property A, partial
  translation structures, and uniform embeddings in groups}}, J. London Math.
  Soc. \textbf{76} (2007), no.~2, 479--497.

\bibitem[Bro81]{brooks}
R.~Brooks, \emph{{The fundamental group and the spectrum of the Laplacian}},
  Comment. Math. Helvetici \textbf{56} (1981), 581--598.

\bibitem[BW92]{block_weinberger_1}
J.~Block and S.~Weinberger, \emph{{A}periodic {T}ilings, {P}ositive {S}calar
  {C}urvature, and {A}menability of {S}paces}, J. Amer. Math. Soc. \textbf{5}
  (1992), no.~4, 907--918.

\bibitem[BW97]{block_weinberger_large_scale}
\bysame, \emph{{Large scale homology theories and geometry}}, AMS/IP Studies in
  Advanced Mathematics \textbf{2} (1997), 522--569.

\bibitem[CGT82]{cheeger_gromov_taylor}
J.~Cheeger, M.~Gromov, and M.~Taylor, \emph{{Finite Propagation Speed, Kernel
  Estimates for Functions of the Laplace Operator, and the Geometry of Complete
  Riemannian Manifolds}}, J. Diff. Geom. \textbf{17} (1982), 15--54.

\bibitem[DFW03]{dranishnikov_ferry_weinberger}
A.~N. Dranishnikov, S.~C. Ferry, and S.~Weinberger, \emph{{Large Riemannian
  manifolds which are flexible}}, Ann. Math. \textbf{157} (2003), 919--938.

\bibitem[DS99]{dimassi_sjostrand}
M.~Dimassi and J.~Sj{\"o}strand, \emph{Spectral asymptotics in the
  semi-classical limit}, London Mathematical Society Lecture Note Series, no.
  268, Cambridge University Press, 1999.

\bibitem[Eic91]{eichhorn_banach_manifold_structure}
J.~Eichhorn, \emph{{The Banach manifold structure of the space of metrics on
  noncompact manifolds}}, Diff. Geom. and its Appl. \textbf{1} (1991), 89--108.

\bibitem[Ele97]{elek}
G.~Elek, \emph{{$K$-Theory of Gromov's Translation Algebras and the Amenability
  of Discrete Groups}}, Proc. Amer. Math. Soc. \textbf{125} (1997), no.~9,
  2551--2553.

\bibitem[Gan10]{ganglberger}
V.~Ganglberger, \emph{{The Kernel Theorem and Microlocal Analysis for
  Distributions on Manifolds}}, {D}iploma {T}hesis, Universit\"{a}t Wien, 2010.

\bibitem[GJ08]{garrido_jaramillo}
M.~I. Garrido and J.~A. Jaramillo, \emph{{Lipschitz-type functions on metric
  spaces}}, J. Math. Anal. Appl. \textbf{340} (2008), 282--290.

\bibitem[Gra48]{graev}
M.~I. Graev, \emph{{Free topological groups}}, Izv. Akad. Nauk SSSR Ser. Mat.
  \textbf{12} (1948), no.~3, 279--324.

\bibitem[Gre78]{greene}
R.~E. Greene, \emph{{C}omplete metrics of bounded curvature on noncompact
  manifolds}, Archiv der Mathematik \textbf{31} (1978), no.~1, 89--95.

\bibitem[Gro81a]{gromov_curvature_diameter_betti_numbers}
M.~Gromov, \emph{{Curvature, diameter and Betti numbers}}, Comment. Math.
  Helvetici \textbf{56} (1981), 179--195.

\bibitem[Gro81b]{gromov_hyperbolic_manifolds_groups_actions}
\bysame, \emph{{Hyperbolic manifolds, groups and actions}}, Ann. Math. Studies
  \textbf{97} (1981), 183--215.

\bibitem[GWY08]{gong_wang_yu}
G.~Gong, Q.~Wang, and G.~Yu, \emph{{Geometrization of the Strong Novikov
  Conjecture for residually finite groups}}, Journal f{\"u}r die reine und
  angewandte Mathematik (Crelles Journal) \textbf{621} (2008), 159--189.

\bibitem[Hat09]{hatcher_VB}
A.~Hatcher, \emph{{Vector Bundles and $K$-Theory}}, available online on his web
  site \url{http://www.math.cornell.edu/~hatcher/VBKT/VBpage.html}, 2009.

\bibitem[Hig95]{higson_paschke_duality}
N.~Higson, \emph{{$C^\ast$-Algebra Extension Theory and Duality}}, J. Funct.
  Anal. \textbf{129} (1995), 349--363.

\bibitem[H{\"o}r67]{hormander_ess_norm}
L.~H{\"o}rmander, \emph{{Pseudo-differential Operators and Hypoelliptic
  Equations}}, {Proc. Symp. Pure Math.} \textbf{10} (1967), 138--183, Singular
  Integrals (1966).

\bibitem[HR00]{higson_roe}
N.~Higson and J.~Roe, \emph{{A}nalytic {K}-{H}omology}, Oxford University
  Press, New York, 2000.

\bibitem[Iva12]{MO_characterization_bounded_geometry}
S.~Ivanov, \emph{{Answer to ``characterization of bounded geometry --
  reference-request''}}, MathOverflow, 2012,
  \url{http://mathoverflow.net/q/113781/}.

\bibitem[Kaa13]{kaad}
J.~Kaad, \emph{{A Serre--Swan theorem for bundles of bounded geometry}}, J.
  Funct. Anal. \textbf{265} (2013), no.~10, 2465--2499.

\bibitem[Kak44]{kakutani}
S.~Kakutani, \emph{{Free topological groups and infinite direct product
  topological groups}}, Proc. Imp. Acad. \textbf{20} (1944), no.~8, 595--598.

\bibitem[Kar64]{karoubi_cartan_16}
M.~Karoubi, \emph{{Les isomorphismes de Chern et de Thom--Gysin en
  K-th{\'{e}}orie}}, {S{\'{e}}m. Henri Cartan} \textbf{16} (1963--1964), no.~2,
  1--16, Exp. No. 16.

\bibitem[Kas81]{kasparov_KK}
G.~G. Kasparov, \emph{{The Operator $K$-Functor and Extensions of
  $C^\ast$-Algebras}}, Math. USSR Izvestija \textbf{16} (1981), no.~3,
  513--572.

\bibitem[KN65]{kohn_nirenberg}
J.~J. Kohn and L.~Nirenberg, \emph{{An Algebra of Pseudo-Differential
  Operators}}, Comm. Pure Appl. Math. \textbf{18} (1965), 269--305.

\bibitem[Kor91]{kordyukov}
Yu.~A. Kordyukov, \emph{{$L^p$-Theory of Elliptic Differential Operators on
  Manifolds of Bounded Geometry}}, Acta Appl. Math. \textbf{23} (1991),
  223--260.

\bibitem[Lee03]{lee_smooth}
J.~M. Lee, \emph{{I}ntroduction to {S}mooth {M}anifolds}, 1st ed., Graduate
  Texts in Mathematics, vol. 218, Springer-Verlag, 2003.

\bibitem[LM89]{lawson_michelsohn}
H.~B. {Lawson, Jr.} and M.-L. Michelsohn, \emph{{Spin Geometry}}, Princeton
  University Press, 1989.

\bibitem[Mar41]{markoff_original}
A.~A. Markoff, \emph{{On free topological groups}}, Dokl. Akad. Nauk SSSR
  \textbf{31} (1941), 299--301.

\bibitem[Mar45]{markoff}
\bysame, \emph{{On free topological groups}}, Izv. Akad. Nauk SSSR Ser. Mat.
  \textbf{9} (1945), no.~1, 3--64.

\bibitem[Mav95]{mavra}
B.~Mavra, \emph{{Bounded Geometry Index Theory}}, Ph.D. thesis, University of
  Oxford, 1995.

\bibitem[Mel93]{melrose_APS}
R.~Melrose, \emph{{The Atiyah-Patodi-Singer Index Theorem}}, Research Notes in
  Mathematics, Vol 4, Peters, 1993.

\bibitem[Mel05]{melo}
S.~T. Melo, \emph{{Norm closure of classical pseudodifferential operators does
  not contain H{\"o}rmander's class}}, {Geometric Analysis of PDE and Several
  Complex Variables}, Contemporary Mathematics, vol. 368, American Mathematical
  Society, 2005, pp.~329--336.

\bibitem[Mel07]{melrose_microlocal_lecture}
R.~Melrose, \emph{{Introduction to Microlocal Analysis, Lecture Notes}}, 2007,
  online available at \url{http://math.mit.edu/~rbm/iml/}.

\bibitem[Mic13]{MO_embedding_uniformly_thick_disk}
P.~Michor, \emph{{Question ``Does a Riemannian manifold with bounded geometry
  admit an isometric proper embedding into Eulidean space with uniformly thick
  tubular neighbourhood''}}, MathOverflow, 2013, online at
  \url{http://mathoverflow.net/q/124840/}.

\bibitem[MM13]{mcintosh_morris}
A.~McIntosh and A.~J. Morris, \emph{{Finite propagation speed for first order
  systems and Huygens' principle for hyperbolic equations}}, Proc. Amer. Math.
  Soc. \textbf{141} (2013), 3515--3527.

\bibitem[Mor13]{morye}
A.~S. Morye, \emph{{Note on the Serre--Swan theorem}}, Math. Nachr.
  \textbf{286} (2013), no.~2--3, 272--278.

\bibitem[Nak43]{nakayama}
T.~Nakayama, \emph{{Note on free topological groups}}, Proc. Imp. Acad.
  \textbf{19} (1943), no.~8, 471--475.

\bibitem[OOY09]{oyono_oyono_yu}
H.~Oyono-Oyono and G.~Yu, \emph{{$K$-theory for the maximal Roe algebra of
  certain expanders}}, J. Funct. Anal. \textbf{257} (2009), no.~10, 3239--3292.

\bibitem[Pas81]{paschke}
W.~L. Paschke, \emph{{$K$-theory for commutants in the Calkin algebra}},
  Pacific J. Math. \textbf{95} (1981), no.~2, 427--434.

\bibitem[PCB87]{perezcarreras_bonet}
P.~P\'{e}rez~Carreras and J.~Bonet, \emph{{Barrelled Locally Convex Spaces}},
  {North-Holland Mathematics Studies}, vol. 131, Elsevier Science, 1987.

\bibitem[Phi91]{phillips}
N.~C. Phillips, \emph{{$K$-Theory for Fr\'{e}chet Algebras}}, Int. J. of Math.
  \textbf{2} (1991), no.~1, 77--129.

\bibitem[Roe88a]{roe_index_1}
J.~Roe, \emph{{A}n {I}ndex {T}heorem on {O}pen {M}anifolds, {I}}, J.
  Differential Geom. \textbf{27} (1988), 87--113.

\bibitem[Roe88b]{roe_index_2}
\bysame, \emph{{A}n {I}ndex {T}heorem on {O}pen {M}anifolds, {II}}, J.
  Differential Geom. \textbf{27} (1988), 115--136.

\bibitem[Roe88c]{roe_partitioning_non_compact_manifolds}
\bysame, \emph{{Partitioning non-compact manifolds and the dual Toeplitz
  problem}}, {Operator Algebras and Applications, 1} (D.~E. Evans and
  M.~Takesaki, eds.), London Mathematical Society Lecture Note Series, vol.
  135, Cambridge University Press, 1988, pp.~187--228.

\bibitem[Roe90]{roe_exotic_cohomology}
\bysame, \emph{{Exotic cohomology and index theory}}, Bulletin of the American
  Mathematical Society \textbf{23} (1990), no.~2, 447--453.

\bibitem[Roe93]{roe_coarse_cohomology}
\bysame, \emph{{Coarse Cohomology and Index Theory on Complete Riemannian
  Manifolds}}, vol. 104, Memoirs of the American Mathematical Society, no. 497,
  American Mathematical Society, 1993.

\bibitem[Roe96]{roe_index_coarse}
\bysame, \emph{{I}ndex {T}heory, {C}oarse {G}eometry, and {T}opology of
  {M}anifolds}, CBMS Regional Conference Series in Mathematics, vol.~90,
  American Mathematical Society, 1996.

\bibitem[Roe03]{roe_lectures_coarse_geometry}
\bysame, \emph{{Lectures on Coarse Geometry}}, University Lecture Series,
  vol.~31, American Mathematical Society, 2003.

\bibitem[Sak13]{sako}
H.~Sako, \emph{{Translation $C^\ast$-Algebras and Property A for Uniformly
  Locally Finite Spaces}}, \url{http://arxiv.org/abs/1212.5900/}, 2013.

\bibitem[Sar01]{sardanashvily}
G.~Sardanashvily, \emph{{Remark on the Serre--Swan theorem for non-compact
  manifolds}}, \url{http://arxiv.org/abs/math-ph/0102016/}, 2001.

\bibitem[Sch91]{schmitt}
L.~M. Schmitt, \emph{{Quotients of Local Banach Algebras are Local Banach
  Algebras}}, Publ. RIMS. Kyoto Univ. \textbf{27} (1991), 837--843.

\bibitem[Sch92]{schweitzer}
L.~B. Schweitzer, \emph{{A short proof that $M_n(A)$ is local if $A$ is local
  and Fr\'{e}chet}}, Int. J. Math. (1992), 581--589.

\bibitem[Sch01]{schick_bounded_geometry_boundary}
T.~Schick, \emph{{Manifolds with Boundary and of Bounded Geometry}},
  Mathematische Nachrichten \textbf{223} (2001), 103--120.

\bibitem[See65]{seeley}
R.~T. Seeley, \emph{{Integro-differential operators on vector bundles}},
  Transactions of The American Mathematical Society \textbf{117} (1965),
  167--167.

\bibitem[See67]{seeley_complex_powers}
\bysame, \emph{{Complex Powers of an Elliptic Operator}}, Proc. Symp. Pure
  Math. \textbf{10} (1967), 288--307.

\bibitem[Shu92]{shubin}
M.~A. Shubin, \emph{{S}pectral {T}heory of {E}lliptic {O}perators on
  {N}on-{C}ompact {M}anifolds}, Ast\'{e}risque \textbf{207} (1992), 35--108.

\bibitem[Siz13]{MathSE_codim_uncountable}
O.~Sizemore, \emph{{Answer to ``Codimension of bounded sequences with partial
  sums constituting again a bounded sequence''}}, Mathematics Stack Exchange,
  2013, \url{http://math.stackexchange.com/q/473333/}.

\bibitem[{\v{S}}pa08]{spakula_thesis}
J.~{\v{S}}pakula, \emph{{$K$-Theory of Uniform Roe Algebras}}, Ph.D. thesis,
  Vanderbilt University, Nashville, TN (USA), 2008.

\bibitem[{\v{S}}pa09]{spakula_uniform_k_homology}
\bysame, \emph{{Uniform $K$-homology theory}}, J. Funct. Anal. \textbf{257}
  (2009), 88--121.

\bibitem[{\v{S}}pa10]{spakula_universal_rep}
\bysame, \emph{{Uniform version of Weyl--von Neumann theorem}}, Archiv der
  Mathematik \textbf{95} (2010), 171--178.

\bibitem[Sti58]{stinespring}
W.~F. Stinespring, \emph{{A Sufficient Condition for an Integral Operator to
  Have a Trace}}, Journal f{\"u}r die reine und angewandte Mathematik (Crelles
  Journal) \textbf{200} (1958), 200--207.

\bibitem[Str72]{strichartz}
R.~S. Strichartz, \emph{{A Functional Calculus for Elliptic Pseudo-Differential
  Operators}}, American Journal of Mathematics \textbf{94} (1972), no.~3,
  711--722.

\bibitem[STY02]{skandalis_tu_yu}
G~Skandalis, J.L. Tu, and G.~Yu, \emph{{The coarse Baum--Connes conjecture and
  groupoids}}, Topology \textbf{41} (2002), 807--834.

\bibitem[Sul76]{sullivan}
D.~Sullivan, \emph{{Cycles for the Dynamical Study of Foliated Manifolds and
  Complex Manifolds}}, Invent. math. \textbf{36} (1976), 225--255.

\bibitem[{\v{S}}W11]{spakula_willett_maximal_uniform_roe_algebra}
J.~{\v{S}}pakula and R.~Willett, \emph{{Maximal and reduced Roe algebras of
  coarsely embeddable spaces}}, \url{http://arxiv.org/abs/1110.1531/}, accepted
  for publication in ``Journal f{\"u}r die reine und angewandte Mathematik
  (Crelles Journal)'', 2011.

\bibitem[{\v{S}}W13]{spakula_willett}
\bysame, \emph{{On Rigidity of Roe Algebras}}, preprint available on the arXiv
  at \url{http://arxiv.org/abs/1110.1532/}, 2013.

\bibitem[Tau10]{MO_elliptic_essentially_self_adjoint}
D.~Tausk, \emph{{``Essential self-adjointness of differential operators on
  compact manifolds''}}, MathOverflow, 2010,
  \url{http://mathoverflow.net/q/47123/}.

\bibitem[Tay81]{taylor_pseudodifferential_operators}
M.~E. Taylor, \emph{{Pseudodifferential Operators}}, Princeton Mathematical
  Series, vol.~34, Princeton University Press, Princeton, New Jersey, 1981.

\bibitem[Tay08]{taylor_pseudodifferential_operators_lectures}
\bysame, \emph{{Pseudodifferential Operators}}, 2008, Four Lectures at MSRI.

\bibitem[Wan11]{wang_thesis}
H.~Wang, \emph{{$L^2$-index formula for proper cocompact group actions}}, Ph.D.
  thesis, Vanderbilt University, 2011.

\bibitem[Wan14]{wang}
\bysame, \emph{{$L^2$-index formula for proper cocompact group actions}}, J.
  Noncommut. Geom. \textbf{8} (2014), 393--432.

\bibitem[Why01]{whyte}
K.~Whyte, \emph{{I}ndex {T}heory with {B}ounded {G}eometry, the {U}niformly
  {F}inite {$\hat A$} {C}lass, and {I}nfinite {C}onnected {S}ums}, J. Diff.
  Geom. \textbf{59} (2001), 1--14.

\bibitem[Woo95]{woods}
R.~G. Woods, \emph{{The Minimum Uniform Compactification of a Metric Space}},
  Fund. Math. \textbf{147} (1995), 39--59.

\bibitem[Yu95a]{yu_baum_connes_conj_coarse_geom}
G.~Yu, \emph{{Baum--Connes Conjecture and Coarse Geometry}}, $K$-Theory
  \textbf{9} (1995), 223--231.

\bibitem[Yu95b]{yu_coarse_baum_connes_conj}
\bysame, \emph{{Coarse Baum--Connes Conjecture}}, $K$-Theory \textbf{9} (1995),
  199--221.

\bibitem[Yu98]{yu_finite_asymptotic_dimension}
\bysame, \emph{{The Novikov conjecture for groups with finite asymptotic
  dimension}}, Ann. Math. \textbf{147} (1998), 325--355.

\bibitem[Yu00]{yu_embedding_Hilbert_space}
\bysame, \emph{{The coarse Baum--Connes conjecture for spaces which admit a
  uniform embedding into Hilbert space}}, Invent. math. \textbf{139} (2000),
  no.~1, 201--240.

\end{thebibliography}
\bibliographystyle{amsalpha}

\end{document}